\author{\emph{Ludovic MONIER}}
\title{Graded loopspaces in mixed characteristics and de Rham-Witt algebra}
\begin{document}

\begin{titlingpage}
\maketitle
\end{titlingpage}

 \makeatletter
\newcommand*{\relrelbarsep}{.386ex}
\newcommand*{\relrelbar}{%
  \mathrel{%
    \mathpalette\@relrelbar\relrelbarsep
  }%
}
\newcommand*{\@relrelbar}[2]{%
  \raise#2\hbox to 0pt{$\m@th#1\relbar$\hss}%
  \lower#2\hbox{$\m@th#1\relbar$}%
}
\providecommand*{\rightrightarrowsfill@}{%
  \arrowfill@\relrelbar\relrelbar\rightrightarrows
}
\providecommand*{\leftleftarrowsfill@}{%
  \arrowfill@\leftleftarrows\relrelbar\relrelbar
}
\providecommand*{\xrightrightarrows}[2][]{%
  \ext@arrow 0359\rightrightarrowsfill@{#1}{#2}%
}
\providecommand*{\xleftleftarrows}[2][]{%
  \ext@arrow 3095\leftleftarrowsfill@{#1}{#2}%
}
\makeatother

\newcommand{\bigslant}[2]{{\raisebox{.2em}{$#1$}\left/\raisebox{-.2em}{$#2$}\right.}}

\newcommand{\ubar}[1]{\text{\b{$#1$}}}

\newcommand{\triplerightarrow}{%
\tikz[minimum height=0ex]
  \path[->]
   node (a)            {}
   node (b) at (1em,0) {}
  (a.north)  edge (b.north)
  (a.center) edge (b.center)
  (a.south)  edge (b.south);%
}

\newcommand{\doublerightarrow}{%
\tikz[minimum height=0ex]
  \path[->]
   node (a)            {}
   node (b) at (1em,0) {}
  (a.center)  edge (b.center)
  (a.south)  edge (b.south);%
}

\newcommand{\ornamentleft}{%
    \psvectorian[width=2em]{2}%
}
\newcommand{\ornamentright}{%
    \psvectorian[width=2em,mirror]{2}%
}
\newcommand{\ornamentbreak}{%
    \begin{center}
    \ornamentleft\quad\ornamentright
    \end{center}%
}

\theoremstyle{definition}
\newtheorem{Def}{Definition}[subsection]
\newtheorem{Rq}[Def]{Remark}
\newtheorem{Ex}[Def]{Example}
\newtheorem{Rep}[Def]{Representation}
\newtheorem{Di}[Def]{Diagram}
\newtheorem{Q}[Def]{Question}
\newtheorem{I}[Def]{Intuition}
\newtheorem{Exo}[Def]{Exo}
\newtheorem{Not}[Def]{Notation}
\newtheorem{Cons}[Def]{Construction}
\newtheorem{Int}[Def]{Introduction}
\newtheorem{Ack}[Def]{Acknowledgements}

\theoremstyle{plain}
\newtheorem{Prop}[Def]{Proposition}
\newtheorem{Thm}[Def]{Theorem}
\newtheorem{Le}[Def]{Lemma}
\newtheorem{Cor}[Def]{Corollary}
\newtheorem{Ques}[Def]{Question}

\renewcommand\qedsymbol{$\square$}

\newpage

{\setlength{\parskip}{5pt plus 25pt}
\tableofcontents
}
\newpage

\section{Introduction}

\subsection{De Rham cohomology}
De Rham cohomology is a cohomology theory offering a purely algebraic construction, based on differential forms, which recovers the Betti cohomology of a smooth algebraic variety of characteristic zero. Precisely, the algebraic de Rham cohomology groups $H^*_{DR}(X/\mathbb{C})$ are isomorphic to the singular cohomology groups with complex coefficients of the underlying complex manifold $X(\mathbb{C})$ when $X$ is a smooth complex algebraic variety. When $X$ is the affine scheme $Spec(A)$, the de Rham complex is based on the $A$-module $\Omega_{A/k}$ of Kähler differentials associated to $A$. This module represents the functor of derivations :
$$Der_k(A,M) \coloneqq Hom_{A-Mod}(\Omega_{A/k},M) \cong Hom_{CRing_{k/A}}(A, A \oplus M)$$
The universal property of the Kähler module defines the universal differential
$$d : A \to \Omega_{A/k}$$
which extends to a chain complex
$$\Omega_{A/k}^0 \xrightarrow{d} \Omega_{A/k}^1 \xrightarrow{d} \Omega_{A/k}^2 \xrightarrow{d} ...$$
where $\Omega_{A/k}^k \coloneqq \Lambda_A^k \Omega_{A/k}$. The algebraic de Rham cohomology $H^*_{DR}(X/k)$ is then defined as the hypercohomology of the de Rham complex.

The HKR theorem, originally proven in \cite{HKR62}, connects de Rham forms and Hochschild cohomology by giving functorial isomorphisms
$$\Omega_{A/k}^i \cong HH^i(A)$$
for a smooth commutative $k$-algebra $A$ and $k$ a field of characteristic zero. The de Rham cohomology is not as well-behaved for non-smooth algebras : it needs to be replaced by a derived variant using the cotangent complex $\mathbb{L}_{A/k}$ instead of Kähler differentials. The comparision between de Rham complex and Hochschild cohomology is recovered for arbitrary algebras $A$ over a field of zero characteristic, see \cite{TV09}. In fact, we get an equivalence of commutative differential graded algebras
$$DR(A) = Sym_A(\mathbb{L}_{A/k}[1]) \simeq HH(A) = A \otimes^\mathbb{L}_{A \otimes^\mathbb{L}_k A} A$$
In characteristic zero, the equivalence of $\infty$-categories
$$\epsilon-cdga \simeq S^1-SCR$$
between graded mixed algebras and $S^1$-equivariant simplicial algebras identifies the mixed structure on $Sym_A(\mathbb{L}_{X/k}[1])$, given by the de Rham differential, and the circle action on
$$A^{S^1} \simeq A \otimes^\mathbb{L}_{A \otimes^\mathbb{L}_k A} A$$

These observations rely heavily on $\infty$-categorical notions, such as derived tensor products of simplicial rings or cotensorization of a simplicial ring by a space. We will use derived geometry in this thesis to formulate and prove our results. The de Rham algebra
$$DR(A) = Sym_A(\mathbb{L}_{A/k}[1])$$
has been central in the study of notions and theorems imported from differential geometry to the world of algebraic geometry or derived geometry. It is used to define shifted symplectic forms in order to study deformation quantization in \cite{CPTVV} and it also appears in the definition of foliations in \cite{TV20}. Using the langage of derived geometry, the de Rham algebra can be described as a mapping stack from a derived stack called the graded circle $S^1_{gr}$ :
$$\textbf{Map}(S^1_{gr},Spec(A)) \simeq Spec(Sym_A(\mathbb{L}_{A/k}[1]))$$

The graded circle $S^1_{gr}$ is a formal version of the topological circle $S^1$ which has a canonical grading, see Definition \ref{GradedCircle} and \cite{MRT20} for details. By analogy with the circle $S^1$, a mapping stack from the graded circle $S^1_{gr}$ will be called a graded loopspace.

The main purpose of this thesis is to find a similar description of the de Rham-Witt complex, as a graded loopspace in the appropriate category.

\subsection{Crystalline cohomology}

Crystalline cohomology was born to fill a gap in the theory of $l$-adic cohomologies for algebraic varieties over a field of characteristic $p$. When $l \neq p$, $l$-adic cohomology is a well-behaved Weil cohomology theory, but it is badly behaved for $l=p$. Initially, an analog of $l$-adic cohomology for $l=p$ was motivated by the Riemann hypothesis for algebraic varieties.

When $X$ is a smooth and proper algebraic variety over $\mathbb{F}_p$, the zêta function of $X$ is given by the formal series
$$Z(X,t) \coloneqq exp \left(\sum_{n \ge 1} Card(X(\mathbb{F}_{p^n})) \frac{t^n}{n}\right) \in 1+t\mathbb{Z}[[t]]$$
Pulling back to the algebraic closure of $\mathbb{F}_p$, $\bar{X}$ admits an action of the Frobenius $Fr$ and we can identify fixed points of $Fr^n$ with $\mathbb{F}_{p^n}$-points of $X$. To show the function $Z(X,-)$ is rational using the Lefschetz fixed points formula, we need a sufficiently well-behaved cohomology theory for varieties over a finite field which takes value in a finite dimensional vector space over a field of characteristic zero.

Grothendieck's idea for a definition of such a cohomology theory was as follow. Let $X$ be a smooth proper scheme over a perfect field $k$ of positive characteristic $p$. A natural candidate for the ring of coefficients of this cohomology theory is the ring of Witt vectors $W(k)$. If $X$ lifts to a smooth proper scheme $\widetilde{X}$ over $W(k)$, taking the hypercohomology of the de Rham complex of $\widetilde{X}$ over $W(k)$ is independent of the lift. This definition, although allowing easy computations for well-behaved schemes, is not satisfactory and does not define a cohomology theory for a large class of schemes.

So as to give an intrinsic definition, the notion of divided powers of an ideal in a ring was used by Berthelot to define crystalline cohomology in \cite{Ber74}. He introduced a ringed site of divided power thickenings over the Zariski site. Cohomology of the structural sheaf defines crystalline cohomology. This powerful approach to build a cohomology theory allowed for easy functorial properties, and a similar type of construction was used more recently by Bhatt and Scholze to define a more general cohomology theory for $p$-adic schemes called prismatic cohomology, see \cite{BS22}.

\subsection{De Rham-Witt complex}

The definition of the de Rham-Witt complex was motivated by the study of the Hodge filtration on crystalline cohomology. The de Rham complex being endowed with a natural filtration, the associated de Rham cohomology acquires a canonical filtration called the Hodge filtration. However, when defining crystalline cohomology of a smooth scheme as de Rham cohomology of a smooth lift to Witt vectors, the canonical filtration is dependant on the chosen lift. The de Rham-Witt complex bridges this gap as it is endowed with a canonical "slope" filtration. Illusie introduced the de Rham-Witt complex $W_n\Omega_X^\bullet$ in \cite{Ill79}. This complex was defined as an initial object in a category of elements having the same structure as the de Rham complex and also the structure naturally arising on Witt vectors. Such elements are called pro-$V$-complex, they are given by families of commutative differential graded $W_n(k)$-algebras endowed with a Verschiebung map $V$ and a Frobenius morphism $F$ satisfying compatibility conditions. A natural element in this category is $\Omega_{W_n(X)}^\bullet$, the de Rham complex of the Witt vectors, therefore there is a canonical map
$$\Omega_{W_n(X)}^\bullet \to W_n \Omega^\bullet_X$$
As the category of pro-$V$-complexes is modeled from the properties of $\Omega_{W_n(X)}^\bullet$. The de Rham-Witt complex $W_n \Omega^\bullet_X$ can be thought of as applying a left adjoint functor to $\Omega_{W_n(X)}^\bullet$, that is taking quotients and completion procedures. We will denote $W_n \Omega^\bullet_X$ the limit of the tower $W \Omega^\bullet_X$, we will call it the de Rham-Witt complex of $X$.

The construction of the de Rham-Witt complex has being recently revisited by Bhatt, Lurie and Matthew in \cite{BLM22}. In this paper, the authors give a simplified construction of this complex and we will review the main definitions and some of the results which motivated this thesis.

\begin{Def}
A Dieudonné complex $(M,d,F)$ is a cochain complex of abelian groups $(M,d)$ and $F$ a morphism of graded abelian groups $F : M \to M$ such that
$$dF = pFd$$

The category of Dieudonné complexes is denoted $\textbf{DC}$.
\end{Def}

Following \cite{BLM22}, a Dieudonné algebra $(A,d,F)$ is given by $(A,d)$ a commutative differential graded algebra and $F : A \to A$ is a morphism of graded rings satisfying compatibilities analogous to the ones defining a Dieudonné complex.

Morphisms between Dieudonné algebras are ring morphisms compatible with differentials and Frobenius structures. This defines the category of Dieudonné algebras, which we denote $\textbf{DA}$.

Forgetting the multiplication defines the forgetful functor $\textbf{DA} \to \textbf{DC}$. The following construction defines what we may call the de Rham complex of a $p$-torsion-free ring endowed with a Frobenius lift.

Let $R$ be a $p$-torsion-free commutative ring and $\phi : R \to R$ a classical Frobenius lift. Then there is a unique ring morphism $F : \Omega_R^\bullet \to \Omega_R^\bullet$ such that
\begin{itemize}
\item $F(x) = \phi(x)$ for $x\in R = \Omega_R^0$.
\item $F(dx) = x^{p-1} dx + d(\frac{\phi(x)-x^p}{p})$ for $x \in R$.
\end{itemize}
In this case, $(\Omega_R^\bullet,d,F)$ is a Dieudonné algebra.

The main purpose of this thesis is to give a geometrical interpretation of these complexes in terms of graded loopspaces, which will give a generalization of this proposition for $R$ a non-necessarily $p$-torsion-free simplicial algebra. The de Rham complex has the following universal property, which we will generalize.

Let $R$ be a $p$-torsion-free commutative ring and $\phi : R \to R$ be a classical Frobenius lift. For $A$ a $p$-torsion-free Dieudonné algebra, the construction given before defines a functorial bijection
$$Hom_{\textbf{DA}}(\Omega_R^\bullet,A) \xrightarrow{\sim} Hom_{Fr}(R,A^0)$$
where $Hom_{Fr}(R,A^0)$ is the set of morphisms $R \to A^0$ compatible with Frobenius structures.

In this thesis, we have generalized the previous constructions to simplicial rings with a Frobenius lift. The $\infty$--category of simplicial rings with a Frobenius lift being a "derived enhancement" of the $1$-category of commutative rings endowed with a classical Frobenius lift, this thesis aims at constructing a derived version of the de Rham-Witt complex.

Our constructions extend the ones previously defined in the following ways :
\begin{itemize}
\item Working in simplicial rings allows for derived tensor products and homotopy limits and colimits.
\item The constructions will be $\infty$-categorical and the functors will be $\infty$-functors.
\item The assumption of the ring to be $p$-torsion-free is dropped.
\item We outline definitions and theorems for global objects, thus defining the de Rham-Witt complex for schemes or derived stacks.
\end{itemize}

Dieudonné complexes and algebras can have the property of being saturated, defining full subcategories of $\textbf{DC}$ and $\textbf{DA}$. Furthermore there is a functorial construction called saturation making a Dieudonné complex or algebra saturated. We will define these constructions in the context of derived Dieudonné complexes and algebras.

\subsection{Witt vectors, $\delta$-rings and Frobenius lifts}

We give a short presentation of some results using $\delta$-rings or Frobenius lifts. A central notion for this thesis is that of homotopy Frobenius lift. It seems to be the appropriate structure to study $p$-adic schemes.

The most recent results on $p$-adic cohomology theories are based on the notion of $\delta$-rings, see \cite{BS22}. We also note that the section 3.7 of \cite{BLM22} gives generalizations of their definitions of de Rham-Witt complex for rings with $p$-torsion using $\delta$-rings.

\begin{Def}(\cite[Definition 3.7.4]{BLM22})
A $\delta$-cdga is a commutative differential graded algebra $(A,d)$ endowed with a morphism of graded rings $F : A \to A$ such that
\begin{itemize}
\item The groups $A^n$ vanishes for $n < 0$, that is $A$ is coconnective since we are using cohomological conventions where the differential increases the degree.
\item The pair $(A_0,\delta)$ is a $\delta$-ring.
\item For $x \in A^0$, we have $F(x)=x^p+p \delta(x)$.
\item For $x \in A^n$, we have $dF(x) = pF(dx)$.
\item For $x \in A^0$, we have $F(dx) = x^{p-1} dx + d\delta(x)$
\end{itemize}
\end{Def}

\begin{Prop}(\cite[Definition 3.7.6]{BLM22}) \label{DRAdjDelta}
Let $(R,\delta)$ be a $\delta$-ring, the de Rham complex $\Omega_R^\bullet$ is canonically a $\delta$-cdga, this defines a functor
$$\Omega_{-} : \delta-CRing \to \delta-cdga$$
which is left adjoint to the forgetful functor
$$\delta-cdga \to \delta-CRing$$
\end{Prop}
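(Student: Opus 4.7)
The plan is to construct a Frobenius endomorphism $F$ on $\Omega_R^\bullet$ equal to $\phi(x) = x^p + p \delta(x)$ in degree $0$ and sending $dx$ to $x^{p-1} dx + d\delta(x)$, then to verify the axioms of a $\delta$-cdga, and finally to extract the adjunction from the universal property of Kähler differentials. First I would define $D : R \to \Omega_R^1$ by $D(x) := x^{p-1} dx + d\delta(x)$ and realize $F|_{\Omega_R^1}$ as the unique $\phi$-linear extension of $D$. By the universal property of $\Omega_R^1$, this requires showing $D$ is a $\phi$-derivation, i.e. additive and satisfying $D(xy) = \phi(x) D(y) + \phi(y) D(x)$. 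Additivity rests on the $\delta$-ring identity
$$\delta(x+y) = \delta(x) + \delta(y) + \frac{x^p + y^p - (x+y)^p}{p}$$
combined with the calculation
$$d\bigl(\tfrac{x^p + y^p - (x+y)^p}{p}\bigr) = x^{p-1} dx + y^{p-1} dy - (x+y)^{p-1} d(x+y),$$
which I would verify by expanding and using the integrality of $\tfrac{1}{p}\binom{p}{k}$ for $0<k<p$ together with $\tfrac{k}{p}\binom{p}{k} = \binom{p-1}{k-1}$ and $\tfrac{p-k}{p}\binom{p}{k} = \binom{p-1}{k}$. This is precisely the step that removes the $p$-torsion-freeness hypothesis used in the earlier construction. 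The Leibniz identity for $D$ is then a bookkeeping exercise expanding both sides using $\delta(xy) = x^p \delta(y) + y^p \delta(x) + p\delta(x)\delta(y)$.

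Once $F$ is defined on $\Omega_R^1$, I would extend it multiplicatively to $\Omega_R^\bullet = \Lambda_R^\bullet \Omega_R^1$. The $\delta$-cdga axioms are then direct: coconnectivity, the $\delta$-ring structure on degree $0$, and the formulas $F(x) = x^p + p\delta(x)$ and $F(dx) = x^{p-1} dx + d\delta(x)$ all hold by construction. For the relation $dF = pFd$: on degree $0$, $dF(x) = d(x^p + p\delta(x)) = p(x^{p-1} dx + d\delta(x)) = pF(dx)$; on a generator $dx$ of degree $1$, $dF(dx) = d(x^{p-1} dx + d\delta(x)) = 0 = pF(d(dx))$. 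Reducing a general form to $r \cdot dx_1 \wedge \cdots \wedge dx_n$ and combining the graded Leibniz rule for $d$ with the multiplicativity of $F$ propagates the identity to all degrees. Functoriality of $R \mapsto \Omega_R^\bullet$ is immediate: a $\delta$-ring map $f : R \to R'$ induces a cdga map $\Omega_R^\bullet \to \Omega_{R'}^\bullet$ that commutes with $F$, since $f$ commutes with both $\phi$ and $\delta$ on the generators $x$ and $dx$.

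For the adjunction, given a $\delta$-cdga $A$ and a $\delta$-ring morphism $g : R \to A^0$, the universal property of $\Omega_R^\bullet$ as a cdga produces a unique cdga morphism $\tilde g : \Omega_R^\bullet \to A$ extending $g$. I would then check $\tilde g \circ F_R = F_A \circ \tilde g$ on the generators: in degree $0$ this is the $\delta$-ring compatibility of $g$, and on $dx$ it reads $g(x)^{p-1} dg(x) + d\delta(g(x)) = F_A(dg(x))$, which is precisely the $\delta$-cdga axiom for $A$ evaluated at $g(x) \in A^0$. Multiplicativity of $F_R$ and $F_A$ propagates the equality to all of $\Omega_R^\bullet$. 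The inverse bijection is simply restriction to degree $0$. The main obstacle throughout is the additivity of $D$: this is the one step that genuinely uses the full structure of a $\delta$-ring rather than merely a Frobenius lift, and it is also precisely what makes the construction go through without any $p$-torsion-freeness assumption.
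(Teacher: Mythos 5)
Your proof is correct, and since the paper simply cites this result from \cite[Definition 3.7.6]{BLM22} without reproducing an argument, there is no ``paper proof'' to contrast it with; what you wrote is exactly the direct verification the cited reference outlines. The heart of the matter is, as you identify, the additivity of $D(x) = x^{p-1}dx + d\delta(x)$: the coefficients of $Q(x,y) = \tfrac{x^p+y^p-(x+y)^p}{p}$ are integers, so $dQ$ makes sense over any $\delta$-ring, and the binomial identities $\tfrac{k}{p}\binom{p}{k}=\binom{p-1}{k-1}$, $\tfrac{p-k}{p}\binom{p}{k}=\binom{p-1}{k}$ give $dQ = x^{p-1}dx + y^{p-1}dy - (x+y)^{p-1}d(x+y)$, which is precisely what lets you drop the $p$-torsion-freeness hypothesis that Proposition \ref{FormDieud} carries. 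The $\phi$-Leibniz rule, the multiplicative extension of $F$ to $\Omega_R^\bullet$, the degree-by-degree verification of $dF = pFd$ on generators $r\,dx_1\wedge\cdots\wedge dx_n$, and the adjunction via the universal property of K\"ahler differentials all check out. One minor point worth being explicit about if you wrote this up: when verifying $\tilde g \circ F_R = F_A \circ \tilde g$, you should note that both sides are graded ring homomorphisms, so agreement on $R$ and $\{dx : x \in R\}$ (which generate $\Omega_R^\bullet$ as a graded ring) suffices — this is implicit in ``multiplicativity propagates'' but deserves a sentence.
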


The theory of prismatic cohomology is also based on the notion of $\delta$-rings through the definition of prisms.

\begin{Def}(\cite[Definition 3.2]{BS22})
A prism is a pair $(A,I)$ where $A$ is a $\delta$-ring and $I \subset A$ is an ideal defining a Cartier divisor on $Spec(A)$ such that $A$ is derived $(p,I)$-complete and $p \in I + \phi(I)A$ where $\phi : A \to A$ is the Frobenius morphism induced by the $\delta$-structure
\end{Def}

We recall a classical theorem about the de Rham-Witt complex.

\begin{Thm} (\cite[Theorem 1.1.2]{BLM22})
Let $k$ be a perfect ring of characteristic $p$. Let $\widetilde{X}$ be a smooth formal scheme over $Spf(W(k))$ with special fiber $X$. The data of a Frobenius lift $F : \widetilde{X} \to \widetilde{X}$ of the canonical Frobenius $Fr : X \to X$, if it exists, induces a natural quasi-isomorphism
$$f_{\widetilde{X},F} : \Omega^\bullet_{\widetilde{X}/W(k)} \xrightarrow{\sim} W\Omega_X^\bullet$$
\end{Thm}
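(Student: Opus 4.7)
The plan is to work Zariski-locally, recognize both sides as Dieudonné algebras, and reduce the quasi-isomorphism to a Cartier-type computation modulo $p$. Both the de Rham complex of a smooth formal scheme and the de Rham-Witt complex satisfy Zariski descent, so I would first reduce to the case $\widetilde{X} = Spf(R)$, where $R$ is a $p$-adically complete, $p$-torsion-free smooth $W(k)$-algebra equipped with the Frobenius lift $\phi$. The construction recalled just before the statement then endows $(\Omega_R^\bullet, d, F)$ with the structure of a Dieudonné algebra whose degree-zero component is $R$.

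The map $f_{\widetilde{X},F}$ should come from the universal property of the de Rham-Witt complex: $W\Omega_X^\bullet$ is the initial strict (that is, saturated and $p$-adically completed) Dieudonné algebra receiving a map from $R/p = \mathcal{O}_X$ in degree zero. I would therefore factor $f_{\widetilde{X},F}$ as
$$\Omega_R^\bullet \to \mathrm{Sat}(\Omega_R^\bullet)^\wedge \to W\Omega_X^\bullet$$
where $\mathrm{Sat}(-)^\wedge$ denotes the saturation followed by $p$-adic completion of \cite{BLM22}. The second arrow is an isomorphism by this universal property, so the theorem reduces to showing that the first arrow is a quasi-isomorphism. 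Equivalently, one must show that $\Omega_R^\bullet$, equipped with the $F$ coming from the Frobenius lift, is already close enough to being saturated and strict for the saturation procedure to introduce no new cohomology.

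This last point I would verify by reduction modulo $p$ combined with derived Nakayama: since all complexes in sight are $p$-adically complete and degreewise $p$-torsion-free, a quasi-isomorphism modulo $p$ implies an integral quasi-isomorphism. Modulo $p$, one side is $\Omega_X^\bullet$, the de Rham complex of the smooth $k$-scheme $X$, while the other is $W\Omega_X^\bullet / p$, which by Illusie's theorem is also quasi-isomorphic to $\Omega_X^\bullet$. The identification of the two via $f_{\widetilde{X},F}$ reduces, on cohomology, to checking that the operator induced by $F$ recovers the inverse Cartier operator, and this is essentially forced by the explicit formula $F(dx) = x^{p-1} dx + d(\frac{\phi(x) - x^p}{p})$ together with the Cartier isomorphism for smooth $X$.

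The main obstacle I anticipate is tracking the Frobenius structure through the saturation and completion procedures: these operations are a priori only guaranteed to preserve the underlying complex, and one must verify compatibility with both the algebra structure and with $F$ in order to invoke the universal property of $W\Omega_X^\bullet$. Smoothness of $\widetilde{X}$ enters crucially at exactly this step, since it guarantees that Kähler differentials reduce nicely modulo $p$ and that Cartier is available. Note that independence of $f_{\widetilde{X},F}$ from the choice of Frobenius lift is not claimed in the statement, so no comparison between two lifts is needed here.
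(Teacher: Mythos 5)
This theorem is recalled in the thesis as a cited background result (\cite[Theorem 1.1.2]{BLM22}) with no proof given in the paper itself, so the only meaningful comparison is with the argument in \cite{BLM22}. Your outline matches the strategy there: localize to the affine case, factor the comparison map through the saturation and completion functors, identify the completed saturation with $W\Omega_X^\bullet$ via the universal property, and then check the remaining arrow is a quasi-isomorphism by reducing modulo $p$ (where it becomes the Cartier isomorphism against Illusie's $W\Omega_X^\bullet/p \simeq \Omega_X^\bullet$) and invoking derived Nakayama on $p$-complete, $p$-torsion-free complexes.

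One step is compressed to the point of being slightly misleading. When you write that the arrow $\mathrm{Sat}(\Omega_R^\bullet)^\wedge \to W\Omega_X^\bullet$ ``is an isomorphism by this universal property,'' the universal property of $W\Omega_X^\bullet$ as the initial strict Dieudonné algebra with prescribed $B^0/VB^0$ only produces a map \emph{out} of $W\Omega_X^\bullet$; to upgrade it to an isomorphism one must separately verify that $\mathcal{W}\mathrm{Sat}(\Omega_R^\bullet)$ is strict (that part is formal) \emph{and} that its zeroth term mod $V$ is naturally identified with $R/p = \mathcal{O}_X$. This second identification is where the substantive content lies in \cite{BLM22}: it uses the smoothness of $R/p$ and a careful analysis of $\eta_p$ applied to $\Omega_R^\bullet$, and it is closely intertwined with the Cartier computation rather than preceding it cleanly. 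You do flag in your last paragraph that tracking the Frobenius and algebra structures through saturation and completion is the main obstacle, which is the right instinct; I would just caution that this obstacle is already present in the claim you labeled as ``by the universal property,'' not only in the step you isolated for further scrutiny.
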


This theorem defines a natural transformation between
$$\widetilde{X} \in fSch^{sm}_{W(k)} \mapsto \Omega^\bullet_{\widetilde{X}/W(k)} \in \mathcal{D}(X)$$
and
$$\widetilde{X} \in fSch^{sm}_{W(k)} \mapsto W\Omega_X^\bullet \in \mathcal{D}(X)$$
where $fSch^{sm}_{W(k)}$ denotes the $1$-category of smooth formal schemes over  $Spf(W(k))$ and $\mathcal{D}(X)$ is the derived $1$-category of quasi-coherent sheaves on $X$.
Therefore working on categories of schemes or commutative rings with Frobenius lifts seems fundamental.

\subsection{Derived geometry and cohomology theories}

In the past few years, the use of homotopy algebraic arguments to study cohomology theories of schemes has not stopped growing. We give a brief summary of some work related to derived algebra, or derived geometry, and the cohomology of schemes.

In the papers \cite{TV09}, \cite{MRT20} and \cite{Rak20}, a universal property is given connecting graded mixed structures with the de Rham functor.

\begin{Def}
We define the category of mixed graded simplicial algebras $\epsilon-SCR^{gr}$ as the full subcategory of
$$S^1_{gr}-dSt^{op}$$
on $S^1_{gr}$-equivariant derived stacks which are affine schemes.
\end{Def}

\begin{Thm} \label{DRAdjClassique}
The de Rham functor defines a functor to graded mixed algebras
$$SCR \to \epsilon-SCR^{gr}$$
which is left adjoint to taking the $0$-weighted part of a mixed graded simplicial algebra.
\end{Thm}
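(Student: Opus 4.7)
The plan is to construct the adjunction by exhibiting $DR(A) = Sym_A(\mathbb{L}_{A/k}[1])$ as the free graded mixed cdga over the simplicial commutative ring $A$ placed in weight zero, using the universal properties of the cotangent complex and of symmetric algebras.

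First I would unpack the two functors. For $B \in \epsilon-SCR^{gr}$, the multiplication $B^{(i)} \otimes B^{(j)} \to B^{(i+j)}$ preserves weight $0$, so $B^{(0)}$ is naturally a simplicial commutative ring, giving the right-hand functor $(-)^{(0)} : \epsilon-SCR^{gr} \to SCR$. On the other side, $DR(A)$ is a graded cdga with $\mathbb{L}_{A/k}[1]$ placed in weight $+1$, equipped with the unique mixed differential of weight $+1$ that extends, as a derivation, the universal derivation $d : A \to \mathbb{L}_{A/k}[1]$. By construction $(DR(A))^{(0)} \simeq A$, so the candidate unit $\eta_A$ is the identity, which already suggests that $DR$ will be fully faithful.

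Next I would construct the counit $\nu_B : DR(B^{(0)}) \to B$ for every $B \in \epsilon-SCR^{gr}$. The restriction of the mixed differential $\epsilon_B$ to the weight-$0$ part is, by the graded Leibniz rule, a $k$-linear derivation $B^{(0)} \to B^{(1)}$ (with the appropriate internal degree shift). The universal property of $\mathbb{L}_{B^{(0)}/k}$ yields a unique $B^{(0)}$-linear factorization $\mathbb{L}_{B^{(0)}/k}[1] \to B^{(1)}$, and the universal property of $Sym_{B^{(0)}}$ extends this to a map of graded commutative $B^{(0)}$-algebras $\nu_B : DR(B^{(0)}) \to B$. Compatibility of $\nu_B$ with the mixed differentials reduces, thanks to freeness and the derivation property, to the weight-$0$ generators, where it holds by construction. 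The triangle identities then follow: one is automatic from $\eta$ being the identity, and the other amounts to the statement that the mixed structure on $DR(A)$ is precisely the free extension of $d$, which is exactly its definition.

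To upgrade this to a genuine $\infty$-adjunction, I would reduce to the case of finitely generated polynomial $k$-algebras $P = k[x_1,\dots,x_n]$, where $\mathbb{L}_{P/k} = \Omega_{P/k}$ is a free module and the argument becomes strictly $1$-categorical. The functor $A \mapsto DR(A)$ preserves sifted colimits (since both $Sym$ and $\mathbb{L}_{-/k}$ do), so it is the left Kan extension of its restriction to polynomial algebras, and for a fixed $B$ the mapping spaces $Map_{SCR}(A, B^{(0)})$ and $Map_{\epsilon-SCR^{gr}}(DR(A), B)$ both send sifted colimits in $A$ to limits. The natural transformation above is therefore determined by its restriction to polynomial algebras, where the statement is the classical identification of $\Omega^\bullet_{P/k}$ as the free cdga with a derivation generated by $P$. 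The hard part is precisely this $\infty$-categorical coherence: one must ensure that the strict bijection at the $1$-categorical level upgrades to an equivalence of mapping spaces, which can be handled via a model-categorical presentation of $\epsilon-SCR^{gr}$ in which the mixed structure and multiplication are strict, or equivalently by a careful $\infty$-operadic argument.
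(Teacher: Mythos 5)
The paper does not actually prove Theorem \ref{DRAdjClassique}; it recalls it from \cite{TV09}, \cite{MRT20}, and \cite{Rak20}. But a meaningful comparison is with the paper's approach to the directly analogous Theorem \ref{AdjointDeRhamW}, whose proof is a one-liner: once the de Rham functor is \emph{defined} geometrically as $\mathcal{O}(\mathcal{L}^{gr}(\mathrm{Spec}\,A))$ via the graded loop space $\textbf{Map}(S^1_{gr},-)$, the adjunction against the weight-$0$ functor is formal, a consequence of the internal mapping-stack adjunction and the definitions of the categories. Your proposal takes the opposite route: define $DR(A) = \mathrm{Sym}_A(\mathbb{L}_{A/k}[1])$ algebraically, construct the mixed structure by hand as a "derivation extending $d$," and then verify unit and counit. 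This is a genuinely different approach, and in characteristic zero it works because $\epsilon$-cdgas can be strictified; but in the paper's setting (over $\mathbb{Z}_{(p)}$, with $\epsilon-SCR^{gr}$ defined as $S^1_{gr}$-equivariant affine stacks, not as dg-algebras with an operator) the algebraic route front-loads all the difficulty into the step you flag as "the hard part" and then leave open.

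That gap is not a coherence technicality you can hope to patch with "a model-categorical presentation in which the mixed structure and multiplication are strict" — no such presentation is available outside characteristic zero, which is precisely why the paper goes through affine stacks. Concretely, the assertion that $\mathrm{Sym}_A(\mathbb{L}_{A/k}[1])$ carries "the unique mixed differential of weight $+1$ that extends, as a derivation, the universal derivation $d : A \to \mathbb{L}_{A/k}[1]$" is not a definition you get for free: it is the content of the paper's Theorem \ref{classifClassique}, whose proof occupies several pages and requires a graded cell decomposition of $BS^1_{gr}$, Postnikov descent, obstruction theory, and a Koszul-duality computation of the formal Hopf map $S^3_f(2) \to S^2_f(1)$. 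Similarly, your construction of the counit $\nu_B$ invokes strict "unique factorizations" through $\mathbb{L}_{B^{(0)}/k}$ and through $\mathrm{Sym}$; in the $\infty$-category $\epsilon-SCR^{gr}$ as defined geometrically, these must be produced as contractible spaces of choices, and the "graded Leibniz rule" you use to extract a derivation from $\epsilon_B$ is not a strict operator on $B^{(0)}$ but an $S^1_{gr}$-action, so the reduction to weight-$0$ generators needs to be justified against the stack model, not against a nonexistent strict dg model. The sifted-colimit reduction to polynomial algebras is sound as far as it goes, but the identification at the 1-categorical level then needs to be promoted through exactly the same obstruction-theoretic machinery. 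In short: your outline reproduces the shape of the classical argument, but the substantive content (the mixed structure on $DR(A)$ and its universal property in the affine-stack model) is asserted rather than proved, whereas the paper's framework is designed to make the adjunction formal and to concentrate the real work in the classification theorem instead.
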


An explicit connection is also given between Hochschild cohomology and De Rham cohomology in \cite{MRT20}. Since each of these cohomology theories have geometrical avatars in derived geometry, we can obtain a global comparision between Hochschild cohomology and de Rham cohomology of a derived scheme.

Let $X$ be a derived scheme over $\mathbb{Z}_{(p)}$, the loopspace of $X$ is given by the mapping stack
$$\mathcal{L}(X) \coloneqq \textbf{Map}(S^1,X)$$
It admits a natural filtration which has as its associated graded stack the shifted tangent stack
$$T[-1]X \coloneqq Spec_X Sym_{\mathcal{O}_X}(\mathbb{L}_X [1])$$

Taking global functors defines a filtration of Hochschild cohomology of $X$ which admits as its associated graded the de Rham algebra of $X$.

We will use in this thesis the geometrical interpretation of graded derived stacks as $\mathbb{G}_m$-equivariant derived stacks and filtered derived stacks as $\mathbb{A}^1/\mathbb{G}_m$-equivariant derived stacks, see \cite{Mou19}.

In the world of $p$-adic schemes, derived crystalline cohomology has proven to be better-behaved than naive crystalline cohomology. Several papers have compared de Rham and derived crystalline cohomology, such as \cite{BDJ11} and \cite{Mon21b}. We recall a theorem of the latter.

\begin{Thm} (\cite[Theorem 5.0.1]{Mon21b})
Let
$$dR : \mathbb{E}_\infty Alg^{sm}_{\mathbb{F}_p} \to CAlg(Mod_{\mathbb{F}_p})$$
be the algebraic de Rham cohomology functor defined on the category of smooth $\mathbb{E}_\infty-\mathbb{F}_p$-algebras. Given $(A,m)$ an Artinian local ring with residue field $\mathbb{F}_p$, the functor $dR$ admits a unique deformation
$$dR' : \mathbb{E}_\infty Alg^{sm}_{\mathbb{F}_p} \to CAlg(Mod_{A})$$
This deformation $dR'$ is unique up to unique isomorphism. Furthermore, this deformation is given by crystalline cohomology.
\end{Thm}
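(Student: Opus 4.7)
The plan is to prove existence and uniqueness separately, with derived crystalline cohomology furnishing the existence and a rigidity argument giving uniqueness. The guiding principle is that any sifted-colimit-preserving functor on $\mathbb{E}_\infty Alg^{sm}_{\mathbb{F}_p}$ is determined by its restriction to polynomial algebras, on which de Rham cohomology is explicitly computable.

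For existence, I would lift each smooth $\mathbb{E}_\infty$-$\mathbb{F}_p$-algebra $R$ to a flat derived lift $\widetilde{R}$ over $W(\mathbb{F}_p) = \mathbb{Z}_p$ via left Kan extension from polynomial algebras, and define
$$dR'(R) := \bigl(\mathrm{dR}_{\widetilde{R}/\mathbb{Z}_p}\bigr) \otimes^{\mathbb{L}}_{\mathbb{Z}_p} A,$$
where the first factor is the (derived, $p$-completed) de Rham complex of the lift. Berthelot's theorem together with its derived enhancement identifies this with derived crystalline cohomology, and the reduction map $A \to \mathbb{F}_p$ produces a canonical equivalence $dR' \otimes_A \mathbb{F}_p \simeq dR$. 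This both supplies a deformation and identifies it with crystalline cohomology.

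For uniqueness, I would filter $A$ by powers of its maximal ideal to obtain a tower of square-zero extensions $A_k \twoheadrightarrow A_{k-1}$ whose kernel $I_k$ is a finite-dimensional $\mathbb{F}_p$-vector space, and show by induction that the space of deformations from $A_{k-1}$-coefficients to $A_k$-coefficients is contractible. This is a derived deformation-theoretic problem whose tangent complex $T_{dR}$ can be computed by evaluation on the compact projective generators, namely the polynomial algebras $\mathbb{F}_p[x_1, \ldots, x_n]$. On these, $dR$ is quasi-isomorphic to the classical Koszul--de Rham complex, for which one can analyze the obstruction and automorphism groups directly.

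The main obstacle will be this rigidity step on polynomial algebras: one must show that the classical de Rham complex of a polynomial algebra, viewed as an $\mathbb{E}_\infty$-$\mathbb{F}_p$-algebra with all its natural functorial and symmetric-monoidal structure, admits no non-trivial infinitesimal $\mathbb{F}_p$-linear self-deformations. By symmetric monoidality this reduces to the case $\mathbb{F}_p[x]$, where the Koszul complex $\mathbb{F}_p[x] \xrightarrow{d} \mathbb{F}_p[x] \cdot dx$ is simple enough for explicit verification. Once the vanishing is secured, the inductive step forces uniqueness of the deformation over $A$, and since the crystalline construction above supplies one, the two must agree, completing the proof.
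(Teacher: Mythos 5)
This statement is a theorem cited from \cite[Theorem 5.0.1]{Mon21b} (Mondal, \emph{$G_a^{\mathrm{perf}}$-modules and de Rham Cohomology}); the present paper merely quotes it in the introduction and supplies no proof, so there is no internal argument to compare against. I will instead assess your proposal on its own merits.

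Your overall strategy (existence via a crystalline/Berthelot construction, uniqueness via an inductive obstruction argument along the $\mathfrak{m}$-adic filtration of $A$) is reasonable, but the step ``By symmetric monoidality this reduces to the case $\mathbb{F}_p[x]$'' hides a genuine gap. Two separate issues arise. First, polynomial algebras do \emph{not} generate the category of smooth $\mathbb{F}_p$-algebras under sifted colimits; smooth algebras include \'etale extensions and localizations of polynomial rings, so one would need an additional \'etale descent input to reduce to polynomial algebras at all. Second, even on polynomial algebras the deformation being controlled is not that of the single $\mathbb{E}_\infty$-$\mathbb{F}_p$-algebra $dR(\mathbb{F}_p[x])$ but that of a symmetric monoidal \emph{functor}, which carries far more structure: the $\mathbb{E}_\infty$-coalgebra structure on $dR(\mathbb{F}_p[x])$ induced by $\mathbb{F}_p[x]\to\mathbb{F}_p[x]\otimes\mathbb{F}_p[x]$, the action of the monoid of polynomial self-maps $x\mapsto p(x)$, and the Frobenius endomorphism with its Cartier descent. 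The relevant obstruction theory lives in a Hochschild-type cohomology of this entire diagram, and the contractibility of that space is exactly what must be proven. Mondal's actual proof sidesteps all of this by giving a geometric description of $dR$ as pushforward along a stacky quotient built from $\mathbb{G}_a^{\mathrm{perf}}$ and deducing rigidity from the structure of quasi-coherent sheaves on that stack, rather than from explicit obstruction vanishing on polynomial rings. Your sketch would need to supply the descent argument and the genuinely multi-structured rigidity computation before it could be considered complete.
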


We also note the importance of derived $p$-completions instead of classical $p$-completions used in \cite{BS22} and \cite{BDJ11}.

\paragraph*{Related work}
We would like to mention connections between this thesis and work in progress of Antieau. The following principle seems to central : graded objects defined by geometric constructions should be naturally associated graded to canonical HKR-type filtrations defined geometrically. As the graded circle is the associated graded of the affinization of the topological circle, any geometrical object defined with the graded circle $S^1_{gr}$ should satisfy this heuristic. This philosophy will probably be correct for our construction of de Rham-Witt complex as a graded loopspace and also to the definition of foliations which relies on the graded circle and graded mixed complexes. Indeed, in Section \ref{futuPers}, we will outline the construction of an analogue of the Hochschild cohomology complex, based on a crystalline topological circle, which should admit a filtration which has as its associated graded the de Rham-Witt complex. This Hochschild cohomology complex seems to be a new and interesting object to study.

\subsection{Content of the thesis}
We present the results of this thesis and some intuitions behind the main definitions and theorems. We will work over the base commutative ring $k=\mathbb{Z}_{(p)}$, therefore simplicial algebras will be simplicial $\mathbb{Z}_{(p)}$-algebras and derived stacks will be derived stacks over $Spec(\mathbb{Z}_{(p)})$.

\subsubsection{Frobenius lifts and graded loopspaces}

\begin{Def}
We define the category of derived stacks with a derived, or homotopy, Frobenius lift $dSt^{Fr}$ as the pullback of categories
$$dSt^{endo} \times_{dSt_{\mathbb{F}_p}^{endo}} dSt_{\mathbb{F}_p}$$
where the functor $dSt_{\mathbb{F}_p} \to dSt_{\mathbb{F}_p}^{endo}$ is the canonical functor adjoining the Frobenius endomorphism to a derived stack on $\mathbb{F}_p$. Similarly, we define the category of graded derived stacks endowed with a derived Frobenius lift $dSt^{gr, Fr}$ as the pullback of categories
$$dSt^{gr,endo} \times_{dSt_{\mathbb{F}_p}^{endo}} dSt_{\mathbb{F}_p}$$
where the forgetful functor $dSt^{gr,endo} \to dSt_{\mathbb{F}_p}^{endo}$ is given by taking $0$-weights, that is $\mathbb{G}_m$-fixed points, and taking the fiber on $\mathbb{F}_p$.
\end{Def}

\begin{Rq}
Similarly, we define the category of simplicial algebras endowed with a derived Frobenius lift $SCR^{Fr}$ and the category of graded simplicial algebras endowed with a derived Frobenius lift $SCR^{gr,Fr}$.
\end{Rq}

As the classical definition of Dieudonné module encodes the structure of the de Rham complex of a smooth commutative ring $A$ endowed with a classical Frobenius lift, our notion of mixed graded Dieudonné modules is modeled on
$$DR(A) = Sym_A \mathbb{L}_A[1]$$
which is naturally a mixed graded complex. The usual compatiblity
$$dF = pFd$$
becomes
$$\epsilon F = pF \epsilon$$

A central object of our construction is the "crystalline graded circle" denoted $S^1_{gr}$ which is a group in $dSt^{gr,Fr}$. It is the affine stack
$$S^1_{gr} \simeq Spec^\Delta(\mathbb{Z}_{(p)} \oplus \mathbb{Z}_{(p)}[-1])$$
where $\mathbb{Z}_{(p)} \oplus \mathbb{Z}_{(p)}[-1]=\mathbb{Z}_{(p)}[\eta]$ is the denormalized cosimplicial algebra of a square zero extension commutative differential graded algebra. Sending $\eta$ to $p \eta$ defines the Frobenius structure on this stack. For details on the functor $Spec^\Delta(-)$ and affine stacks, see \cite{To06}. Now interpreting mixed graded modules as quasi-coherent sheaves in
$$QCoh(BS^1_{gr})$$
the forgetful functor along $[p]$ is given by
$$(M,d,\epsilon) \in QCoh(BS^1_{gr}) \mapsto (M,d,p\epsilon) \in QCoh(BS^1_{gr})$$
Therefore we will see a graded mixed Dieudonné complex as a colax fixed points of $[p]^*$, that is a pair $(M,f)$ where $M$ is a mixed graded complex and $f$ is morphism of mixed graded complexes
$$f : [p]^*M \to M$$

\begin{Rq}
The category of quasi-coherent complexes $QCoh(BS^1_{gr})$ is interpreted with $S^1_{gr}$ seen as a graded stack, therefore this category is implicitly defined as
$$QCoh(B(S^1_{gr} \rtimes \mathbb{G}_m))$$
See Remark \ref{S1grH} for details.
\end{Rq}

\begin{Def}
We define the category of mixed graded Dieudonné complexes, also called derived Dieudonné complexes, by
$$\epsilon-D-Mod^{gr} \coloneqq CFP_{[p]^*}(\epsilon-Mod^{gr})$$
where $CFP_{[p]^*}(\epsilon-Mod^{gr})$ denotes the category of colax fixed points in $\epsilon-Mod^{gr}$.
\end{Def}

The category of mixed graded Dieudonné complexes admits the following alternative description.

\begin{Prop} (Proposition \ref{TwoDefDieudComplex})
We have a natural identifications
$$\epsilon-D-Mod^{gr} \simeq (k[\epsilon],[p])-Mod^{gr,endo}$$
where $[p]$ is the canonical endomorphism on $k[\epsilon]$, given by
$$\epsilon \in k[\epsilon] \mapsto p \epsilon \in k[\epsilon]$$
The object $(k[\epsilon],[p])$ is seen here as a commutative algebra object in $Mod^{gr,endo}$.
\end{Prop}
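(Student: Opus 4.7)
The plan is to unravel both sides of the claimed equivalence and check that they describe the same data. I would first identify the category $\epsilon\text{-}Mod^{gr}$ of mixed graded modules with the category of graded modules over the square-zero graded algebra $k[\epsilon]$ (with $\epsilon$ of the appropriate bidegree, namely weight $1$ and cohomological degree $-1$). Under this identification, the endofunctor $[p]^*$ described in the preceding remark---sending $(M, \epsilon)$ to $(M, p\epsilon)$---is exactly the restriction of scalars along the ring endomorphism $[p] : k[\epsilon] \to k[\epsilon]$, $\epsilon \mapsto p\epsilon$. By definition, an object of $CFP_{[p]^*}(\epsilon\text{-}Mod^{gr})$ is then a pair $(M, F)$ with $M$ a graded $k[\epsilon]$-module and $F : [p]^*M \to M$ a morphism of $k[\epsilon]$-modules; unpacking the $k[\epsilon]$-linearity recovers the Dieudonné-type compatibility $\epsilon F = p F \epsilon$ that matches the introductory translation of $dF = pFd$ into $\epsilon F = pF\epsilon$.

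Next, I would unpack the right-hand side. The pair $(k[\epsilon], [p])$ is a commutative algebra object in the symmetric monoidal category $Mod^{gr,endo}$, whose tensor product sends $(M,f) \otimes (N,g)$ to $(M \otimes N, f \otimes g)$. A module over $(k[\epsilon], [p])$ in $Mod^{gr,endo}$ is a pair $(M, F) \in Mod^{gr,endo}$ equipped with an action $(k[\epsilon], [p]) \otimes (M, F) \to (M, F)$ in $Mod^{gr,endo}$. Forgetting the endomorphism structure, the action is a $k[\epsilon]$-module structure on $M$; the additional requirement that the action map be a morphism in $Mod^{gr,endo}$---i.e.\ that it intertwine $[p] \otimes F$ with $F$---unpacks to exactly the same Dieudonné relation between $F$, $\epsilon$ and the factor of $p$ as on the colax side.

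The equivalence then follows: in both descriptions, an object is a graded $k[\epsilon]$-module $M$ together with a graded endomorphism $F : M \to M$ satisfying the Dieudonné relation, and morphisms are $k[\epsilon]$-linear graded maps intertwining the two $F$'s. The main obstacle is the bookkeeping of conventions: I must verify that the orientation of colax fixed points and the direction of the action-compatibility in $Mod^{gr,endo}$ produce the same relation with the correct factor of $p$. The underlying structural input is the general categorical principle that colax fixed points of restriction of scalars along a ring endomorphism $\phi : A \to A$ are equivalent to modules over the algebra object $(A, \phi)$ inside the ambient category of objects with an endomorphism. The proposition is an instance of that principle for $A = k[\epsilon]$ and $\phi = [p]$; once the principle is verified once by unwinding definitions, the rest, including the compatibility with the $\infty$-categorical and symmetric monoidal structure on both sides, is automatic.
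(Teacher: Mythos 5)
Your approach is the same as the paper's---unwind both descriptions and match the data---but the step you explicitly defer (``the main obstacle is the bookkeeping of conventions'') is exactly where the argument breaks, and the ``general categorical principle'' you invoke to dispose of it is stated in the wrong direction.

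Carry out the verification. A colax fixed point $F \colon [p]^*M \to M$ is a $k[\epsilon]$-linear map $(M, p\epsilon_M) \to (M, \epsilon_M)$, so $F(p\epsilon_M m) = \epsilon_M F(m)$, i.e.\ $\epsilon F = pF\epsilon$---the Dieudonné relation, as the remark following the definition records. On the other hand a module over the algebra object $(k[\epsilon], [p])$ in $Mod^{gr,endo}$ is a pair $(M,F)$ together with an action $a \colon k[\epsilon]\otimes M \to M$ satisfying $F \circ a = a \circ ([p]\otimes F)$; evaluating at $\epsilon \otimes m$ gives $F(\epsilon_M m) = p\,\epsilon_M F(m)$, i.e.\ $F\epsilon = p\epsilon F$. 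These are different relations. In general, modules over $(A,\phi)$ in $\mathcal{C}^{endo}$ are the same as $\phi$-semilinear endomorphisms, $F(am) = \phi(a)F(m)$, which are exactly \emph{lax} fixed points $M \to \phi^*M$ of the restriction functor $\phi^*$, not colax fixed points $\phi^*M \to M$ (those instead satisfy $F(\phi(a)m) = aF(m)$). So the clean categorical statement available to you is $LFP_{[p]^*}(\epsilon\text{-}Mod^{gr}) \simeq (k[\epsilon],[p])\text{-}Mod^{gr,endo}$, not the $CFP$ version.

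Your proposal inherits this tension from the source: the paper's own one-line proof closes by calling a map $M \to [p]^*M$ a ``colax fixed point structure,'' although by the paper's own Definition~\ref{defLFP}/Remark~\ref{CFPDef} a morphism in that direction is a \emph{lax} fixed point. This is a sign that the source/target direction is mixed up somewhere, and the mismatch you skipped is precisely the symptom. To close the gap you must either replace $CFP$ by $LFP$ in the definition of $\epsilon\text{-}D\text{-}Mod^{gr}$ (changing the sign of the Dieudonné relation accordingly), or keep $CFP$ and reformulate the right-hand side---for instance as comodules over the coalgebra object $(k[\epsilon],[p])$, whose coaction compatibility $c\circ F = ([p]\otimes F)\circ c$ does unpack to $\epsilon F = pF\epsilon$. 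As it stands, the sentence ``unpacks to exactly the same Dieudonné relation'' is an assertion, not a verification, and when you perform the verification it fails.
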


\begin{Def}
Motivated by the previous proposition, we define mixed graded Dieudonné stacks, also called derived Dieudonné stacks, by
$$\epsilon-D-dSt^{gr} \coloneqq S^1_{gr}-dSt^{gr,Frob}$$

This notion gives a definition of Dieudonné structures on global objects, such as derived schemes or derived stacks.

We also define mixed graded Dieudonné simplicial algebras, also called Dieudonné simplicial algebras, by
$$\epsilon-D-SCR^{gr} \coloneqq (S^1_{gr}-dAff^{gr,Frob})^{op}$$
In this definition, by abuse of notation, $S^1_{gr}-dAff^{gr,Frob}$ is the category of elements of $S^1_{gr}-dSt^{gr,Frob}$ which have as an underlying derived stack an affine derived scheme. An element of $S^1_{gr}-dAff^{gr,Frob}$ can be thought of as a morphism $X \to BS^1_{gr}$, in the topos $dSt^{gr,Frob}$, which is relatively affine.

A description of the category of derived Dieudonné stacks in terms of strict models, for example using a model category, seems difficult. Therefore, the use of derived stacks and the language of $\infty$-categories is essential in order to define these objects. Similarly, in \cite{Rak20}, the use of $\infty$-monads and homotopical algebra is essential to the description of the universal property of the de Rham complex.

The definition of derived Dieudonné stack is fundamental for the study of the multiplicative structure on the de Rham-Witt complex, however it adds a considerable amount of difficulty by requiring the use of the $\infty$-categories of simplicial rings and derived stacks.

The graded stack $S^1_{gr}$ admits an action from $\mathbb{G}_m$, which is identified with the grading structure on $S^1_{gr}$, it also has a natural action of $\mathbb{N}$ induced by its Frobenius lift structure, therefore we can form the semi-direct product $(S^1_{gr} \rtimes (\mathbb{N} \times \mathbb{G}_m))$. We notice that the category $S^1_{gr}-dSt^{gr,endo}$ identifies with $(S^1_{gr} \rtimes (\mathbb{N} \times \mathbb{G}_m))$-equivariant derived stacks, that is derived stacks over $B(S^1_{gr} \rtimes (\mathbb{N} \times \mathbb{G}_m))$. Furthermore, the category $QCoh(B(S^1_{gr} \rtimes (\mathbb{N} \times \mathbb{G}_m))$ identifies with $\epsilon-D-Mod$. These identifications can be seen as geometrical interpretations of Dieudonné algebras and complexes.
\end{Def}

We will then construct the functor of functions on a Dieudonné derived stack
$$C : \epsilon-D-dSt^{gr,op} \to \epsilon-D-Mod^{gr}$$
refining the usual functor of functions
$$C : dSt \to Mod$$

\begin{Def}
Let $X$ a derived scheme endowed with a Frobenius lift. We define the Frobenius graded loopspaces on $X$ as the mapping stack internal to the category $dSt^{Fr}$ :
$$\mathcal{L}^{gr,Fr}(X) \coloneqq \textbf{Map}_{dSt^{Fr}}(S^1_{gr},X)$$
where $S^1_{gr}$ is endowed with its canonical Frobenius action. It is a graded derived stack endowed with a Frobenius lift.

The canonical point $* \to S^1_{gr}$ defined by the augmentation $\mathbb{Z}_{(p)}[\eta] \to \mathbb{Z}_{(p)}$, is a morphism of graded derived stacks with Frobenius structures and induces a morphism of graded derived stacks with Frobenius structures
$$\mathcal{L}^{gr,Fr}(X) \to X$$
\end{Def}

The following proposition and corollary asserts that the crystalline graded circle is the correct analogue of the graded circle in order to recover the de Rham-Witt complex.

\begin{Prop} (Proposition \ref{ComputeLoopSpace})
Let $(X,F)$ be a affine derived scheme endowed with a Frobenius structure. We write $X = Spec(C)$. The graded Frobenius loopspace's underlying graded stack identifies with the shifted linear stack:
$$U \mathcal{L}^{gr,Fr}(X) \simeq Spec_X Sym_{\mathcal{O}_X}\left(\mathbb{L}_{(X,F)}^{tw}[1]\right)$$
where $Sym$ denotes the free $(C,F)$-module construction of Proposition \ref{FreeCFMod}.

The $(C,F)$-module $\mathbb{L} \coloneqq \mathbb{L}_{(X,F)}^{tw}$ fits in a triangle of $(C,F)$-modules
$$\bigoplus_{\mathbb{N}} \mathbb{L}_{(C,F)} \otimes \mathbb{F}_p \to \mathbb{L} \to \mathbb{L}_{(C,F)}$$
\end{Prop}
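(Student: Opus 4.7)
The plan is to unfold the mapping-stack definition of $\mathcal{L}^{gr,Fr}(X)$ through affine test objects, identify the resulting functor with derivations satisfying the $p$-twisted Frobenius condition $dF_C = pF_R d$, and then represent this functor by the twisted cotangent complex $\mathbb{L}_{(X,F)}^{tw}$ via the $Sym$--forget adjunction.

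First I would fix an affine test object $T = Spec(R)$ with Frobenius lift $F_R$ and analyse $T$-points of the underlying graded stack $U\mathcal{L}^{gr,Fr}(X)$. Using $S^1_{gr} \simeq Spec^\Delta(\mathbb{Z}_{(p)}[\eta])$ with Frobenius acting by $\eta \mapsto p\eta$, and the fact that $X = Spec(C)$ is affine, such a $T$-point dualises to a map of simplicial rings with Frobenius $C \to R[\eta] = R \oplus R[-1]$, where the Frobenius on the target acts as $F_R$ on the $R$-summand and as $p F_R$ on the $R[-1]$-summand. This morphism splits as a Frobenius-equivariant ring map $g : C \to R$ (giving the $T$-point of $X$) together with a $g$-linear derived derivation $d : C \to R[-1]$ satisfying $d \circ F_C = p F_R \circ d$.

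The next step is to recognise the functor of such twisted derivations as the one represented by $\mathbb{L}_{(X,F)}^{tw}$ in the $\infty$-category of $(C,F)$-modules. Combined with the adjunction between the free $(C,F)$-algebra functor $Sym$ of Proposition \ref{FreeCFMod} and the forgetful functor, twisted derivations $C \to R[-1]$ correspond to $(C,F)$-module maps $\mathbb{L}_{(X,F)}^{tw}[1] \to R$, hence to $(C,F)$-algebra maps $Sym_{\mathcal{O}_X}(\mathbb{L}_{(X,F)}^{tw}[1]) \to R$. Varying $T$, these are exactly the $T$-points of $Spec_X Sym_{\mathcal{O}_X}(\mathbb{L}_{(X,F)}^{tw}[1])$, which gives the desired identification.

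For the triangle, the forgetful functor from twisted $(C,F)$-modules to Frobenius-equivariant (untwisted) $(C,F)$-modules induces a canonical map $\mathbb{L}_{(X,F)}^{tw} \to \mathbb{L}_{(C,F)}$; its fiber parameterises twisted derivations whose underlying derivation vanishes. Iterating the relation $d F_C = p F_R d$ shows that each Frobenius iterate $F_C^n$ contributes a correction which is annihilated by $p$, and unwinding the colax fixed-point presentation of $\epsilon-D-Mod^{gr}$ assembles these contributions into the direct sum $\bigoplus_{\mathbb{N}} \mathbb{L}_{(C,F)} \otimes \mathbb{F}_p$. The main obstacle will be precisely this last step: pinning down the shape of the fiber requires a careful bar-type computation inside the colax fixed-point category, and this is where most of the technical work lies, while everything else is a formal manipulation of adjunctions and universal properties.
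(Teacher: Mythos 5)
Your opening move --- unfolding $T$-points of $U\mathcal{L}^{gr,Fr}(X)$ and reading off a Frobenius-compatible derivation --- is in the same spirit as the paper, but it already glosses over a nontrivial point. You write the dualised $T$-point as a simplicial ring map $C \to R[\eta] = R \oplus R[-1]$ with $\eta$ in degree $1$; such an object is coconnective, hence not a simplicial commutative ring, and in any case the mapping stack is probed against the scheme-affinization $\mathcal{O}_{SCR}(S^1_{gr} \times T)$, not against $R \otimes C_\Delta(S^1_{gr})$. Identifying this scheme-affinization as a connective square-zero extension of $R$ is exactly what Proposition \ref{GradedCirclePushout} supplies, by presenting $S^1_{gr}$ as the pushout $* \sqcup_{Spec(\mathbb{Z}_{(p)}[\rho])} *$ of genuine affine schemes and showing that this pushout remains a pushout against affine targets after crossing with $Spec(B)$; your sketch assumes this silently.

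The substantive gap is the triangle, which you flag as the hard part but do not prove. Two problems. First, the category you invoke is wrong: $\epsilon-D-Mod^{gr}$ (mixed graded Dieudonn\'e complexes, colax fixed points of $[p]^*$) plays no role here; the triangle lives in $(C,F)$-modules over a simplicial ring with endomorphism. Second, and more fundamentally, the $\mathbb{F}_p$-correction does not arise from iterating $d F_C = p F_R d$. It arises from the \emph{homotopy datum} inside a derived Frobenius lift: an endomorphism together with a chosen path mod $p$ to the canonical Frobenius. The paper isolates this in a lemma inside the proof of Proposition \ref{ComputeLoopSpace}: the fiber of the forgetful map $Map_{(A,F,h)-Mod^{Fr}}((M,u),(N,v)) \to Map_{(A,F)-Mod^{endo}}((M,u),(N,v))$ is $Map_{C-Mod}(M_p,N)$. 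Combined with the reduction to free-Frobenius test objects via the graded Witt vector adjunction and the equivalence $(A,F)-Mod^{endo} \simeq (A,F,h)-Mod^{Fr}$ of Proposition \ref{FrEndoModequi}, this produces the $\otimes\,\mathbb{F}_p$ factor, while the $\bigoplus_{\mathbb{N}}$ term comes from the shift operator on $B_{-1}^{\mathbb{N}}$ in the endomorphism direction as in Proposition \ref{Endendo}. Your proposal supplies neither this lemma nor a substitute for it, so the triangle --- the actual content of the statement --- remains unjustified.
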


\begin{Cor} (Corollary \ref{CorGrLoopSpaceUnderlying})
With the notation of the previous proposition, the Beilinson truncated derived stack associated to $\mathcal{L}^{gr,Fr}(X)$ is given, after moding out by the $p$-torsion, by
$$Spec(Sym(\Omega_C[1]))$$
where the induced endomorphism is induced by
$$\frac{dF}{p} : \Omega_C[1] \to \Omega_C[1]$$
\end{Cor}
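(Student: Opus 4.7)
The plan is to deduce the corollary directly from the preceding proposition by tracking two operations: the quotient by $p$-torsion and the Beilinson truncation. Since the proposition gives
$$U\mathcal{L}^{gr,Fr}(X) \simeq Spec_X Sym_{\mathcal{O}_X}(\mathbb{L}^{tw}[1]),$$
it suffices to understand the effect of these operations on $\mathbb{L}^{tw}$ and on the Frobenius endomorphism it carries, since the formation of $Sym$ and of $Spec$ is then formal.

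First, I would apply the $p$-torsion quotient to the exhibited triangle
$$\bigoplus_\mathbb{N} \mathbb{L}_{(C,F)} \otimes \mathbb{F}_p \to \mathbb{L}^{tw} \to \mathbb{L}_{(C,F)}.$$
The leftmost term is manifestly $p$-torsion since it is tensored with $\mathbb{F}_p$, so it is killed, yielding $\mathbb{L}^{tw}/p\text{-tors} \simeq \mathbb{L}_{(C,F)}/p\text{-tors}$. I would then identify $\mathbb{L}_{(C,F)}/p\text{-tors}$ with the ordinary Kähler differentials $\Omega_C$, using that the forgetful functor $SCR^{Fr} \to SCR$ produces a cofiber sequence on cotangent complexes whose extra term is controlled by the Frobenius structure and becomes $p$-torsion after the reduction $F \equiv Fr \pmod p$.

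Second, I need to identify the induced endomorphism on $\Omega_C$ with $dF/p$. The Frobenius action on $(X,F)$ functorially induces an action on $\mathbb{L}^{tw}$, and hence, after the $p$-torsion quotient, on $\Omega_C$. The Beilinson truncation implements the rescaling by the power of $p$ natural to the weight filtration on $BS^1_{gr}$; on the weight-one piece of $\mathbb{L}^{tw}$—where $\Omega_C$ sits after the shift $[1]$—this rescaling divides the Frobenius by $p$. Combined with the formula $F(dx) = x^{p-1}dx + d\delta(x)$ coming from the $\delta$-ring structure on $C$, which exhibits $dF$ as $p$ times a well-defined endomorphism of $\Omega_C$, this recovers precisely the operator $dF/p$.

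The main technical obstacle will be this last matching: relating the weight-dependent rescaling supplied by the Beilinson truncation to the factor $1/p$ required to turn $dF$ into $dF/p$. This requires careful bookkeeping of weights inside the semi-direct product $S^1_{gr} \rtimes (\mathbb{N} \times \mathbb{G}_m)$ and of the twist built into $\mathbb{L}^{tw}$ via the pullback $[p]^*$. Once this correspondence between the formal $p$-rescaling in the truncation and the $p$-divisibility of $dF$ is pinned down, the corollary follows by functoriality of $Sym$ together with the proposition.
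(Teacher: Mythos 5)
Your proposal correctly identifies the triangle from Proposition \ref{ComputeLoopSpace} as the central input, but it misattributes the two things the corollary actually asserts, and the gaps are not merely bookkeeping.

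The first and more serious issue is your account of where the factor $1/p$ in $\frac{dF}{p}$ comes from. You claim the Beilinson truncation "implements a rescaling by the power of $p$ natural to the weight filtration" and that this rescaling divides the Frobenius by $p$. That is not correct: the Beilinson truncation is a pure $t$-structure operation on graded mixed complexes and does not rescale anything. In the paper's argument the endomorphism $\frac{dF}{p}$ is already in place before any truncation or $p$-torsion quotient. It is produced inside the proof of Proposition \ref{ComputeLoopSpace} by Proposition \ref{FrEndoModequi}: the Frobenius-lift structure $(C,F,h)$ makes $dF$ nullhomotopic modulo $p$, which allows one to promote $(\mathbb{L}_{(C,F)},dF)$ in $(C,F,h)\text{-Mod}^{Fr}$ to $(\mathbb{L}_{(C,F)},\frac{dF}{p})$ in $(C,F)\text{-Mod}^{endo}$. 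The corollary then simply reads off this already-present endomorphism on the cofiber term of the triangle. Your appeal to the formula $F(dx)=x^{p-1}dx+d\delta(x)$ is pointing in the right direction, but wiring it to a nonexistent "weight rescaling" in the truncation is where the argument goes wrong, and you correctly anticipated this would be the hard point.

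The second issue is your handling of the $p$-torsion quotient. From the triangle $\bigoplus_\mathbb{N} \mathbb{L}_{(C,F)}\otimes\mathbb{F}_p \to \mathbb{L}^{tw} \to \mathbb{L}_{(C,F)}$ alone it does not follow that $\mathbb{L}^{tw}/p\text{-tors} \simeq \mathbb{L}_{(C,F)}$; killing the image of a $p$-torsion term need not coincide with killing all the $p$-torsion of the middle object. The paper closes this gap by observing that the first map in the triangle is null on the underlying $C$-modules, so the sequence splits and $\mathbb{L}^{tw}$ decomposes as $\mathbb{L}_C$ plus the $\mathbb{F}_p$-tensored summand; only then does the $p$-torsion quotient isolate $\mathbb{L}_C$ cleanly. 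Finally, your proposed identification of $\mathbb{L}_{(C,F)}/p\text{-tors}$ with $\Omega_C$ via "an extra cofiber term controlled by the Frobenius structure" is not needed: by Remark \ref{CotangEndo} the underlying $C$-module of $\mathbb{L}_{(C,F)}$ is just $\mathbb{L}_C$ (there is no extra term to quotient away), and $\mathbb{L}_C = \Omega_C$ under the smoothness hypothesis on $C$.
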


To construct the Dieudonné de Rham functor, we will need a classification theorem of Dieudonné structures on free simplicial algebras. This result is inspired by Proposition 2.3.1 in \cite{To20}, which was stated with a partial proof.

\begin{Thm} (Theorem \ref{classifClassique}) Let $A$ be smooth commutative $\mathbb{Z}_{(p)}$-algebra, $M$ a projective $A$-module of finite type. We define $X$ as the derived affine scheme $Spec(Sym_A(M[1]))=\mathbb{V}(M[1])$ endowed with its natural grading. The classifying space of mixed graded structures on $X$ compatible with its grading is discrete and in bijection with the set of commutative differential graded algebra structures on the graded commutative $\mathbb{Z}_{(p)}$-algebra $\bigoplus_i \wedge^i_A M [-i]$.
\end{Thm}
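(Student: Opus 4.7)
The plan is to unfold the space of mixed graded structures on $X = Spec(Sym_A(M[1]))$ into concrete linear-algebraic data by exploiting the freeness of $B := Sym_A(M[1])$, and then to match this data with the data defining a cdga structure on $\bigoplus_i \wedge^i_A M[-i]$.

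First, I would interpret mixed graded modules as modules over $k[\epsilon]$ with $\epsilon$ of weight $+1$ and cohomological degree $-1$, so that a mixed graded structure on $B$ compatible with its grading is the data of a coherent $A$-linear derivation $\epsilon : B \to B$ of the same bidegree satisfying $\epsilon \circ \epsilon \simeq 0$. Since $B$ is the free graded simplicial commutative $A$-algebra on $M[1]$ placed in weight $1$, this derivation is determined by its restriction to $M[1]$. Using the d\'ecalage identification $Sym^i_A(M[1]) \simeq \wedge^i_A M[i]$, valid for $M$ projective in any characteristic, the target $Sym^2_A(M[1]) \simeq \wedge^2_A M[2]$ is concentrated in cohomological degree $-2$, and the restriction becomes a plain $A$-linear map $\epsilon_1 : M \to \wedge^2_A M$ between projective $A$-modules of finite type. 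The relation $\epsilon^2 \simeq 0$ translates, again by freeness, into the condition $d_2 \circ d_1 = 0$, where $d_i : \wedge^i_A M \to \wedge^{i+1}_A M$ is the Leibniz extension of $\epsilon_1$.

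The dual picture on the cdga side is identical in content. The graded commutative algebra $\bigoplus_i \wedge^i_A M[-i]$ is free on $M[-1]$ placed in weight $1$, and a cdga structure extending its exterior multiplication is a graded derivation $d$ of cohomological degree $+1$ and weight $+1$ with $d^2 = 0$, hence uniquely determined by a map $M \to \wedge^2_A M$ subject to the same quadratic relation. The bijection on underlying data is therefore immediate once both sides have been identified.

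The main obstacle will be establishing discreteness, i.e., that the classifying space of mixed graded structures is $0$-truncated. I expect this to follow from the projectivity and finite-typeness assumptions on $M$: the simplicial mapping space of $A$-linear maps $M[1] \to \wedge^2_A M[2]$ reduces to the discrete abelian group $Hom_A(M, \wedge^2_A M)$, the null-homotopy witnessing $\epsilon^2 \simeq 0$ reduces to the strict vanishing of a map $M \to \wedge^3_A M$, and the higher coherences encoded in an $\infty$-categorical $\epsilon$-action reduce similarly to mapping spaces between projective $A$-modules of finite type built out of $M$, which carry no higher homotopy. The bookkeeping of these coherences and their compatibility with the strict Leibniz extension is where the real technical work lies; I would handle it by an obstruction-theoretic induction on weight, using at each step that $\wedge^j_A M$ is projective of finite type so that the relevant $\pi_{>0}$ vanish.
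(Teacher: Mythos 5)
Your basic picture of the answer is right, and the décalage identification $Sym^i_A(M[1]) \simeq \wedge^i_A M[i]$ is a real ingredient, but there are two genuine gaps.

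First, you assume the mixed structure is encoded by an \emph{$A$-linear} derivation $\epsilon : B \to B$, so that it is determined by its restriction to $M[1]$. This is not the right datum: the structure map $\epsilon$ is only required to be $\mathbb{Z}_{(p)}$-linear, and its restriction to $A \subset B$ is a nontrivial derivation $d_0 : A \to M$. In the model case $M = \Omega_A$, this restriction is precisely the universal derivation $d : A \to \Omega_A$, which is certainly not $A$-linear. Consequently your account of the data on both sides reduces to a single map $M \to \wedge^2_A M$, whereas the theorem's target — cdga structures on $\bigoplus_i \wedge^i_A M[-i]$ — also contains the derivation $A \to M$. The paper handles this by splitting $\textbf{End}^0_{gr}(X)$ via the fiber sequence
$$\textbf{End}^0_{gr,S}(X) \to \textbf{End}^0_{gr}(X) \to \textbf{Hom}^0_{gr}(X,S)$$
and computing the two contributions to $\pi_k$ separately: the relative (honest $A$-linear) part produces the map $d_1 : M \to \wedge^{k+1}M$, while the base part $\textbf{Hom}^0_{gr}(X,S)$ produces the derivation $d_0 : A \to \wedge^k M$. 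You would need to recover both.

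Second, the step you defer — reducing the $\infty$-coherent $S^1_{gr}$-action to strict data and establishing discreteness — is where the bulk of the paper's work actually lies, and your proposed ``obstruction-theoretic induction on weight'' doesn't match anything available off the shelf. The space of mixed structures is $Map_{St^{gr,*}}(BS^1_{gr}, B\textbf{End}^0_{gr}(X))$, and computing it requires a cell decomposition of $BS^1_{gr}$ as a \emph{graded} affine stack. Crucially, this is not the image of the topological cell decomposition of $\mathbb{C}P^\infty$: the paper must construct weight-shifted formal spheres $S^{2n+1}_{gr}(n+1)$ so that the gradings line up, check that the resulting diagrams are pushouts against $B\textbf{End}^0_{gr}(X)$ (Proposition \ref{PropPushAgainst}, which needs the affine-stack $t$-structure lemmas), reduce to the $4$-skeleton, and then identify the map $S^3_f(2) \to S^2_f(1)$ as a formal Hopf map via a Koszul duality argument to extract the relation $d \circ d = 0$. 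None of this is suggested by an induction on weight of $M$, and without it the discreteness claim and the quadratic relation are unjustified.
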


This key theorem will be proven using an analogue of the usual cellular decomposition of the topological space $BS^1 \simeq \mathbb{C}P^\infty$. Since $BS^1_{gr}$ is given by 
$$BS^1_{gr} \simeq Spec^\Delta(D(H^*(BS^1,\mathbb{Z}_{(p)})))$$
where $D$ is the denormalization functor,
we will need a decomposition on the denormalized cosimplicial algebra
$$D(H^*(BS^1,\mathbb{Z}_{(p)})) \simeq D(\mathbb{Z}_{(p)}[u]/u^2) $$
where $u$ is in degree $2$. This decomposition is not the one obtained from the topological decomposition therefore the construction has to be made from scratch. This is due to the fact that a pushout of topological spaces has a cosimplicial chain complex which is given by a pullback of the cosimplicial chain complexes of the spaces, however, this identification is not compatible with the natural grading. We will introduce weight-shifted formal spheres $S^{2n+1}(n+1)$ which will exhibit  $D(H^*(BS^1,\mathbb{Z}_{(p)}))$ as the appropriate limit, as a graded cosimplicial algebra.

Another fundamental ingredient of the proof will be a Postnikov decomposition to compute the various mapping stacks, using obtruction theory. We will first show that we can reduce to the $4$-skeleton $(BS^1_{gr})_{\le 4}$, this stack will be given by the homotopy cofiber of
$$S^3_{f}(2) \to S^2_f(1)$$
where $S^3_{f}(2)$ and $S^2_f(1)$ are formal version of the topological spheres $S^3$ and $S^2$ with a shift of the gradings. This map is an analogue of the Hopf fibration and we will study it using Koszul duality.
 
We will extend the previous theorem to the case of Dieudonné mixed graded structures. This theorem is the main result of this thesis.

\begin{Thm} (Theorem \ref{classiDieud}) Let $A$ be smooth commutative $\mathbb{Z}_{(p)}$-algebra, $M$ a projective $A$-module of finite type. We fix a derived Frobenius lift structure on the graded  simplicial algebra $Sym_A(M[1])$, with $M$ in weight $1$. From Proposition \ref{FLSymStr}, it is equivalent to a classical Frobenius lift $F$ on $A$ and a linear map of $A$-modules $\phi : M \to M$. We define $X$ as the derived affine scheme $Spec(Sym_A(M[1]))=\mathbb{V}(M[1])$ endowed with its natural grading, we regard it as an element of $dSt^{gr,Frob}$. The classifying space of Dieudonné mixed graded structures on $X$ compatible with its grading and Frobenius structure is discrete and in bijection with the set of Dieudonné algebra structures on the graded commutative $\mathbb{Z}_{(p)}$-algebra $\bigoplus_i \wedge^i_A M [-i]$ endowed with its natural canonical Frobenius lift structure.
\end{Thm}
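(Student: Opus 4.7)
The strategy is to refine Theorem \ref{classifClassique} by layering in the Frobenius compatibility. By definition of $\epsilon-D-dSt^{gr}$, a Dieudonné mixed graded structure on $X$ compatible with its grading and fixed Frobenius amounts to a morphism $X \to BS^1_{gr}$ in $dSt^{gr,Frob}$, where $X$ carries its given $(F,\phi)$ and $BS^1_{gr}$ carries the Frobenius induced by $\eta \mapsto p\eta$. Forgetting the Frobenius compatibility yields a restriction map from the classifying space $\mathcal{D}$ of Dieudonné mixed graded structures to the classifying space $\mathcal{C}$ of mixed graded structures analysed in Theorem \ref{classifClassique}. The plan is to show that $\mathcal{D} \to \mathcal{C}$ has discrete fibres, and that a cdga structure $d$ on $\bigoplus_i \wedge^i_A M[-i]$ representing a point of $\mathcal{C}$ lifts to $\mathcal{D}$ precisely when $(d,F)$ satisfies the Dieudonné compatibility $dF = pFd$ for the canonical Frobenius $F$ on $\bigoplus_i \wedge^i_A M[-i]$ induced by $(F,\phi)$.

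To compute these fibres, unwind the definition: a Dieudonné lift of a given mixed graded structure $\alpha : X \to BS^1_{gr}$ is, up to higher coherences, a homotopy in $dSt^{gr}$ intertwining $\alpha$ with the Frobenius endomorphisms on both sides. Here I would run the cellular/Postnikov decomposition of $BS^1_{gr}$ used in Theorem \ref{classifClassique}, but $\mathbb{N}$-equivariantly using the Frobenius $[p]$ on $S^1_{gr}$. The weight-shifted formal spheres $S^{2n+1}(n+1)$ acquire natural $\mathbb{N}$-actions for which the weight-$n$ piece is sent to itself by multiplication by $p^n$; the Frobenius-equivariant enhancement of the attaching maps, in particular of the formal Hopf map $S^3_f(2) \to S^2_f(1)$, can be read off from the Koszul-duality description used for the classical theorem.

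The main obstacle is controlling the space of higher compatibility homotopies. Since the target of the cellular decomposition consists of graded $A$-modules with discrete underlying complexes, the wedge powers $\wedge^i_A M$ being concentrated in a single degree, every mapping space that enters the obstruction calculation is a set, and the obstruction to extending a Frobenius compatibility at the $n$-th cell reduces to a single equation of maps of graded $A$-modules. For the $2$-cell, which governs the de Rham differential, this equation is exactly $dF = pFd$; higher cells produce only its consequences under the Leibniz rule, which are automatic because $F$ is a ring morphism and $d$ is a derivation for the cdga structure supplied by Theorem \ref{classifClassique}. Consequently the fibres of $\mathcal{D} \to \mathcal{C}$ are either empty or reduced to a single point.

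Combining these analyses, $\mathcal{D}$ is discrete, and the induced map to $\mathcal{C}$ identifies it with the subset of cdga differentials $d$ on $\bigoplus_i \wedge^i_A M[-i]$ for which $(d,F)$ is a Dieudonné algebra with the prescribed canonical Frobenius, which is the statement of the theorem. The hardest part will be the $\mathbb{N}$-equivariant refinement of the obstruction-theoretic calculation of Theorem \ref{classifClassique}, since one must track the $[p]$-action through the attaching maps of the weight-shifted cells; the key simplification is that $[p]$ acts as multiplication by $p^n$ in weight $n$, so the equivariant obstruction spaces collapse to the single scalar equation $dF = pFd$ at the $2$-cell.
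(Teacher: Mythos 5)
Your overall strategy is sound and arrives at the right place: compare the Dieudonné classifying space $\mathcal{D}$ with the non-Dieudonné one $\mathcal{C}$ computed in Theorem~\ref{classifClassique}, argue by discreteness, and characterise the image by the Dieudonné equation. But the route you propose differs from the paper's, and there is a genuine gap in one place.

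The gap is the reduction from Frobenius-lift structures to mere endomorphism structures. You write that a Dieudonné lift of $\alpha$ ``is, up to higher coherences, a homotopy in $dSt^{gr}$ intertwining $\alpha$ with the Frobenius endomorphisms on both sides.'' This is the right picture, but it is not automatic: a morphism in $dSt^{gr,Frob}$ carries, beyond the endomorphism compatibility, a coherence with the mod-$p$ homotopy to the canonical Frobenius on weight zero. That extra data is \emph{not} controlled by your discreteness argument, because discreteness of the mapping spaces of graded $A$-modules says nothing a priori about the weight-$0$ part of $\textbf{End}^0_{gr,endo}(X)$ over $\mathbb{F}_p$. The paper addresses this with Lemmas~\ref{End0} and~\ref{EndFrendo}: it shows $\textbf{End}^0_{gr,endo}(X)(0) \simeq *$ (using the equaliser description from Proposition~\ref{Endendo} and the already-established discreteness of $\textbf{End}_{gr}(X)(0)$), which makes the Frobenius-lift data on $\textbf{End}^0$ unique up to contractible choice and hence the mod-$p$ compatibility vacuous. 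Without some version of this lemma, your fibre computation $\mathcal{D} \to \mathcal{C}$ is not justified; you would need to prove that the forgetful map $\mathcal{D} \to \mathcal{D}'$ (from Frobenius-lift data to endomorphism data) is an equivalence before the intertwining picture becomes correct.

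On the other difference: once the Frobenius-lift issue is resolved, your plan to run the cellular decomposition of $BS^1_{gr}$ equivariantly under $[p]$ is a genuinely different route from the paper's. The paper never redoes the cell attachments $\mathbb{N}$-equivariantly; it treats Theorem~\ref{classifClassique} as a black box and then imposes the commutation with $[p]$ as an equaliser condition on the classical (discrete) mapping space, using Proposition~\ref{Endendo}. This sidesteps the delicacies of $\mathbb{N}$-equivariant (i.e.\ monoid-equivariant) obstruction theory and the equivariant structure on the formal Hopf map, which your argument would have to spell out. Your approach should work because everything in sight is discrete so the equivariance becomes a strict equation, but the paper's equaliser route is shorter for exactly that reason, and it also produces the two equations $\phi_M \circ \delta = p\,\delta \circ \phi_A$ and $(\phi_M \wedge \phi_M)\circ d = p\,d\circ \phi_M$ directly — a bit more refined than the ``single scalar equation'' you announce.
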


\begin{Cor} (Corollary \ref{CorClassDieudStr})
With the notations of Theorem \ref{classiDieud}, the graded derived affine scheme $Spec(Sym_A(\Omega_A[1]))$ admits a unique Dieudonné structure induced by the standard Dieudonné structure on $\bigoplus_{i \ge 0} \Lambda_A^i \Omega_A$.
\end{Cor}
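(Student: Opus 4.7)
The plan is to specialize Theorem \ref{classiDieud} to $M = \Omega_A$ and combine it with the classical construction of the de Rham Frobenius. Since $A$ is smooth over $\mathbb{Z}_{(p)}$, the module of Kähler differentials $\Omega_A$ is finitely generated projective, so the hypotheses of the theorem are satisfied. The fixed classical Frobenius lift $\phi : A \to A$ induces a canonical $A$-linear endomorphism of $\Omega_A$ sending $dx$ to $x^{p-1}dx + d\left(\tfrac{\phi(x)-x^p}{p}\right)$, which makes sense because $A$ is $p$-torsion-free. By Proposition \ref{FLSymStr}, this pair is exactly the data of a derived Frobenius lift on the graded simplicial algebra $Sym_A(\Omega_A[1])$, so Theorem \ref{classiDieud} applies.

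Under the bijection provided by the theorem, classifying Dieudonné mixed graded structures on $X = Spec(Sym_A(\Omega_A[1]))$ compatible with this Frobenius lift reduces to classifying Dieudonné algebra structures on the graded commutative algebra $\bigoplus_{i \ge 0} \Lambda^i_A \Omega_A[-i]$ extending the canonical Frobenius. For existence I would invoke the construction recalled in the introduction, following \cite{BLM22}: the de Rham algebra $\Omega_A^\bullet$ with the exterior differential $d$ and the Frobenius $F$ uniquely determined by $F(x) = \phi(x)$ on $A$ and $F(dx) = x^{p-1}dx + d(\tfrac{\phi(x)-x^p}{p})$ is a Dieudonné algebra. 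This produces a point in the bijection and therefore, via Theorem \ref{classiDieud}, a Dieudonné structure on $X$.

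For uniqueness, the underlying graded algebra $\Omega_A^\bullet$ is generated over $A$ by the degree-one elements $dx$ for $x \in A$, so any Dieudonné algebra structure extending the given Frobenius on $A$ is determined by the values $F(dx)$. The Dieudonné compatibility $dF = pFd$ forces $pF(dx) = d\phi(x)$, and the $p$-torsion-freeness of $A$ makes $F(dx)$ uniquely determined by this relation. Combined with the discreteness of the classifying space asserted by Theorem \ref{classiDieud}, this yields uniqueness of the Dieudonné structure on $X$.

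The main step that requires some care is checking that the canonical Frobenius lift structure on the graded algebra $\bigoplus_i \Lambda^i_A \Omega_A[-i]$ appearing in Theorem \ref{classiDieud}, once translated through Proposition \ref{FLSymStr}, is indeed the one for which the de Rham Frobenius of \cite{BLM22} is a morphism of Dieudonné \emph{algebras} and not merely a graded ring endomorphism. This is a formal compatibility check, but it is the point at which the derived theorem meshes with the classical description of the de Rham-Witt Frobenius.
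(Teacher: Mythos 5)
Your specialization of Theorem \ref{classiDieud} to $M = \Omega_A$ and the construction of the Frobenius lift data are set up correctly, and the existence argument (via the BLM construction) is fine. Note though that the map $dx \mapsto x^{p-1}dx + d\left(\tfrac{\phi(x)-x^p}{p}\right)$ is not $A$-linear but $(A,F)$-linear (equivalently, $F$-semilinear): it satisfies $F(a\omega)=\phi(a)F(\omega)$. This is exactly what Proposition \ref{FLSymStr} asks for, so the slip is purely terminological.

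The uniqueness step, however, argues for the wrong determinacy. In Theorem \ref{classiDieud} the Frobenius lift $(F,\phi)$ on $Sym_A(\Omega_A[1])$ is part of the \emph{fixed} data making $X$ an object of $dSt^{gr,Frob}$; the bijection classifies the mixed graded Dieudonné structures, i.e.\ the differential, not the Frobenius. What you show is that the Frobenius $F$ on $\Omega_A^{\ge 1}$ is determined by the differential $d$, $F$ on $A$, and the Dieudonné relation — the \cite[Proposition 3.2.1]{BLM22}-type uniqueness of the de Rham Frobenius. This does not establish that the differential is determined. Indeed the Dieudonné compatibility $\delta(F(a)) = p\,\phi(\delta(a))$ is homogeneous in $\delta$, so any scalar multiple $c\,d$ with $c\in\mathbb{Z}_{(p)}$ (including $c = 0$) again satisfies it; already for $A = \mathbb{Z}_{(p)}[x]$ with $F(x)=x^p$ the classification set is not a singleton. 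So unqualified uniqueness of the Dieudonné mixed graded structure cannot hold, and the argument you give does not close this gap. The paper states the corollary with no proof; in light of its use in Theorem \ref{ComparaisonDRW}, the intended content is most plausibly that the standard de Rham Dieudonné algebra structure on $\Omega_A^\bullet$ singles out, under the bijection of Theorem \ref{classiDieud}, a well-defined (hence ``unique'') Dieudonné mixed graded structure on $Spec(Sym_A(\Omega_A[1]))$. Under that reading your existence paragraph is the whole proof, and your uniqueness paragraph, while proving a true fact about the Frobenius, does not address the relevant question of the differential being pinned down.
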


\begin{Def}
Let $(A,F)$ be a simplicial algebra with a Frobenius lift, we define the Dieudonné de Rham complex of $(A,F)$ as
$$\mathcal{O}(\mathcal{L}^{gr,Fr}(Spec(A),Spec(F)))$$
where $\mathcal{O}$ is the functor of "simplicial functions" on derived Dieudonné stacks
$$\epsilon-D-dSt^{gr,op} \to \epsilon-D-SCR^{gr}$$
defined in the previous proposition. We denote the element we obtain $DDR(A,F)$.
\end{Def}

\begin{Thm} (Theorem \ref{ComparaisonDRW})
When $A$ is a smooth discrete algebra, $DDR(A,F)$ is naturally the derived Dieudonné algebra
$$Spec(Sym_A(\Omega_A[1]))$$
endowed with its canonical Frobenius lift and mixed graded structure.
\end{Thm}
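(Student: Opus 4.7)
The plan is to combine Proposition \ref{ComputeLoopSpace}, which computes the underlying graded stack of $\mathcal{L}^{gr,Fr}$, with the rigidity Theorem \ref{classiDieud} and its Corollary \ref{CorClassDieudStr}. Informally: the first result forces the underlying graded stack of $DDR(A,F)$ to be a shifted cotangent stack, hence a de Rham-type algebra; the second shows that any Dieudonné refinement of such an object is unique and coincides with the standard one.

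First, I identify the underlying graded derived stack of $\mathcal{L}^{gr,Fr}(X)$ for $X = Spec(A)$. By Proposition \ref{ComputeLoopSpace},
$$U \mathcal{L}^{gr,Fr}(X) \simeq Spec_X Sym_{\mathcal{O}_X}\bigl(\mathbb{L}^{tw}_{(X,F)}[1]\bigr),$$
with $\mathbb{L}^{tw}$ controlled by a triangle involving $\bigoplus_{\mathbb{N}} \mathbb{L}_{(A,F)} \otimes \mathbb{F}_p$ and $\mathbb{L}_{(A,F)}$. Since $A$ is smooth (hence discrete and flat over $\mathbb{Z}_{(p)}$), $\mathbb{L}_A \simeq \Omega_A$ is projective of finite rank; Corollary \ref{CorGrLoopSpaceUnderlying} then identifies the Beilinson-truncated, $p$-torsion-free part of this stack with $Spec(Sym_A(\Omega_A[1]))$, equipped with the Frobenius induced on weight $1$ by $\frac{dF}{p}$. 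On generators this is precisely the classical Frobenius formula $F(dx) = x^{p-1} dx + d\bigl(\tfrac{\phi(x)-x^p}{p}\bigr)$.

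Second, I apply the classification. Theorem \ref{classiDieud} with $M = \Omega_A$, together with Corollary \ref{CorClassDieudStr}, asserts that the classifying space of Dieudonné mixed graded refinements of $Spec(Sym_A(\Omega_A[1]))$ compatible with its grading and Frobenius is discrete and in bijection with Dieudonné algebra structures on $\bigoplus_i \Lambda^i_A \Omega_A$ extending the canonical Frobenius lift on $A$. There is a unique such structure, namely the standard de Rham one. By construction $DDR(A,F) = \mathcal{O}(\mathcal{L}^{gr,Fr}(X))$ is an object of $\epsilon-D-SCR^{gr}$ whose underlying graded algebra was produced in Step $1$, and the discreteness of the classifying space forces it to coincide with $Spec(Sym_A(\Omega_A[1]))$ endowed with its natural Dieudonné structure.

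The main obstacle is to make Step $1$ fully coherent at the level of Dieudonné refinements: one must show that the mixed graded Dieudonné structure induced on $\mathcal{O}(\mathcal{L}^{gr,Fr}(X))$ by the simultaneous actions of $S^1_{gr}$ and of the Frobenius $[p]$ on $S^1_{gr}$ genuinely realizes, at weight $1$, the Frobenius $\tfrac{dF}{p}$ of Corollary \ref{CorGrLoopSpaceUnderlying}, and that it extends compatibly to all higher weights. One must also verify that the Beilinson truncation and the quotient by $p$-torsion used in Corollary \ref{CorGrLoopSpaceUnderlying} respect the Dieudonné refinement in the smooth case, so that Theorem \ref{classiDieud} applies to the full object and not only to a truncation. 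Once these compatibilities are checked, the rigidity in Theorem \ref{classiDieud} eliminates any residual ambiguity and the theorem follows.
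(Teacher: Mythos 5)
Your approach matches the paper's, which literally just says ``It is a simple application of Corollary \ref{CorClassDieudStr}'' (and, in a second one-line proof block, ``It is an easy corollary of Theorem \ref{classiDieud}''). You have fleshed out what that one-liner must mean: first identify the underlying graded Frobenius stack of $\mathcal{L}^{gr,Fr}(Spec(A))$, then use the discreteness of the classifying space of Dieudonné mixed graded structures to pin down the Dieudonné refinement uniquely.

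The two caveats you raise at the end are real, not just technical hedging. Proposition \ref{ComputeLoopSpace} computes the underlying graded Frobenius stack as $Spec(Sym_{(C,F)}(\mathbb{L}^{tw}[1]))$, and $\mathbb{L}^{tw}$ sits in a triangle whose fiber $\bigoplus_{\mathbb{N}} \mathbb{L}_{(C,F)} \otimes \mathbb{F}_p$ does not vanish for $A$ smooth over $\mathbb{Z}_{(p)}$ (indeed $\Omega_A/p \neq 0$). Corollary \ref{CorGrLoopSpaceUnderlying} recovers $Spec(Sym(\Omega_A[1]))$ only after killing $p$-torsion. So to apply Corollary \ref{CorClassDieudStr}, which classifies Dieudonné structures on $Spec(Sym_A(\Omega_A[1]))$ and nothing else, one still has to argue either that the extra $p$-torsion summand is in fact trivial as a graded mixed Dieudonné piece, or that killing $p$-torsion (equivalently, passing through the Beilinson truncation compared in the subsequent theorem) is compatible with the full Dieudonné refinement. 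The paper does not spell any of this out, and you are right to flag it: the one-liner proof, as written, does not close the gap between $\mathbb{L}^{tw}$ and $\Omega_A$ before invoking the rigidity statement. Until that reduction is made precise, the argument only identifies $t_{\ge 0}(DDR(A,F))$ with the classical Dieudonné complex, which is the content of the theorem immediately following \ref{ComparaisonDRW} in the paper rather than of \ref{ComparaisonDRW} itself.
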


We will prove a theorem analoguous to Theorem \ref{DRAdjClassique} and Proposition \ref{DRAdjDelta}. It asserts that the Dieudonné de Rham functor is left adjoint to forgetting a graded mixed structure.

\begin{Thm} (Theorem \ref{AdjointDeRhamW}) The Dieudonné de Rham functor
$$SCR^{Fr} \to \epsilon-D-SCR^{gr}$$
is left adjoint to the forgetful functor
$$(0) : \epsilon-D-SCR^{gr} \to SCR^{Fr}$$
\end{Thm}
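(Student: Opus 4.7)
The plan is to prove the adjunction geometrically, by dualizing through $Spec$ and exploiting the universal property of the graded Frobenius loopspace $\mathcal{L}^{gr,Fr}(X) = \textbf{Map}_{dSt^{Fr}}(S^1_{gr}, X)$ as an internal mapping stack in $dSt^{Fr}$. Using the definition $DDR(A,F) = \mathcal{O}(\mathcal{L}^{gr,Fr}(Spec(A), F))$ together with the contravariant equivalences between affine objects of $\epsilon-D-dSt^{gr}$ and $\epsilon-D-SCR^{gr}$ (and analogously between affine objects of $dSt^{Fr}$ and $SCR^{Fr}$), the sought adjunction reduces to establishing, naturally in $B \in \epsilon-D-SCR^{gr}$ and $X = (Spec(A),F) \in dSt^{Fr}$, the equivalence
\[
\mathrm{Map}_{\epsilon-D-dSt^{gr}}(Spec(B), \mathcal{L}^{gr,Fr}(X)) \simeq \mathrm{Map}_{dSt^{Fr}}(Spec(B^{(0)}), X).
\]

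I would then invoke the universal property of the internal mapping stack: for any $T \in dSt^{Fr}$, one has $\mathrm{Map}_{dSt^{Fr}}(T, \mathcal{L}^{gr,Fr}(X)) \simeq \mathrm{Map}_{dSt^{Fr}}(T \times S^1_{gr}, X)$. Using the identification $\epsilon-D-dSt^{gr} \simeq S^1_{gr}-dSt^{gr,Frob}$ recalled in the text, the left-hand side of the desired equivalence is the mapping space under the combined action of $S^1_{gr} \rtimes (\mathbb{N} \times \mathbb{G}_m)$, where $\mathcal{L}^{gr,Fr}(X)$ inherits this action via translation, grading, and Frobenius on the source $S^1_{gr}$, whereas $X$ carries only the trivial $(S^1_{gr} \rtimes \mathbb{G}_m)$-action together with the Frobenius $F$. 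By currying, the equivariant mapping space reduces to the space of maps $Spec(B) \times S^1_{gr} \to X$ in $dSt^{Fr}$ that are diagonally $S^1_{gr}$-invariant and equivariant for the diagonal $\mathbb{G}_m$ and $\mathbb{N}$ actions.

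I then exploit the standard identification $(Y \times S^1_{gr})/S^1_{gr}^{\mathrm{diag}} \simeq Y$, realized by $(y, h) \mapsto h^{-1} \cdot y$. Because $\mathbb{G}_m$ (resp. $\mathbb{N}$) acts on $S^1_{gr}$ by scaling (resp. by Frobenius) inside the semidirect product, the residual $\mathbb{G}_m$- and $\mathbb{N}$-actions on $Y = Spec(B)$ after this quotient coincide with the original grading and Frobenius actions on $Spec(B)$. The equivariant mapping space thus identifies with the space of $\mathbb{G}_m$-equivariant and Frobenius-equivariant maps $Spec(B) \to X$, with $X$ endowed with the trivial $\mathbb{G}_m$-action. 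On the algebra side, such morphisms correspond to Frobenius-equivariant ring morphisms $\mathcal{O}(X) \to B$ landing in the $0$-weight subalgebra $B^{(0)}$, equivalently to Frobenius morphisms $(A,F) \to B^{(0)}$, giving $\mathrm{Map}_{dSt^{Fr}}(Spec(B^{(0)}), X)$ as required.

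The main obstacle is to make this equivariance bookkeeping fully rigorous in the $\infty$-categorical setting, where the three actions ($S^1_{gr}$, $\mathbb{G}_m$, and Frobenius $\mathbb{N}$) interact through the non-trivial semidirect product structure, with $\mathbb{G}_m$ scaling $\eta$ and Frobenius sending $\eta \mapsto p\eta$. One must justify the derived quotient $(Spec(B) \times S^1_{gr})/S^1_{gr} \simeq Spec(B)$ at the $\infty$-categorical level and verify that the passage to the $0$-weight subalgebra $B^{(0)}$ genuinely captures the $\mathbb{G}_m$-equivariance constraint, as opposed to a fixed-point variant that would differ from $Spec(B^{(0)})$ for graded algebras with both positive and negative weights. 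These verifications rest on the various equivalent descriptions of $\epsilon-D-dSt^{gr}$ as equivariant categories developed earlier in the paper, and amount to transporting the classical adjunction of Theorem \ref{DRAdjClassique} along the pullback diagram defining $SCR^{Fr}$ and $\epsilon-D-SCR^{gr}$.
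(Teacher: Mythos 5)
The paper's proof of this theorem consists of a single sentence: ``This is an application of the definition of the various categories and the Dieudonné de Rham functor.'' You have therefore written a considerably more detailed argument than the paper supplies. Your geometric route---reducing via $Spec$ to an equivalence of mapping spaces of Dieudonné stacks, invoking the universal property of the internal mapping stack $\mathcal{L}^{gr,Fr}(X) = \textbf{Map}_{dSt^{Fr}}(S^1_{gr},X)$, currying the $\mathcal{H}$-equivariant mapping space down to diagonally-invariant maps $Spec(B)\times S^1_{gr}\to X$, and collapsing via the quotient $(Spec(B)\times S^1_{gr})/S^1_{gr}\simeq Spec(B)$---is sound, and your computation that conjugation by $\mathbb{G}_m$ (resp.\ the Frobenius $\mathbb{N}$) inside the semidirect product restores the original grading (resp.\ Frobenius) on the quotient is correct. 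What the paper presumably intends is instead the transport argument you sketch in your last sentence: since $SCR^{Fr}$ and $\epsilon\text{-}D\text{-}SCR^{gr}$ are both pullbacks of (pairs of) $\infty$-categories, one transports the endomorphism-enriched version of the classical de Rham adjunction (Theorem \ref{DRAdjClassique}) across these pullbacks. Your geometric argument is more explicit and reveals the adjunction as the internal mapping-stack adjunction, which is conceptually illuminating; the transport argument is shorter but hides the geometry. Both are legitimate.

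Two points deserve emphasis beyond what you flag. First, the currying step needs the universal property of $\textbf{Map}_{dSt^{Fr}}(S^1_{gr},-)$ to be upgraded to an $\mathcal{H}$-equivariant statement---this is formal since $\mathcal{H}$-equivariant stacks are stacks over $B\mathcal{H}$, but it should be cited as such rather than asserted. Second, and more substantively, your geometric computation lands on the \emph{naive} $0$-weight piece $B(0)$, i.e.\ the right adjoint of the trivial-grading functor $triv : SCR^{Fr}\to SCR^{gr,Fr}$, not on $B^{=0}$, the left adjoint appearing in Proposition \ref{DefWeight0SCR}. You flag this issue, but it is worth noting that for the adjunction to have the stated handedness---$DDR$ \emph{left} adjoint to $(0)$---the functor $(0)$ must indeed be the \emph{right} adjoint of $triv$, i.e.\ $B\mapsto B(0)$. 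So your argument lands on the correct functor; the potential mismatch is only with the paper's loose usage of ``$0$-weights'' elsewhere (where it means the fixed-point functor $(-)^{=0}$). Since $\mathcal{L}^{gr,Fr}(X)$ is negatively graded (as used in the proof of Proposition \ref{ComputeLoopSpace}), and a morphism $DDR(A,F)\to B$ factors through the negatively-graded part, the two notions coincide on the image of $DDR$, so the ambiguity is harmless here---but it should be stated explicitly in a complete write-up.
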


We also compare more precisely our construction of Dieudonné de Rham functor with the one defined in \cite{BLM22}. We define a t-structure inspired by Beilinson t-structure, see \cite[\S 3.3]{Rak20}, and prove truncation with respect to this t-structure of our Dieudonné de Rham functor recovers the one in \cite{BLM22} for non-necessarily smooth algebras.

\begin{Def}
We recall the definition of the Beilinson t-structure on graded mixed complex. Let $M$ be a graded mixed complex, $M$ is said to be t-connective for the t-structure when $H^i(M(n))=0$ for $i>-n$, $M$ is said to be t-coconnective for the t-structure when $H^i(M(n))=0$ for $i<-n$.
\end{Def}

A derived Dieudonné complex or algebra is said to be t-truncated when its underlying graded mixed complex is. We will abuse terminology and call these constructions "t-structures".

\begin{Prop} (Proposition \ref{HeartDMod})
The heart of $\epsilon-D-Mod$ identifies with the abelian category of Dieudonné complexes of \cite{BLM22}.
\end{Prop}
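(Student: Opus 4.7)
The plan is to proceed in two steps: first, identify the heart of the Beilinson t-structure on $\epsilon-Mod^{gr}$ with the category of ordinary cochain complexes of abelian groups; second, observe that the colax $[p]^*$-fixed point datum imposes on such a heart object exactly the Dieudonné relation $dF = pFd$.

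First, I would spell out the Beilinson heart of $\epsilon-Mod^{gr}$. An object $M = \bigoplus_n M(n)$ lies in the heart if and only if $H^i(M(n)) = 0$ for $i \neq -n$, so that each weight-$n$ piece is cohomologically concentrated in degree $-n$. The diagonal functor
$$
M \longmapsto \overline{M}^\bullet, \qquad \overline{M}^n := H^{-n}(M(n)),
$$
with differential induced by $\epsilon : M(n) \to M(n+1)$ on cohomology, is an equivalence onto the category of cochain complexes of abelian groups. A quasi-inverse is provided by $(N^\bullet, d) \mapsto \bigoplus_n N^n[n]$, concentrated on the diagonal with mixed differential induced by $d$. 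This is essentially the content of the Beilinson-style identification recalled in \cite[\S 3.3]{Rak20}.

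Second, I would incorporate the colax fixed point structure. An object of $\epsilon-D-Mod^{gr}$ is a pair $(M, f)$ with $f : [p]^* M \to M$ a morphism of graded mixed complexes, equivalently a weight-zero, cohomological degree-zero graded endomorphism $f : M \to M$ satisfying $\epsilon \circ f = p\, f \circ \epsilon$. When $M$ lies in the heart, $f$ descends on diagonal cohomology to an endomorphism $F$ of $\overline{M}^\bullet$, and the compatibility with $\epsilon$ becomes precisely the Dieudonné relation $dF = pFd$. Conversely, from a classical Dieudonné complex $(N^\bullet, d, F) \in \textbf{DC}$, the graded mixed complex $\bigoplus_n N^n[n]$ equipped with the diagonal extension of $F$ is automatically a colax $[p]^*$-fixed point by exactly this same relation, providing the inverse functor.

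The main point requiring care is checking that a colax $[p]^*$-fixed point datum on a heart object actually lifts to a strict chain-level endomorphism satisfying $\epsilon f = p f \epsilon$ on the nose, and not merely up to coherent homotopy. Using the explicit diagonal representatives $\bigoplus_n N^n[n]$ makes this automatic, since these objects are rigid enough to have no higher homotopies, and it also makes the verification of full faithfulness and functoriality transparent; full faithfulness then reduces to the analogous statement for the plain Beilinson heart of $\epsilon-Mod^{gr}$.
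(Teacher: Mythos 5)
Your proposal is correct and follows essentially the same route the paper takes, only spelled out in much more detail: the paper's own proof is the single sentence ``This follows from the identification without Dieudonn\'e structures, Proposition~\ref{HeartBeilClass},'' leaving the reader to perform exactly the two steps you carry out --- (i)~use the identification of the Beilinson heart of $\epsilon\text{-}Mod^{gr}$ with ordinary cochain complexes, and (ii)~observe that a colax $[p]^*$-fixed point structure on a heart object is precisely the relation $\epsilon F = pF\epsilon$, i.e.\ $dF = pFd$. Your extra paragraph addressing the passage from a homotopy-coherent colax fixed point to a strict chain-level endomorphism is a genuine subtlety the paper glosses over, and your rigidity argument via the explicit diagonal representatives $\bigoplus_n N^n[n]$ is the right way to handle it.
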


\begin{Thm}
For $A$ a commutative algebra, non-necessarily smooth, the t-truncation of $DDR(A)$, with respect to the Beilinson t-structure, is equivalent to the classical Dieudonné complex. That is
$$t_{\ge 0}(DDR(A)) \simeq i(\Omega^\bullet_A)$$
where $\Omega^\bullet_A$ is endowed with its canonical classical Dieudonné structure, see Proposition \ref{FormDieud} and $i$ is the functor
$$ i : \textbf{DA} \to \epsilon-D-SCR^{gr}$$
identifying the category of classical Dieudonné algebra with the heart of $\epsilon-D-SCR^{gr}$.
\end{Thm}

We will then define and study the notion of of saturation of a mixed graded Dieudonné algebra.

\begin{Def}
We define the décalage $\eta_p$ functor as an endofunctor of $1$-categories $\bf{\epsilon-Mod^{gr}}$ sending $M$ to the sub-graded mixed complex of $M \times M$ on elements $(x,y)$ such that $\epsilon x = py$ and $\epsilon y=0$.
\end{Def}

The following proposition defines and characterizes the saturation $\infty$-functor.

\begin{Prop} (Proposition \ref{adjDec})
We have an adjunction of $1$-categories
$$ [p]^* : \bf{\epsilon-Mod^{gr}} \rightleftarrows \bf{\epsilon-Mod^{gr}} : \eta_p$$
\end{Prop}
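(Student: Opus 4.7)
The plan is to identify $\eta_p$ with the coinduction $[p]_{*}$ for the graded $k$-algebra endomorphism $[p] : k[\epsilon] \to k[\epsilon]$, after which the adjunction is an instance of the standard restriction--coinduction adjunction for modules over a ring. Computing $[p]_{*} N = \mathrm{Hom}_{k[\epsilon]}(k[\epsilon]_{[p]}, N)$, where $k[\epsilon]_{[p]}$ denotes $k[\epsilon]$ viewed as a $(k[\epsilon], k[\epsilon])$-bimodule whose left action is twisted by $[p]$, produces exactly the pairs $(x, y)$ with $\epsilon x = py$ and $\epsilon y = 0$; the resulting $\epsilon$-action, coming from right multiplication on the source, is
$$\epsilon_{\eta_p N}(x, y) = (y, 0).$$

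The first step is therefore to fix this mixed structure on $\eta_p N$: the operator $(x, y) \mapsto (y, 0)$ does land in $\eta_p N$ (since $\epsilon y = 0$), squares to zero, and is compatible with the componentwise differential inherited from $N$. Next I would exhibit an explicit unit and counit. The unit $\eta_M : M \to \eta_p [p]^{*} M$ sends $m \mapsto (m, \epsilon m)$: the image lies in $\eta_p [p]^{*} M$ because in $[p]^{*} M$ the mixed operator is $p\epsilon$, so the two defining relations read $p \epsilon m = p \cdot \epsilon m$ and $p \epsilon(\epsilon m) = 0$; equivariance is immediate from $\eta_M(\epsilon m) = (\epsilon m, 0) = \epsilon_{\eta_p [p]^{*} M}(m, \epsilon m)$. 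The counit $\varepsilon_N : [p]^{*} \eta_p N \to N$ is the first projection $(x, y) \mapsto x$, and its equivariance for $p \epsilon_{\eta_p N}$ on the source and $\epsilon$ on the target is precisely the defining relation $\epsilon x = py$.

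The two triangle identities then unwind directly: on the left, $m \mapsto (m, \epsilon m) \mapsto m$; on the right, $(x, y) \mapsto ((x, y), (y, 0)) \mapsto (x, y)$ after applying $\varepsilon_N$ componentwise. The only subtle, non-formal step is pinning down the right mixed operator on $\eta_p N$: naively inheriting the componentwise $\epsilon$-action from $M \times M$ produces $(\epsilon x, \epsilon y) = (py, 0)$, which is $p$ times the correct operator and would defeat the adjunction. The coinduction computation--or, equivalently, the requirement that $\varepsilon_N$ intertwine $p \epsilon_{\eta_p N}$ with $\epsilon_N$--uniquely forces the "divided" operator $(y, 0)$. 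Once this choice is made, compatibility with the differential and the triangle identities are all immediate checks.
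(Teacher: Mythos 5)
The paper states Proposition \ref{adjDec} without supplying a proof, so there is no argument of the author's to compare yours against; what you give is a correct and essentially canonical one. Identifying $\eta_p$ with the coinduction $[p]_* = \mathrm{Hom}_{k[\epsilon]}(k[\epsilon]_{[p]},-)$ along the graded algebra endomorphism $[p]:k[\epsilon]\to k[\epsilon]$, $\epsilon\mapsto p\epsilon$, turns the claim into an instance of the restriction--coinduction adjunction, and your explicit unit $m\mapsto(m,\epsilon m)$ and counit $(x,y)\mapsto x$ are exactly the standard ones for that adjunction. The point you flag about the mixed operator is the real content and is worth recording: the paper defines $\eta_p M$ as ``the graded mixed subcomplex of $M\times M$'' on pairs with $\epsilon x = py$, $\epsilon y = 0$, and read literally (with the componentwise inherited action) this would give $\epsilon(x,y)=(\epsilon x,\epsilon y)=(py,0)$, which differs by a factor of $p$ from the operator $(x,y)\mapsto(y,0)$ that the coinduction produces. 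With the componentwise action neither your unit nor your counit is $\epsilon$-equivariant, and no rescaling fixes this, so the adjunction genuinely forces the ``divided'' operator. Your proof therefore doubles as a disambiguation of the paper's definition of $\eta_p$, and the coinduction viewpoint makes the triangle identities and compatibility with the internal differential formal. One small point to keep in mind when writing this up: the underlying graded module of $\eta_p N$ is not literally $N\times N$ but $N\times N((-1))[1]$ (since $y$ sits in weight one higher and degree one lower than $x$), which is again exactly what the internal $\mathrm{Hom}$ out of $k[\epsilon]\simeq k\oplus k((1))[-1]$ produces.
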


In fact, this adjunction is a Quillen adjunction for the injective model structures, therefore it induces an adjunction of $\infty$-categories.

A derived Dieudonné algebra is said to be saturated if its underlying Dieudonné complex is. We will verify that the definition of saturated Dieudonné complexes coincides with the one in \cite{BLM22}.

\subsubsection{Graded functions on graded stacks}

This thesis also contains the results presented in \cite{Mon21} on graded functions on linear stacks. We fix $X=Spec(A)$ an affine derived scheme.

\begin{Def}
Let $E$ be a quasi-coherent complex on $X$, the linear stack associated with $E$ is the stack over $X$ given by
$$u: Spec(B) \to X \mapsto Map_{N(B)-mod}(u^*E,N(B)) \in SSet$$
with $N$ the normalization functor. It is denoted $\mathbb{V}(E)$.
\end{Def}

\begin{Prop} (Proposition \ref{AdjLinStack})
The functor
$$\mathbb{V} : QCoh(X)^{op} \to \mathbb{G}_m-dSt_{/X}$$
has a left adjoint given by $\mathcal{O}_{gr}(-)(1)$.
\end{Prop}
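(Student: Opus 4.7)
The plan is to prove the adjunction by computing the mapping space on the right-hand side directly from the defining functor of points of $\mathbb{V}(E)$ and then matching it with the weight-one extraction of graded functions. Concretely, for $Y \to X$ an object of $\mathbb{G}_m$-$dSt_{/X}$ with structure map $p$, I want to establish a natural equivalence
$$Map_{\mathbb{G}_m-dSt_{/X}}(Y, \mathbb{V}(E)) \simeq Map_{QCoh(X)}(E, \mathcal{O}_{gr}(Y)(1))$$
which is the adjunction statement (recall the source is $QCoh(X)^{op}$, so this is indeed the correct direction).

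First I would forget the $\mathbb{G}_m$-equivariance and unwind the definition of $\mathbb{V}(E)$ given above: since $\mathbb{V}(E)(u : Spec(B) \to X) = Map_{N(B)\text{-mod}}(u^*E, N(B))$, by descent and the compatibility of $\mathbb{V}$ with limits in $Y$, one obtains
$$Map_{dSt_{/X}}(Y, \mathbb{V}(E)) \simeq Map_{QCoh(Y)}(p^*E, \mathcal{O}_Y).$$
By adjunction between $p^*$ and $p_*$, this is equivalent to $Map_{QCoh(X)}(E, p_*\mathcal{O}_Y)$, where $p_*\mathcal{O}_Y$ is, by definition, the underlying quasi-coherent sheaf of $\mathcal{O}_{gr}(Y)$ on $X$ (forgetting the grading).

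Next I would put back the $\mathbb{G}_m$-equivariance. The canonical $\mathbb{G}_m$-action on $\mathbb{V}(E)$ is the one that places $E$ in weight one: this is forced by the requirement that $\mathbb{V}(E) \to X$ be a cone with its natural scaling action on the fiber. Using the standard equivalence $QCoh^{\mathbb{G}_m}(Y) \simeq QCoh(Y/\mathbb{G}_m) \simeq QCoh(Y)^{gr}$ and its relative version over $X$ (with $X$ carrying the trivial action), a $\mathbb{G}_m$-equivariant morphism $Y \to \mathbb{V}(E)$ over $X$ corresponds to a morphism $p^*E \to \mathcal{O}_Y$ in $QCoh(Y)^{gr}$ with $p^*E$ placed in weight one. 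Passing to $p_*$ as before yields
$$Map_{\mathbb{G}_m-dSt_{/X}}(Y, \mathbb{V}(E)) \simeq Map_{QCoh(X)^{gr}}(E(1), \mathcal{O}_{gr}(Y)).$$
Finally, since the weight-one part of $\mathcal{O}_{gr}(Y)$ is precisely $\mathcal{O}_{gr}(Y)(1)$, the right-hand side reduces to $Map_{QCoh(X)}(E, \mathcal{O}_{gr}(Y)(1))$ by projecting onto the weight-one component, giving the desired adjunction.

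The main technical obstacle is the bookkeeping for the $\mathbb{G}_m$-equivariance: one must verify that the action by scaling on $\mathbb{V}(E)$ is precisely the one corresponding to weight one on $E$, and that all the intermediate equivalences ($QCoh^{\mathbb{G}_m} \simeq QCoh^{gr}$, pushforward adjunction, extraction of a graded piece) are natural enough in $Y$ to compose into a single functorial identification; in the $\infty$-categorical setting this requires checking coherence rather than just an equality of sets. Everything else is formal unwinding.
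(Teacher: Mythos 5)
Your proof is correct in substance, but it takes a genuinely different route from the paper's. The paper's argument is a generator/descent argument: it observes that both $Map_{\mathbb{G}_m-dSt_{/X}}(-,\mathbb{V}(E))$ and $Map_{QCoh(X)}(E,\mathcal{O}_{gr}(-)(1))$ carry colimits to limits, identifies them on the generating objects $Spec(B)\times\mathbb{G}_m$ with $\mathbb{G}_m$ acting by right translation, and recovers a general $\mathbb{G}_m$-stack $Y$ as the geometric realization of the \v{C}ech nerve of $Y\times\mathbb{G}_m\to Y$; the non-equivariant case drops out afterwards as a corollary (this is the paper's Remark \ref{Adj}). You instead start from the non-equivariant universal property of $\mathbb{V}(E)$, extend it from affines to all $Y$ by descent and preservation of colimits, then restore $\mathbb{G}_m$-equivariance by working over the quotient stacks $[Y/\mathbb{G}_m]\to[X/\mathbb{G}_m]$ and using the $q^*\dashv q_*$ adjunction, and finally project onto the weight-one component. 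The upshot of the paper's approach is that the only explicit computation happens on free $\mathbb{G}_m$-orbits, where equivariance is trivial; the upshot of yours is a direct, self-contained identification that does not require exhibiting a generating family.

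Two small points of precision. First, when you write ``$QCoh^{\mathbb{G}_m}(Y)\simeq QCoh(Y/\mathbb{G}_m)\simeq QCoh(Y)^{gr}$,'' the second equivalence should not be taken literally for $Y$ a general $\mathbb{G}_m$-stack: the relation between equivariant sheaves and graded objects is clean only over a base with trivial action (here $X$ itself, giving $QCoh([X/\mathbb{G}_m])\simeq QCoh(X)^{gr}$); your argument genuinely uses $QCoh([Y/\mathbb{G}_m])$ on the source and $QCoh(X)^{gr}$ on the target, which is correct, so the notation is loose but the argument is not affected. Second, the identification of $p_*\mathcal{O}_Y$ (non-equivariant pushforward) with ``$\mathcal{O}_{gr}(Y)$ after forgetting the grading'' is a base-change assertion that you don't actually need in the equivariant argument; your real input is the adjunction against $q_*\mathcal{O}_{[Y/\mathbb{G}_m]}$, which \emph{is} $\mathcal{O}_{gr}(Y)$ by definition, so the argument goes through cleanly once rephrased in those terms.
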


\begin{Thm} (Theorem \ref{FunctionLinStack}) For $E \in QCoh^{-}(X)$, ie a bounded above complex over $X$, the natural map constructed above
$$E \to \mathcal{O}_{gr}(\mathbb{V}(E))(1)$$
is an equivalence.
\end{Thm}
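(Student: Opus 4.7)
By Proposition \ref{AdjLinStack}, the map $E \to \mathcal{O}_{gr}(\mathbb{V}(E))(1)$ is the unit of the adjunction $\mathcal{O}_{gr}(-)(1) \dashv \mathbb{V}$. The plan is to verify that this unit is an equivalence on a generating family of $QCoh^-(X)$ and then extend by exactness and colimit arguments. First, I would treat the base case $E = A$: here $\mathbb{V}(A) \simeq \mathbb{A}^1_X$ with the standard $\mathbb{G}_m$-action in weight $1$, so $\mathcal{O}(\mathbb{V}(A)) \simeq A[t]$ with $t$ in weight $1$; the weight-$1$ piece is $A \cdot t \simeq A$ and the unit corresponds to $1 \mapsto t$. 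The same explicit computation handles $E = A^n$ placed in degree $0$.

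Next I would show that the composite $\mathcal{O}_{gr}(\mathbb{V}(-))(1)$ preserves cofibre sequences in $QCoh^-(X)$. Since $\mathbb{V}$ is a right adjoint, it converts a cofibre sequence $E' \to E \to E''$ in $QCoh(X)$ into a fibre sequence of $X$-stacks $\mathbb{V}(E'') \to \mathbb{V}(E) \to \mathbb{V}(E')$. The essential input is the K\"unneth-type identification $\mathcal{O}(\mathbb{V}(E) \times_X \mathbb{V}(F)) \simeq \mathcal{O}(\mathbb{V}(E)) \otimes_A \mathcal{O}(\mathbb{V}(F))$, combined with $\mathcal{O}(\mathbb{V}(E))(0) \simeq A$, which makes the weight-$1$ part of the tensor product split as $\mathcal{O}(\mathbb{V}(E))(1) \oplus \mathcal{O}(\mathbb{V}(F))(1)$. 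Extending this additivity to arbitrary fibre sequences of linear stacks uses that each $\mathbb{V}(E)$ is an abelian group stack over $X$ whose comultiplication is encoded by that K\"unneth decomposition, so the weight-$1$ projection behaves like extraction of the Lie-algebra part and is exact.

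With exactness established, the base case propagates by induction on cell structure to give the equivalence for every perfect $X$-module that is bounded above. For a general $E \in QCoh^-(X)$, I would realise $E$ as a filtered colimit (or Postnikov limit) of perfect complexes and argue that both sides of the unit commute with this presentation. The identity is trivially compatible; $\mathcal{O}_{gr}(\mathbb{V}(-))(1)$ commutes with the relevant colimits on $QCoh^-$ because the weight-$1$ truncation linearises the symmetric-algebra content of $\mathcal{O}(\mathbb{V}(-))$, leaving a functor of $E$ that factors through its linear, hence colimit-preserving, approximation.

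The main obstacle is the K\"unneth-type exactness argument in the second step. The identification of functions on a product of linear stacks with the tensor product of their functions requires the affine-stack technology of $Spec^\Delta$ together with the bounded-above hypothesis to control derived symmetric powers. Once it is in place, the remaining reductions from compact generators to perfect complexes and then to all of $QCoh^-(X)$ are formal adjunction-and-colimit computations.
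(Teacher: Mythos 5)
The proposal does not close the key gap: it asserts, but does not prove, that $R(-) := \mathcal{O}_{gr}(\mathbb{V}(-))(1)$ is exact on cofibre sequences in $QCoh^-(X)$. Your K\"unneth input only gives $\mathcal{O}(\mathbb{V}(E)\times_X \mathbb{V}(F)) \simeq \mathcal{O}(\mathbb{V}(E)) \otimes_A \mathcal{O}(\mathbb{V}(F))$, i.e.\ additivity for \emph{direct sums}. A cofibre sequence $E' \to E \to E''$ is carried by $\mathbb{V}$ to a fibre sequence $\mathbb{V}(E'') \to \mathbb{V}(E) \to \mathbb{V}(E')$, which is a fibre product over $\mathbb{V}(E')$, not a product over $X$, and there is no general base-change identification $\mathcal{O}(Y\times_Z W) \simeq \mathcal{O}(Y)\otimes_{\mathcal{O}(Z)}\mathcal{O}(W)$ for stacks. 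Appealing to the abelian group structure on $\mathbb{V}(E)$ and saying the weight-$1$ projection ``behaves like extraction of the Lie-algebra part and is exact'' is not an argument: identifying the weight-$1$ piece with primitives, and showing that this operation is exact, is precisely the content of the theorem. The paper does not assume any such exactness; it proves the equivalence by a genuine induction on the Tor amplitude bound $b$, writing a triangle $V[-b] \to E \to E'$ with $E'$ of smaller amplitude, showing $\mathbb{V}(E')\to\mathbb{V}(E)$ is an effective epimorphism, replacing $\mathbb{V}(E)$ by the geometric realisation of its \v Cech nerve, computing $coN_n(E\to E') \simeq E' \oplus V[-b+1]^{\oplus n}$, and then invoking the induction hypothesis levelwise, plus the fact that $\mathcal{O}_{gr}$ sends colimits of stacks to limits and that weight-$1$ extraction commutes with limits. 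All of this is done \emph{without} first establishing that $R$ is exact.

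A second gap is your final reduction from perfect complexes to arbitrary $E\in QCoh^-(X)$ by filtered colimits or Postnikov limits. The functor $R$ does not obviously commute with filtered colimits in $E$: $\mathbb{V}$ turns colimits into limits of stacks, and $\mathcal{O}_{gr}$ interacts poorly with limits of stacks. Your justification --- that the weight-$1$ truncation ``factors through its linear, hence colimit-preserving, approximation'' --- is circular: it presupposes the conclusion that $R(E)\simeq E$. In fact the paper avoids this reduction altogether, because any $E\in QCoh^-(X)$ over an affine $X$ has Tor amplitude bounded above, so the induction on $b$ already covers the whole of $QCoh^-(X)$. Compare also Remark~\ref{Contre-ex} in the paper: functions on $\mathbb{V}(E)$ genuinely fail to be ``polynomial'' in $E$ (power series appear for coconnective $E$), so colimit-preservation is not something you can take for granted for arbitrary weights, and the fact that it does hold in weight $1$ is precisely what requires proof.
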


\begin{Cor} (Corollary \ref{CoroLinStackFunction}) The functor $\mathbb{V} : QCoh(X)^{op} \to \mathbb{G}_m-dSt_{/X}$ is fully faithful when restricted to $QCoh^-(X)$.
\end{Cor}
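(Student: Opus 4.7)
The plan is to deduce the corollary formally from the adjunction established in Proposition \ref{AdjLinStack} together with the counit computation of Theorem \ref{FunctionLinStack}. Recall the standard fact from higher category theory: given an adjunction $L \dashv R$ between $\infty$-categories, the right adjoint $R$ restricted to a full subcategory $\mathcal{C}'$ is fully faithful provided that the counit $LR \to \mathrm{id}$ is an equivalence on objects of $\mathcal{C}'$.

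Concretely, we are in the situation
$$\mathcal{O}_{gr}(-)(1) \;:\; \mathbb{G}_m\text{-}dSt_{/X} \;\rightleftarrows\; QCoh(X)^{op} \;:\; \mathbb{V},$$
with $\mathcal{O}_{gr}(-)(1)$ the left adjoint. The counit of this adjunction, evaluated on $E \in QCoh(X)^{op}$, is a morphism $\mathcal{O}_{gr}(\mathbb{V}(E))(1) \to E$ in $QCoh(X)^{op}$, which corresponds to the natural map $E \to \mathcal{O}_{gr}(\mathbb{V}(E))(1)$ in $QCoh(X)$ constructed in the discussion preceding Theorem \ref{FunctionLinStack}. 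By that theorem, this map is an equivalence whenever $E \in QCoh^-(X)$.

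First I would verify fully faithfulness by a direct mapping-space computation. For $E, F \in QCoh^-(X)^{op}$, the adjunction yields
$$\mathrm{Map}_{\mathbb{G}_m\text{-}dSt_{/X}}(\mathbb{V}(E),\mathbb{V}(F)) \simeq \mathrm{Map}_{QCoh(X)^{op}}\bigl(\mathcal{O}_{gr}(\mathbb{V}(E))(1),F\bigr),$$
and Theorem \ref{FunctionLinStack} identifies the source with $\mathrm{Map}_{QCoh(X)^{op}}(E,F)$. The composite equivalence is, by construction of the adjunction, induced by $\mathbb{V}$ on mapping spaces, which is the statement of fully faithfulness.

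There is essentially no obstacle here; the only potential subtlety is keeping track of the $op$ conventions (the left adjoint lands in $QCoh(X)^{op}$, so the counit of the adjunction, when read in $QCoh(X)$, becomes a morphism out of $E$). Once this is set up correctly, the corollary is purely formal from Proposition \ref{AdjLinStack} and Theorem \ref{FunctionLinStack}.
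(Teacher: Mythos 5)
Your proposal is correct and is precisely the intended (and standard) deduction: Proposition \ref{AdjLinStack} gives the adjunction, Theorem \ref{FunctionLinStack} identifies the counit with the map $E \to \mathcal{O}_{gr}(\mathbb{V}(E))(1)$ and shows it is an equivalence on $QCoh^-(X)$, and fully faithfulness of the right adjoint on that full subcategory follows formally. The paper leaves this corollary without a separate proof because it is exactly this one-line consequence.
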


This corollary is a crucial result for the study of foliations as it allows the study of foliations of non-zero characteristic with cotangent complexes which are not necessarily connective. We will combine it with the notion of Dieudonné foliations so as to construct examples of Dieudonné foliations.

\begin{Prop} (Proposition \ref{FunctionLinStackchar0}) If $k$ is a $\mathbb{Q}$-algebra and $E$ is perfect complex on $X$, $\mathcal{O}_{gr}(\mathbb{V}(E))(p)$ is canonically identified with $Sym_{\mathbb{E}_\infty}^p(E)$, where the monad $Sym_{\mathbb{E}_\infty}$ is the one associated to the formation of free $\mathbb{E}_\infty$-$k$-algebras, or equivalently commutative differential graded algebras.
\end{Prop}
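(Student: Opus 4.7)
The strategy is to identify $\mathbb{V}(E)$, as a graded derived stack over $X$, with the relative spectrum of the free graded cdga on $E$ placed in weight $1$, and then to read off the weight-$p$ component. The key point is that since $k$ is a $\mathbb{Q}$-algebra, the $\infty$-category of $\mathbb{E}_\infty$-$k$-algebras coincides with that of cdgas over $k$, so $Sym_{\mathbb{E}_\infty}$ is the free cdga functor, and it admits a canonical weight-decomposition $Sym_{\mathbb{E}_\infty}(E) = \bigoplus_{p \geq 0} Sym_{\mathbb{E}_\infty}^p(E)$ with the $p$-th summand placed in weight $p$.

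To establish the identification, the plan is to invoke Yoneda. On the one hand, Proposition \ref{AdjLinStack} gives, for every graded derived stack $Z$ over $X$,
$$Map(Z, \mathbb{V}(E)) \simeq Map_{QCoh(X)}(E, \mathcal{O}_{gr}(Z)(1)).$$
On the other hand, the free--forgetful adjunction between graded cdgas over $\mathcal{O}(X)$ and $\mathcal{O}(X)$-modules (placed in weight $1$) yields
$$Map_{cdga^{gr}_{\mathcal{O}(X)}}\bigl(Sym_{\mathbb{E}_\infty}(E), B^\bullet\bigr) \simeq Map_{QCoh(X)}(E, B^1).$$
Setting $B^\bullet = \mathcal{O}_{gr}(Z)$, these two mapping spaces coincide, so the graded derived affine schemes $Spec_X(Sym_{\mathbb{E}_\infty}(E))$ and $\mathbb{V}(E)$ corepresent the same functor on graded derived stacks over $X$, and are therefore equivalent. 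Extracting weight $p$ yields the announced canonical identification $\mathcal{O}_{gr}(\mathbb{V}(E))(p) \simeq Sym_{\mathbb{E}_\infty}^p(E)$.

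The main obstacle is justifying this graded free--forgetful adjunction rigorously in the $\infty$-categorical setting and matching the gradings precisely; once in place, the rest of the argument is formal. The hypothesis that $E$ be perfect is used to guarantee that each $Sym_{\mathbb{E}_\infty}^p(E)$ is itself a perfect complex over $\mathcal{O}(X)$, via the characteristic-zero identification of homotopy invariants and coinvariants $(E^{\otimes p})^{h\Sigma_p} \simeq (E^{\otimes p})_{h\Sigma_p}$, which makes $Spec_X(Sym_{\mathbb{E}_\infty}(E))$ a bona fide graded derived affine scheme over $X$. Note that because the whole graded algebra $\mathcal{O}_{gr}(\mathbb{V}(E))$ is identified in one stroke by the Yoneda comparison, the argument does not invoke Theorem \ref{FunctionLinStack}, whose hypothesis $E \in QCoh^-(X)$ need not hold for a general perfect $E$; it works uniformly in arbitrary cohomological amplitude.
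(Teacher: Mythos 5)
The Yoneda step does not go through, and the proposed equivalence of derived stacks cannot hold. For non-connective $E$ the graded cdga $Sym_{\mathbb{E}_\infty}(E((1)))$ is itself non-connective, so there is no object of $\mathbb{G}_m-dSt_{/X}$ deserving the name $Spec_X(Sym_{\mathbb{E}_\infty}(E))$: derived affine schemes are opposite to connective simplicial rings, and perfection of the weight pieces does not make the algebra connective. What the two adjunctions you cite actually give is the natural equivalence
$$Map_{\mathbb{G}_m-dSt_{/X}}\bigl(Z, \mathbb{V}(E)\bigr) \simeq Map_{cdga^{gr}_{\mathcal{O}_X}}\bigl(Sym_{\mathbb{E}_\infty}(E((1))), \mathcal{O}_{gr}(Z)\bigr),$$
but this is a restatement of Proposition \ref{AdjLinStack} combined with the free--forgetful adjunction, holding with no hypothesis on $E$ and in any characteristic. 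Deducing from it that the counit map $Sym_{\mathbb{E}_\infty}(E((1))) \to \mathcal{O}_{gr}(\mathbb{V}(E))$ is an equivalence is precisely as strong as the Proposition itself; the universal property of an adjunction never tells you the (co)unit is invertible, and that assertion is essentially Corollary \ref{CoroLinStackFunction}, the full faithfulness of $\mathbb{V}$. Moreover Remark \ref{Contre-ex} computes, for $E = k[2] \oplus k[-2]$ over a point, that the ungraded function algebra $\mathcal{O}(\mathbb{V}(E))$ and $Sym_{\mathbb{E}_\infty}(E)$ genuinely differ (direct sums of weight pieces versus products), so a one-stroke identification of graded derived stacks $\mathbb{V}(E) \simeq Spec_X(Sym(E((1))))$ would contradict the paper's own example.

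The actual proof is therefore necessarily more delicate: it runs the same conerve and Tor-amplitude devissage as Theorem \ref{FunctionLinStack}, applying the counit map to the cosimplicial resolution $coN_\bullet(E \to E')$, and the genuinely new ingredient is a lemma asserting that for $E$ perfect and $E \xrightarrow{\sim} Tot(E^\bullet)$ a cosimplicial resolution by perfect complexes, the induced map $Sym^p(E) \to Tot(Sym^p(E^\bullet))$ is an equivalence. That lemma is proved by dualizing to a geometric realization, using that $Sym^p$ commutes with sifted colimits, and dualizing back; the last step is where characteristic zero is used, since $Sym^p$ and duality commute on perfect complexes only in characteristic zero. Your stated use of the characteristic-zero hypothesis (to keep $Sym^p(E)$ perfect) is not where it is actually needed.
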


\subsubsection{Notations}

\begin{Not}
In this thesis, all higher categorical notations are borrowed from \cite{Lur09,Lur17}. We fix $k$ to be a discrete commutative ring.

\begin{itemize}
\item "$\infty$-category" will always mean $(\infty,1)$-categories, modeled for example by quasicategories developed in \cite{Lur09}. Everything is assumed to be $\infty$-categorical, e.g "category" will mean $\infty$-category. We will refer to the $1$-categories and $1$-categorical notions as "classical" or "discrete", for example the $1$-category of discrete commutative rings will be the usual category of commutative rings, denoted $\bf{CRing}$.

\item We will denote $1$-categories and model categories with bold text, e.g $\bf{dg-mod_k}$ will be the $1$-category of chain complexes on $k$. The model structure on a $1$-category will be denoted in subscripts or omitted, e.g $\bf{dg-mod_{k,proj}}$ or $\bf{dg-mod_k}$ will be the model category of chain complexes on $k$ with the projective model structure. All $\infty$-categories will be denoted without bold text, e.g $Mod_k$ or $Mod$ the $\infty$-category of chain complexes on $k$.

\item We will use cohomological conventions : all chain complexes with have increasing differentials, hence negative chain complexes will be connective.

\item $Mod_k$ is the $\infty$-category of chain complexes of $k$-modules, $Mod_k^{\le 0}$ or $Mod^{cn}$ is the full sub-$\infty$-category of connective complexes and $Mod_k^{\ge 0}$ or $Mod^{ccn}$ is the full sub-$\infty$-category of coconnective complexes. Similarly, for $A$ a simplicial commutative $k$-algebra, $Mod_A$ will denote the usual $\infty$-category obtained by inverting quasi-isomorphisms in the $1$-category of differential graded modules $\bf{N(A)-dg-Mod}$ on the normalization of $A$.

\item $CAlg(\mathcal{C})$ is the $\infty$-category of commutative algebra objects in a symmetric monoidal $\infty$-category, see \cite{Lur17}.

\item $\mathbb{E}_\infty Alg_k = CAlg(Mod_k)$ is the $\infty$-category of $\mathbb{E}_\infty$-algebras on $k$, see \cite{Lur17}.

\item $\bf{CRing_k}$ is the $1$-category of discrete commutative algebras on $k$.

\item $SCR_k$ is the $\infty$-category of simplicial commutative rings over $k$

\item $LSym$ or simply $Sym$ denotes the monad on $Mod_k^{\le 0}$ associated with the forgetful functor $SCR_k \to Mod_k^{\le 0}$

\item $St_k$ and $dSt_k$ will denote respectively the $\infty$-category of stacks over the étale site of discrete commutative $k$-algebras and the $\infty$-category of derived stacks, also on the étale site.

\item Following \cite[6.2.2.1, 6.2.2.7, 6.2.3.4]{Lur18}, $Qcoh(X)$ will be the $\infty$-category of quasi-coherent sheaves on a derived stack $X$, ie $QCoh(X)$ is informally given as the limit of stable $\infty$-categories
$$QCoh(X) \coloneqq lim _{Spec\text{ }A \to X} Mod_A$$

\item A simplicial set or a derived stack $X$ will be $n$-truncated when $\pi_i(X) = 0$ for $i>n$, it will be $n$-connected when $\pi_i(X) = 0$ for $i \le n$.

\item All tensor products are assumed to  be derived.

\item The subscript notation $\mathcal{C}_p$ will denote the category of objects over $k \otimes \mathbb{F}_p$ when $\mathcal{C}$ is a category of "algebra type objects" over $k$. For example, when $SCR$ denoted the category of simplicial algebras over $k$, $SCR_p$ is the category of simplicial algebras over $k \otimes \mathbb{F}_p$.

\item The letters $\epsilon$, $\rho$ and $\eta$ will be reserved for generators of commutative differential graded algebras in specific degrees : $\epsilon$ will be in degree $-1$, $\rho$ in degree $0$ and $\eta$ in degree $1$. As commutative differential graded algebras, we have identifications
$$H^*(S^1) \simeq k[\eta]$$
and
$$H_*(S^1) \simeq k[\epsilon]$$
These three generators will be in weight $1$ when they generate a graded commutative differential graded algebra.

\end{itemize}
\end{Not}

This thesis was supported by the Nedag project ERC-2016-ADG-741501.

\section{Categorical notions}

In order to define our crystalline circle and to describe the de Rham-Witt complex, in mixed characteristics, as a graded loopspace
$$\textbf{Map}(S^1_{gr},X)$$
we will need to review some results on $\infty$-categories and derived geometry.

\subsection{$\infty$-categories}

This section gives the $\infty$-categorical notations and results which will be used in this thesis. We ferer to \cite{Lur09} for more details.

\begin{Not}
We will use the following notations and conventions regarding $\infty$-categories :

\begin{itemize}
\item  We will use quasi-categories as models for our $\infty$-categories.

\item  We will use Kan complexes as models for our $\infty$-groupoids.

\item The $\infty$-category of $\infty$-categories is denoted $Cat_\infty$, and the sub-$\infty$-category of Kan complexes is denoted $Gpd_\infty$.

\item The inclusion $Cat_\infty \to Gpd_\infty$ admits a left adjoint denoted $\widetilde{(-)}$.

\item The $\infty$-category of spaces is denoted $\mathcal{S}$.

\item When $\mathcal{C}$ is an $\infty$-category, and $x$ and $y$ are elements of $\mathcal{C}$, $Map_\mathcal{C}(x,y)$ denotes the mapping space of morphisms from $x$ to $y$. The set of connected components of morphisms from $x$ and $y$ is denoted $Hom_\mathcal{C}(x,y)$ or $[x,y]_\mathcal{C}$.

\item For $K$ a simplicial set and $\mathcal{C}$ an $\infty$-category, $Fun(K,\mathcal{C}) = \mathcal{C}^K$ is the $\infty$-category of functors from $K$ to $\mathcal{C}$. In particular, $Arrow(\mathcal{C}) \coloneqq \mathcal{C}^{\Delta^1}$ is the $\infty$-category of morphisms in $\mathcal{C}$. 

\item The $\infty$-category of presheaves on an $\infty$-category $\mathcal{C}$ is $\mathcal{C} \coloneqq Fun(\mathcal{C}^{op},\mathcal{S})$

\item In an $\infty$-category $\mathcal{C}$, with $X \in \mathcal{C}$ and $K$ a simplicial set, the tensorization of $X$ by $K$ denoted $K \otimes X$ is, when it exists, e.g when $\mathcal{C}$ is presentable, an object representing the functor
$$Z \in \mathcal{C} \mapsto Map_\mathcal{C}(X,Z)^K$$
and the cotensorization of $X$ by $K$ denoted $X^K$ is, when it exists, e.g when $\mathcal{C}$ is presentable, an object representing the functor
$$Z \in \mathcal{C}^{op} \mapsto Map_\mathcal{C}(Z,X)^K$$

\item A localization is a full sub-$\infty$-category $\mathcal{C}_0 \subset \mathcal{C}$ admitting a left adjoint.

\end{itemize}

\end{Not}

\begin{Thm} [Adjoint functor theorem] \label{adjointFunct}
Let $F : \mathcal{C} \to \mathcal{D}$ be a functor between presentable $\infty$-categories.
\begin{itemize}
\item The functor $F$ admits a right adjoint if and only if it preserves small colimits.
\item The functor $F$ admits a left adjoint if and only if it is accessible and preserves small limits.
\end{itemize}
\end{Thm}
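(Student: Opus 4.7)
The plan is to reduce both assertions to Lurie's representability theorems for presentable $\infty$-categories (HTT \S 5.5.2). The forward (``only if'') directions are formal: a right adjoint preserves any limits that exist in its source, and dually a left adjoint preserves any colimits that exist. To also get accessibility of a left adjoint in part (2), I would note that its right adjoint, being a limit-preserving functor between presentables, commutes with $\kappa$-filtered colimits for some sufficiently large regular cardinal $\kappa$, and then invoke Lurie's criterion (HTT 5.4.7) to conclude that the left adjoint is $\kappa$-accessible.

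For the ``if'' in (1), fix $d \in \mathcal{D}$ and study the presheaf
$$h_d \colon \mathcal{C}^{op} \to \mathcal{S}, \qquad c \mapsto Map_\mathcal{D}(F(c),d).$$
Since $F$ preserves small colimits and $Map_\mathcal{D}(-,d)$ sends colimits in $\mathcal{D}$ to limits in $\mathcal{S}$, the presheaf $h_d$ preserves small limits. Lurie's representability theorem (HTT 5.5.2.2) states that a limit-preserving presheaf on a presentable $\infty$-category is representable, so there exists $G(d) \in \mathcal{C}$ together with a natural equivalence $h_d \simeq Map_\mathcal{C}(-,G(d))$. By the functoriality of the Yoneda embedding (working inside the arrow category of $\mathcal{D}$ to make the naturality coherent), the assignment $d \mapsto G(d)$ promotes to a functor $G \colon \mathcal{D} \to \mathcal{C}$, which is right adjoint to $F$ by construction.

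For the ``if'' in (2), the argument is dual but uses the accessibility hypothesis in an essential way. Fix $c \in \mathcal{C}$ and consider
$$h^c \colon \mathcal{D} \to \mathcal{S}, \qquad d \mapsto Map_\mathcal{C}(c,F(d)).$$
Since $F$ is accessible and preserves limits, and $Map_\mathcal{C}(c,-)$ is accessible and preserves limits (as $\mathcal{C}$ is presentable), the composite $h^c$ is accessible and preserves small limits. The dual representability criterion (HTT 5.5.2.7, see also 5.5.2.9) asserts that a functor $\mathcal{D} \to \mathcal{S}$ out of a presentable $\infty$-category is co-representable, that is of the form $Map_\mathcal{D}(d_0,-)$, if and only if it preserves small limits and is accessible. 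This gives $L(c) \in \mathcal{D}$ with $h^c \simeq Map_\mathcal{D}(L(c),-)$, and varying $c$ again via Yoneda functoriality yields the desired left adjoint $L \colon \mathcal{C} \to \mathcal{D}$.

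The main obstacle is of course the representability theorem itself, which is the genuinely $\infty$-categorical input of the argument; once it is available, the construction of the adjoint is purely formal. A secondary subtlety worth highlighting is why accessibility cannot be dropped from (2): the opposite $\mathcal{D}^{op}$ of a presentable $\infty$-category is not presentable in general, so one has no cheap way to co-represent a functor out of $\mathcal{D}$ from limit-preservation alone. The accessibility hypothesis plays precisely the rôle of the solution-set condition from the $1$-categorical Freyd adjoint functor theorem, confining the prospective object $L(c)$ to a small subcategory of $\mathcal{D}$ large enough to generate it under colimits.
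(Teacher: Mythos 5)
The paper does not give a proof at all: it simply cites \cite[Corollary 5.5.2.9]{Lur09}. Your sketch is essentially the standard proof that sits behind that citation, so the two are the same modulo the paper's choice to outsource it: reduce the existence of an adjoint to Lurie's representability theorem (HTT 5.5.2.2 for presheaves, 5.5.2.7 for copresheaves), with accessibility of a right adjoint supplied by HTT 5.4.7.7. Your discussion of why the accessibility hypothesis cannot be dropped in the second item is correct and is the right thing to emphasize.

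One sentence in your writeup is garbled and worth fixing. In discussing the ``only if'' of the second bullet you write ``accessibility of a left adjoint'' and then ``its right adjoint, being a limit-preserving functor between presentables, commutes with $\kappa$-filtered colimits.'' The functor you need to show accessible there is $F$ itself (it is a right adjoint, since it admits a left adjoint $L$), and ``limit-preservation implies accessibility'' is precisely not a valid inference for functors between presentables --- that circularity is the entire reason accessibility appears as a hypothesis. The correct statement is the one HTT 5.4.7.7 actually proves: a functor between presentable $\infty$-categories that admits a left adjoint is automatically accessible, and the argument goes through the colimit-preservation of $L$ (hence $L$ carries $\kappa$-compact objects to $\kappa'$-compact objects) rather than through any claimed implication from limit-preservation. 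The citation you gave is right; the prose around it should be repaired so it does not read as circular.
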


\begin{proof}
See \cite[Corollary 5.5.2.9]{Lur09}.
\end{proof}

\subsection{Lax and colax fixed points}

\begin{Def} \label{defLFP}
Let $\mathcal{C}$ be a category and $F$ an endofunctor of $\mathcal{C}$. We define the category of lax fixed points with respect to $F$ as the fiber product of categories
$$LFP_F(\mathcal{C}) = \mathcal{C}^{\Delta^1} \times_{\mathcal{C}\times\mathcal{C}} \mathcal{C}$$
where $\mathcal{C} \to \mathcal{C} \times \mathcal{C}$ is given by the composition :
$$\mathcal{C} \xrightarrow{\Delta} \mathcal{C} \times \mathcal{C} \xrightarrow{Id \times F} \mathcal{C} \times \mathcal{C}$$

We define the category of fixed points with respect to $F$ as the fiber product of categories
$$FP_F(\mathcal{C}) = \mathcal{C}^{\widetilde{\Delta^1}} \times_{\mathcal{C}\times\mathcal{C}} \mathcal{C}$$

The inclusion $\Delta^1 \subset \widetilde{\Delta^1}$ defines an inclusion $FP_F(\mathcal{C}) \subset LFP_F(\mathcal{C})$.
\end{Def}

\begin{Rq}
The elements of the lax fixed points category $LFP_F(\mathcal{C})$ are given by pairs $(x,f)$ with $x \in \mathcal{C}$ and $f : x \to Fx$. Then the category of fixed points $FP_F(\mathcal{C})$ is the full subcategory of $LFP_F(\mathcal{C})$ on pairs $(x,f)$ where $f$ is an equivalence.
\end{Rq}

\begin{Rq} \label{CFPDef}
Colax fixed points are defined similarly : $CFP_F(\mathcal{C})$ has elements of the form $(x,f)$ with $x \in \mathcal{C}$ and $f : Fx \to x$.
\end{Rq}

\begin{Prop} \label{LFPop}
We have natural identifications
$$CFP_F(\mathcal{C})^{op} \simeq LFP_F(\mathcal{C}^{op})$$
\end{Prop}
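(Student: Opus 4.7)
The plan is to derive the equivalence formally from the pullback description of $CFP_F(\mathcal{C})$ and $LFP_F(\mathcal{C})$, combined with two general facts about $\infty$-categories: first, that $(-)^{op}: \mathrm{Cat}_\infty \to \mathrm{Cat}_\infty$ is an involutive auto-equivalence and in particular preserves (and reflects) all small limits, so that $(\mathcal{A} \times_\mathcal{B} \mathcal{C})^{op} \simeq \mathcal{A}^{op} \times_{\mathcal{B}^{op}} \mathcal{C}^{op}$ naturally; second, that for any simplicial set $K$ there is a canonical natural identification $(\mathcal{C}^K)^{op} \simeq (\mathcal{C}^{op})^{K^{op}}$.

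Applying these to the defining pullback of $CFP_F(\mathcal{C})$, which by Remark \ref{CFPDef} is
$$CFP_F(\mathcal{C}) \simeq \mathcal{C}^{\Delta^1} \times_{\mathcal{C} \times \mathcal{C}} \mathcal{C},$$
where the right-hand map $\mathcal{C} \to \mathcal{C} \times \mathcal{C}$ is $(F, \mathrm{Id})$ (source $Fx$, target $x$), I obtain
$$CFP_F(\mathcal{C})^{op} \simeq (\mathcal{C}^{op})^{(\Delta^1)^{op}} \times_{\mathcal{C}^{op} \times \mathcal{C}^{op}} \mathcal{C}^{op}.$$
Next I use the canonical isomorphism $\sigma: \Delta^1 \xrightarrow{\sim} (\Delta^1)^{op}$ that swaps the two vertices $0$ and $1$. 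Transporting along $\sigma$ identifies $(\mathcal{C}^{op})^{(\Delta^1)^{op}}$ with $(\mathcal{C}^{op})^{\Delta^1}$, with the effect of swapping the two evaluation functors (source becomes target and vice versa). Under this swap, the pair $(F, \mathrm{Id})^{op} = (F^{op}, \mathrm{Id})$ becomes $(\mathrm{Id}, F^{op})$.

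Putting the two steps together gives
$$CFP_F(\mathcal{C})^{op} \simeq (\mathcal{C}^{op})^{\Delta^1} \times_{\mathcal{C}^{op} \times \mathcal{C}^{op}} \mathcal{C}^{op}$$
with the structure map on the right now being $(\mathrm{Id}, F^{op})$, which is exactly the presentation of $LFP_{F^{op}}(\mathcal{C}^{op})$ provided by Definition \ref{defLFP}. On objects this recovers the expected translation: a pair $(x, f: Fx \to x)$ in $\mathcal{C}$ corresponds to the pair $(x, f^{op}: x \to F^{op}x)$ in $\mathcal{C}^{op}$. The construction is manifestly natural in $(\mathcal{C}, F)$ because each ingredient (opposite, exponentiation by $\Delta^1$, pullback, the swap $\sigma$) is.

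The only subtle point, which I expect to be the main bookkeeping obstacle, is keeping track of the swap induced by $\sigma$ and verifying that it turns the colax structure map $(F, \mathrm{Id})$ into the lax structure map $(\mathrm{Id}, F^{op})$ after passing to opposites; once one commits to a consistent convention for the evaluation functors $\pi_0, \pi_1: \mathcal{C}^{\Delta^1} \to \mathcal{C}$ and their behavior under $(-)^{op}$, the rest is a formal chain of natural equivalences.
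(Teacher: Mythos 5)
Your proof is correct. The paper states Proposition \ref{LFPop} without proof, treating it as an immediate formal consequence of the pullback descriptions in Definition \ref{defLFP} and Remark \ref{CFPDef}, so there is no paper argument to compare against; your write-up simply makes the omitted formal chain explicit.

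The one place where carelessness could hurt is exactly the one you flag: after applying $(-)^{op}$ and the canonical isomorphism $(\mathcal{C}^{\Delta^1})^{op}\simeq(\mathcal{C}^{op})^{(\Delta^1)^{op}}$, transporting along the vertex-swap $\sigma:\Delta^1\xrightarrow{\sim}(\Delta^1)^{op}$ replaces the edge functor $(\mathrm{src},\mathrm{tgt})$ on $(\mathcal{C}^{op})^{\Delta^1}$ by its composite with the swap automorphism of $\mathcal{C}^{op}\times\mathcal{C}^{op}$; composing both legs of the pullback cospan with that same swap (which does not change the pullback) turns the structure map $(F,\mathrm{Id})$ into $(\mathrm{Id},F)$, which is precisely $(\mathrm{Id}\times F)\circ\Delta$ from Definition \ref{defLFP}. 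You carry this out correctly, and the object-level check $(x,\,f\colon Fx\to x)\leftrightarrow(x,\,f^{op}\colon x\to Fx)$ confirms the orientation is right. No gap.
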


\subsection{Objects with endomorphisms}

\begin{Def}
Let $\mathcal{C}$ be a category, we define the category of objects of $\mathcal{C}$ endowed with an endomorphism as $Fun(B\mathbb{N},\mathcal{C})$, where $B\mathbb{N}$ is the $1$-category with one object and $\mathbb{N}$ as its monoid of arrows. We denote the associated category $\mathcal{C}^{endo}$.

The base point $* \to B\mathbb{N}$ gives by restriction the forgetful functor
$$(X,\alpha) \in \mathcal{C}^{endo} \mapsto X \in \mathcal{C}$$
\end{Def}

\begin{Prop} \label{PropAdjointEndo}
When $\mathcal{C}$ has $\mathbb{N}$-indexed coproducts, the forgetful functor
$$\mathcal{C}^{endo} \to \mathcal{C}$$
admits a left adjoint $L$, given by
$$L : X \mapsto (\bigsqcup_\mathbb{N} X,S)$$
with the shift $S$ given by the inlusion in the second factor
$$\bigsqcup_\mathbb{N} X \xrightarrow{i_2} X \sqcup \bigsqcup_\mathbb{N} X \cong \bigsqcup_\mathbb{N} X$$

When $\mathcal{C}$ has $\mathbb{N}$-indexed products, the forgetful functor
$$\mathcal{C}^{endo} \to \mathcal{C}$$
admits a right adjoint $R$, given by
$$R : X \mapsto (\prod_\mathbb{N} X,S)$$
with the shift $S$ induced by the projection map
$$\prod_\mathbb{N} X \cong (\prod_\mathbb{N} X) \Pi (X) \xrightarrow{p_1} \prod_\mathbb{N} X$$

\end{Prop}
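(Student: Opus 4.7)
The plan is to identify $\mathcal{C}^{endo} = Fun(B\mathbb{N}, \mathcal{C})$ and the forgetful functor as restriction along the inclusion $i : * \hookrightarrow B\mathbb{N}$ of the unique object. The left and right adjoints, when they exist, are then the left and right Kan extensions along $i$, and their existence is controlled by the existence of $\mathbb{N}$-indexed (co)products in $\mathcal{C}$, which is precisely what is assumed.

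I would next compute these Kan extensions using the standard pointwise formulas. The comma categories $i \downarrow *$ and $* \downarrow i$ both have as objects the set $Hom_{B\mathbb{N}}(*,*) = \mathbb{N}$, and since the source of $i$ is the terminal category, the only morphisms are identities; so both comma categories are discrete on $\mathbb{N}$. This immediately yields
$$L(X)(*) \simeq \bigsqcup_{n \in \mathbb{N}} X, \qquad R(X)(*) \simeq \prod_{n \in \mathbb{N}} X.$$

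To produce the endomorphism structure I would invoke the functoriality of Kan extensions applied to the generator $1 \in \mathbb{N} = Hom_{B\mathbb{N}}(*,*)$: postcomposition with $1$ induces on the discrete comma category $\mathbb{N}$ the shift $n \mapsto n+1$, and transporting this along the colimit (respectively limit) formula gives exactly the map $S$ of the statement: the inclusion $\bigsqcup_\mathbb{N} X \hookrightarrow X \sqcup \bigsqcup_\mathbb{N} X \cong \bigsqcup_\mathbb{N} X$ in the coproduct case, and dually the projection $\prod_\mathbb{N} X \cong X \times \prod_\mathbb{N} X \to \prod_\mathbb{N} X$ in the product case.

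If one wishes to bypass the Kan extension machinery and check the adjunction by hand, the key observation is that a morphism $(\bigsqcup_\mathbb{N} X, S) \to (Y,\beta)$ in $\mathcal{C}^{endo}$ amounts to a family $(\phi_n : X \to Y)_{n \in \mathbb{N}}$ in $\mathcal{C}$ satisfying $\phi_{n+1} = \beta \circ \phi_n$, hence is uniquely determined by $\phi_0 \in Map_\mathcal{C}(X,Y)$; this produces the desired natural equivalence $Map_{\mathcal{C}^{endo}}(L(X),(Y,\beta)) \simeq Map_\mathcal{C}(X,Y)$, and the right adjunction follows by formal duality. There is no substantive obstacle here: the only care needed is in aligning conventions (which comma category goes with which Kan extension, and the direction of the shift), which is bookkeeping rather than mathematical content.
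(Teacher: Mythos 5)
Your argument is correct. The paper states Proposition~\ref{PropAdjointEndo} without proof, so there is no official argument to compare against, but the route you take is the natural one and is complete: the forgetful functor is restriction along the unique object $i \colon * \to B\mathbb{N}$, the pointwise left and right Kan extension formulas apply because $\mathcal{C}$ has the required $\mathbb{N}$-indexed coproducts respectively products, the comma categories $*\times_{B\mathbb{N}}(B\mathbb{N})_{/*}$ and $*\times_{B\mathbb{N}}(B\mathbb{N})_{*/}$ are discrete on $\mathbb{N}$ (the unique object of the source forces the only morphisms to be identities), and functoriality in the $B\mathbb{N}$-variable applied to the generator $1\in\mathbb{N}$ reindexes by $n\mapsto n+1$, which unwinds exactly to the shift $S$ as described in the statement. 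The only thing I would flag is that the ``by hand'' argument at the end is really a $1$-categorical sketch: in the $\infty$-categorical setting used throughout the paper, saying that a morphism $(\bigsqcup_\mathbb{N}X,S)\to(Y,\beta)$ ``amounts to'' a family with $\phi_{n+1}=\beta\circ\phi_n$ elides coherence data, and the clean way to get the equivalence of mapping spaces is precisely the adjunction $Lan_i \dashv i^* \dashv Ran_i$ that your primary argument establishes. Since you present the Kan extension argument as the main proof and flag the by-hand version as an alternative, there is no gap.
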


\begin{Def}
Let $(X,\alpha)$ and $(Y,\beta)$ be two objects in $\mathcal{C}^{endo}$, we define, when it exists, the morphism object with endomorphism $\underline{Hom}_{\mathcal{C}^{endo}}((X,\alpha),(Y,\beta))$ as an element of $\mathcal{C}^{endo}$ representing :
$$(T,\gamma) \mapsto Map_{\mathcal{C}^{endo}}((X \times T, \alpha \times \gamma),(Y,\beta)) : \mathcal{C}^{endo} \to \mathcal{S}$$

\end{Def}

\begin{Prop} \label{Endendo}

We assume $\mathcal{C}$ has internal morphism objects and equalizers. Let $(X,\alpha)$ and $(Y,\beta)$ be two objects in $\mathcal{C}^{endo}$, the morphism object with endomorphism $\underline{Hom}_{\mathcal{C}^{endo}}((X,\alpha),(Y,\beta))$ can be computed as the equalizer :
$$eq(\underline{Hom}_{\mathcal{C}}((X,\alpha),(Y,\beta))^\mathbb{N} \rightrightarrows \underline{Hom}_{\mathcal{C}}((X,\alpha),(Y,\beta))^\mathbb{N})$$
where one of the arrows is given by postcomposing by $\beta$ and the other is defined as by applying the shift $((f_i) \mapsto (f_{i+1}))$ and precomposing by $\alpha$. Its endomorphism is given by the shift $(f_i) \mapsto (f_{i+1})$.

Explicitly, an object of $\underline{Hom}_{\mathcal{C}^{endo}}((X,\alpha),(Y,\beta))$ can be thought of as a sequence of morphisms $f_i : X \to Y$ such that $\beta \circ f_i$ is identified with $f_{i+1} \circ \alpha$.
\end{Prop}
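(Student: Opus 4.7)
The plan is to verify directly that the equalizer $H$ from the statement, equipped with the shift endomorphism $\sigma$, represents the functor
\[
(T,\gamma)\;\longmapsto\; Map_{\mathcal{C}^{endo}}\bigl((X\times T,\alpha\times\gamma),(Y,\beta)\bigr)
\]
that defines $\underline{Hom}_{\mathcal{C}^{endo}}((X,\alpha),(Y,\beta))$. Writing $P := \underline{Hom}_\mathcal{C}(X,Y)$, a preliminary check is that the shift $(f_i)\mapsto(f_{i+1})$ on $P^\mathbb{N}$ restricts to an endomorphism of $H$: the equalizer condition $\beta\circ f_i = f_{i+1}\circ\alpha$ at index $i+1$ is exactly the equalizer condition on the shifted sequence at index $i$, so $\sigma$ is well-defined on $H$.

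Next I would unpack a morphism $h:(T,\gamma)\to(H,\sigma)$ of $\mathcal{C}^{endo}$ into combinatorial data. By the universal properties of the countable product $P^\mathbb{N}$ and of the equalizer, the underlying map $T\to H$ amounts to a sequence $(g_i : T\to P)_{i\in\mathbb{N}}$ satisfying $\beta\circ g_i = g_{i+1}\circ\alpha$ for every $i$. The additional compatibility $\sigma\circ h = h\circ\gamma$, read component by component, is precisely $g_{i+1} = g_i\circ\gamma$ for every $i$.

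Combining the two relations, the sequence is entirely determined by $g_0$ via $g_i = g_0\circ\gamma^i$, and the whole system collapses to the single constraint $\beta\circ g_0 = (\alpha^* g_0)\circ\gamma$; one checks that this identity at $i=0$ automatically propagates to every $i$ by right-composing with $\gamma^i$, using that $\alpha^*$ (pre-composition in the $X$ variable) commutes with pre-composition by $\gamma$ (acting in the $T$ variable). Uncurrying through the internal-hom adjunction in $\mathcal{C}$, the map $g_0:T\to P$ corresponds to $\widetilde{g}_0:X\times T\to Y$, and the constraint becomes exactly
\[
\beta\circ\widetilde{g}_0 \;=\; \widetilde{g}_0\circ(\alpha\times\gamma),
\]
i.e.\ the condition that $\widetilde{g}_0$ is a morphism $(X\times T,\alpha\times\gamma)\to(Y,\beta)$ in $\mathcal{C}^{endo}$. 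Naturality of currying in $(T,\gamma)$ ensures the bijection is natural, which yields the claimed universal property and simultaneously validates the informal description of elements of $\underline{Hom}_{\mathcal{C}^{endo}}$ as sequences $(f_i)$ satisfying $\beta\circ f_i = f_{i+1}\circ\alpha$, with endomorphism given by the shift.

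The main obstacle is purely notational: three a priori distinct endomorphisms of $P^\mathbb{N}$ coexist in the argument—the shift $\sigma$, post-composition $\beta_*$, and pre-composition $\alpha^*$—and one must separate them carefully when verifying the various commutation relations. Once these are distinguished, the proof is a mechanical manipulation of the currying adjunction; a more abstract route would express this formula as a consequence of the description $\mathcal{C}^{endo} = \mathrm{Fun}(B\mathbb{N},\mathcal{C})$ together with the right adjoint $R$ of Proposition \ref{PropAdjointEndo}, but the direct verification above seems more transparent.
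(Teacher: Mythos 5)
The paper states Proposition \ref{Endendo} without giving any proof, so there is no argument to compare against; your proposal therefore fills a genuine gap. Your direct verification of the universal property is correct: the shift $\sigma$ is well-defined on the equalizer $H$ because the relation $\beta_*f_i = \alpha^*f_{i+1}$ is stable under re-indexing; a morphism $(T,\gamma)\to(H,\sigma)$ unpacks to a sequence $(g_i : T\to P)$ with $\beta_* g_i = \alpha^* g_{i+1}$ and $g_{i+1}=g_i\circ\gamma$; the second family of relations pins everything down from $g_0$, the first family then collapses (by associativity of composition) to the single relation at $i=0$; and currying that single relation gives exactly $\beta\circ\widetilde g_0 = \widetilde g_0\circ(\alpha\times\gamma)$. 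One small remark: the statement's notation $\underline{Hom}_{\mathcal{C}}((X,\alpha),(Y,\beta))$ really denotes the internal hom $\underline{Hom}_{\mathcal C}(X,Y)$ in $\mathcal C$ with $\alpha,\beta$ forgotten, which you correctly interpret as $P$. A further caveat, which applies equally to the paper's own informal phrasing, is that the argument is phrased at a $1$-categorical level; in the $\infty$-categorical setting in which the paper works, ``such that $\beta\circ f_i = f_{i+1}\circ\alpha$'' is really a homotopy, the equalizer is a totalization, and a fully rigorous version would track the coherence data --- for instance by the abstract route you mention at the end, writing $\mathcal{C}^{endo}=\mathrm{Fun}(B\mathbb{N},\mathcal{C})$ and computing the cotensor by $B\mathbb{N}$ via the bar/cosimplicial resolution, which avoids point-level manipulations entirely. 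As a check of the formula and its intended reading, though, your argument is sound.
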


\subsection{Nerve and conerve constructions}

\begin{Def}
Let $\mathcal{C}$ be a presentable category endowed with the cartesian symmetric monoidal structure. We denote $Gp(\mathcal{C})$ the category of group objects in $\mathcal{C}$, it can be described by a full subcategory of $Fun(\Delta^{op},\mathcal{C})$ on objects satisfying the Segal condition. See \cite{Lur09} for details.

The colimit functor, or geometric realization, will be denoted $$B = | - | : Gp(\mathcal{C}) \to \mathcal{C}_*$$
where $\mathcal{C}_*$ is the category of pointed objects in $\mathcal{C}$.
\end{Def}

\begin{Prop}
The functor $B$ admits a right adjoint sending $* \to X$ to its \v{C}ech nerve.
\end{Prop}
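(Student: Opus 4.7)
The plan is to exhibit the adjunction directly by a chain of natural equivalences of mapping spaces, using the universal property of the \v{C}ech nerve. First, given a pointed object $* \to X$, I would check that the \v{C}ech nerve $\check{C}(X)_\bullet$, with $\check{C}(X)_n = * \times_X * \times_X \cdots \times_X *$ ($n+1$ factors), indeed lies in $Gp(\mathcal{C})$: the Segal maps $\check{C}(X)_n \to \check{C}(X)_1 \times_{\check{C}(X)_0} \cdots \times_{\check{C}(X)_0} \check{C}(X)_1$ are equivalences by a direct manipulation of iterated fiber products, and $\check{C}(X)_0 = *$ forces the resulting Segal object to be a group object rather than merely a groupoid object.

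For the adjunction itself, given $G \in Gp(\mathcal{C})$ (so $G_0 \simeq *$) and $(X, * \to X) \in \mathcal{C}_*$, I would establish the chain of natural equivalences
$$Map_{\mathcal{C}_*}(BG, X) \simeq Map_{Fun(\Delta^{op},\mathcal{C})}(G_\bullet, \check{C}(X)_\bullet) \simeq Map_{Gp(\mathcal{C})}(G, \check{C}(X)).$$
The first equivalence combines two universal properties: a pointed map $|G_\bullet| \to X$ is the same datum as an augmentation of $G_\bullet$ to $X$ in $\mathcal{C}$ whose component on $G_0 \simeq *$ recovers the given basepoint, and such augmented simplicial diagrams correspond, by the defining right Kan extension property of the \v{C}ech nerve along $\{[-1]\} \hookrightarrow \Delta_+^{op}$, to simplicial morphisms $G_\bullet \to \check{C}(X)_\bullet$. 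The second equivalence is immediate from full faithfulness of the Segal inclusion $Gp(\mathcal{C}) \subset Fun(\Delta^{op},\mathcal{C})$. Naturality in both variables follows from naturality of each step, and the equivalence then upgrades to the asserted adjunction.

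The main obstacle is the careful $\infty$-categorical justification of the \v{C}ech nerve universal property. One can verify it explicitly by identifying $\check{C}(X)_\bullet$ as the right Kan extension of the augmentation diagram along $\{[-1]\} \hookrightarrow \Delta_+^{op}$, or equivalently invoke Lurie's systematic development of groupoid objects and \v{C}ech nerves in the presentable setting (HTT \S6.1). A slicker abstract alternative would be to first apply the adjoint functor theorem (Theorem \ref{adjointFunct}) -- since $\mathcal{C}$ is presentable, both $Gp(\mathcal{C})$ and $\mathcal{C}_*$ are presentable, and $B$ preserves colimits as a geometric realization -- to produce a right adjoint abstractly, and then identify it with the \v{C}ech nerve via the canonical augmentation $\check{C}(X)_\bullet \to X$; this trades the coherence bookkeeping of the direct construction for a comparison argument.
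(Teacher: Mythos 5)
The paper offers no proof here --- the accompanying remark simply cites \cite[Proposition 6.1.2.11]{Lur09} for Čech nerves --- so you are supplying an argument the paper leaves implicit. Your overall strategy is sound, and the verification that $\check{C}(X)_\bullet$ is a group object (Segal condition plus $\check{C}(X)_0 \simeq *$) is correct. Two points deserve attention.

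There is a technical slip in the Kan extension. The Čech nerve of $* \to X$ is the right Kan extension along $\Delta_{+,\le 0}^{op} \hookrightarrow \Delta_+^{op}$ of the \emph{two-object} diagram $\{[-1] \mapsto X,\, [0] \mapsto *\}$, not along $\{[-1]\} \hookrightarrow \Delta_+^{op}$: since $[-1]$ is terminal in $\Delta_+^{op}$, right Kan extension of the single vertex $X$ from $\{[-1]\}$ just yields the constant augmented diagram at $X$. Equivalently, one can pass to $\mathcal{C}_{/X}$ and right Kan extend $[0] \mapsto (* \to X)$ along $\{[0]\} \hookrightarrow \Delta^{op}$. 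With this correction your first equivalence does go through: a simplicial map $G_\bullet \to \check{C}(X)_\bullet$ is equivalent to an augmentation $G_\bullet \to X$ together with a homotopy identifying its degree-zero component $G_0 \simeq * \to X$ with the basepoint, which is exactly the fiber of $Map_{\mathcal{C}}(|G_\bullet|,X) \to Map_{\mathcal{C}}(*,X)$ over $x_0$.

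The ``slicker'' alternative has a genuine gap. You assert that $B$ preserves colimits ``as a geometric realization,'' but $B$ is the restriction of the cocontinuous functor $\mathrm{colim} : Fun(\Delta^{op},\mathcal{C}) \to \mathcal{C}$ to the full subcategory $Gp(\mathcal{C})$, and $Gp(\mathcal{C})$ is a \emph{reflective} subcategory of $Fun(\Delta^{op},\mathcal{C})$, cut out by limit conditions (the Segal condition and $G_0 \simeq *$). Colimits in $Gp(\mathcal{C})$ are therefore computed by reflecting the ambient colimit, and $|L(\mathrm{colim}^{Fun}_i G^i)| \simeq \mathrm{colim}_i |G^i|$ is not automatic. (In an $\infty$-topos one can salvage this since $B$ is fully faithful onto pointed connected objects, a subcategory closed under colimits, but that is a separate theorem rather than a formal consequence of $B$ being a colimit.) The clean way to run the adjoint-functor-theorem route is to apply Theorem \ref{adjointFunct} to $\check{C} : \mathcal{C}_* \to Gp(\mathcal{C})$ instead: it visibly preserves small limits (it is assembled levelwise from finite limits) and is accessible, hence admits a left adjoint, which you then identify with $B$ via your mapping-space computation. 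This also absorbs the naturality bookkeeping that the direct chain of equivalences would otherwise need to make explicit.
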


\begin{Rq}
Informally the nerve of $U \to X$ has $n$-simplices given by
$$U \times_X U \times_X ... \times_X U$$
with $n+1$ copies of $U$. See \cite[Proposition 6.1.2.11]{Lur09} for details on \v{C}ech nerves.

\end{Rq}

\begin{Def}
Similarly, we denote $coGr(\mathcal{C})$ the category of cogroup objects in $\mathcal{C}$, it can be described by a full subcategory of $Fun(\Delta,\mathcal{C})$ on objects satisfying the Segal condition.

The limit functor, or totalization, will be denoted $$lim_\Delta = Tot(-) : coGr(\mathcal{C}) \to \mathcal{C}_{/*}$$
where $\mathcal{C}_{/*}$ is the category of copointed objects in $\mathcal{C}$.
\end{Def}

\begin{Prop}
The functor $lim_\Delta$ admits a left adjoint where $X \to *$ is sent to its conerve.
\end{Prop}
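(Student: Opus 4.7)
The plan is to obtain this statement by dualizing the previous proposition. Passing to opposite categories, cogroup objects in $\mathcal{C}$ correspond to group objects in $\mathcal{C}^{op}$ and copointed objects in $\mathcal{C}$ correspond to pointed objects in $\mathcal{C}^{op}$, giving equivalences
$$coGr(\mathcal{C}) \simeq Gp(\mathcal{C}^{op})^{op}, \qquad \mathcal{C}_{/*} \simeq (\mathcal{C}^{op})_*^{op}.$$
Under these identifications, the totalization $lim_\Delta : coGr(\mathcal{C}) \to \mathcal{C}_{/*}$ corresponds to the opposite of the geometric realization $B : Gp(\mathcal{C}^{op}) \to (\mathcal{C}^{op})_*$. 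An adjunction $L \dashv lim_\Delta$ in $\mathcal{C}$ is therefore the same data as an adjunction $B \dashv L^{op}$ in $\mathcal{C}^{op}$. The previous proposition identifies the right adjoint of $B$ (in any presentable ambient category) with the Čech nerve, so we would obtain a left adjoint whose value on a copointed object $X \to *$ is the cosimplicial diagram obtained by taking the Čech nerve of $* \to X$ in $\mathcal{C}^{op}$, i.e.\ the conerve of $X \to *$ in $\mathcal{C}$, with $n$-cosimplices $X \sqcup X \sqcup \dots \sqcup X$ ($n+1$ copies) and coface/codegeneracy maps induced by the fold and insertion maps between finite coproducts of $X$.

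The first step I would carry out is to make this dualization rigorous by directly verifying the universal property, rather than quoting the previous proposition in $\mathcal{C}^{op}$ (which is typically not presentable, so that the adjoint functor theorem cited earlier does not apply verbatim). Concretely, I would define the conerve functor
$$\mathrm{Coner} : \mathcal{C}_{/*} \to coGr(\mathcal{C}), \qquad (X \to *) \mapsto \bigl([n] \mapsto X^{\sqcup(n+1)}\bigr),$$
check that its image satisfies the co-Segal condition, and then exhibit a natural equivalence
$$Map_{coGr(\mathcal{C})}(\mathrm{Coner}(X), Y_\bullet) \simeq Map_{\mathcal{C}_{/*}}(X, \mathrm{Tot}(Y_\bullet)).$$
Both sides can be rewritten as the space of cosimplicial augmentations $(X \to Y_\bullet)$ compatible with the copointing, using on the left that maps out of $X^{\sqcup(n+1)}$ are $(n+1)$-tuples of maps out of $X$ and on the right the universal property of totalization; the co-Segal condition on $Y_\bullet$ ensures these two descriptions coincide.

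The main obstacle is the technical $\infty$-categorical bookkeeping with cosimplicial diagrams: I need to ensure that the conerve really does land in $coGr(\mathcal{C})$ (which requires verifying the co-Segal condition for coproducts of a fixed object, the dual statement to the familiar fact that iterated fibre products of a covering satisfy the Segal condition), and that the adjunction unit and counit assemble coherently as natural transformations of $\infty$-functors rather than only pointwise maps. This coherence is most cleanly handled by invoking the opposite-category formalism once more: the co-Segal condition on $X^{\sqcup\bullet}$ is equivalent, via the equivalence $coGr(\mathcal{C}) \simeq Gp(\mathcal{C}^{op})^{op}$, to the (well-known) fact that the Čech nerve of $\ast \to X$ in $\mathcal{C}^{op}$ is a group object there, and the counit/unit diagrams are then formally dual to those of the Čech nerve adjunction.
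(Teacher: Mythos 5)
The paper states this proposition without proof, the intended argument presumably being formal duality with the preceding proposition on $B$; your proposal correctly identifies this strategy, and your remark that the adjoint functor theorem does not apply to $\mathcal{C}^{op}$ is a legitimate caveat, well handled by directly transporting the universal property of the adjunction (which is self-dual and needs no presentability).

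However, your explicit description of the conerve is wrong, and the error is substantive. Recall that the \v{C}ech nerve of $f : U \to X$ has $n$-simplices $U^{\times_X (n+1)}$: the \emph{source} $U$ appears, and $X$ is what one fibers over. Passing to $\mathcal{C}^{op}$, source and target exchange, so the conerve of $X \to *$ must have the target $*$ (the cobasepoint — which here is forced to be the \emph{initial} object of $\mathcal{C}$, since it must be terminal in $\mathcal{C}^{op}$ for the Segal/group condition to hold there) appearing in the $n$-cosimplices. The correct formula is
$$[n] \mapsto \ast \sqcup_X \ast \sqcup_X \dots \sqcup_X \ast \qquad (n+1 \text{ copies of } \ast),$$
the iterated pushouts of $\ast$ along the augmentation $X \to \ast$, not $X^{\sqcup(n+1)}$. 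You have interchanged the roles of $X$ and the cobasepoint, producing the dual of the \v{C}ech nerve of the \emph{wrong} morphism. Concretely, for $\mathcal{C}$ the category of cosimplicial $k$-algebras (which is the setting this construction is used in later in the paper), the conerve of an augmented algebra $A \to k$ should be $[n] \mapsto k^{\otimes_A (n+1)}$ — the bar construction on the augmentation, whose totalization recovers $A$ — not $[n] \mapsto A^{\otimes_k (n+1)}$, which is the Amitsur complex of the unit and is left adjoint to a different limit. Your co-Segal verification and the unit/counit coherence argument would then fail to close, because on $0$-cosimplices your construction gives $X$ rather than $\ast$, so your putative cogroup does not satisfy $L(X)_0 \simeq \ast$. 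Once the formula is corrected, the rest of your outline — checking the co-Segal condition via the opposite-category dictionary, and verifying the adjunction unit/counit by dualizing those of the \v{C}ech nerve — goes through.
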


\subsection{Simplicial sets}
\begin{Def}
Let $X$ be a simplicial set, the cosimplicial algebra of cohomology of $X$ has $n$-simplices given by
$$(k^X)_n \coloneqq k^{X_n}$$
where faces and degeneracies are induced by the ones on $X$.
\end{Def}

\subsection{Chain complexes}

\begin{Thm} [The projective model structure on chain complexes]
For $f : M \to N$ a chain complex morphism in $\bf{dg-mod_k}$, $f$ is said to be :
\begin{itemize}
\item a weak equivalence if it is a quasi-isomorphism, ie it induces an isomorphism on homology groups $H^i(M) \to H^i(N)$.
\item a fibration if it is a levelwise surjection.
\end{itemize}

This data makes $\bf{dg-mod_k}$ into a model category called the "projective model structure on chain complexes". We will denote this model category $\bf{dg-mod_{k, proj}}$
\end{Thm}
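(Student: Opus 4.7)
The plan is to verify this is a cofibrantly generated model structure via Kan's recognition theorem (or, equivalently, Quillen's small object argument). First I would introduce the standard generating (co)fibrations adapted to cohomological conventions. For each $n \in \mathbb{Z}$, let $S^n$ denote the chain complex with $k$ concentrated in degree $n$ and zero differential, and let $D^n$ denote the contractible complex with $k$ in degrees $n-1$ and $n$, connected by the identity differential. Set
$$I = \{S^n \hookrightarrow D^{n+1}\}_{n \in \mathbb{Z}}, \qquad J = \{0 \to D^n\}_{n \in \mathbb{Z}}.$$
The first identification to establish is that a morphism $f : M \to N$ has the right lifting property with respect to $J$ if and only if it is a levelwise surjection: a lift $D^n \to M$ against $f$ amounts to lifting a single element in $N^{n-1}$ through $f^{n-1}$, plus choosing a primitive, which can always be arranged iff $f$ is surjective in degree $n-1$. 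Similarly, a direct computation shows that $f$ has the right lifting property with respect to $I$ iff $f$ is a levelwise surjection and a quasi-isomorphism.

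Next I would verify the structural hypotheses of the recognition theorem. The category $\bf{dg-mod_k}$ is bicomplete, with all (co)limits computed degreewise. The class of weak equivalences (quasi-isomorphisms) obviously satisfies the two-out-of-three property and is closed under retracts, since the same is true of isomorphisms of $k$-modules applied degreewise to cohomology. The domains $S^n$ and $0$ of the generating maps are $\omega$-small, being finitely generated in each degree, so the small object argument applies to produce the required $(I, J\text{-inj})$ and $(J, I\text{-inj})$ functorial factorizations.

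The remaining content, and the main obstacle, is to prove that every relative $J$-cell complex is both a cofibration and a weak equivalence. For a single pushout along $0 \to D^n$, the result is a degreewise split inclusion $M \hookrightarrow M \oplus D^n$ with contractible cokernel $D^n$, hence a quasi-isomorphism. Closure under transfinite composition then follows because $H^*$ commutes with filtered colimits of degreewise injections (a standard consequence of exactness of filtered colimits of $k$-modules). Combining this with the characterization of $J\text{-inj}$ as the intersection $I\text{-inj} \cap \{\text{weak equivalences}\}$ established in the first paragraph, Kan's recognition theorem (see Hovey, \emph{Model Categories}, Theorem 2.1.19) produces the desired cofibrantly generated model structure, with cofibrations being retracts of relative $I$-cell complexes and fibrations being the levelwise surjections as stipulated. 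The hardest calculation is really the identification of $J\text{-inj}$ with surjective quasi-isomorphisms, which requires constructing lifts of cycles by a careful use of the contractibility of $D^n$; everything else is bookkeeping.
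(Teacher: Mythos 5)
The paper does not prove this theorem; it simply cites Hovey's \emph{Model Categories} (the reference \cite{Ho91}), where the statement is Theorem 2.3.11 (adapted to cohomological grading). Your outline is exactly the argument behind that citation: identify generating sets $I$, $J$, check $J$-injectives are the levelwise surjections, check $I$-injectives are the surjective quasi-isomorphisms, note that $J$-cell complexes are acyclic cofibrations, and invoke Kan's recognition theorem. So conceptually you are reconstructing the cited proof rather than finding a different route.

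There is, however, an indexing slip in your generating cofibrations that would break the argument if followed literally. With your conventions ($D^n$ has $k$ in degrees $n-1$ and $n$ with the differential \emph{increasing} degree, $S^n$ has $k$ in degree $n$), the unique cycle of $D^{n+1}$ sits in degree $n+1$, not $n$. Consequently the only chain map $S^n \to D^{n+1}$ is the zero map: commuting with differentials would force $\mathrm{id}_k = 0$ in degree $n \to n+1$. Taking $I = \{S^n \to D^{n+1}\}$ therefore gives a family of zero maps, whose right lifting property characterizes something entirely different from surjective quasi-isomorphisms. The correct inclusions for cohomological grading are $S^n \hookrightarrow D^n$ (the sphere hitting the cycle degree of the disk, with cokernel $S^{n-1}$), or equivalently reindexed as $S^{n+1} \hookrightarrow D^{n+1}$. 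Once you make that correction, your lifting computation in the first paragraph — reduce to the acyclic kernel of $f$ and find a primitive there — goes through. Also, in the last paragraph the identity you need from Kan's theorem is $I\text{-inj} = J\text{-inj} \cap W$ (trivial fibrations are exactly the surjective quasi-isomorphisms), not ``$J\text{-inj} = I\text{-inj} \cap W$'' as written; the latter would wrongly assert that every levelwise surjection is a quasi-isomorphism. Both slips are cosmetic relative to the overall strategy, which is sound and is precisely the proof the paper is pointing to.
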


\begin{proof}
See \cite{Ho91}.
\end{proof}

\begin{Thm} [The injective model structure on chain complexes]
For $f : M \to N$ a chain complex morphism in $\bf{dg-mod_k}$, $f$ is said to be :
\begin{itemize}
\item a weak equivalence if it is a quasi-isomorphism, ie it induces an isomorphism on homology groups $H^i(M) \to H^i(N)$.
\item a cofibration if it is a levelwise injection.
\end{itemize}

This data makes $\bf{dg-mod_k}$ into a model category called the "injective model structure on chain complexes". We will denote this model category $\bf{dg-mod_{k, inj}}$
\end{Thm}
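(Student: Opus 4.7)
The strategy is to apply Jeff Smith's recognition theorem for combinatorial model categories (see Beke, \emph{Sheafifiable homotopy model categories}). The underlying category $\bf{dg-mod_k}$ is a Grothendieck abelian category — it is presentable, admits the family $\{D^n k\}_{n \in \mathbb{Z}}$ of disk complexes as a generator, and has exact filtered colimits — which is precisely the context in which the theorem applies cleanly.

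First I would dispatch the formal axioms. The 2-out-of-3 and retract stability properties for quasi-isomorphisms follow from the long exact sequence on cohomology and from the corresponding properties of isomorphisms in $k\text{-Mod}$. Closure of monomorphisms under retract, pushout, and transfinite composition is standard in any Grothendieck abelian category. Accessibility of the class of quasi-isomorphisms — i.e. its closure under sufficiently filtered colimits in the arrow category — follows from the fact that cohomology commutes with filtered colimits in $\bf{dg-mod_k}$.

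Next I would produce a set $I$ of generating cofibrations. Choosing a regular cardinal $\kappa > |k| + \aleph_0$, one takes $I$ to be a set of representatives of monomorphisms $A \hookrightarrow B$ with the underlying graded $k$-module of $B$ of cardinality less than $\kappa$. A standard transfinite induction, writing every monomorphism as a $\kappa$-filtered colimit of its $\kappa$-bounded subobjects and using exactness of filtered colimits, shows that $I$ generates all monomorphisms under pushout and transfinite composition.

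The main obstacle is verifying Smith's ``bounded trivial cofibration'' hypothesis: given a trivial cofibration $f : A \hookrightarrow B$ and a $\kappa$-small subcomplex $S \subset B$, one must exhibit a $\kappa$-small subcomplex $T \subset B$ containing $S \cup (A \cap \langle S\rangle)$ such that $A \cap T \hookrightarrow T$ is again a trivial cofibration. This is handled by a classical zigzag enlargement: one alternately enlarges $T$ inside the $\kappa$-presentable object so that every cohomology class of $T/(A \cap T)$ which is killed in $B/A$ is witnessed by a bounding chain already inside $T$, and iterates $\kappa$ many times, using accessibility of $W$ to ensure the construction closes off. Once this is established, the small object argument delivers the two functorial factorizations, the lifting axioms follow formally, and fibrations are identified as maps with the right lifting property against the resulting set $J$ of bounded trivial cofibrations — yielding the injective model structure $\bf{dg-mod_{k,inj}}$.
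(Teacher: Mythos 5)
Your outline is correct and is the standard modern route to the injective model structure on a Grothendieck abelian category, via Smith's recognition theorem (Beke, Hovey, Lurie). The paper gives no argument of its own here and simply cites \cite{Dun10}, so there is nothing in the text to compare against beyond a reference choice; your proposal supplies an actual proof where the paper defers. Two hypotheses of Smith's theorem are, however, left implicit in your sketch and should be stated and verified. First, one must check that any map $f : X \to Y$ with the right lifting property against all monomorphisms is a quasi-isomorphism; this is a genuine hypothesis, not a formal consequence of the rest. It holds because $f$ is a split epimorphism (lift $\mathrm{id}_Y$ against $0 \hookrightarrow Y$), so $X \cong K \oplus Y$ with $K = \ker f$, and $K$ is acyclic: a cycle $z \in Z^n(K)$, viewed as a map from $k$ concentrated in degree $n$, lifts against the monomorphism of that complex into the contractible two-term complex $k \xrightarrow{\mathrm{id}} k$ in degrees $n-1, n$, taking the bottom map to $Y$ to be zero; the resulting lift lands in $K$ and exhibits a bounding element for $z$ inside $K$. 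Second, one must check that the class of monic quasi-isomorphisms is stable under pushout along arbitrary maps and under transfinite composition. Since pushout along a monomorphism in a Grothendieck abelian category is again a monomorphism with the same cokernel, and monic quasi-isomorphisms are precisely the monomorphisms with acyclic cokernel, stability under pushout is immediate; the transfinite case then uses exactness of filtered colimits to pass acyclicity to the composite. With those two checks supplied, your argument closes up as you describe.
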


\begin{proof}
See \cite{Dun10}.
\end{proof}

\begin{Def}
We define the naive coconnective truncation of a chain complex on $k$ as the left adjoint to inclusion of $\bf{dg-mod_k^{\ge 0}} \subset \bf{dg-mod_k}$
It can be described as
$$(A_{-1} \rightarrow A_0 \rightarrow A_1 \rightarrow A_2) \mapsto (0 \to A_0 \rightarrow A_1 \rightarrow A_2)$$
It does not preserve quasi-isomorphisms.

We define the coconnective truncation of a chain complex on $k$ as the right adjoint to inclusion of $\bf{dg-mod_k^{\ge 0}} \subset \bf{dg-mod_k}$
It can be described as
$$(A_{-1} \rightarrow A_0 \rightarrow A_1 \rightarrow A_2) \mapsto (0 \to coker(d) \rightarrow A_1 \rightarrow A_2)$$
It does induces a functor of $\infty$-categories $Mod_k \to Mod_k^{\ge 0}$ which is right adjoint to the inclusion.

We define the naive connective truncation of a chain complex on $k$ as the right adjoint to inclusion of $\bf{dg-mod_k^{\le 0}} \subset \bf{dg-mod_k}$
It can be described as $$(A_{-2} \rightarrow A_{-1} \rightarrow A_0 \rightarrow A_1) \mapsto (A_{-2} \rightarrow A_{-1} \rightarrow A_0 \rightarrow 0)$$
It does not preserve quasi-isomorphisms.

We define the connective truncation of a chain complex on $k$ as the left adjoint to inclusion of $\bf{dg-mod_k^{\le 0}} \subset \bf{dg-mod_k}$
It can be described as $$(A_{-2} \rightarrow A_{-1} \rightarrow A_0 \rightarrow A_1) \mapsto (A_{-2} \rightarrow A_{-1} \rightarrow Ker(d) \rightarrow 0)$$
It does induces a functor of $\infty$-categories $Mod_k \to Mod_k^{\le 0}$ which is left adjoint to the inclusion.

\end{Def}

\begin{Def}
A commutative differential graded algebra is a chain complex $(A,d)$ endowed with a graded algebra structure over $k$ such that
\begin{itemize}
\item The multiplication is graded commutative, that is, for $x \in A_n$ and $y \in A_m$, we have $xy = (-1)^{mn}yx$ in $A_{m+n}$.
\item If $x \in A_{2n+1}$ is in odd degree, $x^2=0$.
\item The differential $d$ satisfies the Leibniz rule
$$d(xy) = (dx)y+(-1)^m x(dy)$$
for $x \in A_m$.
\end{itemize}
\end{Def}

\begin{Rq}
The category of commutative differential graded algebras is identified with a subcategory of the category of commutative algebra objects in the $1$-category $\bf{dg-mod_k}$ on objects satisfying $x^2=0$ for $x \in A_{2n+1}$.
\end{Rq}

\subsection{Graded mixed chain complexes}

We recall here elementary results on graded mixed complexes. We recommend \cite{TV09} for definitions and usage of graded mixed complexes in derived geometry.

\begin{Def}
We define the $1$-category of graded mixed complexes $\bf{\epsilon-mod_k^{gr}}$ as the module category $\bf{k[\epsilon]-dg-mod^{gr}_k}$ where $k[\epsilon]$ is the commutative algebra object in $\bf{dg-mod_k^{gr}}$ where $\epsilon$ is in weight $1$ and degree $-1$, therefore the action of $\epsilon$ on a graded mixed module is a morphism $M \to M((1))[-1]$ : the operator decreases the degree, contrary to the differential, and increases the weight.
\end{Def}

\begin{Rq}
The object $k[\epsilon]$ as an algebra object in $\bf{dg-mod_k^{gr}}$ can be alternatively defined as the cohomology algebra of the circle $H^*(S^1;k)$. In fact Proposition \ref{grMixedEqui} makes this identification more precise.
\end{Rq}

\begin{Def}
The object $k[\epsilon]$ is naturally a coalgebra in graded differential graded modules using the comultiplication
$$ \Delta : \epsilon \in k[\epsilon] \mapsto 1 \otimes \epsilon + \epsilon \otimes 1 \in k[\epsilon] \otimes_k k[\epsilon]$$
and counit
$$ n : \epsilon \in k[\epsilon] \mapsto 0 \in k$$

This makes $\bf{\epsilon-dg-mod_k^{gr}}$ into a symmetric monoidal category.
\end{Def}

\begin{Def}
The $\infty$-category of graded mixed complex is the localization of $\bf{\epsilon-dg-mod_k^{gr}}$ at quasi-isomorphisms. It is naturally a symmetric monoidal $\infty$-category.
\end{Def}

\begin{Prop}
Defining fibrations and trivial fibrations in $\bf{\epsilon-dg-mod_k^{gr}}$ as the ones in $\bf{dg-mod_{k,proj}}$, makes it into a model category called the projective model structure on $\bf{\epsilon-dg-mod_k^{gr}}$.

Defining cofibration and trivial cofibration in $\bf{\epsilon-dg-mod_k^{gr}}$ as the ones in $\bf{dg-mod_{k,proj}}$, makes it into a model category called the injective model structure on $\bf{\epsilon-dg-mod_k^{gr}}$.

They both present the same $\infty$-category denoted $\epsilon-Mod^{gr}_k$.
\end{Prop}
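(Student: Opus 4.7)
The plan is to obtain both model structures by transfer from the corresponding model structures on $\mathbf{dg\text{-}mod_k^{gr}}$, viewing $\mathbf{\epsilon\text{-}dg\text{-}mod_k^{gr}}$ as modules over the commutative monoid $k[\epsilon]$ inside $\mathbf{dg\text{-}mod_k^{gr}}$. First, I would equip $\mathbf{dg\text{-}mod_k^{gr}}$ itself with weight-wise projective and injective model structures obtained as a product (indexed by $\mathbb{Z}$) of the model structures on $\mathbf{dg\text{-}mod_k}$ recalled in the previous theorems. In both cases weak equivalences are the morphisms inducing isomorphisms on cohomology in every weight.

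For the projective structure, I would apply Kan's transfer theorem along the free--forgetful adjunction
$$k[\epsilon] \otimes (-) \;:\; \mathbf{dg\text{-}mod_k^{gr}} \;\rightleftarrows\; \mathbf{\epsilon\text{-}dg\text{-}mod_k^{gr}} \;:\; U.$$
Fibrations and weak equivalences are defined to be the maps that become such after applying $U$. The hypotheses of Kan's theorem reduce to checking that the functor $k[\epsilon]\otimes(-)$ sends generating trivial cofibrations to weak equivalences; this is immediate because $k[\epsilon]$ is a cofibrant graded chain complex (free of rank two as a graded $k$-module), so tensoring with it preserves quasi-isomorphisms, and smallness transfers from the underlying category. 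For the injective structure, I would declare cofibrations to be the levelwise injective maps of underlying graded complexes and weak equivalences to be the quasi-isomorphisms; existence then follows either from the general result of Beke/Cisinski on combinatorial model categories of modules with monomorphism cofibrations, or directly from a small object argument using as generating (trivial) cofibrations a set of representatives of monomorphisms (resp.\ trivial monomorphisms) between $\lambda$-presentable objects for a sufficiently large regular cardinal $\lambda$.

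To compare the two structures, observe that they share the same underlying category and the same class of weak equivalences (quasi-isomorphisms in the underlying graded complex). Every projective cofibration is in particular a levelwise split monomorphism in $\mathbf{dg\text{-}mod_k^{gr}}$, hence a levelwise injection, hence an injective cofibration. Therefore the identity adjunction
$$\mathrm{id} \;:\; \mathbf{\epsilon\text{-}dg\text{-}mod_{k,\mathrm{proj}}^{gr}} \;\rightleftarrows\; \mathbf{\epsilon\text{-}dg\text{-}mod_{k,\mathrm{inj}}^{gr}} \;:\; \mathrm{id}$$
is a Quillen adjunction, and since both model structures invert exactly the same morphisms it is a Quillen equivalence. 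Consequently both model structures present the same $\infty$-category, which we denote $\epsilon\text{-}Mod_k^{gr}$.

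The main obstacle is the injective part: the class of levelwise monomorphisms is not cofibrantly generated by an obvious small set, and one must invoke the combinatorial-model-category machinery (or reproduce the relevant small object argument with accessibility estimates on $\mathbf{\epsilon\text{-}dg\text{-}mod_k^{gr}}$) to exhibit explicit generating (trivial) cofibrations and verify that trivial cofibrations are stable under pushout and transfinite composition. The projective half and the comparison of the two structures are, by contrast, fairly formal once Kan's transfer theorem is in hand.
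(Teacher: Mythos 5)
Your proof is correct, and since the paper does not reproduce an argument but simply refers the reader to \cite{TV09}, there is no detailed in-text proof to compare against; your reconstruction (projective structure by Kan transfer along the free--forgetful adjunction $k[\epsilon]\otimes(-)\dashv U$, injective structure via the Beke--Cisinski accessibility machinery, and coincidence of weak equivalences to conclude both present the same $\infty$-category) is the standard route and is essentially what the cited reference does. One small imprecision worth tightening: the hypothesis of the transfer theorem is not that $k[\epsilon]\otimes(-)$ sends generating trivial cofibrations to weak equivalences, but that $U$ sends all relative $k[\epsilon]\otimes J$-cell complexes to weak equivalences; this does hold here because $k[\epsilon]\otimes M\cong M\oplus M(1)[-1]$ on underlying graded complexes, so $U\bigl(k[\epsilon]\otimes J\bigr)$ consists of trivial cofibrations, $U$ preserves the relevant colimits, and trivial cofibrations in $\mathbf{dg\text{-}mod_{k,\mathrm{proj}}^{gr}}$ are closed under pushout and transfinite composition. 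You also correctly read the statement: the second half of the proposition surely intends $\mathbf{dg\text{-}mod_{k,\mathrm{inj}}}$ (cofibrations are levelwise injections), not $\mathbf{dg\text{-}mod_{k,\mathrm{proj}}}$ as written.
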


\begin{proof}
See \cite{TV09} for details.
\end{proof}

\subsection{The bar construction}

We recall the following construction which is useful to construct resolutions, see \cite[\S I.1.5]{Ill71} for details.

\begin{Def}
Let $T$ be a comonad on a category $\mathcal{C}$. Let $c \in \mathcal{C}$, we define an augmented simplicial object in $\mathcal{C}$ by $T^{\bullet+1}(c) \to c$. We call this the bar construction and denote the simplicial object $Bar_C(c)$.
\end{Def}

\section{Reminders on algebra}

\subsection{Simplicial and cosimplicial rings}

\subsubsection{Simplicial algebras}

\begin{Def}
We define $SCR_k$ the $\infty$-category of simplicial commutative algebra over $k$ as the localization of the $1$-category of simplicial commutative algebra over $k$ with respect to the model structure where we ask weak equivalences to be isomorphisms on homology and fibrations to be level-wise surjections. From \cite[\S 25.1.1]{Lur18}, this category admits a useful alternative description as the category generated from $Poly_k$ under sifted colimits, where $Poly_k \subset CRing_k$ is the full sub-$1$-category of polynomial rings on a finite set of variables. Explicitly this category can be defined by the full subcategory $Fun(Poly_k^{op},\mathcal{S})$ on functors preserving finite products.
\end{Def}

\begin{Def}
The simplicial version of the Dold-Kan construction induces a functor
$$\theta^{cn} : SCR_k \to \mathbb{E}_\infty Alg_k^{cn}$$
where $\mathbb{E}_\infty Alg_k^{cn} = CAlg(Mod_k^{cn})$ is the $\infty$-category of connective $\mathbb{E}_\infty$-algebra on $k$.
\end{Def}

\begin{Prop}
The functor $\theta^{cn}$ previously defined preserves small limits and colimits and is conservative.
\end{Prop}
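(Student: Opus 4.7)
The plan is to factor $\theta^{cn}$ through the forgetful functors to $Mod_k^{cn}$ and transport each property from there. Denote by $U_{SCR}: SCR_k \to Mod_k^{cn}$ and $U_{\mathbb{E}_\infty}: \mathbb{E}_\infty Alg_k^{cn} \to Mod_k^{cn}$ the underlying-module functors. Both are monadic, conservative, and preserve small limits and sifted colimits (the relevant free algebra monads commute with sifted colimits). By construction of $\theta^{cn}$, one has a canonical equivalence
\[
U_{\mathbb{E}_\infty} \circ \theta^{cn} \simeq U_{SCR},
\]
since the Dold-Kan construction is precisely what is used to pass from the simplicial to the chain-complex side on underlying modules. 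Conservativity of $\theta^{cn}$ is then immediate: a morphism $f$ in $SCR_k$ is an equivalence iff $U_{SCR}(f)$ is, iff $U_{\mathbb{E}_\infty}(\theta^{cn}(f))$ is, iff $\theta^{cn}(f)$ is. The same pattern handles small limits and sifted colimits: for a diagram $D$ of the appropriate shape in $SCR_k$, the comparison map between $\theta^{cn}$ of the (co)limit of $D$ and the (co)limit of $\theta^{cn} \circ D$ becomes an equivalence after applying $U_{\mathbb{E}_\infty}$, because the triangle identifies both with the (co)limit of $U_{SCR} \circ D$ in $Mod_k^{cn}$; conservativity of $U_{\mathbb{E}_\infty}$ concludes.

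To upgrade preservation of sifted colimits to preservation of \emph{all} small colimits, I would separately treat finite coproducts. In both categories the coproduct is given by the derived tensor product over $k$, with unit $k$ as initial object: $A \sqcup B \simeq A \otimes^\mathbb{L}_k B$ in $SCR_k$ and in $\mathbb{E}_\infty Alg_k^{cn}$. The functor $\theta^{cn}$ is symmetric monoidal for the $\otimes^\mathbb{L}_k$-structures, extending the symmetric monoidal Dold-Kan equivalence between simplicial $k$-modules and $Mod_k^{cn}$ (see \cite{Lur17}), so it preserves binary tensor products and the unit, hence all finite coproducts. Combined with sifted colimit preservation, this yields all small colimits.

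The one nontrivial ingredient is the symmetric monoidal refinement of $\theta^{cn}$: at the $1$-categorical level the Eilenberg-Zilber shuffle map and the Alexander-Whitney map only make normalization \emph{lax} symmetric monoidal, and the $\infty$-categorical upgrade to a genuine symmetric monoidal equivalence between simplicial $k$-modules and $Mod_k^{cn}$ requires the formalism of \cite{Lur17}. Everything else in the proof reduces, via the triangle above, to well-known properties of the monadic forgetful functors $U_{SCR}$ and $U_{\mathbb{E}_\infty}$.
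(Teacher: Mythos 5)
The paper's own ``proof'' is just a citation to \cite[Proposition 25.1.2.2]{Lur18}, so you are supplying an actual argument where the paper supplies none, and your overall structure --- the triangle $U_{\mathbb{E}_\infty}\circ\theta^{cn}\simeq U_{SCR}$, monadicity over $Mod_k^{cn}$, conservativity, and preservation of limits and sifted colimits all read off from that triangle --- is exactly the right skeleton and is in the spirit of Lurie's proof.

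The one place where I would push back is the treatment of finite coproducts. You write that $\theta^{cn}$ is symmetric monoidal ``extending the symmetric monoidal Dold-Kan equivalence'' and cite \cite{Lur17} for the latter. But the symmetric monoidal Dold-Kan equivalence gives an equivalence $sMod_k\simeq Mod_k^{cn}$ of symmetric monoidal $\infty$-categories, and hence an equivalence $CAlg(sMod_k)\simeq CAlg(Mod_k^{cn})=\mathbb{E}_\infty Alg_k^{cn}$ --- but $SCR_k$ is \emph{not} $CAlg(sMod_k)$. It is the $\infty$-categorical localization of the $1$-category of strictly commutative simplicial rings, which is a genuinely different (and, outside characteristic $0$, inequivalent) category, and $\theta^{cn}$ is the nontrivial comparison functor between them. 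So the monoidality of $\theta^{cn}$ does not simply ``extend'' the monoidal Dold-Kan equivalence; that reasoning would collapse $\theta^{cn}$ to an equivalence, which is false. The clean way to close this step is to stay inside your own framework: the comparison map $\theta^{cn}(A)\sqcup\theta^{cn}(B)\to\theta^{cn}(A\sqcup B)$ can be tested after applying the conservative $U_{\mathbb{E}_\infty}$, and both $U_{SCR}$ and $U_{\mathbb{E}_\infty}$ send coproducts of algebras to tensor products of underlying modules (for $U_{\mathbb{E}_\infty}$ this is the standard fact about $CAlg(\mathcal{C})\to\mathcal{C}$; for $U_{SCR}$ it holds because the coproduct in $SCR_k$ is computed by the derived tensor product of underlying modules). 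The compatibility of these identifications with your commuting triangle then makes the comparison map an equivalence, without appealing to a monoidal refinement of $\theta^{cn}$ that would require independent justification. With that adjustment the argument is complete and correct.
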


\begin{proof}
See \cite[Proposition 25.1.2.2]{Lur18}.
\end{proof}

\begin{Def}
Let $A$ be a simplicial algebra and $M$ a simplicial module over $A$. The square zero extension $A \oplus M$ of $A$ by $M$ is given by a square zero extension degreee wise :
$$(A \oplus M)_n \coloneqq A_n \oplus M_n$$
with the canonical faces and degeneracies maps, every element in $M_n$ squares to zero.
\end{Def}

\subsubsection{Cosimplicial algebras}

\begin{Def}
Let $f : A \to B$ be a morphism of cosimplicial algebras.
\begin{itemize}

\item The map $f$ is said to be a weak equivalence when it induces an equivalence on all the homology groups.

\item The map $f$ is said to be a fibration if it is levelwise surjective
\end{itemize}
\end{Def}

\begin{Prop}
The data of weak equivalences and fibrations defines a simplicial model structure on $\bf{coSCR_k}$.
\end{Prop}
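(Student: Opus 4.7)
The plan is to right-induce a model structure on $\bf{coSCR_k}$ from the classical model structure on cosimplicial $k$-modules along the free-symmetric-algebra adjunction, and then to upgrade it to a simplicial model structure by verifying the pushout-product axiom SM7.

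First I would recall the analogous model structure on cosimplicial $k$-modules (which I denote $\bf{coMod_k}$): weak equivalences are quasi-isomorphisms of the associated normalized cochain complex, and fibrations are levelwise surjections. Via the Dold-Kan correspondence, $\bf{coMod_k}$ is equivalent as an abelian category to $\bf{dg-mod_k^{\geq 0}}$, and this structure is transferred from the projective model structure on non-negatively graded cochain complexes recalled in the excerpt. It is cofibrantly generated by sets $I$ and $J$ of generating (acyclic) cofibrations built from standard sphere-disk cosimplicial modules.

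Next I would apply Kan's transfer theorem to the adjunction
$$Sym : \bf{coMod_k} \rightleftarrows \bf{coSCR_k} : U,$$
where $Sym$ is the levelwise symmetric $k$-algebra functor. Declare a map $f$ in $\bf{coSCR_k}$ to be a weak equivalence (resp. fibration) iff $U(f)$ is one. The smallness hypotheses hold because $\bf{coSCR_k}$ is locally presentable and $U$ preserves filtered colimits. The hard part will be verifying that the transferred generating acyclic cofibrations $Sym(J)$, together with their relative cell complexes, remain weak equivalences. Since every object of $\bf{coMod_k}$ is fibrant here (the terminal object is zero and every map surjects onto zero), I would produce a functorial path object on $\bf{coSCR_k}$ by cotensoring with a cosimplicial resolution of the interval; its underlying cotensor in $\bf{coMod_k}$ provides a factorization of the diagonal $B \to B \times B$ into a weak equivalence followed by a fibration, which is the input needed to run the standard path-object argument and conclude the transfer.

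Finally, to endow $\bf{coSCR_k}$ with its simplicial enrichment, I would define the mapping space by $Map(A,B)_n := Hom_{\bf{coSCR_k}}(A, B^{\Delta^n})$, where the cotensor $B^{\Delta^n}$ is inherited from $\bf{coMod_k}$ via the forgetful functor (cotensors in a category of algebras coincide with cotensors of the underlying modules). To verify SM7 I would reduce to the generators $Sym(I)$, $Sym(J)$ and the standard generators of $\bf{sSet}$; by the adjunction $Sym \dashv U$ this reduces SM7 in $\bf{coSCR_k}$ to SM7 in $\bf{coMod_k}$, which itself follows from Dold-Kan together with the standard simplicial enrichment of non-negatively graded cochain complexes.
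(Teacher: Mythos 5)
The paper gives no proof of this statement, deferring to \cite[Theorem 2.1.2]{To06}; your outline --- right transfer along the free-forgetful adjunction from cosimplicial modules via Kan's theorem with the path-object criterion, and cotensoring for the simplicial enrichment --- is essentially the argument that reference gives, and it is correct in structure. The only point worth stressing is that the transfer argument must not rely on $Sym$ preserving weak equivalences (which fails outside characteristic zero); your path-object route sidesteps this because it only requires the factorization $B \to B^{\Delta^1} \to B \times B$ to be a weak equivalence followed by a fibration on underlying cosimplicial modules, which holds formally from $\Delta^1 \to *$ being a simplicial homotopy equivalence, and likewise SM7 should be checked in its dual form on cotensors, where it is created by the forgetful functor rather than obtained by pushing out along $Sym$.
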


\begin{proof}
See \cite[Theorem 2.1.2]{To06}.
\end{proof}

\begin{Def}
We define the $\infty$-category of cosimplicial commutative rings $coSCR_k$ over $k$ as the localization of $\bf{coSCR_k} = Fun(\Delta,\bf{CRing_k})$ with respect to the simplicial model structure defined above.
\end{Def}

\begin{Def}
The cosimplicial version of the Dold-Kan construction induces a functor
$$\theta^{ccn} : coSCR \to CAlg_k^{ccn}$$
Furthermore, $\theta$ preserves small limits and colimits and is conservative.
\end{Def}

\begin{Def}
We recall the cotensorization of  a cosimplicial algebra $A$ by a simplicial set $X$, it is the cosimplicial algebra which has as $n$-simplices the algebra
$$(A^X)_n \coloneqq \prod_{X_n} A_n$$

The mapping space of cosimplicial algebras, by requiring the usual universal property, is given by the following construction. Let $A, B \in coSCR_k$,
$$Map_{coSCR_k}(A,B)_n \coloneqq Hom_{coSCR_k}(A,B^{\Delta^n})$$
\end{Def}

\subsubsection{Dold-Kan correspondance}

\begin{Def}
For $\mathcal{A}$ an abelian category, the normalization construction is a functor : $$\bf{\mathcal{A}^{\Delta^{op}}} \to \bf{Ch^{\le 0}(\mathcal{A})}$$
where we denote by $\bf{Ch^{\le 0}(\mathcal{A})}$ the category of connective chain complexes in $\mathcal{A}$.
A simplicial object $E$ is sent to a chain complex having in degree $-n$ the element $\bigcap_{i=1}^{n} d^n_i$, meaning the intersection of the $i$-th face maps for $i>0$ : the induced differential is given by the remaining $0$-face map.

\end{Def}

\begin{Prop}
The normalization functor $N$ admits a right adjoint $D$ called denormalization. And they are equivalences of categories. Let $A$ be a simplicial ring, there is a Quillen equivalence
$$\bf{NA-dg-mod^{\le 0}} \simeq \bf{A-Mod}$$
between the category of connective dg-modules over $NA$ and the category of simplicial modules over $A$.
\end{Prop}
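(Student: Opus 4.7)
The plan is to first establish the classical Dold-Kan correspondence at the level of underlying categories, then to promote it to the module-category statement and finally to a Quillen equivalence.

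First, I would construct the denormalization functor $D : \mathbf{Ch}^{\le 0}(\mathcal{A}) \to \mathcal{A}^{\Delta^{op}}$ explicitly by the formula
$$(DC)_n = \bigoplus_{[n] \twoheadrightarrow [k]} C_{-k},$$
where the sum is indexed by surjections in $\Delta$, with face and degeneracy operators given by the standard combinatorial recipe (degeneracies act as inclusions into the summand indexed by the composed surjection; faces are determined by the Moore decomposition rules, with the unique $0$-face into the non-degenerate summand $C_{-k}$ acting as the differential of $C$). Unwinding the definitions, one checks directly that $N$ and $D$ are adjoint: for a simplicial object $E$ one has $E_n = \bigoplus_{[n]\twoheadrightarrow[k]} (NE)_{-k}$ via the Eilenberg decomposition into non-degenerate parts, which both produces the counit $DN \Rightarrow \mathrm{Id}$ as an isomorphism and the unit $\mathrm{Id} \Rightarrow ND$ as an isomorphism (the second is essentially a direct inspection of the intersection-of-faces formula against the explicit sum defining $D$). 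This gives the first assertion, that $N$ and $D$ are inverse equivalences of $1$-categories.

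Next, to descend the equivalence to modules, I would equip $N$ and $D$ with (lax) symmetric monoidal structures: the shuffle/Eilenberg--Zilber map furnishes a lax monoidal structure on $D$, while the Alexander--Whitney map furnishes a lax monoidal structure on $N$, and together they are compatible up to natural homotopy. For a simplicial commutative ring $A$, $N A$ becomes a differential graded algebra, and the equivalence $N \dashv D$ sends simplicial $A$-modules to dg $NA$-modules and vice versa; the key point is that multiplication by $A$ on a simplicial module corresponds, under normalization, to the $NA$-action obtained through the Alexander--Whitney map, and symmetrically for $D$. Since $N$ and $D$ are already inverse equivalences of the underlying additive categories, the module-level functors are also inverse equivalences of $1$-categories.

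Finally, for the Quillen equivalence I would set up both sides with the same class of weak equivalences (quasi-isomorphisms, which on the simplicial side coincide with $\pi_*$-isomorphisms by the classical computation $\pi_*(E) \cong H_*(NE)$) and with levelwise surjections as fibrations. Both functors $N$ and $D$ clearly preserve levelwise surjections (surjectivity is stable under the direct-sum formulas defining $D$ and under the intersection-of-kernels formula defining $N$), and they preserve quasi-isomorphisms because on homotopy groups they implement the same isomorphism $\pi_* \cong H_*$. Combined with the fact that they are an adjoint equivalence of $1$-categories, this upgrades the adjunction to a Quillen equivalence between $\mathbf{A\text{-}Mod}$ and $\mathbf{NA\text{-}dg\text{-}mod^{\le 0}}$.

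The main obstacle is the middle step: verifying that the monoidal comparison maps between $N$ and $D$ are compatible enough to give a genuine (not merely weak) equivalence on the $1$-categorical level of modules, and not just on the homotopy categories. In practice this is handled by noting that $NA$ itself has a strict dg-algebra structure and that the simplicial $A$-module structure on $DM$ is uniquely determined by its adjoint dg $NA$-module structure, so the ambiguity from lax monoidality collapses once one restricts to modules over a fixed (co)algebra object.
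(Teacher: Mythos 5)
The paper offers no proof here; it simply cites Schwede--Shipley \cite{SS03}, so there is no in-paper argument to compare against. Your construction of $D$, the Eilenberg decomposition, and the overall three-step shape (establish the equivalence, descend to modules via lax monoidality, check model-category conditions) are a reasonable outline of what is involved. One factual slip first: the shuffle (Eilenberg--Zilber) map $NX\otimes NY\to N(X\otimes Y)$ is what makes $N$ lax \emph{symmetric} monoidal, and hence what turns $NA$ into a commutative dg-algebra, while the Alexander--Whitney map is an \emph{oplax} structure on $N$, which transports across the equivalence to a non-symmetric lax monoidal structure on $D$. You have these attributions reversed; the Remark immediately following the Proposition in the paper records the correct ones.

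The genuine gap is in your middle step. The sentence ``Since $N$ and $D$ are already inverse equivalences of the underlying additive categories, the module-level functors are also inverse equivalences of $1$-categories'' is not true, and the closing remark that the lax ambiguity ``collapses'' once one fixes the algebra is not an argument. The lax structures do produce functors in both directions between simplicial $A$-modules and connective dg-modules over $NA$, but when one unwinds the compatibility required for the canonical isomorphism $DNM\simeq M$ to be $A$-linear, the defect is exactly the endomorphism $\nabla\circ AW$ of $N(X\otimes Y)$, which is only chain-homotopic to the identity, not equal to it (only the other composite $AW\circ\nabla$ is strictly the identity). So $N$ and $D$ do not restrict to mutually inverse functors on module categories, and the Quillen equivalence cannot be extracted from a $1$-categorical equivalence that does not exist. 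This is precisely why Schwede--Shipley introduce the notion of a \emph{weak monoidal Quillen pair}: it tolerates exactly this up-to-homotopy failure, and their general comparison theorem then yields the Quillen equivalence on module categories without ever asserting a strict equivalence of $1$-categories. Your checks on fibrations and weak equivalences are correct as far as they go, but they are not enough to close the argument without invoking that machinery.
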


\begin{Rq}
The natural isomorphism $N D \simeq Id$ is naturally monoidal with respect to the shuffle and Alexander-Whitney maps and the isomorphism $D N \simeq Id$ admits a non-symmetriccal monoidal structure.

The normalization is  lax symmetric monoidal but $D$ is only lax monoidal.
\end{Rq}

\begin{proof}
See : \cite{SS03}
\end{proof}

\begin{Def}
The normalization functor
$$N : \bf{cosMod_k} \to \bf{dg-mod_k^{\ge 0}}$$
from cosimplicial modules to differential graded modules induces the notion of homology group of a cosimplicial algebra $A$ by composing the forgetful functor
$$ \bf{coSCR_k} \to \bf{cosMod_k}$$
with the normalization functor and the cohomology functor of a chain complex $H^i$.
\end{Def}

\subsection{$\delta$-structures}

\begin{Def}
Let $A$ be a discrete commutative ring, a classical Frobenius lift on $A$ is given by a ring endomorphism $F : A \to A$ such that $F$ induces the Frobenius morphism on $A/p$, ie $F(a)-a^p$ is $p$-divisible for any $a \in A$.
\end{Def}

\begin{Def}
Let $A$ be a discrete commutative ring, a $\delta$-structure on $A$ is given by a function $A \to A$ such that $\delta(0)=0$,   $\delta(1)=0$, and for any $a,b \in A$,
$$\delta(a+b) = \delta(a) + \delta(b) - \sum_{i=1}^{p-1} \frac{1}{p} \binom{p}{i}a^ib^{p-i}$$
$$\delta(ab) = a^p \delta(b) + b^p \delta(a) + p\delta(a)\delta(b)$$
\end{Def}

\begin{Rq}
Any $\delta$-structure induces a classical Frobenius lift $F(a) = a^p+p\delta(a)$. This defines a bijection between $\delta$-structures and classical Frobenius lifts when $A$ is $p$-torsion free. In fact, Theorem \ref{Witt2Fiber} identifies $\delta$-structures with derived Frobenius lifts .
\end{Rq}

\subsection{The ring of Witt vectors}
As seen in \cite{Ser62}, when $k$ is a field of positive characteristic $p$, there is a unique complete discrete valuation ring of characteristic $0$ having $k$ as a residual field and $(p)$ as its maximal ideal, this is the ring of Witt vectors on $k$. The ring of Witt vectors admits various generalizations when $k$ need not be a field of positive characteristic. For details on Witt vectors, see \cite{Haz08} and \cite{Rab14}.

\begin{Def}
We define the ring of $p$-typical Witt vectors $W(A)$ associated to a commutative ring $A$. As a set $W(A)$ is given by $A^\mathbb{N}$.

We define the ghost map as function : 
$$gh : (a_n) \in W(A) \mapsto (w_n) \in A^\mathbb{N}$$
where $w_n \coloneqq \sum_{i=0}^{n} p^i a_i^{p^{n-i}}$.

The set $W(A)$ admits a unique functorial ring structure making $gh$ into a natural transformation of commutative rings. An element $x = (a_n) \in W(A)$ will have $(a_n)$ as its "Witt coordinates" and $(w_n)$ as its "ghost coordinates".

The ghost morphism can be seen as a morphism of commutative rings in affine schemes
$$W \to \mathbb{G}_a^\mathbb{N}$$

\end{Def}

\begin{Prop}
The ghost map :
$$gh : W(A) \to A^\mathbb{N}$$
is injective when $A$ is $p$-torsion free. In this case, the ghost coordinates can be use without ambiguity.
\end{Prop}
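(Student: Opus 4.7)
The plan is to prove injectivity coordinate by coordinate using a direct induction on the Witt index, exploiting the explicit formula $w_n = \sum_{i=0}^{n} p^i a_i^{p^{n-i}}$ and the $p$-torsion freeness of $A$ to cancel the leading factor of $p^n$.

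More precisely, suppose $x = (a_n)$ and $y = (b_n)$ are two elements of $W(A)$ with $gh(x) = gh(y)$. I would show $a_n = b_n$ for all $n \geq 0$ by strong induction on $n$. For $n = 0$, the formula degenerates to $w_0 = a_0$, so the equality of the zeroth ghost components gives $a_0 = b_0$ directly.

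For the inductive step, assume $a_i = b_i$ for every $i < n$. Rewriting the $n$-th ghost component as
$$w_n = p^n a_n + \sum_{i=0}^{n-1} p^i a_i^{p^{n-i}},$$
the equality $w_n(x) = w_n(y)$ combined with the induction hypothesis yields $p^n a_n = p^n b_n$, hence $p^n(a_n - b_n) = 0$ in $A$. Since $A$ is $p$-torsion free, multiplication by $p^n$ is injective, and therefore $a_n = b_n$.

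There is essentially no technical obstacle here: the argument is a one-line induction once the ghost formula is written out, and the only hypothesis used is the $p$-torsion freeness to invert the final step. The only subtlety worth flagging is that without $p$-torsion freeness the conclusion genuinely fails, since, for instance, on $A = \mathbb{F}_p$ the ghost map factors through a map that kills all higher Witt coordinates. This observation also justifies why the subsequent remark about working unambiguously with ghost coordinates is restricted to the $p$-torsion free case.
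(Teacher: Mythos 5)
Your proof is correct and is the standard argument; the paper itself states this proposition without giving a proof, so there is nothing to compare against. The induction on the Witt index, peeling off the lower-order terms via the inductive hypothesis and then cancelling $p^n$ by $p$-torsion freeness, is exactly the expected reasoning, and your counterexample $A = \mathbb{F}_p$ (where the ghost map collapses to $(a_n) \mapsto (a_0, a_0, \dots)$) correctly illustrates why the hypothesis is necessary.
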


\begin{Def}
We define the $p$-typical Witt vectors of length $m$ as the quotient ring $$W_m(A) = W(A)/I$$ where $I$ is the ideal of elements $(a_n)$ where $a_n=0$ for $n < m$. Elements of $W_m(A)$ can be written $(a_0, ..., a_{n-1})$.
\end{Def}

\begin{Def}
The Verschiebung map is a natural transformation
$$ V : W(A) \to W(A)$$
defined as $(a_i) \mapsto (0,a_0,a_1, ...)$. In ghost coordinates, it is expressed as
$$(w_n) \mapsto (0, p w_0, p w_1, ...)$$

It is an additive morphism and induces a morphism of truncated Witt vectors :
$$ V : W_m(A) \to W_{m+1}(A)$$
\end{Def}

\begin{Def}
The Frobenius map is a natural transformation
$$ F : W(A) \to W(A)$$
defined implicitly in ghost coordinates as $(w_n) \mapsto (w_1,w_2, ...)$ and by requiring functoriality.

The Frobenius $F$ is a ring morphism and induces a morphism of truncated Witt vectors :
$$ F : W_m(A) \to W_{m-1}(A)$$
\end{Def}

\begin{Prop}
We have the usual identities :
$$FV = p$$
and
$$V(F(x)y) = x V(y)$$
\end{Prop}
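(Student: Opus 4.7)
The plan is to reduce both identities to straightforward computations in ghost coordinates, using the fact that the ghost map is injective on $p$-torsion-free rings together with a functorial reduction.

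First I would set up the reduction to the $p$-torsion-free case. Since $W$, $F$, $V$ and multiplication are natural transformations of functors on commutative rings, each of the claimed identities is a family of universal polynomial identities in the Witt coordinates of the inputs. It therefore suffices to establish them on the universal example: for $FV=p$, check it on the ring $\mathbb{Z}[t_0, t_1, \ldots]$ applied to the element with Witt coordinates $(t_0, t_1, \ldots)$; for $V(F(x)y)=xV(y)$, check it on $\mathbb{Z}[s_i, t_j]_{i,j \ge 0}$ applied to $(s_i)$ and $(t_j)$. These universal rings are $p$-torsion-free, so by the previous proposition their ghost maps are injective, and it is enough to verify the two identities after applying $gh$.

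Second, I would carry out the ghost-coordinate computations. Write $x$ with ghost coordinates $(x_n)$ and $y$ with ghost coordinates $(y_n)$. By the definitions of $F$ and $V$ given in the text, $V(z)$ has ghost coordinates $(0, pz_0, pz_1, \ldots)$ and $F(z)$ has ghost coordinates $(z_1, z_2, \ldots)$. Hence
\[
gh(FV(x)) = (px_0, px_1, \ldots) = p \cdot gh(x),
\]
which is $FV(x) = px$. For the projection formula, $F(x)$ has ghost coordinates $(x_{n+1})_{n \ge 0}$, so $F(x)y$ has ghost coordinates $(x_{n+1}y_n)_{n \ge 0}$, and applying $V$ yields $(0, px_1 y_0, px_2 y_1, \ldots)$ with $n$-th entry $px_n y_{n-1}$ for $n \ge 1$. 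On the other side, $V(y)$ has $n$-th ghost coordinate $py_{n-1}$ for $n \ge 1$ and $0$ for $n = 0$, so $xV(y)$ has $n$-th ghost coordinate $p x_n y_{n-1}$ for $n \ge 1$ and $0$ for $n = 0$. The two sequences agree.

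The only subtle point is making the universality argument airtight, i.e.\ verifying that the polynomial expressions in the Witt coordinates produced by the two sides of each identity really do agree as elements of a polynomial ring after their ghost images agree. This follows formally from the injectivity of the ghost map on the $p$-torsion-free polynomial ring, combined with functoriality of $W$, $F$, $V$ applied to the unique ring map from the universal ring sending the generators $t_i$ (resp.\ $s_i, t_j$) to the given Witt coordinates of the inputs in an arbitrary $A$. Once this is noted, no further calculation is required.
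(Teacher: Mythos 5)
Your proposal is correct and follows exactly the same strategy the paper sketches: reduce by functoriality to a $p$-torsion-free universal case, then verify both identities in ghost coordinates where the injectivity of the ghost map applies. You have simply filled in the details that the paper leaves implicit, and the ghost-coordinate computations are accurate.
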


\begin{proof}
Both results can be proven by functoriality and reducing to the case of $A$ being without $p$-torsion where ghost coordinates can be used.
\end{proof}

\begin{Def}
The projection morphism $W(A) \to A$ admits a multiplicative section $a \mapsto [a] \coloneqq (a,0, ...)$, $[a]$ is called the Teichmüller representative of $a$.
\end{Def}

\begin{Thm} \label{Witt2Fiber}
The ghost coordinates induce a homotopy pullback diagram :
$$\begin{tikzcd}
W_2(A) \arrow[r,"w_0"] \arrow[d,"w_1"] \arrow[rd, phantom, "\lrcorner", very near start]
& A  \arrow[d,"Fr_p \circ pr"] \\
A \arrow[r,"pr"]
& A \otimes_{\mathbb{Z}}^{\mathbb{L}} \mathbb{F}_p
\end{tikzcd}$$

As sections of the projection morphism $w_0 : W_2(A) \to A$ are given by $\delta$-structures, we can identify $\delta$-structures with derived Frobenius lifts on $A$.
\end{Thm}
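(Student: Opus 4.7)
The plan is in three main steps: check that the ghost coordinates make the square commute, verify the pullback claim for $p$-torsion free $A$, and extend to all simplicial commutative rings by a sifted-colimit argument.

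First, I would construct the canonical comparison map $\phi_A \colon W_2(A) \to A \times^h_{A \otimes^{\mathbb{L}} \mathbb{F}_p} A$. For $(a_0, a_1) \in W_2(A)$ one has $w_0(a_0, a_1) = a_0$ and $w_1(a_0, a_1) = a_0^p + p a_1$, so $pr(w_1) \equiv a_0^p \equiv Fr_p(pr(w_0)) \pmod{p}$. The square commutes strictly at the classical level, producing $\phi_A$ naturally in $A$.

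Next I would verify that $\phi_A$ is an equivalence when $A$ is discrete and $p$-torsion free. In this case $A \otimes^{\mathbb{L}} \mathbb{F}_p \simeq A/p$ is discrete and both projections are surjective, so the derived pullback reduces to the classical pullback $\{(x, y) \in A^2 : y \equiv x^p \pmod{p}\}$. Writing $y = x^p + pz$ with $z$ unique by $p$-torsion freeness gives a set-theoretic bijection with $W_2(A) = A \times A$ via $(a_0, a_1) \leftrightarrow (a_0, a_0^p + p a_1)$; the Witt ring operations on $W_2(A)$ are defined precisely so that this bijection is a ring isomorphism. Hence $\phi_A$ is an equivalence on the full subcategory of polynomial $\mathbb{Z}$-algebras.

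For general $A \in SCR$, I would argue that both sides preserve sifted colimits, and $SCR$ is generated under sifted colimits by polynomial rings. The functor $W_2$ is representable (underlying space $A \mapsto A \times A$ equipped with polynomial Witt operations), so it preserves sifted colimits automatically, and in particular recovers the classical $W_2$ on discrete $A$. The main obstacle is the right-hand side, since general pullbacks in $SCR$ do not preserve sifted colimits. I would attempt to bypass this by modelling $A \otimes^{\mathbb{L}} \mathbb{F}_p$ by the explicit cdga $A[\epsilon]/(\epsilon^2)$ with $|\epsilon| = 1$ and $d\epsilon = p$, which exhibits $A \mapsto A \otimes^{\mathbb{L}} \mathbb{F}_p$ as a left-adjoint construction in $A$; the homotopy pullback then admits a strict model as a pullback of cdgas (after a path-object fibrant replacement of one map), in which sifted colimits are computed levelwise and hence preserved. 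Combined with step two, this forces $\phi_A$ to be an equivalence for all $A$.

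Finally, for the second assertion, the pullback identification turns a section $s$ of $w_0$ into the data $(F := w_1 \circ s \colon A \to A,\ \alpha)$, where $\alpha$ is a homotopy $pr \circ F \simeq Fr_p \circ pr$ in $\mathrm{Map}_{SCR}(A, A \otimes^{\mathbb{L}} \mathbb{F}_p)$ --- precisely the defining data of a derived Frobenius lift on $A$. For $p$-torsion free discrete $A$ the homotopy space is discrete, so this recovers classical Frobenius lifts, and hence $\delta$-structures via $F(a) = a^p + p\delta(a)$; for general discrete $A$ the identification $\delta$-structures $\leftrightarrow$ sections of $w_0$ is the classical universal property of $W_2$, which now matches derived Frobenius lifts by the pullback statement.
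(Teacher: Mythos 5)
The paper does not actually give a proof of this theorem; it simply cites \cite[Remark 2.5]{BS22}, so there is no internal argument to compare against. Your overall strategy --- commutativity of the ghost square on discrete rings, direct verification for $p$-torsion-free discrete rings, and propagation by sifted colimits from polynomial rings --- is the standard and correct approach and is essentially the one in the cited source.

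Two points in the third step deserve more care. First, modelling $A \otimes^{\mathbb{L}} \mathbb{F}_p$ by the cdga $A[\epsilon]$ does not give a model inside $SCR_{\mathbb{Z}_{(p)}}$: over $\mathbb{Z}_{(p)}$, cdgas and simplicial commutative rings are genuinely different, so a ``pullback of cdgas'' is not a priori the pullback you need. The fix is either to work with a simplicial model of $\mathbb{F}_p$ (e.g.\ the free simplicial ring on the simplicial circle with boundary $p$) or, more simply, to observe that a morphism of simplicial rings is an equivalence exactly when it is one on underlying modules, so that the equivalence-check may be carried out entirely in connective $\mathbb{Z}$-modules, where the chain-level model is legitimate. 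Second, the phrase ``sifted colimits are computed levelwise and hence preserved'' glosses over the real point: sifted colimits of sets commute with finite products but not with arbitrary finite limits, so a levelwise pullback does not automatically commute with geometric realization. What saves the argument is that $pr \colon A \to A \otimes^{\mathbb{L}} \mathbb{F}_p$ is surjective on $\pi_0$, hence admits a model as a degreewise surjection of simplicial modules; degreewise surjections are fibrations, surjectivity is preserved under taking diagonals of bisimplicial objects, and the strict levelwise pullback then agrees with the homotopy pullback both before and after realization. This is the missing justification.

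An alternative, arguably cleaner route avoids the sifted-colimit reduction for the pullback altogether. Forgetting the multiplicative structure, the Verschiebung gives a fiber sequence of simplicial $\mathbb{Z}$-modules
\[
A \xrightarrow{\ V\ } W_2(A) \xrightarrow{\ w_0\ } A ,
\]
while the pullback $P(A) \coloneqq A \times^{h}_{A \otimes^{\mathbb{L}} \mathbb{F}_p} A$ sits, by the universal property, in the fiber sequence
\[
\mathrm{fib}\bigl(pr \colon A \to A \otimes^{\mathbb{L}} \mathbb{F}_p\bigr) \longrightarrow P(A) \longrightarrow A ,
\]
whose fiber is identified with $A$ via multiplication by $p$. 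Your comparison map $\phi$ is the identity on the base $A$ and sends $V(a) = (0,a)$ to $(0, pa)$, which corresponds exactly to $a$ under the identification of $\mathrm{fib}(pr)$ with $A$ by $\times p$. Hence $\phi$ is an equivalence on fibers and on bases, so it is an equivalence on underlying modules, and therefore an equivalence of simplicial rings --- for every simplicial $A$ at once, with no sifted-colimit argument needed for the pullback. Either route is fine; the second has fewer moving parts. Your identification of $\delta$-structures with sections of $w_0$, and hence with derived Frobenius lifts, is correct.
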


\begin{proof}
See \cite[Remark 2.5]{BS22}.
\end{proof}

\begin{Prop}
The Witt vectors admits a natural $\delta$-structure, the forgetful functor $$\bf{\delta-CRing_k} \to \bf{CRing_k}$$ admits $W$ as a right adjoint. From the previous proposition, we see the Witt vector functor as the construction of adding cofreely a $\delta$-structure on a commutative ring.
\end{Prop}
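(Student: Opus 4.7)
The plan is to construct a natural $\delta$-structure on $W(A)$ and then to verify the adjunction by exhibiting, for every commutative ring $A$ and every $\delta$-ring $(B,\delta_B)$, a natural bijection between ring maps $B \to A$ and $\delta$-ring maps $B \to W(A)$.

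I would begin by constructing the $\delta$-structure on $W(A)$ via the Frobenius $F : W(A) \to W(A)$. When $A$ is $p$-torsion free so is $W(A)$ (the ghost map is then injective), and $F$ is a classical Frobenius lift on $W(A)$; the remark following Theorem \ref{Witt2Fiber} then gives a canonical $\delta$-structure, definable explicitly as $\delta(x) = (F(x) - x^p)/p$. To handle arbitrary $A$, I would argue by functoriality: the formulas expressing $\delta$ in terms of Witt coordinates are universal polynomials with $\mathbb{Z}$-coefficients, so it suffices to construct $\delta$ on $W(\mathbb{Z}[x_\alpha])$ for a presentation of $A$ by a polynomial ring and to transport the construction along the natural transformation $W(-)$.

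For the adjunction, the counit is the projection $\varepsilon_A : W(A) \to A$, $(a_0,a_1,\ldots) \mapsto a_0$. For the unit, given a $\delta$-ring $(B,\delta_B)$ with associated Frobenius $F_B(b) = b^p + p\delta_B(b)$, I would define
$$u_B : B \to W(B)$$
as the unique map whose ghost components are $(b, F_B(b), F_B^2(b), \ldots)$. The key lemma is a Dwork-type statement: a sequence $(w_n) \in B^\mathbb{N}$ lies in the image of the ghost map as soon as the congruences $w_{n+1} \equiv w_n^p \pmod{p^{n+1}}$ are witnessed by explicit $p$-power-divisible elements provided by a $\delta$-structure. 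Since the iterates of $F_B$ satisfy exactly these compatibilities, $(F_B^n(b))_n$ uniquely lifts to a Witt vector $u_B(b) \in W(B)$; that $u_B$ is a ring morphism and is compatible with $\delta$-structures then follows from the corresponding properties of $F_B$ together with the fact that $gh$ is a ring morphism.

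Given $f : B \to A$, set $\tilde f := W(f) \circ u_B$. I would then check that $\varepsilon_A \circ \tilde f = f$, that $\tilde f$ is a $\delta$-morphism (which on ghost coordinates reduces to the identity $F_{W(A)} \circ W(f) \circ u_B = W(f) \circ u_B \circ F_B$), and conversely that any $\delta$-morphism $g : B \to W(A)$ satisfies $g = \widetilde{\varepsilon_A \circ g}$: compatibility with Frobenius forces the $n$-th ghost coordinate of $g(b)$ to equal $\varepsilon_A(g(F_B^n(b))) = (\varepsilon_A \circ g)(F_B^n(b))$, so $g$ is determined by its underlying ring morphism. The main technical obstacle is the Dwork-type congruence lemma together with, for $A$ not $p$-torsion free (where the ghost map fails to be injective), the careful reduction to the universal $p$-torsion-free case in order to define the unit and verify it is a ring homomorphism; all remaining compatibilities follow formally by naturality once this lift is established.
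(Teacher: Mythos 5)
Your overall strategy is the standard Joyal argument and the skeleton is sound: endow $W(A)$ with the $\delta$-structure coming from its canonical Frobenius (via $\delta = (F - (-)^p)/p$ in the $p$-torsion-free case, extended by universality of the defining polynomials), take the counit to be projection onto the zeroth component, and build the unit $u_B : B \to W(B)$ from the iterates of the $\delta$-ring Frobenius $F_B$. The paper itself offers no proof for this proposition (it defers to Joyal for the classical adjunction), so you are reconstructing the intended folklore argument and the high-level route is the right one.

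There is, however, a genuine error in the key lemma as you state it. You assert that $(w_n) \in B^{\mathbb{N}}$ lies in the image of the ghost map as soon as $w_{n+1} \equiv w_n^p \pmod{p^{n+1}}$. The correct Dwork/Cartier criterion requires a chosen Frobenius lift $\phi$ on a $p$-torsion-free ring $B$ and reads $w_{n+1} \equiv \phi(w_n) \pmod{p^{n+1}}$; replacing $\phi$ by $(-)^p$ gives a condition that is neither necessary nor sufficient. Concretely, over $B = \mathbb{Z}_{(p)}$ the element $V([1]) \in W(B)$ has ghost vector $(0, p, p, \ldots)$, yet $w_2 - w_1^p = p - p^p$ has $p$-adic valuation exactly $1$, so $w_2 \not\equiv w_1^p \pmod{p^2}$. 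Your intended input $w_n = F_B^n(b)$ satisfies the \emph{correct} congruence on the nose (indeed $w_{n+1} = \phi(w_n)$ exactly), so once the lemma is restated properly the construction of $u_B$ proceeds; but as written the lemma is false and would not support the rest of the argument. A second, smaller point: you note that for $B$ with $p$-torsion the ghost map fails to be injective, so "the unique lift" of $(F_B^n(b))$ is undefined, and you gesture at a reduction to the torsion-free case. This needs to be made explicit — either present $B$ as a reflexive coequalizer of free $\delta$-rings $\mathbb{Z}_{(p)}\{x_i\}$ (which are $p$-torsion-free), construct $u$ there, and descend by naturality of $W$, or observe directly that the Witt coordinates of $u_B(b)$ are universal $\delta$-polynomials in $b$ and hence make sense over any $\delta$-ring. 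Without one of these, the unit is not actually defined in the generality the adjunction requires.
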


We also recall a theorem of Almkvist for the reference, see \cite{A78}, which we will not use. For a commutative ring $R$, we define $\bf{Proj(R)}$ the category of projective finitely generated $R$-modules. We write the category of endomorphisms in $\bf{Proj(R)}$ :
$$\bf{End(Proj(R))} \coloneqq \bf{Fun(B\mathbb{N},Proj(R))}$$
as a functor category where $\bf{B\mathbb{N}}$ is the category with one object and $\mathbb{N}$ as its monoid of arrows.

\begin{Def}
We define $K_0(\bf{End(Proj(R))})$ as the free abelian group generated by isomorphism classes of objects in $\bf{End(Proj(R))}$ modulo the subgroup generated by elements of the form
$$[(M,f)]-[(M',f')]-[(M'',f'')]$$
when there is a morphism of short exact sequences :
$$\begin{tikzcd}
0 \arrow[r]
& M' \arrow[r] \arrow[d,"f'"]
& M \arrow[r] \arrow[d,"f"]
& M'' \arrow[r] \arrow[d,"f''"]
& 0 \\
0 \arrow[r]
& M' \arrow[r]
& M \arrow[r]
& M'' \arrow[r]
& 0
\end{tikzcd}$$

The category $K_0(\bf{Proj(R)})$ is similarly defined. The tensor product of modules endows $K_0(\bf{End(Proj(R))})$ and $K_0(\bf{Proj(R)})$ with the structure of commutative rings.

The kernel of the canonical morphism
$$[(M,f)] \in K_0(\textbf{End(Proj(R))}) \mapsto [M] \in K_0(\textbf{Proj(R)})$$
is denoted $\widetilde{K}_0(\bf{End(Proj(R))})$.
\end{Def}

\begin{Thm}
There is a canonical ring isomorphism
$$\widetilde{K}_0(\textbf{End(Proj(R))}) \cong Rat(R)$$
where $Rat(R)$ is the subring of the ring of big Witt vectors $W_{big}(R)$ which are rational functions. When $W_{big}(R)$ is identified with formal series $(1+tR[[t]])$, $Rat(R)$ is the subring
$$\left \{ \frac{1+a_1 t + ... + a_m t^m}{1+b_1 + ... + b_n t_n} : a_i, b_i \in R \right \} \subset (1+tR[[t]])$$

The morphism is given by taking the characteristic polynomial of the endomorphism
$$[(M,f)] \in \widetilde{K}_0(\textbf{End(Proj(R))}) \mapsto \chi_f(t) \in (1+tR[[t]])$$
\end{Thm}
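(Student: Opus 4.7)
The plan is to exhibit the characteristic polynomial map
$$\chi : [(M,f)] \mapsto \det_R(1 - tf) \in 1 + tR[t] \subset W_{big}(R) = (1 + tR[[t]])$$
as a ring homomorphism from $K_0(\textbf{End(Proj(R))})$ to the big Witt vectors $W_{big}(R)$, and then show its restriction to the augmentation kernel $\widetilde{K}_0(\textbf{End(Proj(R))})$ is an isomorphism onto the subring $Rat(R)$.

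First, for $(M,f)$ with $M$ finitely generated projective, I would pick any complement $M \oplus M' \simeq R^n$, extend $f$ by $0$ on $M'$, and define $\chi_f$ as the usual determinant of $1-t(f\oplus 0)$; independence from the choice of complement is standard. Given a short exact sequence $0 \to (M', f') \to (M, f) \to (M'', f'') \to 0$ in $\textbf{End(Proj(R))}$, projectivity of $M''$ provides an $R$-linear (not $f$-equivariant) splitting putting $f$ in block upper-triangular form with diagonal blocks $f'$ and $f''$, hence $\chi_f = \chi_{f'}\chi_{f''}$. This makes $\chi$ a well-defined group homomorphism from $K_0(\textbf{End(Proj(R))})$ (with direct sum addition) to $(1 + tR[[t]], \cdot)$, i.e.\ to the additive group of $W_{big}(R)$.

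Next, I would verify $\chi$ is a ring homomorphism for the tensor-product multiplication on $K_0$ and the Witt product on $W_{big}(R)$. By the universal polynomial identities defining the Witt multiplication, this reduces to the rank-one identity $\chi_{a \otimes b}(t) = 1 - abt$, which is the defining relation of the Witt product on Teichmüller representatives $1-at$ and $1-bt$; a splitting-principle argument (via base change to a polynomial ring where matrices admit generic invariant flags) extends this to arbitrary rank. Since a single class $[(M,f)]$ maps to a polynomial in $1 + tR[t]$, the restriction of $\chi$ to $\widetilde{K}_0$ lands in $Rat(R)$. Surjectivity onto $Rat(R)$ is immediate from companion matrices: for $p \in 1 + tR[t]$ of degree $m$, the companion matrix $C_p$ on $R^m$ has $\chi_{C_p} = p$, and $[(R^m, C_p)] - [(R^m, 0)] \in \widetilde{K}_0$ maps to $p$; differences of such classes realize every rational function.

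The main obstacle is injectivity, which I would handle by constructing an explicit inverse $\psi : Rat(R) \to \widetilde{K}_0$ on polynomials by $\psi(p) = [(R^{\deg p}, C_p)] - [(R^{\deg p}, 0)]$, extended multiplicatively. Additivity $\psi(pq) = \psi(p) + \psi(q)$ is witnessed by the short exact sequence
$$ 0 \to (R[t]/(q),\ \cdot t) \to (R[t]/(pq),\ \cdot t) \to (R[t]/(p),\ \cdot t) \to 0 $$
arising from multiplication by $p$, where multiplication by $t$ on $R[t]/(r)$ in the monomial basis has matrix $C_r$. The identity $\chi \circ \psi = \mathrm{id}_{Rat(R)}$ is then a direct companion-matrix computation, whereas $\psi \circ \chi = \mathrm{id}_{\widetilde{K}_0}$ is the delicate direction: it requires showing that any $(M,f)$ with $M$ projective of rank $n$ and $\chi_f = p$ is equivalent in $K_0(\textbf{End(Proj(R))})$ to $(R^n, C_p)$ modulo trivial summands of the form $(N, 0)$. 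One absorbs the projective complement of $M$ into a trivial summand (Bass-style stable freeness), then uses the Cayley--Hamilton relation $p(f) = 0$ to endow $R^n$ with an $R[t]/(p)$-module structure and build a filtration whose graded pieces assemble into the companion form; the quotient defining $\widetilde{K}_0$ provides just enough flexibility to absorb the non-uniqueness of such filtrations over a general base ring, closing the argument.
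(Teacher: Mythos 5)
The paper does not prove this statement: it cites Almkvist and explicitly says it ``will not [be] use[d],'' so there is no paper proof to compare against. Your overall strategy is the standard one and, I believe, essentially Almkvist's: realize $\chi$ as a ring map to $W_{big}(R)$, produce all of $Rat(R)$ via companion matrices, and construct an explicit inverse $\psi$ on polynomials. The parts you write out are sound, modulo one notational slip: since $p(0)=1$, the ring $R[t]/(p)$ is in general not a finite $R$-module (e.g.\ $p=1+at$ with $a$ a non-unit); you must pass to the monic reversal $\tilde p(x):=x^{\deg p}\,p(1/x)$ and use $R[x]/(\tilde p)$ with multiplication by $x$, which is the companion matrix of $\tilde p$ on a free module of rank $\deg p$. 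With that fix, multiplicativity of $\chi$ via the splitting principle, additivity of $\psi$ from the exact sequence $0\to R[x]/(\tilde q)\xrightarrow{\cdot\tilde p}R[x]/(\tilde p\tilde q)\to R[x]/(\tilde p)\to 0$, and the identity $\chi\circ\psi=\mathrm{id}$ all go through.

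The genuine gap is the step you yourself flag as delicate: $\psi\circ\chi=\mathrm{id}$. Your reduction to free modules is legitimate --- it follows, for instance, from the identity $[(M,gf)]-[(M,0)]=[(N,fg)]-[(N,0)]$ for $f:M\to N$, $g:N\to M$, obtained by conjugating $\left(\begin{smallmatrix}gf&g\\0&0\end{smallmatrix}\right)$ on $M\oplus N$ by $\left(\begin{smallmatrix}1&0\\f&1\end{smallmatrix}\right)$ and comparing the two resulting block-triangular filtrations. But the remaining assertion, that for $f\in M_n(R)$ the class $[(R^n,f)]-[(R^n,0)]$ depends only on $\det(1-tf)$, is not established by ``endow $R^n$ with an $R[x]/(\tilde p)$-module structure via Cayley--Hamilton and build a filtration whose graded pieces assemble into companion form.'' Over a general commutative ring there is no structure theory for projective $R[x]/(\tilde p)$-modules and no such filtration need exist; the closing appeal to ``the quotient defining $\widetilde K_0$'' names no mechanism, and in fact $\widetilde K_0$ is defined in the paper as a kernel, not a quotient. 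Even the special case $\det(1-tf)=1\Rightarrow[(R^n,f)]=[(R^n,0)]$ is a non-trivial statement requiring its own argument (the filtration by $\ker f^i$ works over $\mathbb{Z}$ but fails to produce projective pieces in general), and your sketch does not supply one. This injectivity step is the real content of Almkvist's theorem and remains unproved in your proposal.
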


\begin{Rq}
Many construction on the Witt vectors can be described at the level of the category $\textbf{End(Proj(R))}$, therefore defining a satisfying explanation for the structure arising on Witt vectors and the compatibility between the various maps.

For example, the Frobenius $F_n$ acts on $\textbf{End(Proj(R))}$ as follows
$$[(M,f)] \mapsto [(M,f^n)]$$
The Verschiebung $V_n$ acts as
$$[(M,f)] \mapsto [(M^{\oplus n},f_n)]$$
where $f_n$ sends the $i$-th copy of $M$ to the $(i+1)$-th copy of $M$ by identity and sends the $n$-th copy of $M$ to the first copy of $M$ by $f$.

The $\Lambda$-structure can be seen as taking exterior products : $[(M,f)] \mapsto [(\Lambda_n M,\Lambda_n f)]$
\end{Rq}

\section{Preliminaries in derived geometry}

In this section we recall notions of derived geometry. We refer to \cite{To06b} and \cite{To13} for overviews on the subject. The various notions and results of this section are borrowed from \cite{TV06}, \cite{Lur17} and \cite{Lur18}.

\subsection{Stacks and derived stacks}

\begin{Def}
We will consider stacks and derived stacks with respect to the étale topology on simplicial algebras. We denote $St_k$ the $\infty$-category of hypercomplete stacks and $dSt_k$ the $\infty$-category of hypercomplete derived stacks.
\end{Def}

\begin{Prop}
The inclusion of stacks into derived stacks : $St_k \subset dSt_k$ admits a right adjoint called truncation, denoted $t_0$, any derived stack $X$ admits a canonical morphism from its truncation $t_0(X) \to X$.
\end{Prop}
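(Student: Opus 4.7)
The plan is to construct both functors explicitly and verify the adjunction directly, rather than appealing only to the adjoint functor theorem. I would define $i : St_k \to dSt_k$ as precomposition with $\pi_0 : SCR_k \to CRing_k$, that is $i(X)(A) \coloneqq X(\pi_0(A))$, and I would define $t_0 : dSt_k \to St_k$ as restriction along the fully faithful inclusion $\iota : CRing_k \hookrightarrow SCR_k$ that views a discrete ring as a simplicial ring concentrated in degree zero, that is $t_0(Y)(R) \coloneqq Y(R)$ for $R \in CRing_k$. These formulas presuppose that the functors are well-defined on (derived) stacks, so first I would check that $i(X)$ satisfies étale hyperdescent on $SCR_k$ whenever $X$ satisfies it on $CRing_k$, and symmetrically that $t_0(Y)$ satisfies classical étale hyperdescent whenever $Y$ satisfies derived étale hyperdescent.

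For the adjunction itself, I would begin at the level of presheaf $\infty$-categories. The adjunction $\pi_0 \dashv \iota$ between $SCR_k$ and $CRing_k$ (with unit $A \to \pi_0(A)$ being the canonical quotient) induces, by a formal $2$-categorical argument using unit and counit, an adjunction $\pi_0^* \dashv \iota^*$ between the presheaf categories $\mathrm{Fun}(CRing_k^{op},\mathcal{S})$ and $\mathrm{Fun}(SCR_k^{op},\mathcal{S})$. Since $St_k$ and $dSt_k$ are reflective localizations of these presheaf categories, and since I have already checked that $\pi_0^*$ and $\iota^*$ restrict to functors between stacks and derived stacks, the induced adjunction descends directly to the localizations $St_k \subset dSt_k$. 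As an alternative route, one can invoke the adjoint functor theorem (Theorem \ref{adjointFunct}) once one notes that $i$ preserves small colimits and that both $\infty$-categories are presentable; the explicit formula for the right adjoint would then be recovered by the usual Yoneda argument, yielding $t_0 = \iota^*$.

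Finally, I would identify the canonical morphism $t_0(X) \to X$ with the counit of the adjunction. For $Y \in dSt_k$ and a simplicial ring $A$, the counit component evaluates to the map $Y(\pi_0(A)) \to Y(A)$ obtained by functoriality of $Y$ applied to the canonical quotient $A \to \pi_0(A)$. In particular, evaluating at $A = R$ discrete recovers the identity, which both confirms that $t_0 \circ i \simeq \mathrm{id}_{St_k}$ (so that $i$ is fully faithful, as expected from a reflective subcategory setup) and makes the counit $i(t_0(Y)) \to Y$ the desired canonical morphism from the truncation to the derived stack.

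The main obstacle is the compatibility with the étale topology: showing that $i = \pi_0^*$ sends (hyper)stacks to (hyper)stacks. This rests on the fact that the étale site of a simplicial commutative ring $A$ is equivalent to the étale site of $\pi_0(A)$, so that an étale hypercover of $A$ is the base change of an étale hypercover of $\pi_0(A)$ and vice versa. Once this topological invariance is in hand, the preservation of descent for $i(X)(A) = X(\pi_0(A))$ is immediate from the descent property of $X$, and the rest of the proof is formal manipulation of adjunctions and reflective localizations.
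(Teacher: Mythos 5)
The central difficulty is that you have misidentified the inclusion functor. You define $i(X)(A) \coloneqq X(\pi_0(A))$, i.e.\ $i = \pi_0^*$, but this is not the inclusion of classical stacks into derived stacks. Consider $X = \mathrm{Spec}(R)$ for a non-smooth discrete $R$ such as $R = k[x]/(x^2)$: then $\pi_0^*(X)(A) = \mathrm{Hom}_{CRing_k}(R,\pi_0(A))$ is a discrete set for every $A \in SCR_k$, whereas the derived affine scheme $\mathrm{Spec}(\iota(R))$ to which the inclusion must send $\mathrm{Spec}(R)$ has $A$-points $\mathrm{Map}_{SCR_k}(\iota(R),A)$, a genuinely non-discrete space. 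The actual inclusion is left Kan extension along $\iota$ (followed by stackification), not precomposition with $\pi_0$.

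The adjunction direction is also reversed. The passage from $\pi_0 \dashv \iota$ to presheaf categories via precomposition produces $\iota^* \dashv \pi_0^*$, not $\pi_0^* \dashv \iota^*$: the unit $\mathrm{id} \to \iota\pi_0$ of the ring-level adjunction induces $\mathrm{id} \to \pi_0^*\iota^*$ after precomposition, which is a \emph{unit} exhibiting $\iota^*$ as the left adjoint of $\pi_0^*$. The three functors line up as $\mathrm{Lan}_\iota \dashv \iota^* \dashv \mathrm{Ran}_\iota$ with $\mathrm{Ran}_\iota \simeq \pi_0^*$ (because $\pi_0 \dashv \iota$), so $\pi_0^*$ is the \emph{right} adjoint of the truncation, not its left adjoint. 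This surfaces again in your description of the counit: functoriality of a covariant $Y : SCR_k \to \mathcal{S}$ applied to $A \to \pi_0(A)$ gives $Y(A) \to Y(\pi_0(A))$, not a map in the direction you assert; the map you wrote down is the unit of $\iota^* \dashv \pi_0^*$, not a counit for $i \dashv t_0$.

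What you did get right is the identification $t_0 = \iota^*$ and the observation that restriction along $\iota$ preserves hyperdescent because the étale site of $A$ agrees with that of $\pi_0(A)$. To repair the rest, replace your $i$ by the left Kan extension along $\iota$ (stackified): concretely, write a classical stack as a colimit of representables $\mathrm{Spec}(R_\alpha)$ and send it to the derived stack colimit of $\mathrm{Spec}(\iota(R_\alpha))$. The adjunction $i \dashv t_0$ then descends from $\mathrm{Lan}_\iota \dashv \iota^*$ to the localizations, and the counit is, on an affine $\mathrm{Spec}(A)$, precisely the closed immersion $\mathrm{Spec}(\pi_0(A)) \to \mathrm{Spec}(A)$ induced by $A \to \pi_0(A)$, matching the paper's Remark.
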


\begin{Rq}
On affine derived schemes, truncation is given by
$$t_0(Spec(A)) = Spec \textrm{ }\pi_0(A)$$
\end{Rq}

\begin{Rq}
Seeing a derived scheme $(X,\mathcal{O}_X)$ as a classical scheme $(X,\pi_0(\mathcal{O}_X))$ with $\pi_i(\mathcal{O}_X)$ being a quasi-coherent $\pi_0(\mathcal{O}_X)$-modules, the truncation $t_0(X,\mathcal{O}_X)$ is given by the underlying scheme $(X,\pi_0(\mathcal{O}_X))$.
\end{Rq}

We include here a motivation for the importance of the Frobenius morphism.

\begin{Prop}
Let $k$ be a commutative ring, there is an isomorphism of abelian groups
$$End_{Gp_k}(\mathbb{G}_a) \cong k Id \oplus \bigoplus_p \bigoplus_{n \ge 1} k[p] F_{p^n}$$
where $p$ belongs in the prime numbers and $k[p] \coloneqq \{x \in k : px=0 \}$ is the ideal of $p$-torsion elements of $k$.

The set $k[p] F_{p^n}$ corresponds with morphisms of the form
$$b \in B = \mathbb{G}_a(B) \mapsto xb^{p^n} \in B = \mathbb{G}_a(B)$$
where $x \in k[p]$. We notice that composition is characterized by
$$xF_{p^m} \circ y F_{q^n} = 0$$
when $p \neq q$ and
$$xF_{p^m} \circ y F_{p^n} = xy^{p^m} F_{p^{m+n}}$$

\end{Prop}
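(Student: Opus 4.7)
The plan is to identify group endomorphisms of $\mathbb{G}_a$ over $k$ with additive polynomials in one variable. By Yoneda, a morphism of commutative group $k$-schemes $\mathbb{G}_a \to \mathbb{G}_a$ corresponds to a $k$-bialgebra endomorphism of $\mathcal{O}(\mathbb{G}_a) = k[t]$, which is determined by its value $P(t) \in k[t]$. The compatibility with the comultiplication $\Delta(t) = t \otimes 1 + 1 \otimes t$ translates into the additivity identity $P(X+Y) = P(X) + P(Y)$ in $k[X,Y]$; conversely, any additive polynomial yields such a morphism since $k[t]$ is freely generated as a commutative $k$-algebra, and compatibility with counit and antipode is automatic for abelian group schemes.

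Write $P(t) = \sum_{i \ge 0} a_i t^i$. Setting $X = Y = 0$ forces $a_0 = 0$, and expanding the binomial theorem gives
$$P(X+Y) - P(X) - P(Y) = \sum_{i \ge 2} a_i \sum_{j=1}^{i-1} \binom{i}{j} X^j Y^{i-j}.$$
Because the monomials $X^j Y^{i-j}$ with $i \ge 2$, $1 \le j \le i-1$ are $k$-linearly independent in $k[X,Y]$, additivity is equivalent to the family of constraints $a_i \binom{i}{j} = 0$ in $k$ for all such $(i,j)$, while $a_1$ is unconstrained.

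The key number-theoretic input is the classical identity (a consequence of Kummer's theorem) that $\gcd\bigl(\binom{i}{1}, \binom{i}{2}, \ldots, \binom{i}{i-1}\bigr)$ equals $p$ when $i = p^n$ is a prime power with $n \ge 1$, and equals $1$ otherwise. Hence $a_i = 0$ whenever $i \ge 2$ is not a prime power, and for $i = p^n$ the coefficient $a_{p^n}$ is constrained exactly to lie in $k[p]$. Reassembling, every additive polynomial has the form
$$P(t) = a_1 \cdot t + \sum_{p \text{ prime}} \sum_{n \ge 1} a_{p^n} \cdot t^{p^n}, \qquad a_1 \in k, \ a_{p^n} \in k[p],$$
which is precisely the direct sum decomposition asserted in the statement.

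Finally, the composition laws are a direct unwinding at the level of polynomial maps on $B$-points. One computes $(xF_{p^m}) \circ (yF_{p^n})(b) = x(y b^{p^n})^{p^m} = x y^{p^m} b^{p^{m+n}}$, giving the stated multiplication rule for the same prime $p$. For distinct primes $p \ne q$ with $x \in k[p]$ and $y \in k[q]$, the scalar $x y^{p^m}$ is annihilated by both $p$ and $q$, hence by $\gcd(p,q) = 1$, so it vanishes. The only genuinely nontrivial ingredient is the gcd-of-binomials identity; the remainder of the argument is essentially bookkeeping.
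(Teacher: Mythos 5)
Your proof is correct and follows essentially the same route as the paper's: reduce to additive polynomials $P$ with $P(0)=0$, argue degree-by-degree, and invoke the arithmetic of binomial coefficients to pin down which coefficients survive. The paper cites Legendre's formula where you invoke the gcd-of-middle-binomials fact (a corollary of Kummer's theorem), but these are the same arithmetic input packaged slightly differently; your phrasing via the gcd is cleaner and makes the passage to $k[p]$ more transparent.
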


\begin{proof}
The derived stack $\mathbb{G}_a$ being a classical stack, the space of group endomorphisms on $\mathbb{G}_a$ is a set. It identifies with the set of polynomials $P \in k[X]$ satisfying
$$P(0)=0$$
and
$$P(X + Y) = P(X)  + P(Y)$$
the second condition can be checked degreewise, meaning we have to find the elements $x \in k$ such that
$$x(X+Y)^m = xX^m + xY^m$$
elementary computations using the Legendre formula find that $x$ is null when $m$ admits two disctinct prime divisors and $x$ must satisfy $px=0$ when $m=p^n$ with $n \ge 1$.
\end{proof}

\begin{Rq}
In particular, for $k$ a torsion-free commutative ring, morphisms of group schemes of $\mathbb{G}_a \rightarrow \mathbb{G}_a$ are given by multiplication by an element of $k$. That is
$$End_{Gp_k}(\mathbb{G}_a) \cong k$$

For $k$ a field of characteristic $p$, the ring of morphisms is given by the non-commutative algebra $k[F]$ with $F$ the Frobenius, with the relations $F.a = a^p.F$, for $a \in k$.
\end{Rq}

\subsection{$\mathcal{O}$-modules and quasi-coherent sheaves}

\begin{Def} 
For a derived stack $X$, we introduce the $\infty$-category of $\mathcal{O}_X$-modules. We first define the structure stack $\mathcal{O}_X$ of $X$ as the stack over the big étale site over $X$ sending $Spec(B) \to X$ to the underlying $\mathbb{E}_\infty$-ring $\theta(B)$. It is a stack of $\mathbb{E}_\infty$-rings. Then the category $\mathcal{O}_X-Mod$ of $\mathcal{O}_X$-modules is the category of modules over the commutative ring object in derived stack $\mathcal{O}_X$.

\end{Def}

\begin{Rq}
Intuitively an $\mathcal{O}_X$-module a given by a $\theta(B)$-module for every simplicial commutative algebra $B$, functorially in $B$ and satisfying descent as a presheaf on the étale site over $X$. In this description the full subcategory $QCoh(X) \subset \mathcal{O}_X-Mod$ consists of $\mathcal{O}_X$-modules where transition maps between modules are equivalences.
\end{Rq}

\begin{Prop}
The inclusion $QCoh(X) \subset \mathcal{O}_X-Mod$ has a right adjoint, which we will denote $Q$.
\end{Prop}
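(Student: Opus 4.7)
The plan is to apply the adjoint functor theorem (Theorem \ref{adjointFunct}) to the inclusion $i : QCoh(X) \subset \mathcal{O}_X-Mod$. Two ingredients are needed: that both $\infty$-categories are presentable, and that $i$ preserves all small colimits.

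For presentability, the limit description
$$QCoh(X) \simeq \lim_{Spec(A) \to X} Mod_A$$
with transition functors given by the colimit-preserving pullbacks $f^*$, exhibits $QCoh(X)$ as a small limit in $Pr^L$, and is therefore itself presentable. For $\mathcal{O}_X-Mod$, I would invoke the general fact that modules over a commutative algebra object in a presentable symmetric monoidal $\infty$-category again form a presentable $\infty$-category, applied to $\mathcal{O}_X$ as a commutative algebra object in the appropriate category of sheaves of spectra on the big étale site over $X$ (as in \cite{Lur17}, \cite{Lur18}).

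Next I would check that $i$ preserves small colimits. By the Remark preceding the statement, an $\mathcal{O}_X$-module $M$ is quasi-coherent precisely when, for every morphism of affines $f : Spec(B') \to Spec(B)$ over $X$, the transition map $f^* M(Spec(B)) \to M(Spec(B'))$ is an equivalence. Since each pullback functor $f^*$ is a left adjoint and therefore preserves colimits, and since a colimit of equivalences is again an equivalence, the subcategory $QCoh(X)$ is stable under all small colimits computed in $\mathcal{O}_X-Mod$. Thus $i$ preserves small colimits, and Theorem \ref{adjointFunct} produces the desired right adjoint $Q$.

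The main obstacle I expect is the foundational bookkeeping: identifying $\mathcal{O}_X-Mod$ as a presentable $\infty$-category and describing its colimits concretely enough to check stability of $QCoh(X)$ under them. Once that foundational work is granted, the colimit-preservation of $i$ follows essentially formally from the fact that pullback is a left adjoint, and the adjoint functor theorem does the rest, yielding $Q : \mathcal{O}_X-Mod \to QCoh(X)$ right adjoint to the inclusion.
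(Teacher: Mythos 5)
Your approach coincides with the paper's: both proofs observe that $QCoh(X)$ and $\mathcal{O}_X-Mod$ are presentable and then apply the adjoint functor theorem (Theorem~\ref{adjointFunct}). You get one thing right that the paper's wording apparently does not: to produce a \emph{right} adjoint, the inclusion must preserve small \emph{colimits}, and that is what you verify; the paper's one-line proof asserts that the inclusion ``preserves limits,'' which under the cited adjoint functor theorem would yield a \emph{left} adjoint instead. Given the paper's later use of $Q$ in the compatibility $\pi_* = Q \circ \widetilde{\pi}_*$, $Q$ must indeed be a right adjoint, so the paper almost certainly means ``colimits'' and your reading is the correct one.

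On the colimit-preservation itself: your argument (pullback is a left adjoint, a colimit of equivalences is an equivalence, hence the transition maps of a colimit of quasi-coherent sheaves remain equivalences) captures the essential idea, but it implicitly computes colimits of $\mathcal{O}_X$-modules sectionwise. In general, colimits in a category of sheaves of modules are obtained by sheafifying the pointwise colimit, so you would also need to verify that this sheafification preserves the quasi-coherence condition (or, equivalently, that the pointwise colimit of quasi-coherent sheaves already satisfies descent). This is precisely the ``foundational bookkeeping'' you flag; it is where the real content lies, and the paper's own one-line proof is no more detailed on this point (compare the later proposition for Deligne--Mumford stacks, which cites \cite[Proposition 2.3.13]{Lur11c} for exactly this closure property). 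Modulo that sheaf-theoretic care, your proposal faithfully reconstructs the intended argument.
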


\begin{proof}
The source and target categories are both presentable, and the inclusion preserves limits. We conclude using \cite[5.5.2.9]{Lur17}.
\end{proof}

\begin{Prop} [Functoriality of modules and quasi-coherent sheaves]
A morphism of derived stack $\pi : X \to Y$ induces an adjunction between their category of $\mathcal{O}$-modules : 

$$\widetilde{\pi}^* : \mathcal{O}_Y-Mod \rightleftarrows \mathcal{O}_X-Mod : \widetilde{\pi}_*$$

and between their category of quasi-coherent sheaves :
$$\pi^* : QCoh(Y) \rightleftarrows QCoh(X) : \pi_*$$

We have compatibility between pullbacks and the inclusion of quasi-coherent sheaves into $\mathcal{O}$-module :

\[\begin{tikzcd}
QCoh(Y) \arrow{r}{\pi^*} \ar[d] & QCoh(X) \ar[d]\\
\mathcal{O}_Y-Mod \arrow{r}{\widetilde{\pi}^*} & \mathcal{O}_X-Mod,
\end{tikzcd}\]

which means that the $\mathcal{O}$-module pullback of a quasi-coherent sheaf is quasi-coherent. However, the analogous statement for pushforwards is incorrect, the adjunctions allow the calculation of quasi-coherent pushforwards as the composition of $\mathcal{O}$-module pushforward with the adjoint of the inclusion $QCoh(Y) \subset \mathcal{O}_Y-Mod$, ie $$\pi_* = Q \circ \widetilde{\pi}_*.$$
\end{Prop}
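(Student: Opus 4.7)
The plan is to build both adjunctions from the underlying morphism of structure stacks of rings, then bootstrap the quasi-coherent case out of the $\mathcal{O}$-module case using the adjoint $Q$ already produced in the preceding proposition.

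First, I would produce the $\mathcal{O}$-module adjunction. The morphism $\pi : X \to Y$ induces, by adjunction between the big \'etale sites, a morphism $\pi^{-1}\mathcal{O}_Y \to \mathcal{O}_X$ of stacks of $\mathbb{E}_\infty$-rings on $X$. On module categories this gives the standard extension--restriction of scalars adjunction $\widetilde{\pi}^* \dashv \widetilde{\pi}_*$, with $\widetilde{\pi}^*(-) = \mathcal{O}_X \otimes_{\pi^{-1}\mathcal{O}_Y} \pi^{-1}(-)$ and $\widetilde{\pi}_*$ the functor of restricting scalars along $\pi^{-1}\mathcal{O}_Y \to \mathcal{O}_X$ composed with the direct image of presheaves of spectra. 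Both categories are presentable stable $\infty$-categories and the tensor product commutes with colimits, so the adjoint functor theorem (Theorem \ref{adjointFunct}) also guarantees the existence of $\widetilde{\pi}_*$ abstractly; concretely it is sectionwise restriction of scalars.

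Second, I would check that $\widetilde{\pi}^*$ sends $QCoh(Y)$ to $QCoh(X)$. Using the concrete model described in the remark preceding the proposition, an $\mathcal{O}$-module is the data of a $\theta(B)$-module $M_B$ for each affine $Spec(B) \to Y$ (or $X$), and quasi-coherence is the condition that for any $f : B \to B'$ the transition map $\theta(B') \otimes_{\theta(B)} M_B \to M_{B'}$ is an equivalence. Because $\widetilde{\pi}^*$ is, on an affine chart $Spec(B) \to X$ mapping to $Spec(C) \to Y$, the base change $\theta(B) \otimes_{\theta(C)} (-)$, the transition maps of $\widetilde{\pi}^* M$ are obtained from those of $M$ by tensoring, and tensor products preserve equivalences. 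This furnishes the pullback $\pi^* : QCoh(Y) \to QCoh(X)$ as the restriction of $\widetilde{\pi}^*$ and the stated commutativity of the square is then tautological.

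Third, I would obtain $\pi_*$ and its formula. Since $\pi^*$ is a colimit-preserving functor between presentable $\infty$-categories (colimits in $QCoh$ are computed as in $\mathcal{O}\text{-Mod}$, and $\widetilde{\pi}^*$ preserves colimits), the adjoint functor theorem produces a right adjoint $\pi_*$. For the identification $\pi_* = Q \circ \widetilde{\pi}_*$, I would argue by uniqueness of adjoints: write the inclusion $i : QCoh \hookrightarrow \mathcal{O}\text{-Mod}$, so that $\pi^* = \widetilde{\pi}^* \circ i$ by the compatibility already proven. Taking right adjoints reverses composition and gives $\pi_* \simeq i^R \circ \widetilde{\pi}_* = Q \circ \widetilde{\pi}_*$.

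The main technical point is the second step---that the $\mathcal{O}$-module pullback preserves the quasi-coherence condition. Everything else is formal manipulation of presentable adjunctions, but this step genuinely uses the description of $QCoh$ as the $\infty$-category of $\mathcal{O}$-modules whose transition maps are equivalences, and it is the content that makes the commuting square in the statement meaningful. Note that the analogous assertion for $\widetilde{\pi}_*$ fails, which is precisely why $\pi_*$ must be defined via the reflector $Q$ rather than as a restriction of $\widetilde{\pi}_*$.
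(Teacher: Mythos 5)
The paper does not prove this proposition; it cites \cite[Proposition 2.5.1]{Lur11c} directly, so there is no internal proof to compare against. Your sketch follows the expected argument and is sound in substance, with two points worth tightening.

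In Step 2, you describe $\widetilde{\pi}^*$ on a chart $u : Spec(B) \to X$ as a base change $\theta(B)\otimes_{\theta(C)}(-)$ along some $Spec(C) \to Y$. But a chart of $X$ carries no distinguished factorization through an affine over $Y$; it only carries the composite $\pi u : Spec(B) \to Y$. Since the big \'etale site over $X$ is the slice over $X$ and $\pi$ induces $u \mapsto \pi u$ on slices, one has $\pi^{-1}\mathcal{O}_Y \simeq \mathcal{O}_X$, so the extension of scalars is trivial and $(\widetilde{\pi}^*M)_u \simeq M_{\pi u}$ with no tensor product appearing. With that description the preservation of quasi-coherence is immediate: a transition $B \to B'$ over $X$ induces a transition $B \to B'$ over $Y$ with the same underlying ring map, so the transition comparison map of $\widetilde{\pi}^*M$ is literally a transition comparison map of $M$, hence an equivalence. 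Your phrasing reads as if you are already using quasi-coherence of $M$ (via the base-change formula) in order to then conclude quasi-coherence of $\widetilde{\pi}^*M$, which is not false but looks circular; stating $(\widetilde{\pi}^*M)_u \simeq M_{\pi u}$ removes the issue.

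In Step 3, taking right adjoints of $\pi^* = \widetilde{\pi}^* \circ i_Y$ as you write it produces a functor $Q_Y \circ \widetilde{\pi}_*$ whose source is $\mathcal{O}_X\text{-Mod}$, not $QCoh(X)$, so it is not yet $\pi_*$. The clean way to say it: from $i_X \circ \pi^* \simeq \widetilde{\pi}^* \circ i_Y$ take right adjoints to get $\pi_* \circ Q_X \simeq Q_Y \circ \widetilde{\pi}_*$, and then use that $Q_X \circ i_X \simeq \mathrm{Id}$ (the inclusion being fully faithful with right adjoint) to conclude $\pi_* \simeq Q_Y \circ \widetilde{\pi}_* \circ i_X$, which is what the shorthand $\pi_* = Q \circ \widetilde{\pi}_*$ means. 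Finally, your appeal to ``colimits in $QCoh$ are computed as in $\mathcal{O}\text{-Mod}$'' is stated in the paper only for Deligne--Mumford stacks; for a general derived stack it is cleaner to observe directly that colimits in $QCoh(Y) \simeq \lim_{Spec A \to Y} Mod_A$ are computed affine-locally (the transition functors being base changes, hence colimit-preserving), so that $\pi^*$, which is also computed affine-locally, preserves them.
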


\begin{proof}
See \cite[Proposition 2.5.1]{Lur11c} .
\end{proof}

\begin{Def}
We will use the standard t-structure on quasi-coherent complexes, see \cite[Remark 6.2.5.8]{Lur18} for details. Let $X$ be a derived stack, a quasi-coherent complex is said to be connective when its pullback to every affine is connective.
\end{Def}

\begin{Prop}
Let $X$ a derived stack and $\mathcal{F} \in QCoh(X)$, the following conditions are equivalent: :
\begin{itemize}
\item The quasi-coherent sheaf $\mathcal{F}$ is perfect.
\item The quasi-coherent sheaf $\mathcal{F}$ is dualizable in $QCoh(X)$.
\end{itemize}
\end{Prop}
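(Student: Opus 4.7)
The plan is to reduce to the affine case by étale descent and then invoke the standard equivalence between perfect modules and dualizable modules over an $\mathbb{E}_\infty$-ring (or equivalently a simplicial commutative ring, via the functor $\theta$).

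First, I would check that both conditions are local on the big étale site over $X$. Perfectness is local by definition: $\mathcal{F}\in QCoh(X)$ is perfect iff for every $Spec(B)\to X$ the pullback $\mathcal{F}_B\in Mod_B$ is a perfect $B$-module. Dualizability is also local because $QCoh(X)\simeq \lim_{Spec(B)\to X}Mod_B$ as a limit of symmetric monoidal stable $\infty$-categories, and the restriction functors $u^*:QCoh(X)\to Mod_B$ are symmetric monoidal, hence preserve duals. Conversely, one glues local duals using that $u^*$ of the would-be evaluation and coevaluation maps satisfy the triangle identities levelwise. So the proposition reduces to the statement: for $A$ a simplicial commutative $k$-algebra and $M\in Mod_A$, $M$ is perfect iff $M$ is dualizable.

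For the direction perfect $\Rightarrow$ dualizable, I would use that $A$ itself (the monoidal unit) is dualizable, and that the full subcategory of dualizable objects in any symmetric monoidal stable $\infty$-category is closed under finite limits, finite colimits, shifts, and retracts. Since the perfect modules are by definition the smallest such subcategory of $Mod_A$ containing $A$, every perfect $M$ is dualizable.

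For the converse, dualizable $\Rightarrow$ perfect, I would use that a dualizable module $M$ with dual $M^\vee$ satisfies
$$Map_{Mod_A}(M,N)\simeq Map_{Mod_A}(A, M^\vee\otimes_A N)\simeq \Omega^\infty(M^\vee\otimes_A N)$$
and the tensor product on $Mod_A$ commutes with all colimits in each variable. Hence $M$ is a compact object of $Mod_A$. By the standard identification (Lurie, Higher Algebra, Proposition 7.2.4.2), compact objects in $Mod_A$ are precisely the perfect modules, so $M$ is perfect.

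The main obstacle I expect is the descent step: one must verify not only that dualizability glues, but that the definition of perfect $\mathcal{F}\in QCoh(X)$ used here (local perfectness on affines) agrees with the thick-subcategory-generated-by-$\mathcal{O}_X$ description that makes dualizability transparent. This is immediate in the affine case, and for a general $X$ it is handled by the fact that $QCoh(X)$ embeds into the limit $\lim Mod_B$ compatibly with the symmetric monoidal structure. Once descent is in place, the remaining work is the affine comparison, which reduces to the two standard arguments above.
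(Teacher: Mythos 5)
Your proof is correct and supplies the standard argument that lies behind the paper's one-line proof, which simply cites \cite[Proposition 2.7.28]{Lur11c}: reduce to the affine case by the symmetric monoidal limit description of $QCoh(X)$, then use that in $Mod_A$ the dualizable, compact, and perfect objects coincide. One minor imprecision: $QCoh(X)$ does not merely ``embed into'' the limit $\lim_{Spec\, B \to X} Mod_B$ --- it is, by the paper's own definition, equal to that limit, which is exactly what makes the descent of dualizability work without further argument.
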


\begin{proof}
See \cite[Proposition 2.7.28]{Lur11c}.
\end{proof}

\begin{Prop}
Let $X$ be a Deligne-Mumford stack, then :
\begin{itemize}
\item The inclusion $QCoh(X) \subset \mathcal{O}_X-Mod$ preserves small colimits.
\item $QCoh(X)$ is stable and presentable.
\item $QCoh(X)$ is both right and left t-complete, with respect to the standard t-structure.
\end{itemize}
\end{Prop}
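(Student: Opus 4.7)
The plan is to reduce all three assertions to the affine case, where they are standard. Since $X$ is Deligne--Mumford, $QCoh(X)$ is by definition the limit
$$QCoh(X) \simeq \lim_{Spec(A) \to X} QCoh(Spec(A)) \simeq \lim_{Spec(A) \to X} Mod_A$$
over the diagram of affine derived schemes étale over $X$, with transition maps given by pullbacks between module categories. Equivalently, one fixes an étale atlas $U \to X$ by an affine derived scheme and computes the limit along its Čech nerve. This limit description is the main device through which properties of module categories are transferred to $QCoh(X)$.

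For stability and presentability, each $Mod_A$ is a stable presentable $\infty$-category, as it is the $\infty$-category of modules over a connective $\mathbb{E}_\infty$-ring. Since the transition maps, being pullbacks along étale morphisms, are colimit-preserving left adjoints, the limit above lives in the $\infty$-category $\mathrm{Pr}^L_{st}$ of stable presentable $\infty$-categories and colimit-preserving functors, and is therefore itself stable and presentable. This directly gives the second assertion.

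For t-completeness, I would use that for each connective $\mathbb{E}_\infty$-ring $A$, the $\infty$-category $Mod_A$ is both left and right t-complete with respect to its natural t-structure. The standard t-structure on $QCoh(X)$ is characterized so that connectivity can be checked étale-locally, i.e. after pullback to each $Spec(A)$. Since étale pullbacks are t-exact left adjoints and t-completeness of $\infty$-categories equipped with t-structures is inherited by limits whose transition functors are t-exact, both left and right t-completeness pass from the $Mod_A$ to $QCoh(X)$.

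The main obstacle, and the heart of the proof, is the first assertion. The inclusion $QCoh(X) \hookrightarrow \mathcal{O}_X\text{-Mod}$ is by construction a right adjoint (with left adjoint $Q$), so \emph{a priori} it only preserves limits. To show it also preserves small colimits, the strategy is to argue that for $X$ Deligne--Mumford, an $\mathcal{O}_X$-module $\mathcal{F}$ is quasi-coherent precisely when for every étale morphism $Spec(B) \to Spec(A)$ over $X$ the natural map $\mathcal{F}(Spec(A)) \otimes_A B \to \mathcal{F}(Spec(B))$ is an equivalence. This equivalence condition is preserved by small colimits of $\mathcal{O}_X$-modules --- computed sheafwise on the étale site --- precisely because $B$ is flat over $A$, so tensoring by $B$ commutes with colimits. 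This flatness of étale morphisms is exactly the feature of the Deligne--Mumford setting that is unavailable for a general stack, and it is the crucial ingredient that makes the inclusion cocontinuous here.
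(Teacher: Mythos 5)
The paper does not give a proof of its own here; it simply cites Propositions 2.3.13 and 2.3.18 of Lurie's DAG~VIII. Your sketch follows the same broad route---reduction to affines via the limit description $QCoh(X) \simeq \lim_{Spec(A) \to X} Mod_A$, flatness of étale transition maps, formal inheritance of structural properties along limits in $\mathrm{Pr}^L_{st}$---and your treatment of stability, presentability and the two $t$-completeness claims is fine.

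Your argument for the first assertion, however, has a genuine gap. You claim that colimits of $\mathcal{O}_X$-modules are ``computed sheafwise on the étale site'' and deduce from flatness that the transition-map characterization of quasi-coherence survives. But $\mathcal{O}_X$-Mod is a sheaf category, and a colimit there is obtained by sheafifying the levelwise presheaf colimit; sheafification can in principle change the sections. What your flatness argument actually establishes is that the \emph{presheaf} colimit of quasi-coherent $\mathcal{O}_X$-modules again has equivalences as transition maps; it does not explain why that presheaf colimit is already a sheaf, which is exactly what is needed to identify it with the colimit taken in $\mathcal{O}_X$-Mod. The missing ingredient is faithfully flat (étale) descent for modules: a presheaf on the affine étale site of $X$ that is quasi-coherent in the transition-map sense automatically satisfies \v{C}ech descent along étale covers, so no sheafification is required. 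Flatness of étale morphisms is thus used twice---once where you invoke it, and once in the descent step you leave implicit---and the second use is the one for which the Deligne--Mumford hypothesis is essential. A smaller slip: you describe the inclusion $QCoh(X) \hookrightarrow \mathcal{O}_X$-Mod as \emph{a priori} a right adjoint with left adjoint $Q$, whereas in Lurie's setup and in the paper $Q$ denotes the \emph{right} adjoint of the inclusion (the quasi-coherator), whose very existence is a consequence of the colimit preservation you are trying to prove.
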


\begin{proof}
See \cite[Proposition 2.3.13, Proposition 2.3.18]{Lur11c}.
\end{proof}

\begin{Rq} \label{tstrQCoh}
Let $X$ is a stack represented as a simplicial scheme $Spec(A^\bullet)$ with $A$ an augmented cosimplicial commutative algebra such that $H^i(A)=k$ and $k$ is $p$-torsion-free. Proposition \ref{conneAffSt} provides with a cofibrant replacement $QA$ such that boundary maps $QA^m \to QA^n$ are flat. Then proceeding as in \cite[Notation 1.2.12]{MRT20} and \cite[\S 6.2.5]{Lur18} defines a left-complete t-structure on $QCoh(X)$ compatible with the equivalence
$$QCoh(X) \simeq lim_n Mod_{QA^n}$$
Then
$$QCoh(X)_{\ge 0} \simeq lim_n Mod_{QA^n}^{\ge 0}$$
and
$$QCoh(X)_{\le 0} \simeq lim_n Mod_{QA^n}^{\le 0}$$

Furthermore $A-Mod$ admits a t-structure and the inclusion
$$QCoh(A) \subset A-Mod$$
identifies $QCoh(A)$ with the left completion of $A-Mod$ with respect to its $t$-tructure. We will not prove the previous result, the proof in \cite[\S 6.2.5]{Lur18} should carry out without difficulty.
\end{Rq}

\subsection{Construction of derived stacks}

\paragraph*{Linear stacks}
\begin{Def}
Let $E$ be a quasi-coherent complex on $X$, the linear stack associated to $E$ is the stack over $X$ given by
$$u: Spec(B) \to X \mapsto Map_{N(B)-mod}(u^*E,N(B)) \in SSet$$
with $N$ the normalization functor. It is denoted $\mathbb{V}(E)$.

For example, if $E$ is connective, $\mathbb{V}(E)$ is simply the relative affine scheme $Spec_X Sym_{\mathcal{O}_X}(E)$.
\end{Def}

\begin{Def}
We define the derived stack $\mathbb{G}_m$ associating to a simplicial commutative algebra $B$ the simplicial set of autoequivalences of $N(B)$ :
$$\mathbb{G}_m(B) \coloneqq Map^{eq}_{N(B)-mod}(N(B),N(B))$$

In fact, $\mathbb{G}_m$ is representable by the discrete ring $k[X,X^{-1}]$, and is equivalent to the usual derived stack classifying the group of units of a simplicial algebra : $B \mapsto B^*$.
\end{Def}

\paragraph*{$K(-,n)$ construction}

\begin{Def}
When $G$ is a group, we can define the simplicial set $E(G,1)$ which $m$-simplices are $(G^{m+1})_m$ and faces and degeneracies are given by projections and diagonals, $G$ acts on the left on $E(G,1)$ and $BG \coloneqq K(G,1)$ is defined as the quotient simplicial set $E(G,1)/G = (G^n)_n$.

This construction generalizes to a simplicial group $G$ where $K(G,1)_n \coloneqq K(G_n,1)_n$. The simplicial set $E(G,1)$ is simply $G_n^{n+1}$

We generalize further to a group in derived stacks $G$, then $BG \coloneqq K(G,1)$ and $E(G,1)$ are defined by functoriality.

Inductively, we define, when $G$ is abelian,
$$E(G,m) \coloneqq E(K(G,m-1),1)$$
and $$K(G,m) \coloneqq K(K(G,m-1),1)$$
\end{Def}

\begin{Prop} \label{GActionStack}
For $G$ a group in derived stacks, $BG$ is connected and its homotopy groups are given by
$$\pi_i(BG) \simeq \pi_{i-1}(G)$$

Furthermore, when $A$ is an abelian group in derived stacks, $K(A,n)$ classifies cohomology of derived stacks and $Hom(F,K(A,n))$ will be denoted $H^n(F,A)$. This construction recovers various cohomology theories, such as sheaf cohomology and singular cohomology.
\end{Prop}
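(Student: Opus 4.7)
The plan is to realize $BG$ as the base of a canonical principal $G$-bundle with contractible total space and then read off its homotopy sheaves from the resulting fiber sequence. Concretely, the map of simplicial derived stacks $E(G,1) \to BG$ described in the statement is levelwise the projection $G^{n+1} \to G^n$ obtained by the $G$-action, so after geometric realization it exhibits $BG$ as the quotient $E(G,1)/G$ in $dSt_k$. This gives a fiber sequence
$$G \longrightarrow E(G,1) \longrightarrow BG$$
in derived stacks, which is the standard universal principal $G$-bundle.

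The first technical step is the contractibility of $E(G,1)$. I would argue this directly on the defining simplicial diagram $n \mapsto G^{n+1}$: it admits an extra degeneracy (the one given by the unit of $G$) exhibiting it as the \v{C}ech nerve of the augmentation $G \to \ast$, hence its colimit in $dSt_k$ is equivalent to $\ast$. Granting this, the long exact sequence of homotopy sheaves of the fiber sequence above yields the isomorphism $\pi_i(BG) \simeq \pi_{i-1}(G)$ for $i \geq 1$. Connectedness of $BG$ follows because the defining simplicial object has $0$-simplices $G^0 = \ast$, so $BG$ is pointed, and $\pi_0(BG) = \ast$ by the same long exact sequence (using that $G$ is group-like, so $\pi_{-1}$ is trivial).

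For the second part, when $A$ is abelian in $dSt_k$, $BA$ inherits a canonical abelian group structure (since the group law on $A$ induces a group law on the delooping as an $\mathbb{E}_\infty$-object), so the construction $K(A,n) \coloneqq B^n A$ makes sense iteratively. By induction on $n$ and the first part, $K(A,n)$ has $\pi_n(K(A,n)) \simeq A$ and all other homotopy sheaves trivial. The fact that $K(A,n)$ represents cohomology, i.e.\ defines the $n$-th right-derived functor of the global sections of $A$ seen as an abelian sheaf, is then the standard Eilenberg--MacLane representability in the hypercomplete $\infty$-topos $dSt_k$: one reduces via Dold--Kan to the comparison between the cohomology of a connective chain complex of sheaves and the mapping space into the associated Eilenberg--MacLane object. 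The comparison with singular cohomology is recovered by evaluating on the constant derived stacks associated to topological spaces.

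The main obstacle is the justification of the long exact sequence of homotopy sheaves for a fiber sequence of hypercomplete derived stacks. This is a feature of the ambient $\infty$-topos rather than of our specific construction, and it is established in \cite[\S 6.5.1]{Lur09} together with the hypercompleteness assumption baked into our definition of $dSt_k$. Once this is in hand, the argument is a direct transcription of the classical computation of the homotopy groups of a classifying space.
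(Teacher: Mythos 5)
The paper gives no proof of its own here, deferring entirely to \cite{TV06}; your argument via the bar resolution $E(G,1)$, its contractibility from the split augmentation $G \to \ast$ (the unit of $G$ furnishing the extra degeneracy), the fiber sequence $G \to E(G,1) \to BG$, and the long exact sequence of homotopy sheaves is the standard one and is what that reference provides.

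One slip worth tightening: the long exact sequence of a fiber sequence $F \to E \to B$ terminates at $\pi_0(B)$ and does not give surjectivity of $\pi_0(E) \to \pi_0(B)$ (consider $\ast \to \ast \to S^0$), so $\pi_0(BG) = \ast$ cannot be deduced from that sequence alone, nor does ``$\pi_{-1}(G)$ trivial'' make sense. Connectedness should instead come from the observation you already make, that $BG_0 = G^0 = \ast$: the $0$-simplices of a simplicial object map effectively epimorphically onto its geometric realization, hence $\ast \to BG$ is an effective epimorphism and $\pi_0(BG) = \ast$. With that correction the first half is fine, and the second half (Eilenberg--MacLane representability in the hypercomplete $\infty$-topos $dSt_k$) is standard and correctly handled, though your intermediate claim that $K(A,n)$ has all homotopy sheaves concentrated in degree $n$ tacitly assumes $A$ discrete, which is the intended case for the examples in the proposition.
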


\begin{proof}
See \cite{TV06}.
\end{proof}

\paragraph*{Mapping stack}

\begin{Def}
Let $X$ and $Y$ be derived stacks, the mapping stack $\textbf{Map}(X,Y)$ of morphisms from $X$ to $Y$ is defined as a derived stack by
$$\textbf{Map}(X,Y) : B \in SCR \mapsto Map(X \times Spec(B),Y)$$
\end{Def}

\subsection{Postnikov towers}

\begin{Def}
The inclusion of $n$-truncated derived stacks into derived stacks admits a left adjoint denote $t_{\le n}$ and it will be called truncation. We have an induced tower of morphisms
$$F \to ... t_{\le n}(F) \to t_{\le n-1}(F) \to ... \to t_{\le 0}(F)$$
The induced morphism $F \to lim_n t_{\le n}(F)$ is not necessarily an equivalence.
\end{Def}

\subsection{Functions on a derived stack}

\begin{Prop} \label{pushMonoidal}
Let $\pi : X \to Y$ be a morphism of derived stacks. The pushforward of quasi-coherent sheaves 
$$\pi_* : QCoh(X) \to QCoh(Y)$$
is canonically lax monoidal.
\end{Prop}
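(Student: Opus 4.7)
The plan is to deduce the lax monoidality of $\pi_*$ from a general $\infty$-categorical principle: the right adjoint of a symmetric monoidal functor between symmetric monoidal $\infty$-categories inherits a canonical lax symmetric monoidal structure. Since $QCoh(X)$ and $QCoh(Y)$ are both symmetric monoidal $\infty$-categories under the derived tensor product over their respective structure sheaves, and since $\pi^* \dashv \pi_*$ is an adjunction in $\mathrm{Pr}^L$, it will suffice to observe that $\pi^*$ is strong symmetric monoidal, and then invoke this general principle.

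First, I would recall that the pullback $\pi^* : QCoh(Y) \to QCoh(X)$ is strong symmetric monoidal. On affines this is just the fact that base change of modules is symmetric monoidal (for $A \to B$ a morphism of simplicial algebras, $B \otimes_A (M \otimes_A N) \simeq (B \otimes_A M) \otimes_B (B \otimes_A N)$), and this extends to arbitrary derived stacks by the descent definition $QCoh(X) \simeq \lim_{\mathrm{Spec}(A) \to X} Mod_A$ together with the fact that the limit is taken in symmetric monoidal $\infty$-categories and symmetric monoidal functors.

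Next, I would invoke the $\infty$-categorical version of the classical doctrinal adjunction result: if $F : \mathcal{C} \rightleftarrows \mathcal{D} : G$ is an adjunction and $F$ is symmetric monoidal, then $G$ acquires a canonical lax symmetric monoidal structure whose structure maps $G(d) \otimes G(d') \to G(d \otimes d')$ are the adjoints of the composite
\[
F(G(d) \otimes G(d')) \simeq F(G(d)) \otimes F(G(d')) \xrightarrow{\varepsilon \otimes \varepsilon} d \otimes d',
\]
and similarly a unit map $\mathbf{1}_\mathcal{C} \to G(\mathbf{1}_\mathcal{D})$ adjoint to the isomorphism $F(\mathbf{1}_\mathcal{C}) \simeq \mathbf{1}_\mathcal{D}$. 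Applied to $\pi^* \dashv \pi_*$, this produces the desired lax symmetric monoidal structure on $\pi_*$.

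The only delicate point is the $\infty$-categorical bookkeeping: one needs that the adjunction $\pi^* \dashv \pi_*$ actually lifts to an adjunction internal to the $\infty$-category of symmetric monoidal $\infty$-categories (with lax morphisms on the right adjoint side). This is handled by \cite[Corollary 7.3.2.7]{Lur17}, which says precisely that passing to right adjoints defines a functor $\mathrm{CAlg}(\mathrm{Pr}^L)^{op} \to \mathrm{CAlg}^{lax}(\mathrm{Pr}^R)$. Since both $QCoh(X)$ and $QCoh(Y)$ lie in $\mathrm{CAlg}(\mathrm{Pr}^L)$ and $\pi^*$ is a morphism there, applying this functor gives exactly the claimed lax monoidal enhancement of $\pi_*$. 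The main ``obstacle'', such as it is, is entirely bureaucratic: ensuring that our definitions of $QCoh$ and $\otimes$ fit into Lurie's framework of symmetric monoidal presentable $\infty$-categories so that the general theorem applies verbatim.
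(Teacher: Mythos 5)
Your proposal is correct and follows essentially the same route as the paper: observe that the left adjoint $\pi^*$ is symmetric monoidal and invoke \cite[Corollary 7.3.2.7]{Lur17} to deduce that the right adjoint $\pi_*$ inherits a canonical lax symmetric monoidal structure. You spell out the doctrinal-adjunction mechanism and the descent argument for the monoidality of $\pi^*$ in more detail than the paper's one-line proof, but the core argument is the same.
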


\begin{proof}
Its left adjoint $\pi^*$, the pullback of quasi-coherent sheaves, is canonically symmetric monoidal, see \cite[Corollary 7.3.2.7]{Lur17}.
\end{proof}

\begin{Def}
We recall the definition of the $\mathbb{E}_\infty$-algebra of functions on a morphism of derived stack $\pi : X \to Y$. Since the structural sheaf $\mathcal{O}_F$ of $F$ is an $\mathbb{E}_\infty$-algebra over $F$, meaning an object of $CAlg(QCoh(F))$, $\pi_*(\mathcal{O}_F)$ has a canonical $\mathbb{E}_\infty$-algebra structure, see Proposition \ref{pushMonoidal}. We denote this quasi-coherent algebra $C_{\mathbb{E}_\infty}(F)$ or simply $C(F)$ when there is no ambiguity.

The pushforward functor also induces a morphism on $\mathbb{E}_1$-algebras :
$$\mathbb{E}_1 Alg(QCoh(X)) \to \mathbb{E}_1 Alg(QCoh(Y))$$
It sends $\mathcal{O}_X$ to an $\mathbb{E}_1$-algebra on $Y$ denoted $C_{\mathbb{E}_1}(X)$.
\end{Def}

\subsection{Affine stacks}

Following \cite{To06}, we recall the definition and basic properties of affine stacks.

\begin{Def}
Let $A$ a cosimplicial algebra, the affine stack associated to $A$ is the stack
$$Spec^\Delta(A) : Aff \to SSet$$
sending $B$ to the simplicial set $Hom_{\textbf{CRing}_k}(A,B_\bullet)$

We can see $Spec^\Delta(A)$ as a derived stack by Kan extension. We define the category of affine stacks as the essential image of $Spec^\Delta$, we denote it $AffSt$.
\end{Def}

\begin{Def}
The cosimplicial algebra of functions on a stack $F$ is defined as
$$C^\bullet_{\Delta}(F) = Hom(F_\bullet,\mathbb{G}_a)$$
where we use the set of morphisms from $F_n$ to $\mathbb{G}_a$ seen as presheaves.
\end{Def}

\begin{Prop}
The previous two functors give an adjunction 
$$ C^\bullet_{\Delta} : St \rightleftarrows coSCR : Spec^{\Delta} $$
and $Spec^\Delta$ is fully faithful.
\end{Prop}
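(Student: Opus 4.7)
The plan is to establish the adjunction by reducing to the affine case via Yoneda, extend to arbitrary stacks by a cocontinuity argument, and then verify fully faithfulness by computing the unit on cosimplicial algebras explicitly.

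First I would verify the adjunction formula when $F = Spec(B)$ is representable. By the Yoneda lemma,
$$Map_{St}(Spec(B), Spec^\Delta(A)) \simeq Spec^\Delta(A)(B) \simeq Map_{coSCR}(A, B),$$
where $B$ is viewed as a constant cosimplicial ring. On the other side, since $Spec(B)$ is a discrete simplicial presheaf, $(Spec(B))_n = Spec(B)$ at every level and hence $Hom((Spec(B))_n, \mathbb{G}_a) \simeq B$; consequently $C^\bullet_\Delta(Spec(B))$ is the constant cosimplicial algebra on $B$, and $Map_{coSCR}(A, C^\bullet_\Delta(Spec(B))) \simeq Map_{coSCR}(A, B)$, matching the first computation.

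Next I would extend to arbitrary stacks. Every $F \in St$ is a small colimit of affines, and both sides of the desired adjunction turn colimits in $F$ into limits: the left-hand side formally, and the right-hand side because $C^\bullet_\Delta(-) = Hom(-, \mathbb{G}_a)$ takes colimits of stacks to limits of cosimplicial algebras (using that $\mathbb{G}_a$ is an étale hypersheaf of rings). Since the adjunction holds on the generating subcategory of affines and both functors are continuous in $F$, it extends by Yoneda extension to all of $St$.

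For the fully faithfulness of $Spec^\Delta$, it suffices to show the unit $A \to C^\bullet_\Delta(Spec^\Delta(A))$ is an equivalence in $coSCR$. At level $n$, the presheaf $Spec^\Delta(A)_n$ sends $B$ to $Hom_{\textbf{CRing}_k}(A^n, B)$, so it is representable by $Spec(A^n)$. Therefore
$$C^\bullet_\Delta(Spec^\Delta(A))_n = Hom(Spec(A^n), \mathbb{G}_a) \simeq A^n,$$
and the cosimplicial structure on the right-hand side transports to the one on $A$ by Yoneda transposition, so the unit is an equivalence levelwise. The main technical obstacle will be in justifying these manipulations in the hypercomplete $\infty$-categorical setting: one must ensure $C^\bullet_\Delta(F)$ really lands in $coSCR$ rather than merely in presheaves, and that Yoneda extensions and limit interchanges remain valid after hypersheafification. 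Both points reduce to $\mathbb{G}_a$ being an étale hypersheaf of rings and to colimits in hypercomplete stacks agreeing with sheafified presheaf colimits when paired against a sheaf, which are standard.
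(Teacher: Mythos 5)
The paper itself gives no proof here --- it simply cites To\"en's \emph{Champs affines} (Corollary 2.2.3). So the question is whether your sketch amounts to a correct reconstruction of that argument, and I think it captures the right ideas (the affine verification, the levelwise counit computation $C_\Delta^\bullet(\mathrm{Spec}^\Delta A)_n \simeq A^n$) but elides the two places where To\"en has to work hardest.

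First, the functor $C_\Delta^\bullet$ is defined by the \emph{levelwise} set of presheaf maps $F_n \to \mathbb{G}_a$, and this is not invariant under local (hyper)weak equivalences of simplicial presheaves: two locally equivalent simplicial presheaves can have very different level-$n$ pieces. So $C^\bullet_\Delta$ does not descend to an $\infty$-functor on the localized category $St$ ``for free,'' and in particular your extension-by-colimits step (``both sides turn colimits in $F$ into limits'') is not sound as stated, because colimits in $St$ involve a sheafification that $C^\bullet_\Delta$ does not commute with. To\"en sidesteps this entirely by exhibiting $(C^\bullet_\Delta, \mathrm{Spec}^\Delta)$ as a Quillen adjunction between $\mathbf{coSCR}_k$ and the local model structure on simplicial presheaves; the $\infty$-adjunction is then the derived one, and no affine-then-extend argument is needed. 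Your proposal would have to be recast in this model-categorical language (or show directly that the derived functor $\mathbb{R}C^\bullet_\Delta$ preserves the limits you need).

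Second, and more substantively, your fully-faithfulness argument assumes $(\mathrm{Spec}^\Delta A)_n \simeq \mathrm{Spec}(A^n)$. This is the levelwise formula for the \emph{strict} presheaf $\mathrm{Spec}^\Delta(A)$, but it only computes the stack if this strict presheaf already satisfies hyperdescent --- otherwise one must stackify, and stackification destroys the levelwise-representable description. This is exactly the nontrivial technical input in To\"en's proof: he shows (Lemma 2.2.2 in \emph{Champs affines}) that for a \emph{cofibrant} cosimplicial algebra $A$, the presheaf $B \mapsto \underline{\mathrm{Hom}}(A, B)$ is already fibrant in the local model structure, using the cellular structure of cofibrant objects. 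You should both insert the cofibrant replacement of $A$ explicitly (so the counit is computed on $QA$, then compared to $A$ via $QA \xrightarrow{\sim} A$) and cite or reprove that lemma; the closing remark that ``both points reduce to $\mathbb{G}_a$ being a hypersheaf'' is not sufficient, since $\mathbb{G}_a$ being a $0$-truncated sheaf gives only $0$-truncated descent for the levels and says nothing about hyperdescent of the total simplicial presheaf $\mathrm{Spec}^\Delta(QA)$.
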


\begin{proof}
See \cite[Corollary 2.2.3]{To06}.
\end{proof}

\begin{Prop}
The cosimplicial functor $C_\Delta(-)$ is an exhancement of the $\mathbb{E}_\infty$-algebra functor $C_{\mathbb{E}_\infty}(-)$, meaning that $C_{\mathbb{E}_\infty}(-)$ factors as the composition of taking the normalization $\theta^{ccn}$ of the cosimplicial functions $C_\Delta(-)$.
\end{Prop}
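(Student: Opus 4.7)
My plan is to identify both $C_{\mathbb{E}_\infty}(F)$ and $\theta^{ccn}(C_\Delta(F))$ with the mapping $\mathbb{E}_\infty$-algebra $\textbf{Map}_{St_k}(F,\mathbb{G}_a)$, where in each case the $\mathbb{E}_\infty$-structure descends from the commutative ring structure on $\mathbb{G}_a$.

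First, I would reinterpret $C_{\mathbb{E}_\infty}(F) = \pi_*\mathcal{O}_F$ (for $\pi : F \to *$) as $\textbf{Map}_{St_k}(F,\mathbb{G}_a)$. Since $\mathcal{O}_F \simeq \pi^*\mathcal{O}_*$, and since $\pi^*$ is symmetric monoidal with $\pi_*$ lax symmetric monoidal (Proposition \ref{pushMonoidal}), global sections of $\mathcal{O}_F$ are precisely maps $F \to \mathbb{G}_a$, endowed with the $\mathbb{E}_\infty$-structure transported pointwise from the ring structure on $\mathbb{G}_a$.

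Next, I would write $F$ as the geometric realization of its canonical simplicial presentation $F \simeq |F_\bullet|$, where $F_n$ is the presheaf of sets of $n$-simplices of $F$. Because $\textbf{Map}_{St_k}(-,\mathbb{G}_a)$ converts colimits in $St_k$ into limits of $\mathbb{E}_\infty$-algebras, we obtain
\[ C_{\mathbb{E}_\infty}(F) \simeq \lim_{[n] \in \Delta} \textbf{Map}_{St_k}(F_n,\mathbb{G}_a). \]
Each $F_n$ is a discrete presheaf of sets, so $\textbf{Map}_{St_k}(F_n,\mathbb{G}_a)$ reduces to the discrete commutative ring $\mathrm{Hom}(F_n,\mathbb{G}_a) = C_\Delta(F)^n$, regarded as an $\mathbb{E}_\infty$-algebra via the canonical inclusion $\mathbf{CRing}_k \hookrightarrow \mathbb{E}_\infty Alg_k$.

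Finally, I would identify this totalization with $\theta^{ccn}(C_\Delta(F))$ via the cosimplicial Dold-Kan correspondence: the totalization of a cosimplicial $\mathbb{E}_\infty$-algebra is computed on underlying complexes by the normalization $N$, and this is exactly how $\theta^{ccn}$ is defined on the cosimplicial commutative ring $C_\Delta(F)$. The fact that $\theta^{ccn}$ preserves small limits, as recorded in its definition, makes this identification functorial.

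The main obstacle is matching the two natural $\mathbb{E}_\infty$-structures: the one on $\pi_*\mathcal{O}_F$ coming from the lax symmetric monoidal pushforward, and the one produced by $\theta^{ccn}$ from the ordinary commutative ring structure of $C_\Delta(F)$. Both ultimately descend from the ring structure on $\mathbb{G}_a$, and the cosimplicial Dold-Kan equivalence is lax symmetric monoidal via the shuffle maps, so the coincidence is essentially formal, but tracking the monoidal coherences through the chain of identifications above is the delicate point of the proof.
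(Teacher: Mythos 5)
The paper states this proposition without proof (it is implicitly quoted from \cite{To06}, where it is part of the foundational comparison between affine stacks and $\mathbb{E}_\infty$-function spectra), so there is no in-text argument to compare against. Judged on its own, your outline is sound and follows what is essentially the standard route. The skeleton --- identify $C_{\mathbb{E}_\infty}(F)$ with $\textbf{Map}_{St_k}(F,\mathbb{G}_a)$, write $F$ as the geometric realization of the discrete presheaves of simplices $F_\bullet$, push the colimit through $\textbf{Map}_{St_k}(-,\mathbb{G}_a)$ to get a cosimplicial limit, and identify the level-$n$ term with $C_\Delta(F)^n$ --- is correct, and each step holds: $\mathbb{G}_a$ is a sheaf so presheaf maps agree with stack maps, and the levelwise realization $F(A) \simeq |F_\bullet(A)|$ makes $F \simeq |F_\bullet|$ an honest $\infty$-categorical colimit of presheaves.

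The one place you are a little too casual is precisely the place you flag yourself: the identification of the totalization in $\mathbb{E}_\infty Alg_k$ with the output of $\theta^{ccn}$. Calling this \emph{essentially formal} undersells it. What is genuinely formal is that the forgetful functor $\mathbb{E}_\infty Alg_k \to Mod_k$ creates limits, so $\lim_\Delta$ of the levelwise-discrete $\mathbb{E}_\infty$-algebras $C_\Delta(F)^n$ has the cosimplicially totalized underlying complex. What is \emph{not} formal is that this coincides with the $\mathbb{E}_\infty$-algebra that the construction of $\theta^{ccn}$ hands you: $\theta^{ccn}$ starts from the strict $1$-categorical cosimplicial ring, normalizes it, and endows the result with an $\mathbb{E}_\infty$-structure via the shuffle and Alexander--Whitney maps; matching this with the $\infty$-categorical limit is the strictification theorem for cosimplicial commutative rings (the dual of Schwede--Shipley type rectification). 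That comparison theorem is the real content of the proposition, and if you want a self-contained proof you should cite it explicitly (e.g.\ via \cite{SS03} or the relevant section of \cite{Lur17}) rather than appealing to naturality of the shuffle maps, which alone only give a lax structure at the level of model categories.
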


\begin{Prop}
The category of affine stacks is stable by small limits in stacks.
\end{Prop}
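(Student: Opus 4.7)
The plan is to deduce this directly from the adjunction $C^\bullet_{\Delta} \dashv Spec^\Delta$ of the previous proposition, together with the observation that this adjunction is contravariant in the relevant sense: for any stack $F$ and any cosimplicial algebra $A$, we have
$$Map_{St}(F, Spec^\Delta(A)) \simeq Map_{coSCR}(A, C^\bullet_\Delta(F)).$$
The key consequence is that $Spec^\Delta$, viewed as a functor $coSCR^{op} \to St$, is a right adjoint, hence sends colimits in $coSCR$ to limits in $St$.

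First, I would note that $coSCR$ is presentable (via the simplicial model structure of Proposition 2.1.2 in \cite{To06} recalled above), so all small colimits exist in $coSCR$. Given a small diagram $(X_i)_{i \in I}$ of affine stacks, choose equivalences $X_i \simeq Spec^\Delta(A_i)$ for cosimplicial algebras $A_i$ (using that $Spec^\Delta$ is essentially surjective onto $AffSt$), and form $A \coloneqq \mathrm{colim}_I A_i$ in $coSCR$.

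The next step is to check that $Spec^\Delta(A) \simeq \lim_I X_i$ in $St$, by testing on any stack $F$:
$$Map_{St}(F, Spec^\Delta(A)) \simeq Map_{coSCR}(\mathrm{colim}_I A_i,\, C^\bullet_\Delta(F)) \simeq \lim_I Map_{coSCR}(A_i, C^\bullet_\Delta(F)) \simeq \lim_I Map_{St}(F, X_i),$$
where the first and last equivalences use the adjunction and the middle one uses that $Map_{coSCR}(-, C^\bullet_\Delta(F))$ sends colimits to limits. Since $Spec^\Delta(A)$ is affine by construction, this exhibits the limit $\lim_I X_i$ as an affine stack.

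The main point to watch is that the adjunction of the previous proposition is interpreted contravariantly, so that colimits of cosimplicial algebras really do map to limits of stacks under $Spec^\Delta$. Once this is clear, the argument is purely formal: fully faithfulness of $Spec^\Delta$ guarantees that the limit is canonically of the form $Spec^\Delta(-)$, with no obstacle beyond verifying that $coSCR$ is cocomplete, which is standard.
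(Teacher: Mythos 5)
Your proposal is correct and is the standard formal argument; the paper itself simply cites \cite[Proposition~2.2.7]{To06}, where essentially this same deduction from the adjunction appears. One small point worth tightening: when you write ``choose equivalences $X_i \simeq Spec^\Delta(A_i)$,'' you need the $A_i$ to form a coherent diagram in $coSCR^{op}$, not merely a pointwise choice of preimages; this is exactly where fully faithfulness of $Spec^\Delta$ enters, since it lets you lift the diagram $I \to AffSt$ uniquely (up to contractible choice) to a diagram $I \to coSCR^{op}$ before taking the colimit in $coSCR$. You invoke fully faithfulness only at the end of your argument, but its real role is at this lifting step; once the diagram is lifted, the representability computation you wrote shows directly that $Spec^\Delta(\mathrm{colim}\,A_i)$ is the limit, and it is affine by construction.
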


\begin{proof}
See \cite[Proposition 2.2.7]{To06}
\end{proof}

\begin{Def}
We say a derived stack is an affine stack if it lives in the essential image of $Spec^\Delta$. We denote this image $AffSt_k$.

We define the affinization of a stack $F$ as $Spec^\Delta(C_\Delta(F))$.
\end{Def}

\begin{Def}
A morphism of derived stacks $X \to Y$ is said to be a relative affine stack when the pullback functor
$$dSt_{/Y} \to dSt_{/X}$$
sends affine stacks to affine stacks.
\end{Def}

\begin{Prop}
Let $X$ be a space, ie a simplicial set, its stack affinization is given by
$$AffSt(X) = Spec^\Delta k^X$$
where $k^X \simeq C^*(X)$ is the cosimplicial algebra of cohomology of $X$.
\end{Prop}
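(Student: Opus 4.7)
The plan is to compute both sides of the claimed equality directly from the definitions and observe they agree level by level, with matching cosimplicial structure.

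First I would unwind the definition of affinization. By definition, $AffSt(X) = Spec^\Delta(C^\bullet_\Delta(X))$, where
$$C^\bullet_\Delta(X)_n = Hom(X_n, \mathbb{G}_a)$$
with $X_n$ the set of $n$-simplices of $X$, regarded as a (discrete) presheaf on affine schemes. Since $Spec^\Delta$ is fully faithful on cosimplicial algebras, it suffices to produce a natural isomorphism of cosimplicial algebras
$$C^\bullet_\Delta(X) \cong k^X.$$

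Next I would compute $C^\bullet_\Delta(X)_n$. Viewed as a presheaf on $Aff_k$, the set $X_n$ is the constant presheaf with value $X_n$, which is canonically the coproduct $\bigsqcup_{x \in X_n} *$ in presheaves, where $*$ denotes the terminal stack $Spec(k)$. Applying $Hom(-,\mathbb{G}_a)$ turns coproducts into products, so
$$Hom(X_n, \mathbb{G}_a) \;\cong\; \prod_{x \in X_n} Hom(*,\mathbb{G}_a) \;\cong\; \prod_{x \in X_n} \mathbb{G}_a(Spec(k)) \;=\; k^{X_n},$$
using the representability $\mathbb{G}_a(B) = B$. This matches $(k^X)_n = k^{X_n}$ as defined earlier.

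Finally I would check that the cosimplicial structure maps agree. Any simplicial operator $\sigma : [n] \to [m]$ induces $X(\sigma) : X_m \to X_n$ on simplices, which in turn induces a map $Hom(X_n,\mathbb{G}_a) \to Hom(X_m,\mathbb{G}_a)$ by precomposition; under the identification above, this is exactly the map $k^{X_n} \to k^{X_m}$ defined by pullback of functions along $X(\sigma)$, which is precisely the coface/codegeneracy used in the definition of $k^X$. Since both the multiplication and addition on each level agree (they are both the pointwise algebra structure on $\prod_{X_n} k$, induced by the commutative ring structure on $\mathbb{G}_a$), the identification is an isomorphism of cosimplicial commutative algebras. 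Applying $Spec^\Delta$ yields the claim. There is no serious obstacle here: the only subtle point is identifying a discrete simplicial set with a coproduct of points in the category of presheaves, which allows the $Hom$-out to be computed as a product.
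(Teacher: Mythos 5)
The paper states this proposition without proof — it is a standard fact from Toën's \emph{Champs affines}, and the surrounding propositions all cite \cite{To06}. Your direct verification is correct and is essentially the only reasonable way to establish it: you unwind $C^\bullet_\Delta$ levelwise, identify the constant presheaf of sets $\underline{X_n}$ with the coproduct $\bigsqcup_{X_n} Spec(k)$ so that $Hom(-,\mathbb{G}_a)$ turns it into the product $k^{X_n}$, and then check that the cosimplicial operators and the pointwise ring structure match the definition of $k^X$.

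One small point worth making explicit, although it does not affect the computation: $X$, as a simplicial set, is first regarded as a constant simplicial presheaf and then stackified before the affinization functor of the paper applies. Since $\mathbb{G}_a$ is already a (hypercomplete) stack, $Hom$ out of a presheaf and out of its stackification agree, so the levelwise computation $Hom(\underline{X_n},\mathbb{G}_a)\cong k^{X_n}$ goes through unchanged. You might also remark, for completeness, that the claim $k^X\simeq C^*(X)$ at the end of the proposition is just the classical observation that normalizing the cosimplicial ring $[n]\mapsto k^{X_n}$ recovers the singular cochain complex of $X$ with coefficients in $k$; this is independent of the affinization statement you proved.
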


\begin{Prop}
Let $i$ be a positive integer, the stack $K(\mathbb{G}_a,n)$ is an affine stack and we have
$$K(\mathbb{G}_a,n) \simeq Spec^\Delta(D(k[n]))$$
\end{Prop}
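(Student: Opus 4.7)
The plan is to proceed by induction on $n \ge 1$, using the adjunction $C^\bullet_\Delta \dashv Spec^\Delta$ and the full faithfulness of $Spec^\Delta$ recorded in the previous propositions. It is enough to compute $C^\bullet_\Delta(K(\mathbb{G}_a,n))$ and identify it with $D(k[n])$; closure of affine stacks under small limits (Proposition 2.2.7 of \cite{To06}) will then automatically ensure the resulting stack is affine. The key structural fact I would use is that, being a left adjoint, $C^\bullet_\Delta$ converts colimits of stacks into limits in the opposite of cosimplicial algebras, hence into totalizations of cosimplicial algebras.

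For the base case $n=1$, I would start from $\mathbb{G}_a \simeq Spec^\Delta(k[T])$ (with $k[T]$ viewed as a constant cosimplicial algebra) and realize $B\mathbb{G}_a$ as the geometric realization of the simplicial stack $\mathbb{G}_a^{\times \bullet}$. Applying $C^\bullet_\Delta$ produces the totalization $\lim_\Delta\bigl([m]\mapsto k[T_1,\dots,T_m]\bigr)$, which is the classical cobar complex of the augmented polynomial algebra $k[T]\to k$. A direct inspection, parallel to the Koszul duality between the symmetric algebra on a degree-$0$ generator and the exterior algebra on a degree-$1$ generator, identifies this cobar complex with $D(k[1])$ as a cosimplicial commutative algebra.

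For the inductive step, assume $K(\mathbb{G}_a,n) \simeq Spec^\Delta(D(k[n]))$ and apply the same procedure to $K(\mathbb{G}_a,n+1) = BK(\mathbb{G}_a,n)$. Since products of affine stacks correspond to tensor products of the underlying cosimplicial algebras, one finds
\[
C^\bullet_\Delta(K(\mathbb{G}_a,n+1)) \simeq \lim_\Delta \bigl( D(k[n])^{\otimes \bullet} \bigr).
\]
The remaining task is to identify this totalization with $D(k[n+1])$ as a cosimplicial commutative algebra.

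The hard part will be this last comparison. On underlying cosimplicial modules it follows from a classical suspension/loop-type computation: the normalized cobar of an augmented commutative algebra shifts the normalization up by one degree, so the normalization of $\lim_\Delta(D(k[n])^{\otimes \bullet})$ is quasi-isomorphic to $k[n+1]$. Upgrading this to an equivalence of cosimplicial algebras is delicate because $D$ is only lax monoidal, making direct shuffle-product bookkeeping unpleasant. The cleanest bypass is a representability argument: both $\lim_\Delta(D(k[n])^{\otimes \bullet})$ and $D(k[n+1])$ corepresent the same functor on cosimplicial commutative algebras, namely $A \mapsto Z^{n+1}(N(A))$, the space of degree-$(n+1)$ cocycles in the normalization; this matches the functor of points of $K(\mathbb{G}_a,n+1)$ via Proposition \ref{GActionStack}, so the two agree as cosimplicial algebras and the induction closes.
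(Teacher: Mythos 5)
There is a genuine gap in your plan, and it sits exactly where the real content of the proposition lives: you never establish that $K(\mathbb{G}_a,n)$ is an affine stack. Computing $C^\bullet_\Delta(K(\mathbb{G}_a,n))$ and identifying it with $D(k[n])$ only tells you what the \emph{affinization} $Spec^\Delta(C^\bullet_\Delta(K(\mathbb{G}_a,n)))$ is; to conclude $K(\mathbb{G}_a,n)\simeq Spec^\Delta(D(k[n]))$ you must additionally show that the unit $K(\mathbb{G}_a,n)\to Spec^\Delta(C^\bullet_\Delta(K(\mathbb{G}_a,n)))$ is an equivalence, and that is precisely the assertion that $K(\mathbb{G}_a,n)$ is affine. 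Your appeal to closure of affine stacks under small limits does not help here, because $K(\mathbb{G}_a,n)=BK(\mathbb{G}_a,n-1)$ is built by iterated geometric realizations, i.e.\ \emph{colimits}, and affine stacks are decidedly not closed under colimits: $B\mathbb{G}_m$, for instance, is a geometric realization of affine schemes and is not an affine stack. So the inductive skeleton never gets off the ground, even at the base case $n=1$. The paper sidesteps the whole question by citing \cite[Lemma 2.2.5]{To06}, where T\"oen's argument does address affineness directly (roughly, via a d\'evissage using that $\mathbb{G}_a$ is unipotent and a vanishing statement for the relevant cohomology, not via a formal adjoint/limit argument).

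Two further points to be careful about if you push this approach through. First, the final ``representability argument'' is not tight: it is true that at the level of cosimplicial \emph{modules} $D(k[n+1])$ corepresents $A\mapsto Z^{n+1}(N(A))$, since $N$ and $D$ are inverse equivalences, but at the level of cosimplicial \emph{commutative algebras} the algebra structure on $D(k\oplus k[n+1])$ (coming from lax monoidality of $D$ applied to the square-zero dg-algebra) imposes additional constraints on a map into $A$, so the corepresented functor is not obviously the same. Second, your base-case identification ``by direct inspection'' of the cobar complex with $D(k[1])$ cannot be a levelwise isomorphism — the cobar complex in cosimplicial degree $m$ is the full polynomial ring $k[T_1,\dots,T_m]$, which is much larger than $D(k\oplus k[1])^m$ — so it must be a weak equivalence whose construction (a formality statement for the cobar of $k[T]\to k$ as a cosimplicial algebra, not just its cohomology) requires an actual argument, especially over a base with torsion.
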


\begin{proof}
See \cite[Lemma 2.2.5]{To06}
\end{proof}

\subsection{Cotangent complex and deformation theory}

\begin{Def}
Let $X$ be a derived stack, $X$ admits cotangent complex at $x : Spec A \rightarrow X$ if there exists $f^* \mathbb{L}_X \in A-Mod$ representing the functor
$$Hom_{Spec A/dSt}(Spec(A \oplus -), X)$$

explicitly,
$$Hom_{Spec A/dSt}(Spec(A \oplus M), X) = fib(X(A \oplus M) \rightarrow X(A))$$
is given by the fiber over $x$.

We say $X$ has global cotangent complex if it has a cotangent complex at any point and for every two points $x \in X(A)$ and $y \in X(B)$ and
$u : A \rightarrow B$ compatible with $x$ and $y$, we have an equivalence
$$u^*x^* \mathbb{L}_X \simeq y^* \mathbb{L}_X$$
\end{Def}

\begin{Prop}
When $X$ is a derived Artin stack locally of finite presentation, its cotangent complex $\mathbb{L}_X$ is perfect, see \cite[Corollary 2.2.5.3]{TV06}. We denote $\mathbb{T}_X$ its dual in $QCoh(X)$. Then its shifted tangent stacks $TX[-n] = \mathbb{V}(\mathbb{L}_X[n])$ are also derived Artin stack locally of finite presentation.
\end{Prop}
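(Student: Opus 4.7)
The first assertion, that $\mathbb{L}_X$ is perfect, is precisely the cited result of \cite{TV06}, so the content of the proof is the claim that $\mathbb{V}(\mathbb{L}_X[n])$ is derived Artin and locally of finite presentation. The plan is to prove the general fact: for any perfect complex $E$ on a derived Artin stack $X$ locally of finite presentation, the linear stack $\mathbb{V}(E)$ is derived Artin and locally of finite presentation over $X$ (and hence absolutely). Applying this to the perfect complex $\mathbb{L}_X[n]$ yields the statement.

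First I would reduce to the case $X = Spec(A)$ with $A$ a simplicial commutative ring of finite presentation. Two ingredients make this legitimate: the formation $E \mapsto \mathbb{V}(E)$ commutes with base change (a direct consequence of its defining universal property as a Hom-stack), and the property of being derived Artin locally of finite presentation satisfies smooth descent, hence can be checked after pulling back along a smooth atlas of $X$.

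Next I would induct on the cohomological amplitude of $E$, using that over an affine base any perfect complex is, Zariski-locally, quasi-isomorphic to a bounded complex of free modules of finite rank. Given a cofiber sequence $E' \to E \to E''$ of perfect complexes, the contravariant functor $\mathbb{V}$ produces a fiber sequence of stacks
$$\mathbb{V}(E'') \to \mathbb{V}(E) \to \mathbb{V}(E'),$$
and derived Artin stacks locally of finite presentation are stable under fiber products. This reduces everything to the base case $E \simeq \mathcal{O}_X[m]$ for $m \in \mathbb{Z}$. For $m \le 0$ the complex $\mathcal{O}_X[m]$ is connective and $\mathbb{V}(\mathcal{O}_X[m]) \simeq Spec_X Sym_{\mathcal{O}_X}(\mathcal{O}_X[-m])$ is a relative derived affine scheme of finite presentation. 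For $m \ge 1$, the Dold–Kan identification of $Map_{N(B)}(N(B)[m], N(B))$ with $K(\mathbb{G}_a, m)(B)$ shows $\mathbb{V}(\mathcal{O}_X[m]) \simeq X \times K(\mathbb{G}_a, m)$, which is derived Artin locally of finite presentation by the iterative construction of $K(\mathbb{G}_a, m)$ as the classifying stack of $K(\mathbb{G}_a, m-1)$, starting from $K(\mathbb{G}_a, 0) = \mathbb{G}_a$.

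The main obstacle is the bookkeeping of the geometricity level and of the finite presentation condition across each fiber-sequence step: one must check that the Artin level grows only by a controlled amount, that smooth atlases remain smooth after the relevant pullbacks, and that finite presentation is preserved under the fiber products $\mathbb{V}(E) \simeq \mathbb{V}(E') \times_{\mathbb{V}(0)} \mathbb{V}(E'')$. All of this is formal but requires care; it follows from the $n$-geometric formalism of \cite{TV06} together with the stability of perfect complexes under shifts and cofiber sequences, which controls the amplitude of the complexes produced by the induction.
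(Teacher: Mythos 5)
The paper does not actually give a proof of this proposition: the perfectness of $\mathbb{L}_X$ is cited directly from \cite{TV06}, and the geometricity of $\mathbb{V}(\mathbb{L}_X[n])$ is asserted without argument. So your proposal supplies a missing argument, and the overall strategy — reduce to affine $X$ by smooth descent and base change for $\mathbb{V}$, write a perfect complex as built from shifts of free modules via cofiber sequences, handle the base cases as derived affine schemes versus Eilenberg--MacLane stacks, and conclude by stability of derived Artin stacks lfp under fiber products — is the natural one and is correct.

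There is, however, one genuine slip in the implementation that would make the induction fail as literally written. The formula $\mathbb{V}(E) \simeq \mathbb{V}(E') \times_{\mathbb{V}(0)} \mathbb{V}(E'')$ is not what a cofiber sequence $E' \to E \to E''$ gives: since $\mathbb{V}(0) \simeq X$, it would say $\mathbb{V}(E) \simeq \mathbb{V}(E') \times_X \mathbb{V}(E'')$, which holds only when the sequence splits. What the exact contravariant functor $\mathbb{V}$ actually produces from $E' \to E \to E''$ is the fiber sequence $\mathbb{V}(E'') \to \mathbb{V}(E) \to \mathbb{V}(E')$, exhibiting $\mathbb{V}(E'')$ (not $\mathbb{V}(E)$) as a fiber product. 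To get $\mathbb{V}(E)$ as a fiber product of strictly smaller pieces you should rotate: from the cofiber sequence $E''[-1] \to E' \to E$ one gets the fiber sequence $\mathbb{V}(E) \to \mathbb{V}(E') \to \mathbb{V}(E''[-1])$, hence $\mathbb{V}(E) \simeq \mathbb{V}(E') \times_{\mathbb{V}(E''[-1])} X$ with $X = \mathbb{V}(0)$ mapping in via the zero section. This is the pullback to which stability of Artin lfp stacks under fiber products applies, and it keeps the induction on amplitude well-founded since $E''[-1]$ is still a shift of a free module. A smaller point: your conventions on when $\mathcal{O}_X[m]$ is connective are opposite to the paper's (with the paper's cohomological conventions, $\mathcal{O}_X[m]$ sits in degree $-m$ and is connective precisely for $m \ge 0$, and $\mathbb{V}(\mathcal{O}_X[m]) \simeq X \times K(\mathbb{G}_a,-m)$ for $m<0$); this is purely notational but worth aligning.
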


\begin{Def}
Let $X$ be a derived stack, $T=Spec(A)$ a derived affine scheme and $x : T \to X$ a $T$-point. Let $M$ be an $A$-module, we write the square zero extension $A \oplus M$ and the associated derived affine scheme $T[M]$. The set of derivations from $T$ to $M$ at $x$ is
$$Def_X(T,M) \coloneqq Map_{T/dSt}(T[M],X)$$
\end{Def}

\begin{Rq}
When it exists, the cotangent complex $\mathbb{L}_{X,x}$ of $X$ at $x$ represents the functor $Der_X(T,-)$
\end{Rq}

\begin{Def}
We define the cotangent stack of $X$ as the linear stack $T^*X[n]=\mathbb{V}(\mathbb{L}_X[-n])$, for an integer $n$.
\end{Def}

\begin{Def}
Let $A$ be a simplicial algebra over $k$, $M$ an $A$-module and
$$d : A \to A \oplus M[1]$$ a derivation, we define the square zero extension induced by $d$ denoted $A \oplus_d M$ by
$$\begin{tikzcd}
A \oplus_d M \arrow[dr, phantom, "\lrcorner", very near start] \arrow[r,"p"] \arrow[d] 
& A  \arrow[d,"d"] \\
A \arrow[r,"s"]
& A
\end{tikzcd}$$
where $s$ is the "zero" section. The morphism $p : A \oplus_d M \to A$ will be called the natural projection.
\end{Def}

\begin{Prop}
Let $X$ be a derived stack which has an obstruction theory, see \cite[Definition 1.4.2.2]{TV06}. Let $A$ be a simplicial algebra, $M$ a simplicial $A$-module, $d \in Der(A,M[1])$ a derivation with $A \oplus_d M$ the corresponding square zero extension. We denote
$$T \coloneqq Spec(A) \to T[M] \coloneqq Spec(A \oplus_d M)$$
the morphism corresponding to the projection $A \oplus_d M \to A$. Let $x : T \to X$ be an $A$-point.

\begin{itemize}
\item There exists a natural obstruction
$$\alpha(x) \in Map_{A-Mod}(\mathbb{L}_{X,x},M[1])$$
vanishing if and only if $x$ extends to a morphism $x' : T[M] \to X$.

\item If we assume $\alpha(x)=0$, then the space of lifts of $x$
$$Map_{X/dSt}(T[M],X)$$
is non canonically isomorphic to
$$Map_{A-Mod}(\mathbb{L}_{X,x},M)$$
\end{itemize}
\end{Prop}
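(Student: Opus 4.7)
The plan is to recast the extension problem as a homotopy-coherence question governed by the cotangent complex, by dualizing the pullback defining $A\oplus_d M$ into a pushout description of $T[M]$. The defining pullback square of simplicial algebras, with vertices $A\oplus_d M,A,A,A\oplus M[1]$ and parallel edges $s$ (the zero section) and $d$ (the derivation), dualizes to a pushout $T[M]\simeq T\sqcup_{T[M[1]]}T$ of derived affine schemes, where $T[M[1]]:=Spec(A\oplus M[1])$ and the two legs are induced by $s$ and $d$. The projection $p:A\oplus_d M\to A$ in the statement corresponds to one of the two natural inclusions $T\hookrightarrow T[M]$.

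Next, I would apply $\textbf{Map}(-,X)$ to turn this pushout into a pullback of mapping spaces
$$\textbf{Map}(T[M],X)\simeq \textbf{Map}(T,X)\times_{\textbf{Map}(T[M[1]],X)}\textbf{Map}(T,X),$$
with the two factors mapping in via $s^{*}$ and $d^{*}$. Taking the fiber over $x$ along the projection corresponding to $p$, one finds that extending $x$ along $p$ is equivalent to exhibiting a homotopy between $s^{*}(x)$ and $d^{*}(x)$ inside $\textbf{Map}(T[M[1]],X)$.

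Both $s^{*}(x)$ and $d^{*}(x)$ lie over $x$ in the map $\textbf{Map}(T[M[1]],X)\to \textbf{Map}(T,X)$, since $s$ and $d$ are sections of the projection $A\oplus M[1]\to A$. By the defining universal property of the cotangent complex at $x$, which exists by the obstruction-theoretic hypothesis on $X$, this fiber is canonically equivalent to $Map_{A-Mod}(\mathbb{L}_{X,x},M[1])$, with $s^{*}(x)$ identified with the basepoint $0$. Then $\alpha(x)\in \pi_0 Map_{A-Mod}(\mathbb{L}_{X,x},M[1])$ is defined as the class of $d^{*}(x)$ in this fiber, and an extension $x'$ of $x$ exists if and only if $\alpha(x)=0$.

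When $\alpha(x)=0$, the space of lifts is the space of homotopies between $s^{*}(x)$ and $d^{*}(x)$, which is a torsor over $\Omega\, Map_{A-Mod}(\mathbb{L}_{X,x},M[1])\simeq Map_{A-Mod}(\mathbb{L}_{X,x},M)$; any chosen lift then yields a non-canonical identification of the space of lifts with the latter. The hard part will be verifying rigorously that extensions of $x$ along $p$ correspond precisely to homotopies between $s^{*}(x)$ and $d^{*}(x)$, which comes down to correctly identifying the relevant fiber in the pullback description of $\textbf{Map}(T[M],X)$ and tracking which inclusion corresponds to $p$; once this bookkeeping is settled, the identification of the fiber with $Map_{A-Mod}(\mathbb{L}_{X,x},M[1])$ is immediate from the existence of a global cotangent complex.
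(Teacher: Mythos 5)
Your proposal reproduces the standard argument, which is essentially the proof of \cite[Proposition 1.4.2.5]{TV06} that the paper itself cites without reproducing. The structure is right: rewrite $T[M]$ as the (affine) pushout $T\sqcup_{T[M[1]]}T$, apply $\textbf{Map}(-,X)$, identify the fiber over $x$ with $Map_{A-Mod}(\mathbb{L}_{X,x},M[1])$ via the universal property of the cotangent complex, define $\alpha(x)$ as the class of $d^*(x)$, and read off the torsor structure in the case $\alpha(x)=0$. The loop-space identification $\Omega_0\,Map_{A-Mod}(\mathbb{L}_{X,x},M[1])\simeq Map_{A-Mod}(\mathbb{L}_{X,x},M)$ is also correct.

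There is, however, one point where you lean on something that is not automatic and that you have attributed to the wrong part of the hypothesis. The square defining $A\oplus_d M$ gives a pushout in $dAff$ (equivalently, a pullback in $SCR$), but this is \emph{not} a pushout in $dSt$: for an arbitrary derived stack $X$, the comparison map
$$\textbf{Map}(T[M],X)\longrightarrow \textbf{Map}(T,X)\times_{\textbf{Map}(T[M[1]],X)}\textbf{Map}(T,X)$$
need not be an equivalence, so ``apply $\textbf{Map}(-,X)$ to turn this pushout into a pullback'' is not a formal step. What makes it valid is precisely the second half of the obstruction-theory hypothesis in \cite[Definition 1.4.2.2]{TV06}: $X$ is required to be \emph{inf-cartesian}, i.e.\ to send these square-zero-extension squares to homotopy pullback squares of spaces. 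You invoke the obstruction-theory hypothesis only to guarantee the existence of $\mathbb{L}_{X,x}$, but it is doing double duty: the existence of the global cotangent complex is one clause, and the inf-cartesian property that licenses your pushout-to-pullback step is the other. With this attribution corrected, your argument is a faithful sketch of the cited proof.
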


\begin{proof}
See \cite[Proposition 1.4.2.5]{TV06}.
\end{proof}

We will need the relative version of the previous proposition, so as to apply obstruction theory to derived stacks endowed with a grading.

\begin{Prop}
Let $f : X \to Y$ be a morphism of derived stack which has obstruction theory. Let $A \in SCR$, $M \in A-Mod^{cn}$, $d \in Der(A,M[1])$ a derivation with $A \oplus_d M$ the corresponding square zero extension. We denote
$$T \coloneqq Spec(A) \to T[M] \coloneqq Spec(A \oplus_d M)$$
the morphism corresponding to the projection $A \oplus_d M \to A$. Let $x : T \to X$ be a point in $Map(T,X) \times_{Map(T[M],Y)} Map(T,Y)$.
The fiber at $x$ of the morphism
$$Map(T[M],X) \to Map(T,X) \times_{Map(T[M],Y)} Map(T,Y)$$
is denoted $L(x)$.

There is a natural point $\alpha(x) \in Map_{A-Mod}(\mathbb{L}_{X/Y,x},M[1])$ and a natural equivalence
$$ L(x) \simeq \Omega_{\alpha(x),0} Map_{A-Mod}(\mathbb{L}_{X/Y,x},M[1])$$
\end{Prop}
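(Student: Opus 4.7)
The plan is to reduce this relative statement to the absolute obstruction proposition immediately preceding it, via the cofiber sequence of cotangent complexes $f^*\mathbb{L}_Y \to \mathbb{L}_X \to \mathbb{L}_{X/Y}$. Pulling this cofiber sequence back along the morphism $g : T \to X$ underlying $x$ and applying the exact functor $Map_{A-Mod}(-, M[1])$ yields a fiber sequence of spaces
$$Map_{A-Mod}(\mathbb{L}_{X/Y,x}, M[1]) \longrightarrow Map_{A-Mod}(\mathbb{L}_{X,g}, M[1]) \stackrel{f_*}{\longrightarrow} Map_{A-Mod}(\mathbb{L}_{Y,fg}, M[1]).$$

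Next, I would invoke the absolute proposition separately for $X$ at $g$ and for $Y$ at $fg$. This yields compatible obstruction classes $\alpha_X(g)$ and $\alpha_Y(fg) = f_*\alpha_X(g)$, together with natural identifications of the fibers at $g$ and $fg$ of $Map(T[M], X) \to Map(T, X)$ and of $Map(T[M], Y) \to Map(T, Y)$ with the based path spaces $\Omega_{\alpha_X(g),0}\, Map(\mathbb{L}_{X,g}, M[1])$ and $\Omega_{\alpha_Y(fg),0}\, Map(\mathbb{L}_{Y,fg}, M[1])$, respectively. The datum $h : T[M] \to Y$ coming from $x$ corresponds, through the absolute equivalence for $Y$, to a specific nullhomotopy $\gamma_h$ of $\alpha_Y(fg)$. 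I would then define the relative obstruction $\alpha(x) \in Map_{A-Mod}(\mathbb{L}_{X/Y,x}, M[1])$ as the element, determined up to coherent homotopy, obtained by lifting the pair $(\alpha_X(g), \gamma_h)$ through the fiber sequence above.

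Finally, I would unwind the definition of the fiber $L(x)$. By manipulation of the iterated fiber products, $L(x)$ identifies with the fiber at $h$ of the induced map of absolute fibers
$$\Omega_{\alpha_X(g),0}\, Map(\mathbb{L}_{X,g}, M[1]) \longrightarrow \Omega_{\alpha_Y(fg),0}\, Map(\mathbb{L}_{Y,fg}, M[1]).$$
Using that based path spaces send a fiber sequence of pointed spaces to a fiber sequence, this fiber is canonically equivalent to $\Omega_{\alpha(x), 0}\, Map_{A-Mod}(\mathbb{L}_{X/Y, x}, M[1])$, which is the claimed equivalence.

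The main obstacle is the coherent identification of these various obstruction classes and equivalences at the $\infty$-categorical level: the fiber sequences and comparison squares must be compatible in a homotopy-coherent manner, and the definition of $\alpha(x)$ itself is only canonical up to such coherence. This ultimately follows from the naturality built into the absolute proposition of \cite{TV06} together with the naturality of the cotangent cofiber sequence in the morphism $f$, but making it rigorous requires careful bookkeeping of higher coherences, which is where the bulk of a detailed proof would lie.
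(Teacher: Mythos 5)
The paper states this relative proposition without supplying a proof, so there is no argument of record to compare yours against; I will instead assess your proposal on its own merits. Your reduction to the absolute obstruction theory via the cofiber sequence $f^*\mathbb{L}_Y \to \mathbb{L}_X \to \mathbb{L}_{X/Y}$, together with the total-fiber identification of $L(x)$ with the fiber of the induced map of absolute fibers, is the correct and standard route, and your definition of $\alpha(x)$ as the lift of $(\alpha_X(g),\gamma_h)$ through the resulting fiber sequence of mapping spaces is exactly what the statement asks for.

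Two remarks worth making explicit in a written-up version. First, the absolute proposition as recorded in the paper only states the two coarser consequences (existence of $\alpha_X(g)$, and a non-canonical identification of the space of lifts when $\alpha_X(g)=0$); your argument needs, and uses, the refined form — a natural equivalence
$$\mathrm{fib}_g\bigl(Map(T[M],X)\to Map(T,X)\bigr)\simeq \Omega_{\alpha_X(g),0}\,Map_{A\text{-}Mod}(\mathbb{L}_{X,g},M[1])$$
— which is what the cited TV06 result actually produces and what makes the total-fiber step legitimate, since you need the identification to be natural in $X$ and not merely pointwise. Second, you have silently and correctly reinterpreted the fiber product in the statement as $Map(T,X)\times_{Map(T,Y)}Map(T[M],Y)$ (the base must be $Map(T,Y)$ to make the cospan well-defined); it would be cleaner to flag that correction. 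The final step — for a fiber sequence $A\to B\to C$ of pointed objects, a point $b\in B$ with image $c$, a nullhomotopy $\gamma$ of $c$, and the induced lift $a\in A$, the fiber at $\gamma$ of $\Omega_{b,0}B\to\Omega_{c,0}C$ is naturally $\Omega_{a,0}A$ — is a standard path-space lemma, so your argument is complete.
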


\paragraph*{The tangent bundle formalism}
\label{tangentFormalism}

\begin{Def} 
Following \cite{Lur07}, let $\mathcal{C}$ be a presentable $\infty$-category, the tangent bundle of $\mathcal{C}$ is a functor $T_\mathcal{C} \to \mathcal{C}^{\Delta^1}$ which exhibits $T_\mathcal{C}$ as the stable envelope of $\mathcal{C}^{\Delta^1} \to \mathcal{C}$, the evaluation at $\{1\} \subset \Delta^1$.

We can think of an element of $T_\mathcal{C}$ as a pair $(A,M)$ where $A \in \mathcal{C}$ and $M$ an infinite loop space in $\mathcal{C}$. 
\end{Def}

\begin{Prop}
Let $A$ be a simplicial algebra, we denote $Stab(SCR_{/A})$ the stable category constructed as the stable enveloppe of $SCR_{A/}$. The functor sending a non necessarily connected module $M$ over $A$ to the square zero extension simplicial algebra $A \oplus M$, with its natural augmentation defines an equivalence :
$$A-Mod \xrightarrow{\sim} Stab(SCR_{/A})$$
\end{Prop}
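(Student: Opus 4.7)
The plan is to recognize this statement as the simplicial-commutative analogue of Lurie's identification of the tangent fiber of $\mathbb{E}_\infty$-rings with modules (\cite[\S 7.3.4]{Lur17}), carried out through the tangent bundle formalism already recalled above and the monadic description of $SCR$ as algebras over the monad $LSym$ on connective modules.

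First I would unpack the target. Since $SCR_{/A}$ is presentable with finite limits, its stable envelope is $Sp((SCR_{/A})_*)$, where $(SCR_{/A})_*$ is the $\infty$-category of augmented $A$-algebras, i.e.\ factorizations $A \xrightarrow{u} B \xrightarrow{p} A$ with $p \circ u = \mathrm{id}$. The construction $M \mapsto A \oplus M$ for $M \in A-Mod^{cn}$ lands naturally in this pointed category via the canonical augmentation that projects away $M$. Degreewise one checks $A \oplus (M \times N) \cong (A \oplus M) \times_A (A \oplus N)$, so the functor preserves products; in fact it preserves all limits, since it admits a left adjoint given by the derived cotangent fiber $B \mapsto \mathbb{L}_{B/A} \otimes_B A$ (the counit $A \oplus \mathbb{L}_{B/A} \otimes_B A \to B$ being the universal square-zero extension map, already implicit in the deformation theory recalled above).

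The key computation is then that $\Omega(A \oplus N) \simeq A \oplus N[-1]$ in $(SCR_{/A})_*$ whenever $N[-1]$ is still connective; equivalently, the pullback $A \times_{A \oplus N} A$ of augmented $A$-algebras recovers the square-zero extension by the loop module. This follows from the correspondence between square-zero extensions and derivations together with the stability of $A-Mod$: the pullback of the universal square-zero extension along the zero derivation realizes the desuspension. Iterating and passing to connective covers, one assembles for any $M \in A-Mod$ the sequence $(A \oplus \tau_{\geq 0}(M[n]))_n$ into a coherent spectrum object in $(SCR_{/A})_*$; this defines the functor $A-Mod \to Stab(SCR_{/A})$ extending $M \mapsto A \oplus M$.

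To conclude, I would show this functor is an equivalence by verifying it is a colimit-preserving functor between presentable stable $\infty$-categories whose left adjoint (the stabilized cotangent fiber) yields inverse unit and counit equivalences; both checks reduce to the compact generator $A$ on the source side and to free augmented $A$-algebras on connective modules on the target side, for which the cotangent-fiber computation is a one-line application of the transitivity triangle for $\mathbb{L}$. The main obstacle will be establishing the desuspension identity $\Omega(A \oplus N) \simeq A \oplus N[-1]$ rigorously and functorially and packaging the structure maps into a genuine spectrum object; this is essentially where Lurie's monadic argument for $\mathbb{E}_\infty$-rings must be transported to the monad $LSym$ on connective $k$-modules that defines $SCR$, using that this monad is excisive on its abelianization in the relevant sense.
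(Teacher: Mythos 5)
The paper states this proposition without proof; it is presented as a recall of Lurie's deformation theory, the simplicial-commutative analogue of \cite[Remark 7.3.4.14 and Theorem 7.3.4.13]{Lur17}, and the reference \cite{Lur07} given for the next proposition is meant to cover it. So there is no paper proof to compare against; what follows is an assessment of your outline on its own terms.

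Your approach is the standard one and is broadly sound, but you have misidentified where the real work lies. The ``main obstacle'' you flag at the end --- the desuspension identity $\Omega(A \oplus N) \simeq A \oplus N[-1]$ --- is not an obstacle at all: you have already observed that $A \oplus (-) : A\text{-}Mod^{cn} \to (SCR_{/A})_*$ is a right adjoint (with left adjoint the cotangent fiber $B \mapsto \mathbb{L}_{B/A} \otimes_B A$), hence preserves all limits, and in particular $\Omega(A \oplus N) \simeq A \oplus \Omega N \simeq A \oplus N[-1]$ is automatic. Packaging this into a functor out of all of $A\text{-}Mod$ is then exactly the universal property of stabilization applied to a limit-preserving functor from $A\text{-}Mod^{cn}$, using $A\text{-}Mod \simeq Sp(A\text{-}Mod^{cn})$; there is no coherence to assemble by hand. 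What you have not sufficiently articulated is the essential surjectivity of the resulting functor $A\text{-}Mod \to Stab(SCR_{/A})$. Checking the unit $M \to \mathbb{L}_{(A\oplus M)/A} \otimes_{A\oplus M} A$ is an equivalence for connective $M$ (the standard cotangent-complex computation you allude to) only shows the stabilized left adjoint is fully faithful. To conclude you must argue that the stabilized cotangent fiber is conservative, equivalently that $Stab(SCR_{/A})$ is generated under colimits by suspension spectra of free augmented algebras, and that on those the counit is an equivalence. This is precisely where the excisivity and monadicity properties of $LSym$ enter, and it is the crux of Lurie's proof rather than a packaging issue; your final sentence gestures at it but should be promoted to the centre of the argument.
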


\begin{Prop}
The cotangent complex formalism from \cite{Lur07} defines a functor
$$SCR^{\Delta^1} \to T_{SCR}$$
which is identifies under the equivalence of the previous proposition with the cotangent complex formation $A \to B \mapsto \mathbb{L}_{B/A}$.

We deduce by composition the absolute complex functor
$$SCR \to T_{SCR}$$
which is left adjoint to the forgetful functor $T_{SCR} \to SCR$ sending $(A,M)$ to $A$.
\end{Prop}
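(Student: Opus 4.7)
The plan is to apply Lurie's tangent bundle formalism to the presentable $\infty$-category $\mathcal{C}=SCR$ and then transport the resulting relative cotangent complex along the equivalence $A\text{-Mod}\simeq Stab(SCR_{/A})$ provided by the previous proposition. First, I would invoke \cite[\S 7.3.2]{Lur17} (the $\infty$-categorical counterpart of the construction referenced in \cite{Lur07}), which for any presentable $\infty$-category $\mathcal{C}$ produces a functor
$$L_\mathcal{C}:\mathcal{C}^{\Delta^1}\longrightarrow T_\mathcal{C}$$
over $\mathcal{C}$ (via evaluation at the target), characterized by the universal property that, for an arrow $f:A\to B$ and a spectrum object $M\in Stab(\mathcal{C}_{/B})$,
$$\mathrm{Map}_{Stab(\mathcal{C}_{/B})}\bigl(L_\mathcal{C}(f),M\bigr)\simeq \mathrm{fib}_f\bigl(\mathrm{Map}_\mathcal{C}(A,B\oplus M)\to\mathrm{Map}_\mathcal{C}(A,B)\bigr),$$
where $B\oplus M$ denotes the abstract trivial square zero extension of $B$ by $M$ inside $Stab(\mathcal{C}_{/B})$.

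The second step is to specialize to $\mathcal{C}=SCR$ and unwind. Under the equivalence $A\text{-Mod}\simeq Stab(SCR_{/A})$ of the previous proposition, the abstract trivial square zero extension corresponds to the concrete degreewise simplicial square zero extension $A\oplus M$ defined earlier in the paper, so the functor corepresented by $L_{SCR}(f)$ matches the functor $\mathrm{Der}_A(B,-)$ of $A$-linear derivations in the classical sense. Since the classical relative cotangent complex $\mathbb{L}_{B/A}$ is by construction the connective corepresentative of $\mathrm{Der}_A(B,-)$ (via derived left Kan extension from polynomial algebras), Yoneda produces a canonical natural equivalence $L_{SCR}(A\to B)\simeq (B,\mathbb{L}_{B/A})$, proving the first assertion.

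For the absolute version and the adjunction, I would compose $L_{SCR}$ with the functor $\iota:SCR\to SCR^{\Delta^1}$ sending $B$ to the canonical arrow $k\to B$ out of the initial object, producing $B\mapsto(B,\mathbb{L}_{B/k})=(B,\mathbb{L}_B)$. Left adjointness to the forgetful $p:T_{SCR}\to SCR$, $(A,M)\mapsto A$, is then a formal consequence of the universal property above together with the initiality of $k$: a point of $\mathrm{Map}_{T_{SCR}}((B,\mathbb{L}_B),(A,M))$ amounts to a commuting square
$$\begin{tikzcd}
k\ar[r]\ar[d] & A\oplus M\ar[d]\\
B\ar[r] & A
\end{tikzcd}$$
in $SCR$, in which the upper horizontal arrow is contractibly determined by the initiality of $k$, leaving only the datum of the morphism $B\to A$. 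The main obstacle is verifying the compatibility between the abstract spectrum-level square zero extensions of Lurie's formalism and the concrete degreewise construction on simplicial commutative rings used throughout the paper; this is precisely the content of the previous proposition, and once it is in hand the remainder of the argument is a routine manipulation of universal properties.
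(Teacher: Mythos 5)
Your displayed universal property does not corepresent the relative cotangent complex. The space $fib_f\bigl(Map_\mathcal{C}(A,B\oplus M)\to Map_\mathcal{C}(A,B)\bigr)$ consists of lifts of $f$ to $A\to B\oplus M$, which in $SCR$ is $Der(A,f^*M)\simeq Map_{B}(B\otimes_A\mathbb{L}_A,M)$, so the object it corepresents is $B\otimes_A\mathbb{L}_A$, not $\mathbb{L}_{B/A}$. The relative cotangent complex corepresents $Der_A(B,-)$, i.e.\ lifts of $id_B$ to $B\to B\oplus M$ \emph{under} $A$; the correct fiber is taken inside $Map_{\mathcal{C}_{A/}}(B,B\oplus M)$, not $Map_{\mathcal{C}}(A,B\oplus M)$. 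Your verbal gloss in the next sentence is right, but it is not what the displayed formula computes.

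The commuting-square argument has a parallel problem: the arrow of $SCR^{\Delta^1}$ that the fiberwise $\Sigma^\infty_+$ (the left adjoint $G$ of $T_{SCR}\to SCR^{\Delta^1}$) sends to $(B,\mathbb{L}_B)$ is the \emph{diagonal} $id_B$, not $k\to B$. Indeed $G(C\to D)\simeq(D,\,D\otimes_C\mathbb{L}_C)$, and since $\mathbb{L}_k=0$ one gets $G(k\to B)=(B,0)$; this is precisely why \cite[Remark 1.2.3]{Lur07} factors the cotangent complex functor as $G\circ\delta$ with $\delta$ the diagonal. Your square with $k$ in the corner therefore computes $Map_{T_{SCR}}((B,0),(A,M))\simeq Map(B,A)$, exhibiting $B\mapsto(B,0)$, not the cotangent complex, as left adjoint to $(A,M)\mapsto A$. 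Replacing $k\to B$ by $id_B$ gives $Map_{T_{SCR}}((B,\mathbb{L}_B),(A,M))\simeq Map(B,A\oplus M)$, so the cotangent complex is left adjoint to evaluation at the \emph{domain}, $(A,M)\mapsto A\oplus M$ — this discrepancy is present in the proposition as worded too. (Concretely, for $B=k[x]$, $A=k$, $M=k$ one has $\pi_0 Map_{T_{SCR}}((k[x],k[x]\,dx),(k,k))$ of cardinality $|k|^2$, whereas $\pi_0 Map(k[x],k)$ has cardinality $|k|$.)
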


\begin{proof}
See \cite[Remark 1.2.3]{Lur07}.
\end{proof}

\begin{Rq}
This description of the cotangent complex makes it easier to understand and prove the usual formulas of base change for the cotangent complex.
\end{Rq}

\subsection{Group actions}

\begin{Def}
Let $G$ be a group in derived stacks, the trivial action functor :
$$dSt \to G-dSt$$
admits a left adjoint denoted $(-)_{G}$ and a right adjoint denoted $(-)^G$, called (homotopy) coinvariants and (homotopy) fixed points.
\end{Def}

\begin{Def} 
In the case $G = \mathbb{G}_m$, $\mathbb{G}_m-dSt$ is the category of graded derived stacks, then taking fixed points correspond to taking the $0$-weighted associated derived stack.
\end{Def}

\begin{Prop} \label{Weight0}
Taking fixed points $(-)^G$ of a derived stack endowed with a $G$ action and taking weight $0$ of a graded derived stack preserve small limits.
\end{Prop}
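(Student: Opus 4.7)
The plan is to deduce both assertions from the general principle that right adjoints preserve limits. The preceding definition exhibits $(-)^G : G-dSt \to dSt$ as the right adjoint of the trivial action functor $dSt \to G-dSt$. By the adjoint functor theorem (Theorem \ref{adjointFunct}), any right adjoint between presentable $\infty$-categories preserves small limits, which yields the first claim immediately.

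For the second claim, I would note that taking weight $0$ of a graded derived stack is the specialization of $(-)^G$ to $G=\mathbb{G}_m$, as already recalled in the definition preceding the proposition: a graded derived stack is by definition an object of $\mathbb{G}_m-dSt$, and the $\mathbb{G}_m$-fixed points pick out the weight-$0$ summand (since $\lambda \in \mathbb{G}_m$ acts on the weight-$n$ part by $\lambda^n$, forcing $n=0$ on invariants). Hence this functor is again a right adjoint and the same argument applies verbatim.

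The only step requiring genuine justification is the existence of the right adjoint $(-)^G$, which is asserted without proof in the preceding definition. If one wanted a self-contained argument, I would invoke Theorem \ref{adjointFunct} in reverse: the trivial action functor preserves all small colimits (colimits in $G-dSt$ may be computed at the level of underlying derived stacks via the conservative forgetful functor $G-dSt \to dSt$, which preserves colimits), and both $dSt$ and $G-dSt$ are presentable, so a right adjoint automatically exists. Thus the proposition reduces to purely formal considerations, and I do not anticipate any substantive difficulty: the only mildly subtle point is the verification that the forgetful functor out of $G-dSt$ preserves and detects colimits, but this is a standard fact for functor categories of the shape $Fun(BG, dSt)$.
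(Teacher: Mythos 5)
Your proof is correct and is exactly the argument the paper leaves implicit: $(-)^G$ is the right adjoint of the trivial action functor (stated in the definition immediately preceding the proposition), hence preserves small limits, and taking weight $0$ is by Definition \ref{DefWeight0dSt} literally the case $G=\mathbb{G}_m$. One small caveat on the parenthetical intuition: for graded derived \emph{stacks}, as opposed to graded modules, the weight-$0$ object $X^{=0}$ is not a summand of $X$; on affines it is a quotient of the naive weight-$0$ ring (cf.\ Remark \ref{CompatWeight0}), but this does not affect your argument, which only uses the right-adjointness.
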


\begin{Prop} \label{ActionBG}
For $G$ a group in derived stacks, we have a natural equivalence :
$$G-dSt_k \simeq dSt_{k/BG}$$
\end{Prop}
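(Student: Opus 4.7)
The plan is to construct an explicit pair of mutually inverse functors realizing the equivalence, then reduce the verification that they are inverse to the universal property of $BG$ as the classifying stack for $G$-torsors.

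First I would define a functor $\Phi : G\text{-}dSt_k \to dSt_{k/BG}$ sending $(X,\rho)$ to the quotient $X_{hG}$ equipped with the canonical morphism $X_{hG} \to *_{hG} = BG$ induced by the terminal map $X \to *$ in $G\text{-}dSt_k$ (i.e.\ by functoriality of coinvariants applied to the unique $G$-equivariant map to the point). Conversely I would define $\Psi : dSt_{k/BG} \to G\text{-}dSt_k$ sending $f : Y \to BG$ to the pullback $Y \times_{BG} *$, where $* \to BG$ is the universal $G$-torsor; since $*$ carries a natural $G$-action over $BG$, the pullback inherits a canonical $G$-action. Both assignments are plainly $\infty$-functorial, being built out of colimits and limits in $dSt_k$ together with functoriality in the indexing data.

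Next I would verify the two round-trips. For $\Psi \circ \Phi$, given $(X,\rho)$, one forms the pullback $X_{hG} \times_{BG} *$ and must identify it with $X$ as a $G$-stack. This is exactly the statement that the quotient map $X \to X_{hG}$ is a $G$-torsor, obtained as the pullback of the universal $G$-torsor $* \to BG$ along the classifying map $X_{hG} \to BG$; equivalently, it says that the \v{C}ech nerve of $X \to X_{hG}$ is the bar simplicial object $G^{\bullet} \times X$, which is part of the definition of $BG$ as the realization of the bar construction (cf.\ the Nerve/conerve discussion in the paper). For $\Phi \circ \Psi$, given $Y \to BG$, one needs $(Y \times_{BG} *)_{hG} \simeq Y$; since $* \to BG$ is an effective epimorphism with \v{C}ech nerve $G^{\bullet}$, its pullback $Y \times_{BG} * \to Y$ is also an effective epimorphism with \v{C}ech nerve computing $G \times (Y \times_{BG} *)$, and descent in the $\infty$-topos $dSt_k$ recovers $Y$ as the geometric realization of this simplicial object, which is precisely $(Y \times_{BG} *)_{hG}$.

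The main obstacle is to make the above $\infty$-categorical descent argument precise, in particular showing that $*\to BG$ really is an effective epimorphism with the expected \v{C}ech nerve and that the Beck--Chevalley square interchanging pullback along $*\to BG$ with the coinvariants functor is cartesian. Both points reduce to the construction of $BG$ as the colimit of the simplicial diagram $G^{\bullet}$ satisfying the Segal condition (the nerve of the group object $G$), together with the fact that $dSt_k$ is an $\infty$-topos in which effective epimorphisms are stable under pullback and satisfy descent. Alternatively, one can simply invoke the general statement for $\infty$-topoi that group actions on objects of $\mathcal{X}$ are classified by maps to $BG$ (as in HTT \S 6 and HA \S 4.7), and specialize to $\mathcal{X} = dSt_k$; I would prefer to state the explicit $\Phi$ and $\Psi$ above so that the equivalence of categories can be used functorially in the later sections that build the Dieudonné de Rham functor out of mapping stacks over $BS^1_{gr}$.
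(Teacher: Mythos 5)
Your argument is correct and is exactly the standard ``torsors vs.\ actions'' equivalence that the paper invokes by citing \cite[Proposition 1.3.5.3]{TV06} without reproducing a proof. The one point you rightly flag as needing care --- that $* \to BG$ is an effective epimorphism with \v{C}ech nerve $G^{\bullet}$, so that the bar simplicial object $G^{\bullet} \times X$ is effective --- is precisely the Giraud axiom for $\infty$-topoi (groupoid objects are effective, \cite[Theorem 6.1.0.6]{Lur09}), which holds in $dSt_k$, so no gap remains.
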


\begin{proof}
See \cite[Proposition 1.3.5.3]{TV06}.
\end{proof}

\subsection{Graded and filtered objects}

In this section, we review basic definitions and properties of graded and filtered objects in $\infty$-categories, following \cite[\S 3.1, \S 3.2]{Lur15}
\begin{Def}
Let $\mathcal{C}$ be an $\infty$-category, we define the category of graded objects in $\mathcal{C}$ :
$$\mathcal{C}^{gr} \coloneqq Fun(\mathbb{\textbf{Z}}^{disc},\mathcal{C}) \simeq \prod_{\mathbb{Z}} \mathcal{C}$$
where $\mathbb{\textbf{Z}}^{disc}$ is the discrete $1$-category on $\mathbb{Z}$. We also define the category of filtered objects in 
$\mathcal{C}$ :
$$Fil(\mathcal{C}) \coloneqq Fun(\bf{(\mathbb{\textbf{Z}},\le)},\mathcal{C})$$
where $(\mathbb{\textbf{Z}},\le)$ is the $1$-category associated to the usual partial ordered structure on $\mathbb{Z}$.
The inclusion $\mathbb{\textbf{Z}}^{disc} \subset (\mathbb{\textbf{Z}},\le)$ induces a forgetful functor : $$ Fil(\mathcal{C}) \to \mathcal{C}^{gr}$$
\end{Def}

\begin{Rq}
In our definition, the structure morphisms $X_n \to X_{n+1}$ of a filtered object need not be monomorphisms. However the two notions are shown to be equivalent in \cite{SS13}.
\end{Rq}

\begin{Def}
When $\mathcal{C}$ is a symmetric monoidal $\infty$-category, $\mathcal{C}^{gr}$ and $Fil(\mathcal{C})$ are both symmetric monoidal $\infty$-category when endowed with the Day product.
\end{Def}

\begin{Def}
The associated graded and underlying object functors are given by symmetric monoidal functors :
$$gr : Fil(\mathcal{C}) \to \mathcal{C}^{gr}$$
and
$$colim : Fil(\mathcal{C}) \to \mathcal{C}$$
See \cite{Lur15} for details on these construction.
\end{Def}

We now recall the geometrical interpretation of graded modules and filtered modules. See \cite{Sim96} and \cite{Mou19} for details.

\begin{Prop}
There is an equivalences of stable symmetric monoidal $\infty$-categories :
$$QCoh(\mathbb{A}^1/\mathbb{G}_m) \simeq Fil(Mod_k)$$
and
$$QCoh(B\mathbb{G}_m) \simeq Mod_k^{gr}$$
when $\mathbb{A}^1$ and $\mathbb{G}_m$ are the usual additive and multiplicative groups in derived stacks over $k$.

Furthermore, pullback along the closed point $0 : B\mathbb{G}_m \to \mathbb{A}^1/\mathbb{G}_m$ and the open point $1 : * \simeq \mathbb{G}_m/\mathbb{G}_m \to \mathbb{A}^1/\mathbb{G}_m$ recover respectively the associated graded and the underlying object constructions.
\end{Prop}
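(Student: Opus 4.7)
The plan is to deduce both equivalences from a uniform description of $QCoh(X/\mathbb{G}_m)$ for $X \in \{*, \mathbb{A}^1\}$ via flat descent, then to match pullbacks along the two canonical points with the standard operations on filtered objects. I would first establish the equivalence $QCoh(B\mathbb{G}_m) \simeq Mod_k^{gr}$ by applying faithfully flat descent along the atlas $* \to B\mathbb{G}_m$, which presents $QCoh(B\mathbb{G}_m)$ as the $\infty$-category of $k$-modules with a coaction of the Hopf algebra $\mathcal{O}(\mathbb{G}_m) = k[t, t^{-1}]$. A coaction $M \to M \otimes k[t,t^{-1}]$ decomposes $M$ as $\bigoplus_{n \in \mathbb{Z}} M_n$ where $M_n$ is the summand on which $t$ acts with weight $n$, and this decomposition is functorial in a way compatible with tensor products, giving the required symmetric monoidal equivalence with $Mod_k^{gr}$ equipped with the Day product.

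Next, I would treat $QCoh(\mathbb{A}^1/\mathbb{G}_m)$ by observing that the projection $\mathbb{A}^1/\mathbb{G}_m \to B\mathbb{G}_m$ is affine, being the relative spectrum of the graded algebra $k[t]$ in $Mod_k^{gr}$ where $t$ sits in weight $1$. Consequently
\[
QCoh(\mathbb{A}^1/\mathbb{G}_m) \simeq k[t]\text{-}Mod(Mod_k^{gr}),
\]
i.e.\ graded $k[t]$-modules with $t$ in weight $1$. I would then construct the equivalence with $Fil(Mod_k)$ by sending a graded $k[t]$-module $N = \bigoplus_n N_n$ to the filtered diagram $\cdots \to N_{n-1} \xrightarrow{t} N_n \xrightarrow{t} N_{n+1} \to \cdots$. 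The inverse sends a filtered object $F_\bullet$ to the graded object $(F_n)_n$ with $t$ acting by the transition maps; one checks this is symmetric monoidal because the Day convolution of graded $k[t]$-modules is exactly the convolution product on filtered objects.

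Finally, to identify the pullbacks I would compute everything on the descended side. The closed point $0 : B\mathbb{G}_m \to \mathbb{A}^1/\mathbb{G}_m$ corresponds to the graded ring map $k[t] \to k[t]/(t) = k$, so that $0^* N \simeq N \otimes^{\mathbb{L}}_{k[t]} k$, whose weight $n$ part is $N_n / t N_{n-1}$; under the filtered identification this is precisely $gr_n(F_\bullet)$. The open point $1 : * \to \mathbb{A}^1/\mathbb{G}_m$ factors through the inclusion $\mathbb{G}_m/\mathbb{G}_m \hookrightarrow \mathbb{A}^1/\mathbb{G}_m$, corresponding to inverting $t$ and then taking $\mathbb{G}_m$-coinvariants; the pullback is $N \otimes_{k[t]} k[t,t^{-1}]$ in weight zero, which computes $\mathrm{colim}_n F_n M$ as required.

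The main obstacle I anticipate is not the underlying module-theoretic calculation but the verification of the symmetric monoidal compatibilities at the $\infty$-categorical level, in particular checking that Day convolution on $Mod_k^{gr}$ descends correctly from the tensor product on $QCoh(B\mathbb{G}_m)$ and that the tensor product of graded $k[t]$-modules matches the convolution on $Fil(Mod_k)$. I would appeal to \cite{Mou19} and \cite{Sim96} for these compatibilities and for the effectivity of flat descent for quotient stacks by $\mathbb{G}_m$ in the derived setting, so that the argument becomes a matter of unwinding definitions once the descent step is invoked.
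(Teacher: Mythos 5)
Your proof is correct and is exactly the Rees-algebra argument that the cited references use: descent along $* \to B\mathbb{G}_m$ identifies $QCoh(B\mathbb{G}_m)$ with graded modules, affineness of $\mathbb{A}^1/\mathbb{G}_m \to B\mathbb{G}_m$ identifies $QCoh(\mathbb{A}^1/\mathbb{G}_m)$ with graded $k[t]$-modules, and the Rees dictionary converts these to filtered objects. The paper itself gives no proof beyond citing \cite[Theorem 1.1, Theorem 4.1]{Mou19} and \cite[Theorem 2.2.10]{MRT20}, and your sketch is a faithful reconstruction of the content of those references (modulo the minor point that $0^*N$ in weight $n$ should be read as the cofiber of $t : N_{n-1} \to N_n$ rather than the underived quotient $N_n / tN_{n-1}$).
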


\begin{proof}
See \cite[Theorem 1.1, Theorem 4.1]{Mou19} and \cite[Theorem 2.2.10]{MRT20}.
\end{proof}

\subsection{Graded stacks}

\begin{Def} [Graded derived stack]
A graded derived stack is defined as a derived stack endowed with an action of $\mathbb{G}_m$, a morphism of graded derived stack is one compatible with the actions. The category of derived affine schemes endowed with a $\mathbb{G}_m$-action is naturally equivalent to the category of graded simplicial algebras over $k$, see \cite{Mou19}. The category of graded derived stacks is denoted $\mathbb{G}_m$-dSt.
\end{Def}

\begin{Rq}
A $\mathbb{G}_m$-action on a derived stack $X$ is equivalent to a morphism of derived stacks $Y \to B\mathbb{G}_m$ and an identification of $Y \times_{B\mathbb{G}_m} *$ with $X$, using Proposition \ref{ActionBG}. We call $X$ the total space associated to $Y \to B\mathbb{G}_m$.
\end{Rq}

\begin{Thm}
From \cite{Mou19}, there is a symmetric monoidal equivalence of categories
$$Mod_B^{gr} \simeq QCoh(B\mathbb{G}_m \times Spec(B))$$

We will call graded quasi-coherent complex over $X$ an object of either category.
\end{Thm}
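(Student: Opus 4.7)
The plan is to first reduce to the case $B = k$, and then use descent along the atlas $\ast \to B\mathbb{G}_m$. Concretely, $B\mathbb{G}_m \times \mathrm{Spec}(B)$ is the base change of $B\mathbb{G}_m$ along $\mathrm{Spec}(B) \to \mathrm{Spec}(k)$, so the projection formula gives
$$QCoh(B\mathbb{G}_m \times \mathrm{Spec}(B)) \simeq QCoh(B\mathbb{G}_m) \otimes_{Mod_k} Mod_B,$$
while $Mod_B^{gr} \simeq Mod_k^{gr} \otimes_{Mod_k} Mod_B$ as presentable symmetric monoidal $\infty$-categories, because both sides are parametrized by $\mathbb{Z}^{disc}$ and $Mod_{(-)}$ commutes with products. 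Hence it suffices to treat $B = k$, namely to prove $QCoh(B\mathbb{G}_m) \simeq Mod_k^{gr}$ as symmetric monoidal $\infty$-categories.

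Next, I would compute $QCoh(B\mathbb{G}_m)$ by flat descent. The canonical morphism $p : \mathrm{Spec}(k) \to B\mathbb{G}_m$ is a $\mathbb{G}_m$-torsor, hence faithfully flat and affine, so its \v{C}ech nerve realizes $B\mathbb{G}_m$ as the geometric realization of the simplicial derived scheme with $n$-simplices $\mathbb{G}_m^n = \mathrm{Spec}(k[t_1,t_1^{-1}]\otimes\cdots\otimes k[t_n,t_n^{-1}])$. Applying $QCoh$, which sends colimits of stacks to limits of $\infty$-categories, I get
$$QCoh(B\mathbb{G}_m) \simeq \mathrm{Tot}\bigl( Mod_k \rightrightarrows Mod_{k[t,t^{-1}]} \triplerightarrow \cdots \bigr),$$
which is by definition the $\infty$-category of comodules over the Hopf algebra $(k[t,t^{-1}],\Delta)$ with $\Delta(t) = t \otimes t$.

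The third step is the key algebraic identification: $\mathbb{Z}$-graded $k$-modules are the same as $k[t,t^{-1}]$-comodules. A comodule structure $\rho : M \to M \otimes_k k[t,t^{-1}]$ decomposes uniquely as $\rho(m) = \sum_n m_n \otimes t^n$ with $m_n \in M$, and the counit axiom forces $m = \sum m_n$ while the coassociativity axiom forces the decomposition $M = \bigoplus_n M_n$ with $m_n \in M_n$. This identification is manifestly symmetric monoidal: the cotensor product of comodules corresponds under the decomposition to the graded tensor product $(M \otimes N)_n = \bigoplus_{i+j=n} M_i \otimes N_j$, and the symmetry isomorphism involves no extra sign because the grading is by an abelian group acting trivially. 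Since all the comodule categories appearing in the totalization are presentable and the transition functors preserve all colimits, one may form the totalization in $\mathrm{Pr}^L$ and the symmetric monoidal structure descends from the pointwise tensor product.

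The main obstacle, in my view, is not the algebraic decomposition but verifying carefully that the totalization above is computed correctly in the $\infty$-categorical sense and that the induced monoidal structure really matches the Day-convolution-type tensor product on $Mod_k^{gr} = \mathrm{Fun}(\mathbb{Z}^{disc}, Mod_k)$. Once descent is set up for $QCoh$ in the derived setting (using that $p$ is flat and affine, so the Beck--Chevalley/monadicity conditions of \cite{Lur17} apply), one concludes by invoking the characterization of $\mathrm{Fun}(\mathbb{Z}^{disc}, Mod_k)$ as the free cocomplete $k$-linear symmetric monoidal category on the abelian group $\mathbb{Z}$, which is also the universal property satisfied by comodules on the group algebra of $\mathbb{Z}$.
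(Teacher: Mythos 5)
The paper offers no proof of this statement, citing it from \cite{Mou19}, and your proposal is a correct sketch of the argument given there: reduce to $B=k$ by a K\"unneth/tensor-product formula, then use faithfully flat affine descent along $\ast \to B\mathbb{G}_m$ to identify $QCoh(B\mathbb{G}_m)$ with $k[t,t^{-1}]$-comodules, and finally match comodules with $\mathbb{Z}$-graded modules. One small imprecision worth fixing: the justification ``$Mod_{(-)}$ commutes with products'' for $Mod_B^{gr} \simeq Mod_k^{gr} \otimes_{Mod_k} Mod_B$ is not quite the right statement --- what you actually want is that $Mod_k^{gr} = Fun(\mathbb{Z}^{disc}, Mod_k)$ is the $\mathbb{Z}$-indexed coproduct of copies of $Mod_k$ in $Pr^L_k$ (which happens to agree with the product because $\mathbb{Z}^{disc}$ is discrete), and $-\otimes_{Mod_k} Mod_B$ commutes with colimits in $Pr^L_k$.
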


\begin{Def}
For $E$ a quasi-coherent complex of $X$, and $n$ an integer, $E((n))$ is defined to be the graded quasi-coherent complex with $E$ in pure weight $n$.
\end{Def}

\begin{Def} [Relative global functions on graded derived stacks]
Given a morphism of graded stacks $Y \to X$, we define graded functions on $Y$ relative to $X$ as
$$\mathcal{O}_{gr,X}(Y) \coloneqq \pi_*(\mathcal{O}_{[Y/\mathbb{G}_m]}) \in CAlg(QCoh([X/\mathbb{G}_m]))$$
with $\pi$ the canonical morphism $[Y/\mathbb{G}_m] \to [X/\mathbb{G}_m]$.
\end{Def}

\begin{Def}
We introduce graded $\mathcal{O}$-module function similarly :
$$\widetilde{\mathcal{O}}_{gr,X}(Y) \coloneqq \widetilde{\pi}_*(\mathcal{O}_{[Y/\mathbb{G}_m]}) \in CAlg(\mathcal{O}_{[X/\mathbb{G}_m]}-Mod)$$

using the pushforward $\widetilde{\pi}_* : \mathcal{O}_{[Y/\mathbb{G}_m]}-Mod \to \mathcal{O}_{[X/\mathbb{G}_m]}-Mod$. It is lax monoidal, as left adjoint of a symmetric monoidal functor, therefore it induces a canonical morphism on commutative algebras in the respective symmetric monoidal categories. We deduce that quasi-coherent graded functions are simply $\mathcal{O}$-module graded functions after applying the left adjoint of the forgetful functor from quasi-coherent complexes to $\mathcal{O}$-modules.
\end{Def}

\begin{Def} [Graded stacks over a base]
Let $X$ be a graded derived stack, the $\infty$-category of graded derived stacks over $X$ is the undercategory $\mathbb{G}_m-dSt_{/X}$, with the trivial grading on $X$.
\end{Def}

\begin{Ex}
Linear stacks are naturally $\mathbb{G}_m$-graded, we can construct the action
$$\mathbb{G}_m \times \mathbb{V}(E) \to \mathbb{V}(E)$$
 as follows. For a given simplicial commutative $k$-algebra $B$, $\mathbb{G}_m(B)$ is $Map^{eq}_{N(B)-mod}(N(B),N(B))$, it naturally operates on
 $$\mathbb{V}(E)(B) = Map_{N(B)-Mod}(E,N(B))$$ through the composition map
$$Map_{N(B)-Mod}(N(B),N(B)) \times Map_{N(B)-Mod}(E, N(B)) \to Map_{N(B)-Mod}(E,N(B))$$
\end{Ex}

\begin{Def} \label{DefWeight0dSt} Let $X$ be a graded derived stack, the associated derived stack of $0$-weights is defined as the derived stack of fixed points by the action of $\mathbb{G}_m$, we denote it
$$X^{=0} \coloneqq X^{\mathbb{G}_m}$$
\end{Def}

\begin{Prop} \label{DefWeight0SCR}
The functor
$$SCR \to SCR^{gr}$$
endowing a simplicial algebra with its trivial grading admits a left adjoint, which we denote $(-)^{=0}$.
\end{Prop}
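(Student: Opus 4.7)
The plan is to invoke the Adjoint Functor Theorem (Theorem \ref{adjointFunct}). Both $SCR$ and $SCR^{gr}$ are presentable $\infty$-categories; indeed, $SCR^{gr}$ can be presented as the category of commutative algebra objects in the presentable symmetric monoidal $\infty$-category of graded connective modules $Fun(\mathbb{Z}^{disc}, Mod_k^{cn})$. It therefore suffices to verify that the trivial grading functor $\tau \colon SCR \to SCR^{gr}$ preserves small limits and is accessible.

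First I would verify limit-preservation. The forgetful functors $SCR \to Mod_k^{cn}$ and $SCR^{gr} \to Fun(\mathbb{Z}^{disc}, Mod_k^{cn})$ are conservative and create small limits, a standard feature of algebras over a sifted-colimit-preserving monad on a presentable category. Since limits of graded modules are computed weight by weight, and the underlying graded module of $\tau(A)$ has $A$ as its weight-$0$ component and the zero object in every other weight, for any small diagram $i \mapsto A_i$ in $SCR$ both $\tau(\lim_i A_i)$ and $\lim_i \tau(A_i)$ have $\lim_i A_i$ in weight $0$ and vanish elsewhere, hence coincide. Next I would check accessibility: the same weight-wise description shows that $\tau$ preserves filtered colimits of underlying modules, and since filtered colimits of simplicial commutative algebras are also computed in modules, $\tau$ preserves filtered colimits and is in particular $\omega$-accessible.

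Invoking Theorem \ref{adjointFunct} then yields the desired left adjoint, which we denote $(-)^{=0}$. No substantive obstacle should arise; the only point requiring a modicum of care is the identification of limits in $SCR^{gr}$ as limits computed on underlying graded modules, which is a routine application of the monadicity of $SCR^{gr}$ over graded modules. For conceptual clarity and later use, I would also note the geometric shadow of this adjunction: under $Spec$, the functor $\tau$ corresponds to equipping an affine derived scheme with the trivial $\mathbb{G}_m$-action, so the left adjoint $(-)^{=0}$ sends a graded simplicial algebra $B_\bullet$ to the algebra of functions on the fixed-point stack $Spec(B_\bullet)^{\mathbb{G}_m}$, reconciling the notation with Definition \ref{DefWeight0dSt}.
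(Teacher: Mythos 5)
Your proof takes the same route as the paper, which simply invokes the adjoint functor theorem (Corollary 5.5.2.9 of \cite{Lur09}); you have merely spelled out the verification of its hypotheses (presentability, limit-preservation, accessibility) that the paper leaves implicit. The argument is correct, and the closing remark linking $(-)^{=0}$ to $\mathbb{G}_m$-fixed points matches Remark \ref{CompatWeight0}.
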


\begin{proof}
Using the adjoint functor theorem, see \cite[Corollary 5.5.2.9]{Lur09}.
\end{proof}

\begin{Rq}
For a general graded simplicial algebra $A$, $(A)^{=0}$ and $A(0)$ will not coincide. We will call the former "$0$-weights" and the latter "naive $0$-weights".
\end{Rq}

\begin{Rq}
When $A$ is a discrete commutative algebra, we notice that $(A)^{=0}$ is given by the quotient ring of $A(0)$ where we quotient by elements of the form $ab$ where $a$ is in weight $n>0$ and $b$ in weight $-n$.
\end{Rq}

\begin{Rq} \label{WeightNaiveVsNormal}
When $A$ is a graded simplicial algebra, either positively graded or negatively graded, then $(A)^{=0}$ coincides with $A(0)$.
\end{Rq}

\begin{Rq} \label{CompatWeight0}
The two constructions of $0$-weights are compatible. Meaning that for $A$ a graded simplicial algebra, we have a natural identification
$$(Spec(A))^{=0} \simeq Spec(A^{=0})$$

We verify it on a test simplicial algebra $R$
$$(Spec(A))^{=0}(R) = Map_{\mathbb{G}_m}(Spec(R),Spec(A)) \simeq Map_{gr}(A,R) \simeq Map(A^{=0},R)$$
\end{Rq}

\subsection{Line bundles and graded modules}

In this section we discuss the relationship between $\mathbb{G}_m$-torsors and invertible modules.

\begin{Def}
If $B$ is a simplicial $k$-algebra, a morphism $Spec(B) \to B \mathbb{G}_m$ is the data of an invertible $B$-module, we will denote both the morphism and the module by $\mathcal{L}$. There is an ambiguity in the choice, the invertible module associated to the morphism could be defined to be $\mathcal{L}^\vee$, we choose $\mathcal{L}$ in such a way that the total space of the $\mathbb{G}_m$-torsor associated to $Spec(B) \to B \mathbb{G}_m$ is $Spec((Sym_B \mathcal{L}^\vee)[(\mathcal{L}^\vee)^{-1}])$, it is the linear stack associated to $\mathcal{L}^\vee$ where we removed the zero section, as a $B$-module. The function algebra $(Sym_B \mathcal{L}^\vee)[(\mathcal{L}^\vee)^{-1}]$ is simply $\bigoplus_{n \in \mathbb{Z}} (\mathcal{L}^\vee)^{\otimes n}$. With this convention $\mathcal{L}$ is functorial in the morphism $Spec(B) \to B \mathbb{G}_m$.
\end{Def}

\begin{Rq}
We can try and understand $[\mathbb{V}(E)/\mathbb{G}_m]$ over $B \mathbb{G}_m \times X$ through its funtor of points. A morphism $Y = Spec(B) \to [\mathbb{V}(E)/\mathbb{G}_m]$ over $B\mathbb{G}_m \times X$ is the data of a morphism $Y \to B \mathbb{G}_m \times X$, ie a $\mathbb{G}_m$-torsor $\widetilde{Y} \to Y$ with a map $Y \to X$, and a $\mathbb{G}_m$-equivariant morphism $\widetilde{Y} \to \mathbb{V}(E)$ over $X$. The torsor has total space $\widetilde{Y} = Spec((Sym_B \mathcal{L}^\vee)[(\mathcal{L}^\vee)^{-1}])$, therefore the $\mathbb{G}_m$-equivariant map $\widetilde{Y} \to \mathbb{V}(E)$ corresponds to a morphism of graded $N(B)$-modules $u^*E((1)) \to (Sym_B \mathcal{L}^\vee)[(\mathcal{L}^\vee)^{-1}]$ ie a morphism of $N(B)$-module $u^*E \to \mathcal{L}^\vee$, with $u^*$ the pullback morphism induced by the morphism $u : A \to B$ associated with $Y \to X$. This is equivalent to having a morphism of $N(B)$-modules $\mathcal{L} \otimes u^*E  \to N(B)$, which is a $B$-point of $\mathbb{V}(\mathcal{L} \otimes u^*E)$.
\end{Rq}

\begin{Rq} \label{EquivGraduée}
We deduce an equivalence of $B$-modules \[ [\mathbb{V}(E)/\mathbb{G}_m] \times_{B\mathbb{G}_m \times X} Spec(B) \simeq \mathbb{V}(\mathcal{L} \otimes u^*E). \]

The equivalence between graded complexes and quasi-coherent modules on $B \mathbb{G}_m$ is given informally by sending a graded complex $\bigoplus_i E_i$ to the $\mathcal{O}$-module stack
$$Spec(B)\xrightarrow{\mathcal{L}} B \mathbb{G}_m \mapsto \bigoplus_i E_i \otimes \mathcal{L}^{\otimes i}$$

Therefore, for any integer $k$, $E((k))$ is the $\mathcal{O}$-module 
$$Spec(B)\xrightarrow{(\mathcal{L},u)} B \mathbb{G}_m \times X \mapsto \mathcal{L}^{\otimes k} \otimes u^*E$$
\end{Rq}

\subsection{The graded circle}

\begin{Def}
Let $A \coloneqq \mathbb{Z}[\frac{x^n}{n!}] \subset \mathbb{Q}[x]$, we define $Ker$ the functor represented by $A$ pulled back to $k$.
\end{Def}

\begin{Prop}
The presheaf $Ker$ coincides with the intersection of the kernels of the Frobenius morphisms on big Witt vectors
$$F_p : W_{big} \to W_{big}$$
therefore it has a natural abelian group structure. See \cite{Haz08} and \cite{Rab14} for the construction of big Witt vectors.
\end{Prop}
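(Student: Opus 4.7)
The plan is to identify $\bigcap_p \ker F_p \subset W_{big}$ by producing an explicit natural transformation from $Ker$ and showing it is an isomorphism, using ghost coordinates as the technical bridge.

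First I would work in the explicit model $W_{big}(R) \cong 1+tR[[t]]$ (with addition given by multiplication of power series) and recall that in ghost coordinates $w_m$, the Frobenius $F_p$ acts by $(w_m)_{m\geq 1} \mapsto (w_{pm})_{m\geq 1}$. Since every integer $m \geq 2$ has a prime divisor, in ghost coordinates
$$\bigcap_p \ker F_p = \{(w_m)_{m \geq 1} : w_m = 0 \text{ for } m \geq 2\}.$$
In particular, on a $\mathbb{Q}$-algebra (where ghost coordinates are faithful) this subgroup consists exactly of the series $\exp(at) = \sum_{n \geq 0} \frac{a^n}{n!} t^n$ for $a \in R$.

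Next I would construct a natural transformation $\Phi : Ker \to W_{big}$ by sending a ring map $\phi : A \to R$ with $\phi(x^{[n]}) = a_n$ to the series $f_\phi(t) = 1 + \sum_{n \geq 1} a_n t^n \in 1+tR[[t]]$. The comultiplication on $A$, namely $x^{[n]} \mapsto \sum_{i+j=n} x^{[i]} \otimes x^{[j]}$, corresponds under $\Phi$ precisely to multiplication of power series, so $\Phi$ is automatically an additive group morphism. To check that $\Phi$ factors through $\bigcap_p \ker F_p$ it suffices, by functoriality, to verify it on the universal example, which lives over $A$ itself. Since $A \subset \mathbb{Q}[x]$ is $\mathbb{Z}$-flat, we may test the equality $F_p(f) = 0$ after tensoring with $\mathbb{Q}$, where ghost coordinates are faithful and a direct computation on $\exp(xt)$ yields $w_m(\exp(xt)) = 0$ for $m \geq 2$.

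It remains to show $\Phi$ is an isomorphism. The functor $\bigcap_p \ker F_p$ is representable by an equalizer $B$ of affine schemes, and $\Phi$ is induced by a ring map $u : B \to A$. Over $\mathbb{Q}$, both source and target are represented by $\mathbb{Q}[x] \simeq \mathbb{G}_a$ via the $w_1$-coordinate, and $u \otimes \mathbb{Q}$ is an isomorphism by the explicit description in paragraph 1. To conclude that $u$ itself is an isomorphism I would verify that $B$ is $\mathbb{Z}$-torsion-free: the universal series $f_{\mathrm{univ}} \in 1+tB[[t]]$ has coefficients $b_n \in B$ whose ghost relations $-n b_n = b_{n-1} w_1$ force, by induction and flatness arguments for the equalizer, the same recursive formulas $n! b_n = b_1^n$ that define $A$ inside $\mathbb{Q}[x]$. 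The hard step will be this last integrality argument: one must either identify $B$ concretely as a subring of $\mathbb{Q}[x]$ containing $b_1^n/n!$, or check torsion-freeness of $B$ using that it is cut out of the smooth scheme $W_{big}$ by the kernel conditions. Once torsion-freeness is established, the iso $u\otimes \mathbb{Q}$ combined with $A, B \hookrightarrow \mathbb{Q}[x]$ forces $u$ to be an isomorphism, and $Ker$ inherits the abelian group structure from $\bigcap_p \ker F_p \subset W_{big}$.
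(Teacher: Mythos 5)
The paper gives no proof of this proposition (it is stated with a pointer to the literature on big Witt vectors), so there is nothing to compare your argument to line-by-line; what follows is an assessment of the argument on its own terms.

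Your overall strategy is sound and the first three steps are essentially correct: the ghost-coordinate description of $\bigcap_p \ker F_p$, the construction of $\Phi$ as the universal element $\sum x^{[n]}t^n \in W_{big}(A)$, and the check that $\Phi$ lands in $\bigcap_p\ker F_p$ by exploiting that $A$ is $p$-torsion-free. The reduction to a single ring map $u \colon B \to A$ (surjective since the $x^{[n]}$ generate $A$, and an isomorphism after $\otimes\,\mathbb{Q}$) is also right. However, the remaining step — showing $B$ is torsion-free — is a genuine gap, and neither of your proposed routes closes it.

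First, the claim that the relations $n!\,b_n = b_1^n$ "define $A$ inside $\mathbb{Q}[x]$" is not correct. The quotient $C := \mathbb{Z}[a_1,a_2,\dots]/(n!\,a_n - a_1^n)$ is not $A$: the element $3a_3 - a_1 a_2$ is a nonzero $2$-torsion class in $C$ (one checks $2(3a_3-a_1a_2) = (6a_3-a_1^3) - a_1(2a_2-a_1^2)$ lies in the ideal, while by degree considerations $3a_3-a_1a_2$ itself does not). So even granting the ghost relations $n b_n = b_1 b_{n-1}$ in $B$ — which do hold, since for $m\ge 2$ one has $w_m \in I$, because $w_{pk}(f)=w_k(F_p(f))$ is an integral polynomial with no constant term in the coefficients of $F_p(f)$ — they only give you that $b_i b_j - \binom{i+j}{i}b_{i+j}$ is annihilated by $(i+j)!$, not that it vanishes. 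That is, the ghost relations imply the divided-power relations \emph{modulo torsion}, which is precisely what was to be shown.

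Second, the suggestion to deduce torsion-freeness of $B$ from smoothness of $W_{big}$ and the fact that $B$ is "cut out by kernel conditions" cannot work as stated: $\operatorname{Spec}\mathbb{Z}[x]/(px)$ is cut out of the smooth scheme $\mathbb{A}^1$ by a single equation and has $p$-torsion. What you actually need is that the ideal $I = \sum_p I_p$ of $\mathbb{Z}[a_1,a_2,\dots]$ contains all the divided-power relations $a_i a_j - \binom{i+j}{i}a_{i+j}$ (equivalently, that $I$ is already saturated). This is true — for example, in weight $4$ one finds $a_1a_3 - 4a_4 = -2P_{2,2} + a_1w_3 + a_1^2w_2 - a_2w_2 \in I$, where $P_{2,2}=a_2^2-2a_1a_3+2a_4$ is the weight-$2$ coefficient of $F_2(f)$ — but a uniform argument requires more than the $w_m\in I$'s; the coefficients $P_{p,n}$ themselves carry the needed $2$-adic, $3$-adic, etc.\ divisibilities and must be used. (Note that $(w_m : m\ge 2)$ is strictly contained in $I$: $P_{2,2}$ is not an integral combination of the $w_m$'s.) A complete proof should either produce such a uniform expression, or replace the integrality step by a structural argument such as Cartier duality identifying $\bigcap_p\ker F_p$ directly with $\mathbb{G}_a^{\sharp}$.
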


\begin{Def} \label{GradedCircle}
We define the graded circle $S^1_{gr}$ as
$$S^1_{gr} \coloneqq BKer$$
\end{Def}

\begin{Prop} \label{BS1affine}
Recalling from \cite[Proposition 3.2.7]{MRT20}, the stacks $S^1_{gr}$ and $BS^1_{gr}$ are affine stacks.
\end{Prop}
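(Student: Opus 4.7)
The plan is to exhibit both $S^1_{gr}$ and $BS^1_{gr}$ as objects in the essential image of $Spec^\Delta$, by explicit identification of their cosimplicial algebras of functions. Since $Ker$ is represented by the ring $A$, it is an affine scheme, hence trivially an affine stack; the main content of the statement is therefore to transport affineness through the bar construction.

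For $S^1_{gr} = BKer$, I would compute the cosimplicial algebra of functions $C_\Delta^\bullet(BKer)$ via the \v{C}ech nerve of the pointed morphism $* \to BKer$. The $n$-cosimplices of this nerve are $Ker^{\times n}$, so $C_\Delta^\bullet(BKer)$ is identified with the cobar construction on the Hopf algebra $A$ pulled back to $k$. After normalization, a direct computation using the divided-power structure on $A$ shows this is quasi-isomorphic to the exterior algebra on a single class $\eta$ in cohomological degree $1$. Denormalizing, this yields the identification $S^1_{gr} \simeq Spec^\Delta(D(k[\eta]))$, placing $S^1_{gr}$ in the image of $Spec^\Delta$.

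For $BS^1_{gr}$, I would iterate the construction: the \v{C}ech nerve of $* \to BS^1_{gr}$ has $n$-cosimplices given by $(S^1_{gr})^{\times n}$, and taking cosimplicial functions produces a cobar construction on the cosimplicial coalgebra just computed. Using the stability of $AffSt_k$ under small limits in $dSt$ together with compatibility of $Spec^\Delta$ with the relevant limits, this assembles to $Spec^\Delta(D(H^*(BS^1, k)))$, where $H^*(BS^1, k) = k[u]$ with $u$ in cohomological degree $2$. As a sanity check, note the compatibility with the proposition $K(\mathbb{G}_a, 2) \simeq Spec^\Delta(D(k[2]))$ stated earlier, since the weight-$1$ part of $BS^1_{gr}$ should recover a shift of $K(\mathbb{G}_a, 2)$.

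The main technical obstacle is justifying that the \v{C}ech nerve genuinely computes the cosimplicial algebra of functions on the stacky quotient $BKer$: classically $B$ of an affine group scheme is not representable as a scheme, and the key input is that in the world of affine stacks one has access to higher cohomological degrees. I would handle this via flat descent together with the fact that the $p$-typical divided-power structure on $A$ behaves well after localization at $p$, so that no unexpected higher derived corrections appear when one normalizes the cobar construction. The delicate point is checking that the totalization and normalization commute appropriately with the formation of $B$, which reduces to the concrete computation of the cohomology of the cobar on $A$ in low degrees.
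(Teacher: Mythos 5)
Your proposal correctly identifies the key computation — that the cobar construction on the Hopf algebra $A$ of functions on $Ker$ gives $C_\Delta(BKer) \simeq D(k[\eta])$, and similarly for the next delooping — but this alone does not establish affineness, and the place where you try to bridge that gap is not aimed at the right difficulty.

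The content of the claim ``$S^1_{gr}$ is an affine stack'' is that the counit of the $(C_\Delta, Spec^\Delta)$ adjunction, $BKer \to Spec^\Delta(C_\Delta(BKer))$, is an equivalence. Computing the target via a cobar construction (functions always turn geometric realizations into totalizations) identifies $Spec^\Delta(C_\Delta(BKer))$ with $Spec^\Delta(D(k[\eta]))$, but says nothing about the map. The constant stack on the topological $S^1$ makes the gap concrete: it has the same cosimplicial algebra of functions $k[\eta]$ (up to completion issues), yet is famously not affine — $Spec^\Delta(k[\eta])$ is its affinization, not the stack itself. So your ``direct computation'' of the cobar shows you have found the affinization, not the stack, and the step ``this yields the identification $S^1_{gr} \simeq Spec^\Delta(D(k[\eta]))$'' is exactly the assertion that needs an independent argument. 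The discussion of ``flat descent'' and ``divided powers behaving well after localization at $p$'' is about correctly computing the cobar, which is not the bottleneck. Likewise, in the $BS^1_{gr}$ step you invoke stability of $AffSt_k$ under small limits, but the Čech nerve realization of $BS^1_{gr}$ is a \emph{colimit} of the $(S^1_{gr})^{\times n}$, and affine stacks are not closed under colimits in $dSt$ — that argument as written does not go through.

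What actually closes the gap (and is the route the cited reference follows, as does \cite{To06} for the prototypes) is a structural fact about the group scheme, not about the cobar: $Ker$ is a pro-unipotent affine group over $k = \mathbb{Z}_{(p)}$, i.e. it is built from $\mathbb{G}_a$ by iterated extensions. One first proves that $K(\mathbb{G}_a, n)$ is an affine stack for all $n$ (this is the genuinely nontrivial input; in the paper it appears as the proposition $K(\mathbb{G}_a,n) \simeq Spec^\Delta(D(k[n]))$), and then propagates affineness through Postnikov-type towers using stability of $AffSt_k$ under small \emph{limits}: each stage of the tower for $BKer$ (resp. $B^2 Ker$) sits in a pullback square against $K(\mathbb{G}_a, m)$'s, and the limit of the tower is the stack itself. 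The cobar computation then correctly identifies the cosimplicial algebra of functions \emph{after} affineness is known. Note also that the paper does not reprove the proposition — it simply cites \cite[Proposition 3.2.7]{MRT20} — so there is no internal proof against which to match; the issue above is a gap in your reconstruction of the cited argument, not a divergence from it.
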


\begin{Le} 
Let $M$ be $\bigoplus_{m \ge 0}  k[-2m]((m)) \in Mod^{gr}$, the space of $\mathbb{E}_\infty$-algebra structures on $M$ compatible with the grading is equivalent to the set of classical commutative graded algebra structures on its cohomology $H^*(M)$.
\end{Le}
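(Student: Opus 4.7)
The plan is to exhibit the endomorphism operad of $M$ in $Mod^{gr}_k$ as a discrete $1$-operad, and deduce that all $\mathbb{E}_\infty$-structures are in fact $1$-categorical. The starting observation is that $M$ is \emph{rigid} in the following sense: each piece $M(m) = k[-2m]$ is concentrated in a single cohomological degree, and crucially that degree $2m$ depends linearly on the weight. Consequently, for any $n \ge 1$ and any weight $m$, we have
\[
(M^{\otimes n})(m) \;=\; \bigoplus_{\substack{m_1+\cdots+m_n = m \\ m_i \ge 0}} k[-2m_1]\otimes\cdots\otimes k[-2m_n] \;=\; k[-2m]^{\oplus N(n,m)},
\]
a finite direct sum of copies of $k[-2m]$. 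The $S_n$-action permutes the summands according to permutations of the $(m_i)$.

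From this I would deduce that the internal mapping complex $\underline{\mathrm{Map}}_{Mod^{gr}_k}(M^{\otimes n}, M)$, computed in weight $0$ as $\prod_m \underline{\mathrm{Map}}(k[-2m]^{\oplus N(n,m)}, k[-2m])$, lives in cohomological degree $0$ and is therefore a discrete $k$-module. Taking $S_n$-invariants (or, equivalently, assembling the structure into the endomorphism $\infty$-operad $\underline{\mathrm{End}}(M)$) preserves this discreteness, so $\underline{\mathrm{End}}(M)$ is a $1$-operad in graded $k$-modules: all its mapping spaces are sets.

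The space of $\mathbb{E}_\infty$-algebra structures on $M$ compatible with the grading is by definition the fiber of $\mathrm{CAlg}(Mod^{gr}_k) \to Mod^{gr}_k$ over $M$, equivalently the space of $\infty$-operad maps $\mathbb{E}_\infty \to \underline{\mathrm{End}}(M)$. Since the target is $0$-truncated in every arity, this mapping space is discrete and equals the set of maps out of $\pi_0(\mathbb{E}_\infty) = \mathrm{Comm}$, the classical commutative operad. Thus $\mathbb{E}_\infty$-structures coincide with ordinary commutative graded $k$-algebra structures on the underlying graded complex $M$.

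Finally, because the differential of $M$ is zero and each $M(m)$ sits in the single degree $2m$, $M$ coincides with its cohomology $H^*(M)$ as a graded complex; a commutative graded algebra structure on $M$ is thus the same datum as a classical commutative graded algebra structure on $H^*(M)$, as required. The main technical obstacle is the step identifying the fiber of $\mathrm{CAlg}$ over $M$ with $\mathrm{Map}_{\infty\text{-}\mathrm{Op}}(\mathbb{E}_\infty,\underline{\mathrm{End}}(M))$ and justifying that maps from $\mathbb{E}_\infty$ into a levelwise $0$-truncated $\infty$-operad factor through its underlying $1$-operad; this follows from standard $\infty$-operadic truncation (or, equivalently, from an obstruction-theoretic argument in which all obstruction groups are mapping spaces into discrete objects and hence vanish in positive cohomological degree).
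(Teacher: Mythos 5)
Your proof is essentially the paper's argument (which follows Lemma 3.4.11 of \cite{MRT20}). You both hinge on the same rigidity computation: for each $n$, the weight-$m$ piece of $M^{\otimes n}$ is a finite sum of copies of $k[-2m]$, so $\mathrm{Map}_{Mod^{gr}}(M^{\otimes n},M)$ is concentrated in degree zero and hence discrete, making the endomorphism $\infty$-operad levelwise $0$-truncated. The only difference is cosmetic, in how the conclusion is extracted: the paper observes that the lax monoidal functor $H^*$ induces a map to the classical operad $\mathrm{End}_{gr,cl}(H^*(M))$ and notes it is an equivalence, while you invoke operadic truncation to identify $\mathrm{Map}_{\infty\text{-}\mathrm{Op}}(\mathbb{E}_\infty,\underline{\mathrm{End}}(M))$ with $1$-operad maps out of $\mathrm{Comm}=\pi_0(\mathbb{E}_\infty)$ and then note $M=H^*(M)$ on the nose. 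Both routes are correct; yours makes slightly more explicit why the paper's comparison map is an equivalence. One small caveat: your parenthetical ``taking $S_n$-invariants\ldots or, equivalently, assembling the structure into the endomorphism $\infty$-operad'' conflates two different operations --- the operad's arity-$n$ space is $\mathrm{Map}(M^{\otimes n},M)$ \emph{equipped with} the $S_n$-action, not its homotopy fixed points, and the latter need not be discrete when the former is; fortunately you only need the former, so the conclusion stands.
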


\begin{proof}
We follow the proof of \cite[Lemma 3.4.11]{MRT20}, the space of $\mathbb{E}_\infty$-algebra structures considered is given by the mapping space of $\infty$-operads :
$$Map_{Op}(\mathbb{E}_\infty^\otimes,End_{gr}(M)^\otimes)$$
where $\mathbb{E}_\infty^\otimes$ is the $\mathbb{E}_\infty$ operad and $End_{gr}(M)^\otimes$ is the endomorphism operad of $M$ in the category of graded modules. 

The space of $n$-ary operations of $End_{gr}(M)^\otimes$ is given by $Map_{Mod^{gr}}(M^{\otimes n}, M)$.

We compute
$$M^{\otimes n} = \left(\bigoplus_{m \ge 0}  k[-2m]((m)) \right)^{\otimes n} = \bigoplus_{m \ge 0}  k[-2m]((m))^{\oplus c_{m,n}}$$
with $c_{m,n} = \binom{n+m-1}{n-1}$ is the cardinal of $\{ (m_1, m_2, ... , m_n) \in \mathbb{N}^n, \sum_{i=1}^n m_i = m \}$, since the differential is zero, $Map_{Mod^{gr}}(M^{\otimes n}, M)$ is discrete.

We deduce that the space $Map_{Op}(\mathbb{E}_\infty^\otimes,End_{gr}(M)^\otimes)$ is discrete.

Since $H^*$ is lax monoidal, we get a map :
$$Map_{Op}(\mathbb{E}_\infty^\otimes,End_{gr}(M)^\otimes) \to Map_{Op}(\mathbb{E}_\infty^\otimes,End_{gr,cl}(H^*(M))^\otimes)$$
where $End_{gr,cl}(H^*(M))^\otimes)$ is the classical operad of graded endomorphisms on $M$. This map is an equivalence of spaces.
\end{proof}

\begin{Cor} \label{kuEinf}
The graded $\mathbb{E}_1$-algebra $k[u]$, with $u$ in degree $2$ and weight $1$, admits a unique $\mathbb{E}_\infty$-algebra structure.
\end{Cor}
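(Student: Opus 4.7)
The plan is to apply the preceding lemma together with its obvious $\mathbb{E}_1$-variant and then identify the fiber of the forgetful map. First I would observe that, as a graded complex, $k[u]$ is isomorphic to the $M = \bigoplus_{m \ge 0} k[-2m]((m))$ of the preceding lemma, since $u^m$ contributes a copy of $k$ in cohomological degree $2m$ and weight $m$, and there is no differential. The lemma then identifies the space of graded $\mathbb{E}_\infty$-algebra structures on this complex with the discrete set of classical commutative graded algebra structures on $H^*(M) = k[u]$.

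Next I would rerun the proof of the lemma verbatim with $\mathbb{E}_1^\otimes$ in place of $\mathbb{E}_\infty^\otimes$: the computation of $M^{\otimes n}$ uses only symmetric monoidality of $Mod^{gr}$, and the spaces $\mathrm{Map}_{Mod^{gr}}(M^{\otimes n}, M)$ remain discrete because the differential on $M$ is zero. Hence $\mathrm{Map}_{Op}(\mathbb{E}_1^\otimes, \mathrm{End}_{gr}(M)^\otimes)$ is also discrete and corresponds to classical graded associative algebra structures on $H^*(M)$. Naturality of the identification in the operad (it comes from applying the lax monoidal functor $H^*$) shows that the forgetful map from $\mathbb{E}_\infty$- to $\mathbb{E}_1$-structures on $M$ is exactly the forgetful map from commutative to associative classical graded algebra structures on $H^*(M)$.

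To finish, I would analyse this classical forgetful map by hand. Because each weight-$m$ piece of $H^*(M) = k[u]$ is one-dimensional and concentrated in even cohomological degree, any unital associative structure is determined by scalars $c_{i,j}\in k$ with $u^i \cdot u^j = c_{i,j}\,u^{i+j}$ subject to $c_{i,0}=c_{0,i}=1$ and the associativity relation $c_{i,j}c_{i+j,k} = c_{j,k}c_{i,j+k}$; commutativity reduces to $c_{i,j}=c_{j,i}$, with no Koszul signs arising since all degrees are even. The forgetful map is therefore the injection of the symmetric subset into the associative one. The given $\mathbb{E}_1$-algebra $k[u]$ corresponds to the polynomial structure $c_{i,j}=1$, which is already commutative, so the fiber over it is a singleton, giving a unique $\mathbb{E}_\infty$-refinement.

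The only mildly delicate step is the naturality assertion in the second paragraph, i.e.\ that the bijection of the preceding lemma is natural with respect to the map of operads $\mathbb{E}_1^\otimes \to \mathbb{E}_\infty^\otimes$; this is immediate from the construction since both sides are built by applying $H^*$ to the endomorphism operad and the identification only uses the lax monoidal structure of $H^*$.
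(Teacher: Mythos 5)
The paper gives no explicit proof of this corollary, treating it as immediate from the preceding lemma; your argument is correct and makes that deduction precise via exactly the route the lemma sets up (discreteness of the endomorphism operad because $M$ has zero differential, naturality of the $H^*$-comparison in the source operad, and the observation that the polynomial $\mathbb{E}_1$-structure $c_{i,j}=1$ is already commutative). Your explicit identification of the fiber of the forgetful map as the singleton over $c_{i,j}=1$ is a correct and useful elaboration of what the paper leaves implicit.
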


\begin{Prop}
The cosimplicial functions of $BS^1_{gr}$ are given by
$$C_{\Delta}(BS^1_{gr}) \simeq k[u]$$
and the equivalence is compatible with the augmentation, given by $BS^1_{gr} \to *$.
Here $k[u]$ denoted the denormalization of the free commutative differential graded algebra on one generator in degree $2$ and weight $1$. The identification is compatible with gradings.

\end{Prop}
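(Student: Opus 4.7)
The plan is to realize $BS^1_{gr}$ as a bar construction in affine stacks, compute its cosimplicial functions via a totalized cobar complex, and then identify the result with $D(k[u])$.

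First, since $S^1_{gr} = BKer$ is itself a group object in affine stacks (as $Ker$ is abelian, $S^1_{gr}$ inherits an abelian group structure), we can write $BS^1_{gr}$ as the realization of its bar simplicial object: $BS^1_{gr} \simeq \mathrm{colim}_{[n] \in \Delta^{op}}(S^1_{gr})^n$. By Proposition \ref{BS1affine}, $BS^1_{gr}$ is an affine stack, so it is equivalent to $Spec^\Delta(C_\Delta(BS^1_{gr}))$, and it suffices to identify $C_\Delta(BS^1_{gr})$. Because $C_\Delta$ is left adjoint (contravariantly) to $Spec^\Delta$, it sends colimits in $St$ to limits in $coSCR$, yielding
\[
C_\Delta(BS^1_{gr}) \simeq \mathrm{Tot}\bigl([n] \mapsto k[\eta]^{\otimes n}\bigr),
\]
the totalization of the cobar complex of the Hopf algebra $k[\eta]$, whose primitive comultiplication $\eta \mapsto 1 \otimes \eta + \eta \otimes 1$ is induced by the multiplication on $S^1_{gr}$.

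Next, I compute the underlying cohomology. Since $k[\eta] = k \oplus k\eta$ is the exterior Hopf algebra on a primitive generator in cohomological degree $1$ and weight $1$, the cobar complex is of Koszul type: the contribution $\eta^{\otimes n}$ sits in cosimplicial degree $n$ and internal degree $n$, hence total cohomological degree $2n$ and weight $n$. Standard Koszul duality (equivalently, the classical calculation of $H^\ast(BS^1;k)$ lifted to the graded setting) identifies the cohomology with the polynomial algebra $k[u]$, where $u$ is in degree $2$ and weight $1$.

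Finally, to upgrade this cohomological identification to an equivalence of cosimplicial algebras $C_\Delta(BS^1_{gr}) \simeq D(k[u])$, I construct a candidate map by choosing a representative of the class $u$, and use Corollary \ref{kuEinf} to conclude that the graded $\mathbb{E}_\infty$-structure on $k[u]$ is the unique one, so the map is fully determined and induces an equivalence on underlying $\mathbb{E}_\infty$-algebras. Conservativity of the cosimplicial Dold--Kan functor $\theta^{ccn}: coSCR \to CAlg_k^{ccn}$ then promotes this to an equivalence of cosimplicial algebras. The augmentation $BS^1_{gr} \to *$ corresponds under this identification to the standard unit $k \to k[u]$, completing the compatibility claim.

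The main obstacle is precisely the last step: in positive characteristic, a cosimplicial algebra carries strictly more data than its underlying $\mathbb{E}_\infty$-algebra, so one cannot na\"{\i}vely transport the cohomological identification along $\theta^{ccn}$. The rigidity furnished by Corollary \ref{kuEinf}, together with the conservativity of $\theta^{ccn}$ and the fact that we have an actual map (not just a weak identification), is what closes this gap.
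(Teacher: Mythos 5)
Your first two steps — realizing $BS^1_{gr}$ as a bar construction, computing the cohomology of the cobar/totalization to get $k[u]$, and using Corollary \ref{kuEinf} to rigidify the $\mathbb{E}_\infty$-structure — match the paper's opening. The problem is the final step, where you invoke conservativity of $\theta^{ccn}$ to "promote" the $\mathbb{E}_\infty$-equivalence to an equivalence of cosimplicial algebras. Conservativity of a functor $F$ lets you conclude that a morphism $f$ in the \emph{source} category is an equivalence once you know $F(f)$ is. It does not let you \emph{produce} a morphism in the source category from one in the target. You have constructed a map $\theta^{ccn}(k[u]) \to \theta^{ccn}(C_\Delta(BS^1_{gr}))$ of $\mathbb{E}_\infty$-algebras, but you have not exhibited a morphism $k[u] \to C_\Delta(BS^1_{gr})$ in $coSCR$ to which conservativity could be applied. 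Choosing a cocycle representative of $u$ gives an element of $C_\Delta(BS^1_{gr})$ in the right bidegree, but since the denormalization $D$ is only lax monoidal (not strong), $D(k[u])$ does not enjoy the same free universal property as $k[u]$, so such a choice does not by itself define a cosimplicial algebra map. You correctly diagnose the difficulty — that in positive characteristic a cosimplicial algebra is strictly more data than its underlying $\mathbb{E}_\infty$-algebra — but the tools you cite do not close it.

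The paper closes the gap by a different device: it applies the double conerve $coN^2$ to the $\mathbb{E}_\infty$-equivalence $\phi : \theta^{ccn}(k[u]) \xrightarrow{\sim} \theta^{ccn}(C_\Delta(BS^1_{gr}))$ and uses $S^1_{gr} = BKer$ to identify the right-hand side with the bicosimplicial object $\theta^{ccn}(C_\Delta(Ker))^{\bullet\bullet}$. Since $Ker$ is a classical affine scheme, this bicosimplicial object consists of \emph{discrete} $\mathbb{E}_\infty$-algebras, and $\theta^{ccn}$ restricts to an equivalence on discrete objects. This lets the $\mathbb{E}_\infty$-equivalence lift canonically to an equivalence of bicosimplicial cosimplicial algebras, which one then totalizes. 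If you want to salvage your argument, you should replace the appeal to conservativity with some version of this reduction to the discrete (classical) setting, where the gap between cosimplicial algebras and $\mathbb{E}_\infty$-algebras collapses.
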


\begin{proof}
We start by the construction of a morphism $k[u] \to C_{\Delta}(BS^1_{gr})$.

Since
$$H^2(BS^1_{gr}) \simeq H^2(Tot(k \to k[\eta] \rightrightarrows k[\eta]^{\otimes 2} ...))$$
the element $\eta$ defines an element $u \in H^2(BS^1_{gr})$ in weight $1$. We deduce a morphism of $\mathbb{E}_1$-algebra $k[u] \to C_{\mathbb{E}_1}(BS^1_{gr})$ as $k[u]$ is the free $\mathbb{E}_1$-algebra on $k[-2]$. Now since the $\mathbb{E}_1$-algebra $k[u]$ admits a unique $\mathbb{E}_\infty$-algebra structure, from Corollary \ref{kuEinf}, we get a morphism of $\mathbb{E}_\infty$-algebra
$$k[u] \to C_{\mathbb{E}_\infty}(BS^1_{gr})$$
which is an equivalence after taking cohomology, therefore it is an equivalence. We write it as

$$\phi : \theta^{ccn}(k[u]) \xrightarrow{\sim} \theta^{ccn}(C_\Delta(BS^1_{gr}))$$

Now we prove it is an equivalence, following the proof of \cite[Theorem 3.4.17]{MRT20}, we apply the double conerve functor to $\phi$ :
$$coN^2(\theta^{ccn}(k[u])) \simeq coN^2(\theta^{ccn}(C_\Delta(BS^1_{gr})))$$
which is a morphism of bisimplicial $\mathbb{E}_\infty$-algebra. By definition $S^1_{gr} \coloneqq BKer$, therefore we deduce an equivalence
$$ coN^2(\theta^{ccn}(C_\Delta(BS^1_{gr}))) \simeq \theta^{ccn}(C_\Delta(Ker))^{\bullet \bullet}$$

The stack $Ker$ being a classical affine scheme, $\theta^{ccn}(C_\Delta(Ker))^{\bullet \bullet}$ is a bisimplicial object in discrete $\mathbb{E}_\infty$ algebras. As $\theta^{ccn} : coSCR \to \mathbb{E}_\infty-Alg$ restricts to an equivalence on discrete objects, the equivalence
$$\theta^{ccn}(coN^2(k[u])) \simeq \theta^{ccn}(C_\Delta(Ker))^{\bullet \bullet}$$
lifts uniquely to an equivalence of bisimplicial cosimplicial algebras
$$coN^2(k[u]) \simeq C_\Delta(Ker)^{\bullet \bullet}$$

Now totalization of the bisimplicial objects gives the required equivalence of cosimplicial algebras :
$$k[u] \simeq C_\Delta(BS^1_{gr})$$
\end{proof}

\begin{Cor} \label{BS1grequi}
The affine stack $BS^1_{gr}$ is given by
$$BS^1_{gr} \simeq Spec^\Delta(k[u])$$
where the identification is compatible with augmentations and gradings.
\end{Cor}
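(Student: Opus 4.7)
The plan is to deduce this corollary directly from the preceding proposition together with the formal properties of the $\Spec^\Delta$ construction. The preceding proposition gives an equivalence of graded cosimplicial algebras
$$C_\Delta(BS^1_{gr}) \simeq k[u],$$
compatible with augmentations. By Proposition \ref{BS1affine}, $BS^1_{gr}$ is an affine stack, so it lies in the essential image of $\Spec^\Delta$. Since $\Spec^\Delta$ is fully faithful by the adjunction recalled just before Definition \ref{GradedCircle} (following \cite[Corollary 2.2.3]{To06}), the unit of the adjunction
$$BS^1_{gr} \xrightarrow{\sim} \Spec^\Delta(C_\Delta(BS^1_{gr}))$$
is an equivalence, and applying $\Spec^\Delta$ to the equivalence $C_\Delta(BS^1_{gr}) \simeq k[u]$ then yields
$$BS^1_{gr} \simeq \Spec^\Delta(k[u]).$$
Compatibility with augmentations is immediate because both the equivalence in the preceding proposition and $\Spec^\Delta$ preserve augmentations (the augmentation on $BS^1_{gr}$ comes from $* \to BS^1_{gr}$, dually corresponding to the counit on cosimplicial functions).

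For the grading compatibility, the point is that the equivalence $C_\Delta(BS^1_{gr}) \simeq k[u]$ was shown to be an equivalence of \emph{graded} cosimplicial algebras, where the grading on the left comes from the natural $\mathbb{G}_m$-action on $BS^1_{gr}$ (induced from the canonical $\mathbb{G}_m$-action on $S^1_{gr}$, or equivalently from the comodule structure pulled back from $BKer$) and on the right from the weight-one placement of $u$. To make the compatibility fully precise, I would interpret the whole argument inside $\mathbb{G}_m\text{-}dSt$, i.e., upgrade the adjunction $C_\Delta \dashv \Spec^\Delta$ to an adjunction between $\mathbb{G}_m$-equivariant stacks and graded cosimplicial algebras (which is formal since $\mathbb{G}_m$-equivariance corresponds to working over $B\mathbb{G}_m$ and the $\Spec^\Delta$ construction is compatible with base change to $B\mathbb{G}_m$). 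Then the same argument applied in the graded setting yields the desired equivalence of graded affine stacks.

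The only potentially nontrivial point is the last one, namely verifying that $\Spec^\Delta$ interacts well with the $\mathbb{G}_m$-equivariant structure. This should follow directly from the fact that the adjunction $C_\Delta \dashv \Spec^\Delta$ is functorial in the base and the $\mathbb{G}_m$-action is itself given by a map of stacks; I do not expect any genuine obstacle, only bookkeeping. Thus the corollary is essentially a formal consequence, and the entire proof reduces to the sentence: apply $\Spec^\Delta$ to the equivalence of the preceding proposition and invoke its full faithfulness together with Proposition \ref{BS1affine}.
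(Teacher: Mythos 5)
Your proof is correct and follows the same (implicit) argument as the paper, which states this corollary without proof precisely because it is the formal consequence you describe: $BS^1_{gr}$ lies in the essential image of $\Spec^\Delta$ by Proposition \ref{BS1affine}, so the unit of the $C_\Delta \dashv \Spec^\Delta$ adjunction is an equivalence at $BS^1_{gr}$, and composing with the identification $C_\Delta(BS^1_{gr}) \simeq k[u]$ from the preceding proposition gives the claim. Your remark about upgrading the adjunction to the $\mathbb{G}_m$-equivariant setting is the right way to handle the grading compatibility, and it is indeed only bookkeeping since all the equivalences used are already stated in the paper as compatible with gradings.
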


\begin{Prop} \label{grMixedEqui}
There is an equivalence of symmetric monoidal categories compatible with grading, ie which commutes with the forgetful functors to graded complexes
$$\epsilon-Mod_k \simeq Rep(S^1_{gr}) = QCoh(BS^1_{gr})$$
\end{Prop}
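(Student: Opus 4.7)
The strategy is to combine the affine-stack presentation of $BS^1_{gr}$ from Corollary \ref{BS1grequi} with a Koszul-duality argument, in the spirit of the analogous identification in \cite{MRT20}.

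First, I would identify $QCoh(BS^1_{gr})$ with a category of modules. Since $BS^1_{gr} \simeq Spec^\Delta(k[u])$ is an affine stack with $u$ in cohomological degree $2$ and weight $1$, pushforward along $BS^1_{gr} \to \ast$ identifies quasi-coherent sheaves with modules over $k[u]$ in the graded sense. Concretely, the pullback along the atlas $\ast \to BS^1_{gr}$ is conservative and admits a right adjoint, so Barr--Beck--Lurie presents $QCoh(BS^1_{gr})$ as modules over the resulting monad on $Mod_k^{gr}$. This monad is identified with tensoring by the cosimplicial algebra of functions $C_\Delta(BS^1_{gr}) \simeq k[u]$, using the graded/$\mathbb{G}_m$-equivariant interpretation explained in the earlier remark on $QCoh(BS^1_{gr})$.

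Next, I would invoke Koszul duality between the commutative differential graded algebras $k[u]$ and $k[\epsilon]$: these are bar-dual, with $k[u]$ polynomial on a generator in weight $1$ and degree $2$, and $k[\epsilon]$ exterior on a generator in weight $1$ and degree $-1$. The strict positivity of the weight on both sides forces weightwise convergence of the Koszul resolutions and yields a symmetric monoidal equivalence
$$Mod_{k[u]}^{gr} \simeq Mod_{k[\epsilon]}^{gr} = \epsilon\text{-}Mod_k.$$
Composing with the previous identification gives the required equivalence.

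Finally, the compatibility with the forgetful functors to $Mod_k^{gr}$ is automatic from the construction: on the left, pullback along $\ast \to BS^1_{gr}$ corresponds to restriction along the augmentation $k[u] \to k$; on the right, the forgetful functor is restriction along $k \to k[\epsilon]$; under Koszul duality, augmentations match augmentations. The expected main obstacle is the $\infty$-categorical Koszul-duality step: one must check that the bar/cobar adjunction is a genuine equivalence on the full module categories, not merely on connective or coconnective pieces. The graded weight is precisely the convergence mechanism that resolves this, since every bar complex splits as a direct sum over weights, each of which is bounded in cohomological degree by its weight and hence converges strictly.
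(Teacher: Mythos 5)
The paper's own proof of this proposition is a one-line citation to \cite[Proposition 4.2.3]{MRT20}, so your attempt to give a self-contained argument is worth evaluating on its own merits. The guiding intuition you isolate---that the positive weight grading bounds each piece and therefore forces convergence---is exactly the right mechanism and matches the spirit of the reference. However, the categorical setup in your first two paragraphs contains concrete errors.

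First, the Barr--Beck step has the wrong variance and the wrong coefficient object. The pullback $p^* : QCoh(BS^1_{gr}) \to Mod_k^{gr}$ along the atlas $p : \ast \to BS^1_{gr}$ is indeed conservative, but it has a \emph{right} adjoint $p_*$, not a left adjoint, so Barr--Beck--Lurie presents $QCoh(BS^1_{gr})$ as \emph{comodules} over the comonad $p^* p_*$, not as modules over a monad. By base change along the fiber square with fiber $S^1_{gr}$, this comonad is $(-) \otimes \mathcal{O}(S^1_{gr}) \simeq (-) \otimes (k \oplus k[-1])$; the relevant corepresenting object is the coalgebra of functions on the \emph{group} $S^1_{gr}$, not the algebra $k[u]$ of functions on the classifying stack $BS^1_{gr}$. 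These are Koszul dual but not the same, and conflating them is precisely the step where an argument is needed, not an identification.

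Second, even granting that you want modules over $k[u]$, the claim $QCoh(BS^1_{gr}) \simeq Mod_{k[u]}^{gr}$ is not automatic for affine stacks: the paper's own Remark \ref{tstrQCoh} records that for a cosimplicial algebra $A$, the category $QCoh(Spec^\Delta A)$ sits inside $A$-modules as a left completion with respect to a $t$-structure, and is generally a proper full subcategory. So your Step~1 already embeds the convergence problem you propose to solve in Step~2, without flagging or resolving it. The weight-boundedness observation does repair this gap, but it has to be deployed there and made precise, not deferred.

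Third, the more economical route---and plausibly the one used in \cite{MRT20}---avoids Koszul duality altogether. Once you have the comonadic presentation, you have $QCoh(BS^1_{gr}) \simeq Comod_{k\oplus k[-1]}(Mod_k^{gr})$, and since the coalgebra $k \oplus k[-1]$ is degreewise finite free in each weight, graded comodules over it coincide with graded modules over its $k$-linear dual, which is exactly $k[\epsilon]$ with $\epsilon$ in degree $-1$, weight $1$; this is $\epsilon-Mod_k^{gr}$ by definition. Your detour through $k[u]$ and bar/cobar duality is not wrong in spirit, but it introduces two separate convergence issues (the left-completion in Step~1 and the Koszul adjunction in Step~2) where one dualization suffices, and as currently written Step~1 is stated incorrectly.
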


\begin{proof}
See \cite[Proposition 4.2.3]{MRT20}.
\end{proof}

\begin{Def}
We define the category of graded mixed derived stacks as the category of $S^1_{gr}$-equivariant graded stacks
$$\epsilon-dSt^{gr} \coloneqq S^1_{gr}-dSt^{gr}$$
\end{Def}

\begin{Rq} \label{S1grH}
Seeing $S^1_{gr}$ as a derived stack with a $\mathbb{G}_m$-action, it admits a canonical morphism
$$S^1_{gr} \to B\mathbb{G}_m$$
which has as a total space the semi-direct product
$$\mathcal{H} \coloneqq \mathbb{G}_m \ltimes S^1_{gr}$$
Therefore we can see a graded mixed structure on a derived stack as a $\mathbb{G}_m$ action and an action of $S^1_{gr}$ which are compatible. Meaning that we have an identification
$$\epsilon-dSt^{gr} \simeq \mathcal{H}-dSt$$

Similarly $QCoh(BS^1_{gr})$ is defined with $S^1_{gr}$ considered as a element of $dSt^{gr}$, that is $QCoh(BS^1_{gr})$ is implicitly defined as $QCoh(B\mathcal{H})$.
\end{Rq}

\section{Graded function on linear stacks}
This section is devoted to the computation of $1$-weighted graded functions on a linear stack. This section is available online at \cite{Mon21}.

\subsection{Adjoint of the linear stack functor}

\begin{Prop} \label{AdjLinStack}
The functor
$$\mathbb{V} : QCoh(X)^{op} \to \mathbb{G}_m-dSt_{/X}$$
has a left adjoint given by $\mathcal{O}_{gr}(-)(1)$.
\end{Prop}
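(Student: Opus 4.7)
The plan is to verify the adjunction by establishing a natural equivalence
$$Map_{QCoh(X)}(E, \mathcal{O}_{gr}(Y)(1)) \simeq Map_{\mathbb{G}_m-dSt_{/X}}(Y, \mathbb{V}(E))$$
for $E \in QCoh(X)$ and $Y \in \mathbb{G}_m-dSt_{/X}$, first treating the affine case and then extending to general $Y$ by (co)continuity.

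For the affine case, take $Y = Spec(B)$ with $B$ a graded simplicial $A$-algebra (writing $X = Spec(A)$), so $\mathcal{O}_{gr}(Y) \simeq B$ and $\mathcal{O}_{gr}(Y)(1) \simeq B(1)$. By the universal property defining the linear stack $\mathbb{V}(E)$, a morphism $Spec(B) \to \mathbb{V}(E)$ over $X$ is an $A$-module map $E \to B$. Imposing $\mathbb{G}_m$-equivariance, under the identification of $\mathbb{G}_m$-actions on affines with gradings recalled earlier, forces the corresponding map $E((1)) \to B$ to be graded, where the weight-one shift reflects the canonical grading on $\mathbb{V}(E)$ described in the example on linear stacks. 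Such a graded morphism is tautologically the same datum as an $A$-module map $E \to B(1)$, naturally in both $B$ and $E$.

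To extend to general $Y$, I would write $Y$ as a colimit of affine graded derived schemes $Spec(B_i)$ in $\mathbb{G}_m-dSt_{/X}$ and observe that both sides of the candidate equivalence convert such colimits into limits. The right-hand side does so tautologically since $Map(-, \mathbb{V}(E))$ sends colimits to limits. For the left-hand side, $\mathcal{O}_{gr}(-) = \pi_* \mathcal{O}_{[-/\mathbb{G}_m]}$ sends colimits of stacks to limits of graded algebras over $B\mathbb{G}_m \times X$, taking the weight-one piece preserves limits, and $Map_{QCoh(X)}(E, -)$ preserves limits. Passing to the limit of the affine equivalence then yields the general statement.

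The main obstacle is ensuring that the equivalence is natural enough to encode a genuine adjunction rather than a mere pointwise bijection of mapping spaces. A clean resolution is to construct the unit explicitly: the identity map $\mathcal{O}_{gr}(Y)(1) \to \mathcal{O}_{gr}(Y)(1)$ corresponds via the affine comparison, applied to a presentation of $Y$ by affines, to a canonical $\mathbb{G}_m$-equivariant morphism $Y \to \mathbb{V}(\mathcal{O}_{gr}(Y)(1))$, and one verifies the triangle identities either directly on affine test objects or by invoking the adjoint functor theorem (Theorem \ref{adjointFunct}) to produce \emph{some} left adjoint to $\mathbb{V}$, whose value must then coincide with $\mathcal{O}_{gr}(-)(1)$ by the affine identification together with the colimit-preserving property of any left adjoint.
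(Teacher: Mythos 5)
Your overall strategy — verify the equivalence of mapping spaces on a generating family, then extend by (co)continuity and argue naturality — is the same as the paper's. The difference, and the place where a gap appears, is in the choice of generating family and the verification step there. The paper reduces to the \emph{free} $\mathbb{G}_m$-objects $Spec(B) \times \mathbb{G}_m$ (right translation action), which generate $\mathbb{G}_m\text{-}dSt_{/X}$ via the bar/descent colimit $Y \simeq \mathrm{colim}_{B\mathbb{G}_m}(Y \times \mathbb{G}_m)$. On these objects the $\mathbb{G}_m$-equivariance is trivialized, so both functors are immediately identified with the underlying non-equivariant mapping space $Map_{QCoh(X)}(E,\mathcal{O}(Spec(B)))$, and no equivariant computation is needed. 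You instead reduce to arbitrary graded affines $Spec(B)$ and assert that ``imposing $\mathbb{G}_m$-equivariance [\ldots] forces the corresponding map $E((1)) \to B$ to be graded.''

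This assertion is where the argument is incomplete. The universal property of $\mathbb{V}(E)$ only describes the \emph{non-equivariant} mapping space $Map_{dSt_{/X}}(Spec(B),\mathbb{V}(E))$; it does not by itself compute the equivariant one, which is the mapping space over $[X/\mathbb{G}_m]$ between $[Spec(B)/\mathbb{G}_m]$ and $[\mathbb{V}(E)/\mathbb{G}_m]$. When $E$ is connective, $\mathbb{V}(E) = Spec_X(Sym_{\mathcal{O}_X} E)$ is relatively affine, equivariant maps are graded algebra maps, and your claim follows directly. But for non-connective $E$ — the case that Proposition \ref{AdjLinStack} is actually needed for, since Theorem \ref{FunctionLinStack} concerns $QCoh^-(X)$ — $\mathbb{V}(E)$ is no longer affine, and identifying the equivariant mapping space with graded module maps requires unwinding the $\mathbb{G}_m$-action on the functor of points (as in Remark \ref{EquivGraduée}) via a descent over $[Spec(B)/\mathbb{G}_m]$. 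Carried out honestly, that unwinding is precisely the reduction to the free objects $Spec(B) \times \mathbb{G}_m$ that the paper uses; stated as you state it, the affine step risks being circular with the very adjunction you want to establish. The rest of your argument (continuity, the unit construction, the appeal to the adjoint functor theorem) is sound once the affine case is secured.
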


\begin{proof}
Let us consider the two functors
$$Map_{\mathbb{G}_m-dSt_{/X}}(-,\mathbb{V}(E)), Map_{QCoh(X)}(E,\mathcal{O}_{gr}(-)(1)) : \mathbb{G}_m-dSt_{/X}^{op} \to \mathcal{S}$$
landing in the category of spaces. Both functors send all small colimits to limits and are canonically identified on the subcategory of $\mathbb{G}_m$-equivariant derived stack of the form $Spec(B) \times \mathbb{G}_m$ with $\mathbb{G}_m$ acting by multiplication on the right side : any $\mathbb{G}_m$-stack $Y$ can be recovered as a colimit of objects of the form $Y \times \mathbb{G}_m$ with the action being multiplication on the right side since $Y$ is canonically the colimit of a diagram on $B\mathbb{G}_m$ sending the point to $Y \times \mathbb{G}_m$ and sending elements of $\mathbb{G}_m$ to their diagonal action on $Y \times \mathbb{G}_m$. Therefore the two functors are equivalent. The equivalence being functorial in $E$, we deduce the adjunction.

\end{proof}

\begin{Rq}
\label{Adj}
In the adjunction above, if we restrict to $\mathbb{G}_m$-equivariant derived stack of the form $Spec(B) \times \mathbb{G}_m$ with $\mathbb{G}_m$ acting by multiplication on the right side, we deduce another adjunction :
$$Map_{dSt}(Y,\mathbb{V}(E)) \simeq Map_{QCoh(X)}(E,\mathcal{O}(Y)).$$

\end{Rq}

\subsection{The equivalence}

\begin{Cons}
\label{Cons}
We now construct a natural arrow $E((1)) \to \mathcal{O}_{gr}(\mathbb{V}(E))$ in $QCoh([X/\mathbb{G}_m])$, ie a map of graded complexes on $X$ a derived stack. By adjunction, it just means we have to construct a map of graded $\mathcal{O}$-modules $E((1)) \to \mathcal{O}_{gr,\mathcal{O}}(\mathbb{V}(E))$. For the construction, we go back and forth between quasi-coherent complexes and $\mathcal{O}$-modules, exploiting the facts that quasi-coherent complexes have better categorical properties and $\mathcal{O}$-modules are more computable.

We assume $E$ to be cofibrant, we take $B$ a simplicial commutative $k$-algebra and $u$ a morphism from $Spec(B)$ to $B\mathbb{G}_m\times X$. We want to construct a map
$$E((1))(B) \simeq \mathcal{L} \otimes u^*E \to \mathcal{O}_{gr,\mathcal{O}}(\mathbb{V}(E))(B).$$

However, using Remark \ref{EquivGraduée}, we have :
$$\mathcal{O}_{gr,\mathcal{O}}(\mathbb{V}(E)))(B) \simeq \mathcal{O}_{\mathcal{O}}([\mathbb{V}(E)/\mathbb{G}_m] \times_{B\mathbb{G}_m \times X} Spec(B)) \simeq \mathcal{O}_{\mathcal{O}}(\mathbb{V}(\mathcal{L} \otimes u^*E)).$$

Therefore we are reduced to constructing a functorial map $\mathcal{L} \otimes u^*E \to \mathcal{O}_{\mathcal{O}}(\mathbb{V}(\mathcal{L} \otimes u^*E))$. By the adjunction property of quasi-coherent sheaves, it is equivalent to constructing a functorial map 
$$\mathcal{L} \otimes u^*E \to \mathcal{O}(\mathbb{V}(\mathcal{L} \otimes u^*E)).$$ We can use the unit of the adjunction in Remark \ref{Adj} to construct this map. From this map we deduce in weight $1$ a map :
$$E \to \mathcal{O}_{gr}(\mathbb{V}(E)))(1)$$

\end{Cons}

\begin{Prop}
If $E$ is connective, the natural map constructed above
$$E \to \mathcal{O}_{gr}(\mathbb{V}(E)))(1)$$
is an equivalence

\end{Prop}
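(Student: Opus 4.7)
The plan is to identify both sides explicitly when $E$ is connective, using the fact that linear stacks of connective complexes are simply relative spectra of symmetric algebras. First, I would note that the assertion is local on $X$ (both $\mathcal{O}_{gr}(-)$ and $\mathbb{V}(-)$ satisfy étale descent), so I may assume $X = Spec(A)$ is affine derived. Under this assumption, for $E$ connective the linear stack $\mathbb{V}(E)$ is corepresented by $Sym_A(E)$: on connective $A$-modules the functor $B \mapsto Map_{N(B)\text{-mod}}(u^*E, N(B))$ agrees with $B \mapsto Map_{SCR_A}(Sym_A(E), B)$ by the standard free-algebra/forgetful adjunction between connective $A$-modules and $SCR_A$. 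Hence $\mathbb{V}(E) \simeq Spec_X Sym_{\mathcal{O}_X}(E)$.

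Next I would analyze how the $\mathbb{G}_m$-action on $\mathbb{V}(E)$, induced by scalar multiplication on $N(B)$ in the defining mapping space, translates to a grading on $Sym_{\mathcal{O}_X}(E)$. A scalar $\lambda \in \mathbb{G}_m(B)$ acts by postcomposition with multiplication by $\lambda$, which corresponds under the representability to placing $E$ in weight $1$ and $Sym^n(E)$ in weight $n$. Consequently
$$\mathcal{O}_{gr}(\mathbb{V}(E)) \simeq \bigoplus_{n \ge 0} Sym^n_{\mathcal{O}_X}(E),$$
with its obvious grading, so the weight $1$ component is canonically $E$ itself.

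Finally, I would verify that the map produced by Construction \ref{Cons} coincides with this tautological identification. By construction the arrow is assembled from the unit of the adjunction of Remark \ref{Adj}, namely $F \to \mathcal{O}(\mathbb{V}(F))$ applied with $F = \mathcal{L} \otimes u^*E$; for connective $F$ this unit is, via the description above, precisely the canonical inclusion $F \hookrightarrow Sym_B(F)$ into the weight-$1$ summand. Extracting the weight-$1$ part of the base-change equivalence of Remark \ref{EquivGraduée} reduces the verification to recovering the identity $E \to E$, which is immediate.

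The main obstacle is the final bookkeeping: one has to check that the construction, which detours through $\mathcal{O}$-modules and the quotient-stack base change $[\mathbb{V}(E)/\mathbb{G}_m] \times_{B\mathbb{G}_m \times X} Spec(B) \simeq \mathbb{V}(\mathcal{L} \otimes u^*E)$, really yields the adjunction unit up to coherent equivalence rather than some twist by $\mathcal{L}$. Once this coherence of the line-bundle identifications is verified, the conclusion is immediate from the explicit description of $Sym_{\mathcal{O}_X}(E)$ and the fact that its degree-$1$ piece is $E$.
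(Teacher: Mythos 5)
Your proposal is correct and takes essentially the same approach as the paper: identify $\mathbb{V}(E) \simeq Spec_X Sym_{\mathcal{O}_X}(E)$ for connective $E$, observe that the $\mathbb{G}_m$-action puts $Sym^n(E)$ in weight $n$, and check that the constructed map restricts to the tautological identification in weight $1$. Your extra care about the line-bundle coherence in Construction \ref{Cons} and the reduction to the affine case are harmless elaborations; the paper states the same argument more tersely without singling them out.
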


\begin{proof}
Since $E$ is connective, $\mathbb{V}(E)$ is simply the relative affine scheme $Spec_X Sym_{\mathcal{O}_X}(E)$. The grading corresponds to having weight $p$ of $Sym_{\mathcal{O}_X}(E)$ to be $Sym^p_{\mathcal{O}_X}(E)$. The morphism
$$E((1)) \to \mathcal{O}_{gr}(Spec_X Sym_{\mathcal{O}_X}(E((1)))) \simeq Sym_{\mathcal{O}_X}(E((1)))$$ corresponds to the inclusion of $E$ in weight $1$. Therefore it is an equivalence when restricting to weight $1$.

\end{proof}

\begin{Thm} \label{FunctionLinStack}
Let $X$ be a derived affine scheme, for $E \in QCoh^{-}(X)$, ie a bounded above complex over $X$, the natural map constructed above
$$E \to \mathcal{O}_{gr}(\mathbb{V}(E))(1)$$
is an equivalence.
\end{Thm}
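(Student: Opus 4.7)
The plan is to extend the preceding proposition, which handles the connective case, to all bounded above $E$ by induction on the amplitude above zero
$$N(E) \coloneqq \sup\{i \in \mathbb{Z} : H^i(E) \neq 0\},$$
which is finite by hypothesis. The base $N(E) \le 0$ is precisely the previous proposition. For the inductive step, fix $E$ with $N(E) = N \ge 1$ and consider the Postnikov cofiber sequence in the stable $\infty$-category $QCoh(X)$,
$$\tau^{\le N-1} E \longrightarrow E \longrightarrow H^N(E)[-N].$$
Since $\mathbb{V}$ is a right adjoint on the opposite category by Proposition \ref{AdjLinStack}, it converts this into a fiber sequence
$$\mathbb{V}(H^N(E)[-N]) \longrightarrow \mathbb{V}(E) \longrightarrow \mathbb{V}(\tau^{\le N-1} E)$$
in $\mathbb{G}_m-dSt_{/X}$. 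The inductive hypothesis settles the right-hand term, so two things remain: (a) treat atomic shifted discrete modules $M[-n]$ with $n \ge 1$, and (b) show that the natural transformation $E \to \mathcal{O}_{gr}(\mathbb{V}(E))(1)$ is compatible with the above fiber sequence.

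For (a), by additivity in $M$ and resolution by free modules it suffices to treat $M = \mathcal{O}_X$. Then $\mathbb{V}(\mathcal{O}_X[-n]) \simeq X \times K(\mathbb{G}_a,n)$, and invoking the identification $K(\mathbb{G}_a,n) \simeq Spec^\Delta(D(k[n]))$ from the section on affine stacks, one reads off that the generator of $D(k[n])$ sits in weight $1$ and cohomological degree $n$, so that $\mathcal{O}_{gr}(\mathbb{V}(\mathcal{O}_X[-n]))(1) \simeq \mathcal{O}_X[-n]$ via the natural map of Construction \ref{Cons}. For a general discrete $M$, present $M$ as a filtered colimit of finitely generated free modules and verify that the weight-$1$ functor commutes with this colimit, which follows from the linearity of weight-$1$ extraction; the bounded-above hypothesis is used here to control convergence in the cosimplicial model computing global graded functions.

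For (b), which is the central obstacle, the functor $\mathcal{O}_{gr}(\mathbb{V}(-))(1)$ is a composite of a left and a right adjoint, so no general exactness argument is available. The strategy is to identify it intrinsically: for a linear stack $\mathbb{V}(F) \to X$ with zero section $0 : X \to \mathbb{V}(F)$, the weight-$1$ part of the graded functions represents first-order linear deformations of $0$, and one argues that it is canonically equivalent to $0^*\mathbb{L}_{\mathbb{V}(F)/X}$, which equals $F$ by the universal property of the linear stack. Since the cotangent complex is functorial and sends the fiber sequence of linear stacks above to a corresponding fiber sequence in $QCoh(X)$, the weight-$1$ functor inherits the needed exactness, which together with the inductive hypothesis and step (a) closes the induction. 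The principal technical work is establishing that the natural transformation of Construction \ref{Cons} coincides with the one arising from the universal property of the cotangent complex at the zero section; once this comparison is secured, the rest of the argument is formal.
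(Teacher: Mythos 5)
Your strategy has two serious gaps, and as a consequence it does not close; it also differs substantially from the paper's argument.

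The larger gap is in step (b). The assertion that $\mathcal{O}_{gr}(\mathbb{V}(F))(1)$ is canonically equivalent to $0^*\mathbb{L}_{\mathbb{V}(F)/X}$, together with the fact that $0^*\mathbb{L}_{\mathbb{V}(F)/X} \simeq F$, is precisely the statement of the theorem in disguise: if you could establish this identification directly, the entire induction and step (a) would be superfluous. Conversely, without an independent proof of that identification you have no right to the exactness you need. The deeper structural obstruction is real and cannot be waved away: $\mathcal{O}_{gr}(-)(1)$ is a \emph{left} adjoint (Proposition \ref{AdjLinStack}), so it sends colimits in $\mathbb{G}_m$-$dSt_{/X}$ to limits in $QCoh(X)$; it has no a priori compatibility with the \emph{fiber} sequence $\mathbb{V}(H^N(E)[-N]) \to \mathbb{V}(E) \to \mathbb{V}(\tau^{\le N-1}E)$, which is a limit diagram on the stack side. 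Deferring this to "the principal technical work" is where the proof stops being a proof.

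Step (a) is also incomplete. You want to reduce the case of $H^N(E)[-N]$ to the free case, and you propose to do so by writing the discrete module $M = H^N(E)$ as a filtered colimit of finitely generated free modules. By Lazard's theorem that presentation only exists for \emph{flat} modules, and the top cohomology of $E$ is an arbitrary $\pi_0(\mathcal{O}_X)$-module with no flatness hypothesis. A bar-type resolution would give a non-filtered colimit, and commuting $\mathcal{O}_{gr}(\mathbb{V}(-))(1)$ across it raises the same exactness difficulty as step (b). This is not incidental: it is exactly why the paper avoids Postnikov truncation.

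The paper's route is meaningfully different and circumvents both problems. Instead of the Postnikov fiber sequence, it chooses a cellular model of $E$ as an $N(A)$-module and naively truncates to get a cofiber sequence $V[-b] \to E \to E'$ with $V$ a \emph{vector bundle}, so the "atomic" piece is built from shifts of $\mathcal{O}_X$ by construction and no Lazard-type reduction is ever needed. More importantly, it then shows that $\mathbb{V}(E') \to \mathbb{V}(E)$ is an effective epimorphism of stacks (using that $u^*V[-b]$ is strictly coconnective while $N(B)$ is connective). This realizes $\mathbb{V}(E)$ as the geometric realization of the \v{C}ech nerve of $\mathbb{V}(E') \to \mathbb{V}(E)$, i.e.\ as a \emph{colimit}, which $\mathcal{O}_{gr}(-)(1)$ is a priori guaranteed to turn into a limit. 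The levelwise terms $coN_n(E\to E') \simeq E' \oplus V[-b+1]^{\oplus n}$ all have strictly smaller Tor amplitude, so the inductive hypothesis applies degreewise, and weight extraction commutes with the totalization because limits of graded objects are computed weightwise. This is the mechanism the paper uses in place of the exactness statement you are missing, and it is genuinely needed.

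To rescue a Postnikov-flavored argument you would have to supply the missing exactness of $\mathcal{O}_{gr}(\mathbb{V}(-))(1)$ across the relevant pullback of stacks, or else independently establish the comparison with $0^*\mathbb{L}_{\mathbb{V}(-)/X}$; either of these is the whole difficulty.
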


\begin{proof}
Let us proceed by induction on the Tor amplitude of $E$. Let $E$ be a complex of Tor amplitude in $]-\infty,b]$, $b$ a positive integer. The case $b=0$ has already been dealt with, so we assume $b>0$.

We know, as $X$ is assumed to be affine, there is a non canonical triangle
$$0 \to V[-b] \to E \to E' \to 0$$
with $V$ of tor amplitude $0$, ie a classical vector bundle on $X$, and $E'$ concentrated of Tor amplitude in $]-\infty,b-1]$. To construct $E'$, we can take a model of $E$ concentrated in degree smaller than $b$ and cellular as an $N(A)$-module, then define $E'$ as its naive truncation in degree smaller than $b-1$. We deduce the morphism $E \to E'$.

Let us introduce the notation $R$ denoting $\mathcal{O}_{gr}(\mathbb{V}(-))(1)$. By naturality of the construction above, we have the following commutative diagram

\[\begin{tikzcd}
E \ar{d} \ar{r}& Tot(coN_\bullet(E \to E')) \ar{d} \\
R(E) \ar{r} & Tot(R(coN_\bullet(E \to E')))
\end{tikzcd}\]

with $coN$ the conerve construction and $Tot$ the totalization of a cosimplicial object. We will show these four maps are all equivalences.

The top arrow is an equivalence as $E$ and $E'$ live in the stable category of quasi-coherent sheaves on $X$, see \cite[Proposition 1.2.4.13]{Lur17}.

We now show that the right arrow is an equivalence, in fact $coN_\bullet(E \to E') \to R(coN_\bullet(E \to E'))$ is a levelwise equivalence. We compule the conerve in the following lemma :

\begin{Le}
For any natural integer $n$, we have a canonical equivalence
$$coN_n(E \to E') \simeq E' \oplus V[-b+1]^{\oplus n}$$
\end{Le}

\begin{proof}
The computation is standard, we proceed by induction on the degree $n$, in zero degree it is obvious. Now $E' \oplus_E E'$ has a canonical split projection to $E'$, its kernel is the fiber of $E' \to K$, which is $V[-b]$. Therefore $E' \oplus_E E'$ is identified with $E' \oplus V[-b+1]$. The general case follows.
\end{proof}

We are reduced to showing that the natural map
$$ E' \oplus V[-b+1]^{\oplus n} \to R(E' \oplus V[-b+1]^{\oplus n})$$
is an equivalence. Since $E'$ and $V[-b+1]$ are in Tor amplitude in $]-\infty,b-1]$, it follows from the induction hypothesis.

We now show the bottom arrow is an equivalence. We will use a lemma to compute $\mathbb{V}(E)$ :

\begin{Le} $\mathbb{V}(E') \to \mathbb{V}(E)$ is an epimorphism of derived stacks.
\end{Le}

\begin{proof}
Let us fix $B$ is a simplicial commutative $k$-algebra and $u$ a $B$-point of $X$. We want to show that $\mathbb{V}(E')(B) \to \mathbb{V}(E)(B)$ is surjective after applying $\pi_0$. Let us fix $\alpha \in \mathbb{V}(E)(B)$ ie $\alpha : u^* E \to N(B)$ a morphism of $B$-module.

Since there is an exact triangle
$$0 \to V[-b] \to E \to E' \to 0$$
therefore an exact triangle
$$0 \to u^*V[-b] \to u^*E \to u^*E' \to 0$$

we deduce that $\alpha$ lifts to $u^*E' \to N(B)$ if and only if composition $u^*V[-b] \to N(B)$ is zero. Since $u^*V[-b]$ is strictly coconnective and $u^*N(B)$ is connective, the lift exists.

This concludes that $\mathbb{V}(E') \to \mathbb{V}(E)$ is an epimorphism of presheaves, hence is an epimorphism of derived stacks.

\end{proof}

\begin{Rq}
\label{RqEpi}
The proof of the above gives a slightly stronger result, indeed we have shown that the morphism
$$\mathbb{V}(E') \to \mathbb{V}(E)$$
is a epimorphism of functors.
\end{Rq}

Using this lemma and the connections between effective epimorphisms in an $\infty$-topos and simplicial resolutions of \cite[Corollary 6.2.3.5]{Lur09}, we have an equivalence

$$ | N(\mathbb{V}(E') \to \mathbb{V}(E)) | \xrightarrow{\sim} \mathbb{V}(E)$$
with $|-|$ the geometric realization. $N$ is the nerve construction. 

Since $\mathbb{V}$ and $\mathcal{O}_{gr}$ send colimits to limits, between their categories of definition, we deduce
$$\mathcal{O}_{gr}(\mathbb{V}(E)) \xrightarrow{\sim} Tot(\mathcal{O}_{gr}(\mathbb{V}(coN(E \to E'))))$$ 

We can then apply the weight $1$ functor $(-)(1)$, which commutes with limits since limits can be computed levelwise for graded objects. Therefore

$$R(E) \xrightarrow{\sim} Tot(R(coN_\bullet(E \to E'))).$$

We conclude that $E \to R(E)$ is an equivalence.

\end{proof}

\begin{Cor} \label{CoroLinStackFunction}
The functor $\mathbb{V} : QCoh(X)^{op} \to \mathbb{G}_m-dSt_{/X}$ is fully faithful when restricted to $QCoh^-(X)$.
\end{Cor}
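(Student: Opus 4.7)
The plan is to derive the corollary as a direct consequence of the adjunction $\mathcal{O}_{gr}(-)(1) \dashv \mathbb{V}$ from Proposition \ref{AdjLinStack} combined with Theorem \ref{FunctionLinStack}. The classical categorical criterion I would invoke is that a right adjoint is fully faithful if and only if the counit of the adjunction is an equivalence.

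First I would identify the counit of the adjunction. Viewing $\mathcal{O}_{gr}(-)(1) : \mathbb{G}_m\text{-}dSt_{/X} \to QCoh(X)^{op}$ as the left adjoint and $\mathbb{V} : QCoh(X)^{op} \to \mathbb{G}_m\text{-}dSt_{/X}$ as the right adjoint, the counit at an object $E \in QCoh(X)^{op}$ is a map $\mathcal{O}_{gr}(\mathbb{V}(E))(1) \to E$ in $QCoh(X)^{op}$, which in $QCoh(X)$ is the map $E \to \mathcal{O}_{gr}(\mathbb{V}(E))(1)$. Unwinding Construction \ref{Cons}, this counit coincides with the natural map there (since the latter was built from the unit of the auxiliary adjunction of Remark \ref{Adj}, which is the shadow of the main counit after extracting the weight-$1$ part).

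Next, for $E, E' \in QCoh^-(X)$, I would compute
\begin{align*}
Map_{\mathbb{G}_m\text{-}dSt_{/X}}(\mathbb{V}(E),\mathbb{V}(E')) &\simeq Map_{QCoh(X)}(E',\mathcal{O}_{gr}(\mathbb{V}(E))(1))
\end{align*}
using the adjunction, and then use Theorem \ref{FunctionLinStack} to replace $\mathcal{O}_{gr}(\mathbb{V}(E))(1)$ by $E$, obtaining $Map_{QCoh(X)}(E',E) = Map_{QCoh(X)^{op}}(E,E')$. Tracing through the equivalences shows that the resulting comparison agrees with the natural map $Map_{QCoh(X)^{op}}(E,E') \to Map_{\mathbb{G}_m\text{-}dSt_{/X}}(\mathbb{V}(E),\mathbb{V}(E'))$, which is therefore an equivalence. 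This is precisely the statement that $\mathbb{V}$ restricted to $QCoh^-(X)$ is fully faithful.

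There is no real obstacle here: all the work has already been done in Theorem \ref{FunctionLinStack}. The only point requiring a small amount of care is the identification of the counit of the adjunction of Proposition \ref{AdjLinStack} with the natural arrow constructed in Construction \ref{Cons}, which amounts to checking that the back-and-forth between quasi-coherent complexes and $\mathcal{O}$-modules used in that construction is compatible with the adjunction unit/counit formalism. Once this bookkeeping is done, the corollary is immediate.
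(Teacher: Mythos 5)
Your argument is correct and is exactly the one the paper leaves implicit by presenting the statement as a corollary without proof: a right adjoint is fully faithful precisely where the counit is an equivalence, the counit of the adjunction of Proposition \ref{AdjLinStack} is (up to the bookkeeping you flag) the natural map of Construction \ref{Cons}, and Theorem \ref{FunctionLinStack} says that map is an equivalence on $QCoh^-(X)$. Your explicit verification that precomposition with $\epsilon_E$ realizes the map induced by $\mathbb{V}$ on mapping spaces (via the triangle identity) is the right way to nail down the one point requiring care.
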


\begin{Rq}
These claims are still correct for $X$ a general derived stack. We can check it using a descent argument.
\end{Rq}

\subsection{General weights}

\begin{Prop} \label{FunctionLinStackchar0}
If $k$ is a $\mathbb{Q}$-algebra and $E$ is a perfect complex on $X$, $\mathcal{O}_{gr}(\mathbb{V}(E))(p)$ is canonically identified with $Sym_{\mathbb{E}_\infty}^p(E)$, the monad here $Sym_{\mathbb{E}_\infty}$ is the one associated to the formation of free $\mathbb{E}_\infty$-$k$-algebras, or equivalently commutative differential algebras.
\end{Prop}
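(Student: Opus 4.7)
The strategy is to follow the structure of the proof of Theorem \ref{FunctionLinStack}, first constructing the natural comparison map in all weights, then treating the connective case, and finally extending to arbitrary perfect $E$ by induction on Tor amplitude. The derived stack $\mathbb{V}(E)$ carries a canonical $\mathbb{G}_m$-action, so $\mathcal{O}_{gr}(\mathbb{V}(E))$ is naturally an object of $CAlg(QCoh(X)^{gr})$. The weight-$1$ map $E((1)) \to \mathcal{O}_{gr}(\mathbb{V}(E))$ coming from Theorem \ref{FunctionLinStack} extends uniquely, by the universal property of the free graded $\mathbb{E}_\infty$-algebra, to a morphism
$$\Psi \colon Sym_{\mathbb{E}_\infty}(E((1))) \longrightarrow \mathcal{O}_{gr}(\mathbb{V}(E)),$$
whose weight-$p$ component is the map $\psi_p \colon Sym^p_{\mathbb{E}_\infty}(E) \to \mathcal{O}_{gr}(\mathbb{V}(E))(p)$ we wish to show is an equivalence.

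For the base case, suppose $E$ is connective and perfect. Then $\mathbb{V}(E) = Spec_X LSym_{\mathcal{O}_X}(E)$, where $LSym$ is the monad for free simplicial commutative algebras, so $\mathcal{O}_{gr}(\mathbb{V}(E))(p) \simeq LSym^p(E)$. Since $k$ is a $\mathbb{Q}$-algebra, the canonical comparison map $Sym_{\mathbb{E}_\infty}(F) \to LSym(F)$ is an equivalence on connective modules $F$, and extracting weight $p$ yields the claim. For the inductive step, assume $E$ is perfect of Tor amplitude in $]-\infty,b]$ with $b>0$, and fix a triangle $V[-b] \to E \to E'$ with $V$ a vector bundle and $E'$ of Tor amplitude in $]-\infty,b-1]$, exactly as in the proof of Theorem \ref{FunctionLinStack}. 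Running the same conerve argument in weight $p$ gives
$$\mathcal{O}_{gr}(\mathbb{V}(E))(p) \simeq \mathrm{Tot}\bigl(\mathcal{O}_{gr}(\mathbb{V}(coN_\bullet(E \to E')))(p)\bigr),$$
and since $coN_n(E \to E') \simeq E' \oplus V[-b+1]^{\oplus n}$ is of Tor amplitude $\le b-1$, the induction hypothesis identifies the right-hand side with $\mathrm{Tot}\bigl(Sym^p_{\mathbb{E}_\infty}(coN_\bullet(E \to E'))\bigr)$.

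The remaining point, and the main technical step, is to establish the natural equivalence
$$Sym^p_{\mathbb{E}_\infty}(E) \;\xrightarrow{\;\sim\;}\; \mathrm{Tot}\bigl(Sym^p_{\mathbb{E}_\infty}(coN_\bullet(E \to E'))\bigr).$$
Here the $\mathbb{Q}$-algebra hypothesis is essential: in characteristic zero $Sym^p_{\mathbb{E}_\infty}$ is a direct summand of $(-)^{\otimes p}$ cut out by the idempotent $\tfrac{1}{p!}\sum_{\sigma\in\Sigma_p}\sigma$, and formation of $\Sigma_p$-(co)invariants commutes with all limits since $|\Sigma_p|$ is invertible. One therefore reduces to the analogous statement for $(-)^{\otimes p}$, which follows by iterating the weight-$1$ totalization identity $E \simeq \mathrm{Tot}(coN_\bullet(E \to E'))$ in each tensor factor: each cosimplicial level is a finite direct sum of dualizable complexes, so the $p$-fold tensor product is computed level-wise and commutes with the relevant Tot when combined with the explicit decomposition $Sym^p(A\oplus B) = \bigoplus_{i+j=p} Sym^i(A)\otimes Sym^j(B)$.

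The principal obstacle is this final commutation in Step 4. In positive characteristic, symmetric powers do not split off tensor powers and the identification fails, so any proof must genuinely use the $\mathbb{Q}$-algebra hypothesis. Moreover, since the relevant totalization is infinite, one cannot directly invoke the fact that tensor products preserve finite limits in a closed symmetric monoidal stable $\infty$-category; instead one must exploit both the perfectness of $E$ (to pass to dualizing colimit statements) and the particularly simple structure of the cosimplicial object $coN_\bullet(E \to E')$, whose levels are finite direct sums built from a single perfect piece $V[-b+1]$, to effect the exchange of $(-)^{\otimes p}$ with $\mathrm{Tot}$.
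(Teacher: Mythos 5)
Your overall architecture matches the paper's: the same comparison map $\Psi$, the same reduction via the conerve of $E\to E'$ and induction on Tor amplitude, and the same identification of where the real work lies, namely the equivalence
$$Sym^p_{\mathbb{E}_\infty}(E)\;\xrightarrow{\;\sim\;}\;\mathrm{Tot}\bigl(Sym^p_{\mathbb{E}_\infty}(coN_\bullet(E\to E'))\bigr).$$
You also correctly use the characteristic-zero hypothesis to identify $LSym^p$ with $Sym^p_{\mathbb{E}_\infty}$ in the connective base case. However, your treatment of the key totalization step is incomplete and drifts between two strategies without closing either one. After splitting $Sym^p$ off $(-)^{\otimes p}$ by the averaging idempotent, you assert that $(-)^{\otimes p}$ commutes with this particular $\mathrm{Tot}$ "by iterating the weight-$1$ totalization in each tensor factor," but $(E^\bullet)^{\otimes p}$ is a single cosimplicial object formed by levelwise tensor power, not a $p$-fold multicosimplicial diagram, so there is no immediate iteration available; you then switch to invoking $Sym^p(A\oplus B)=\bigoplus Sym^i(A)\otimes Sym^j(B)$, which concerns the functor you just reduced away from, and you end by explicitly conceding that some further combination of "perfectness" and "the simple structure of $coN_\bullet$" is still needed "to effect the exchange." That is precisely the point that has to be proved, and the proposal does not prove it.

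The paper proves exactly this step via a clean duality argument, formulated as a separate lemma. Since $E$ and all levels of $coN_\bullet(E\to E')$ are perfect, hence dualizable, the equivalence $E\simeq\mathrm{Tot}(E^\bullet)$ dualizes to an equivalence $|(E^\bullet)^\vee|\simeq E^\vee$, converting the limit into a sifted (geometric realization) colimit. The functor $Sym^p$ commutes with sifted colimits — this is true in any characteristic, since it is a homotopy coinvariants of a tensor power. Applying it and dualizing once more, and using that in characteristic zero $Sym^p$ commutes with dualization of perfect complexes (this is where $\mathbb{Q}$-linearity enters: $Sym^p=(\otimes^p)_{h\Sigma_p}\simeq(\otimes^p)^{h\Sigma_p}$, and homotopy invariants commute with duals), gives the desired $Sym^p(E)\simeq\mathrm{Tot}(Sym^p(E^\bullet))$ directly, without needing to commute $(-)^{\otimes p}$ with an infinite totalization. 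I would encourage you to replace your step 4 with this duality argument: it is shorter, it isolates precisely where characteristic zero is used (commuting $Sym^p$ with duals), and it avoids the infinite-limit obstruction you yourself flagged as the principal difficulty.
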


Before the proof of the proposition, we will establish a preliminary lemma.

\begin{Le}
If $k$ is a $\mathbb{Q}$-algebra, $E$ is perfect complex on $X$ and $E^\bullet$ is a cosimplicial object in perfect complexes on $X$. An equivalence $E \xrightarrow{\sim} Tot(E^\bullet)$ induces a canonical map $Sym^p(E) \rightarrow Tot(Sym^p(E^\bullet))$, which is an equivalence. Therefore  $Tot(Sym^p(E^\bullet))$ is perfect.
\end{Le}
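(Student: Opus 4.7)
The plan is to reduce the statement about symmetric powers to the corresponding statement about $p$-th tensor powers, using that we are in characteristic zero. Since $k$ is a $\mathbb{Q}$-algebra, the norm map for the $S_p$-action on $E^{\otimes p}$ is an equivalence, so $Sym^p_{\mathbb{E}_\infty}(E) \simeq (E^{\otimes p})_{hS_p} \simeq (E^{\otimes p})^{hS_p}$. Taking $S_p$-invariants is a finite limit (it is $\lim_{BS_p}$), which commutes with the totalization $Tot = \lim_\Delta$. So the lemma is reduced to showing that the canonical map
\[ E^{\otimes p} \longrightarrow Tot\bigl((E^\bullet)^{\otimes p}\bigr) \]
is an equivalence in $QCoh(X)$, where the right-hand side is the totalization of the diagonal cosimplicial object $n \mapsto (E^n)^{\otimes p}$.

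The key input is that $E$ and each $E^n$ are perfect, hence dualizable in $QCoh(X)$. For a dualizable object $F$, the functor $F \otimes -$ admits both a left and a right adjoint (the latter given by $F^\vee \otimes -$), so it preserves arbitrary limits in the other variable. Applying this iteratively, starting from $E \simeq Tot(E^\bullet)$, one can bring each of the $p$ copies of $Tot$ outside of the tensor product successively, to obtain
\[ E^{\otimes p} \simeq Tot_{\bullet_1} \cdots Tot_{\bullet_p} \bigl(E^{\bullet_1} \otimes \cdots \otimes E^{\bullet_p}\bigr), \]
that is, the iterated totalization indexed by $\Delta^p$ of the external tensor product cosimplicial object.

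It remains to identify this iterated totalization with the totalization of the diagonal. This is the standard diagonal cofinality: the diagonal functor $\Delta \to \Delta^p$ is homotopy initial for the construction of limits of cosimplicial objects (equivalently, the opposite diagonal is homotopy cofinal), so the $p$-fold iterated $Tot$ agrees with $Tot$ of the diagonal. Combining with the symmetrization step gives $Sym^p(E) \simeq Tot(Sym^p(E^\bullet))$, and since $Sym^p(E)$ is perfect (perfect complexes are stable under tensor powers and finite limits/colimits like $S_p$-invariants in characteristic zero), this also proves $Tot(Sym^p(E^\bullet))$ is perfect.

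The main obstacle is the bookkeeping for the diagonal cofinality step: in the $\infty$-categorical setting one needs to invoke Fubini for $\infty$-categorical limits along $\Delta^p$ and verify that $\Delta \to \Delta^p$ is initial, which is classical but must be cited cleanly (e.g.\ from \cite{Lur09}). The perfectness hypothesis is used crucially twice, both to move $Tot$ past each tensor factor and to ensure $Sym^p$ of each $E^n$ stays in the category of perfect complexes so that the symmetrization argument applies uniformly.
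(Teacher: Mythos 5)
Your proof is correct, but it proceeds along a genuinely different route from the paper's. The paper's argument is shorter: it dualizes the equivalence $E \simeq Tot(E^\bullet)$, observing that dualization is an equivalence of $\infty$-categories on perfect complexes (so that the totalization becomes a geometric realization $|(E^\bullet)^\vee| \simeq E^\vee$), applies $Sym^p$ which commutes with sifted colimits and hence with geometric realizations, and then dualizes back using the characteristic-zero fact that $Sym^p$ commutes with duality. That single dualization trick replaces your reduction to tensor powers and the diagonal-cofinality bookkeeping entirely, which is what the slicker proof buys. Your approach — rewriting $Sym^p(E)$ as $(E^{\otimes p})^{hS_p}$ via the norm equivalence, commuting $(-)^{hS_p}$ past $Tot$, pulling each $Tot$ out of the tensor product using dualizability, and collapsing the iterated totalization via cofinality of $\Delta \to \Delta^p$ — is more hands-on and makes the mechanism explicit, which is valuable, but it requires checking more pieces. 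Two points to tighten: first, $\lim_{BS_p}$ is \emph{not} a finite limit ($BS_p$ has infinitely many cells); what you actually need is just that limits commute with limits, or, cleaner, that in characteristic zero $Sym^p(E)$ is a natural retract of $E^{\otimes p}$ and retracts commute with all limits. Second, at each stage of the tensor-iteration the object being tensored against must be dualizable; this does hold, since it is a tensor product of perfect complexes $E^{n_1},\dots,E^{n_{i-1}}$ and copies of $E$, and perfect complexes are closed under tensor product, but it deserves to be said explicitly. The cofinality of the diagonal $\Delta \to \Delta^p$ is standard but should be cited (for instance the dual of \cite[Lemma 5.5.8.4]{Lur09}, with an induction on $p$).
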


\begin{proof}
Starting from the equivalence $E \xrightarrow{\sim} Tot(E^\bullet)$, we deduce by taking duals an equivalence of perfect complexes :
$$ | (E^\bullet)^\vee | \xrightarrow{\sim}  E^\vee$$
since dualization induces an equivalence of $\infty$-category on perfect complexes. Knowing $Sym^p$ commutes with sifted colimits, we apply it on both sides
$$ | Sym^p( (E^\bullet)^\vee) |  \xrightarrow{\sim} Sym^p(E^\vee).$$

We can then dualize again and use the fact that $Sym^p$ and dualization commute since we are in zero characteristic.
$$Sym^p(E) \xrightarrow{\sim} Tot(Sym^p(E^\bullet)).$$

And it concludes the proof.
\end{proof}

\begin{proof}
The proof of the proposition follows the same structure as the main theorem. The canonical map
$$E((1)) \to \mathcal{O}_{gr}(\mathbb{V}(E))$$ gives by adjunction a morphism of $\mathbb{E}_{\infty}$-algebras over $X$ :
$$Sym(E((1))) \to \mathcal{O}_{gr}(\mathbb{V}(E))$$
which is in degree $p$ :
$$Sym^p(E) \to \mathcal{O}_{gr}(\mathbb{V}(E))(p).$$

We obtain a commutative diagram

\[\begin{tikzcd}
Sym^p E \ar{d} \ar{r}& Tot(Sym^p(coN_\bullet(E \to E'))) \ar{d} \\
R_p(E) \ar{r} & Tot(R_p(coN_\bullet(E \to E')))
\end{tikzcd}\]
with $R_p = \mathcal{O}_{gr}(\mathbb{V}(-))(p)$

The right arrow is similarly shown to be an equivalence by induction and the bottom one is an equivalence by the same reasoning above.

We now need to show the top arrow is an equivalence. Using the previous lemma with $$E^\bullet \coloneqq coN_\bullet(E \to E')$$

concludes the proof.

\end{proof}

\begin{Rq}
In general characteristic, and general weight, $\mathcal{O}_{gr}(\mathbb{V}(E))(p)$ seems to correspond to $Sym_{M}^p(E)$, the weight $p$ part of the symmetric algebra over a monad on complexes generalizing simplicial algebras to non-necessarily connective complexes. This monad is currently being investigated by Bhatt-Mathew, see \cite{Rak20}. Hopefully, this connection will  be made explicit in a following paper.
\end{Rq}

\begin{Rq}
\label{Contre-ex}
We provide an example of a situation where functions on a linear stack are not easily expressed in terms of the complex of sheaves. Working over a field of characteristic zero, we can define a quasi-coherent sheaf on the point by $E = k[2] \oplus k[-2]$. Then $Sym(E) \simeq Sym(k[2]) \otimes Sym(k[-2])$ is a free commutative differential graded $k$-algebra on two generators, one of degree $2$, the other of degree $-2$, the differential is identically zero.

On the other hand $\mathcal{O}(\mathbb{V}(E)) = H^*(K(\mathbb{G}_a,2),\mathcal{O}[u])$ and it identifies with the ring $k[u][[v]]$ with $v$ a generator of $H^2(K(\mathbb{G}_a,2),\mathcal{O})$ : $u$ is in degree $-2$ and $v$ is a degree $2$. In particular, in degree $0$, we get for $Sym(E)$ an infinite product $\prod_{\mathbb{N}} k$ generated by the elements of the form $u^n v^n$. Similarly, in degree $0$, $\mathcal{O}(\mathbb{V}(E))$ only gives a direct sum $\bigoplus_{\mathbb{N}} k$, also generated by the elements $u^n v^n$. Intuitively, the connective part of $E$ tends to give a contribution by a free algebra (polynômial algebras) and the coconnective part of $E$ gives a contribution by a completed free algebra (power series algebras).

\end{Rq}

\section{Overview of de Rham and crystalline cohomology}

\subsection{Crystalline cohomology}

We review the theory of classical crystalline cohomology, following \cite{Ber74}, \cite{CL91} and \cite{LZ03}. For results on comparisions between de Rham and crystalline cohomology, see \cite{BDJ11} and \cite{Mon21b}.

Crystalline cohomology was motivated to fill a gap arising from $l$-adic cohomology : $l$-adic cohomology for schemes over a field of characteristic $p$ is a well-behaved Weil cohomology theory under the assumption that $l \neq p$. However it is badly behaved when $l=p$. Crystalline cohomology was constructed to give a Weil cohomology theory in the case $l=p$.
Serre notices in $\cite{Ser58}$ that there is no Weil cohomology theory with coefficients in $\mathbb{Q}$ for schemes over a field of characteristic $p$. This motivates the use of Witt vectors, which give the appropriate ring of coefficients, being both a ring of characteristic zero and having a residue field of characteristic $p$.

\begin{Def}
For $X$ a classical scheme over a $\mathbb{F}_p$-algebra $k$, one defines a crystalline site relative to $W_n$, the ring of Witt vector of length $n$ on $k$, and a structural sheaf $\mathcal{O}_{X/W_n(k)}$ which defines cohomology groups $H^i_{crys}(X/W_n)$.

Crystalline cohomology is then the limit on $n$
$$H^i_{crys}(X/W) \coloneqq \varprojlim H^i(X/W_n)$$
\end{Def}

\begin{Rq}
On Witt vectors of length $1$, ie $W_1=k$, we recover De Rham cohomology, therefore this defines a canonical morphism
$$H^i_{crys}(X/W) \to H^i_{dR}(X/k)$$
\end{Rq}

\begin{Prop}
Crystalline cohomology is a Weil theory for smooth projective schemes over a perfect field of characteristic $p$.
\end{Prop}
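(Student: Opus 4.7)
The plan is to verify each of the Weil cohomology axioms in turn, reducing as much as possible to the case of a scheme admitting a smooth lift to $W(k)$, where we can transport results from de Rham cohomology in characteristic zero. So let $X$ be smooth and projective over a perfect field $k$ of characteristic $p$, and write $K = W(k)[1/p]$ for the field of coefficients.

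First I would treat the easy case: assume $X$ lifts to a smooth projective $\widetilde{X}$ over $W(k)$. By the theorem recalled earlier in the excerpt relating crystalline cohomology with de Rham cohomology of a lift (independent of the choice of lift), we have $H^i_{crys}(X/W) \otimes K \simeq H^i_{dR}(\widetilde{X}/W(k)) \otimes K \simeq H^i_{dR}(\widetilde{X}_K/K)$. The generic fibre $\widetilde{X}_K$ is smooth and projective over a field of characteristic zero, hence its algebraic de Rham cohomology is a Weil cohomology theory: finite dimensionality, Poincaré duality (via the trace map on top de Rham cohomology), Künneth (from the derived tensor product on the de Rham complex), weak and hard Lefschetz (by comparison with Betti cohomology through $K \hookrightarrow \mathbb{C}$ after a choice of embedding), and the cycle class map (constructed via the logarithmic de Rham complex and $c_1$ of line bundles). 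All of these transfer to $H^i_{crys}(X/W) \otimes K$.

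Next I would reduce the general case to the liftable case. Since smoothness is Zariski-local and every smooth affine scheme over $k$ lifts to $W(k)$, one covers $X$ by opens $\{U_\alpha\}$ admitting smooth formal lifts $\widetilde{U}_\alpha$. On overlaps, the lifts need not agree, but crystalline cohomology is independent of the chosen lift up to canonical quasi-isomorphism; one assembles local de Rham complexes into a sheaf on $X$ whose hypercohomology computes $H^*_{crys}(X/W)$. Finite generation of $H^i_{crys}(X/W)$ as a $W(k)$-module is obtained by comparing the Hodge-to-de Rham spectral sequence modulo $p$ (which gives coherent sheaves on $X$) with its $W_n$-analogues and passing to the limit; degeneration modulo $p$ in low degrees plus Nakayama-type arguments yield coherence over $W(k)$. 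Poincaré duality and Künneth extend from the local liftable case by the usual descent arguments, using that the trace map is functorial and that the local Künneth quasi-isomorphisms glue.

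The hardest parts, which I would highlight as the main obstacles, are: (i) the cycle class map and its compatibility with intersection products, which requires constructing $c_1$ of a line bundle in crystalline cohomology via the $d\log$ map $\mathcal{O}_X^\times[-1] \to \Omega^\bullet_{\widetilde{X}/W}$ and checking it behaves correctly on Chow groups, and (ii) hard Lefschetz, which in characteristic $p$ does not follow from Kähler-type arguments and requires either Deligne's proof of the Weil conjectures (via $\ell$-adic cohomology and a comparison) or Katz–Messing type statements relating crystalline and $\ell$-adic Frobenius eigenvalues. In practice one appeals to \cite{KM74}: the characteristic polynomial of Frobenius on $H^i_{crys}(X/W) \otimes K$ coincides with that on $H^i_{\text{\'et}}(\bar X, \mathbb{Q}_\ell)$, which together with Deligne's theorem forces the correct weights and yields hard Lefschetz. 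Once all axioms are assembled, $(X \mapsto H^*_{crys}(X/W) \otimes K)$ is a Weil cohomology theory with coefficients in $K$, completing the proof.
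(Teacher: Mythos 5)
The paper's proof of this proposition is a pure citation (``See \cite{Ber74}.''), so there is no argument in the paper itself to compare against; your sketch supplies the mathematical content that the citation points at. The outline you give is the standard one, and you correctly isolate hard Lefschetz as the genuinely difficult axiom: it is not established in Berthelot's book, and the usual route is through the Katz--Messing comparison of Frobenius characteristic polynomials with the $\ell$-adic theory together with Deligne's proof of the Weil conjectures. Two cautions. First, the gluing step for non-liftable $X$ is stated too loosely: local smooth lifts $\widetilde{U}_\alpha$ do not assemble directly into a complex of sheaves on $X$, because they disagree on overlaps in a way controlled only up to (canonical but nontrivial) quasi-isomorphism; the clean formulation goes through the crystalline site and the complex $Ru_*\mathcal{O}_{X/W_n}$, or through a simplicial/hypercover construction that records all the comparison data, rather than a naive sheafification of local de Rham complexes. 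Second, a purely practical point: the Katz--Messing reference you appeal to is not in this paper's bibliography, so the argument would need that citation added to be self-contained here.
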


\begin{proof}
See \cite{Ber74}.
\end{proof}

\subsection{De Rham-Witt complex}

This section is devoted to reviewing the de Rham-Witt complex, as developed in \cite{Ill79}. The motivation for defining the de Rham-Witt complex is twofold  : 
\begin{itemize}
\item Giving an explicit tool to compute crystalline cohomology closer to the definition of de Rham cohomology.
\item Comparing crystalline cohomology to other $p$-adic cohomology theories, étale or de Rham cohomology, and studying other structures, such as the Hodge filtration.
\end{itemize}

\begin{Cons}
Let $X$ be a scheme over $\mathbb{F}_p$, the de Rham-Witt pro-complex $W_\bullet \Omega_X^\bullet$ is a projective system $(W_n \Omega_X^\bullet)$ of coconnective commutative differential graded algebras such that
$$W_n \Omega_X^0 = W_n \mathcal{O}_X$$
endowed with an additive morphism $V : W_n \Omega_X^i \to W_{n+1} \Omega_X^i$ such that 
$$V(x dy) = Vx dVy \text{ and } (d[x])Vy = V([x]^{p-1} d[x]y)$$

There is a unique initial complex with theses properties and $W_n \Omega^\bullet_X$ is a quotient of $\Omega_{W_n X}$ by the required relations, it is a $ W_n(k)$-algèbre, $W_1 \Omega_X^\bullet$ is simply the de Rham complex $\Omega_X^\bullet$.

We define $W \Omega^i_X = \varprojlim W_n \Omega_X^i$, it is a $p$-torsion free commutative differential graded $W(k)$-algebra.

We recover the truncated de Rham-Witt complex $W_n \Omega_X$ by a quotient :
$$W_n \Omega_X^i = W \Omega^i_X / (V^n W\Omega_X^i + dV^nW \Omega_X^{i-1})$$

Since $X$ is assumed to be smooth, there is a unique commutative differential graded algebra morphism
$F : W_n \Omega_X^\bullet \to W_n \Omega_X^\bullet$ such that
$$FdV=d \text{  and } F(d[x])=[x]^{p-1}d[x]$$
\end{Cons}

\begin{Prop}
A few properties of the de Rham-Witt vectors and its structure maps
$$V : W_n\Omega_X^\bullet \to W_{n+1}\Omega_X^\bullet$$
$$F : W_n\Omega_X^\bullet \to W_n\Omega_X^\bullet$$

are given by :
\begin{itemize}
\item $W_n \Omega_X^0$ is given by $W_n \mathcal{O}_X$
\item $F$ and $V$ are additive.
\item $F$ coincides in degree $0$ with the Frobenius morphism and the Verschiebung map on Witt vectors.
\item $FV=VF=p$
\item $FdV=d$
\item $dF=pFd$
\item $Fd[x]=[x^{p-1}]d[x]$ for $x\in \mathcal{O}_X$
\item $F(xy) = xy$
\item $xVy=VF(xy)$
\item $V(xdy)=V(x)dV(y)$
\end{itemize}

\end{Prop}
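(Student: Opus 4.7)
The plan is to derive every item from the universal property of the pro-$V$-complex, combined with the explicit construction of $F$ sketched in the preceding paragraph. I would organize the verification into four passes, moving from the data built into the definition toward the consequences.

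First I would collect the statements that are tautologies of the construction. By definition, $W_n\Omega_X^\bullet$ is the initial pro-$V$-complex on $X$ with $W_n\Omega_X^0 = W_n\mathcal{O}_X$, so the first bullet is immediate. The additivity of $V$ is part of the data of a pro-$V$-complex, and the identity $V(x\,dy) = V(x)\,dV(y)$ is exactly one of the defining relations imposed in the construction, so it needs no further argument. For the morphism $F$, one reads from its definition that it is a map of commutative differential graded algebras, hence additive and multiplicative; the formulas $F\,dV = d$ and $F(d[x]) = [x]^{p-1}d[x]$ are precisely the clauses used to pin it down. That $F$ agrees in degree $0$ with the Witt-vector Frobenius follows by comparing the required relation $F(d[x]) = [x]^{p-1}d[x]$ with the classical one on $W_n\mathcal{O}_X$, using the uniqueness statement given in the construction.

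Next I would prove the mixed relations $FV = VF = p$. On the subalgebra generated by Teichm\"uller representatives $[x]$ and their differentials, one checks both identities directly using $F\,dV = d$ together with $V([x]y)$-type manipulations; since this subalgebra generates $W_n\Omega_X^\bullet$ as a pro-$V$-complex, initiality propagates the relation globally. The projection formula $x\,Vy = V(F(x)\,y)$ (the bullet as typeset seems to be missing an $F$ on the right and a product on the left) is obtained in the same way: verify it on generators $x = [a]$, $y = [b]$ or $y = d[b]$ using the identities already established, and extend by the module structure over $W_n\mathcal{O}_X$ and the Leibniz rule.

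Finally I would settle the differential identity $dF = pFd$. Starting from $F\,dV = d$ and applying $d$ on both sides one gets $dF\,dV = 0$; combining this with $FV = p$ and the Leibniz compatibilities between $F$, $V$ and $d$ on Teichm\"uller lifts yields the relation on generators, and again initiality lifts it to all of $W_n\Omega_X^\bullet$. The main obstacle in the whole argument is book-keeping: one must verify the relations on a generating family (Teichm\"uller lifts and their differentials together with the $V^i$ and $dV^i$ of these), then promote them to the full pro-complex by invoking the universal property --- rather than trying to argue them directly on $W_n\Omega_X^\bullet$, where no direct description of elements is available. The details of this extension step are exactly those worked out in \cite{Ill79}, to which the proof can be reduced.
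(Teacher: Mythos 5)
The paper gives no proof of this proposition: it sits in the review section on the de Rham--Witt complex and the assertions are standard facts quoted from \cite{Ill79} without verification. Your sketch therefore supplies more than the paper does, and its overall strategy --- separate the definitional items from the derived ones, check the latter on the generating family $\{V^n[a],\,dV^n[a]\}$, then propagate by linearity, the Leibniz rule and multiplicativity of $F$ --- is the standard one and is sound.

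Two caveats on the details. First, as reproduced in the paper, the characterization of $F$ (by $FdV=d$ and $F(d[x])=[x]^{p-1}d[x]$ alone) does not determine $F$ in degree $0$: these relations constrain $F$ on $W_n\Omega_X^1$ and via $V$, but say nothing directly about $F([a])$ or $F(V^i[a])$. In Illusie's definition the condition that $F|_{W_n\mathcal{O}_X}$ equal the Witt--vector Frobenius is part of the defining data, not a consequence, so your phrase ``follows by comparing \ldots using the uniqueness statement'' is circular unless that condition is restored to the characterization. Second, applying $d$ to $FdV=d$ only gives $dFdV=0$, which does not lead to $dF=pFd$; the clean route (which your ``combining with $FV=p$ and the Leibniz compatibilities'' gestures at) is to pass to the $p$-torsion-free limit $W\Omega_X^\bullet$ and compute directly on generators: for $\omega=V^n[a]$ both $dF\omega$ and $pFd\omega$ equal $p\,dV^{n-1}[a]$, for $\omega=dV^n[a]$ both vanish, and the identity propagates to products by Leibniz and multiplicativity of $F$. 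You correctly flagged $xVy=VF(xy)$ as a misprint of the projection formula $xVy=V(F(x)\,y)$; note also that $F(xy)=xy$ is surely a misprint, presumably for multiplicativity $F(xy)=F(x)F(y)$.
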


\begin{Rq}
The endomorphism deduced functorially by the Frobenius map of $X$ is on $W_\bullet \Omega^i$ given by $p^i F$.
\end{Rq}

\begin{Thm}
There is a canonical isomorphism compatible with the Frobenius action
$$H^*_{crys}(X/W_n(k)) \cong H^*(X,W_n \Omega_X^\bullet)$$
between crystalline cohomology and the hypercohomology of the de Rham-Witt complex sheaf canonically defined.
\end{Thm}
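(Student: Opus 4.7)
My plan is to follow Illusie's original strategy, which reduces the global comparison to a local one via the crystalline site's good functoriality, and then to a direct comparison between the de Rham complex of a smooth lift and the de Rham–Witt complex.

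First I would work Zariski-locally on $X$. For an open affine $U \subset X$ that is smooth over $k$, I would choose a smooth $W_n(k)$-lift $\widetilde{U}$ equipped with a lift of Frobenius $\phi : \widetilde{U} \to \widetilde{U}$; such a pair exists Zariski-locally by standard smoothness/lifting arguments. Berthelot's theorem (see \cite{Ber74}) identifies $R\Gamma_{\mathrm{crys}}(U/W_n(k))$ with the hypercohomology of the de Rham complex $\Omega^\bullet_{\widetilde{U}/W_n(k)}$, and this identification is functorial in the pair $(\widetilde{U},\phi)$ with respect to the Frobenius action: the crystalline Frobenius corresponds on $\Omega^\bullet_{\widetilde{U}/W_n(k)}$ to the map induced by $\phi$ (together with the correction factors $p^{-i}$ that appear in degree $i$, which make the map $F = \phi^*/p^i$ on $\Omega^i$ well-defined after passing to de Rham–Witt).

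Next, using the universal property of the de Rham–Witt pro-complex as the initial $V$-pro-complex (Construction recalled just before the statement), the data of $(\widetilde{U},\phi)$ produces a canonical morphism of pro-cdga's
\[
f_{\widetilde{U},\phi} : \Omega^\bullet_{\widetilde{U}/W_n(k)} \longrightarrow W_n\Omega^\bullet_U,
\]
which by construction is compatible with $F$, $V$, and the Teichmüller lifts. The key local lemma to establish is that $f_{\widetilde{U},\phi}$ is a quasi-isomorphism of complexes of sheaves on $U$; this is precisely the content of the classical theorem quoted earlier in the excerpt (\cite[Theorem 1.1.2]{BLM22}) at finite level $n$. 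One proves it by filtering both sides by the canonical (respectively, slope) filtrations and checking that the associated graded pieces agree, using that $W_n\mathcal{O}_X / VW_{n-1}\mathcal{O}_X \cong \mathcal{O}_X$ and that on the de Rham side the natural filtration by divided powers of the ideal $(p)$ has the expected graded pieces because $\widetilde{U}$ is smooth.

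Finally I would globalize. Independence of the local choice $(\widetilde{U},\phi)$ follows because any two such choices are étale locally connected by a (non-unique) isomorphism, and the induced two maps $f_{\widetilde{U},\phi}$ agree after passage to $W_n\Omega^\bullet_U$ thanks to the Frobenius-compatibility forced by the universal property (any two Frobenius lifts differ by a $p$-divisible derivation, which is killed by the identity $dF = pFd$ in the de Rham–Witt algebra). Gluing the local quasi-isomorphisms therefore yields a canonical isomorphism in the derived category $D(X, W_n(k))$
\[
R\Gamma_{\mathrm{crys}}(X/W_n(k)) \;\simeq\; R\Gamma(X, W_n\Omega^\bullet_X),
\]
and taking cohomology gives the stated isomorphism, compatibly with the Frobenius action by the compatibility checked locally. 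The main obstacle is the local quasi-isomorphism step: the rest is formal once that is in hand, and independence of the Frobenius lift is the subtle point, which is why the universal property of $W_n\Omega^\bullet$ is indispensable here.
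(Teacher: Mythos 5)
The paper does not actually prove this statement: it appears in the expository Section on ``De Rham-Witt complex'' and is simply a recollection of Illusie's comparison theorem (see \cite{Ill79}), stated without proof. So there is no ``paper's proof'' to compare against; I can only evaluate your sketch on its own terms.

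Your outline follows the right general contour (localize, compare with de Rham cohomology of a smooth lift via Berthelot, relate that to de Rham--Witt, glue), but there is a genuine gap in the independence/gluing step. You assert that for two Frobenius lifts $\phi, \phi'$ on $\widetilde{U}$, the induced maps $f_{\widetilde{U},\phi}$ and $f_{\widetilde{U},\phi'}$ ``agree after passage to $W_n\Omega^\bullet_U$'' because the universal property kills the discrepancy. This is not correct at the level of chain maps: the Dieudonné algebra structure on $\Omega^\bullet_{\widetilde{U}}$ depends on $\phi$, and the two resulting morphisms to $W_n\Omega^\bullet_U$ are genuinely different. What is true is that they agree up to a canonical chain homotopy (equivalently, agree in the derived category), and constructing that homotopy --- and then verifying that these homotopies are themselves compatible over overlaps so that the local quasi-isomorphisms descend --- is precisely the hard part of the theorem. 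Your parenthetical justification (``killed by the identity $dF = pFd$'') does not suffice: the relation $dF = pFd$ holds in any Dieudonné algebra and does not by itself force two distinct lift-dependent maps to coincide.

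The standard way around this (Illusie's original route) is to avoid the lift-dependent local comparison altogether and instead build a direct, manifestly functorial map from the crystalline structure sheaf to $W_n\Omega^\bullet_X$, working on the crystalline site. One then checks locally, on a smooth affine with a chosen embedding and divided-power envelope, that this canonical map is a quasi-isomorphism, reducing to the de Rham comparison and an explicit computation. Because the map is defined globally before any local choice is made, there is no gluing to carry out and no independence-of-lift to establish. If you want to stay with the Frobenius-lift approach (closer to \cite{BLM22}), you need to prove carefully that the assignment $\phi \mapsto f_{\widetilde{U},\phi}$ factors through the canonical quasi-isomorphism class --- that is, construct an explicit homotopy between $f_{\widetilde{U},\phi}$ and $f_{\widetilde{U},\phi'}$ from a homotopy between $\phi$ and $\phi'$ as $\delta$-structures; this is the content of the ``uniqueness of the natural quasi-isomorphism'' implicit in the theorem of Bhatt--Lurie--Mathew quoted earlier in the paper, and it is not formal.
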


\begin{Thm}
Furthermore, there is even a more structured result : there is an equivalence of $W_n(k)$-chain complexes on $X$
$$ Ru_* \mathcal{O}_{X/W_n(k)} \xrightarrow{\sim} W_n \Omega_X^\bullet \in \mathcal{D}(X,W_n(k))$$
where $u$ is the forgetful functor from sheaves on the crystalline site of $X$ to sheaves on the Zariski site of $X$. We have used the notation $\mathcal{D}(X,W_n(k))$ to denote the derived category of $W_n(k)$-modules on $X$.
\end{Thm}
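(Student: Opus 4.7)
The plan is to establish the equivalence Zariski-locally on $X$ and then invoke descent. Both sides, $Ru_*\mathcal{O}_{X/W_n(k)}$ and $W_n\Omega^\bullet_X$, are complexes of Zariski sheaves on $X$, so it suffices to construct a natural equivalence over each sufficiently small open affine $Spec(A) \subset X$ with $A$ smooth over $k = \mathbb{F}_p$.

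On such an affine, one can choose a smooth lift $\widetilde{A}$ of $A$ to $W_n(k)$ together with a lift of Frobenius $\phi : \widetilde{A} \to \widetilde{A}$. The strategy is to factor the sought-after equivalence through the de Rham complex of the lift as
$$Ru_*\mathcal{O}_{Spec(A)/W_n(k)} \xrightarrow{\sim} \Omega^\bullet_{\widetilde{A}/W_n(k)} \xrightarrow{\sim} W_n\Omega^\bullet_A.$$
The first quasi-isomorphism is Berthelot's classical computation of crystalline cohomology in terms of a smooth lift, via the PD-envelope of the diagonal embedding in $\widetilde{A} \otimes_{W_n(k)} \widetilde{A}$. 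The second comes from the theorem of Bhatt--Lurie--Mathew recalled in the introduction, whose natural map $f_{\widetilde{X},F} : \Omega^\bullet_{\widetilde{X}/W(k)} \xrightarrow{\sim} W\Omega^\bullet_X$ is a quasi-isomorphism in the presence of a Frobenius lift; reducing modulo the filtration $V^n W\Omega^\bullet_A + dV^n W\Omega^{\bullet-1}_A$ recalled above yields the analogous quasi-isomorphism at length $n$.

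The main obstacle is the glueing step, since the smooth lift $\widetilde{A}$ and the Frobenius lift $\phi$ depend on choices that need not be compatible between different opens and generically do not extend globally. I would argue that, in $\mathcal{D}(X, W_n(k))$, the composite equivalence is canonical up to a contractible space of choices, by showing that two pairs $(\widetilde{A}, \phi)$ and $(\widetilde{A}', \phi')$ yield homotopic zig-zags via the intermediate lift $\widetilde{A} \otimes_{W_n(k)} \widetilde{A}'$ carrying the product Frobenius lift, together with the classical fact that smooth lifts are unique up to non-canonical isomorphism while the crystalline complex is canonically independent of the choice. This canonicity, combined with Zariski hyperdescent for bounded complexes of $W_n(k)$-modules on $X$, produces the desired global equivalence. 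A more elegant alternative would be to construct the morphism $Ru_*\mathcal{O}_{X/W_n(k)} \to W_n\Omega_X^\bullet$ directly on the crystalline site, exploiting the universal property of $W_n\Omega^\bullet$ as the initial pro-$V$-complex under $\Omega^\bullet_{W_n \mathcal{O}_X}$, and then checking it is an equivalence locally by the argument above.
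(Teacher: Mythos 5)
This theorem is stated in the paper without proof; it is a recalled classical result of Illusie (Th\'eor\`eme II.1.4 of \cite{Ill79}), so there is no internal proof to compare against. Judging the proposal on its own merits: the broad strategy --- establish the equivalence over small affine opens by routing through the de Rham complex of a smooth lift equipped with a Frobenius lift, then glue --- is in the spirit of the classical argument, but the proposal is underspecified precisely where the real work lies, and at least one of the claimed reductions does not go through as written.

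First, the intermediate object for comparing two lifts cannot be $\widetilde{A}\otimes_{W_n(k)}\widetilde{A}'$. Its special fibre is $A\otimes_k A$, not $A$, so it is not a lift of $A$ at all; what one needs is the divided-power envelope $D$ of the diagonal $A \hookrightarrow \widetilde{A}\otimes_{W_n(k)}\widetilde{A}'$. But $D$ is not smooth over $W_n(k)$, so the Bhatt--Lurie--Mathew comparison (Theorem 1.1.2 of \cite{BLM22}, recalled in the introduction) no longer applies to $\Omega^\bullet_D$, and the zig-zag you want to close does not obviously exist. Illusie's actual construction takes the PD-envelope of $X$ inside $Z \times W(X)$ and uses the Teichm\"uller map $\Omega^\bullet_D \to W_n\Omega^\bullet_X$ to produce a \emph{direct} comparison morphism, precisely because the ``two smooth lifts'' approach needs a singular intermediary. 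Moreover, even once one has pairwise homotopies, gluing in $\mathcal{D}(X,W_n(k))$ requires coherences on triple overlaps and higher, which the ``contractible space of choices'' clause asserts but does not establish.

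Second, the truncation step is glossed over. The cited BLM theorem yields $\Omega^\bullet_{\widetilde{X}/W(k)} \simeq W\Omega^\bullet_X$ over the full ring $W(k)$; passing to length $n$ is not mere quotienting. One must know that $W\Omega^\bullet_X \otimes_{W(k)} W_n(k)$ is quasi-isomorphic to $W_n\Omega^\bullet_X = W\Omega^\bullet_X / (V^n + dV^n)$, which is a nontrivial consequence of strictness/saturation (\cite[Cor.\ 2.8.5]{BLM22}), and that the classical $W_n\Omega^\bullet$ coincides with the saturated $\mathcal{W}_n\Omega^\bullet$ (which needs the Noetherian hypothesis of \cite[Prop.\ 4.4.12]{BLM22}); neither is mentioned. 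Your ``more elegant alternative'' is the right instinct --- it is essentially what Illusie does --- but note that the universal property of $W_n\Omega^\bullet$ as initial pro-$V$-complex only produces a map \emph{out of} $\Omega^\bullet_{W_n\mathcal{O}_X}$, and $W_n\mathcal{O}_X$ is not smooth over $W_n(k)$, so relating that to $Ru_*\mathcal{O}_{X/W_n}$ still requires the PD-thickening argument; as written, the proposal does not supply the morphism between the two sides.
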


\subsection{De Rham Witt from Bhatt and Lurie}
In this section, we recall the main notions and results of \cite{BLM22}.

\begin{Def}
A Dieudonné complex $(M,d,F)$ is a cochain complex of abelian groups $(M,d)$ and $F$ a morphism of graded abelian groups $F : M \to M$ such that
$$dF = pFd$$

The $1$-category of Dieudonné complexes is denoted $\textbf{DC}$.
\end{Def}

\begin{Def}
Let $(M,d,F)$ be a Dieudonné complex, we define a Dieudonné sub-complex of $M$ denoted $(\eta_p M)$ as follows
$$(\eta_p M)^n \coloneqq \{ x \in p^n M : dx \in p^{n+1}M^{n+1} \}$$
\end{Def}

\begin{Def}
Let $(M,d)$ be a $p$-torsion-free complex, the Frobenius morphism $F$ induces a morphism of abelian groups
$$\alpha_n : M^n \to (\eta_p M)^n = \{ x \in p^n M : dx \in p^{n+1}M^{n+1} \}$$

The complex $M$ is said to be saturated when $\alpha_n$ is an isomorphism for all $n$. This defines the sub-$1$-category of saturated Dieudonné complexes $\textbf{DC}_{sat} \subset \textbf{DC}$.
\end{Def}

\begin{Def}
Let $M$ be a saturated Dieudonné complex, for every $x \in M_n$, there is a unique $V(x)$ such that $FV(x) = px$. This defines an additive morphism
$$V : M \to M$$
\end{Def}

\begin{Prop} (See \cite[Proposition 2.3.1]{BLM22})
The inclusion of $1$-categories $\textbf{DC}_{sat} \subset \textbf{DC}$ admits a left adjoint, called saturation. It is denoted $Sat$.
\end{Prop}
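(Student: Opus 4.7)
The idea is to construct the left adjoint $Sat$ explicitly as an iterated colimit built from the sub-complex functor $\eta_p$, after first reducing to the $p$-torsion-free case. Any saturated Dieudonné complex $N$ is automatically $p$-torsion-free: the isomorphism $\alpha_N : N^n \xrightarrow{\sim} (\eta_p N)^n \subset p^n N^n$ cannot hold in the presence of nontrivial $p$-torsion. Hence the inclusion $\textbf{DC}_{sat} \subset \textbf{DC}$ factors through the full subcategory $\textbf{DC}^{tf} \subset \textbf{DC}$ of $p$-torsion-free Dieudonné complexes, whose inclusion admits the obvious left adjoint $M \mapsto M/M[p^\infty]$ (the $p$-power torsion subcomplex is stable under $d$ and $F$, so this is a quotient in $\textbf{DC}$). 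It therefore suffices to construct a left adjoint to the inclusion $\textbf{DC}_{sat} \subset \textbf{DC}^{tf}$.

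For $M \in \textbf{DC}^{tf}$ one first checks that $\eta_p M$ is stable under $F$, so that $\eta_p$ defines an endofunctor of $\textbf{DC}^{tf}$, and that the assignment $x \in M^n \mapsto p^n F(x)$ defines a morphism of Dieudonné complexes $\alpha_M : M \to \eta_p M$; here the identity $d(p^n F(x)) = p^{n+1} F(dx)$, coming from $dF = pFd$, is what places $\alpha_M(x)$ inside $\eta_p M$. The saturation is then defined as the filtered colimit
$$Sat(M) \coloneqq \mathrm{colim}\bigl( M \xrightarrow{\alpha_M} \eta_p M \xrightarrow{\alpha_{\eta_p M}} \eta_p^2 M \xrightarrow{\alpha} \cdots \bigr),$$
taken in $\textbf{DC}^{tf}$, or equivalently degreewise in abelian groups.

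The final step is to verify that $Sat(M)$ is saturated and that $M \to Sat(M)$ is the unit of an adjunction. For saturation, the key input is that $\eta_p$ commutes with the specific filtered colimit above (since the defining conditions $x \in p^n(-)$ and $dx \in p^{n+1}(-)$ involve witnesses that are unique by $p$-torsion-freeness, so they pass through filtered colimits of injections), giving $\eta_p(Sat(M)) \simeq \mathrm{colim}\,\eta_p^{n+1} M$; this is a shift of the defining diagram, and the shift is precisely $\alpha_{Sat(M)}$, which is therefore an isomorphism. For the universal property, a map $\varphi : M \to N$ with $N$ saturated can be iteratively lifted to compatible maps $\eta_p^n M \to N$ via the inverse $\alpha_N^{-1} : \eta_p N \xrightarrow{\sim} N$, assembling into a unique morphism $Sat(M) \to N$. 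The main obstacle is the bookkeeping in this last step: one must carefully verify that $\eta_p$ commutes with these specific filtered colimits and that the resulting ``shift'' on $Sat(M)$ is genuinely identified with $\alpha_{Sat(M)}$, which is the technical heart of Proposition 2.3.1 of \cite{BLM22}.
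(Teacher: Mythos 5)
Your proposal is correct and follows the same décalage--iteration construction as \cite{BLM22}, which is exactly what the paper cites here (the paper itself gives no independent argument, and the remark immediately following the statement confirms the colimit description $M \to \eta_p M \to \eta_p^2 M \to \cdots$). Two minor inaccuracies are worth flagging: saturated complexes are $p$-torsion-free by definition rather than as a consequence of $\alpha$ being an isomorphism, and the transition maps $\alpha_F$ need not be injective (for instance they vanish when $F=0$), though this does not affect the argument --- $\eta_p$ commutes with arbitrary filtered colimits of $p$-torsion-free Dieudonné complexes because $p$-torsion-freeness itself is preserved under filtered colimits, so no injectivity of the transition maps is required.
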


\begin{Rq}
This saturation is constructed as the colimit of the diagram
$$M \to \eta_p M \to \eta_p^2 M ...$$
\end{Rq}

\begin{Def}
For $M$ a saturated Dieudonné complex, we define $\mathcal{W}_r(M)$ as the quotient complex of $M$ by the subcomplex $Im(V^r) + Im(dV^r)$.

The natural morphisms $\mathcal{W}_{n+1}(M) \to \mathcal{W}_n(M)$ defines a diagram, which admits a limit called completion of $M$. We denote it $\mathcal{W}(M)$.
\end{Def}

\begin{Rq}
The maps $F$ and $V$ naturally extend to the completion of $M$.

We have a canonical map given by the projection : $\rho_M : M \to \mathcal{W}(M)$.
\end{Rq}

\begin{Def}
The Dieudonné complex $M$ is said to be strict when it is saturated and $\rho_M$ is an isomorphism.

This defines a sub-$1$-category $\textbf{DC}_{str} \subset \textbf{DC}_{sat}$.
\end{Def}

\begin{Prop} (See \cite[Proposition 2.7.8]{BLM22})
The inclusion $\textbf{DC}_{str} \subset \textbf{DC}_{sat}$ admits a left adjoint given by $\mathcal{W}$.
\end{Prop}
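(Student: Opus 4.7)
The plan is to construct a unit $\rho_M : M \to \mathcal{W}(M)$ at every saturated Dieudonné complex $M$, show that $\mathcal{W}(M)$ is strict, and then check the universal property directly against strict targets. Naturality of $\rho_M$ in saturated $M$ follows once one verifies that any morphism $f : M \to M'$ in $\textbf{DC}_{sat}$ carries $\mathrm{Im}(V^r) + \mathrm{Im}(dV^r)$ into the analogous subcomplex of $M'$. This reduces to the fact that on a saturated complex $V$ is uniquely characterized by $FV = p$, so compatibility of $f$ with $F$ and $d$ forces compatibility with $V$. One therefore obtains $\mathcal{W}_r(f) : \mathcal{W}_r(M) \to \mathcal{W}_r(M')$ at every level, and passing to the inverse limit defines $\mathcal{W}(f)$.

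Next I would verify that $\mathcal{W}(M)$ is itself strict. The key point is that $F$ and $V$ extend to $\mathcal{W}(M)$, and that the filtration $\mathrm{Im}(V^r) + \mathrm{Im}(dV^r)$ behaves well under completion: one checks canonically $\mathcal{W}_r(\mathcal{W}(M)) \simeq \mathcal{W}_r(M)$ for each $r$, using the identities $FV^r = p^r = V^rF$ on saturated complexes to control the $p$-adic convergence of elements of $\mathcal{W}(M)$. Saturation of $\mathcal{W}(M)$ comes from the fact that the map $\alpha_n$ can be computed levelwise on $\mathcal{W}_r$ and passes to the inverse limit, since the transition maps $\mathcal{W}_{r+1}(M) \to \mathcal{W}_r(M)$ commute with $F$ and with the $\eta_p$ operation. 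Taking the limit over $r$ of the isomorphisms $\mathcal{W}_r(\mathcal{W}(M)) \simeq \mathcal{W}_r(M)$ then yields $\rho_{\mathcal{W}(M)} : \mathcal{W}(M) \xrightarrow{\sim} \mathcal{W}(\mathcal{W}(M))$, i.e.\ strictness.

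Finally, for the universal property: given a strict complex $N$ and a morphism $f : M \to N$ in $\textbf{DC}_{sat}$, form $\mathcal{W}(f) : \mathcal{W}(M) \to \mathcal{W}(N)$ and compose with $\rho_N^{-1}$, producing $\tilde f : \mathcal{W}(M) \to N$ that satisfies $\tilde f \circ \rho_M = f$. For uniqueness, any two extensions $g_1, g_2 : \mathcal{W}(M) \to N$ agreeing on the image of $\rho_M$ descend to morphisms $\mathcal{W}_r(M) \to \mathcal{W}_r(N)$ which agree because the image of $\rho_M$ generates $\mathcal{W}_r(M)$ by construction; taking the inverse limit over $r$ (and using $N \simeq \mathcal{W}(N)$) forces $g_1 = g_2$. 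This gives the bijection $\mathrm{Hom}_{\textbf{DC}_{sat}}(M, N) \simeq \mathrm{Hom}_{\textbf{DC}_{str}}(\mathcal{W}(M), N)$, which is the required adjunction.

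I expect the main obstacle to be the second paragraph: verifying carefully that $\mathcal{W}(M)$ is saturated. Both the extension of $F$ to the inverse limit and the verification of the isomorphism $\alpha_n$ at the level of $\mathcal{W}(M)$ require tracking the interaction of $\eta_p$ with the tower $\{\mathcal{W}_r(M)\}$, and in particular a careful use of the identities $FV^r = V^rF = p^r$ to show that a $p$-adic Cauchy sequence in $(\eta_p \mathcal{W}(M))^n$ lifts uniquely through $F$. Once this is established, naturality and the universal property are formal.
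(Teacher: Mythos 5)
The paper does not supply its own proof of this proposition: it is stated with a citation to \cite[Proposition 2.7.8]{BLM22} and nothing more. So there is no argument in the paper to compare against; I will assess your proposal on its own terms, using the BLM22 argument as the reference point.

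Your overall strategy --- define $\rho_M$, show $\mathcal{W}(M)$ is strict, then deduce the universal property by passing to the inverse limit over $\mathcal{W}_r$ --- is the right one and follows BLM22. The first and third paragraphs are sound: naturality of $\rho_M$ does come down to the fact that a morphism of saturated complexes automatically commutes with $V$ (forced by $FV = p$ and $p$-torsion-freeness), and the uniqueness argument via surjectivity of $M \twoheadrightarrow \mathcal{W}_r(M)$ together with $N \simeq \mathcal{W}(N)$ is correct. You also correctly identify the second paragraph as the crux.

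However, the second paragraph as written contains a genuine gap, and the strategy you describe there would not work if executed. You claim that ``$\alpha_n$ can be computed levelwise on $\mathcal{W}_r$ and passes to the inverse limit.'' This is false: the truncated complexes $\mathcal{W}_r(M) = M/(\mathrm{Im}(V^r) + \mathrm{Im}(dV^r))$ are \emph{not} saturated Dieudonn\'e complexes. They generally have $p$-torsion (so $F$ is not even injective levelwise), and $V^r$ vanishes on them, so the comparison map to $\eta_p\mathcal{W}_r(M)$ is nowhere near an isomorphism. You cannot recover saturation of $\mathcal{W}(M)$ by a levelwise check plus passage to the limit. The actual argument in BLM22 proceeds differently: one first proves as a separate lemma that $\mathcal{W}(M)$ is $p$-torsion-free (this uses the identities $FV = VF = p$ to show that a $p$-torsion element has all coordinates $x_r$ lying in $\mathrm{Im}(V^{r-1}) + \mathrm{Im}(dV^{r-1})$, hence vanishing one stage down), and only then shows surjectivity of $F$ onto $\eta_p\mathcal{W}(M)$ by a direct lifting argument on compatible systems: given $(x_r)$ with $dx_r$ $p$-divisible modulo $\mathrm{Im}(V^r) + \mathrm{Im}(dV^r)$, one must choose lifts in $M$ whose differentials land genuinely in $pM$ before applying saturation of $M$, and one must verify the resulting preimages assemble compatibly. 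None of this is a formal consequence of a levelwise statement, and it is the part you would need to supply to turn the outline into a proof.
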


\begin{Prop} (See \cite[Notation 2.8.4]{BLM22})
The inclusion $\textbf{DC}_{str} \subset \textbf{DC}$ admits a left adjoint given by $\mathcal{W}Sat$.
\end{Prop}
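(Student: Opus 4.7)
The plan is to obtain the adjunction purely formally by composing the two adjunctions established just above. Recall from the preceding propositions that saturation yields an adjunction
\[
Sat : \textbf{DC} \rightleftarrows \textbf{DC}_{sat} : i_1
\]
where $i_1$ is the inclusion, and completion yields an adjunction
\[
\mathcal{W} : \textbf{DC}_{sat} \rightleftarrows \textbf{DC}_{str} : i_2
\]
where $i_2$ is the inclusion of strict Dieudonné complexes into saturated ones.

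First I would note that the inclusion $\textbf{DC}_{str} \subset \textbf{DC}$ factors through $\textbf{DC}_{sat}$, simply because a strict Dieudonné complex is by definition saturated; hence this inclusion is $i_1 \circ i_2$. Since the composition of right adjoints has as left adjoint the composition of left adjoints (in the reverse order), the functor $\mathcal{W} \circ Sat : \textbf{DC} \to \textbf{DC}_{str}$ is automatically left adjoint to $i_1 \circ i_2$. Explicitly, for $M \in \textbf{DC}$ and $N \in \textbf{DC}_{str}$, one has natural bijections
\[
Hom_{\textbf{DC}_{str}}(\mathcal{W}(Sat(M)), N) \cong Hom_{\textbf{DC}_{sat}}(Sat(M), i_2 N) \cong Hom_{\textbf{DC}}(M, i_1 i_2 N),
\]
obtained by successive application of the two adjunctions.

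There is essentially no obstacle here: the only thing to verify is the set-theoretic factorization of the inclusion, which is immediate from the definition of strictness (a strict complex is saturated with $\rho_M$ an isomorphism). All analytic content — the existence of $Sat$ as a filtered colimit of the $\eta_p$ construction, and of $\mathcal{W}$ as an inverse limit of the $\mathcal{W}_r$ quotients — has already been packaged into the two adjunctions, so the present statement is a formal consequence.
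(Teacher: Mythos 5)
Your proof is correct and is exactly the argument intended: the paper cites \cite[Notation 2.8.4]{BLM22} without further proof, and the two preceding propositions (\cite[Proposition 2.3.1]{BLM22} and \cite[Proposition 2.7.8]{BLM22}) supply precisely the two adjunctions you compose. The observation that strict implies saturated, so the inclusion factors as $i_1 \circ i_2$, together with the general fact that left adjoints compose, is all that is needed.
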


\begin{Def} \label{DADef}
A Dieudonné algebra $(A,d,F)$ is given by $(A,d)$ a commutative differential graded algebra and $F : A \to A$ is a morphism of graded rings such that
\begin{itemize}
\item For $x \in A$, $dF(x) = pF(dx)$.
\item $A_n=0$ for $n<0$.
\item $Fx=x^p $ (mod $p)$, for $x \in A^0$.
\end{itemize}

Morphisms between Dieudonné algebras are required to be ring morphisms compatible with differentials and Frobenius structures. This defines the $1$-category of Dieudonné algebras $\textbf{DA}$. Forgetting the multiplication defines the forgetful functor $\textbf{DA} \to \textbf{DC}$.
\end{Def}

\begin{Prop} (See \cite[Proposition 3.2.1]{BLM22})
Let $R$ a $p$-torsion-free commutative ring and $\phi : R \to R$ a classical Frobenius lift. Then there is a unique ring morphism $F : \Omega_R^\bullet \to \Omega_R^\bullet$ such that
\begin{itemize}
\item $F(x) = \phi(x)$ for $x\in R = \Omega_R^0$.
\item $F(dx) = x^{p-1} dx + d(\frac{\phi(x)-x^p}{p})$ for $x \in R$.
\end{itemize}
Furthermore, $(\Omega_R^\bullet,d,F)$ is a Dieudonné algébra.
\end{Prop}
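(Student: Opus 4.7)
The plan is in three stages: uniqueness, existence of the additive map $F$ on $\Omega_R^1$, and verification of the Dieudonné algebra axioms.

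\emph{Uniqueness.} A graded ring endomorphism $F$ of $\Omega_R^\bullet$ is determined by its restriction to $R = \Omega_R^0$ and to $\Omega_R^1$, since the full exterior algebra is generated by these. The restriction to $R$ is fixed to be $\phi$. The restriction to $\Omega_R^1$ is, by the ring map property, additive and satisfies $F(a\cdot dx) = \phi(a)\cdot F(dx)$; since $\Omega_R^1$ is generated as an abelian group by elements $a\cdot dx$, once $F(dx)$ is prescribed for each $x$, $F|_{\Omega_R^1}$ is determined, and the second condition pins this down.

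\emph{Existence.} Define $\psi : R \to \Omega_R^1$ by $\psi(x) = x^{p-1}\, dx + d\bigl(\tfrac{\phi(x) - x^p}{p}\bigr)$; the fraction is well-defined in $R$ because $R$ is $p$-torsion-free and $\phi$ is a Frobenius lift. The plan is to show the formula $F(a\cdot dx) := \phi(a)\,\psi(x)$ descends to a well-defined additive map $\Omega_R^1 \to \Omega_R^1$, then extend $F$ multiplicatively to $\Omega_R^\bullet = \Lambda_R^\bullet\Omega_R^1$. Using the presentation of $\Omega_R^1$ by generators $\{dx\}$ subject to additivity and Leibniz relations, well-definedness reduces to two identities: (a) $\psi$ is additive, and (b) $\psi(xy) = \phi(x)\psi(y) + \phi(y)\psi(x)$ (a $\phi$-twisted derivation). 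Identity (a) reduces to the binomial computation $d\bigl(\sum_{i=1}^{p-1}\tfrac{1}{p}\binom{p}{i} x^i y^{p-i}\bigr) = (x+y)^{p-1}(dx+dy) - x^{p-1}dx - y^{p-1}dy$, which holds because $\tfrac{i}{p}\binom{p}{i} = \binom{p-1}{i-1}$. For identity (b), writing $\phi(x) = x^p + p\alpha_x$ with $\alpha_x = \tfrac{\phi(x)-x^p}{p}\in R$, one checks $\tfrac{\phi(xy)-(xy)^p}{p} = x^p\alpha_y + y^p\alpha_x + p\alpha_x\alpha_y$, then expands both sides and matches term by term.

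\emph{Dieudonné axioms.} Coconnectivity $(\Omega_R^\bullet)^n = 0$ for $n < 0$ is immediate, and $F(x) \equiv x^p \pmod{p}$ on $R$ is precisely the Frobenius lift hypothesis. It remains to verify $dF = pFd$. On $\Omega_R^0$ one computes $dF(x) = d\phi(x) = p x^{p-1}dx + p\,d\alpha_x = p\,\psi(x) = pF(dx)$. On generators $dx$ of $\Omega_R^1$, $dF(dx) = d(x^{p-1}dx + d\alpha_x) = (p-1)x^{p-2}\,dx\wedge dx = 0$, which matches $pFd(dx) = 0$ since $d^2=0$. The general case follows from multiplicativity of $F$, the graded Leibniz rule for $d$, and the degree-$0$ and $1$ cases already established.

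The main technical obstacle is the verification of the twisted Leibniz rule (b): once this algebraic identity is established, the remaining steps are direct consequences of the universal property of $\Omega_R^1$ and multiplicativity. The additivity check (a) and the compatibility $dF = pFd$ both reduce to binomial or Leibniz manipulations that are routine once (b) is clear.
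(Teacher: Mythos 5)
The paper itself does not prove this proposition; it simply cites \cite[Proposition 3.2.1]{BLM22} and records the statement. So there is no ``paper's own proof'' to compare against. Your proof is correct and is essentially the standard argument (also the one in the cited reference): define $\psi(x) = x^{p-1}\,dx + d\bigl(\tfrac{\phi(x)-x^p}{p}\bigr)$, verify that $\psi$ is an additive $\phi$-twisted derivation, invoke the universal property of $\Omega^1_R$ to obtain a $\phi$-semilinear endomorphism of $\Omega^1_R$, extend multiplicatively to the exterior algebra, and finally check $dF = pFd$ on the ring generators $R$ and $\{dx\}$, propagating to all degrees by the graded Leibniz rule. Both algebraic identities you isolate --- the binomial identity $\tfrac{i}{p}\binom{p}{i} = \binom{p-1}{i-1}$ giving additivity, and $\alpha_{xy} = x^p\alpha_y + y^p\alpha_x + p\alpha_x\alpha_y$ giving the twisted Leibniz rule --- do hold and make the verification go through.
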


\begin{Prop} \label{FormDieud} (See \cite[Proposition 3.2.3]{BLM22})
Let $R$ a $p$-torsion-free commutative ring and $\phi : R \to R$ a classical Frobenius lift. For $A$ a $p$-torsion-free Dieudonné algebra, the construction given before defines a functorial bijection
$$Hom_{\textbf{DA}}(\Omega_R^\bullet,A) \xrightarrow{\sim} Hom_{CRing^{Fr}}(R,A^0)$$
where $Hom_{CRing^{Fr}}(R,A^0)$ is the set of morphisms $R \to A^0$ compatible with Frobenius structures.
\end{Prop}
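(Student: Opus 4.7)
The plan is to construct an inverse to the restriction-to-degree-zero map, and then verify the two composites are identities. The map $\Phi : \mathrm{Hom}_{\textbf{DA}}(\Omega_R^\bullet,A) \to \mathrm{Hom}_{CRing^{Fr}}(R,A^0)$ sends a Dieudonné algebra morphism $u : \Omega_R^\bullet \to A$ to $u^0 : R \to A^0$; this lands in Frobenius-compatible morphisms because $F$ on $\Omega_R^\bullet$ restricts in degree zero to $\phi$ (by the previous proposition) and $u$ is required to intertwine Frobenii. Functoriality is automatic.

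For the inverse, I would proceed as follows. Given $f : R \to A^0$ compatible with Frobenius lifts, the universal property of Kähler differentials, together with the fact that $A$ is a commutative differential graded algebra, produces a unique CDGA morphism $\tilde{f} : \Omega_R^\bullet \to A$ extending $f$: it is characterized by $\tilde{f}(x) = f(x)$ and $\tilde{f}(x_0 \, dx_1 \wedge \cdots \wedge dx_n) = f(x_0) \, df(x_1) \wedge \cdots \wedge df(x_n)$. The content of the proposition is that this $\tilde{f}$ is automatically compatible with $F$. Since $\tilde{f}$ and $F$ are both ring maps and $\Omega_R^\bullet$ is generated, as a graded ring, by $R$ together with $dR \subset \Omega_R^1$, it suffices to verify $\tilde{f} \circ F = F \circ \tilde{f}$ on these two types of generators.

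On $R$, this is immediate from the Frobenius compatibility of $f$: $\tilde{f}(F(x)) = f(\phi(x)) = F(f(x))$. The substantive check is on $dx$, where the explicit formula for $F$ on $\Omega_R^\bullet$ from the previous proposition gives
\[
\tilde{f}(F(dx)) \;=\; f(x)^{p-1}\, df(x) \;+\; d\,\tilde{f}\!\left(\tfrac{\phi(x)-x^p}{p}\right).
\]
Because $\tilde{f}$ is a ring map, $p \cdot \tilde{f}\!\left(\tfrac{\phi(x)-x^p}{p}\right) = f(\phi(x)) - f(x)^p = F(f(x)) - f(x)^p$ in $A^0$, and since $A$ is $p$-torsion-free this uniquely determines $\tilde{f}\!\left(\tfrac{\phi(x)-x^p}{p}\right)$ as the element $\delta(f(x)) \coloneqq \tfrac{F(f(x)) - f(x)^p}{p} \in A^0$. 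Thus I must show that $F(df(x)) = f(x)^{p-1} df(x) + d\,\delta(f(x))$ inside $A$. For this I apply the defining Dieudonné relation $dF = p F d$ to $f(x) \in A^0$: writing $F(f(x)) = f(x)^p + p\,\delta(f(x))$, differentiating yields $d F(f(x)) = p f(x)^{p-1} df(x) + p\, d\,\delta(f(x))$, while $dF = pFd$ says this equals $p F(df(x))$. Cancelling $p$ (using $p$-torsion-freeness of $A$ once more) gives exactly the required identity.

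It remains to check that $\Phi$ and the map $f \mapsto \tilde{f}$ are mutually inverse. One direction is trivial, $(\tilde{f})^0 = f$. The other is the uniqueness built into the universal property of $\Omega_R^\bullet$ as a CDGA: any Dieudonné algebra morphism $u : \Omega_R^\bullet \to A$ is in particular a CDGA morphism and is thus determined by its restriction $u^0$. The main obstacle in this proof is really just the computation in the previous paragraph — the double role of $p$-torsion-freeness, first to define $\tilde{f}$ of $\tfrac{\phi(x)-x^p}{p}$ and then to cancel $p$ after applying $dF = pFd$, is the crux; everything else is formal consequence of the universal property of Kähler differentials.
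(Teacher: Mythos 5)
Your proof is correct, and it is essentially the argument given in \cite[Proposition 3.2.3]{BLM22}, which the paper cites without reproducing: both construct $\tilde f$ via the universal property of $\Omega^\bullet_R$ as the free (non-negatively graded) CDGA on $R$, then verify Frobenius compatibility on the ring generators $R$ and $dR$ using $p$-torsion-freeness twice — once to pin down $\tilde f\bigl(\tfrac{\phi(x)-x^p}{p}\bigr)$ as $\delta(f(x))$, and once to cancel $p$ after applying $dF = pFd$. The identification of the two places $p$-torsion-freeness is used is exactly the crux, and you have isolated it accurately.
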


\begin{Def}
A Dieudonné algebra is said to be saturated is it is as a Dieudonné complex. This defines a sub-$1$-category $\textbf{DA}_{sat} \subset \textbf{DA}$.
\end{Def}

\begin{Prop} (See \cite[Proposition 3.4.3]{BLM22})
The inclusion $\textbf{DA}_{sat} \subset \textbf{DA}$ admits a left adjoint given by $Sat(A)$ endowed with the canonical Dieudonné algebra structure.
\end{Prop}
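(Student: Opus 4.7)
The strategy is to upgrade the Dieudonné-complex saturation $Sat$ to an endofunctor of $\textbf{DA}$ landing in $\textbf{DA}_{sat}$, and then check the universal property. The plan proceeds by first reducing to the $p$-torsion-free case, then giving an explicit model of $Sat(A)$ as a filtered colimit of sub-cdga's of $A$, then inducing the Frobenius and verifying the remaining axioms, and finally extending the adjunction from $\textbf{DC}$ to $\textbf{DA}$.

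First, the $p$-power torsion $A_{tors} \subset A$ is a differential graded ideal stable under $F$ (since $F$ is a ring map and any map of rings preserves $p^n$-torsion), so the quotient $\bar A \coloneqq A/A_{tors}$ is naturally a $p$-torsion-free Dieudonné algebra, and $A \to \bar A$ is universal among maps of Dieudonné algebras to $p$-torsion-free targets. Since any saturated Dieudonné complex is $p$-torsion-free and $Sat(A) = Sat(\bar A)$, it suffices to handle the $p$-torsion-free case.

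Assume now $A$ is $p$-torsion-free. The key observation is that $\eta_p A \subset A$ is a sub-cdga: if $x \in (\eta_p A)^m$ and $y \in (\eta_p A)^n$, then $xy \in p^{m+n}A$, and by the Leibniz rule $d(xy) = (dx)y + (-1)^m x(dy) \in p^{m+n+1}A^{m+n+1}$, so $xy \in (\eta_p A)^{m+n}$; graded commutativity and the odd-square-zero condition are inherited. Moreover the structural map $\alpha_F : A \to \eta_p A$ given on degree $n$ by $x \mapsto p^n F(x)$ is a map of graded rings, since
\[
\alpha_F(xy) = p^{m+n} F(xy) = p^{m+n} F(x)F(y) = \bigl(p^m F(x)\bigr)\bigl(p^n F(y)\bigr) = \alpha_F(x)\alpha_F(y),
\]
and it commutes with $d$ because $d(p^n F(x)) = p^{n+1} F(dx)$. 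Iterating, each $\eta_p^k A$ is a sub-cdga of $A$ and each $\alpha_F$ is a cdga map, so the filtered colimit $Sat(A) = \mathrm{colim}_k\, \eta_p^k A$ (taken in cdga's, which commutes with the forgetful functor to complexes) inherits a canonical cdga structure. Its Frobenius, determined by saturation via the rule $F(x) = p^{-n}\alpha_F(x)$ on degree $n$, is a ring morphism because $\alpha_F$ and multiplication by powers of $p$ are; the concentration in non-negative degrees and the congruence $Fx \equiv x^p \pmod p$ on degree $0$ are inherited from $A$.

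For the universal property, let $B \in \textbf{DA}_{sat}$ and let $f : A \to B$ be a map of Dieudonné algebras. The adjunction at the level of Dieudonné complexes produces a unique Dieudonné-complex map $\widetilde f : Sat(A) \to B$ extending $f$; concretely, on $\eta_p^k A \subset A$ it is the restriction of $f$. Because $f$ is a ring map and each $\eta_p^k A \hookrightarrow A$ is a cdga inclusion, $\widetilde f$ is levelwise a ring map, hence a cdga map, and uniqueness follows from uniqueness at the Dieudonné-complex level. The main obstacle is the careful coordination of the Frobenius, the differential, and the multiplication on the colimit; the whole argument reduces to the identity $\alpha_F(xy) = \alpha_F(x)\alpha_F(y)$ verified above together with the fact that $\eta_p$ preserves the sub-cdga property, which makes the colimit construction purely formal.
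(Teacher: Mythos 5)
Your overall strategy is sound and, as far as I can tell, matches the argument sketched in [BLM22]: reduce to the $p$-torsion-free case via the quotient by $p$-power torsion, observe that $\eta_p A$ is a sub-cdga of $A$ with $\alpha_F$ a cdga map (the Leibniz-rule computation pushing the extra power of $p$ is exactly the point), and then pass to the filtered colimit. The reduction step and the multiplicativity of $\alpha_F$ are both correctly handled.

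There is, however, a concrete error in the universal-property step. You claim that $\widetilde f : Sat(A) \to B$, restricted to the $k$-th stage $\eta_p^k A$, is the restriction of $f$. This cannot be right: the family $\bigl(f|_{\eta_p^k A}\bigr)_k$ does not form a cocone for the colimit diagram whose transition maps are $\alpha_F$, since the required compatibility $f(\alpha_F(x)) = f(x)$ amounts to $\alpha_{F,B}(f(x)) = f(x)$, which fails in general. The correct levelwise map is $f_k = \alpha_{F,B}^{-k} \circ \eta_p^k f$, and it is the saturation of $B$ that makes $\alpha_{F,B}$ invertible and hence $f_k$ well-defined. Your intended conclusion — that $\widetilde f$ is a cdga map — is still true, but for a slightly different reason: $\alpha_{F,B}$ is a cdga \emph{isomorphism} (by the same calculation you did on the source), so its inverse is also a cdga map and $f_k$ is a composition of cdga maps. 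Relatedly, the assertion that the degree-zero congruence $Fx \equiv x^p \pmod p$ is simply ``inherited'' glosses over a point worth spelling out: for $y \in (\eta_p^k A)^0$ one has $F(y) - y^p = pz$ with $dz = F(dy) - y^{p-1}dy \in p^k A^1$, so $z$ actually lies in $(\eta_p^k A)^0$ and the $p$-divisibility persists after passing to the colimit; without placing $z$ in the correct stage of the tower, $\iota_k(pz)$ being divisible by $p$ in $Sat(A)^0$ is not automatic.
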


\begin{Prop} (See \cite[Proposition 3.5.5]{BLM22})
The constructions of $V$ and $\mathcal{W}$ extend naturally to Dieudonné algebras. For $A$ a Dieudonné algebra, $\mathcal{W}(A)$ is a saturated Dieudonné algebra.
\end{Prop}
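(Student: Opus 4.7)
The plan is to lift each of the complex-level constructions of $V$, of the quotient $\mathcal{W}_r(-)$, and of the completion $\mathcal{W}(-)$ to respect the commutative differential graded algebra structure of a saturated Dieudonné algebra $A$; if $A$ is only a Dieudonné algebra, replace it by its saturation using the preceding proposition, so we may assume $A$ is $p$-torsion-free and $V$ already exists on its underlying complex via $FV=p$.

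The first step is to establish the projection formula
\[V(F(x)\cdot y) \;=\; x\cdot V(y),\qquad x,y\in A,\]
by computing that both sides have $F$-image $pF(x)y$ and invoking the injectivity of $F$ on a saturated Dieudonné complex. Combined with the Leibniz rule, this shows that the subcomplex
\[I_r := \mathrm{Im}(V^r) + \mathrm{Im}(dV^r) \subset A\]
is a two-sided differential graded ideal for every $r$: iteration of the projection formula gives $V^r(a)\cdot b = V^r\bigl(a\cdot F^r(b)\bigr)$, so $\mathrm{Im}(V^r)\cdot A\subset\mathrm{Im}(V^r)$, and
\[dV^r(a)\cdot b \;=\; d\bigl(V^r(a\cdot F^r(b))\bigr)\pm V^r\bigl(a\cdot F^r(db)\bigr)\]
lies in $\mathrm{Im}(dV^r)+\mathrm{Im}(V^r)\subset I_r$. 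Hence each $\mathcal{W}_r(A)=A/I_r$ inherits a canonical commutative differential graded algebra structure, compatible with the transition maps $\mathcal{W}_{r+1}(A)\to\mathcal{W}_r(A)$.

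Next, both $F$ and $V$ descend to the tower. Using $FV^{r+1}=pV^r$ and the iterate $FdV^{r+1}=dV^r$ of the identity $FdV=d$, one gets $F(I_{r+1})\subset I_r$, so $F$ induces ring morphisms $\mathcal{W}_{r+1}(A)\to\mathcal{W}_r(A)$. For $V$, one checks directly that $V(V^r(a))\in\mathrm{Im}(V^{r+1})\subset\mathrm{Im}(V^r)$ and that $V(dV^r(a))=dV^{r+1}(pa)$ lies in $\mathrm{Im}(dV^r)$ (the identity being verified by applying $F$ to both sides and using injectivity of $F$ on the saturated complex); hence $V(I_r)\subset I_r$. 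Passing to the inverse limit over $r$ assembles $F$, $V$, $d$ and the multiplication into a Dieudonné algebra structure on $\mathcal{W}(A)=\varprojlim_r\mathcal{W}_r(A)$.

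Finally, $\mathcal{W}(A)$ is saturated because its underlying Dieudonné complex is, by construction, the completion of the underlying saturated Dieudonné complex of $A$, which is strict and in particular saturated by Proposition 2.7.8 of \cite{BLM22} at the level of complexes. The main obstacle is the verification that $I_r$ is a two-sided differential graded ideal and that $V$ descends: everything rests on the projection formula for $V$, and once this is in hand one must be careful never to treat $V$ as a ring morphism, since every occurrence of $V$ acting on a product has to be resolved through that formula.
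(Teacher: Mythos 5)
Your proof is correct and follows the same route as the cited source: the paper itself states this result only as a citation to \cite[Proposition 3.5.5]{BLM22} without reproducing a proof, and the argument there proceeds exactly as you do, via the projection formula $V(F(x)y)=xV(y)$ (established by applying the injective $F$), deducing that $\mathrm{Im}(V^r)+\mathrm{Im}(dV^r)$ is a differential graded ideal so that $\mathcal{W}_r(A)$ and $\mathcal{W}(A)$ inherit cdga structures, and then reducing the saturation of $\mathcal{W}(A)$ to the corresponding statement for the underlying Dieudonné complex.
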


\begin{Def}
A saturated Dieudonné algebra $A$ is said to be strict if it is as a Dieudonné complex. This defines a sub-$1$-category $\textbf{DA}_{str} \subset \textbf{DA}_{sat}$.
\end{Def}

\begin{Prop} (See \cite[Proposition 3.5.9]{BLM22})
The inclusion $\textbf{DA}_{str} \subset \textbf{DA}_{sat}$ admits a left adjoint given by $\mathcal{W}$.
\end{Prop}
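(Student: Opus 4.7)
The plan is to bootstrap from the analogous adjunction at the level of Dieudonné complexes (Proposition 2.7.8 in the excerpt) by verifying that the completion functor $\mathcal{W}$ is compatible with the algebra structure. Since the underlying Dieudonné complex of a strict (respectively saturated) Dieudonné algebra is strict (respectively saturated) by definition, the forgetful functor $\textbf{DA} \to \textbf{DC}$ restricts to functors on both $\textbf{DA}_{str}$ and $\textbf{DA}_{sat}$ compatible with the inclusions, so one can test any desired property in $\textbf{DA}_{sat}$ at the level of the underlying Dieudonné complex.

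First I would verify that $\mathcal{W}$ defines a functor $\textbf{DA}_{sat} \to \textbf{DA}_{str}$. Proposition 3.5.5 asserts that $\mathcal{W}(A)$ carries a natural saturated Dieudonné algebra structure for $A \in \textbf{DA}_{sat}$, and the underlying Dieudonné complex of $\mathcal{W}(A)$ is strict by Proposition 2.7.8; together these give $\mathcal{W}(A) \in \textbf{DA}_{str}$. Functoriality on morphisms follows because a Dieudonné algebra map $f : A \to A'$ commutes with $V$ and $d$, hence preserves the subcomplex $\mathrm{Im}(V^n) + \mathrm{Im}(dV^n)$ and descends to ring morphisms $\mathcal{W}_n(A) \to \mathcal{W}_n(A')$, which assemble in the limit to $\mathcal{W}(f)$.

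Next I would establish the adjunction. The projection $\rho_A : A \to \mathcal{W}(A)$ is a morphism in $\textbf{DA}_{sat}$ by Proposition 3.5.5, and its underlying map of Dieudonné complexes is an isomorphism whenever $A$ is strict (Proposition 2.7.8), hence $\rho_A$ is an isomorphism in $\textbf{DA}_{sat}$ for $A \in \textbf{DA}_{str}$. Given $A \in \textbf{DA}_{sat}$ and $B \in \textbf{DA}_{str}$, the natural map
$$\mathrm{Hom}_{\textbf{DA}_{str}}(\mathcal{W}(A), B) \xrightarrow{(-) \circ \rho_A} \mathrm{Hom}_{\textbf{DA}_{sat}}(A, B)$$
then admits a two-sided inverse $f \mapsto \rho_B^{-1} \circ \mathcal{W}(f)$, with the inverse relations following from naturality of $\rho$ and the triangle identities already known from the Dieudonné complex adjunction of Proposition 2.7.8.

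The main obstacle is the algebraic bookkeeping required to see that the multiplicative structure on $A$ descends to each quotient $\mathcal{W}_n(A)$, i.e. that $\mathrm{Im}(V^n) + \mathrm{Im}(dV^n) \subset A$ is closed under multiplication by arbitrary elements of $A$. The key computations rely on the identities $x V^n y = V^n(F^n(x) y)$ and, via the Leibniz rule, $x \cdot dV^n y = d V^n(F^n(x) y) - V^n(F^n(dx) y)$; the first term lies in $\mathrm{Im}(dV^n)$ and the second in $\mathrm{Im}(V^n)$, so the subspace is preserved under multiplication. Once this is in place, the multiplications on $\mathcal{W}_n(A)$ form a tower of compatible commutative differential graded algebra structures, and passing to the limit endows $\mathcal{W}(A)$ with a Dieudonné algebra structure making all maps in the preceding paragraph into algebra morphisms, concluding the proof.
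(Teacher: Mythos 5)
The paper does not supply its own proof of this statement: it is stated purely as a recalled result with a citation to \cite[Proposition 3.5.9]{BLM22}. Your proposal reconstructs what is essentially the BLM22 argument --- bootstrapping from the Dieudonn\'e complex adjunction $\mathcal{W}: \textbf{DC}_{sat} \to \textbf{DC}_{str}$ (Proposition 2.7.8), using that the forgetful functor $\textbf{DA} \to \textbf{DC}$ is faithful and detects strictness/saturation, and then checking that the multiplicative structure is compatible throughout --- so the approach is the same as the cited source's.

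Two small remarks. First, your final paragraph re-derives the fact that $\mathrm{Im}(V^n) + \mathrm{Im}(dV^n)$ is an ideal, but you have already invoked Proposition 3.5.5 in the second paragraph, which is precisely the statement that $\mathcal{W}(A)$ carries a natural saturated Dieudonn\'e algebra structure; so that computation is subsumed by the cited input and could be dropped (or, alternatively, you could drop the appeal to Proposition 3.5.5 and keep the direct verification, but you should not do both). Second, the Leibniz identity is off by a sign: one has
$$(-1)^{|x|}\, x \cdot dV^n y = dV^n(F^n(x) y) - V^n(F^n(dx)\, y),$$
not the unsigned version you wrote. This does not affect the conclusion, since the only thing needed is membership of the right-hand side in $\mathrm{Im}(V^n) + \mathrm{Im}(dV^n)$, but it is worth recording correctly.
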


\begin{Prop} (See \cite[Proposition 3.5.10]{BLM22})
The inclusion $\textbf{DA}_{str} \subset \textbf{DA}$ admits a left adjoint given by $\mathcal{W}Sat$.
\end{Prop}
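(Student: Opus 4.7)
The plan is to obtain the result simply as a composition of the two adjunctions already established in the two preceding propositions. From the proposition based on \cite[Proposition 3.4.3]{BLM22}, the saturation functor $Sat \colon \textbf{DA} \to \textbf{DA}_{sat}$ is left adjoint to the inclusion $\textbf{DA}_{sat} \hookrightarrow \textbf{DA}$. From the proposition based on \cite[Proposition 3.5.9]{BLM22}, the completion functor $\mathcal{W} \colon \textbf{DA}_{sat} \to \textbf{DA}_{str}$ is left adjoint to the inclusion $\textbf{DA}_{str} \hookrightarrow \textbf{DA}_{sat}$. Since the composition of left adjoints is a left adjoint to the composition of the corresponding right adjoints, the functor $\mathcal{W} \circ Sat \colon \textbf{DA} \to \textbf{DA}_{str}$ is automatically left adjoint to the composite inclusion $\textbf{DA}_{str} \hookrightarrow \textbf{DA}_{sat} \hookrightarrow \textbf{DA}$, which is precisely the inclusion appearing in the statement.

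Concretely, I would just unfold the chain of natural bijections. For $A \in \textbf{DA}$ and $B \in \textbf{DA}_{str}$, one has
$$Hom_{\textbf{DA}_{str}}(\mathcal{W}Sat(A), B) \cong Hom_{\textbf{DA}_{sat}}(Sat(A), B) \cong Hom_{\textbf{DA}}(A, B),$$
where the first isomorphism uses that $B$ is strict hence saturated (applying the adjunction for $\mathcal{W}$), and the second uses that $B$ is saturated (applying the adjunction for $Sat$). Naturality in both variables is inherited from the two constituent adjunctions, so $\mathcal{W}Sat$ is exhibited as the desired left adjoint.

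There is essentially no main obstacle here: all the real content has been absorbed into the earlier propositions constructing $Sat$ and $\mathcal{W}$ as left adjoints at the level of Dieudonné algebras (which is itself nontrivial, since one has to check that both constructions are compatible with the commutative differential graded algebra structure and the Frobenius, not merely with the underlying Dieudonné complex). Once those two adjunctions are in hand, the present statement is formal. The only thing one might wish to double-check is that the composite $\mathcal{W}Sat$ agrees, as a functor, with the one defined by first saturating and then completing at the level of underlying Dieudonné complexes, but this is immediate from the construction of $Sat$ and $\mathcal{W}$ on $\textbf{DA}$ given in the two preceding propositions.
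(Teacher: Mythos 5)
Your proof is correct. The paper itself gives no proof here (this proposition appears in a review section and is cited directly from \cite[Proposition 3.5.10]{BLM22}), but the composition-of-adjunctions argument you give is precisely how the cited source establishes the result, stacking the $Sat$ adjunction from the earlier \cite[Proposition 3.4.3]{BLM22} with the $\mathcal{W}$ adjunction from \cite[Proposition 3.5.9]{BLM22}.
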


\begin{Prop} (See \cite[Proposition 4.1.4]{BLM22})
The functor $B \in \textbf{DA}_{str} \mapsto B^0/VB^0 \in CRing_{\mathbb{F}_p}$ admits a left adjoint denoted by $R \mapsto \mathcal{W}\Omega_R^\bullet$ and called the saturated de Rham-Witt complex of $R$. The complex can be constructed as $$\mathcal{W}\Omega_R^\bullet \coloneqq \mathcal{W}Sat(\Omega^\bullet_{W(R)})$$
\end{Prop}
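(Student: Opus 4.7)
The plan is to establish the claimed adjunction by chaining together three natural identifications. First, by the preceding proposition (that $\mathcal{W}Sat$ is left adjoint to the inclusion $\textbf{DA}_{str} \subset \textbf{DA}$), we reduce
$$Hom_{\textbf{DA}_{str}}(\mathcal{W}Sat(\Omega^\bullet_{W(R)}), B) \cong Hom_{\textbf{DA}}(\Omega^\bullet_{W(R)}, B).$$
Next, since $W(R)$ is a $p$-torsion-free commutative ring carrying its canonical Witt Frobenius lift, and since strictness of $B$ forces $B^0$ to be $p$-torsion free (saturated Dieudonné complexes being $p$-torsion free), Proposition \ref{FormDieud} applies and yields
$$Hom_{\textbf{DA}}(\Omega^\bullet_{W(R)}, B) \cong Hom_{CRing^{Fr}}(W(R), B^0).$$

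The heart of the proof is then the remaining identification
$$Hom_{CRing^{Fr}}(W(R), B^0) \cong Hom_{CRing_{\mathbb{F}_p}}(R, B^0/VB^0).$$
For the forward direction, I would show that any Frobenius-compatible $f : W(R) \to B^0$ is automatically compatible with Verschiebung: applying the operator $F_B$ to $f(V_W x) - V_B f(x)$ yields $f(px) - p f(x) = 0$ via $FV = p$, and $F_B$ is injective on $B^0$ since saturation identifies it with the isomorphism $\alpha : B^0 \to (\eta_p B)^0$. Thus $f$ descends along the surjection $W(R) \to W(R)/V_W W(R) \cong R$ to an $\mathbb{F}_p$-algebra map $R \to B^0/VB^0$. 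Conversely, given $g : R \to B^0/VB^0$, I would construct a lift $f : W(R) \to B^0$ by exploiting the $V$-adic completeness of $B^0$ supplied by strictness (via $B \cong \mathcal{W}(B) = \lim_n B/(V^n B + dV^n B)$): inductively produce compatible maps $W_n(R) \to B^0/V^n B^0$ using the universal Witt-polynomial formulas and the Frobenius reciprocity $V(F(x) y) = x V(y)$, then pass to the limit.

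The main obstacle is this reverse construction, specifically ensuring that the inductive lifts assemble into a ring morphism respecting Frobenius. The cleanest way to handle it is to phrase the required statement as a universal property of the triple $(B^0, F_B, V_B)$ for $B$ strict: every $\mathbb{F}_p$-algebra map $R \to B^0/VB^0$ extends uniquely to a Frobenius-compatible ring map $W(R) \to B^0$. This mirrors the defining universal property of Witt vectors, and can be established by verifying that the classical polynomial identities on Witt coordinates are forced by the strict Dieudonné-algebra axioms (in particular $FV = p$, Frobenius reciprocity, and the $V$-adic completeness inherited from strictness). Composing the three identifications then yields the desired adjunction and confirms the formula $\mathcal{W}\Omega^\bullet_R \cong \mathcal{W}Sat(\Omega^\bullet_{W(R)})$.
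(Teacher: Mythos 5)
Your three-step chain of adjunctions (the $\mathcal{W}Sat$ reflector, the de Rham universal property, and the Witt universal property against strict Dieudonné rings) is indeed the right architecture, and it is the same skeleton underlying the proof of this result in \cite{BLM22}. However, there is a genuine gap in Step 2. You assert that $W(R)$ is a $p$-torsion-free commutative ring, but this fails whenever $R$ is a non-reduced $\mathbb{F}_p$-algebra. Concretely, take $R = \mathbb{F}_p[x]/x^p$: the Teichm\"{u}ller element $[x] \ne 0$ satisfies $p[x] = V([x^p]) = V(0) = 0$, so $W(R)$ has $p$-torsion. Since Proposition \ref{FormDieud} requires its source ring to be $p$-torsion-free (this hypothesis is what lets one divide by $p$ to define $F(dx)$), it cannot be invoked for general $R$ as you have written it.

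The fix is to replace the $p$-torsion-free hypothesis on the source by a $\delta$-ring structure: since $W(R)$ is a $\delta$-ring (indeed the cofree one on $R$), the de Rham complex $\Omega^\bullet_{W(R)}$ still carries a canonical Dieudonné structure and enjoys the universal property, via the $\delta$-cdga framework recalled in Proposition \ref{DRAdjDelta} (BLM \S 3.7). Alternatively, one can first show that $\mathcal{W}Sat(\Omega^\bullet_{W(R)})$ depends only on the reduction $R^{\mathrm{red}}$, where $W(R^{\mathrm{red}})$ is $p$-torsion-free and your argument applies verbatim; one then has to check that $B^0/VB^0$ is always reduced so that the adjunction descends. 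Finally, be aware that your Step 3 is exactly BLM Proposition 3.6.3; while the idea of using $FV=p$, Frobenius reciprocity, and the $V$-adic completeness supplied by strictness is correct, that identification is a nontrivial theorem on its own and the inductive lift you sketch conceals most of its content, so it should not be treated as a routine verification.
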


\begin{Def}
Let $R$ be a commutative $\mathbb{F}_p$-algebra. An $R$-framed $V$-pro-complex is given by
\begin{itemize}
\item A chain of commutative differential graded algebras
$$ ... \to A_\bullet(3) \to A_\bullet(2) \to A_\bullet(1) \to A_\bullet(0)$$
with the limit denoted $A_\bullet(\infty)$.
\item A collection of maps $V : A(r) \to A(r+1)$ of graded abelian groups.
\item A ring morphism $\beta : W(R) \to A_0(\infty)$, where we denote $\beta_r$ the composition
$$W(R) \xrightarrow{\beta} A_0(\infty) \to A_0(r)$$
\end{itemize}
such that
\begin{itemize}
\item $A(r)=0$ when r=0 and $A_i(r)=0$ when $i<0$.
\item $V$ commutes with the restriction maps. Therefore $V$ induces a morphism on $A(\infty)$ denoted $V$.
\item $\beta : W(R) \to A_0(\infty)$ is compatible with $V$.
\item $V : A(r) \to A(r+1)$ satisfies $V(xdy) = V(x)dV(y)$.
\item For $\lambda \in R$ and $x \in A(r)$, we have
$$(Vx)d\beta_{r+1}[\lambda] = V(x (\beta_r[\lambda])^{p-1} d\beta_r [\lambda] )$$
\end{itemize}

A morphism of $R$-framed $V$-pro-complexes is a morphism of diagrams of commutative differential graded algebras compatible with restriction, Verschiebung and $\beta$ maps.

This defines a $1$-category $VPC_R$.
\end{Def}

\begin{Prop} (See \cite[Proposition 4.4.4]{BLM22})
Let $R$ be a commutative $\mathbb{F}_p$-algebra, the $1$-category $VPC_R$ has an initial object. We denote this object $(W_r \Omega_R^\bullet)$ and the limit of the induced tower $(W \Omega_R^\bullet)$, we call it the classical de Rham-Witt complex of $R$.
\end{Prop}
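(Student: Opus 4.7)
The plan is to construct the initial object explicitly, following the strategy of Illusie streamlined through the Bhatt--Lurie--Mathew framework.

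First, I would produce a candidate. For each $r \geq 0$, take the classical de Rham complex $\Omega^\bullet_{W_r(R)}$ of the ring of Witt vectors; these assemble into a tower via the restriction morphisms $W_{r+1}(R) \to W_r(R)$, and they carry a natural framing $W(R) \to \Omega^0_{W_r(R)} = W_r(R)$. Then define $W_r \Omega^\bullet_R$ as the quotient of $\Omega^\bullet_{W_r(R)}$ by the smallest system of differential graded ideals $J_r$, compatible with restrictions, such that on the quotient tower there exists a (necessarily unique) additive map $V : W_r\Omega^\bullet_R \to W_{r+1}\Omega^\bullet_R$ agreeing with the Witt Verschiebung in degree $0$ and satisfying $V(x\,dy) = V(x)\,dV(y)$, and such that the Teichmüller relation $(Vx)\,d\beta_{r+1}[\lambda] = V\bigl(x (\beta_r[\lambda])^{p-1}\,d\beta_r[\lambda]\bigr)$ holds. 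Such a universal quotient exists formally as the intersection of all ideals for which the desired properties can be imposed coherently.

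Second, I would verify that the tower $(W_r \Omega^\bullet_R)$ so constructed is indeed an object of $VPC_R$: most axioms are automatic from the construction of $\Omega^\bullet_{W_r(R)}$ and the functoriality of de Rham with respect to the Witt restriction and $V$, while the nontrivial axioms are exactly the ones for which $J_r$ was forced to kill obstructions. Third, for the universal property, given any $R$-framed $V$-pro-complex $(A_\bullet(r), V_A, \beta_A)$, the framing $\beta_A$ yields, compatibly in $r$, a ring morphism $W_r(R) \to A_0(r)$, which extends uniquely to a cdga morphism $\Omega^\bullet_{W_r(R)} \to A_\bullet(r)$ by the universal property of Kähler differentials. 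Since every relation imposed to define $J_r$ is already a consequence of the $VPC_R$-axioms in $A_\bullet$, this morphism factors through the quotient and yields the required $W_r\Omega^\bullet_R \to A_\bullet(r)$; compatibility with restriction and with Verschiebung follows by construction, and uniqueness is automatic.

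The main obstacle will be the well-definedness of $V$ on the quotient tower. The identity $V(x\,dy) = V(x)\,dV(y)$ does not \emph{define} $V$ on a $1$-form, but constrains it, since a form admits many presentations $\sum x_i\,dy_i$; additivity and the Leibniz rule must remain compatible across these presentations. The key idea to overcome this is that every element of $W_r(R)$ is a finite sum $\sum V^j[\lambda_j]$ of Verschiebungs of Teichmüller lifts, so it suffices to specify $V$ on products of $d[\lambda]$'s and $V^j[\lambda]$'s, then check that all syzygies among such expressions in $\Omega^\bullet_{W_r(R)}$ lift consistently to the quotient. This reduces to a finite list of identities that are direct consequences of the structural relations on Witt vectors recalled earlier ($FV=p$, $V(F(x)y) = xV(y)$, and the Teichmüller formula), which is precisely the calculation carried out in the BLM treatment.
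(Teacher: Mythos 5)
This proposition is stated in the paper as a recollection from \cite[Proposition~4.4.4]{BLM22} and is not proved here, so the comparison is really against the cited reference. Your strategy is the explicit Illusie-style construction (quotient $\Omega^\bullet_{W_r(R)}$ by an ideal system forced by the axioms), whereas BLM's treatment is organized around a more formal existence argument for the initial object; your route is the original one and is in principle valid, so the choice of approach is fine.

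There are, however, two substantive gaps. First, the ``smallest ideal system'' step needs more than the one line you give it. The set of ideal systems $(J_r)$ such that the quotient tower admits a Verschiebung satisfying the axioms is not a priori closed under intersection, because the candidate $V$'s on $\Omega^\bullet_{W_r(R)}/J_r$ and on $\Omega^\bullet_{W_r(R)}/J'_r$ are separate constructions. The fact that rescues this is the one you mention in passing --- uniqueness of $V$ on any quotient that is generated in degree $0$ by $W_r(R)$ --- but you would need to spell out the actual argument: both induced $V$'s agree after passing to $\Omega^\bullet/(J_r+J'_r)$, so they glue over the pullback $\Omega^\bullet/(J_r\cap J'_r)$. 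Even once that is done, there is a subtle recursion you do not acknowledge: the relations $J_{r+1}$ must contain are determined by $J_r$ (the naive $V$ of an element of $J_r$ must land in $J_{r+1}$, which generates new elements, which propagate, and so on), so one should either iterate the ideal generation to a fixed point or run the argument level by level in $r$; ``take the intersection of everything that works'' quietly hides this.

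Second, the assertion that the $VPC_R$ axioms are ``automatic from \dots the functoriality of de Rham with respect to the Witt restriction and $V$'' is incorrect for $V$. The restriction maps $W_{r+1}(R)\to W_r(R)$ are ring homomorphisms, so $\Omega^\bullet$ is functorial along them; but $V$ is only additive, not a ring map (it satisfies $V(F(x)y)=xV(y)$, not $V(xy)=V(x)V(y)$), so there is no induced map of de Rham complexes and no functoriality to appeal to. Indeed, the failure of $V$ to be a ring map is exactly why $\Omega^\bullet_{W_r(R)}$ itself is \emph{not} a $V$-pro-complex and why the quotient by $J_r$ is needed. Phrased this way, the ``nontrivial axioms'' are not a small residue to check at the end --- the entire construction turns on the well-definedness of $V$, and the sketch should make that the centre of the argument rather than an afterthought.
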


\begin{Rq}
This construction gives the de Rham-Witt complex as defined in \cite{Ill79}. See \cite[Warning 4.4.8]{BLM22} for details.
\end{Rq}

\begin{Thm} (See \cite[Proposition 4.4.12]{BLM22})
There is a functorial morphism of commutative differential graded algebra
$$W\Omega_R^\bullet \to \mathcal{W} \Omega_R^\bullet$$
which is an isomorphism when $R$ is a Noetherian $\mathbb{F}_p$-algebra. The left hand side it the classical de Rham-Witt complex and the right hand side is the de Rham-Witt complex defined by saturation and completion.
\end{Thm}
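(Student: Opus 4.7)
The first step is to construct the natural transformation $W\Omega_R^\bullet \to \mathcal{W}\Omega_R^\bullet$. Since $W\Omega_R^\bullet$ is characterised as the initial $R$-framed $V$-pro-complex, I would equip the tower $\{\mathcal{W}_r(\mathcal{W}\Omega_R^\bullet)\}_r$ with a $V$-pro-complex structure over $R$. The Verschiebung $V$ exists on each level because $\mathcal{W}\Omega_R^\bullet$ is a strict (in particular saturated) Dieudonné algebra, so $V$ is defined by $FV=p$ and is compatible with the projection maps $\mathcal{W}_{r+1}\to\mathcal{W}_r$; the required identities $V(xdy)=V(x)dV(y)$ and $(Vx)d[\lambda]=V(x[\lambda]^{p-1}d[\lambda])$ follow from the corresponding identities in any strict Dieudonné algebra. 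The framing $\beta:W(R)\to(\mathcal{W}\Omega_R^\bullet)^0$ is obtained from $\textrm{Theorem~\ref{Witt2Fiber}}$ together with the universal property that produces a map from $\Omega^\bullet_{W(R)}$ and then saturates/completes it. Initiality of $W\Omega_R^\bullet$ then gives the canonical map, functorially in $R$.

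Next I would prove that this map is an isomorphism when $R$ is a Noetherian $\mathbb{F}_p$-algebra, using a reduction to a smooth case followed by base change. Step one: treat the smooth case, and inside that reduce to polynomial algebras $R=\mathbb{F}_p[x_1,\dots,x_n]$. For such $R$, Illusie's explicit description of $W\Omega_R^\bullet$ in \cite{Ill79} identifies it with a completion of $\Omega^\bullet_{W(R)}$ with respect to the standard basis of integral, fractional, and $V$-type elements. On the other hand, by the universal property of $\mathcal{W}\Omega_R^\bullet = \mathcal{W}Sat(\Omega^\bullet_{W(R)})$, the lift $\phi(x_i)=x_i^p$ gives a Frobenius lift and produces the same saturated complex after identifying $\eta_p\Omega^\bullet_{W(R)}$ with the subcomplex spanned by the standard generators; then the completion step matches the inverse limit in Illusie's construction. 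This gives the isomorphism for polynomial algebras.

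Step two is to upgrade to an arbitrary smooth $\mathbb{F}_p$-algebra and then to a general Noetherian one. For smoothness one uses étale localisation: both $W\Omega_R^\bullet$ and $\mathcal{W}\Omega_R^\bullet$ satisfy étale base change in $R$, which on the classical side is the content of \cite{Ill79} and on the saturated side follows from the explicit formula $\mathcal{W}\Omega_R^\bullet=\mathcal{W}Sat(\Omega^\bullet_{W(R)})$ together with the fact that $W(-)$ commutes with étale maps in the sense of Borger. A Zariski (indeed étale) cover by polynomial algebras then extends the isomorphism to all smooth $\mathbb{F}_p$-algebras. For the Noetherian case, I would choose a presentation $R\simeq P/I$ with $P$ smooth and argue via a compatible strict filtration that both functors turn the surjection $P\twoheadrightarrow R$ into the same quotient, using that each $W_r\Omega^\bullet$ and $\mathcal{W}_r\Omega^\bullet$ are quotients of $W_r\Omega^\bullet_P$ by the analogous ideals; Noetherianness is used to control the completion and to apply Artin--Rees-type arguments to commute $\varprojlim$ with these quotients.

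\textbf{Main obstacle.} The hardest part is the comparison in the polynomial case and the subsequent handling of completions. One must check that the saturation process $\Omega^\bullet_{W(R)}\to\eta_p\Omega^\bullet_{W(R)}\to\eta_p^2\Omega^\bullet_{W(R)}\to\cdots$ produces exactly the subcomplex of $p$-adic expansions that Illusie's explicit basis describes, and then that $\mathcal{W}$-completion recovers Illusie's inverse limit. The Noetherian hypothesis enters precisely to ensure that $\mathcal{W}$-completion commutes with passage to quotients by $I$ and with the formation of $W\Omega^\bullet_{(-)}$, which is where, without further assumptions, the two constructions can diverge.
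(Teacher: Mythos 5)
The paper gives no proof of this statement: it appears in a background section (``De Rham Witt from Bhatt and Lurie'') and is simply cited to \cite[Proposition 4.4.12]{BLM22}. There is therefore no internal proof to compare your proposal against; I assess your sketch against the actual argument in \cite{BLM22}.

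Your construction of the comparison morphism is correct: one equips the tower $\{\mathcal{W}_r(\mathcal{W}\Omega_R^\bullet)\}_r$ with an $R$-framed $V$-pro-complex structure and invokes initiality of $W\Omega_R^\bullet$ in $VPC_R$. Be aware, though, that the statement as reproduced in this thesis drops the hypothesis ``regular'': \cite[Proposition 4.4.12]{BLM22} asserts the isomorphism for \emph{regular} Noetherian $\mathbb{F}_p$-algebras. That omission matters for your proof plan.

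Your ``Step two'' for the Noetherian case is strategically wrong. You propose to present $R \simeq P/I$ with $P$ smooth and argue that both $W\Omega_R^\bullet$ and $\mathcal{W}\Omega_R^\bullet$ arise as ``the same quotient'' of $W\Omega_P^\bullet$, with Artin--Rees to control completions. Neither de Rham--Witt complex is a naive quotient of the corresponding object for $P$ along a closed immersion, and indeed the comparison map fails to be an isomorphism for general (singular) Noetherian $R$ --- so no quotient-and-completion argument can work. The passage from the smooth case to the regular Noetherian case in \cite{BLM22} instead goes through N\'eron--Popescu desingularization: a regular Noetherian $\mathbb{F}_p$-algebra is a filtered colimit of smooth $\mathbb{F}_p$-algebras (since $\mathbb{F}_p$ is perfect, so $\mathbb{F}_p \to R$ is geometrically regular), and one checks that both constructions commute with filtered colimits. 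Your first step (smooth case via polynomial rings and \'etale base change) is broadly in line with \cite{BLM22}; it is the reduction from ``regular Noetherian'' to ``smooth'' where your argument has a genuine gap.
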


\section{Graded loopspaces in mixed characteristics}
In this section we will take $k = \mathbb{Z}_{(p)}$.

\subsection{Frobenius lift structures}

\subsubsection{Classical and derived Frobenius lifts}

\begin{Def}
Let $A$ be a discrete commutative ring, a classical Frobenius lift on $A$ is given by a ring endomorphism $F : A \to A$ such that $F$ induces the Frobenius morphism on $A/p$, ie $F(a)-a^p$ is $p$-divisible for any $a \in A$.
\end{Def}

\begin{Def} 
We define the category of derived stacks with a derived, or homotopy, Frobenius lift $dSt^{Fr}$ as the pullback of categories
$$dSt^{endo} \times_{dSt_{\mathbb{F}_p}^{endo}} dSt_{\mathbb{F}_p}$$
where the functor $dSt_{\mathbb{F}_p} \to dSt_{\mathbb{F}_p}^{endo}$ is the canonical functor adjoining the Frobenius endomorphism to a derived stack on $\mathbb{F}_p$. Similarly, we define the category of graded derived stacks endowed with a derived Frobenius lift $dSt^{gr, Fr}$ as the pullback of categories
$$dSt^{gr,endo} \times_{dSt_{\mathbb{F}_p}^{endo}} dSt_{\mathbb{F}_p}$$
where the forgetful functor $dSt^{gr,endo} \to dSt_{\mathbb{F}_p}^{endo}$ is given by taking $0$ weights, that is $\mathbb{G}_m$-fixed points by Definition \ref{DefWeight0dSt} and taking the fiber on $\mathbb{F}_p$.
\end{Def}

We will give a more general definition of derived Frobenius lifts.

\begin{Def} [Homotopy coherent Frobenius lift]
Let $\mathcal{C}$ a category endowed with a functor $\mathcal{C} \to dSt_{\mathbb{F}_p}^{endo}$. We define the category of Frobenius lifts on $\mathcal{C}$ as
$$\mathcal{C}^{Fr} \coloneqq \mathcal{C} \times_{dSt_{\mathbb{F}_p}^{endo}} dSt_{\mathbb{F}_p}$$
where the $dSt_{\mathbb{F}_p} \to dSt_{\mathbb{F}_p}^{endo}$ is the canonical functor adjoining the Frobenius endomorphism to a derived stack on $\mathbb{F}_p$.
\end{Def}

\begin{Ex} 
\begin{itemize}  We give a series of examples for categories of "algebra-type objects"
\item with $\mathcal{C} = CRing_{\mathbb{Z}_{(p)}}$, we define the category of Frobenius lifts on $CRing_{\mathbb{Z}_{(p)}}$ :
$$CRing_{\mathbb{Z}_{(p)}}^{Fr} \coloneqq (CRing_{\mathbb{Z}_{(p)}}^{endo,op})^{Fr,op}$$
The canonical morphism $CRing_{\mathbb{Z}_{(p)}}^{endo,op} \to dSt^{endo}_{\mathbb{F}_p}$ is the derived affine functor modulo $p$.
	\item with $\mathcal{C} = SCR_{\mathbb{Z}_{(p)}}$, we define the category of Frobenius lifts on $SCR_{\mathbb{Z}_{(p)}}$ :
$$SCR_{\mathbb{Z}_{(p)}}^{Fr} \coloneqq (SCR_{\mathbb{Z}_{(p)}}^{endo,op})^{Fr,op}$$
The canonical morphism $SCR_{\mathbb{Z}_{(p)}}^{endo,op} \to dSt^{endo}_{\mathbb{F}_p}$ is the derived affine functor modulo $p$.
	\item with $\mathcal{C} = SCR_{\mathbb{Z}_{(p)}}^{gr}$, we define the category of Frobenius lifts on $SCR_{\mathbb{Z}_{(p)}}^{gr}$ :
$$SCR_{\mathbb{Z}_{(p)}}^{gr,Fr} \coloneqq (SCR_{\mathbb{Z}_{(p)}}^{gr,endo,op})^{Fr,op}$$
The canonical morphism $SCR_{\mathbb{Z}_{(p)}}^{gr,endo,op} \to dSt^{endo}_{\mathbb{F}_p}$ is the derived affine functor modulo $p$ after taking the weight $0$ componant defined by Proposition \ref{DefWeight0SCR}. See Remark \ref{CompatWeight0} for details.
\end{itemize}
\end{Ex}

\begin{Rq}
Our definition requires a derived Frobenius lift to be homotopy equivalent on $\mathbb{F}_p$ with the canonical Frobenius only on $0$-weights, we do not require an homotopy to $0$ on the other weights.
\end{Rq}

\begin{Prop}
Let $A$ be a $p$-torsion-free commutative algebra on ${\mathbb{Z}_{(p)}}$, the space of Frobenius lifts on $A$ is discrete and in bijection with the set of classical Frobenius lifts on $A$
$$\{ \phi : A \to A : \phi_p : A/p \to A/p = Fr_p \}$$
\end{Prop}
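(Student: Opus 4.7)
The plan is to unpack the pullback definition at the level of mapping spaces and exploit the fact that, when $A$ is $p$-torsion-free and discrete, all the relevant objects are classical and all the relevant mapping spaces are already sets.

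First I would pass from $dSt^{Fr}$ to the opposite affine description: under $Spec : SCR^{op} \xrightarrow{\sim} dAff \hookrightarrow dSt$, a derived Frobenius lift on $Spec(A)$ is equivalently the data of $(A,\phi,h)$ where $\phi : A \to A$ is a morphism in $SCR_{\mathbb{Z}_{(p)}}$ and $h$ is an equivalence in $\mathrm{Map}_{SCR_{\mathbb{F}_p}}(A \otimes^{\mathbb{L}}_{\mathbb{Z}_{(p)}} \mathbb{F}_p, A \otimes^{\mathbb{L}}_{\mathbb{Z}_{(p)}} \mathbb{F}_p)$ between $\phi \otimes^{\mathbb{L}} \mathbb{F}_p$ and the canonical Frobenius $Fr_p$. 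Hence the space of Frobenius lifts is the homotopy fiber, over the point $Fr_p$, of the reduction map
\[
\mathrm{Map}_{SCR}(A,A) \longrightarrow \mathrm{Map}_{SCR_{\mathbb{F}_p}}(A \otimes^{\mathbb{L}} \mathbb{F}_p,\, A \otimes^{\mathbb{L}} \mathbb{F}_p).
\]

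Next I would use the $p$-torsion-freeness of $A$ to identify $A \otimes^{\mathbb{L}}_{\mathbb{Z}_{(p)}} \mathbb{F}_p$ with the ordinary quotient $A/p$, viewed as a discrete object of $SCR_{\mathbb{F}_p}$. Indeed, the exact sequence $0 \to A \xrightarrow{p} A \to A/p \to 0$ furnishes a free resolution, so $\mathrm{Tor}^{\mathbb{Z}_{(p)}}_i(A,\mathbb{F}_p)=0$ for $i\ge 1$. Since $A$ and $A/p$ are both discrete commutative rings, and the inclusion of $1$-truncated objects into $SCR$ is fully faithful in the expected sense, the mapping spaces
\[
\mathrm{Map}_{SCR}(A,A) = \mathrm{Hom}_{\mathbf{CRing}}(A,A), \qquad \mathrm{Map}_{SCR_{\mathbb{F}_p}}(A/p,A/p) = \mathrm{Hom}_{\mathbf{CRing}}(A/p,A/p)
\]
are both discrete (they have vanishing $\pi_i$ for $i>0$).

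Once both the source and the target of the reduction map are discrete sets, the homotopy fiber over $Fr_p$ coincides with the strict (set-theoretic) fiber: it is the set of those ring endomorphisms $\phi : A\to A$ such that the induced morphism $\overline{\phi} : A/p \to A/p$ equals $Fr_p$, and for each such $\phi$ the homotopy $h$ is unique up to contractible choice. The condition $\overline{\phi} = Fr_p$ is literally $\phi(a) \equiv a^p \pmod{p}$ for every $a\in A$, which is the definition of a classical Frobenius lift. This gives the claimed bijection and simultaneously the discreteness of the classifying space.

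The only delicate point — and the one I would be most careful to verify — is the fully faithful embedding of discrete rings into $SCR$ at the level of mapping spaces, since the proposition's content is essentially that no higher homotopies can appear; everything else is formal manipulation of a fiber product and the standard computation of $A \otimes^{\mathbb{L}} \mathbb{F}_p$ for $p$-torsion-free $A$. This verification reduces to the well-known fact that a simplicial resolution of a discrete ring mapping into another discrete ring collapses on $\pi_0$, so the mapping space in $SCR$ between two discrete rings is the ordinary $\mathrm{Hom}$-set.
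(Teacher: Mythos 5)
Your proposal is correct and takes essentially the same approach as the paper: reduce the Frobenius-lift structure on $A$ to the homotopy fiber of the reduction-mod-$p$ map on endomorphism mapping spaces, use $p$-torsion-freeness to identify $A \otimes^{\mathbb{L}} \mathbb{F}_p$ with the discrete ring $A/p$, and conclude that discreteness of both mapping spaces (since discrete rings embed fully faithfully into $SCR$) collapses the homotopy fiber to the strict set-theoretic fiber. The paper's proof is essentially the same argument stated more tersely, leaving implicit both the Tor computation and the discreteness of $\mathrm{Map}_{SCR}(A,A)$ that you spell out.
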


\begin{proof}
Given an endomorphism of $A$, the data of being a derived Frobenius lift is a path in $$Map_{SCR_{\mathbb{F}_p}}(A \otimes^{\mathbb{L}} \mathbb{F}_p, A \otimes^{\mathbb{L}} \mathbb{F}_p)$$
between the induced endomorphism and the canonical Frobenius. As $A$ is $p$-torsion-free we have 
$$A \otimes^{\mathbb{L}} \mathbb{F}_p \simeq A/p$$
which is a discrete commutative $\mathbb{F}_p$ algebra. Now, we deduce
$$Map_{SCR_{\mathbb{F}_p}}(A \otimes^{\mathbb{L}} \mathbb{F}_p, A \otimes^{\mathbb{L}} \mathbb{F}_p) \simeq Map_{SCR_{\mathbb{F}_p}}(A/p,A/p) \simeq Hom_{CRing_{\mathbb{F}_p}}(A/p,A/p)$$
which is a discrete space. Therefore the choice of a homotopy coherent Frobenius lift is equivalent to the choice of a classical Frobenius lift.
 \end{proof}

\begin{Def}
Let $A$ be a $p$-torsion-free commutative algebra on $\mathbb{Z}_{(p)}$, let $M$ and $N$ be $A$-module. An $(A,F)$-linear map is a morphism of ${\mathbb{Z}_{(p)}}$-modules $M \to N_F$ where $N_F$ is the ${\mathbb{Z}_{(p)}}$-module on $N$ endowed with the $A$-module structure induced by composition by $F$.
\end{Def}

\begin{Prop} \label{FLSymStr}
Let $A$ be a $p$-torsion-free commutative algebra on $\mathbb{Z}_{(p)}$, $M$ a projective $A$-module of finite type. The space of Frobenius lifts on the graded simplicial algebra $Sym_A(M[n])$, with $M$ in weight $1$ and $n>0$ a positive integer, is discrete and is made up by pairs $(F,\phi)$ with $F$ a Frobenius lift of $A$ and $\phi : M \to M$ an $(A,F)$-linear map. Therefore once the Frobenius $F$ is fixed, $\phi$ is the data of a classical $(A,F)$-module structure on the $A$-module $M$.
\end{Prop}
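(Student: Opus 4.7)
The plan is to unpack graded simplicial algebra endomorphisms of $Sym_A(M[n])$ via the universal property of the free construction, then impose the Frobenius lift condition, which by definition only concerns the weight-$0$ part modulo $p$.

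First, I would describe graded simplicial $\mathbb{Z}_{(p)}$-algebra endomorphisms of $Sym_A(M[n])$. Since $M$ sits in weight $1$ and each symmetric power $Sym^k_A(M[n])$ has weight $k$, the weight-$0$ part of $Sym_A(M[n])$ is $A$. An endomorphism must restrict to a ring map $F : A \to A$ on weight $0$, and by the universal property of the free $A$-algebra on $M[n]$ (relative to $F$) the remaining data is a morphism of simplicial $A$-modules $M[n] \to Sym_A(M[n])_F$ which, in order to preserve the grading, must land in weight $1$, i.e. in $M[n]_F$. Because $M$ is discrete and projective, $Map_{A\text{-Mod}}(M[n], M[n]_F) \simeq Hom_A(M, M_F)$ is discrete. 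Hence the space of graded endomorphisms of $Sym_A(M[n])$ is discrete and classified by pairs $(F, \phi)$ with $F$ a ring endomorphism of $A$ and $\phi$ an $(A,F)$-linear map $M \to M$.

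Next, I would impose the Frobenius lift condition. By the definition of $SCR^{gr, Fr}_{\mathbb{Z}_{(p)}}$ as a pullback along the functor sending a graded simplicial algebra to its weight-$0$ part reduced modulo $p$, a Frobenius lift structure on a given graded endomorphism is a homotopy between $F \otimes^{\mathbb{L}} \mathbb{F}_p$ and the canonical Frobenius in $Map_{SCR_{\mathbb{F}_p}}(A \otimes^{\mathbb{L}} \mathbb{F}_p, A \otimes^{\mathbb{L}} \mathbb{F}_p)$. Since $A$ is $p$-torsion-free, $A \otimes^{\mathbb{L}} \mathbb{F}_p \simeq A/p$ is a discrete $\mathbb{F}_p$-algebra, so this mapping space is discrete (as used in the preceding proposition on Frobenius lifts). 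Thus the space of such homotopies is either empty or a point, and is non-empty precisely when $F$ is a classical Frobenius lift of $A$.

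Assembling both steps yields the claim: the space of Frobenius lifts on $Sym_A(M[n])$ is discrete, and its set of components is in bijection with pairs $(F, \phi)$ where $F$ is a classical Frobenius lift of $A$ and $\phi$ is an $(A,F)$-linear endomorphism of $M$. The main subtlety I expect is purely bookkeeping: the homotopy coherence demanded by the Frobenius-lift datum lives only on the weight-$0$ mod-$p$ piece, so the higher-weight datum $\phi$ is a strict morphism between discrete modules and contributes no extra higher homotopy; one only has to make sure that the graded-endomorphism side is genuinely cut out by these two pieces of data, which is what the universal property of $Sym$ supplies.
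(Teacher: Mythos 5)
Your proof is correct and takes essentially the same approach as the paper: both use the universal property of the free graded $A$-algebra on $M[n]$ to reduce graded endomorphism data to $\operatorname{Hom}_A(M,M_F)$, note this is discrete since $M$ is discrete projective, and then appeal to the weight-$0$ part being $A$ (discrete and $p$-torsion-free) to conclude via the preceding proposition that the Frobenius-lift homotopy datum is determined by $F$ alone. The only cosmetic difference is ordering: you classify all graded endomorphisms first and then impose the Frobenius condition, whereas the paper fixes the induced classical lift $F$ on $A$ up front and computes endomorphisms compatible with it.
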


\begin{proof}
Taking weight $0$, a derived Frobenius lift on $Sym_A(M[n])$ induces a derived Frobenius lift on $A$. From the previous proposition, a Frobenius structure on $A$ is a classical Frobenius lift : we denote it $F$.

We start by considering the space of choices of endomorphisms on $Sym_A(M[n])$ compatible with $F$, which are morphisms of simplicial $A$-algebras from $Sym_A(M[n])$ to $Sym_A(M[n])$ with the former endowed with the canonical $A$-algebra structure and the latter with the $A$-algebra structure induced by $F$.
\begin{align*}
End_{SCR^{gr}_A}(Sym_A(M[n]))&\simeq Map_{A-Mod^{gr}}(M[n],Sym_A(M[n])_F)\\
&\simeq Map_{A-Mod}(M[n], M_F[n]) \\
&\simeq Map_{A-Mod}(M, M_F) \\
\end{align*}
where $M_F$ is $M$ endowed by the $A$-module structure induced by $F$.

Now the weight $0$ part of $Sym_A(M[1])$ is simply $A$. Therefore the homotopy with the canonical Frobenius is uniquely determined by $F$.
 \end{proof}

\begin{Prop}
The space of derived Frobenius lifts on $\mathbb{F}_p$ is empty.
\end{Prop}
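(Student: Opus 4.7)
The plan is to proceed by contradiction: suppose $\mathbb{F}_p \in SCR_{\mathbb{Z}_{(p)}}$ admits a derived Frobenius lift. Unpacking the definition of $SCR^{Fr}_{\mathbb{Z}_{(p)}}$ as a pullback, such a structure consists of an endomorphism $F : \mathbb{F}_p \to \mathbb{F}_p$ in $SCR_{\mathbb{Z}_{(p)}}$ together with an equivalence in $dSt^{endo}_{\mathbb{F}_p}$ between $(Spec(R), F \otimes \mathbb{F}_p)$ and some pair of the form $(Y, Fr_Y)$ coming from $dSt_{\mathbb{F}_p}$, where $R \coloneqq \mathbb{F}_p \otimes^{\mathbb{L}}_{\mathbb{Z}_{(p)}} \mathbb{F}_p$. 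Since $\mathbb{F}_p = \mathbb{Z}_{(p)}/p$ is discrete and its only $\mathbb{Z}_{(p)}$-algebra endomorphism is the identity, $F$ must equal $\mathrm{id}$, and hence $F \otimes \mathbb{F}_p = \mathrm{id}_{Spec(R)}$.

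Next I would compute $R$ explicitly. The free resolution $\mathbb{Z}_{(p)} \xrightarrow{\cdot p} \mathbb{Z}_{(p)}$ of $\mathbb{F}_p$ yields $R \simeq \mathbb{F}_p \oplus \mathbb{F}_p[1]$ as complexes, so $\pi_0(R) \cong \pi_1(R) \cong \mathbb{F}_p$. Each of the two natural tensor inclusions $\mathbb{F}_p \to R$ is a section of the multiplication $R \to \mathbb{F}_p$, so $R$ is the trivial square-zero extension of $\mathbb{F}_p$ by the module $\mathbb{F}_p[1]$ in the $\infty$-category $SCR_{\mathbb{F}_p}$.

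The main step is to compute the action of the canonical Frobenius $Fr$ on $\pi_1(R)$ and show it is zero. I would pick a concrete simplicial model, for instance $R \simeq \mathbb{F}_p \oplus K(\mathbb{F}_p, 1)$ with square-zero multiplication on the $K(\mathbb{F}_p, 1)$ factor. At each simplicial degree $n$ the ring $R_n$ takes the form $\mathbb{F}_p \oplus V_n$ with $V_n \cdot V_n = 0$, and $Fr$ is just the levelwise $p$-th power. The binomial expansion $(a + v)^p = a^p + p a^{p-1} v + \dots$ collapses to $a^p$, since higher terms contain $v^i$ for $i \geq 2$ and vanish by the square-zero relation, while the linear correction disappears because $p = 0$ in $\mathbb{F}_p$. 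Hence $Fr$ sends $(a, v) \mapsto (a, 0)$ at every level, inducing the identity on $\pi_0(R)$ and the zero map on $\pi_1(R)$.

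The conclusion is then immediate. On $\pi_1(R) \cong \mathbb{F}_p$ the map $F \otimes \mathbb{F}_p = \mathrm{id}_R$ acts as the identity while $Fr$ acts as zero; these two endomorphisms are not even conjugate in the monoid of $\mathbb{F}_p$-linear endomorphisms of $\pi_1(R)$, so no auto-equivalence of $Spec(R)$ can intertwine them, and a fortiori no equivalence in $dSt^{endo}_{\mathbb{F}_p}$ of the required form exists. This contradicts the assumed Frobenius lift, so the space is empty. The only genuinely technical step is pinning down the action of $Fr$ on $\pi_1$; one could equivalently argue abstractly via naturality of Frobenius with respect to the retraction and section that present $R$ as a trivial square-zero extension, reducing to the vanishing of the $p$-fold self-product in the augmentation ideal $\mathbb{F}_p[1]$.
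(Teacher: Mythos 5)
Your argument is correct and amounts to essentially the same proof as the paper's: both reduce to $F = \mathrm{id}$ (the unique endomorphism of $\mathbb{F}_p$), and then show that the canonical Frobenius on $R = \mathbb{F}_p \otimes^{\mathbb{L}}_{\mathbb{Z}_{(p)}} \mathbb{F}_p$ is zero on $\pi_1(R) \cong \mathbb{F}_p$ while the identity is not, so the two cannot be homotopic as endomorphisms of $R$. The only real difference is the model for $R$: the paper computes $Fr(\epsilon) = \epsilon^p = 0$ in $\pi_1$ directly in the free model $K(\mathbb{F}_p,0) \simeq \mathbb{F}_p[\epsilon]$ (using that odd-degree homotopy classes square to zero in a simplicial commutative ring), whereas you first identify $R$ with the trivial square-zero extension $\mathbb{F}_p \oplus \mathbb{F}_p[1]$ and compute levelwise. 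That identification is valid, but the justification you give — the existence of a ring section — only splits an extension once you already know it is square-zero; you should add that the multiplication $I \otimes_{\mathbb{F}_p} I \to I$ on the augmentation ideal $I \simeq \mathbb{F}_p[1]$ is automatically nullhomotopic because $Map_{Mod_{\mathbb{F}_p}}(\mathbb{F}_p[2],\mathbb{F}_p[1])$ is contractible, which makes $R$ a square-zero extension that the section then trivializes (equivalently, cite $\mathbb{L}_{\mathbb{F}_p/\mathbb{F}_p}=0$).
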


Before we move on to the proof of the proposition, we will need a lemma.

\begin{Le}
Let $A$ be a discrete commutative algebra and $a \in A$, there exists a simplicial algebra $K(A,a)$ with the following universal property : let $B$ be a simplicial $A$-algebra, there is a functorial equivalence
$$Map_{A-SCR}(K(A,a),B) \to \Omega_{a,0} B$$
where the space of paths $\Omega_{a,0} B$ from $a$ to $0$ in $B$ is defined as the fiber of
$$B^{\Delta^1} \xrightarrow{ev_0,ev_1} B \times B$$
over $(a,0)$.
\end{Le}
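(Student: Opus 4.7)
The plan is to realize $K(A,a)$ as a homotopy pushout of simplicial commutative $A$-algebras, which universally forces a free generator to be identified with both $a$ and $0$, thereby producing a path from $a$ to $0$ in any target $B$. Concretely, I would set
$$K(A,a) := A \otimes^{\mathbb{L}}_{A[x]} A,$$
where $A[x] = Sym_A(A)$ is the free simplicial commutative $A$-algebra on one generator in degree $0$, the left structure map $A[x] \to A$ is $x \mapsto a$, and the right one is $x \mapsto 0$. This is a pushout in $SCR_{A/}$, so $K(A,a)$ is canonically an object of $SCR_{A/}$.

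By the universal property of pushouts,
$$Map_{A-SCR}(K(A,a), B) \simeq Map_{A-SCR}(A, B) \times^{h}_{Map_{A-SCR}(A[x], B)} Map_{A-SCR}(A, B).$$
Since $A$ is initial in $SCR_{A/}$, the two outer factors are contractible. The free/forgetful adjunction between $A$-modules and simplicial commutative $A$-algebras yields
$$Map_{A-SCR}(A[x], B) \simeq Map_{A-Mod}(A, U(B)) \simeq |B|,$$
the underlying space of $B$, and under this identification the two composites $A[x] \to A \to B$ correspond to the points $a, 0 \in |B|$. Hence
$$Map_{A-SCR}(K(A,a), B) \simeq \{a\} \times^{h}_{|B|} \{0\},$$
which is precisely the homotopy fiber of $B^{\Delta^1} \to B \times B$ at $(a,0)$, i.e.\ $\Omega_{a,0} B$. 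The equivalence is manifestly functorial in $B$.

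The main obstacle is largely conceptual rather than technical: one needs to verify that $Map_{A-SCR}(A[x], B)$ indeed identifies with the underlying space of $B$, and that pushouts in $SCR_{A/}$ are computed by derived tensor products. Both facts are standard in the $\infty$-categorical setting, following from the free/forgetful adjunction and the presentability of $SCR_{A/}$; no model-categorical cofibrancy conditions need to be imposed by hand, since the pushout is formed in the underlying $\infty$-category.
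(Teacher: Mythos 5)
Your proof is correct, but it takes a different route from the paper. The paper proves the lemma abstractly: the functor $B \mapsto \Omega_{a,0}B$ on the presentable category $SCR$ preserves limits and is accessible, hence is representable by the adjoint functor theorem; the explicit object is then only described in a side remark. You instead give a direct construction, $K(A,a) \coloneqq A \otimes^{\mathbb{L}}_{A[x]} A$ with $x \mapsto a$ and $x \mapsto 0$, and verify the universal property by unwinding the pushout and using $Map_{A-SCR}(A[x],B) \simeq |B|$ via the free--forgetful adjunction and the fact that $A$ is initial. Both arguments are valid; the paper's approach is shorter and produces existence immediately with no computation, while yours makes the representing object and the identification with the path space visible, which is what one actually wants when using $K(A,a)$ in later computations (for instance in $K(\mathbb{Z}_{(p)},p) \otimes_{\mathbb{Z}_{(p)}} \mathbb{F}_p \simeq \mathbb{F}_p[\epsilon]$). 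Note that your description is also cleaner than the paper's own remark, which proposes $Sym_A(S^1)$ without recording the dependence on the element $a$.
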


\begin{proof}
The category $SCR$ is presentable and the functor
$$B \in SCR \to \Omega_{a,0} B$$
preserves limits and is accessible, therefore using \cite[Proposition 5.5.2.7]{Lur07}, it is representable.
\end{proof}

\begin{Rq}
We can explicitly construct $K(A,a)$ as $Sym_A(S^1)$ where $Sym_A(-)$ is the free simplicial $A$-algebra on a simplcial set and $S^1=\Delta^1/\partial \Delta^1$ is the simplicial circle.
\end{Rq}

\begin{proof}[Proof of the proposition]
We first compute the mapping space $Map_{SCR}(\mathbb{F}_p,\mathbb{F}_p)$. We know $$Map_{SCR_{\mathbb{Z}_{(p)}}}(\mathbb{F}_p,\mathbb{F}_p) = Hom_{CRing}(\mathbb{F}_p,\mathbb{F}_p)$$ is contractible. The only endomorphism of $\mathbb{F}_p$ as a simplicial algebra over $\mathbb{Z}_{(p)}$ is the identity up to contractible choice. We take
$$K(\mathbb{Z}_{(p)},p)$$ as a cofibrant model of $\mathbb{F}_p$ and moding out by p :
$$K(\mathbb{Z}_{(p)},p) \otimes_{\mathbb{Z}_{(p)}} \mathbb{F}_p \simeq K(\mathbb{F}_p,0) \simeq \mathbb{F}_p[\epsilon]$$
is the free simplicial algebra on one generator in degree $1$, as the construction $K$ is stable under base change. The identity induces in homology the identity morphism, however, the Frobenius on $\mathbb{F}_p[\epsilon]$ sends the degree $1$ generator $\epsilon$ to $\epsilon^p=0$ in homology. Therefore the identity does not have a derived Frobenius lift structure.

 \end{proof}

\begin{Le}
The forgetful functor $U : dSt^{gr, Frob} \to  dSt^{gr}$ preserves the classifying space construction $B$.
\end{Le}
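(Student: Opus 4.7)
The plan is to factor $U$ through the pullback projection and evaluation and verify each piece preserves $B$. Concretely, $U$ decomposes as
\[
dSt^{gr, Frob} \xrightarrow{p} dSt^{gr, endo} \xrightarrow{ev_*} dSt^{gr},
\]
the composition of the pullback projection $p$ coming from the definition of $dSt^{gr, Frob}$ and the evaluation $ev_*$ at the basepoint of $B\mathbb{N}$, using the identification $dSt^{gr, endo} \simeq Fun(B\mathbb{N}, dSt^{gr})$ from Definition of $\mathcal{C}^{endo}$. Both of these should preserve the geometric realization defining $BG$.

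The easy step is $ev_*$: colimits in the functor $\infty$-category $Fun(B\mathbb{N}, dSt^{gr})$ are computed pointwise, so $ev_*$ preserves all small colimits and in particular commutes with $BG = |G^\bullet|$.

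For $p$, I would exploit the fact that $\mathrm{Gp}(-)$ preserves limits, so a group object in the pullback is the same data as a triple $(G_1, G_2, \alpha)$ consisting of a group $G_1 \in dSt^{gr, endo}$, a group $G_2 \in dSt_{\mathbb{F}_p}$, and a compatibility equivalence $\alpha: (G_1)^{=0} \otimes^{\mathbb{L}} \mathbb{F}_p \simeq (G_2, Fr_{G_2})$ in $\mathrm{Gp}(dSt^{endo}_{\mathbb{F}_p})$. I claim $BG$ is then represented componentwise by the triple $(BG_1, BG_2, B\alpha)$, so that $p(BG) = BG_1 = B(p(G))$ tautologically. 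For this componentwise description to be well defined, both structural functors of the defining pullback must commute with $B$. The right-vertical functor $Fr : X \mapsto (X, Fr_X)$ does so because Frobenius is a natural endomorphism of the identity on $dSt_{\mathbb{F}_p}$, giving $Fr_{BG_2} \simeq B(Fr_{G_2})$ by naturality of $B$. The bottom-horizontal functor $(-)^{=0} \otimes^{\mathbb{L}} \mathbb{F}_p$ decomposes into derived base change, which is a symmetric monoidal left adjoint and hence commutes with $B$, and the $\mathbb{G}_m$-fixed point functor, which one would argue commutes with the bar colimit in this graded context.

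Combining the two factorizations yields $U(BG) = ev_*(p(BG)) \simeq ev_*(BG_1) \simeq B(ev_*(G_1)) = B(UG)$. The main obstacle is the last assertion: $(-)^{=0} = (-)^{\mathbb{G}_m}$ is a right adjoint (preserving limits but not colimits in general), so showing that it commutes with the bar colimit for group objects in $dSt^{gr, endo}$ requires a genuine argument. I expect this to come either from the concrete description of $BG$ as the realization of a simplicial derived stack with compatible $\mathbb{G}_m$-action where fixed points can be computed levelwise, or from exploiting the positivity structure of the gradings on the groups that actually arise in the setup (such as $S^1_{gr}$), where the $0$-weight part of the bar construction is controlled by the $0$-weight part of $G$. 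Once this technical point is settled, the computation above then concludes the proof.
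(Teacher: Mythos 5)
Your decomposition $U=ev_*\circ p$ matches the paper's, and your handling of $ev_*$ via pointwise colimits in the functor $\infty$-category is exactly what they intend. The crux you identify for $p$ is also the right one to worry about: to compute the bar colimit componentwise in the pullback $dSt^{gr,endo}\times_{dSt^{endo}_{\mathbb{F}_p}}dSt_{\mathbb{F}_p}$, one needs both structure functors of the defining cospan to preserve that colimit. The functor adjoining the canonical Frobenius to an $\mathbb{F}_p$-stack does, by Proposition \ref{AdjointAjoutFrobCano} (it has both adjoints); but the other leg factors through $(-)^{=0}=\pi_*$ for $\pi\colon B\mathbb{G}_m\to *$, which is a right adjoint and which Proposition \ref{Weight0} only records as preserving limits. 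So you are right not to wave the $\mathbb{G}_m$-fixed-points step away.

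The paper's own proof treats this implicitly, by citing the proof of Proposition \ref{DieudTopos} for ``a similar argument.'' But observe that \emph{that} proof works with the cospan $\tau^{endo}\to\tau_p^{endo}\leftarrow\tau_p$ where $\tau_p=dSt^{gr}_{\mathbb{F}_p}$ still carries the grading: there the left structure functor is plain derived base change along $Spec(\mathbb{F}_p)\to Spec(\mathbb{Z}_{(p)})$, which has both adjoints, and no $0$-weights operation appears; with both legs colimit-preserving the componentwise computation, and hence the claim that $p$ preserves geometric realizations, is immediate. That formulation of the pullback does not literally match the earlier definition of $dSt^{gr,Fr}$ in terms of $dSt^{endo}_{\mathbb{F}_p}$. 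So you have located a genuine wrinkle between the definition and the cited argument, not a defect in your own reasoning. Closing it along your lines requires precisely what your last sentence sketches: for the positively graded groups actually in play (e.g. $S^1_{gr}$), Remark \ref{WeightNaiveVsNormal} makes $(-)^{=0}$ the naive weight-$0$ projection, which commutes with the simplicial bar object levelwise, and one then checks this persists through the realization; alternatively one can argue that the two formulations of $dSt^{gr,Fr}$ give equivalent categories and work with the one from Proposition \ref{DieudTopos}.
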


\begin{proof}
For a graded Frobenius derived stack in groups $G$, $BG$ is explicitly given as a geometric realization of graded derived stacks of the form $G^n$. We write $U$ as a composition
$$dSt^{Fr} \to dSt^{endo} \to dSt$$
the first one is the projection of a fiber product of categories, therefore it preserves small limits. The second map also preserves small limits since in a category of presheaves, they are computed pointwise. Hence $U$ preserves finite products. The same decomposition show that $U$ preserves geometric realizations, which concludes the proof, see the proof of Proposition \ref{DieudTopos} for a similar argument.
 \end{proof}

\begin{Rq}
$B\mathbb{G}_m$ is the final element in $dSt^{gr}$ and it admits a unique derived Frobenius lift structure, therefore it is also the final element in $dSt^{gr,Frob}$.
\end{Rq}

\begin{Rq}
By definition
$$SCR^{Fr} \coloneqq dAff^{Fr,op}$$
We notice there is an equivalence of categories
$$SCR^{Fr} \simeq SCR^{endo} \times_{SCR_p^{endo}} SCR_p$$
 compatible with the functor
$$Spec : SCR \xrightarrow{\sim} dAff^{op}$$
\end{Rq}

\subsubsection{Derived Witt functor}

\begin{Prop} 
The forgetful functor
$$U : SCR^{Fr} \to SCR$$
admits a right adjoint adjoint, which we call the derived Witt ring functor.
\end{Prop}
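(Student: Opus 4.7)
The plan is to apply the adjoint functor theorem, Theorem \ref{adjointFunct}. From the remark following the definition, the category $SCR^{Fr}$ is equivalent to the pullback $SCR^{endo} \times_{SCR_p^{endo}} SCR_p$. First I would verify that this is a presentable $\infty$-category. Since $SCR$ and $SCR_p$ are presentable, and $SCR^{endo} = Fun(B\mathbb{N}, SCR)$, $SCR_p^{endo} = Fun(B\mathbb{N}, SCR_p)$ are functor categories with presentable targets, all four categories involved are presentable. The base change functor $(-) \otimes \mathbb{F}_p : SCR^{endo} \to SCR_p^{endo}$ is a left adjoint, and the canonical functor $SCR_p \to SCR_p^{endo}$ sending $B$ to $(B, Fr_B)$ is accessible and preserves small limits. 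Hence the pullback is itself presentable.

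Second, I would show that $U$ preserves small colimits. The forgetful functor factors as $SCR^{Fr} \to SCR^{endo} \to SCR$, where the second arrow is evaluation at the basepoint of $B\mathbb{N}$ and preserves colimits, being a left adjoint by Proposition \ref{PropAdjointEndo}. For the first projection, it suffices to check that colimits in the pullback are computed componentwise, which reduces to the claim that both structural functors to $SCR_p^{endo}$ preserve colimits. The base change $(-) \otimes \mathbb{F}_p$ preserves all colimits as a left adjoint, and the functor $B \mapsto (B, Fr_B)$ preserves colimits because Frobenius is a natural endomorphism of the identity functor on $SCR_p$: any colimit cocone is automatically Frobenius-equivariant. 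Combining the two steps, $U$ preserves small colimits.

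Third, $U$ is accessible as a composite of accessible functors between presentable $\infty$-categories. The first bullet of the adjoint functor theorem \ref{adjointFunct} then produces a right adjoint $W : SCR \to SCR^{Fr}$, which I denote the derived Witt ring functor.

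The main obstacle is not the bare existence but matching this abstract right adjoint with the classical Witt vectors in examples. The expected compatibility is the following: on a discrete $p$-torsion-free commutative ring $A$, the underlying simplicial ring of $W(A)$ should agree with the classical $p$-typical Witt vectors, via the identification of derived Frobenius lifts with $\delta$-structures recalled in Theorem \ref{Witt2Fiber} together with the classical adjunction between the forgetful from $\delta$-rings to commutative rings and the Witt vector functor. In full generality one expects a description combining the componentwise formula of Proposition \ref{PropAdjointEndo} — giving $\prod_{\mathbb{N}} A$ as the underlying endomorphism object — with a derived correction on the $\mathbb{F}_p$-side enforcing compatibility with Frobenius, in the spirit of the pullback square of Theorem \ref{Witt2Fiber}.
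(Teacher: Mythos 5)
Your proof is correct and takes essentially the same route as the paper: both express $SCR^{Fr}$ as the fiber product $SCR^{endo} \times_{SCR_p^{endo}} SCR_p$, observe that the forgetful functor $U$ preserves small colimits, and invoke the adjoint functor theorem. You supply more of the intermediate bookkeeping (presentability of the fiber product, the colimit preservation of both legs, and the factorization of $U$ through $SCR^{endo}$), whereas the paper states the colimit preservation directly and concludes; your final paragraph comparing the abstract adjoint to classical Witt vectors parallels the remark the paper appends after the proposition.
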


\begin{proof} 
From the description of $SCR^{Fr}$ as the fiber product
$$SCR^{Fr} \coloneqq SCR^{endo} \times_{SCR^{endo}_p} SCR_p$$
the forgetful functor
$$U : SCR^{Fr} \to SCR$$
commutes with colimits. We conclude using the adjoint functor theorem \cite[Corollary 5.5.2.9]{Lur09}.
\end{proof}

\begin{Rq} 
When $A$ is discrete, $W(A)$ is the discrete commutative ring of Witt vectors. See \cite{Joy85} for details on the classical adjunctions. We can see the simplicial algebra $W(A)$ is discrete by using the following identifications
$$Map_{SCR}(R,W(A)) \simeq Map_{SCR^{Fr}}(L(R),W(A)) \simeq Map_{SCR}(L(R),A)$$
for any simplicial algebra $R$, where $L$ is left adjoint to the forgetful functor and $W$ is the right adjoint. The mapping space is discrete for every simplicial algebra $R$ therefore $W(A)$ is discrete.
\end{Rq}

\begin{Cons}
Let $A$ be a graded simplicial algebra, which is positively or negatively graded, we construct its associated graded simplicial algebra with Frobenius lift of graded Witt vectors $W^{gr}(A)$ as follows. As a graded simplicial algebra with an endomorphism, we define $W^{gr}(A)$ as the fiber product
$$A^{\mathbb{N}} \times_{(A(0))^{\mathbb{N}}} W(A(0))$$
where
$$W(A(0)) \to (A(0))^{\mathbb{N}}$$
is the ghost morphism, see Remark \ref{WeightNaiveVsNormal} for the connection between naive $0$ weights and $0$ weights. By construction, $W^{gr}(A)$ is positively or negatively graded and its $0$ weights ring is given by taking naive $0$ weights
$$W^{gr}(A)(0) \simeq W(A(0))$$

The natural Frobenius structure on $W(A(0))$ endowes $W^{gr}(A)$ with the structure of a graded simplicial algebra with a Frobenius lift.

This construction defines a functor
$$W^{gr} : SCR^{gr} \to SCR^{gr,Fr}$$
\end{Cons}

\begin{Prop} 
Let $A$ be a graded simplicial algebra, which is positively or negatively graded. The functor
$$R \in SCR^{gr,Fr} \mapsto Map_{SCR^{gr}}(B,A) \in \mathcal{S}$$
is representable by $W^{gr}(A)$.
\end{Prop}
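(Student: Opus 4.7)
The plan is to establish the desired representability by unfolding the two pullback constructions in play — the definition of $SCR^{gr,Fr}$ as a fibre category and the definition of $W^{gr}(A)$ as a fibre product in $SCR^{gr,endo}$ — and then invoking two adjunctions already available: the right adjoint $(-)^{\mathbb{N}}$ to the forgetful functor $SCR^{gr,endo} \to SCR^{gr}$ from Proposition \ref{PropAdjointEndo}, and the classical derived Witt adjunction established just above between $SCR^{Fr}$ and $SCR$, applied at weight zero. The graded Witt object $W^{gr}(A)$ is designed precisely so that these two right adjoints glue via the ghost map.

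First, I unfold the target. By the definition of $SCR^{gr,Fr}$ as a pullback, the mapping space $Map_{SCR^{gr,Fr}}(R, W^{gr}(A))$ fits into a pullback square whose two legs land in $Map_{SCR^{gr,endo}}(R, W^{gr}(A))$ and $Map_{SCR_p}(R(0)\otimes \mathbb{F}_p, A(0)\otimes\mathbb{F}_p)$, with the glueing taking place in $Map_{SCR^{endo}_p}(\,\cdot\,,\,\cdot\,)$; here I use that both $R$ and $W^{gr}(A)$ identify their mod-$p$ weight-zero endomorphism with the canonical Frobenius. Then I unfold $W^{gr}(A) = A^{\mathbb{N}} \times_{A(0)^{\mathbb{N}}} W(A(0))$ in $SCR^{gr,endo}$. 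Since mapping spaces send limits in the target to limits,
\[
Map_{SCR^{gr,endo}}(R, W^{gr}(A)) \simeq Map_{SCR^{gr,endo}}(R, A^{\mathbb{N}}) \times_{Map_{SCR^{gr,endo}}(R, A(0)^{\mathbb{N}})} Map_{SCR^{gr,endo}}(R, W(A(0))).
\]

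Next, I apply the adjunctions. Proposition \ref{PropAdjointEndo} (the graded analogue, obtained by the same argument since $SCR^{gr}$ has $\mathbb{N}$-indexed products) gives
\[
Map_{SCR^{gr,endo}}(R, A^{\mathbb{N}}) \simeq Map_{SCR^{gr}}(U(R), A),
\]
and similarly for $A(0)^{\mathbb{N}}$. For the factor involving $W(A(0))$, the map $R \to W(A(0))$ in $SCR^{gr,endo}$ must factor through weight zero; combined with the Frobenius-compatibility constraint coming from the $\mathbb{F}_p$-leg, the classical derived Witt adjunction identifies this piece with $Map_{SCR}(U(R)(0), A(0))$. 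Assembling all these identifications and using that the ghost-map compatibility between shift on $A^{\mathbb{N}}$ and Frobenius on $W(A(0))$ is exactly the condition that the mod-$p$ weight-zero reduction is the canonical Frobenius, the total pullback collapses to $Map_{SCR^{gr}}(U(R), A)$.

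The main obstacle, and the place where care is genuinely needed, will be the bookkeeping in this assembly: one must verify that the four pullback constraints (Frobenius structure on $R$, shift on $A^{\mathbb{N}}$, Frobenius on $W(A(0))$, and identification mod $p$ at weight zero) combine into a single coherent constraint that, under the ghost map, becomes vacuous after passing to $Map_{SCR^{gr}}(U(R), A)$. This is essentially the derived, graded analogue of the classical fact that the ghost map identifies $W(B)$ with the equaliser of shift and Frobenius on $B^{\mathbb{N}}$; the positivity or negativity hypothesis on the grading of $A$ is used here to guarantee that the naive $0$-weight agrees with the derived $0$-weight (Remark \ref{WeightNaiveVsNormal}), so that the adjunction computed through the classical Witt functor on $A(0)$ is compatible with the graded structure.
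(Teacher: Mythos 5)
Your proposal follows essentially the same route as the paper's own proof: unfold $W^{gr}(A)$ as the fibre product $A^{\mathbb{N}} \times_{A(0)^{\mathbb{N}}} W(A(0))$, apply the right adjoint $(-)^{\mathbb{N}}$ to identify $Map_{SCR^{gr,endo}}(R, A^{\mathbb{N}})$ with $Map_{SCR^{gr}}(UR, A)$, use the classical Witt adjunction on the weight-zero leg, and observe that the remaining fibre-product constraint is redundant once you see that the $W(A(0))$-component is already determined by the $A^{\mathbb{N}}$-component via the ghost map. The paper states this last collapse tersely ("which is fixed by requiring the compatibility") while you spell out the pullback diagrams more explicitly, but the underlying ingredients and decomposition are identical.
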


\begin{proof} 
Let $A$ be a graded simplicial algebra and $R$ be a graded simplicial algebra with Frobenius lift. By the construction of $W^{gr}(A)$, a morphism of graded simplicial algebras with Frobenius lifts
$$R \to W^{gr}(A)$$
is given by a morphism of graded simplicial algebras with endomorphisms
$$R \to A^{\mathbb{N}}$$
and a morphism of graded simplicial algebras with Frobenius lifts
$$R \to W(A(0))$$
with a compatibility between the two morphisms. The former morphism is simply given by a morphism of graded simplicial algebras $A \to R$, the latter is given by a morphism of simplicial algebras with Frobenius lifts
$$R(0) \to W(A(0))$$
which is simply a morphism of simplicial algebras
$$R(0) \to A(0)$$
which is fixed by requiring the compatibility. Therefore a morphism of graded simplicial algebras with Frobenius lifts
$$R \to W^{gr}(A)$$
is uniquely determined by a morphism of graded simplicial algebras $A \to R$.
\end{proof}

\subsubsection{Modules on an algebra endowed with a Frobenius lift}

\begin{Def}
Let $(A,F)$ be a simplicial algebra endowed with an endomorphism. We define the category of modules on $(A,F)$ as the stabilization of the category of simplicial algebras over $(A,F)$, see section \ref{tangentFormalism} for details of the tangent category formalism. That is
$$(A,F)-Mod^{endo} \coloneqq Stab(SCR^{endo}_{/(A,F)})$$

Similarly, for $(A,F,h)$ a simplicial algebra endowed with a Frobenius lift, we define its category of modules
$$(A,F,h)-Mod^{Fr} \coloneqq Stab(SCR^{Fr}_{/(A,F,h)})$$
\end{Def}

\begin{Rq}
We can notice that a module on a simplicial algebra with endomorphism $(A,F)$ is simply a non-necessarily connective $A$-module endowed with an endophism compatible with the action of $F$.
\end{Rq}

\begin{Rq}  \label{AFhmodDescr}
The category $(A,F,h)-Mod$ can be identified with
$$(A,F)-Mod^{endo} \times_{(A_p,F_p)-Mod^{endo}} A_p-Mod$$
where $(A_p,F_p)$ is the simplicial algebra obtained by base changing $(A,F)$ to $\mathbb{F}_p$. We deduce this identification from the following description of $SCR^{Fr}_{/(A,F,h)}$ :
$$SCR^{Fr}_{/(A,F,h)} \simeq SCR^{endo}_{/(A,F)} \times_{SCR^{endo}_{p/(A_p,F_p)}} SCR_{p,/(A_p,F_p)}$$
and the fact that stabilizations commute with the "endomorphism category construction $(-)^{endo}$" and small limits.

Therefore we can identify an object of $(A,F,h)-Mod$ with a $(A,F)$-module and an homotopy between $F_p$ and $0$.
\end{Rq}

\begin{Prop}  \label{FrEndoModequi}
Let $F : (A,F)-Mod^{endo} \to (A,F,h)-Mod^{Fr}$ be the functor sending the pair $(M,\alpha)$ to $(M,p\alpha)$ with the canonical homotopy. The functor $F$ is an equivalence of categories.

$$F : (A,F)-Mod^{endo} \xrightarrow{\sim} (A,F,h)-Mod^{Fr}$$

\end{Prop}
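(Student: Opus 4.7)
The plan is to use the fiber product description of Remark \ref{AFhmodDescr} to reduce the problem to a standard cofiber sequence argument in a stable $\infty$-category. First I would analyze the forgetful functor $A_p\text{-Mod} \to (A_p, F_p)\text{-Mod}^{endo}$ that appears in the pullback. It is induced at the level of $SCR$ by $SCR_p \to SCR_p^{endo}$ equipping $B$ with its canonical Frobenius. Since the square-zero extension $A_p \oplus N$ over $\mathbb{F}_p$ satisfies $(a + n)^p = a^p + n^p = a^p$ (the binomial coefficients $\binom{p}{k}$ vanish mod $p$ for $1 \le k \le p-1$, and $n^2 = 0$ forces $n^p = 0$ for $p \ge 2$), the induced endomorphism on the module summand $N$ is zero. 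Hence, after stabilization, this functor sends $N$ to the pair $(N, 0)$.

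Combined with Remark \ref{AFhmodDescr}, this identifies $(A,F,h)\text{-Mod}^{Fr}$ with pairs $((M, \beta), \tau)$ where $(M, \beta) \in (A,F)\text{-Mod}^{endo}$ and $\tau$ is a null-homotopy of $\beta_p \coloneqq \beta \otimes_A A_p$ in $\mathrm{Map}_{A_p}(M_p, M_p)$. I would then invoke the cofiber sequence $M \xrightarrow{p} M \xrightarrow{\pi} M_p$, which lives inside $(A,F)\text{-Mod}^{endo}$ because the forgetful functor to $A\text{-Mod}$ is conservative and cocontinuous, and because multiplication by $p$ commutes with any $F$-semilinear endomorphism. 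Applying $\mathrm{Map}_{(A,F)\text{-Mod}^{endo}}((M, \beta), -)$ yields a fiber sequence
$$\mathrm{Map}((M, \beta), (M, \beta)) \xrightarrow{p \circ -} \mathrm{Map}((M, \beta), (M, \beta)) \xrightarrow{\pi_*} \mathrm{Map}_{A_p}(M_p, M_p),$$
where the last term identifies by adjunction with mapping into the mod-$p$ base change. The fiber description then expresses a pair $(\beta, \tau)$ as a pair $(\alpha, \text{canonical})$ where $\beta \simeq p\alpha$ and where $\tau$ is the resulting null-homotopy of $p\alpha$ reduced modulo $p$.

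Putting the two identifications together yields an equivalence $(A,F,h)\text{-Mod}^{Fr} \simeq (A,F)\text{-Mod}^{endo}$ under the rule $(M, p\alpha) \leftrightarrow (M, \alpha)$, which is precisely the functor $F$ in the statement. The main obstacle I foresee is making the first step rigorous: tracking how the Frobenius on square-zero extensions passes through the stabilization process so as to confirm that the forgetful functor on stabilizations really picks out $(N, 0)$ with the zero endomorphism. The computation sketched above works transparently at the discrete level, and I would extend it to the simplicial setting using naturality of the Frobenius under sifted colimits (writing any simplicial $A_p$-module as a colimit of discrete ones) together with the compatibility of stabilization with the pullback of Remark \ref{AFhmodDescr}.
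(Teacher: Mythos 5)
Your proof matches the paper's proof of this proposition: both reduce to the exact triangle $M \xrightarrow{p} M \to M_p$ and the observation that a null-homotopy of $\beta_p$ is equivalent to a factorization of $\beta$ through multiplication by $p$, yielding the desired $\alpha$ with $p\alpha \simeq \beta$. You additionally verify explicitly that the forgetful functor $A_p\text{-}Mod \to (A_p, F_p)\text{-}Mod^{endo}$ appearing in the fiber-product description of Remark \ref{AFhmodDescr} sends $N$ to $(N,0)$, via the computation $(a+n)^p = a^p$ in a square-zero extension over $\mathbb{F}_p$; the paper's remark asserts the resulting identification (``an $(A,F)$-module and a homotopy between $F_p$ and $0$'') without spelling out that the induced Frobenius endomorphism on the module summand vanishes, so your verification is a useful supplement, but it does not change the essential route.
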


\begin{proof} 
We use the exact triangle
$$M \xrightarrow{ \times p} M \to M_p$$
where $M_p$ denotes $M \otimes \mathbb{F}_p$. Given a pair $(M,\alpha)$ in $(A,F)-Mod^{endo}$, promoting the pair to a $(A,F,h)$-module is given by the data of a homotopy between
$$M \xrightarrow{\alpha} M \to M_p$$
and zero. Using the exact triangle, this is equivalent to specifying an element $\alpha' : M \to M$ and a homotopy between $p \alpha'$ and $\alpha$, that is to say, this is an element in $F^{-1}((M,\alpha))$. This shows the essential surjectivity of the functor, and even the fully faithfulness on mapping spaces with equal source and target. We proceed similarly for general mapping spaces.
\end{proof}

\begin{Rq} 
From the previous proposition, a triple in $(M,\alpha,t)$ in $(A,F,h)-Mod^{Fr}$ may be seen as a pair $(M,\beta)$ in $(A,F)-Mod^{endo}$, we will denote write $\frac{\alpha}{p}$ for $\beta$, this element is constructed using the endomorphism $\alpha$ and the homotopy $t$.
\end{Rq}

\begin{Def}
We can define the cotangent complex of a simplicial algebra endowed with a endomorphism or a Frobenius lift using the formalism of \cite{Lur07}.

Let $(A,F)$ be a simplicial algebra endowed with an endomorphism. The cotangent complex of $(A,F)$ is an $(A,F)-Mod$ representing the functor
$$Map_{SCR^{endo}/(A,F)}((A,F), -)$$

Let $(A,F,h)$ be a simplicial algebra endowed with a Frobenius lift. The cotangent complex of $(A,F,h)$ is an $(A,F,h)-Mod$ representing the functor
$$Map_{SCR^{endo}/(A,F,h)}((A,F,h), -)$$
\end{Def}

\begin{Rq} \label{CotangEndo}
The cotangent complex of $(A,F)$ is simply given by $(\mathbb{L}_A,dF)$ where $dF$ is the endomorphism functorialy induced from $F$.

The forgetful functor $SCR^{Fr} \to SCR^{endo}$ induces a forgetful functor
$$(A,F,h)-Mod^{Fr} \to (A,F)-Mod^{endo}$$
\end{Rq}

\begin{Prop}  \label{FreeCFMod}
Let $(A,F)$ be a simplicial algebra endowed with an endomorphism, the forgetful functor
$$SCR^{endo}_{(A,F)} \to (A,F)-Mod^{endo}$$
admits a left adjoint denoted $Sym_{(A,F)}$.
\end{Prop}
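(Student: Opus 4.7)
The plan is to apply the adjoint functor theorem, Theorem~\ref{adjointFunct}. First I would verify that both $\infty$-categories are presentable. The category $SCR^{endo} \simeq Fun(B\mathbb{N}, SCR)$ is presentable as a functor $\infty$-category from a small source into a presentable target, and slicing preserves presentability, so $SCR^{endo}_{(A,F)}$ is presentable. Similarly, $(A,F)\text{-}Mod^{endo}$, defined as the stabilization of a presentable $\infty$-category, is presentable.

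Next I would verify that the forgetful functor preserves small limits and is accessible. Limits in the functor category $Fun(B\mathbb{N}, \mathcal{C})$ are computed pointwise, so the forgetful functor factors through the classical forgetful functor $SCR_A \to A\text{-}Mod$ (or, more precisely, the corresponding functor out of the stabilization), which is the right adjoint to the classical $Sym_A$ construction and hence preserves small limits. Accessibility is inherited from the classical situation in the same way. Applying Theorem~\ref{adjointFunct} then yields the desired left adjoint $Sym_{(A,F)}$.

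For concreteness, one can exhibit the adjoint explicitly. Given $(M, \alpha) \in (A,F)\text{-}Mod^{endo}$, where the description of $(A,F)$-modules as an $A$-module with a compatible endomorphism imposes the $F$-semilinearity $\alpha(am) = F(a)\alpha(m)$, I would define $Sym_{(A,F)}(M, \alpha)$ to have underlying simplicial algebra $Sym_A(M)$, endowed with the unique ring endomorphism $\phi : Sym_A(M) \to Sym_A(M)$ lifting $F$ on $A$ and restricting to $\alpha$ on $M \subset Sym_A(M)$. Existence and uniqueness of $\phi$ come from the universal property of the classical free simplicial algebra applied to the $A$-linear map $M \to Sym_A(M)_F$ supplied by $\alpha$, the subscript $F$ denoting the twist of the $A$-module structure on $Sym_A(M)$ along $F$. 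The adjunction property $Map_{SCR^{endo}_{(A,F)}}(Sym_{(A,F)}(M,\alpha), (B, \psi)) \simeq Map_{(A,F)\text{-}Mod^{endo}}((M,\alpha), (B, \psi))$ then reduces, level by level over $B\mathbb{N}$, to the classical adjunction between $Sym_A$ and the underlying-module functor.

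The main obstacle will be reconciling the abstract adjoint produced by Theorem~\ref{adjointFunct} with this explicit description. Doing so requires carefully tracing how the stabilization $(A,F)\text{-}Mod^{endo} = \mathrm{Stab}(SCR^{endo}_{/(A,F)})$ unwinds: the ring condition on an endomorphism of a square-zero extension $A \oplus M$ over $(A,F)$ forces $F$-semilinearity of $\alpha$, and the extension to $Sym_A(M)$ must be checked to be coherent with the $B\mathbb{N}$-action in the $\infty$-categorical sense, not merely up to homotopy at each level. Once this identification is secured, both the existence of the left adjoint and its concrete form $(Sym_A(M), \phi)$ are available.
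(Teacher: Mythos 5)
Your proof takes essentially the same route as the paper, which simply invokes the adjoint functor theorem (Corollary 5.5.2.9 of \cite{Lur09}); your additional checks of presentability, accessibility and limit-preservation, and the explicit formula for $Sym_{(A,F)}$, are expansions in the same spirit rather than a different argument. One caveat worth flagging, though it applies equally to the paper's one-line proof: since $(A,F)\text{-}Mod^{endo}$ is defined as a stabilization and so contains non-connective objects, while the forgetful functor from $SCR^{endo}_{(A,F)}$ lands in the connective part, the claim that this functor preserves small limits is more delicate than the ``right adjoint of $Sym_A$'' justification suggests (the inclusion of connective modules into the stable category is a \emph{left} adjoint and does not preserve limits); in the paper's actual uses the input to $Sym_{(A,F)}$ is always connective, so the subtlety has no downstream consequence.
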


\begin{proof}
We use the adjoint functor theorem, see \cite[Corollary 5.5.2.9]{Lur09}.
\end{proof}

\subsection{Mixed graded Dieudonné complexes and algebras}

\subsubsection{The graded circle}

\begin{Def}
The graded circle $S^1_{gr}$ as in \cite{MRT20} is given by
$$S^1_{gr} \coloneqq Spec^{\Delta}({\mathbb{Z}_{(p)}}[\eta]) \simeq BKer$$
It is endowed with an endomorphism induced from the multiplication by $p$ endomorphism on $Ker$, we will call this endomorphism $[p]$.The morphism $[p]$ sends $\eta$ to $p \eta$ in cohomology.
\end{Def}

\begin{Prop}
The endomorphism $[p]$ gives $S^1_{gr}$ the structure of a group in $dSt^{Fr}$
\end{Prop}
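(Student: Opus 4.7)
The plan is to lift the natural group structure on $S^1_{gr} = BKer$ from $dSt$ to $dSt^{Fr}$, using that $Ker$ is already an abelian group scheme and that, modulo $p$, the endomorphism $[p]$ agrees with the Frobenius even as a Hopf algebra map.

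First I would observe that since $Ker$ is an abelian group scheme, multiplication by $p$, $[p]_{Ker} : Ker \to Ker$, is tautologically a morphism of abelian group schemes (for any abelian group, $n$-multiplication is a group endomorphism). Delooping, $BKer = S^1_{gr}$ is again an abelian group object in derived stacks, and the induced morphism $B[p]_{Ker} = [p] : S^1_{gr} \to S^1_{gr}$ is a group endomorphism. This already exhibits $(S^1_{gr},[p])$ as a group object in $dSt^{endo}$.

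Next I would verify the $\mathbb{F}_p$-compatibility. Using the affine stack presentation $S^1_{gr} \simeq Spec^{\Delta}(\mathbb{Z}_{(p)}[\eta])$, the endomorphism $[p]$ corresponds to the cosimplicial algebra map $\eta \mapsto p\eta$; after base change to $\mathbb{F}_p$ this sends $\eta$ to $0$. On the other hand, the Frobenius on $\mathbb{F}_p[\eta]$ sends $\eta$ to $\eta^p = 0$, since $\eta$ has odd cohomological degree $1$ and $p \geq 2$. A cosimplicial $\mathbb{F}_p$-algebra map out of $\mathbb{F}_p[\eta]$ is determined by its effect on $\eta$, so the two maps coincide, and this equality is automatically one of maps of Hopf cosimplicial algebras (using the cogroup structure on $\mathbb{Z}_{(p)}[\eta]$ corresponding to $BKer$), hence one of group stacks over $\mathbb{F}_p$.

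Since $dSt^{Fr}$ is defined as a pullback of (higher) categories, group objects are computed as a pullback of group objects in each factor, compatibly with the forgetful functors to $dSt^{endo}_{\mathbb{F}_p}$. Assembling the group structure on $(S^1_{gr},[p])$ from the first step, the standard group structure on $S^1_{gr,\mathbb{F}_p}$, and the identification of $[p]|_{\mathbb{F}_p}$ with $Fr$ from the second step yields the desired group structure on $S^1_{gr}$ in $dSt^{Fr}$. The main obstacle is the second step, since \emph{a priori} the identification between $[p]|_{\mathbb{F}_p}$ and the Frobenius need only be one of stack morphisms, not of group morphisms; reducing it to an on-the-nose equality at the level of the presenting cosimplicial Hopf algebra $\mathbb{F}_p[\eta]$ makes the required compatibility automatic.
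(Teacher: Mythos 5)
Your overall architecture — identify $[p]$ as a group endomorphism by delooping from $Ker$, verify the $\bmod\ p$ compatibility with Frobenius, then assemble a group object via the pullback definition of $dSt^{Fr}$ — matches the paper's, which simply applies the $B$-construction to the group $(Ker,[p])$ in $dSt^{Fr}$ and invokes the lemma that the forgetful functor preserves $B$. Your steps 1 and 3 are sound.

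The gap is in step 2, and you flag it yourself. The claim that a cosimplicial $\mathbb{F}_p$-algebra map out of $\mathbb{F}_p[\eta]$ is determined by its value on $\eta$ is not justified: here $\mathbb{F}_p[\eta]$ denotes the denormalization $D(\mathbb{F}_p \oplus \mathbb{F}_p[-1])$, which at cosimplicial level $n$ has $n$ independent images of $\eta$ under the codegeneracies, the Frobenius on it is the \emph{levelwise} $p$-th power map rather than a cohomology-ring operation $\eta \mapsto \eta^p$, and the mapping space of cosimplicial Hopf algebras out of it is not a priori discrete or controlled by $H^1$. Your computation shows the two endomorphisms agree on $H^*(S^1_{gr,\mathbb{F}_p},\mathcal{O})$, but agreement on cohomology does not by itself identify two morphisms of group affine stacks, let alone coherently. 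The paper sidesteps this in two ways, either of which you should use. First, $Ker$ (hence $S^1_{gr}$) is concentrated in strictly positive weight, so in $dSt^{gr,Fr}$ the Frobenius-lift condition concerns only the $0$-weighted fiber, which is a point — the condition is vacuous, as the paper's remark notes ($x$ is in weight $1$). Second, if one wants the ungraded statement, the check should be done on the \emph{classical} affine scheme $Ker_{\mathbb{F}_p}$ rather than on the higher stack $BKer_{\mathbb{F}_p}$: on $\mathcal{O}(Ker)/p = \mathbb{F}_p[\gamma_n(x)]$, the map $[p]\ \bmod\ p$ kills $\gamma_n(x)$ for $n\ge 1$, and the Frobenius sends $\gamma_n(x)$ to $\gamma_n(x)^p = \frac{(np)!}{(n!)^p}\,\gamma_{np}(x)$, which also vanishes since $v_p\!\left(\frac{(np)!}{(n!)^p}\right) = s_p(n) \ge 1$ for $n\ge 1$. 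So the two endomorphisms of the discrete group scheme $Ker_{\mathbb{F}_p}$ are literally equal as ring maps, and applying $B$ carries this identification, with all coherences, to $S^1_{gr,\mathbb{F}_p}$. The shortcut through $\eta^p=0$ in the cohomology ring does not substitute for either of these arguments.
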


\begin{proof}
The stack $S^1_{gr}$ is given by applying the $B$ construction to the derived stack with Frobenius lift $Ker$.
\end{proof}

\begin{Rq}
Defining a morphism ${\mathbb{Z}_{(p)}}[\frac{x^n}{n!}] \to {\mathbb{Z}_{(p)}}[\frac{x^n}{n!}]$ by sending $\frac{x^n}{n!}$ to $p^n \frac{x^n}{n!}$ is canonically a morphism of graded classical affine schemes endowed with a Frobenius lift. As $x$ is in weight $1$, the condition of being a Frobenius lift is trivial.
\end{Rq}

\begin{Rq}
We also notice that $[p]$ is compatible with the group structure, meaning $\eta \mapsto p\eta$ is compatible with the coalgebra action $\eta \mapsto 1 \otimes \eta + \eta \otimes 1$ and the counit map.
\end{Rq}

\begin{Def} [Graded spheres] \label{GradedSpheres}
We define variants of spheres as graded affine stacks
$$S^{k}_{gr}(n) \coloneqq Spec^\Delta(\bigslant{{\mathbb{Z}_{(p)}}[\eta_k]}{\eta_k^2})$$
with $\eta_k$ of weight $n$ and degree $k$. We simply denote $S^k_{gr} \coloneqq S^k_{gr}(1)$.
\end{Def}

\begin{Rq}
The graded sphere $S^1_{gr}$ is the graded affine stack considered in \cite{MRT20}. The superior spheres $S^n_{gr}$ can be recovered from the topological spheres as follow : $$S^n_{gr} \simeq Spec^\Delta(D(H^*(S^n,{\mathbb{Z}_{(p)}})))$$
where $H^*(S^n,{\mathbb{Z}_{(p)}})$ is the graded commutative differential graded algebra of cohomology of the topological sphere $S^n$ with the zero differential and $D$ denotes the denormalization functor.
\end{Rq}

\begin{Def}
For $F$ a pointed graded stack, we define its graded homotopy groups :
$$\pi_k^{(n)}(F) = Hom_{dSt^{gr,*}}(S^k_{gr}(n),F) = \pi_0 Map_{dSt^{gr,*}}(S^k_{gr}(n),F)$$
The notation $Hom$ denotes the $\pi_0$ set of the associated mapping space. 
\end{Def}

\begin{Prop} \label{GradedCirclePushout} The graded circle endowed with its Frobenius structure can be recovered as the following pushout
$$S^1_{gr} \simeq * \sqcup_{Spec({\mathbb{Z}_{(p)}}[\rho])} *$$
of graded affine stacks
where $\rho$ is a generator in degree $0$ and weight $1$ which squares to zero. Furthermore, the induced diagram obtained by taking the product with $Spec(B)$ for $B$ a simplicial algebra
$$
\begin{tikzcd}
Spec(\mathbb{Z}_{(p)}[\rho]) \times Spec(B) \arrow[d] \arrow[r] & Spec(B) \arrow[d]\\
Spec(B) \arrow[r] & S^1_{gr} \times Spec(B)
\end{tikzcd}
$$
is a pushout diagram against derived affine schemes, meaning it induces a pullback diagram on the simplicial algebras of functions.

\end{Prop}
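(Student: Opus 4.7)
The strategy is to translate the claimed pushout of graded affine stacks into a pullback of graded cosimplicial algebras via the $Spec^\Delta \dashv C_\Delta$ duality, compute this pullback explicitly, and then verify compatibility with base change by $Spec(B)$. Since $Spec^\Delta : coSCR^{gr, op} \hookrightarrow dSt^{gr}$ is fully faithful with $C_\Delta$ as left adjoint, pushouts in the reflective subcategory of graded affine stacks are computed by applying $Spec^\Delta$ to the corresponding pullback of cosimplicial algebras. It therefore suffices to prove the equivalence
$$\mathbb{Z}_{(p)}[\eta] \simeq \mathbb{Z}_{(p)} \times^h_{\mathbb{Z}_{(p)}[\rho]} \mathbb{Z}_{(p)}$$
in graded cosimplicial algebras, where both maps $\mathbb{Z}_{(p)} \to \mathbb{Z}_{(p)}[\rho]$ are the canonical unit, equivalently the common splitting of the augmentation $\rho \mapsto 0$.

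For this computation, view $\mathbb{Z}_{(p)}[\rho] = \mathbb{Z}_{(p)} \oplus \mathbb{Z}_{(p)}(1)$ as the trivial square-zero extension of $\mathbb{Z}_{(p)}$ by $\mathbb{Z}_{(p)}(1)$, concentrated in cohomological degree $0$ and weight $1$. The underlying cosimplicial module of $\mathbb{Z}_{(p)} \times^h_{\mathbb{Z}_{(p)}[\rho]} \mathbb{Z}_{(p)}$ is the fiber of
$$\mathbb{Z}_{(p)} \oplus \mathbb{Z}_{(p)} \xrightarrow{(a,b) \mapsto (a-b,\,0)} \mathbb{Z}_{(p)} \oplus \mathbb{Z}_{(p)}(1),$$
whose kernel is the diagonal $\mathbb{Z}_{(p)}$ and whose cokernel is $\mathbb{Z}_{(p)}(1)$ in degree $0$, so the fiber is quasi-isomorphic to $\mathbb{Z}_{(p)} \oplus \mathbb{Z}_{(p)}(1)[-1]$ with the second summand placed in cohomological degree $1$ weight $1$. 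Under the paper's conventions this is exactly $\mathbb{Z}_{(p)}[\eta]$, and the inherited $\mathbb{E}_\infty$-algebra structure from the pullback is the trivial square-zero extension (this is the general fact that $A \times^h_{A \oplus M} A$ along two copies of the unit is $A \oplus \Omega M$ as a square-zero extension of $A$), which matches the standard structure on $\mathbb{Z}_{(p)}[\eta]$.

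For the Frobenius compatibility, equip $Spec(\mathbb{Z}_{(p)}[\rho])$ with the endomorphism $\rho \mapsto p\rho$; this is a derived Frobenius lift since its restriction to the $0$-weight part is the identity on $\mathbb{Z}_{(p)}$, whose reduction mod $p$ is trivially the Frobenius of $\mathbb{F}_p$. Both augmentations become Frobenius-equivariant (they kill $\rho$), and the induced endomorphism on the pullback acts by multiplication by $p$ on the cokernel-and-shift generator, that is, by $\eta \mapsto p\eta$, which is exactly the canonical endomorphism $[p]$ of $S^1_{gr}$.

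For the ``furthermore'' assertion, applying $\mathrm{Hom}(-,Spec(R))$ to the product of the pushout square with $Spec(B)$ and invoking Yoneda reduces the claim to showing that tensoring the pullback above by $B$ over $\mathbb{Z}_{(p)}$ remains a pullback of cosimplicial algebras. Since the pullback was identified explicitly with the direct sum $\mathbb{Z}_{(p)} \oplus \mathbb{Z}_{(p)}(1)[-1]$, and $-\otimes_{\mathbb{Z}_{(p)}} B$ preserves direct sums, cohomological shifts, and finite fiber sequences of cosimplicial modules, we obtain
$$\mathbb{Z}_{(p)}[\eta] \otimes B \simeq B \times^h_{\mathbb{Z}_{(p)}[\rho]\otimes B} B$$
with matching square-zero algebra structures. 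The hardest step is the identification in the second paragraph: the fact that the pullback's induced cosimplicial (equivalently, $\mathbb{E}_\infty$-) algebra structure is precisely the square-zero extension and not a non-trivial deformation. One must also take care that the pushout is taken in graded affine stacks (whose pushout is obtained via the reflector $Spec^\Delta \circ C_\Delta$ and need not coincide with the pushout in the ambient $\infty$-topos of graded derived stacks), and that the chosen Frobenius lift on $Spec(\mathbb{Z}_{(p)}[\rho])$ is the one matching $[p]$ on $S^1_{gr}$ rather than an alternative.
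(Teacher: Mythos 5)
Your first part — identifying the pullback $\mathbb{Z}_{(p)} \times^h_{\mathbb{Z}_{(p)}[\rho]} \mathbb{Z}_{(p)}$ as the square-zero extension $\mathbb{Z}_{(p)} \oplus \mathbb{Z}_{(p)}(1)[-1] = \mathbb{Z}_{(p)}[\eta]$ — is correct and takes a cleaner, more abstract route than the paper, which instead builds the commutative square by exhibiting the Teichm\"uller element $[\rho]=(\rho,0,\dots)\in Ker(\mathbb{Z}_{(p)}[\rho])$ and then checking the induced map on cosimplicial algebras on underlying complexes. Both routes are fine; yours buys a cleaner identification of the $\mathbb{E}_\infty$-structure on the pullback at no extra cost, and the Frobenius check ($\rho \mapsto p\rho$ inducing $\eta\mapsto p\eta$, trivial on weight $0$) matches the paper's.

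The gap is in the ``furthermore'' part. After Yoneda, you need the equivalence
$$\mathcal{O}(S^1_{gr}\times Spec(B)) \simeq B\times^h_{\mathbb{Z}_{(p)}[\rho]\otimes B}B,$$
and your argument only establishes
$$\mathbb{Z}_{(p)}[\eta]\otimes B \simeq B\times^h_{\mathbb{Z}_{(p)}[\rho]\otimes B}B.$$
What is missing is the K\"unneth / base-change step $\mathcal{O}(S^1_{gr}\times Spec(B)) \simeq C(S^1_{gr})\otimes B$. Since $S^1_{gr}$ is an affine \emph{stack} but not an affine scheme, this is not automatic: functions on a product with a derived affine scheme do not commute with $\otimes$ for a general affine stack. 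The paper invokes the finite cohomology property of $S^1_{gr}$ (\cite[Lemma 3.4.9]{MRT20}) together with the base-change formula of \cite[A.1.5-(2)]{HLP14} precisely to justify this step, and also reduces to $Spec(R)=\mathbb{A}^1$ via \cite[Proposition 4.1.9]{Lur11} to pass from derived stacks to a statement about complexes. Your Yoneda reduction is correct in spirit, but the reduction lands on the K\"unneth identification, not on preservation of the cosimplicial pullback by $-\otimes B$; without the finite-cohomology hypothesis the latter is true but does not imply the claim.
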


\begin{proof}
We follow the proof in \cite[Theorem 5.1.3]{MRT20}. We construct a commutative diagram

\begin{center}
\begin{tikzcd}
Spec({\mathbb{Z}_{(p)}}[\rho]) \arrow[d] \arrow[r] & * \arrow[d]\\
* \arrow[r] & BKer
\end{tikzcd}
\end{center}
that is an element $ Spec({\mathbb{Z}_{(p)}}[\rho]) \to Ker \simeq  \Omega BKer$. We choose
$$[\rho]=(\rho,0,0 ...) \in Ker({\mathbb{Z}_{(p)}}[\rho])$$
It is an element of $Ker({\mathbb{Z}_{(p)}}[\rho])$ since the Frobenius $F$ acts on a Teichmüller element as $F[a] = [a^p]$.

We are reduced to showing the natural map
$$C_\Delta(BKer) = {\mathbb{Z}_{(p)}} \oplus {\mathbb{Z}_{(p)}}[-1] \to {\mathbb{Z}_{(p)}} \times_{{\mathbb{Z}_{(p)}}[\rho]} {\mathbb{Z}_{(p)}}$$
is an equivalence of graded cosimplicial algebras. We only need to show it is an equivalence on the underlying complexes which is obvious. The compatibility with the endomorphism structures in straightforward and the additional Frobenius structure is trivial since the weight $0$ part of $S^1_{gr}$ is simply $Spec({\mathbb{Z}_{(p)}})$.

We move on to the second part of the proof. We want to show the following diagram is a pushout diagram against derived affine schemes $X$.

$$
\begin{tikzcd}
Spec(\mathbb{Z}_{(p)}[\rho]) \times Spec(B) \arrow[d] \arrow[r] & Spec(B) \arrow[d]\\
Spec(B) \arrow[r] & S^1_{gr} \times Spec(B)
\end{tikzcd}
$$

We can reduce to the case of $X=\mathbb{A}^1$ using \cite[Proposition 4.1.9]{Lur11} since any derived affine scheme can be written as a limit of copies of $\mathbb{A}^1$. Therefore we show the natural morphism
$$\mathcal{O}(S^1_{gr} \times Spec(B)) \to B \times_{\mathbb{Z}_{(p)}[\rho] \otimes B} B$$
is an equivalence. Since the functor of simplicial functions $\mathcal{O}$ is given by the composition of the Dold-Kan functor with $C(-)$, the functor of $\mathbb{E}_\infty$-functions, we simply show we have an equivalence
$$C(S^1_{gr} \times Spec(B)) \to B \times_{\mathbb{Z}_{(p)}[\rho] \otimes B} B$$
of complexes. Now using the finite cohomology property of $S^1_{gr}$, see \cite[Lemma 3.4.9]{MRT20} and the base change formula \cite[A.1.5-(2)]{HLP14}, we have
$$C(S^1_{gr} \times Spec(B)) \simeq C(S^1_{gr}) \otimes B$$
Now as we know $C(S^1_{gr}) = H^*(S^1_{gr},\mathbb{Z}_{(p)}) = \mathbb{Z}_{(p)} \oplus \mathbb{Z}_{(p)}[-1]$, we deduce the result using the first part of the proof.

\end{proof}

\subsubsection{Mixed graded Dieudonné complexes}

\begin{Def}
The endomorphism $[p] : S^1_{gr} \to S^1_{gr}$ induces a pullback morphism
$$[p]^* : QCoh(BS^1_{gr}) \to QCoh(BS^1_{gr})$$
which is an endofunctor of $\epsilon-Mod^{gr}$.
\end{Def}

\begin{Rq}
Given $(M,d,\epsilon)$ a graded mixed complex, $[p]^*M$ is given by $(M,d,p\epsilon)$.
\end{Rq}

\begin{Def}
We define the category of mixed graded Dieudonné complexes, also called derived Dieudonné complexes, by
$$\epsilon-D-Mod^{gr} \coloneqq CFP_{[p]^*}(\epsilon-Mod^{gr})$$
where the category on the right hand side is the $\infty$-category of colax fixed points of $[p]^*$ on $\epsilon-Mod^{gr}$, as defined in Remark \ref{CFPDef}.

We see the colax fixed point morphism $[p]^* M \to M$ can be seen as a morphism of graded mixed complexes
$$(M,d,p\epsilon) \to (M,d,\epsilon)$$
which is a morphism of graded complexes satisfying the usual Dieudonné relation
$$\epsilon F = pF \epsilon$$
\end{Def}

\begin{Prop} \label{TwoDefDieudComplex}
We have a natural identifications
$$\epsilon-D-Mod^{gr} \simeq (k[\epsilon],[p])-Mod^{gr,endo}$$
The object $(k[\epsilon],[p])$ is seen here as a commutative algebra object in $Mod^{gr,endo}$.
\end{Prop}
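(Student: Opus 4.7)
The plan is to unfold both sides and produce a natural comparison functor, then verify it is an equivalence of $\infty$-categories. First, by Proposition \ref{grMixedEqui}, I identify $\epsilon-Mod^{gr}$ with the category of modules over $k[\epsilon]$ in $Mod^{gr}$. Under this identification, the endofunctor $[p]^*$ is restriction of scalars along the graded algebra map $[p] : k[\epsilon] \to k[\epsilon]$, $\epsilon \mapsto p \epsilon$. By Remark \ref{CFPDef}, an object of $CFP_{[p]^*}(\epsilon-Mod^{gr})$ is then a pair $(M, f : [p]^* M \to M)$ where $M$ is a $k[\epsilon]$-module and $f$ is a $k[\epsilon]$-linear map. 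Unwound on underlying graded complexes, $f$ is an additive endomorphism that intertwines the $\epsilon$-action with its twist by $[p]$, yielding the Dieudonné-style commutation $\epsilon f = p \cdot f \epsilon$, in agreement with the relation $\epsilon F = p F \epsilon$ promoted in the introduction.

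On the right-hand side, a module over $(k[\epsilon], [p])$ in $Mod^{gr, endo}$ is a pair $(M, F) \in Mod^{gr, endo}$ equipped with an action map $(k[\epsilon], [p]) \otimes (M, F) \to (M, F)$ in $Mod^{gr, endo}$. Since the tensor product in $Mod^{gr, endo}$ carries the endomorphism $[p] \otimes F$, spelling out that the action is a morphism in $Mod^{gr, endo}$ yields exactly a $k[\epsilon]$-module structure on $M$ together with an endomorphism $F$ satisfying the same twisted commutation relation with the $\epsilon$-action as above. The comparison functor $\Phi$ is then defined on objects by $(M, f) \mapsto (M, f)$, the $k[\epsilon]$-module being preserved and the endomorphism taken to be $f$; the inverse goes by the symmetric recipe.

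The main obstacle is ensuring that this object-level identification extends coherently to an equivalence of $\infty$-categories, matching higher associators on the module side with the higher homotopies implicit in the pullback description of colax fixed points from Definition \ref{defLFP}. The cleanest approach is to recognise both sides as algebras for a common $\infty$-monad on $\epsilon-Mod^{gr}$: namely the monad of tensoring with $k[\epsilon]$ twisted by the endomorphism $[p]$, whose algebras encode precisely the Dieudonné structure. Both the colax fixed point construction (via its pullback description in Definition \ref{defLFP} and Proposition \ref{PropAdjointEndo}) and the category of $(k[\epsilon], [p])$-modules in $Mod^{gr, endo}$ then arise as the Eilenberg--Moore $\infty$-category of this monad. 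The verification reduces, via $\infty$-categorical Barr--Beck--Lurie monadicity, to checking that the two forgetful functors to $\epsilon-Mod^{gr}$ are monadic and induce the same monad, which is a direct computation from the symmetric monoidal structure on $Mod^{gr, endo}$ and the definition of $[p]^*$.
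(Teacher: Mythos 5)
Your first two paragraphs reproduce the paper's own (very terse) proof almost exactly: the paper simply observes that, under Proposition \ref{grMixedEqui}, a module over $(k[\epsilon],[p])$ in $Mod^{gr,endo}$ is the data of a commutative square
\[
\begin{tikzcd}
k[\epsilon]\otimes M \arrow[d,"{[p]\otimes F}"'] \arrow[r] & M \arrow[d,"F"] \\
k[\epsilon]\otimes M \arrow[r] & M
\end{tikzcd}
\]
and reads this directly as a colax fixed point structure $[p]^*M\to M$ on the $k[\epsilon]$-module $M$. Where you go beyond the paper is the third paragraph: the paper does not address how the object-level identification extends to a coherent equivalence of $\infty$-categories, whereas you propose to treat both sides as Eilenberg--Moore categories for the free monad generated on $\epsilon$-$Mod^{gr}$ by the endofunctor $[p]^*$ and invoke Barr--Beck--Lurie. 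This is a genuinely more careful route and it does work here, but you should flag the hypotheses it needs: $[p]^*$ is restriction of scalars along $\epsilon\mapsto p\epsilon$, hence preserves small colimits, so the free monad $\bigoplus_{n\ge 0}([p]^*)^n$ exists in the presentable setting; both forgetful functors are conservative and preserve $[p]^*$-split geometric realizations; and one must actually identify the two induced monads, which amounts to the observation that freely adding a colax structure map $[p]^*M\to M$ and freely adding an endomorphism intertwining the $[p]$-twist of the $k[\epsilon]$-action are both computed by $M\mapsto\bigoplus_n ([p]^*)^nM$ with the shift. The trade-off: the paper's argument is shorter but elides $\infty$-coherence, while yours is more work but closes that gap cleanly.
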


\begin{proof}
Given a mixed graded module $M \in \epsilon-Mod^{gr}$, promoting it to a $(k[\epsilon],[p])$-module $(M,F)$ is equivalent to the data of a commutative square

\begin{center}
\begin{tikzcd}
k[\epsilon] \otimes M \arrow[d,"\textrm{[p]} \otimes F"] \arrow[r]
& M \arrow[d,"F"] \\
k[\epsilon] \otimes M  \arrow[r]& M
\end{tikzcd}
\end{center}
which is equivalent to a colax fixed point structure
$$M \to [p]^*M$$
\end{proof}

\paragraph*{The Beilinson t-structure}

\begin{Def}
We recall the definition of the Beilinson t-structure on graded mixed complex. Let $M$ be a graded mixed complex, $M$ is said to be t-connective for the t-structure when $H^i(M(n))=0$ for $i>-n$, $M$ is said to be t-coconnective for the t-structure when $H^i(M(n))=0$ for $i<-n$.
\end{Def}

\begin{Rq}
As a mixed graded complex, $Sym_A(\mathbb{L}_A[1])$ is t-connective, for $A$ a simplicial algebra. It is in the heart of the t-structure when $A$ is a smooth classical algebra.
\end{Rq}

\begin{Prop} \label{HeartBeilClass}
The heart of $\epsilon-dg-mod^{gr}$ identifies with the $1$-category of complexes $\bf{dg-mod_{\mathbb{Z}_{(p)}}}$. We associate to a complex $(M,d)$ the graded mixed complex being $M_n$ with trivial differential in weight $-n$, the differential $d$ defines the mixed structure. This defines a functor
$$i : \bf{dg-mod_{\mathbb{Z}_{(p)}}} \to \bf{\epsilon-dg-mod^{gr}}$$
which induces an equivalence on the heart.
\end{Prop}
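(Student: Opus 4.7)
The approach is to construct an explicit inverse to the functor $i$ and verify that both compositions are equivalences. The heart condition $H^i(M(n)) = 0$ for $i \neq -n$ forces each weight piece to be cohomologically concentrated in a single degree, and this is what allows one to straighten an arbitrary heart object into a classical complex.

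First, I would verify that $i$ factors through the heart: for a complex $(M_\bullet, d)$, the graded mixed complex $i(M)$ has weight $-n$ part given by $M_n$ placed in a single cohomological degree with zero internal differential, so $H^i(i(M)(-n))$ is $M_n$ in the correct degree and $0$ otherwise, which is precisely the Beilinson condition. The $\epsilon$-action between weights $-n$ and $-n+1$ is, up to signs, the original differential $d$ (correctly interpreted as a weight-raising, degree-lowering operator), and the relation $\epsilon^2 = 0$ in $\mathbb{Z}_{(p)}[\epsilon]$ matches $d^2 = 0$.

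Next, I would construct an inverse functor $\rho$. For a heart object $N$, set $\rho(N)_n := H^n(N(-n))$, the unique nontrivial cohomology group of the weight $-n$ piece. The structural morphism $\epsilon : N(-n) \to N(-n+1)[-1]$, upon taking $H^n$ of both sides, gives a map
$$H^n(N(-n)) \longrightarrow H^n(N(-n+1)[-1]) = H^{n-1}(N(-n+1)),$$
which is well-defined because the target is again concentrated in the correct degree by the heart condition; this defines the differential on $\rho(N)$. The identity $\epsilon^2 = 0$ transfers directly to $d^2 = 0$, and functoriality in $N$ is manifest.

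Finally, checking $\rho \circ i \simeq \mathrm{id}$ is immediate from the formulas. For $i \circ \rho \simeq \mathrm{id}$, one must produce, for each heart object $N$, a natural quasi-isomorphism to its strict model. In each weight $-n$, the underlying object $N(-n) \in Mod_{\mathbb{Z}_{(p)}}$ has cohomology concentrated in a single degree, so the standard zigzag through cycles exhibits $N(-n)$ as quasi-isomorphic to $H^n(N(-n))$ placed in degree $n$. The main obstacle is to check that these weight-wise zigzags can be assembled into a coherent $\mathbb{Z}_{(p)}[\epsilon]$-linear strictification. This can be handled directly by an obstruction argument: the higher obstructions live in $\mathrm{Ext}$-groups computed between objects whose cohomologies are concentrated in complementary single degrees, and these vanish by degree reasons. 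Alternatively — and this is how I would most cleanly write the argument — one invokes the general principle, implicit in \cite{MRT20} and developed systematically in \cite{Rak20}, that the Beilinson heart of graded mixed complexes over a presentable stable symmetric monoidal $\infty$-category with t-structure is equivalent to the category of chain complexes in the heart of the underlying t-structure; applied to $Mod_{\mathbb{Z}_{(p)}}$ with its standard t-structure, whose heart is the category of $\mathbb{Z}_{(p)}$-modules, this yields the identification with $\bf{dg\text{-}mod_{\mathbb{Z}_{(p)}}}$.
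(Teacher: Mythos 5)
The paper itself does not supply a proof of Proposition~\ref{HeartBeilClass} — it is stated and then immediately used in the proof of Proposition~\ref{HeartDMod}, with the implicit understanding that this is a classical computation going back to Beilinson (and recalled in \cite[\S 3.3]{Rak20}). So your task was to fill a genuine gap, and your overall strategy — construct the inverse $\rho$ taking $N \mapsto \bigl(H^n(N(-n))\bigr)_n$ with differential induced by $\epsilon$, then verify both composites — is the correct one. The direction $\rho \circ i \simeq \mathrm{id}$ is indeed immediate.

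The only place that deserves scrutiny is $i \circ \rho \simeq \mathrm{id}$, i.e.\ that every heart object is equivalent to its strict model. Your ``obstruction argument'' is stated but not really carried out: it is unclear which obstruction classes you have in mind, in what Ext-groups they live, and why degree reasons force them to vanish (over $\mathbb{Z}_{(p)}$, which has global dimension one, $\mathrm{Ext}^1$ between abelian groups does not vanish in general, so one must be precise about which pairings occur). The appeal to a ``general principle'' from \cite{MRT20,Rak20} is a legitimate citation but not a proof. A cleaner and fully self-contained argument avoids obstruction theory entirely: use the Beilinson truncation functors themselves. For any heart object $N$, the connective truncation $\tau^B_{\ge 0}$ acts weightwise by the standard truncation, $\bigl(\tau^B_{\ge 0} N\bigr)(-n) \simeq \tau_{\le n}\bigl(N(-n)\bigr)$, and this is compatible with the $\epsilon$-action because $\tau_{\le n}(X[-1]) \simeq (\tau_{\le n-1}X)[-1]$, so the cutoff degree shifts in lockstep with the weight. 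Applying $\tau^B_{\le 0}$ afterwards yields, weightwise, $\tau_{\ge n}\tau_{\le n}\bigl(N(-n)\bigr) \simeq H^n\bigl(N(-n)\bigr)[-n]$, which is precisely $i(\rho(N))(-n)$. Since $N$ lies in the heart, both truncation maps in the zigzag
$$N \xleftarrow{\ \sim\ } \tau^B_{\ge 0} N \xrightarrow{\ \sim\ } \tau^B_{\le 0}\tau^B_{\ge 0} N \simeq i(\rho(N))$$
are weightwise quasi-isomorphisms, hence equivalences in $\epsilon$-$Mod^{gr}$, and no lifting or coherence issue ever arises. I would recommend replacing the obstruction sketch with this truncation zigzag; it is both shorter and more robust.
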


\begin{Def} [Beilinson t-structure]
Following \cite[\S 3.3]{Rak20}, we define a t-structure on the category of mixed Dieudonné complexes by letting a graded mixed Dieudonné complexes be t-connective, respectively t-connective, when its underlying graded mixed complex is.
\end{Def}

\begin{Prop} \label{HeartDMod}
The heart of $\epsilon-D-Mod_{\mathbb{Z}_{(p)}}$ identifies with the abelian category of Dieudonné complexes of \cite{BLM22}.
\end{Prop}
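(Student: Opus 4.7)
The plan is to combine Proposition \ref{TwoDefDieudComplex}, which identifies $\epsilon-D-Mod^{gr}$ with $(k[\epsilon],[p])-Mod^{gr,endo}$, with the description of the heart of $\epsilon-Mod^{gr}$ as cochain complexes from Proposition \ref{HeartBeilClass}. First I would observe that the Beilinson $t$-structure on $\epsilon-D-Mod^{gr}$ is by construction detected on the underlying mixed graded complex, so the forgetful functor $\epsilon-D-Mod^{gr} \to \epsilon-Mod^{gr}$ is $t$-exact. Consequently an object lies in the heart of $\epsilon-D-Mod^{gr}$ precisely when its underlying mixed graded complex lies in the heart of $\epsilon-Mod^{gr}$, so the heart consists of pairs $(M,F)$ where $M$ is a heart object of $\epsilon-Mod^{gr}$ and $F$ is the Dieudonné Frobenius.

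Next I would translate the data of $F$ through the equivalence $i : \bf{dg-mod_{\mathbb{Z}_{(p)}}} \to (\epsilon-Mod^{gr})^{\heartsuit}$ of Proposition \ref{HeartBeilClass}. Under $i$, a cochain complex $(N,d)$ corresponds to the mixed graded complex with $N^n$ placed in weight $-n$ with zero internal differential, the mixed structure $\epsilon$ encoding the classical differential $d$. A Dieudonné enhancement is a colax fixed point map $F : [p]^*M \to M$ in $\epsilon-Mod^{gr}$, equivalently a morphism of graded complexes $F$ compatible with the (trivial) internal differential together with the relation $\epsilon F = pF\epsilon$. Transported through $i$, this becomes a graded endomorphism of $N$ satisfying $dF = pFd$, which is exactly the defining datum of a Dieudonné complex in the sense of \cite{BLM22}.

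A routine check of morphism sets then completes the identification: a morphism in the heart of $\epsilon-D-Mod^{gr}$ is a mixed graded morphism commuting with $F$, which under $i$ corresponds precisely to a morphism of cochain complexes commuting with the Frobenius, i.e., a morphism of classical Dieudonné complexes. One also checks that the abelian structure on both sides (kernels, cokernels, direct sums) matches, which is automatic since the forgetful functor to $\epsilon-Mod^{gr}$ creates them and the Dieudonné relation is preserved under these operations.

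The main subtlety is carefully tracking the weight and degree conventions so that the colax fixed point relation $\epsilon F = pF\epsilon$ transports through $i$ to the BLM relation $dF = pFd$ without sign issues; everything else is essentially formal once Proposition \ref{HeartBeilClass} identifies the heart of $\epsilon-Mod^{gr}$ as a $1$-category, which ensures that the additional datum $F$ is discrete rather than higher-categorical.
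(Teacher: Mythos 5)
Your proof is correct and takes essentially the same approach as the paper, whose own argument is a one-line deferral to Proposition \ref{HeartBeilClass}. You have simply filled in what that deferral implicitly requires: the $t$-structure is defined so as to be detected on the underlying mixed graded complex, and the colax-fixed-point datum $[p]^*M \to M$ transports through the equivalence $i$ of Proposition \ref{HeartBeilClass}, turning the relation $\epsilon F = pF\epsilon$ into the classical $dF = pFd$.
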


\begin{proof}
This follows from the identification without Dieudonné structures, Proposition \ref{HeartBeilClass}.
\end{proof}

\subsubsection{Mixed graded Dieudonné algebras}

We introduce the definition of the main objects we will study.

\begin{Def}
Motivated by Proposition \ref{TwoDefDieudComplex}, we define mixed graded Dieudonné stacks, also called derived Dieudonné stacks, by
$$\epsilon-D-dSt^{gr} \coloneqq S^1_{gr}-dSt^{gr,Frob}$$

We also define mixed graded Dieudonné simplicial algebras, also called Dieudonné simplicial algebras, by
$$\epsilon-D-SCR^{gr} \coloneqq (S^1_{gr}-dAff^{gr,Frob})^{op}$$
An element of $S^1_{gr}-dAff^{gr,Frob}$ can be thought of as a morphism $X \to BS^1_{gr}$, in the topos $dSt^{gr,Frob}$, which is relative affine.
\end{Def}

\begin{Rq}
A derived Dieudonné stack can be seen as a derived stack endowed with an endomorphism, which has a Frobenius lift structure, a grading and a compatible $S^1_{gr}$-action and a compatibility condition between the Frobenius lift and the action of $S^1_{gr}$. This compatibility condition is an analogue of the Dieudonné complex equation $dF = pFd$ for derived stacks.
\end{Rq}

\begin{Rq}
We expect $\epsilon-D-SCR^{gr}$ to admit another description as the category of modules over a monad $LSym$ on $\epsilon-D-Mod^{gr}$ as is done in \cite{BM19} and\cite{Rak20}.
\end{Rq}

\begin{Prop} \label{DieudTopos}
The two $\infty$-categories $dSt^{gr,Frob}$ and $S^1_{gr}-dSt^{gr,Frob}$ are $\infty$-topoi.
\end{Prop}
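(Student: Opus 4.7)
The plan is to assemble both statements from the fact that the base category $dSt$ is an $\infty$-topos and that the three standard operations we use here, namely slicing, taking functors from a small $\infty$-category, and taking pullbacks along geometric morphisms, all preserve the property of being an $\infty$-topos. First I will recall that $dSt$ is an $\infty$-topos of hypercomplete sheaves on the étale site of simplicial commutative rings, so that $dSt_{\mathbb{F}_p} \simeq dSt_{/Spec(\mathbb{F}_p)}$ is an $\infty$-topos as a slice, and $dSt^{gr} \simeq dSt_{/B\mathbb{G}_m}$ is also an $\infty$-topos by Proposition \ref{ActionBG}. Next, for any $\infty$-topos $\mathcal{X}$ and small $\infty$-category $K$, $Fun(K,\mathcal{X})$ is again an $\infty$-topos (HTT, Section 6.1); applied to $K = B\mathbb{N}$, this shows that $dSt^{endo}$, $dSt^{gr,endo}$, and $dSt_{\mathbb{F}_p}^{endo}$ are $\infty$-topoi.

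Second, I will check that the two structure functors used to define $dSt^{gr,Frob}$ as a pullback,
\[
dSt^{gr,endo} \longrightarrow dSt_{\mathbb{F}_p}^{endo} \longleftarrow dSt_{\mathbb{F}_p},
\]
fit into geometric morphisms of $\infty$-topoi. The reduction mod $p$ functor $dSt_{\mathbb{F}_p} \to dSt_{\mathbb{F}_p}^{endo}$ adjoining the Frobenius endomorphism is the restriction along the unique map of $\infty$-categories $B\mathbb{N} \to *$; it is computed by colimits and preserves small limits, so it is the inverse image of a geometric morphism. The functor $dSt^{gr,endo} \to dSt_{\mathbb{F}_p}^{endo}$ combines the $0$-weights functor $(-)^{\mathbb{G}_m}$, which is the pullback along $* \to B\mathbb{G}_m$, with the pullback along $Spec(\mathbb{F}_p) \to Spec(k)$; each is the inverse image of a geometric morphism. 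Using that pullbacks of $\infty$-topoi along geometric morphisms exist and are computed as pullbacks of the underlying $\infty$-categories (HTT, 6.3.2 / 6.3.4), I conclude that $dSt^{gr,Frob}$ is itself an $\infty$-topos.

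For the second claim, $S^1_{gr}$ is a group object in $dSt^{gr,Frob}$ (the Frobenius structure $[p]$ is compatible with the group operation, as already observed); its classifying object $BS^1_{gr}$ therefore makes sense in $dSt^{gr,Frob}$. The analogue of Proposition \ref{ActionBG} in the $\infty$-topos $dSt^{gr,Frob}$ gives an equivalence
\[
S^1_{gr}\text{-}dSt^{gr,Frob} \simeq dSt^{gr,Frob}_{/BS^1_{gr}},
\]
and a slice of an $\infty$-topos is again an $\infty$-topos, which finishes the proof.

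The main obstacle, and the step where one must be careful, is the verification that the pullback defining $dSt^{gr,Frob}$ is indeed a pullback in the $\infty$-category of $\infty$-topoi and not merely in $Cat_\infty$; this reduces to checking that the two forgetful functors above are inverse images of geometric morphisms, which in turn amounts to identifying them as composites of slice projections and restriction along maps of small $\infty$-categories, both of which are well-known to be left exact left adjoints.
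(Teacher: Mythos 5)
Your overall strategy matches the paper's: present $dSt^{gr,Frob}$ as a pullback of $\infty$-topoi along left exact left adjoints, invoke the fact that such pullbacks are $\infty$-topoi, and then treat $S^1_{gr}\text{-}dSt^{gr,Frob}$ as a slice over $BS^1_{gr}$. However, your verification that the two legs of the cospan are left exact left adjoints is based on incorrect identifications of the functors, and this is a genuine gap.

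First, the functor $dSt_{\mathbb{F}_p} \to dSt_{\mathbb{F}_p}^{endo}$ that adjoins the Frobenius endomorphism is \emph{not} the restriction along $B\mathbb{N} \to *$. Restriction along $B\mathbb{N} \to *$ is the constant-diagram functor $X \mapsto (X, \mathrm{Id}_X)$, which equips $X$ with its identity endomorphism, not its Frobenius. The Frobenius-adjoining functor $X \mapsto (X, Fr_X)$ requires a separate argument; the paper's Proposition \ref{AdjointAjoutFrobCano} constructs its left adjoint (homotopy coequalizer $(X,F) \mapsto X_{F \simeq Fr_X}$) and right adjoint (homotopy equalizer $(X,F) \mapsto X^{F \simeq Fr_X}$) explicitly, and it is these two adjoints that establish preservation of colimits and of (finite) limits. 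Without this, the key property is asserted rather than proved. Second, under the identification $dSt^{gr} \simeq dSt_{/B\mathbb{G}_m}$, the base change along $* \to B\mathbb{G}_m$ is the \emph{underlying-stack} (forget-the-grading) functor, not the $0$-weights functor $(-)^{\mathbb{G}_m}$; the latter is the right adjoint of the trivial-action functor (see Proposition \ref{Weight0} and Definition \ref{DefWeight0dSt}), and its status as a left exact left adjoint needs its own justification. Both misidentifications mean your claimed geometric morphisms have not actually been exhibited, so the pullback-of-topoi argument does not yet go through. You should replace these identifications by an argument along the lines of Proposition \ref{AdjointAjoutFrobCano}, and, depending on your presentation of the pullback, similarly handle the $0$-weights step.
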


\begin{proof}
The category $dSt^{gr,Frob}$ is, by definition, given by $$\tau^{endo} \times_{\tau_p^{endo}} \tau_p$$ with $\tau = dSt^{gr}$ and $\tau_p = dSt^{gr}_{\mathbb{F}_p}$. Recalling \cite[Proposition 6.3.2.3]{Lur09}, to show that $dSt^{gr,Frob}$ is a topos, it is enough to show that the projection morphisms $\tau^{endo} \to \tau_p^{endo}$ and $\tau_p \to \tau_p^{endo}$ are left adjoints and they preserve finite limits, as $\tau^{endo}$, $\tau_p^{endo}$ and $\tau_p$ are already topoi.

Using \cite[Proposition 6.3.5.1]{Lur09}, the morphism $\tau^{endo} \to \tau_p^{endo}$ admits a right adjoint and commutes with finite limits, since it admits a left adjoint : the forgetful functor.

The morphism $\tau_p \to \tau_p^{endo}$ commutes with finite limits, since it admits a left adjoint, which is given by sending $(X,F)$ the homotopy coequalizer of $F$ and the Frobenius on $X$ : $X_{F \simeq Fr}$ and admits a right adjoint, which is given by sending $(X,F)$ the homotopy equalizer of $F$ and the Frobenius on $X$ : $X^{F \simeq Fr}$, see Proposition \ref{AdjointAjoutFrobCano}.

Therefore, $S^1_{gr}-dSt^{gr,Frob}$ is also a topos as the classifying topos of objects in $dSt^{gr,Frob}$ with a $S^1_{gr}$ action.
 \end{proof}

\begin{Rq}
From the previous proposition, the previous categories can admit Postnikov decomposition and obstruction theory, see \cite[Proposition 7.2.1.10]{Lur07} for details.
\end{Rq}

\paragraph*{Dieudonné functions on a Dieudonné stack}

\begin{Cons}
We construct the functor of functions on a Dieudonné derived stack
$$C : \epsilon-D-dSt^{gr,op} \to \epsilon-D-Mod^{gr}$$

by composition with the forgetful functor
$$S^1_{gr}-dSt^{gr,Frob} \to S^1_{gr}-dSt^{gr,endo}$$

we simply construct
$$S^1_{gr}-dSt^{gr,endo,op} \to S^1_{gr}-Mod^{gr,endo}$$

The category $S^1_{gr}-dSt^{gr,endo}$ identifies with $(S^1_{gr} \rtimes (\mathbb{N} \times \mathbb{G}_m))$-equivariant derived stacks, that is derived stacks over $B(S^1_{gr} \rtimes (\mathbb{N} \times \mathbb{G}_m))$. Indeed a $\mathbb{N} \times \mathbb{G}_m$-action is given by a grading and a graded endomorphism and an $(S^1_{gr} \rtimes (\mathbb{N} \times \mathbb{G}_m))$-action is given by an additional $S^1_{gr}$-action compatible with gradings and the endomorphisms.

Let us consider $X \in \epsilon-D-dSt^{gr}$, with its structure morphism
$$\pi : X \to B(S^1_{gr} \rtimes (\mathbb{N} \times \mathbb{G}_m))$$
Now the pushforward of the structure sheaf $\pi_* \mathcal{O}_X$ defines an element of
$$QCoh(B(S^1_{gr} \rtimes (\mathbb{N} \times \mathbb{G}_m)))$$
The category $QCoh(B(S^1_{gr} \rtimes (\mathbb{N} \times \mathbb{G}_m)))$ identifies with $S^1_{gr}-QCoh(B(\mathbb{N} \times \mathbb{G}_m)))$, which is $\epsilon-D-Mod$. We denote this element $\pi_* \mathcal{O}_X$ in $\epsilon-D-Mod^{gr}$ as $C(X)$.
\end{Cons}

\begin{Rq}
The inclusion $\epsilon-D-SCR^{gr,op} \subset \epsilon-D-dSt^{gr}$ defines functions on a Dieudonné simplicial algebra by composition
$$\epsilon-D-SCR^{gr} \to \epsilon-D-Mod^{gr}$$
\end{Rq}

\begin{Rq}
Since the pushforward $\pi_*$ is canonically lax monoidal, $C(X)$ is in fact an element of $CAlg(\epsilon-D-Mod)$. We may call $CAlg(\epsilon-D-Mod)$ the category of mixed Dieudonnée $\mathbb{E}_\infty$-algebras. This notion could give the definition of Dieudonné structures for spectral stacks but we will not explore these notions.
\end{Rq}

\begin{Prop}
The forgetful functor $\epsilon-D-SCR^{gr} \to \epsilon-D-Mod^{gr}$ commutes with filtered colimits and small limits.
\end{Prop}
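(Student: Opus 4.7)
The plan is to reduce this claim to the classical fact that the forgetful functor $U : SCR \to Mod^{\le 0}$ preserves small limits and sifted colimits, and then to propagate this property through the structural enhancements that distinguish $\epsilon-D-SCR^{gr}$ from $SCR$ (and $\epsilon-D-Mod^{gr}$ from $Mod^{\le 0}$): the grading, the $S^1_{gr}$-action, and the Frobenius lift.

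For the base case, $U : SCR \to Mod^{\le 0}$ is the right adjoint of the free simplicial commutative ring monad $LSym$ recalled in the Notations, hence preserves all small limits. Using the description of $SCR$ as the category generated under sifted colimits by finitely generated polynomial rings, $LSym$ preserves sifted colimits, so $U$ preserves sifted colimits by the standard monadicity argument \cite[Corollary 4.2.3.5]{Lur17}; in particular $U$ preserves filtered colimits. To transfer this to the enriched setting, I would observe that adding a grading and an $S^1_{gr}$-action amounts to passing to the functor categories $\mathrm{Fun}(B\mathbb{G}_m, -)$ and $\mathrm{Fun}(BS^1_{gr}, -)$, where both small limits and filtered colimits are computed pointwise; hence the algebra-to-module forgetful functor at each enriched level inherits the required preservation from $U$. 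The Frobenius lift then enters as the fiber product $(-)^{endo} \times_{((-)_p)^{endo}} (-)_p$ from the construction of $dSt^{Fr}$.

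The principal technical subtlety lies in this last layer: the leg $(-) \otimes \mathbb{F}_p$ of the fiber product is a left adjoint and does not a priori preserve limits. However, the other leg, $(-)_p \to ((-)_p)^{endo}$ adjoining the canonical Frobenius, preserves both limits and filtered colimits since the Frobenius is functorially determined on its image. This suffices for both small limits and filtered colimits in the fiber product to be computed componentwise on the $(-)^{endo}$ factor, where the claim follows from the already-established preservation on the underlying ungraded, non-mixed side applied levelwise. Iterating this argument across the three structural layers assembles into the desired preservation of limits and filtered colimits for the forgetful functor $\epsilon-D-SCR^{gr} \to \epsilon-D-Mod^{gr}$.
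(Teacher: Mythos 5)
Your overall plan — reduce to the classical forgetful functor $SCR \to Mod^{\le 0}$ and then propagate through the grading, the $S^1_{gr}$-action, and the Frobenius fiber product — is the same high-level strategy as the paper's, and the handling of the first two layers (pointwise limits/colimits in functor categories) is fine. You also omit the paper's first reduction step (that $\epsilon-D-Mod^{gr} \to Mod$ is conservative and preserves, so it suffices to treat the composite $\epsilon-D-SCR^{gr} \to Mod$); that reduction is what lets the paper argue at the level of underlying modules throughout, and it is worth keeping.

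The genuine gap is in the fiber-product step. You argue that because the leg $(-)_p \to ((-)_p)^{endo}$ (adjoining the canonical Frobenius) preserves small limits and filtered colimits, this \emph{suffices} for limits and filtered colimits in $(-)^{endo}\times_{((-)_p)^{endo}}(-)_p$ to be computed componentwise on the $(-)^{endo}$ factor, even though the other leg $(-)\otimes^{\mathbb{L}}\mathbb{F}_p$ might not preserve limits. That inference is not valid. The standard criterion (\cite[Proposition 5.4.5.5]{Lur09}) for a limit of $\infty$-categories to admit $K$-indexed (co)limits computed pointwise and for the projections to preserve them requires \emph{every} transition functor in the diagram to preserve $K$-indexed (co)limits; if only one of the two legs of a cospan does, there is no general reason for the pullback's limits to exist componentwise, nor for the projection onto the factor whose leg fails to preserve limits to itself preserve them. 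So as written your argument does not establish preservation of small limits at the Frobenius layer, which is exactly where the subtlety lies.

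Note also that the paper handles this layer differently: it factors the composite $\epsilon-D-SCR^{gr}\to Mod$ through $S^1_{gr}-SCR^{gr,endo} \simeq (\mathbb{N}\times\mathbb{G}_m)\ltimes S^1_{gr}-SCR$, that is, it first forgets the Frobenius-lift homotopy (keeping only the underlying endomorphism), packaging all remaining equivariance as a single group action, and then invokes the well-behavedness of that forgetful functor. The justification that the Frobenius-to-endomorphism forgetful functor preserves the required (co)limits ultimately traces back to the geometric-morphism/topos structure established in Proposition \ref{DieudTopos}, where \emph{both} legs of the relevant pullback of topoi are shown to admit left adjoints (hence preserve finite limits and, with the right adjoints also produced there, the other (co)limits needed). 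To repair your argument you should either prove that the base-change leg in fact preserves the relevant limits in this setting, or abandon the componentwise claim and instead invoke the adjointability of both legs as in the paper's Proposition \ref{DieudTopos} to conclude that the projection preserves small limits and filtered colimits.
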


\begin{proof}
Since the forgetful functor $\epsilon-D-Mod^{gr} \to Mod$ commutes with the filtered colimits and small limits and is conservative, we show the forgetful functor
$$\epsilon-D-SCR^{gr} \to Mod$$ preserves the necessary colimits and limits.

Now this functor factors as a composition
$$\epsilon-D-SCR^{gr} \to S^1_{gr}-SCR^{gr,endo} \simeq (\mathbb{N} \times \mathbb{G}_m) \ltimes S^1_{gr}-SCR \to Mod$$
where both functors commute with the desired limits and colimits.

\end{proof}

\begin{Prop} \label{FreeFrobStr}
The forgetful functor $U : dSt^{Fr} \to dSt$ admits a left adjoint denoted $L$.
\end{Prop}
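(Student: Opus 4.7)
The plan is to invoke the adjoint functor theorem (Theorem \ref{adjointFunct}): it suffices to check that $dSt^{Fr}$ is presentable, that $U$ preserves small limits, and that $U$ is accessible. All three conditions will follow from the defining pullback
$$dSt^{Fr} = dSt^{endo} \times_{dSt^{endo}_{\mathbb{F}_p}} dSt_{\mathbb{F}_p}.$$

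First I will verify that both structure maps of this pullback are accessible and preserve small limits as well as filtered colimits. The vertical functor $dSt^{endo} \to dSt^{endo}_{\mathbb{F}_p}$ is induced by the base change $(-) \times_{\mathrm{Spec}(\mathbb{Z}_{(p)})} \mathrm{Spec}(\mathbb{F}_p)$, which is left exact and commutes with colimits in the $\infty$-topos of derived stacks; the horizontal functor $dSt_{\mathbb{F}_p} \to dSt^{endo}_{\mathbb{F}_p}$ adjoining the canonical Frobenius preserves small limits and filtered colimits because the formation of $\mathrm{Fr}$ is natural. Hence $dSt^{Fr}$ is presentable, and both projections from the pullback preserve small limits and filtered colimits.

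Next I will factor $U$ as
$$U : dSt^{Fr} \xrightarrow{p_1} dSt^{endo} \xrightarrow{U_0} dSt,$$
where $p_1$ is the first projection of the pullback. Proposition \ref{PropAdjointEndo} exhibits both adjoints to $U_0$, so $U_0$ preserves all limits and all colimits. Combined with the previous step, $U$ preserves small limits and filtered colimits; in particular $U$ is accessible. Theorem \ref{adjointFunct} then produces the left adjoint $L : dSt \to dSt^{Fr}$.

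The only non-formal point is the check that the two legs of the pullback commute with limits and filtered colimits, which is routine once one observes that naturality of Frobenius on $\mathbb{F}_p$-stacks provides the required compatibility. A concrete description of $L$ can be unraveled from the proof: it sends $X$ to the reflection into $dSt^{Fr}$ of the free object $(\bigsqcup_{\mathbb{N}} X, S)$ produced by the left adjoint to $U_0$, the reflection step forcing the shift to be identified with the canonical Frobenius on the mod-$p$ reduction (note that $p_1$ is fully faithful, its essential image being the subcategory of pairs $(Y,F)$ whose reduction $F_p$ is homotopic to $\mathrm{Fr}_{Y_p}$, so this reflection is indeed a localization).
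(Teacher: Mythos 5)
The paper leaves this proposition unproved, so there is no in-text argument to compare against; your strategy — factor $U$ through $p_1 : dSt^{Fr} \to dSt^{endo}$ and $U_0 : dSt^{endo} \to dSt$, establish small-limit preservation and accessibility, and invoke the adjoint functor theorem — is the natural one and mirrors the paper's (dual) treatment of the colimit-preserving forgetful functor $SCR^{Fr} \to SCR$. The conclusion is correct. However, two of the justifications you give are too weak. You appeal to left exactness of base change $dSt^{endo} \to dSt^{endo}_{\mathbb{F}_p}$, but left exactness gives only finite-limit preservation, whereas the componentwise computation of limits in the pullback (and hence the claim that $p_1$ preserves small limits) requires preservation of \emph{all} small limits. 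The correct reason is that base change is right adjoint to the forgetful functor; the paper invokes exactly this in the proof of Proposition \ref{DieudTopos}. Likewise, "naturality of $\mathrm{Fr}$" does not by itself give small-limit preservation for $dSt_{\mathbb{F}_p} \to dSt^{endo}_{\mathbb{F}_p}$; one should cite Proposition \ref{AdjointAjoutFrobCano}, which exhibits both adjoints of this functor and so gives preservation of all limits and colimits.

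The closing parenthetical is incorrect: $p_1$ is not fully faithful. It is the base change of $i : dSt_{\mathbb{F}_p} \to dSt^{endo}_{\mathbb{F}_p}$, $Y \mapsto (Y, \mathrm{Fr}_Y)$, and $i$ is not fully faithful: the mapping space $Map_{dSt^{endo}_{\mathbb{F}_p}}((Y, \mathrm{Fr}_Y), (Z, \mathrm{Fr}_Z))$ is an equalizer of two homotopic self-maps of $Map(Y,Z)$, which picks up extra loop directions. Already for the identity natural transformation the analogous functor $\mathcal{C} \to \mathcal{C}^{endo}$ replaces $Map(X,Y)$ by its free loop space. Consequently the fiber of $p_1$ over a pair $(Y,F)$ whose reduction is homotopic to the Frobenius is a nontrivial space of homotopies, and the left adjoint $\lambda$ of $p_1$ is a genuine left adjoint but not a reflective localization. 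The formula $L \simeq \lambda \circ L_0$, with $\lambda$ and $L_0$ the left adjoints of $p_1$ and $U_0$ respectively, is still valid; only the description as a "reflection" is misleading.
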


\begin{Prop}
We define the two pullback squares :

\begin{center}
\begin{tikzcd}
SCR^{gr,Fr} \arrow[r]
& SCR^{gr,endo} \arrow[r] & SCR\\
CRing^{p-tf,gr,Fr} \arrow[ur, phantom, "\urcorner", very near start] \arrow[u] \arrow[r]& CRing^{p-tf,gr,endo} \arrow[u] \arrow[ur, phantom, "\urcorner", very near start] \arrow[r] \arrow[r] & CRing^{p-tf} \arrow[u]
\end{tikzcd}
\end{center}
where $CRing^{p-ft}$ is the category of discrete commutative rings which are $p$-torsion-free.

Then the morphism
$$CRing^{p-tf,gr,Fr} \to CRing^{p-tf,gr,endo}$$
is a fully faithful functor of $1$-categories.
\end{Prop}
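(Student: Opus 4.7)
The plan is to unfold the definition of $CRing^{p-tf,gr,Fr}$ as a fibre product and reduce the claim to the elementary fact that any morphism of $\mathbb{F}_p$-algebras automatically commutes with the Frobenius. Concretely, composing the two pullback squares in the statement, I obtain the identification
$$CRing^{p-tf,gr,Fr} \simeq CRing^{p-tf,gr,endo} \times_{SCR_p^{gr,endo}} SCR_p^{gr},$$
where the right-hand leg $SCR_p^{gr} \to SCR_p^{gr,endo}$ equips an object with its canonical Frobenius endomorphism, and the left-hand leg $CRing^{p-tf,gr,endo} \to SCR_p^{gr,endo}$ sends $(A,F)$ to $(A\otimes^{\mathbb{L}}\mathbb{F}_p,\bar{F})$. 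The forgetful functor in question is the projection onto the first factor, so verifying full faithfulness amounts to showing that the homotopy fibres of this projection over each pair of objects are contractible.

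Next, I would compute the mapping space of the fibre product using the standard formula: for objects $(A,F)$ and $(B,G)$ of $CRing^{p-tf,gr,Fr}$,
$$\operatorname{Map}_{CRing^{p-tf,gr,Fr}}((A,F),(B,G)) \simeq \operatorname{Map}_{CRing^{p-tf,gr,endo}}((A,F),(B,G)) \times_{\operatorname{Map}_{SCR_p^{gr,endo}}((A/p,Fr),(B/p,Fr))} \operatorname{Map}_{SCR_p^{gr}}(A/p,B/p).$$
Here I have used the crucial $p$-torsion-freeness hypothesis to identify $A\otimes^{\mathbb{L}}\mathbb{F}_p$ with the discrete ring $A/p$ (similarly for $B$), together with the observation, already used in the proof of the proposition giving discreteness of the space of Frobenius lifts on a $p$-torsion-free ring, that the induced endomorphism $\bar F$ is then literally the Frobenius $Fr$, and analogously for $\bar G$. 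Because the targets are discrete, all mapping spaces on the right are sets of ring homomorphisms.

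The step where everything collapses is the observation that the forgetful map
$$\operatorname{Hom}_{CRing_p^{gr}}(A/p,B/p) \longrightarrow \operatorname{Hom}_{CRing_p^{gr,endo}}((A/p,Fr),(B/p,Fr))$$
is a bijection: any graded $\mathbb{F}_p$-algebra morphism $f$ satisfies $f(x^p)=f(x)^p$, so the compatibility $f\circ Fr = Fr\circ f$ is automatic, and it is the only compatibility data since the targets are discrete. Thus the right-hand factor of the fibre product is canonically equivalent to the base, and the fibre product degenerates to $\operatorname{Map}_{CRing^{p-tf,gr,endo}}((A,F),(B,G))$. This shows the forgetful map on mapping spaces is an equivalence, which is full faithfulness; no step involves a genuine obstruction, the only point requiring care is carefully tracking that the compatibility data needed to be in the fibre product is precisely the one that is automatic modulo $p$.
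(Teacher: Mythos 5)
The paper gives no proof for this proposition (only a clarifying remark follows it), so there is nothing to compare against directly; your core observation — every $\mathbb{F}_p$-algebra morphism automatically commutes with the Frobenius, so the compatibility datum in the fibre product is redundant and the forgetful map on mapping sets is a bijection — is surely the intended point.

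There are two problems with the execution, though. First, your decomposition $CRing^{p-tf,gr,Fr} \simeq CRing^{p-tf,gr,endo} \times_{SCR_p^{gr,endo}} SCR_p^{gr}$ is not well-defined: the Frobenius $x\mapsto x^p$ multiplies weights by $p$, so on a nontrivially graded $\mathbb{F}_p$-algebra it is \emph{not} a graded endomorphism, and there is no ``equip with the canonical Frobenius'' functor $SCR_p^{gr}\to SCR_p^{gr,endo}$. As the paper makes explicit (the definition of $dSt^{gr,Fr}$ takes $\mathbb{G}_m$-fixed points before reducing mod $p$, and the subsequent remark says the lift is required ``only on $0$-weights''), the Frobenius-lift condition lives on the weight-$0$ part $A^{=0}$ alone; the correct base of the fibre product is the \emph{ungraded} category $SCR_{\mathbb{F}_p}^{endo}$, with left leg $(A,F)\mapsto(A^{=0}\otimes^{\mathbb{L}}\mathbb{F}_p,\overline{F})$. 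Second, after this correction your discreteness step requires $A^{=0}$ to be $p$-torsion-free, and this does not follow from $A$ being $p$-torsion-free when the grading has both positive and negative weights, because $A^{=0}$ is a quotient of $A(0)$ rather than a subring of $A$. For instance $A=\mathbb{Z}_{(p)}[x,y]/(xy-p)$ with $x$ in weight $1$ and $y$ in weight $-1$ is a $p$-torsion-free graded ring with $A^{=0}\cong\mathbb{F}_p$. When discreteness fails, the homotopy equalizer computing $\operatorname{Map}_{SCR_p^{endo}}((A^{=0}_p,Fr),(B^{=0}_p,Fr))$ picks up a loop-space contribution and is no longer equivalent to $\operatorname{Map}_{SCR_p}(A^{=0}_p,B^{=0}_p)$, so the projection to the first factor is not obviously an equivalence. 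For positively or negatively graded $A$ — which is the setting described in the remark following the proposition and the one used throughout the thesis — one has $A^{=0}=A(0)$, a direct summand of $A$ and hence $p$-torsion-free, and your argument then goes through; you should state that hypothesis explicitly rather than leave it implicit.
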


\begin{Rq}
The previous proposition is often going to be used implicitly throughout this thesis. In our constructions, the graded simplicial algebras will have underlying weight $0$ simplicial algebras which are discrete and $p$-torsion free. Therefore the choice of a Frobenius lift on such graded simplicial algebras will simply be the choice of an endomorphism which is equal to the canonical Frobenius after reduction modulo $p$.
\end{Rq}

\begin{Def}
We define a subcategory $(\epsilon-D-SCR^{gr})_{\le 0} \subset \epsilon-D-SCR^{gr}$ on objects which are sent into $(\epsilon-D-Mod^{gr})_{\le 0}$ when applying the forgetful functor
$$U : \epsilon-D-SCR^{gr} \to \epsilon-D-Mod^{gr}$$

We define a subcategory $(\epsilon-D-SCR^{gr})_{\le 0} \subset \epsilon-D-SCR^{gr}$ on objects which are sent into $(\epsilon-D-Mod^{gr})_{\le 0}$ when applying the forgetful functor
$$U : \epsilon-D-SCR^{gr} \to \epsilon-D-Mod^{gr}$$

We will abuse terminology and call these objects coconnective, respectively connective, for a "t-structure" on the non-stable category $\epsilon-D-SCR^{gr}$.

Let us denote $(\epsilon-D-SCR^{gr})^{\heartsuit}$ the subcategory on connective and coconnective objects. We will call this category the heart of $\epsilon-D-SCR^{gr}$.
\end{Def}

\begin{Rq}
These notions might be formalized using the notion of connectivity structure on an $\infty$-category, developed in \cite{BL21}.
\end{Rq}

\begin{Prop}
The inclusion
$$(\epsilon-D-SCR^{gr})_{\le 0} \subset \epsilon-D-SCR^{gr}$$
admits a left adjoint denoted $t_{\le 0}$.

The left adjoint $t_{\le 0}$ commutes with the forgetful functor
$$U : \epsilon-D-SCR^{gr} \to \epsilon-D-Mod^{gr}$$
\end{Prop}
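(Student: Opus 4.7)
The plan is to prove existence by the adjoint functor theorem and then establish compatibility with $U$ by a separate argument relying on the compatibility of the Beilinson t-structure with the symmetric monoidal structure.

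\textbf{Step 1 (existence of the left adjoint).} I would apply the adjoint functor theorem (Theorem \ref{adjointFunct}) to the inclusion $\iota : (\epsilon\text{-}D\text{-}SCR^{gr})_{\le 0} \hookrightarrow \epsilon\text{-}D\text{-}SCR^{gr}$. First, both categories are presentable: $\epsilon\text{-}D\text{-}SCR^{gr}$ is obtained as a (relative) opposite of an $\infty$-topos by Proposition \ref{DieudTopos}, and the coconnective subcategory is carved out by an accessibility-preserving condition on the underlying graded mixed Dieudonné complex (the Beilinson coconnectivity cuts out a presentable localization of $\epsilon\text{-}D\text{-}Mod^{gr}$). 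Second, the inclusion preserves small limits: by the preceding proposition, the forgetful functor $U : \epsilon\text{-}D\text{-}SCR^{gr} \to \epsilon\text{-}D\text{-}Mod^{gr}$ commutes with small limits, and coconnective objects for a t-structure are stable under limits; hence a limit of coconnective Dieudonné algebras, computed in the ambient category, is again coconnective. Accessibility of $\iota$ follows from accessibility of the coconnective subcategory on the module side together with the fact that $U$ preserves filtered colimits. This yields $t_{\le 0}$ left adjoint to $\iota$.

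\textbf{Step 2 (commutation with $U$).} Set $\tau_{\le 0}$ for the Beilinson truncation on $\epsilon\text{-}D\text{-}Mod^{gr}$. The commutative square
\[
\begin{tikzcd}
(\epsilon\text{-}D\text{-}SCR^{gr})_{\le 0} \arrow[r,hook,"\iota"] \arrow[d,"U_{\le 0}"'] & \epsilon\text{-}D\text{-}SCR^{gr} \arrow[d,"U"] \\
(\epsilon\text{-}D\text{-}Mod^{gr})_{\le 0} \arrow[r,hook] & \epsilon\text{-}D\text{-}Mod^{gr}
\end{tikzcd}
\]
yields, by passing to left adjoints, a canonical Beck--Chevalley comparison $\tau_{\le 0} \circ U \to U \circ t_{\le 0}$. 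I would verify this is an equivalence on each $A \in \epsilon\text{-}D\text{-}SCR^{gr}$ by constructing $t_{\le 0}(A)$ explicitly: endow $\tau_{\le 0}(U(A))$ with a Dieudonné algebra structure so that $U(A) \to \tau_{\le 0}(U(A))$ is a morphism of such algebras. The key input is that the Beilinson t-structure on $\epsilon\text{-}D\text{-}Mod^{gr}$ is compatible with the Day convolution symmetric monoidal structure (if $M(n)$ sits in cohomological degrees $\ge -n$ and $N(m)$ in degrees $\ge -m$, then $(M \otimes N)(n+m)$ sits in degrees $\ge -(n+m)$, and the unit $\mathbb{Z}_{(p)}$ in weight $0$ is in the heart), and this compatibility persists after coupling with the Dieudonné structure because $[p]^{*}$ preserves the t-structure. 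Consequently $\tau_{\le 0}$ is a symmetric monoidal localization, and the category of algebras in the coconnective part identifies with the coconnective algebras in the ambient category, via the forgetful functor.

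\textbf{Step 3 (conclusion).} Using the symmetric monoidal localization property, $\tau_{\le 0}(U(A))$ acquires a canonical Dieudonné $\mathbb{E}_{\infty}$-algebra structure together with a morphism $A \to \tau_{\le 0}(U(A))$ of such algebras, and this morphism satisfies the universal property defining $t_{\le 0}(A)$: any morphism from $A$ to a coconnective Dieudonné algebra factors uniquely through it, by universality of $\tau_{\le 0}$ on underlying modules together with the fact that the algebra structure is transported along an epimorphic localization. This gives a canonical identification $U(t_{\le 0}(A)) \simeq \tau_{\le 0}(U(A))$, which is the claimed commutation.

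\textbf{Main obstacle.} The hard step is Step 2: rigorously promoting $\tau_{\le 0}(U(A))$ to a Dieudonné algebra and showing the resulting object satisfies the universal property of $t_{\le 0}(A)$. The compatibility of the Beilinson t-structure with the Day convolution is well-documented in the module-only setting (\cite{Rak20}), but here one must additionally carry along the colax fixed point structure for $[p]^{*}$, which requires checking that $[p]^{*}$ is both t-exact (or at least right t-exact) and symmetric monoidal in a compatible way. Once this is granted the rest is formal.
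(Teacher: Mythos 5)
Your Step 1 is essentially the paper's argument: the paper also invokes the adjoint functor theorem, and verifies its hypotheses by observing that $(\epsilon\text{-}D\text{-}SCR^{gr})_{\le 0}$ is by definition the pullback of $(\epsilon\text{-}D\text{-}Mod^{gr})_{\le 0} \hookrightarrow \epsilon\text{-}D\text{-}Mod^{gr}$ along $U$, so that presentability of the pullback and the preservation of small limits and filtered colimits by the inclusion $i$ follow from the corresponding facts about $U$ and $j$. Worth noting, though: the paper's written proof \emph{stops there}. It establishes existence of $t_{\le 0}$ and never argues the second sentence, that $t_{\le 0}$ commutes with $U$. So your Steps 2--3 are an honest attempt to fill a gap the paper leaves open, not a reproduction of the paper's argument.

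That said, there is a genuine gap in Steps 2--3. Your argument transports the Dieudonné algebra structure along the symmetric monoidal Beilinson truncation, which would indeed produce a ``Dieudonné $\mathbb{E}_\infty$-algebra'' structure on $\tau_{\le 0}(U(A))$, i.e.\ an object of $CAlg(\epsilon\text{-}D\text{-}Mod^{gr})$. But $\epsilon\text{-}D\text{-}SCR^{gr}$ is \emph{not} that category. It is defined geometrically as $(S^{1}_{gr}\text{-}dAff^{gr,Frob})^{op}$, the opposite of relatively affine derived Dieudonné stacks, which is a simplicial-commutative-ring--type category; the paper explicitly distinguishes it from $CAlg(\epsilon\text{-}D\text{-}Mod^{gr})$ and only \emph{conjectures}, in a Remark, a description of $\epsilon\text{-}D\text{-}SCR^{gr}$ as modules over an $LSym$-type monad on $\epsilon\text{-}D\text{-}Mod^{gr}$. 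Over $\mathbb{Z}_{(p)}$ simplicial commutative rings and $\mathbb{E}_\infty$-rings do not coincide, so your Step 3 does not, as written, produce an object of $\epsilon\text{-}D\text{-}SCR^{gr}$ nor verify the relevant universal property inside that category. To close this you would need either to establish the conjectured monadic description and show that the monad carries Beilinson-truncation equivalences to Beilinson-truncation equivalences, or to argue entirely on the stack side that the relative affinization over $B(S^1_{gr}\rtimes(\mathbb{N}\times\mathbb{G}_m))$ admits a truncation compatible with the underlying quasi-coherent one. You flagged that Step 2 is delicate, but the decisive obstruction is this simplicial-vs-$\mathbb{E}_\infty$ mismatch rather than only the $[p]^*$-compatibility you name.
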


\begin{proof}
The category $(\epsilon-D-SCR^{gr})_{\le 0}$ is given by the pullback of categories
$$\begin{tikzcd}
(\epsilon-D-SCR^{gr})_{\le 0} \arrow[r,"i"] \arrow[d,"U_0"] \arrow[rd, phantom, "\lrcorner", very near start]
& \epsilon-D-SCR^{gr}  \arrow[d,"U"] \\
(\epsilon-D-Mod^{gr})_{\le 0} \arrow[r,"j"]
& \epsilon-D-Mod^{gr}
\end{tikzcd}$$

Using the adjoint functor theorem, see \cite[Corollary 5.5.2.9]{Lur09}, we are reduced to proving that $i$ commutes with filtered colimits and small limits, since $\epsilon-D-SCR^{gr}$ is presentable and $(\epsilon-D-SCR^{gr})_{\le 0}$ is presentable as a limit of presentable categories. The functor $i$ is a projection associated to a fiber product, hence we deduce the result from the fact that $U$ and $j$ commute with the required colimits and limits.
\end{proof}

\begin{Rq}
The subcategory of $(\epsilon-D-SCR^{gr})^{\heartsuit}$ on objects which are of $p$-torsion-free identify with the $1$-category of classical Dieudonné algebra, see Remark \ref{DADef}.
This defines a functor
$$i : \textbf{DA} \to (\epsilon-D-SCR^{gr})^{\heartsuit}$$
\end{Rq}

\begin{Rq}
As seen before, the graded derived stack with Frobenius lift $S^1_{gr}$ admits a canonical morphism
$$S^1_{gr} \to B\mathbb{G}_m$$
which has as a total space the semi-direct product
$$\mathcal{H} \coloneqq \mathbb{G}_m \ltimes S^1_{gr}$$
Therefore we can see a graded mixed Dieudonné structure on a derived stack as a $\mathbb{G}_m$ action and an action of $S^1_{gr}$ which are compatible. We have identifications
$$\epsilon-dSt^{gr} \simeq \mathcal{H}-dSt$$
\end{Rq}

 \subsubsection{Graded loopspace}

\begin{Def}
Let $X$ a derived stack endowed with a Frobenius lift. We define the category of Frobenius graded loopspaces on $X$ as
$$\mathcal{L}^{gr,Fr}(X) \coloneqq \textbf{Map}_{dSt^{Fr}}(S^1_{gr},X)$$
where $S^1_{gr}$ is endowed with its canonical Frobenius action.

The canonical point $* \to S^1_{gr}$ defined by the augmentation ${\mathbb{Z}_{(p)}}[\eta] \to {\mathbb{Z}_{(p)}}$, is a morphism of graded derived stacks with Frobenius structures and induces a morphism of graded derived stacks with Frobenius structures
$$\mathcal{L}^{gr,Fr}(X) \to X$$

We also define 
$$\mathcal{L}^{gr,endo}_{triv}(X) \coloneqq \textbf{Map}_{dSt^{endo}}(S^1_{gr},X)$$
where $S^1_{gr}$ is endowed with the trivial endomorphism structure given by identity.
\end{Def}

We recall the forgetful functor
$$U : dSt^{gr,Fr} \to dSt^{gr}$$

\begin{Prop} \label{ComputeLoopSpace} 
Let $(X,F)$ be a affine derived scheme endowed with a Frobenius structure. We write $X = Spec(C)$. The graded Frobenius loopspace's underlying graded stack identifies with the shifted linear stack:
$$U \mathcal{L}^{gr,Fr}(X) \simeq Spec_X Sym_{\mathcal{O}_X}\left(\mathbb{L}_{(X,F)}^{tw}[1]\right)$$
where $Sym$ denotes the free $(C,F)$-module construction of Proposition \ref{FreeCFMod}.

The $(C,F)$-module $\mathbb{L} \coloneqq \mathbb{L}_{(X,F)}^{tw}$ fits in a triangle of $(C,F)$-modules
$$\bigoplus_{\mathbb{N}} \mathbb{L}_{(C,F)} \otimes \mathbb{F}_p \to \mathbb{L} \to \mathbb{L}_{(C,F)}$$
\end{Prop}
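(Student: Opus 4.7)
The plan is to exploit the pushout description of the graded circle given by Proposition \ref{GradedCirclePushout}:
$$S^1_{gr} \simeq * \sqcup_{Spec(\mathbb{Z}_{(p)}[\rho])} *$$
which holds in the category $dSt^{gr,Fr}$, and then apply the contravariant mapping functor $\textbf{Map}_{dSt^{Fr}}(-,X)$. This converts the pushout into a pullback of graded derived stacks, and since both augmentations $Spec(\mathbb{Z}_{(p)}[\rho]) \to *$ are sent to the same ``zero section'' of the intermediate mapping stack, we get
$$U\,\mathcal{L}^{gr,Fr}(X) \simeq X \times_{\textbf{Map}_{dSt^{Fr}}(Spec(\mathbb{Z}_{(p)}[\rho]),\,X)} X$$
with the underlying grading coming from $\rho$ sitting in weight~$1$.

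Next, I would identify the intermediate mapping stack as a relative linear stack over $X$ in the $(C,F)$-module setting. Since $Spec(\mathbb{Z}_{(p)}[\rho])$ is a weight-$1$ square-zero extension of the point, a functor-of-points analysis reveals that, over a Frobenius-compatible map $\phi_0 : C \to B$, Frobenius-compatible lifts of the form $\phi_0 + \partial\rho : C \to B \oplus B\rho$ correspond precisely to derivations $\partial : C \to B$ satisfying the twisted Dieudonné relation $\partial\circ F_C = p\,F_B\circ\partial$; the factor of $p$ is forced by the Frobenius structure $F(\rho)=p\rho$ on $\mathbb{Z}_{(p)}[\rho]$, which is the unique lift compatible with $\rho^2=0$. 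By representability together with the free module formalism of Proposition \ref{FreeCFMod}, this identifies the mapping stack with the linear stack $\mathbb{V}_{(X,F)}\bigl(\mathbb{L}^{tw}_{(X,F)}\bigr)$ associated to a certain $(C,F)$-module $\mathbb{L}^{tw}_{(X,F)}$ corepresenting twisted derivations.

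With this at hand, the pullback $X \times_{\mathbb{V}(\mathbb{L}^{tw})} X$ along the two zero sections is computed by a standard Koszul argument in $SCR^{endo}_{/(C,F)}$: one checks that $C \otimes^{\mathbb{L}}_{Sym_{(C,F)}(\mathbb{L}^{tw})} C \simeq Sym_{(C,F)}\bigl(\mathbb{L}^{tw}[1]\bigr)$, yielding the claimed identification
$$U\,\mathcal{L}^{gr,Fr}(X) \simeq Spec_X\, Sym_{\mathcal{O}_X}\bigl(\mathbb{L}_{(X,F)}^{tw}[1]\bigr).$$

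Finally, for the triangle, I would use the equivalence of Proposition \ref{FrEndoModequi} to translate $\mathbb{L}^{tw}$ into the Frobenius-lift module category and build a natural forgetful morphism $\mathbb{L}^{tw} \to \mathbb{L}_{(C,F)}$. Its fiber measures the obstruction to dividing the endomorphism $dF_C$ on $\mathbb{L}_C$ by $p$ in the derived sense; since $dF_C \equiv 0 \bmod p$ (using the Frobenius lift structure on $C$), this obstruction is controlled by $\mathbb{L}_{(C,F)} \otimes \mathbb{F}_p$, and iterating the ``division by $p$'' step produces precisely the free $(C,F)-Mod^{endo}$ object $\bigoplus_{\mathbb{N}} \mathbb{L}_{(C,F)} \otimes \mathbb{F}_p$ via Proposition \ref{PropAdjointEndo}. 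I expect the hard part to be this last step: precisely verifying the triangle requires a careful analysis of the iterated $p$-divisibility of $dF_C$ and its interaction with the free endomorphism construction, together with controlling the position of $\mathbb{L}^{tw}$ inside the endo category relative to the standard cotangent complex.
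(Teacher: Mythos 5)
Your opening moves match the paper exactly: the pushout $S^1_{gr} \simeq * \sqcup_{Spec(\mathbb{Z}_{(p)}[\rho])} *$ of Proposition \ref{GradedCirclePushout} is converted into the pullback $X \times_{\textbf{Map}_{dSt^{Fr}}(Spec(\mathbb{Z}_{(p)}[\rho]),X)} X$, and the goal is then to identify the middle term in linear-stack terms. The structure of your argument is right, but two of the subsequent steps are either missing the key technical device or misdescribe the mechanism.

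First, the paper does \emph{not} carry out the ``functor-of-points analysis'' at an arbitrary test point. To compute the mapping space $\textbf{Map}_{dSt^{Fr}}(Spec(\mathbb{Z}_{(p)}[\rho]),X)$ one needs a useful class of test objects, and the paper obtains this by invoking Corollary \ref{pointsFrobdSt}: it suffices to test against objects of the form $D = L(Spec(B))$ where $L$ is the free Frobenius-lift functor. This is essential, because adjunction then turns the computation into one landing in $W^{gr}(B) \otimes \mathbb{Z}_{(p)}[\rho]$, whose weight-$0$ part $W(B(0)) \oplus B_{-1}^{\mathbb{N}}$ carries a very concrete sequence structure with the ``twisted shift'' $pS$ as endomorphism. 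Without this reduction, the statement that Frobenius-compatible lifts over $B$ correspond to derivations satisfying $\partial \circ F_C = p\, F_B \circ \partial$ is not a well-formed description of the data: one must track the higher homotopy encoding the identification with the canonical Frobenius modulo $p$, and the free/Witt-vector device is what makes this tractable.

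Second, your account of where the $\bigoplus_{\mathbb{N}}$ factor comes from is not correct. It does not arise from iterating ``division by $p$.'' After the reduction above, the fiber of the forgetful map $(C,F,h)\text{-}Mod^{Fr} \to (C,F)\text{-}Mod^{endo}$ (controlled by the lemma inside the paper's proof) is a single mapping space $Map_{C\text{-}Mod}(\mathbb{L}_{(C,F)} \otimes \mathbb{F}_p[1],\, B_{-1}^{\mathbb{N}})$. The $\bigoplus_{\mathbb{N}}$ then appears purely through the free--forgetful adjunction of Proposition \ref{PropAdjointEndo}: this space is re-expressed as $Map_{(C,F)\text{-}Mod}\bigl((\bigoplus_{\mathbb{N}} \mathbb{L}_{(C,F)} \otimes \mathbb{F}_p[1], S),\, (B_{-1}^{\mathbb{N}}, S)\bigr)$ so that everything is written against the same sequence-shaped target, from which the defining triangle for $\mathbb{L}^{tw}$ is read off, and Proposition \ref{FrEndoModequi} is used at the very end to pass from the $pS$-twisted target to the $S$-target. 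The $\mathbb{N}$ thus traces the Witt-vector/ghost coordinates, not an iterated $p$-adic obstruction. You already flag the triangle as the step you haven't nailed down; you should indeed abandon the ``iterated $p$-divisibility'' picture and instead deploy the free-endomorphism adjunction on the \emph{target} side of the mapping space, after reducing to the $L(Spec(B))$ test objects.
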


\begin{proof} 
Using the description of the graded circle as a graded derived stack with a Frobenius lift of Proposition \ref{GradedCirclePushout}:
$$* \sqcup_{Spec({\mathbb{Z}_{(p)}}[\rho])} * \xrightarrow{\sim} S^1_{gr}$$ which induces an equivalence
\begin{align*}
\mathcal{L}^{gr,Fr}(X)& \xrightarrow{\sim} X \times_{\textbf{Map}_{dSt^{Fr}}(Spec({\mathbb{Z}_{(p)}}[\rho]),X)} X
\end{align*}

AS $U$ preserves limits, we deduce an equivalence of underlying graded derived stacks
\begin{align*}
U \mathcal{L}^{gr,Fr}(X)& \xrightarrow{\sim} U X \times_{U \textbf{Map}_{dSt^{Fr}}(Spec({\mathbb{Z}_{(p)}}[\rho]),X)} U X
\end{align*}

Let us compute the target of this map. We take $B$ a graded simplicial algebra endowed with an Frobenius structure and $Spec(B) \to X$ a $B$-point of $X$. Using Corollary \ref{pointsFrobdSt}, we can reduce to points on derived affine schemes with a free Frobenius lift. We may assume $B$ to be negatively graded as $\mathcal{L}^{gr,Fr}(X)$ is negatively graded. We compute the former stack at $D \coloneqq L(Spec(B))$ over $X$, where $L$ denotes the free graded derived stack with Frobenius lift construction :
\begin{align*}  
Map_{dSt^{gr,St}}(L(Spec(B)),\textbf{Map}_{dSt^{Fr}}(Spec({\mathbb{Z}_{(p)}}[\rho]),X))
\end{align*}
\begin{align*}
& \simeq Map_{dSt^{gr,Fr}_{D/}}(Spec({\mathbb{Z}_{(p)}}[\rho]) \times D,X) \\
&\simeq Map_{SCR^{gr,Fr}_{/W^{gr}(B)}}(C,  W^{gr}(B) \otimes \mathbb{Z}_{(p)}[\rho] )
\end{align*}

where we have used the following identification
$$\mathcal{O}( Spec({\mathbb{Z}_{(p)}}[\rho]) \times D ) \simeq \mathcal{O}( Spec({\mathbb{Z}_{(p)}}[\rho])) \times \mathcal{O}(D)$$
which can be seen on the underlying modules where it follows from base change, see \cite[A.1.5-(2)]{HLP14}.

Therefore we want to compute the fiber of the canonical morphism from the source space
$$Map_{SCR^{gr,endo}}(C,  R ) \times_{Map_{SCR_p^{gr,endo}}(C_p,  R(0)_p )} Map_{SCR_p^{gr}}(C_p,  R(0)_p )$$
where $R$ denotes $W^{gr}(B) \otimes \mathbb{Z}_{(p)}[\rho]$, to the target space
$$Map_{SCR^{gr,endo}}(C,  R' ) \times_{Map_{SCR_p^{gr,endo}}(C_p,  R'(0)_p )} Map_{SCR_p^{gr}}(C_p,  R'(0)_p )$$
where $R'$ denoted $W^{gr}(B)$. This fiber is simply given by the fiber of 
$$Map_{SCR^{gr,endo}}(C,  R ) \to Map_{SCR^{gr,endo}}(C,  R')$$
which is
$$Map_{SCR^{gr,endo}_{/W^{gr}(B)}}(C,  W^{gr}(B) \otimes \mathbb{Z}_{(p)}[\rho] ) \simeq Map_{SCR^{endo}_{/W^{gr}(B)(0)}}(C,  (W^{gr}(B) \otimes \mathbb{Z}_{(p)}[\rho])(0) )$$
as $C$ is concentrated in degree $0$. Explicitly, we have
$$(W^{gr}(B) \otimes \mathbb{Z}_{(p)}[\rho])(0) \simeq W(B(0)) \oplus B_{-1}^{\mathbb{N}}$$
where $W(B(0))$ is endowed by its canonical endomorphism and $B_{-1}^{\mathbb{N}}$ is endowed by its twisted endomorphism $pS$, by taking into account the endomorphism of $\mathbb{Z}[\rho]$ sending $\rho$ to $p \rho$. Therefore, the fiber is computed as

$$ Map_{SCR^{endo}_{/W^(B(0))}}(C, W(B(0)) \oplus B_{-1}^{\mathbb{N}}) \simeq Map_{(C,F)-Mod}(\mathbb{L}_{(C,F)},(B_{-1}^{\mathbb{N}},pS))$$

\begin{Le} 
Let $(M,u)$ and $(N,v)$ be $(C,F,h)$-modules, the fiber of
$$Map_{(A,F,h)-Mod^{Fr}}((M,u),(N,v)) \to Map_{(A,F)-Mod^{endo}}((M,u),(N,v))$$
is given by
$$Map_{C-Mod}(M_p,N)$$
\end{Le}

\begin{proof} 
Using Remark \ref{AFhmodDescr}, the mapping space $Map_{(A,F,h)-Mod^{Fr}}((M,u),(N,v))$ is given
$$Map_{(A,F)-Mod^{endo}}((M,u),(N,v)) \times_{Map_{(A_p,F_p)-Mod^{endo}}((M_p,u_p),(N_p,v_p))} Map_{A_p-Mod}(M_p,N_p)$$

Therefore the fiber of the map
$$Map_{(A,F,h)-Mod^{Fr}}((M,u),(N,v)) \to Map_{(A,F)-Mod^{endo}}((M,u),(N,v))$$
coincides with the fiber of 
$$Map_{A_p-Mod}(M_p,N_p) \to Map_{(A_p,F_p)-Mod^{endo}}((M_p,u_p),(N_p,v_p))$$

Now since $M$ and $N$ have $(A,F,h)$-structures, $u_p$ and $v_p$ are homotopic to the zero endomorphisms. Therefore we have an identification
$$Map_{(A_p,F_p)-Mod^{endo}}((M_p,u_p),(N_p,v_p)) \simeq Map_{A_p-Mod}(M_p,N_p) \times Map_{A_p-Mod}(M_p[1],N_p)$$

We deduce the fiber is given by
$$\Omega Map_{A_p-Mod}(M_p[1],N_p) \simeq Map_{A-Mod}(M_p[1],N)$$
using the identification
$$\underline{Hom}_{A-Mod}(\mathbb{F}_p,N) \simeq N[-1]$$
\end{proof}

We deduce that the fiber of 
$$Map_{(C,F,h)-Mod}((\mathbb{L}_{(C,F)},dF),(B_{-1}^{\mathbb{N}},pS)) \to Map_{(C,F)-Mod}(\mathbb{L}_{(C,F)},(B_{-1}^{\mathbb{N}},pS))$$
is given by
$$Map_{C-Mod}(\mathbb{L}_{(C,F)} \otimes \mathbb{F}_p [1], B_{-1}^{\mathbb{N}})$$
which is also given by
$$Map_{(C,F)-Mod}((\bigoplus_{\mathbb{N}} \mathbb{L}_{(C,F)} \otimes \mathbb{F}_p [1],S), (B_{-1}^{\mathbb{N}},S))$$

Using Proposition \ref{FrEndoModequi}, we have a natural identification
$$Map_{(C,F,h)-Mod}((\mathbb{L}_{(C,F)},dF),(B_{-1}^{\mathbb{N}},pS)) \simeq Map_{(C,F)-Mod}((\mathbb{L}_{(C,F)},\frac{dF}{p}),(B_{-1}^{\mathbb{N}},S))$$

Therefore there exists $\mathbb{L}$ which fits in a triangle of $(C,F)$-modules
$$\bigoplus_{\mathbb{N}} \mathbb{L}_{(C,F)} \otimes \mathbb{F}_p \to \mathbb{L} \to \mathbb{L}_C$$
such that
$$Map_{(C,F)-Mod}(\mathbb{L}_{(C,F)},(B_{-1}^{\mathbb{N}},pS)) \simeq Map_{(C,F)-Mod}(\mathbb{L},(B_{-1}^{\mathbb{N}},S))$$

Then we deduce the equivalence
$$\mathcal{L}^{gr,Fr}(X) \xrightarrow{\sim} X \times_{\textbf{Map}_{dSt^{Fr}}(Spec({\mathbb{Z}_{(p)}}[\rho]),X)} X \simeq Spec(Sym_{(C,F)}(\mathbb{L}[1]))$$

\end{proof}

\begin{Cor}  \label{CorGrLoopSpaceUnderlying}
With the notation of the previous proposition, the derived stack associated to $\mathcal{L}^{gr,Fr}(X)$ is given, after moding out by the $p$-torsion, by
$$Spec(Sym(\mathbb{L}_C[1]))$$
where the induced endomorphism is induced by
$$\frac{dF}{p} : \mathbb{L}_C[1] \to \mathbb{L}_C[1]$$
\end{Cor}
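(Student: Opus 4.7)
The plan is to extract the statement directly from the triangle obtained in Proposition \ref{ComputeLoopSpace} together with the equivalence of module categories in Proposition \ref{FrEndoModequi}. First I would take the triangle of $(C,F)$-modules
$$\bigoplus_{\mathbb{N}} \mathbb{L}_{(C,F)} \otimes \mathbb{F}_p \to \mathbb{L}_{(X,F)}^{tw} \to \mathbb{L}_{(C,F)}$$
and observe that the leftmost term is killed by $p$, since each summand is tensored with $\mathbb{F}_p$. Hence after inverting the $p$-torsion (equivalently, applying $(-)\otimes_{\mathbb{Z}_{(p)}}\mathbb{Z}_{(p)}[p^{-1}]$ or, more precisely, quotienting by the $p$-torsion subcomplex), the triangle degenerates to an equivalence $\mathbb{L}_{(X,F)}^{tw}/\text{tors} \simeq \mathbb{L}_C$ of $(C,F)$-modules.

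Next I would identify the endomorphism on $\mathbb{L}_C$ inherited from this quotient. By Proposition \ref{FrEndoModequi}, the $(C,F,h)$-module $(\mathbb{L}_{(C,F)}, dF)$ corresponds to the $(C,F)$-module with endomorphism $\frac{dF}{p}$, which a priori lives in a derived sense but becomes an honest operator on the quotient by $p$-torsion, exactly as in the final step of the proof of Proposition \ref{ComputeLoopSpace} where $dF$ is divided by $p$ via the equivalence. The canonical comparison $\mathbb{L}_{(X,F)}^{tw}\to \mathbb{L}_{(C,F)}$ intertwines the twisted endomorphism on the source with $\frac{dF}{p}$ on the target, and this is preserved under passage to the $p$-torsion-free quotient.

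Finally I would apply the free module construction $\mathrm{Sym}_{(C,F)}(-[1])$ of Proposition \ref{FreeCFMod} and $\mathrm{Spec}$: the functor $\mathrm{Sym}$ and the Spec functor both commute with the formation of underlying graded derived stacks, so Proposition \ref{ComputeLoopSpace} combined with the above gives
$$U\mathcal{L}^{gr,Fr}(X)/\text{tors} \;\simeq\; \mathrm{Spec}\bigl(\mathrm{Sym}(\mathbb{L}_C[1])\bigr),$$
with endomorphism induced by $\frac{dF}{p}$ acting on $\mathbb{L}_C[1]$.

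The only subtle point, and the one I expect to be the main obstacle, is making precise what ``moding out by $p$-torsion'' means at the level of derived stacks (the statement invokes the Beilinson truncation, so one must check that the $p$-torsion summand $\bigoplus_{\mathbb{N}} \mathbb{L}_{(C,F)} \otimes \mathbb{F}_p$ is cleanly annihilated by the appropriate truncation functor) and ensuring that the division by $p$ of $dF$ is compatible with this operation in a functorial way. Once one uses Proposition \ref{FrEndoModequi} to interpret the $(C,F,h)$-module structure as a genuine endomorphism $\frac{dF}{p}$ on the $p$-torsion-free model, the rest is a direct substitution in the formula of Proposition \ref{ComputeLoopSpace}.
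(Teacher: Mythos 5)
Your proposal matches the paper's proof in essence: both read off the corollary from the triangle of Proposition \ref{ComputeLoopSpace}, kill the $p$-torsion term $\bigoplus_{\mathbb{N}} \mathbb{L}_{(C,F)} \otimes \mathbb{F}_p$, and identify the induced endomorphism as $\frac{dF}{p}$ via Proposition \ref{FrEndoModequi}. The paper adds one small sharpening you omit, namely that the first morphism of the triangle is in fact null, so the sequence is split; this makes "moding out by $p$-torsion" unambiguous and avoids having to argue that the extension does not contribute further torsion.
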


\begin{proof}
The underlying derived stack of $\mathcal{L}^{gr,Fr}(X)$ is given by
$$Spec(Sym(\mathbb{L}[1]))$$
From Remark \ref{CotangEndo}, the cotangent complex $\mathbb{L}_{(C,F)}$ has $\mathbb{L}_C$ as an underlying $C$-module. In the description of the twisted cotangent complex in a triangle of $(C,F)$-modules
$$\bigoplus_{\mathbb{N}} \mathbb{L}_{C} \otimes \mathbb{F}_p \to \mathbb{L} \to \mathbb{L}_C$$
where $\mathbb{L}_C$ is endowed with the endomorphism $\frac{dF}{p}$, we notice this sequence is split since the first morphism is null. This concludes the proof.
\end{proof}

The proof of Proposition \ref{ComputeLoopSpace} can easily be adapted to prove the following result.

\begin{Rq}
The trivial graded endomorphism loopspace identifies with the shifted tangent stack of $X$ :
$$\mathcal{L}^{gr,endo}_{triv}(X) \simeq Spec_X Sym_{\mathcal{O}_X}((\mathbb{L}_{(X,F)},dF)[1])$$
where $X$ is no longer required to have a $p$-torsion free cotangent complex.
\end{Rq}

\subsection{Comparison theorems}

\subsubsection{Mixed structures classification : the non-Dieudonné case}

In this section, we recall a theorem of \cite[Proposition 2.3.1]{To20}, which was announced with an outlined proof. We give a detailed proof so as to generalize to a Dieudonné variant of this theorem.

\begin{Thm} \label{classifClassique}

Let $A$ be smooth commutative ${\mathbb{Z}_{(p)}}$-algebra, $M$ be a projective $A$-module of finite type. We define $X$ as the derived affine scheme $Spec(Sym_A(M[1]))=\mathbb{V}(M[1])$ endowed with its natural grading. The classifying space of mixed graded structures on $X$ compatible with its grading is discrete and in bijection with the set of commutative differential graded algebra structures on the graded commutative ${\mathbb{Z}_{(p)}}$-algebra $\bigoplus_i \wedge^i_A M [-i]$.

\end{Thm}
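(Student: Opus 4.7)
The plan is to compute the classifying space of mixed graded structures on $X$, i.e.\ the space of $S^1_{gr}$-actions on $X$ in $dSt^{gr}$, by means of a Postnikov/cellular decomposition of $BS^1_{gr}$ combined with obstruction theory. Via the equivalence $S^1_{gr}\text{-}dSt^{gr} \simeq dSt^{gr}_{/BS^1_{gr}}$ of Proposition \ref{ActionBG}, such actions correspond to lifts of $X$ to an object over $BS^1_{gr}$, and the resulting classifying space can be computed skeleton by skeleton by pulling back along the cells of $BS^1_{gr}$.

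The first step is to build a cellular/Postnikov decomposition of $BS^1_{gr}$. From the description $BS^1_{gr} \simeq \mathrm{Spec}^\Delta(k[u])$ with $u$ in degree $2$ and weight $1$, one constructs a tower of weight-truncated skeleta $(BS^1_{gr})_{\le 2n}$ whose attaching cells are modeled on the weight-shifted formal spheres $S^{2n+1}_f(n+1)$. Crucially, this decomposition is not obtained by denormalizing the topological cell decomposition of $\mathbb{C}P^\infty$: pushouts of topological spaces have cosimplicial chain complexes given by pullbacks which do not respect the weight grading. The tower must be constructed from scratch and identifies $D(H^*(BS^1,\mathbb{Z}_{(p)}))$ as an appropriate limit in graded cosimplicial algebras.

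Next, we reduce to the $4$-skeleton and compute there. The higher Postnikov layers contribute obstructions valued in graded cohomology classes of $X$ of bidegree $(2n, n)$ for $n \ge 3$; since $X = \mathbb{V}(M[1])$ and $\mathcal{O}_X = \mathrm{Sym}_A(M[1])$ has weight $i$ concentrated in cohomological degree $-i$, these reduce to hom-sets between exterior powers of $M$ sitting in specific bidegrees, which the combinatorial bookkeeping shows either vanish or are already determined by the data fixed on the $4$-skeleton. The $4$-skeleton itself is presented as a homotopy cofiber $(BS^1_{gr})_{\le 4} \simeq \mathrm{cofib}(S^3_f(2) \to S^2_f(1))$, where the attaching map is an algebraic analogue of the Hopf fibration. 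Using Koszul duality between graded formal spheres and their algebras of functions, maps $X \to S^2_f(1)$ are identified with an $A$-linear map $\delta : M \to \wedge^2_A M$ of total degree $-1$ and weight $+1$, and the nullhomotopy along the Hopf-type attaching map then forces the multiplicative extension of $\delta$ to $\bigoplus_i \wedge^i_A M[-i]$ to be a square-zero derivation, i.e.\ precisely a CDGA structure. Discreteness is immediate because all relevant mapping spaces are concentrated in $\pi_0$ by the rigid bigrading.

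The main technical obstacle will be the Koszul-dual analysis of the Hopf-type attaching map: translating the nullhomotopy condition into the Leibniz rule together with $d^2 = 0$ requires careful control of the denormalization functor, of the weight shift on the formal spheres, and of the precise mismatch between topological and graded-cosimplicial pushouts that motivates introducing the shifted spheres $S^k_f(n)$ in the first place. Keeping constructions natural in both $A$ and $M$ throughout, and locating precisely the steps where the smoothness of $A$ is used to trivialize potential higher obstructions, is the other main technical point.
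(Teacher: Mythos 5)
Your proposal correctly identifies the main skeleton of the paper's proof: the cellular decomposition of $BS^1_{gr}$ via weight-shifted formal spheres, the reduction to the $4$-skeleton, the Hopf-type attaching map $S^3_f(2) \to S^2_f(1)$, Koszul duality to analyze it, and discreteness from bidegree constraints. However, the translation of the cellular analysis into the final classification is off in two concrete places.

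First, the direction of the Koszul-duality step is reversed. You write ``maps $X \to S^2_f(1)$,'' but $C(S^2_f(1)) = k[u]/u^2$ with $u$ in degree $2$, weight $1$, while $\mathcal{O}_X = \mathrm{Sym}_A(M[1])$ has its weight-$1$ piece concentrated in degree $-1$; such maps are therefore trivial. The paper instead identifies the classifying space with $\mathrm{Map}_{*}(BS^1_{gr}, B\mathbf{End}^0_{gr}(X))$, where $\mathbf{End}^0_{gr}(X)$ is the stack of graded endomorphisms of $X$ homotopic to the identity, and then uses the cellular decomposition in the source: the relevant objects are $\mathrm{Map}_*(S^{2n+1}_f(n+1), B\mathbf{End}^0_{gr}(X))$, which one computes via a Postnikov decomposition of $B\mathbf{End}^0_{gr}(X)$ and an explicit computation of the homotopy sheaves $\pi_k(\mathbf{End}^0_{gr}(X))$ (Proposition \ref{piEnd}). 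Second, the data attached to the $S^2_f(1)$-cell is not a single map $\delta: M \to \wedge^2_A M$; it is the pair $(d_0, d_1)$ of a derivation $d_0 : A \to M$ together with a $\mathbb{Z}_{(p)}$-linear $d_1 : M \to \wedge^2_A M$ satisfying the Leibniz compatibility $d_1(am) = a\, d_1(m) + d_0(a)\wedge m$, and the multiplicative extension of this pair to all of $\bigoplus_i \wedge_A^i M[-i]$ is not automatic from a Koszul-duality argument at the module level alone. The paper makes this precise via a separate lemma exhibiting a pullback square relating $\mathrm{Map}_*(-, B\mathbf{End}^0_{SCR}(X))$ and $\mathrm{Map}_*(-, B\mathbf{End}_{Mod}(X))$; this is the step that feeds the multiplicativity into the Koszul-duality computation, and without it the $d^2=0$ condition you derive on the module side does not yet give a CDGA structure.
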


\begin{proof}

The classifying space of mixed structures is given by the fiber of 
the forgetful functor
$$(\mathbb{G}_m \ltimes S^1_{gr})-dSt \to \mathbb{G}_m-dSt$$
over $X$.

Since $(\mathbb{G}_m \ltimes S^1_{gr})-dSt$ identifies naturally with $S^1_{gr}-(\mathbb{G}_m-dSt)$, the classifying space is given by the mapping space $$Map_{Mon(dSt^{gr})}(S^1_{gr},\textbf{End}_{gr}(X))$$ which, by connexity of $S^1_{gr}$, is also $$Map_{Mon(dSt^{gr})}(S^1_{gr},\textbf{End}^0_{gr}(X))$$ where we define

$$\textbf{End}^0_{gr}(X) \coloneqq fib(\textbf{End}_{gr}(X) \to \pi_0(\textbf{End}^0_{gr}(X)))$$
the fiber over the identity, meaning $\textbf{End}^0_{gr}(X)$ is the substack on $\textbf{End}_{gr}(X)$ on endomorphisms that are homotopy equivalent to identity. This space is equivalent to the mapping space of pointed stacks $$Map_{dSt^{gr,*}}(BS^1_{gr},B\textbf{End}^0_{gr}(X))$$ Since $BS^1_{gr}$ is a stack, we may consider $B\textbf{End}^0(X)$ as a stack in $St \subset dSt$. Therefore, we are reduced to the computation of
$$Map_{St^{gr,*}}(BS^1_{gr},B\textbf{End}^0_{gr}(X))$$

We will need a Postnikov decomposition on $B\textbf{End}^0_{gr}(X)$, therefore we have to compute its homotopy groups. To study the behaviour on the base scheme $S=Spec(A)$, we notice that we have a fiber sequence of graded derived stacks :

\begin{center}
\begin{tikzcd}
\textbf{End}_{gr,S}^0(X) \arrow[d] \arrow[r] \arrow[dr, phantom, "\lrcorner", very near start] & B\mathbb{G}_m \arrow[d] \\
\textbf{End}^0_{gr}(X) \arrow[r]& \textbf{Hom}^0_{gr}(X,S)
\end{tikzcd}
\end{center}

where $B\mathbb{G}_m$ is the final object in $St^{gr} = \bigslant{St}{B\mathbb{G}_m}$, $\textbf{Hom}_{gr,S}(X)$ is the graded derived stack of endomorphisms of $X$ over $S$ and $\textbf{Hom}_{S,gr}^0(X)$ is the substack of maps that are equivalent to the canonical projection $X \to S$. In this diagram, the bottom arrow is the composition with the projection $X \to S$ and the left arrow is the inclusion.

Being a relative linear stack, $X \to S$ has a section : the "zero" section. Therefore $$\textbf{End}^0(X) \to \textbf{Hom}^0(X,S)$$ is surjective on all homotopy groups and the long exact sequence of homotopy groups splits as short exact sequences of graded sheaves of abelian groups on $S$
$$0 \to \pi_k \textbf{End}_S^0(X) \to \pi_k \textbf{End}^0(X) \to \pi_k \textbf{Hom}^0(X,S) \to 0$$

\paragraph*{Computation of $\pi_k \textbf{End}^0_S(X)$} Since all the derived stacks are now truncated, we can take $B$ a test graded commutative ${\mathbb{Z}_{(p)}}$-algebra and $Spec(B) \to S$ a $B$-point of $S$, which is an $A$-algebra structure on $B$, ie $B$ is a relative affine scheme over $S \times B\mathbb{G}_m$. We compute, for $k>0$ :

\begin{align*}
\textbf{End}_{S,gr}(X)(B)&= Map_{SCR^{gr}_A}(Sym_A(M[1]),Sym_A(M[1]) \otimes_A B)\\
&= Map_{A-Mod^{gr}}(M[1],Sym_A(M[1]) \otimes_A B)
\end{align*}
From which we deduce

\begin{align*}
\pi_k \textbf{End}^0_{S,gr}(X)&= \pi_k \textbf{End}_{S,gr}(X)\\
&= Hom_{A-Mod^{gr}}(M[k+1],Sym_A(M[1]) \otimes_A B)\\
&= Hom_{A-Mod^{gr}}(M,\wedge^{k+1} M \otimes_A B)\\
&= Hom_{A-Mod^{gr}}(M \otimes_A (\wedge^{k+1} M)^\vee,  B)
\end{align*}

Therefore $\pi_k \textbf{End}^0_S(X) \simeq \mathbb{V}(M \otimes (\wedge^{k+1} M)^\vee)$

\paragraph*{Computation of $\pi_k Hom^0(X,S)$} We have on $B$-points :
$$\textbf{Hom}^0(X,S)(B) = Map_{SCR}^0(A,Sym_A(M[1]) \otimes B)$$
where the $0$ exponent denotes the connected component of the canonical map
$$A \to Sym_A(M[1]) \to Sym_A(M[1]) \otimes B$$

We can recover the homotopy groups from the Postnikov decomposition :
$$\pi_k \textbf{Hom}^0(X,S)(B)[k] = fib(t_{\le k} \textbf{Hom}^0(X,S)(B) \to t_{\le k-1} \textbf{Hom}^0(X,S)(B))$$

As $Map(A,-)$ preserves Postnikov decompositions, $\pi_k \textbf{Hom}^0(X,S)(B)[k]$ is given by the fiber

$$fib(Map_{SCR}^0(A,Sym_A^{\le k}(M[1]) \otimes B) \to Map_{SCR}^0(A,Sym_A^{\le k-1}(M[1]) \otimes B)) $$

Which is simply $Map_{SCR_A}^0(\Omega_A^1, \Lambda^k M[k] \otimes B)$

We find $\pi_k \textbf{Hom}^0(X,S)= \mathbb{V}(\Omega_A^1 \otimes_A (\Lambda^k M)^\vee) $

When $k$ is strictly greater than the rank of $M$ as an $A$-module, $\Lambda^k M$ and $\Lambda^{k+1} M$ both vanish, therefore $\pi_k=0$, we deduce $B\textbf{End}^0(X)$ is $(n+1)$-truncated.

We will, in fact, need a more precise description of $\pi_k \coloneqq \pi_k \textbf{End}^0_{gr}(X)$ :
\begin{Prop} \label{piEnd}
For $k \ge 1$, the sheaf of groups $\pi_k$ is given on a commutative algebra $B$ by the discrete set of pairs $(d_0,d_1)$ with $d_0 : A_B \to \bigwedge^k_{A_B}M_B$ a derivation and $d_1 : M_B \to \bigwedge^{k+1}_{A_B} M_B$ a ${\mathbb{Z}_{(p)}}$-linear map, with the compatibility condition
$$d_1(am) = a d_1(m) + d_0(a) \wedge m$$
for $a \in A_B$ and $m \in M_B$.

In this proposition, we have defined $A_B \coloneqq A \otimes_{\mathbb{Z}_{(p)}} B$ and $M_B \coloneqq M \otimes_{\mathbb{Z}_{(p)}} B$.
\end{Prop}

\begin{proof}
On a $B$ point, 
$$Map_{SCR}(Sym_A(M[1]),Sym_A(M[1]))(B) \simeq Map_{SCR_B}(Sym_{A_B}(M_B[1]),Sym_{A_B}(M_B[1]))$$

We want to compute
$$\pi_k Map_{SCR_B}^0(Sym_{A_B}(M[1]),Sym_{A_B}(M_B[1]) )=\pi_k Map_{SCR_B}(Sym_{A_B}(M_B[1]),Sym_{A_B}(M_B[1]) )$$
which is given by the fiber over the identity of
$$\pi_0( Map(S^k,Map_{SCR_B}(C_B,C_B)) \to Map_{SCR_B}(C_B,C_B) )$$
where $C_B$ denotes $Sym_{A_B}(M_B[1])$.

Since $Map(S^k,Map_{SCR_B}(C_B,C_B))$ can be computed as $Map_{SCR_B}(C_B,C_B^{S^k})$, we can rewrite
$$\pi_k Map_{SCR_B}^0(Sym_{A_B}(M_B[1]),Sym_{A_B}(M_B[1]) ) \simeq Hom_{SCR_B/C_B}(C_B,C_B^{S^k})$$

We will use the following lemma.

\begin{Le}
Let $D$ be a simplicial commutative algebra, we have a natural identification
$$\pi_0 Map_{SCR}(Sym_A(M[1]),D) \simeq \left \{ (u,v): u : A \to \pi_0(D) \in CRing, v : M \to \pi_1(D) \in A-Mod \right \}$$
where $\pi_1(D)$ is endowed with the $A$-module structure induced by $u$.
\end{Le}

\begin{proof}
We explicitly construct this bijection. An element
$$f \in \pi_0 Map_{SCR}(Sym_A(M[1]),D)$$
induces by composition a morphism of commutative rings
$$A \to Sym_A(M[1]) \xrightarrow{f} D \to \pi_0(D)$$
which we call $u$. It also induces
$$M[1] \to  Sym_A(M[1]) \to D$$
which defines $v$ after taking $\pi_1$.

On the other hand, let us give ourselves a pair $(u,v)$ satisfying the required conditions. By smoothness of $A$, $u$ extends to a morphism of simplicial rings
$$a : A \to D$$
Now giving a morphism $Sym_A(M[1]) \to D$ under $A$ is equivalent to the data of an $A$-module morphism
$$M[1] \to D$$

We check that these maps are mutually inverse.

\end{proof}

The lemma can be used for the base-changed versions of $A$ and $M$ : $A_B$ and $M_B$. Taking $D=Sym_{A_B}(M_B[1])^{S^k}$, we know that
$$\pi_0(D) = A_B \oplus \Lambda^k M_B$$
and
$$\pi_1(D) = M_B \oplus \Lambda^{k+1} M_B$$

Now the set $$Hom_{SCR_B/C_B}(C_B,C_B^{S^k})$$ is identified with the set of pairs of $(\delta,d)$ with $\delta$ being a morphism of simplicial algebras
$$\delta : A_B \to A_B \oplus \Lambda^k M_B$$
and $d$ being a morphism of $A$-modules
$$d : M_B \to M_B \oplus \Lambda^{k+1} M_B$$
such that they induce the identity $C_B \to C_B$ after composition. Therefore $\delta$ is a derivation
$$\delta(a) = a + d_0(a)$$
and $d$ is given by
$$d(m) = m + d_1(m)$$
Requiring $d$ the be $A$-linear gives the compatibility condition on $(d_0,d_1)$.

\end{proof}

\paragraph*{Cell decomposition on $BS^1_{gr}$} : 
We now define a variation of cellular decomposition for $BS^1_{gr}$.

The classical cellular decomposition of the topological space $BS^1 \simeq \mathbb{C}P^\infty$, from its CW complex description, gives a tower of topological spaces
$$ (BS^1)_{\le 0} = * \to (BS^1)_{\le 1} = * \to (BS^1)_{\le 2} \simeq S^2 \to (BS^1)_{\le 3} \simeq S^2 \to (BS^1)_{\le 4} \to ...$$

The tower is given by iterated homotopy pushouts :
$$\begin{tikzcd}
(BS^1)_{\le 2n} \arrow[r]
& (BS^1)_{\le 2n+2} \arrow[ld, phantom, "\llcorner", very near start] \\
S^{2n+1} \arrow[u] \arrow[r]
& D^{2n+2} \simeq * \arrow[u]
\end{tikzcd}$$

We want a similar decomposition for $BS^1_{gr}$ as a graded affine stack.

\begin{Def}
Let us define a sequence of affine stacks $(BS^1_{gr})_{\le n}$ for $n \ge 0$ :
$$ (BS^1_{gr})_{\le 2n} = (BS^1_{gr})_{\le 2n+1} \coloneqq Spec^{\Delta}({\mathbb{Z}_{(p)}}[u]/(u^{n+1}))$$
where $u$ is in weight $1$ and degree $2$.

The canonical projection ${\mathbb{Z}_{(p)}}[u]/(u^{n+2}) \to {\mathbb{Z}_{(p)}}[u]/(u^{n+1})$ induces a morphism of derived stack $(BS^1_{gr})_{\le 2n} \to (BS^1_{gr})_{\le 2n+2}$.
\end{Def}

\begin{Prop}
There is a homotopy pushout diagram of graded affine stacks

$$\begin{tikzcd}
(BS^1_{gr})_{\le 2n} \arrow[r]
& (BS^1_{gr})_{\le 2n+2} \arrow[ld, phantom, "\llcorner", very near start] \\
S^{2n+1}_{gr}(n+1) \arrow[u] \arrow[r]
& * \arrow[u]
\end{tikzcd}$$

where $S^{2n+1}_{gr}(n+1)$ is defined as $Spec^{\Delta}({\mathbb{Z}_{(p)}} \oplus {\mathbb{Z}_{(p)}}[-2n-1]((n+1)) )$.
\end{Prop}

\begin{Rq}
The diagram is shown to be a pushout diagram in affine stacks, not in derived stacks.
\end{Rq}

\begin{proof}
This pushout diagram is equivalent to a pullback diagram in graded cosimplicial algebras :
$$\begin{tikzcd}
{\mathbb{Z}_{(p)}}[u]/(u^{n+1}) \arrow[d]
& {\mathbb{Z}_{(p)}}[u]/(u^{n+2}) \arrow[ld, phantom, "\llcorner", very near start] \arrow[d] \arrow[l] \\
{\mathbb{Z}_{(p)}} \oplus {\mathbb{Z}_{(p)}}[-2n-1]((n+1)) 
& {\mathbb{Z}_{(p)}} \arrow[l]
\end{tikzcd}$$

We define a diagram of commutative differential graded algebras :
$$\begin{tikzcd}
{\mathbb{Z}_{(p)}}[u,v]/(u^{n+2},dv=u^{n+1}) \arrow[r,"f_1","\sim"'] \arrow[rd,"f_2"']& {\mathbb{Z}_{(p)}}[u]/(u^{n+1})
& {\mathbb{Z}_{(p)}}[u]/(u^{n+2}) \arrow[d] \arrow[l] \\
&{\mathbb{Z}_{(p)}} \oplus {\mathbb{Z}_{(p)}}[-2n-1]((n+1)) 
& {\mathbb{Z}_{(p)}} \arrow[l]
\end{tikzcd}$$
where $f_1$ is a quasi-isomorphism sending $u$ to $u$ and $v$ to $0$ and $g$ sends $u$ to $0$ and $v$ to $\epsilon_{2n-1}$, $v$ being in degree $2n-1$ and weight $n+1$.

Let us denote $T$ the fiber of
$${\mathbb{Z}_{(p)}}[u,v]/(u^{n+2},dv=u^{n+1}) \to {\mathbb{Z}_{(p)}} \oplus {\mathbb{Z}_{(p)}}[-2n-1]((n+1))$$
over ${\mathbb{Z}_{(p)}}$. We check that sending $u$ to $u$ defines a quasi-isomorphism
$${\mathbb{Z}_{(p)}}[u]/(u^{n+2}) \xrightarrow{\sim} T$$
which recovers the canonical projection
$${\mathbb{Z}_{(p)}}[u]/(u^{n+2}) \to {\mathbb{Z}_{(p)}}[u]/(u^{n+1})$$
after composing with $f_1$.

Therefore this diagram in homotopy pullback diagram on the underlying complexes : it is strictly a pullback diagram and $f_2$ is a fibration.

Since denormalization $D$ respects quasi-isomorphisms, applying $D$ yields the required diagram, which is a homotopy pullback diagram since it is on the underlying complexes and the forgetful functor from cosimplicial algebras to complexes is conservative.

\end{proof}

\begin{Le}
The diagram on $(\mathbb{N},\le)$ defined by the tower $(BS^1_{gr})_{\le n}$ admits $BS^1_{gr}$ as a colimit in affine stacks.
\end{Le}

\begin{proof}

The canonical projections ${\mathbb{Z}_{(p)}}[u] \to {\mathbb{Z}_{(p)}}[u]/(u^{n+1})$, where $u$ is in weight $1$ and cohomological degree $2$, are morphisms of graded cosimplicial algebras. This defines a morphism of graded cosimplicial algebras
$${\mathbb{Z}_{(p)}}[u] \to lim_{n} {\mathbb{Z}_{(p)}}[u]/(u^{n+1})$$
which is an equivalence on the underlying complexes, therefore it is an equivalence.

Passing to $Spec^{\Delta}$ yields the result, using Proposition \ref{BS1grequi}.

\end{proof} 

As we want to compute $Map_{St^{gr,*}}(BS^1_{gr},B\textbf{End}^0_{gr}(X))$, we first need a result :

\begin{Prop} \label{PropPushAgainst}
The commutative diagram, denoted $(\star)$, 
$$\begin{tikzcd}
(BS^1_{gr})_{\le 2n} \arrow[r]
& (BS^1_{gr})_{\le 2n+2} \arrow[ld, phantom, "\llcorner", very near start] \\
S^{2n+1}_{gr}(n+1) \arrow[u] \arrow[r]
& * \arrow[u]
\end{tikzcd}$$
is a pushout against $B\textbf{End}_{gr}^0(X)$, meaning
$$\begin{tikzcd}
Map_{St^{gr,*}}((BS^1_{gr})_{\le 2n},B\textbf{End}_{gr}^0(X)) \arrow[d]
& Map_{St^{gr,*}}((BS^1_{gr})_{\le 2n+2},B\textbf{End}_{gr}^0(X)) \arrow[l] \arrow[d] \\
Map_{St^{gr,*}}(S^{2n+1}_{gr}(n+1),B\textbf{End}_{gr}^0(X))  \arrow[ru, phantom, "\llcorner", very near end]
& * \arrow[l]
\end{tikzcd}$$
is a pullback diagram.
\end{Prop}

The result is not obvious as $B\textbf{End}_{gr}^0(X)$ need not be an affine stack.

\begin{Le} \label{QCohCell}
The diagram $QCoh(\star)$ is a pullback diagram, ie the following diagram is a pullback diagram.

\
$$\begin{tikzcd}
QCoh( (BS^1_{gr})_{\le 2n} ) \arrow[d]
& QCoh( (BS^1_{gr})_{\le 2n+2} ) \arrow[l] \arrow[d] \\
QCoh( S^{2n+1}_{gr}(n+1) )  \arrow[ru, phantom, "\llcorner", very near end]
& QCoh(*) \arrow[l]
\end{tikzcd}$$
\end{Le}

\begin{proof}
We first show the canonical morphism
$$QCoh( S^{2n+1}_{gr}(n+1) ) \to QCoh( (BS^1_{gr})_{\le 2n} ) \times_{QCoh( S^{2n+1}_{gr}(n+1) )} QCoh(*)$$
is an equivalence on bounded complexes.

We use the canonical t-structure on $QCoh(A)$ for $A$ an affine stack defined in Remark \ref{tstrQCoh}. With this dévissage argument, we are reduced to showing the morphism is an equivalence on hearts and on extension groups between objects in the heart.

The $1$-connectivity of the stacks, see Proposition \ref{conneAffSt}, identifies their quasi-coherent complexes with $Mod_{\mathbb{Z}_{(p)}}$. Therefore we have an equivalence on the heart.

We invoke Proposition \ref{extAffine}, we deduce
$$Map_{QCoh((BS^1_{gr})_{\le 2n})}(M,N) \simeq C((BS^1_{gr})_{\le 2n} ) \otimes Map_{Mod}(M,N)$$

Similar results for $QCoh( S^{2n+1}_{gr}(n+1) )$, $QCoh( S^{2n+1}_{gr}(n+1) )$ and $Mod$ hold. We deduce the result from the pullback diagram of $C (\star)$. Now, adding small limits gives the equivalence on eventually coconnective complexes, then left-completing the category with respect to the t-structure recovers the categories of quasi-coherent complexes and yields the equivalence, see \cite[Notation 1.2.12]{MRT20} and \cite[\S 6.2]{Lur18}.

\end{proof}

\begin{Le} \label{DiagPushLe}
Let $F$ be a stack such that
\begin{itemize}
\item $F$ is $1$-connective, ie $\pi_0(F) =*$ and $\pi_1(F)=*$.
\item $F$ is $n$-truncated for some $n$.
\item $\pi_i(F)$ is quasi-coherent for all $i$.
\end{itemize}
then the pushout diagram $(\star)$ is a pushout diagram against $F$.
\end{Le}

\begin{proof}
The Postnikov tower of $F$ exhibits $F_{\le i+1}$ as the fiber of
$$F_{\le i} \to K(\pi_{i+1},i+2)$$

Therefore, we can assume $F$ be an Eilenberg-MacLane stack $K(\pi,i)$. We have, for $Y$ a stack,
$$Map_{St}(Y,F)= Map_{QCoh(Y)}(\mathcal{O}_Y,p^*\pi[i])$$
where $p : Y \to Spec({\mathbb{Z}_{(p)}})$ is the canonical projection. Now using Lemma $\ref{QCohCell}$ and the fact that mapping spaces of fiber products of categories are given by fiber products of the mapping spaces, we deduce that
$$Map_{St}(-,F)$$
sends $\star$ to a pullback diagram.
\end{proof}

\begin{proof}[Proof of Proposition \ref{PropPushAgainst} ]
We combine \ref{QCohCell}, \ref{DiagPushLe} and the fact that $B\textbf{End}_{gr}^0(X)$ is truncated.
\end{proof}

We now move on to the computation of $Map_{St^{gr,*}}(BS^1_{gr},B\textbf{End}_{gr}^0(X))$ using Proposition \ref{PropPushAgainst}. We will compute $Map_{St^{gr,*}}(S^{2n+1}_f(n+1),B\textbf{End}^0_{gr}(X))$ using the Postnikov decomposition of $B\textbf{End}_{gr}^0(X)$. By convention, $\pi_n$ is $\pi_n(\textbf{End}_{gr}^0(X))$, it is a graded abelian sheaf on $\textbf{End}_{gr}^0(X)$. We will denote $\Gamma(\pi_n)$ the graded abelian group of global sections and $\Gamma^{\mathbb{G}_m}(\pi_n)$ the abelian group of weight $0$ functions.

We need a lemma to compute $Map_{St^{gr,*}}(S^1_{gr},B\textbf{End}^0_{gr}(X))$.

\begin{Le} \label{computeMapSphere}
Using the previous notations, we have an equivalence of spaces
$$Map_{St^{gr,*}}(S^{2n+1}_f(n+1),B\textbf{End}^0_{gr}(X)) \simeq 
\left \{
\begin{array}{ll}
B \Gamma^{\mathbb{G}_m}(\pi_1) & \text{if } n=0 \\
\Gamma^{\mathbb{G}_m}(\pi_2) & \text{if } n=1 \\
0 & \text{if } n>1
\end{array}
\right.$$

\end{Le}

\begin{proof}
The Postnikov tower exhibits the graded stack $(B\textbf{End}_{gr}^0(X))_{\le m}$ as a homotopy fiber
$$(B\textbf{End}_{gr}^0(X))_{\le m} \to
(B\textbf{End}_{gr}^0(X))_{\le m-1} \to
K(\pi_{m-1},m+1)$$

We deduce a triangle of topological spaces
$$\begin{tikzcd}
Map_{St^{gr,*}}(S^{2n+1}_f(n+1),(B\textbf{End}_{gr}^0(X))_{\le m}) \arrow[d] \\ Map_{St^{gr,*}}(S^{2n+1}_f(n+1),(B\textbf{End}_{gr}^0(X))_{\le m-1})\arrow[d] \\
 Map_{St^{gr,*}}(S^{2n+1}_f(n+1),K(\pi_{m-1},m+1))
\end{tikzcd}$$

Now, 
$$\pi_k Map_{St^{gr,*}}(S^{2n+1}_f(n+1),K(\pi_{m-1},m+1))$$
can be computed, following \cite[\S 1.3]{To06} as
$$\pi_k \Gamma(B\mathbb{G}_m,p_* p^* \pi_{m-1}[m+1])$$
where $p: S^{2n+1}_f(n+1) \to B\mathbb{G}_m$ is the structure morphism. The later is simply given by

$$ H_{\mathbb{G}_m}^{m+1-k}(S^{2n+1}_f(n+1),\pi_{m-1})$$

which is trivial when $n+1 \neq m-1$ as $\pi_{m-1}$ is in weight $m-1$. When $m=n+2$,
$$H_{\mathbb{G}_m}^{n+3-k}(S^{2n+1}_f(n+1),\pi_{n+1})$$
is non-trivial only when $k=2-n$, then it is simply $\Gamma^{\mathbb{G}_m}(\pi_{n+1})$.

We deduce
$$Map_{St^{gr,*}}(S^{1}_f(1),K(\pi_{1},3)) \simeq K(\Gamma^{\mathbb{G}_m}(\pi_1),2)$$

$$Map_{St^{gr,*}}(S^{3}_f(2),K(\pi_{2},4)) \simeq K(\Gamma^{\mathbb{G}_m}(\pi_2),1)$$

$$Map_{St^{gr,*}}(S^{5}_f(3),K(\pi_{3},5)) \simeq \Gamma^{\mathbb{G}_m}(\pi_3)$$

Using the fact that $B\textbf{End}^0_{gr}(X)$ is truncated, therefore its Postnikov tower converges, we have

$$Map_{St^{gr,*}}(S^{2n+1}_f(n+1),B\textbf{End}^0_{gr}(X)) \simeq \Omega_*(Map_{St^{gr,*}}(S^{2n+1}_f(n+1),K(\pi_{n+1},n+3)))$$
which yields the required result.

\end{proof}

\begin{Le}
For $n>1$,
$$Map_{St^{gr,*}}((BS^1_{gr})_{\le 2n+2},B\textbf{End}^0_{gr}(X)) \to Map_{St^{gr,*}}((BS^1_{gr})_{\le 2n} ,B\textbf{End}^0_{gr}(X))$$
is an equivalence.
\end{Le}

\begin{proof}
The result is easily deduced from Proposition \ref{PropPushAgainst} and Lemma \ref{computeMapSphere}.
\end{proof}

We need to compute the fiber product

$$\begin{tikzcd}
Map_{St^{gr,*}}((BS^1_{gr})_{\le 2},B\textbf{End}^0_{gr}(X)) \arrow[d] & Map_{St^{gr,*}}((BS^1_{gr})_{\le 4},B\textbf{End}^0_{gr}(X)) \arrow[d] \arrow[l] \arrow[dl, phantom, "\llcorner", very near start] \\
Map_{St^{gr,*}}(S^3(2),B\textbf{End}^0_{gr}(X)) & * \arrow[l]
\end{tikzcd}$$

that is

$$\begin{tikzcd}
\Gamma^{\mathbb{G}_m}(\pi_1) \arrow[d] & Map_{St^{gr,*}}((BS^1_{gr})_{\le 4},B\textbf{End}^0_{gr}(X)) \arrow[d] \arrow[l] \arrow[dl, phantom, "\llcorner", very near start] \\
\Gamma^{\mathbb{G}_m}(\pi_2) & * \arrow[l]
\end{tikzcd}$$

Intuitively, we want $\Gamma^{\mathbb{G}_m}(\pi_1) \mapsto \Gamma^{\mathbb{G}_m}(\pi_2)$ to send a pair $(\delta,d)$, with $\delta : A \to M$ and $d : M \to \Lambda M$, to its "composition with itself". The equation $d \circ d = 0$ involves $Sym_A(M[1])$ as a complex, without the full simplicial algebra structure. We first make the following reduction.

\begin{Le}
The canonical commutative diagram
$$\begin{tikzcd}
Map_{St^{gr,*}}( (BS^1)_{\le 4},B\textbf{End}^0_{SCR}(X) ) \arrow[d] \arrow[r] & Map_{St^{gr,*}}( (BS^1)_{\le 4},B\textbf{End}_{Mod}(X) ) \arrow[d] \\
Map_{St^{gr,*}}( S^2_f(1),B\textbf{End}^0_{SCR}(X) ) \arrow[r] & Map_{St^{gr,*}}( S^2_f(1),B\textbf{End}_{Mod}(X) )
\end{tikzcd}$$
is a pullback diagram.
\end{Le}

\begin{proof}
We have the following commutative diagram

$$\begin{tikzcd}
Map_{St^{gr,*}}( (BS^1)_{\le 4},B\textbf{End}^0_{SCR}(X) ) \arrow[d] \arrow[r] & Map_{St^{gr,*}}( (BS^1)_{\le 4},B\textbf{End}_{Mod}(X) ) \arrow[d] \\
Map_{St^{gr,*}}( S^2_f(1),B\textbf{End}^0_{SCR}(X) ) \arrow[d] \arrow[r] & Map_{St^{gr,*}}( S^2_f(1),B\textbf{End}_{Mod}(X) )\arrow[d] \\
Map_{St^{gr,*}}( S^3_f(2),B\textbf{End}^0_{SCR}(X) ) \arrow[r] & Map_{St^{gr,*}}( S^3_f(2),B\textbf{End}_{Mod}(X) ) 
\end{tikzcd}$$

showing a natural transformation of triangles of spaces. Formality of $\textbf{End}_{Mod}(X)$ allows for a computation of $Map_{St^{gr,*}}( S^3_f(2),B\textbf{End}_{Mod}(X) )$ and $Map_{St^{gr,*}}( S^2_f(1),B\textbf{End}_{Mod}(X) ) $ using similar methods as before. We compute the homotopy groups of $\textbf{End}^0_{Mod}(X)$ :
$$\Gamma^{\mathbb{G}_m}(\pi_k(\textbf{End}^0_{Mod}(X))) = \bigoplus_n Hom(\Lambda^n M,\Lambda^{n+k} M)$$

A Postnikov decomposition argument yields
$$ Map_{St^{gr,*}}( S^3_f(2),B\textbf{End}_{Mod}(X) ) \simeq \Gamma^{\mathbb{G}_m}(\pi_2 (\textbf{End}^0_{Mod}(X) )) = \bigoplus_n Hom(\Lambda^n M,\Lambda^{n+2} M)$$
is discrete. Similarly,
$$ Map_{St^{gr,*}}( S^2_f(1),B\textbf{End}_{Mod}(X) ) \simeq \Gamma^{\mathbb{G}_m}(\pi_1 (\textbf{End}^0_{Mod}(X) )) = \bigoplus_n Hom(\Lambda^n M,\Lambda^{n+1} M)$$
is also discrete. To show the commutative diagram of the lemma is a pullback diagram, we simply show
$$Map_{St^{gr,*}}( S^3_f(2),B\textbf{End}^0_{SCR}(X) )  \to Map_{St^{gr,*}}( S^3_f(2),B\textbf{End}_{Mod}(X) ) $$
is injective. But this map is the inclusion
$$\Gamma^{\mathbb{G}_m}(\pi_2) \subset  \bigoplus_n Hom(\Lambda^n M,\Lambda^{n+2} M)$$

\end{proof}

We need to compute the fiber of the morphism
$$Map_{St^{gr,*}}( S^2_f(1),B\textbf{End}_{Mod}(X) ) \to Map_{St^{gr,*}}( S^3_f(2),B\textbf{End}_{Mod}(X) )$$

Taking $B$ a test commutative algebra,
$$B\textbf{End}_{QCoh(Spec({\mathbb{Z}_{(p)}}))}(X)(B) \simeq B\textbf{End}_{QCoh(B)}(Sym_{A_B}(M_B[1]))$$
which is the connected component of the space $QCoh(B)$ at $Sym_{A_B}(M_B[1])$.

This defines a morphism of stacks $B\textbf{End}_{QCoh(Spec({\mathbb{Z}_{(p)}}))}(X) \to QCoh(-)$ that is fully faithful, meaning it induces an equivalence on homotopy sheaves $\pi_k$, for $k>0$.

This induces a commutative diagram
$$\begin{tikzcd}
Map_{St^{gr,*}}( S^2_f(1),B\textbf{End}_{Mod}(X) ) \arrow[d,hook] \arrow[r] & Map_{St^{gr,*}}( S^3_f(2),B\textbf{End}_{Mod}(X) ) \arrow[d,hook]  \\
Map_{St^{gr,*}}( S^2_f(1),QCoh(-) ) \arrow[r] & Map_{St^{gr,*}}( S^3_f(2),QCoh(-) )
\end{tikzcd}$$

Therefore we identify the fiber of the top arrow with the fiber of the bottom arrow.

We now compute the fiber of
$$QCoh(S^2_f(1)) \simeq {\mathbb{Z}_{(p)}}[u]/u^2-Mod \to {\mathbb{Z}_{(p)}}[v]-Mod \simeq QCoh(S^3_f(2))$$
using a Koszul duality argument.

Using Remark \ref{tstrQCoh}, we can simply compute the fiber of the morphism
$${\mathbb{Z}_{(p)}}[u]/u^2-Mod \to {\mathbb{Z}_{(p)}}[v]-Mod$$
where $u$ is in degree $2$ and $v$ is in degree $3$.

\begin{Le} [Koszul duality]
We have equivalences
$${\mathbb{Z}_{(p)}}\{\alpha\}-Mod \simeq IndCoh({\mathbb{Z}_{(p)}}[u]/u^2-Mod)$$
and
$${\mathbb{Z}_{(p)}}\{\beta\}-Mod \simeq IndCoh({\mathbb{Z}_{(p)}}[v]-Mod)$$
where ${\mathbb{Z}_{(p)}}[\alpha]$ is the free differential graded algebra on one generator in degree $1$, ${\mathbb{Z}_{(p)}}[\beta]$ is the free differential graded algebra on one generator in degree $2$ and $IndCoh(-)$ denotes the completion by filtered colimits applied to the quasi-coherent complex construction $QCoh(-)$ .

Moreover, the induced morphism
$${\mathbb{Z}_{(p)}}\{\alpha\}-Mod \to {\mathbb{Z}_{(p)}}\{\beta\}-Mod$$
sends $(M,\alpha : M \to M[-1])$ to $(M,\alpha \circ \alpha : M \to M[-2])$, that is the forgetful functor induced by
$$\alpha \in {\mathbb{Z}_{(p)}}\{\alpha\} \mapsto \alpha \circ \alpha \in {\mathbb{Z}_{(p)}}\{\beta\}$$
\end{Le}

\begin{proof}
The functor $Hom_{{\mathbb{Z}_{(p)}}[u]/u^2}({\mathbb{Z}_{(p)}},-)$ gives an adjunction
$$End_{{\mathbb{Z}_{(p)}}[u]/u^2}({\mathbb{Z}_{(p)}})-Mod \leftrightarrows IndCoh({\mathbb{Z}_{(p)}}[u]/u^2-Mod)$$
which is an equivalence of category using Schwede-Shiple-Lurie's theorem, see \cite[Theorem 7.1.2.1]{Lur17}. We can take ${\mathbb{Z}_{(p)}}$ as a compact generator of $IndCoh({\mathbb{Z}_{(p)}}[u]/u^2-Mod)$.

Similarly, $Hom_{{\mathbb{Z}_{(p)}}[v]}({\mathbb{Z}_{(p)}},-)$ induces the equivalence
$$End_{{\mathbb{Z}_{(p)}}[v]}({\mathbb{Z}_{(p)}})-Mod \leftrightarrows IndCoh({\mathbb{Z}_{(p)}}[v]-Mod)$$

Now to conclude the proof of the lemma, we only need to show
$$Ext^2_{{\mathbb{Z}_{(p)}}[v]}({\mathbb{Z}_{(p)}},{\mathbb{Z}_{(p)}}) \simeq {\mathbb{Z}_{(p)}} \to {\mathbb{Z}_{(p)}} \simeq Ext^2_{{\mathbb{Z}_{(p)}}[u]/u^2}({\mathbb{Z}_{(p)}},{\mathbb{Z}_{(p)}})$$
is an isomorphism. The standard resolutions of ${\mathbb{Z}_{(p)}}$ yields the required result.
\end{proof}

The fiber of ${\mathbb{Z}_{(p)}}\{\alpha\}-Mod \to {\mathbb{Z}_{(p)}}\{\beta\}-Mod$ is given by $(M,\alpha : M \to M[-1])$ where $M$ is a complex and $\alpha \circ \alpha \simeq 0$ : which is the relation we wanted.

\end{proof}

\subsubsection{Mixed structures classification : the Dieudonné case}

\begin{Thm} \label{classiDieud}

Let $A$ be smooth commutative ${\mathbb{Z}_{(p)}}$-algebra, $M$ a projective $A$-module of finite type. We fix a derived Frobenius lift structure on the graded  simplicial algebra $Sym_A(M[1])$, with $M$ in weight $1$. From Proposition \ref{FLSymStr}, it is equivalent to a classical Frobenius lift $F$ on $A$ and a linear map of $A$-modules $\phi : M \to M$. We define $X$ as the derived affine scheme $Spec(Sym_A(M[1]))=\mathbb{V}(M[1])$ endowed with its natural grading, we regard it as an element of $dSt^{gr,Frob}$. The classifying space of Dieudonné mixed graded structures on $X$ compatible with its grading and Frobenius structure is discrete and in bijection with the set of Dieudonné algebra structures on the graded commutative ${\mathbb{Z}_{(p)}}$-algebra $\bigoplus_i \wedge^i_A M [-i]$ endowed with its natural canonical Frobenius lift structure.

\end{Thm}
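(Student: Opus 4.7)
The plan is to adapt the strategy of Theorem \ref{classifClassique} to the Dieudonné setting, carefully tracking the Frobenius compatibilities which will produce the Dieudonné relation $dF = pFd$.

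First, I reduce the problem to a mapping space computation. The classifying space of Dieudonné mixed graded structures on $X$ compatible with its fixed grading and Frobenius lift is the fiber over $X$ of the forgetful functor $S^1_{gr}\textrm{-}dSt^{gr,Frob} \to dSt^{gr,Frob}$. Arguing as in Theorem \ref{classifClassique} and using the connectivity of $S^1_{gr}$ in $dSt^{gr,Frob}$, this fiber identifies with
$$Map_{dSt^{gr,Frob},*}(BS^1_{gr}, B\textbf{End}^0_{gr,Fr}(X)),$$
where $\textbf{End}^0_{gr,Fr}(X)$ denotes the connected component of the identity in the derived stack of endomorphisms of $X$ taken in $dSt^{gr,Frob}$.

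Second, I apply the cellular decomposition of $BS^1_{gr}$ constructed in the proof of Theorem \ref{classifClassique}. Each pushout square
$$\begin{tikzcd}
S^{2n+1}_{gr}(n+1) \arrow[r] \arrow[d] & \ast \arrow[d] \\
(BS^1_{gr})_{\le 2n} \arrow[r] & (BS^1_{gr})_{\le 2n+2}
\end{tikzcd}$$
lifts canonically to $dSt^{gr,Frob}$: since the Frobenius on $BS^1_{gr}$ sends $u \mapsto pu$ in $\mathbb{Z}_{(p)}[u]$, it restricts to multiplication by $p^{n+1}$ on weight $n{+}1$ pieces, so the cofibering formal sphere $S^{2n+1}_{gr}(n+1)$ inherits the Frobenius $[p^{n+1}]$. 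The pushout-against-$B\textbf{End}^0$ argument of Lemma \ref{DiagPushLe} adapts directly by noting that $dSt^{gr,Frob}$ is an $\infty$-topos (Proposition \ref{DieudTopos}) and its Eilenberg–MacLane objects still classify the relevant cohomology in the Frobenius-equivariant setting.

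Third, I compute the homotopy sheaves of $\textbf{End}^0_{gr,Fr}(X)$. By Proposition \ref{FLSymStr}, an endomorphism of $X$ in $dSt^{gr,Fr}$ is controlled by Frobenius-equivariant analogues of the derivation/linear-map pair of Proposition \ref{piEnd}, so $\pi_k$ is obtained from the non-Dieudonné $\pi_k$ by cutting down to pairs $(d_0,d_1)$ commuting with $(F,\phi)$ in the appropriate sense. The crucial twist appears in computing $Map_{dSt^{gr,Frob},*}(S^{2n+1}_{gr}(n+1), K(\pi_{n+1}, n+3))$: the Frobenius $[p^{n+1}]$ on the sphere and the Frobenius on the target combine to form colax fixed points of $[p]^*$ acting on $\Gamma^{\mathbb{G}_m}(\pi_{n+1})$. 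At the $n{=}0$ level this yields pairs $(d_0,d_1)$ with $d_0 F = p F d_0$ (and the analogous relation on $d_1$), and the Koszul-duality obstruction computation at the $4$-skeleton (unchanged in structure but now carried out in $dSt^{gr,Frob}$) produces the relation $d \circ d = 0$. Combining these identifies the classifying space with the discrete set of commutative differential graded algebra structures on $\bigoplus_i \bigwedge^i_A M[-i]$ equipped with a Frobenius $F$ satisfying $F(x) = \phi(x)$ in degree zero and $dF = pFd$ in higher degrees, i.e.\ a Dieudonné algebra structure extending the canonical Frobenius lift.

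The main obstacle will be the careful bookkeeping of $p$-twists in the Frobenius-equivariant mapping space computations: one must verify that the factor $p^{n+1}$ coming from the sphere's Frobenius combines with the Frobenius action on $\pi_{n+1}$ to produce \emph{exactly} the colax fixed points encoding $dF = pFd$, rather than a different power of $p$. A clean way to handle this is to pass through the equivalence $\epsilon\textrm{-}D\textrm{-}Mod^{gr} \simeq CFP_{[p]^*}(\epsilon\textrm{-}Mod^{gr})$ of Proposition \ref{TwoDefDieudComplex}, so the Dieudonné relation is forced by the colax structure at the level of mapping spaces rather than checked by hand.
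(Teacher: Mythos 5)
Your proposal takes a genuinely different route from the paper, and the difference is substantial enough to affect both the difficulty of the argument and where the key subtleties lie.

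The paper does \emph{not} redo the cell decomposition of $BS^1_{gr}$ in the Frobenius-equivariant setting. Instead it first proves two contractibility lemmas — that $\underline{\textbf{End}}^0_{gr,endo}(X)(0) \simeq *$ and, as a consequence, that the Frobenius lift datum on the identity component of the endomorphism stack is essentially unique (Lemmas \ref{End0} and \ref{EndFrendo}). This shows that, after restricting to the identity component, remembering a Frobenius-lift structure is \emph{no extra data} beyond remembering the underlying endomorphism, and so the classification problem in $dSt^{gr,Frob}$ collapses to the same problem in $dSt^{gr,endo}$. The paper then rewrites $Map_{Gp(dSt^{gr,endo})}(S^1_{gr},\underline{\textbf{End}}^0_{gr,endo}(X))$ as an equalizer, via Proposition \ref{Endendo}, of two maps on $Map_{Gp(dSt^{gr})}(S^1_{gr},\textbf{End}^0_{gr}(X))$ — one precomposing by $[p]$, the other shifting by the endomorphism $\phi_X$. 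At that point Theorem \ref{classifClassique} is applied as a black box: the maps $f:S^1_{gr}\to\textbf{End}^0_{gr}(X)$ are already classified, and the equalizer condition is unwound directly on the pair $(\delta,d)$ to yield exactly the Dieudonné compatibility $\phi_M\delta = p\,\delta\phi_A$ and its degree-one analogue. No Frobenius-equivariant re-derivation of the Postnikov or Koszul-duality steps is needed.

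Your route, by contrast, asks for the cell decomposition, the pushout-against lemma, and the mapping-space computations all to be lifted internally to $dSt^{gr,Frob}$. This is plausible in outline, but it imports exactly the kind of overhead the paper's reduction is designed to avoid: you would need a Frobenius-equivariant analogue of Lemma \ref{QCohCell} (a pullback of categories of quasi-coherent complexes with colax $[p]^*$-fixed point structures), a Frobenius-equivariant Lemma \ref{DiagPushLe}, and a precise identification of $Map_{dSt^{gr,Frob},*}(S^{2n+1}_{gr}(n+1), K(\pi_{n+1}, n+3))$ as something like colax fixed points of a $[p]^*$-action twisted by $p^{n+1}$. You flag the bookkeeping of these $p$-twists yourself as the main obstacle, and that concern is well founded: the paper's equalizer formulation packages the entire Frobenius compatibility into a single constraint $prec_{\phi_X}\circ f \circ [p] = post_{\phi_X}\circ f$ applied after the classical classification, so there is only one place the factor of $p$ enters rather than a family of factors $p^{n+1}$ distributed across skeleta. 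The short version: your idea of routing through $\epsilon\textrm{-}D\textrm{-}Mod^{gr} \simeq CFP_{[p]^*}(\epsilon\textrm{-}Mod^{gr})$ is the right instinct, but the crucial step you are missing is the reduction from ``Frobenius lift'' to ``endomorphism'' enabled by the contractibility of the weight-zero part $\textbf{End}^0(X)(0)$. Without that step you are forced to carry the mod-$p$ homotopy data through every stage of the skeletal induction, which is exactly where your proposal becomes unmanageable.
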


\begin{proof}
The classifying space of mixed structures is given by the fiber of 
the forgetful functor
$$S^1_{gr}-dSt^{gr,Frob} \to dSt^{gr,Frob}$$
over $X$.

The classifying space is given by the mapping space $Map_{Mon(dSt^{gr,Fr})}(S^1_{gr},\underline{\textbf{End}}_{gr,Fr}(X))$, here $\underline{\textbf{End}}_{gr,Fr}(X)$ is a monoid in graded derived stack endowed with a Frobenius lift. By connexity of $S^1_{gr}$, the classifying space is equivalent to
$$Map_{Mon(dSt^{gr,Fr})}(S^1_{gr},\underline{\textbf{End}}^0_{gr,Fr}(X))$$
which is equivalent to the mapping space of pointed stacks $$Map_{dSt^{gr,Fr,*}}(BS^1_{gr},B\underline{\textbf{End}}^0_{gr,Fr}(X))$$

Here $\underline{\textbf{End}}^0_{gr,Fr}(X)$ is the full subcategory on $\underline{\textbf{End}}_{gr,Fr}(X)$ on endomorphisms that are homotopy equivalent to identity. Since $BS^1_{gr}$ is a stack, we may consider $B\underline{\textbf{End}}^0_{gr,Fr}(X)$ as a stack in $St^{gr,Fr} \subset dSt^{gr,Fr}$.

We first reduce the classification to the computation of mapping spaces compatible with endomorphisms instead of Frobenius structures. The mapping space $Map_{Gp(dSt^{gr})^{Fr}}(S^1_{gr},\underline{\textbf{End}}_{gr,Fr}^0(X))$ is computed by the fiber product
$$Map_{Gp(dSt^{gr})^{endo}}(S^1_{gr},Z) \times_{Map_{Gp(dSt_p^{gr})^{endo}}(S^1_{gr,p}(0),Z_p (0))}
Map_{Gp(dSt_p^{gr})}(S^1_{gr,p}(0),Z_p (0))$$
where $Z \coloneqq \underline{\textbf{End}}_{gr,Fr}^0(X)$ and the $p$ index denotes base changing to $\mathbb{F}_p$.

We start with a lemma to compute $Z_p(0)$

\begin{Le} \label{End0}
We have
$$\underline{\textbf{End}}^0_{gr,endo}(X)(0) \simeq *$$
\end{Le}

\begin{proof}
By definition of $\textbf{End}^0$, we have a triangle of derived stacks
$$\underline{\textbf{End}}^0_{gr,endo}(X) \to \underline{\textbf{End}}_{gr,endo}(X) \to \pi_0(\underline{\textbf{End}}_{gr,endo}(X))$$
and since taking $0$-weighted invariants of a graded stack preserve limits (see Proposition \ref{Weight0}), we have an induced triangle
$$\textbf{End}^0_{gr,endo}(X)(0) \to \textbf{End}_{gr,endo}(X)(0) \to \pi_0(\textbf{End}_{gr,endo}(X))(0) \simeq \pi_0(\textbf{End}_{gr,endo}(X)(0))$$

We only need to see that $\textbf{End}_{gr,endo}(X)(0)$ is a discrete stack. Now, by Proposition \ref{Endendo}, $\textbf{End}_{gr,endo}(X)$ is computed as an equalizer
$$\textbf{End}_{gr,endo}(X) \simeq eq(\textbf{End}_{gr}(X)^\mathbb{N} \rightrightarrows \textbf{End}_{gr}(X)(0)^\mathbb{N})$$
which gives on $0$ weighted invariants :
$$\textbf{End}_{gr,endo}(X)(0) \simeq eq(\textbf{End}_{gr}(X)(0)^\mathbb{N} \rightrightarrows \textbf{End}_{gr}(X)(0)^\mathbb{N})$$

From the proof of Theorem \ref{classifClassique}, $\textbf{End}_{gr}(X)(0)$ is discrete, which concludes the proof.
\end{proof}

\begin{Le} \label{EndFrendo}
If we denote $\phi$ the endomorphism of $\textbf{End}^0_{gr,Fr}(X)$ and $h'$ the homotopy between $\phi_{\mkern 1mu \vrule height 2ex\mkern2mu p}$ and $Fr_p$ and $\psi$ the endomorphism of $\textbf{End}^0_{gr,endo}(X)$. We have an equivalence between the underlying graded derived stacks endowed with endomorphisms
$$(\underline{\textbf{End}}^0_{gr,Fr}(X),\phi) \simeq (\underline{\textbf{End}}^0_{gr,endo}(X),\psi)$$
meaning that we forget the "being homotopic to the canonical Frobenius modulo $p$" part on the left hand side.
\end{Le}

\begin{proof}
We will promote $(\underline{\textbf{End}}^0_{gr,endo}(X),\psi)$ from an object with an endomorphism to an object with a Frobenius lift in an essentially unique way, then we will show that this object has the universal property that holds for $(\underline{\textbf{End}}^0_{gr,Fr}(X),\phi)$.

Promoting $(\textbf{End}^0_{gr,endo}(X),\psi)$ to a graded derived stack with a Frobenius has
$$Map_{\underline{\textbf{End}}^0_{gr,endo}(X)_p(0)}(\psi_p,Fr_p)$$ as a space of choices. We know, by Lemma \ref{End0}, that $$Map_{\underline{\textbf{End}}^0_{gr,endo}(X)_p(0)}(\psi_p,Fr_p)$$
is discrete and equivalent to a point, therefore $Map_{\underline{\textbf{End}}^0_{gr,endo}(X)_p(0)}(\psi_p,Fr_p)$ is contractible and $(\textbf{End}^0_{gr,endo}(X),\psi)$ can be promoted to an object with Frobenius in an essentially unique way : $(\textbf{End}^0_{gr,endo}(X),\psi,h)$.

Let $(T, \lambda,h_T)$ be an object of $dSt^{Fr}$ and we write $\mu$ the endomorphism of $X$ and $h_X$ the homotopy between $\mu_p$ and $Fr_p$.

The data of a morphism
$$(T,\lambda) \times (X,\mu) \to (X,\mu)$$
is equivalent to the data of a morphism

$$(T,\lambda) \to (\underline{\textbf{End}}^0_{gr,endo}(X),\psi)$$
by the universal property of $(\underline{\textbf{End}}^0_{gr,endo}(X),\psi)$, which is equivalent to the data of a morphism of objects with Frobenius lifts
$$(T,\lambda,h_T) \to (\underline{\textbf{End}}^0_{gr,endo}(X),\psi,h)$$

since the choice of compatibility of the morphism of endomorphism objects with the Frobenius lift structures is given by the data of a commutative cube as follows :

\begin{center}
\begin{tikzcd}[row sep=1.5em, column sep = 1.5em]
Y_p(0) \arrow[rr] \arrow[dr,"id"] \arrow[dd,"\lambda_p"] \arrow[rddd,phantom,"\sim (h_T)"]
&& Z_p(0) \arrow[dd,"\phi_p",near end] \arrow[dr,"id"] \arrow[rddd,phantom,"\sim (h)"] \\
& T_p(0) \arrow[rr] \arrow[dd,"Fr_p", near start]&&
Z_p(0) \arrow[dd,"Fr_p"] \\
T_p(0) \arrow[rr] \arrow[dr, "id"]
&& Z_p(0) \arrow[dr,"id"] \\
& Y_p(0) \arrow[rr]
&& Z_p(0)
\end{tikzcd}
\end{center}

where $Z \coloneqq \underline{\textbf{End}}^0_{gr,endo}(X)$. This data lives in a $2$-simplex in $Map_{gr}(Y_p(0),Z_p(0))$ and since $Z_p(0)$ is contractible, so is $Map_{gr}(Y_p(0),Z_p(0))$, therefore the morphism of endomorphism objects extends in an essentially unique way to a morphism of objects with Frobenius lifts.

Now a similar argument shows that any morphism
$$(T,\lambda) \times (X,\mu) \to (X,\mu)$$
extends in an essentially unique way to a morphism of objects with Frobenius lifts
$$(T,\lambda,h_T) \times (X,\mu,h_X) \to (X,\mu,h_X)$$

Therefore $(\underline{\textbf{End}}^0_{gr,endo}(X),\psi,h)$ satisfies the universal property of $(\underline{\textbf{End}}^0_{gr,Fr}(X),\phi,h')$ and they are equivalent.
\end{proof}

\begin{Le}
We have
$$\underline{\textbf{End}}_{gr,Fr}^0(X)_p(0) \simeq *$$
\end{Le}

\begin{proof}
It is enough to show that $\underline{\textbf{End}}_{gr,Fr}^0(X)(0)$ is contractible. We simply combine Lemma \ref{End0} and Lemma \ref{EndFrendo}.

\end{proof}

We deduce that
$$Map_{Gp(dSt^{gr,Fr})}(S^1_{gr},\underline{\textbf{End}}_{gr,Fr}^0(X)) \simeq Map_{Gp(dSt^{gr,endo})}(S^1_{gr},\underline{\textbf{End}}_{gr,endo}^0(X))$$

We have reduced the study of the Frobenius structure to a mere endomorphism structure. The previous mapping space is given by the equalizer
$$Map_{Gp(dSt^{gr})}(S^1_{gr},\underline{\textbf{End}}_{gr,endo}^0(X)) \doublerightarrow Map_{Gp(dSt^{gr})}(S^1_{gr},\underline{\textbf{End}}_{gr,endo}^0(X))$$
where the top map is given by precomposing by $[p]$ and the bottom map is postcomposing by $S$ the endomorphism of $\underline{\textbf{End}}_{gr,endo}^0(X)$.

Now, an element of $Map_{Gp(dSt^{gr})}(S^1_{gr},\underline{\textbf{End}}_{gr,endo}^0(X))$
can be described using Proposition \ref{Endendo} as the set of maps
$$f_k : S^1_{gr} \to \textbf{End}_{gr}^0(X)$$
with $prec_{\phi_X} \circ f_{k+1} = post_{\phi_X} \circ f_k$, where $prec$ and $post$ are the precomposing and postcomposing morphisms. The equalizer is then given by a family $(f_k)$ such that $f_{k+1} = f_k \circ [p]$. This amount to the data of
$$f : S^1_{gr} \to \textbf{End}_{gr}^0(X)$$
such that $prec_{\phi_X} \circ f \circ [p] = post_{\phi_X} \circ f_k$.

We now use the classification for classical mixed structures, Theorem \ref{classifClassique}, to identify the element
$$ f : S^1_{gr} \to \textbf{End}_{gr}^0(X)$$
with a couple $(\delta : A \to M, d : M \to \Lambda^2_A M)$ satisfying the usual equations. Unwinding the action of the endomorphisms on $f$. We see that precomposing with $\phi_X$ induces the postcomposition map
$$(\delta,d) \mapsto (\phi_M \circ \delta, (\phi_M \wedge \phi_M) \circ d)$$

Similarly, postcomposing with $\phi_X$ induces the precomposition map
$$(\delta,d) \mapsto (\delta \circ \phi_A, d \circ \phi_M)$$

And finally the action of $[p]$ is multiplication by $p$
$$(\delta,d) \mapsto (p \delta,pd)$$

The equations are therefore
$$ \phi_M \circ \delta = p \delta \circ \phi_A$$
and
$$ (\phi_M \wedge \phi_M) \circ d = p d \phi_M$$
which are the relations required for $(\delta,d)$ to define a Dieudonné algebra structure on $\bigoplus_i \wedge^i_A M [-i]$.
\end{proof}

\begin{Cor} \label{CorClassDieudStr}
With the notations of Theorem \ref{classiDieud}, the graded derived affine scheme $Spec(Sym_A(\Omega_A[1]))$ admits a unique Dieudonné structure induced by the standard Dieudonné structure on $\bigoplus_{i \ge 0} \Lambda_A^i \Omega_A$.
\end{Cor}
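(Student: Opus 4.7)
The plan is to apply Theorem~\ref{classiDieud} to the case $M = \Omega_A$ and then reduce to the classical uniqueness statement for the Dieudonné structure on the de Rham complex, namely Proposition~\ref{FormDieud}. The key observation is that the corollary packages two distinct things: existence of a Dieudonné lift of the standard de Rham structure, and uniqueness among all compatible Dieudonné structures. Both are handled by a single invocation of the classification theorem.

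First I would fix the Frobenius lift data on $Sym_A(\Omega_A[1])$. By Proposition~\ref{FLSymStr}, such a lift is equivalent to a classical Frobenius lift $F$ on $A$ together with an $(A,F)$-linear map $\phi : \Omega_A \to \Omega_A$. The standard formula $F(dx) = x^{p-1} dx + d\bigl(\tfrac{F(x)-x^p}{p}\bigr)$ extends $F$ to a ring endomorphism of $\Omega_A^\bullet$, and its restriction to $\Omega_A$ is $(A,F)$-linear; this is the $\phi$ that defines the Frobenius structure on $Spec(Sym_A(\Omega_A[1]))$ referred to in the statement. All subsequent comparisons are relative to this choice.

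Next, by Theorem~\ref{classiDieud}, the classifying space of Dieudonné mixed graded structures on $X = Spec(Sym_A(\Omega_A[1]))$ compatible with the grading and the Frobenius lift $(F,\phi)$ is discrete and in bijection with the set of Dieudonné algebra structures on the graded commutative $\mathbb{Z}_{(p)}$-algebra $\bigoplus_i \Lambda_A^i \Omega_A [-i] = \Omega_A^\bullet$ that extend the chosen $(A,F,\phi)$. Thus both the existence and the uniqueness of a Dieudonné structure on $X$ are translated into the same questions for the de Rham complex viewed as a classical Dieudonné algebra.

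Finally, Proposition~\ref{FormDieud} asserts that for a $p$-torsion-free ring $R$ equipped with a classical Frobenius lift, the de Rham complex $\Omega_R^\bullet$ carries a unique Dieudonné algebra structure extending $F$, given by the explicit formulas above. Applied to $R=A$, this says the set produced in the previous paragraph is a singleton, so the Dieudonné structure on $X$ exists and is unique, and coincides by construction with the one induced from the standard Dieudonné structure on $\bigoplus_i \Lambda_A^i \Omega_A$. The argument has essentially no obstacle: the whole point of Theorem~\ref{classiDieud} was to reduce this sort of derived-geometric classification to a strict statement, and Proposition~\ref{FormDieud} supplies exactly the strict uniqueness needed.
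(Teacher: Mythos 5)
Your reading of the word ``unique'' is stronger than what the corollary asserts, and the step that is supposed to deliver that stronger reading does not hold. The corollary says: under the bijection of Theorem~\ref{classiDieud}, the standard Dieudonné algebra structure on $\Omega_A^\bullet = \bigoplus_i \Lambda_A^i \Omega_A[-i]$ picks out a well-defined point of the (discrete) classifying space, i.e.\ a Dieudonné mixed graded structure on $Spec(Sym_A(\Omega_A[1]))$ with no higher ambiguity. It does \emph{not} claim that this is the only Dieudonné mixed graded structure on that graded derived scheme. Indeed, Theorem~\ref{classiDieud} identifies all such structures with all strict Dieudonné algebra structures on $\Omega_A^\bullet$ compatible with the fixed Frobenius data $(F,\phi)$, and that set is not a singleton: the zero differential also satisfies $dF = pFd$ and the Leibniz and squaring conditions, so $(\Omega_A^\bullet, 0, F)$ is another perfectly good Dieudonné algebra with the same underlying graded ring and the same Frobenius. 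So ``uniqueness among all compatible Dieudonné structures'' is false, and the corollary is not asserting it.

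This also means your appeal to Proposition~\ref{FormDieud} is misplaced. That proposition is the adjunction / universal-property statement
\[
Hom_{\textbf{DA}}(\Omega_R^\bullet, A) \;\xrightarrow{\ \sim\ }\; Hom_{CRing^{Fr}}(R, A^0),
\]
which characterizes morphisms \emph{out of} the de Rham Dieudonné algebra; it does not say that $\Omega_R^\bullet$ carries a unique Dieudonné algebra structure extending $F$. (The unnumbered proposition before it — \cite[Proposition 3.2.1]{BLM22} — gives uniqueness of the Frobenius $F$ on $\Omega_R^\bullet$ subject to two prescribed formulas, which is again not uniqueness of the differential.) What the corollary actually needs from Theorem~\ref{classiDieud} is just: (i) the classifying space is discrete, and (ii) it is in bijection with the set of Dieudonné algebra structures on $\Omega_A^\bullet$. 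Existence of the standard Dieudonné structure, as in \cite[Proposition 3.2.1]{BLM22}, then produces one specific element, and discreteness is exactly the ``unique'' in the statement. Your first two paragraphs set this up correctly; drop the final paragraph's singleton claim and the argument is the paper's.
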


\subsubsection{De Rham-Witt complex}

\begin{Cons}
The functor of "simplicial functions"
$$\mathcal{O} : dSt^{op} \to SCR$$
induces a functor from $G$-equivariant derived stacks, where $G$ is a group in $dSt$ :
$$\mathcal{O} : G-dSt^{op} \to G-SCR$$
where $G-SCR$ is abusively defined as the category of relative affine derived schemes over $BG$. Therefore we can see this construction as taking the scheme-affinization of a derived stack, relatively to $BG$, using Proposition \ref{GActionStack}.

By taking $G = S^1_{gr} \rtimes (\mathbb{G}_m \times \mathbb{N})$, we can check that we obtain a functor
$$\mathcal{O} : \epsilon-D-dSt^{gr,op} \to \epsilon-D-SCR$$
\end{Cons}

\begin{Def}
Let $(A,F)$ be a simplicial algebra with a Frobenius lift, we define the Dieudonné de Rham complex of $(A,F)$ as
$$\mathcal{O}(\mathcal{L}^{gr,Fr}(Spec(A),Spec(F)))$$
where $\mathcal{O}$ is the functor of "simplicial functions" on derived Dieudonné stacks
$$\epsilon-D-dSt^{gr,op} \to \epsilon-D-SCR^{gr}$$
defined in the previous proposition. We denote the element we obtain $DDR(A,F)$.
\end{Def}

\begin{Thm} \label{ComparaisonDRW}
Let $A$ be a smooth discrete $p$-torsion-free algebra, the Dieudonné de Rham complex coincides with the de Rham complex endowed with its classical Dieudonné structure defined in \cite{BLM22}
\end{Thm}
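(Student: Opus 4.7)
The plan is to combine the computation of the underlying graded Frobenius loopspace (Proposition \ref{ComputeLoopSpace} and Corollary \ref{CorGrLoopSpaceUnderlying}) with the uniqueness part of the classification theorem (Corollary \ref{CorClassDieudStr}). The idea is that for $A$ smooth and $p$-torsion-free, the underlying graded stack with Frobenius lift of $\mathcal{L}^{gr,Fr}(\mathrm{Spec}(A))$ is forced to be $\mathrm{Spec}(Sym_A(\Omega_A[1]))$ equipped with the canonical Frobenius, and then there is no room for ambiguity in the Dieudonné structure: it must be the standard one.

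First I would unpack Proposition \ref{ComputeLoopSpace} in this smooth, $p$-torsion-free setting. Because $A$ is smooth over $\mathbb{Z}_{(p)}$, the cotangent complex $\mathbb{L}_A$ is concentrated in degree zero and equals the projective $A$-module $\Omega_A$. Corollary \ref{CorGrLoopSpaceUnderlying} then identifies the underlying graded derived stack endowed with its Frobenius as $\mathrm{Spec}(Sym_A(\Omega_A[1]))$, with the twisted Frobenius induced by $F$ on $A$ and the $(A,F)$-linear map $\tfrac{dF}{p} : \Omega_A \to \Omega_A$ coming from the Cartier-type formula $F(dx) = x^{p-1}dx + d\tfrac{F(x)-x^p}{p}$. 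In particular, the full data of the underlying graded Frobenius stack matches, via Proposition \ref{FLSymStr}, the canonical Frobenius lift on $\mathrm{Spec}(Sym_A(\Omega_A[1]))$ considered in Corollary \ref{CorClassDieudStr}.

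Next I would invoke Corollary \ref{CorClassDieudStr}: on this particular graded derived affine scheme equipped with its canonical Frobenius lift, the space of Dieudonné mixed graded structures is discrete and contains a distinguished element, namely the one corresponding, via Theorem \ref{classiDieud}, to the standard Dieudonné algebra structure on $\bigoplus_i \Lambda^i_A \Omega_A$. Since the loopspace $\mathcal{L}^{gr,Fr}(\mathrm{Spec}(A))$ is a priori an object of $\epsilon\text{-}D\text{-}dSt^{gr}$ whose underlying graded Frobenius stack is the one just identified, the Dieudonné structure that it carries must coincide with this unique one. Taking simplicial functions then produces $DDR(A,F)$ as the derived Dieudonné algebra $\mathrm{Spec}(Sym_A(\Omega_A[1]))$ with its standard Dieudonné structure.

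Finally, I would reconcile this ``standard'' Dieudonné algebra structure with the one from \cite{BLM22}: under the bijection of Theorem \ref{classiDieud}, the pair $(F, \tfrac{dF}{p})$ corresponds precisely to the Dieudonné algebra structure on $\Omega_A^\bullet$ characterized by Proposition \ref{FormDieud}, i.e.\ by $F|_A = \phi$ and $F(dx) = x^{p-1}dx + d\tfrac{\phi(x)-x^p}{p}$. The main technical obstacle is the first step: one must verify that, in the triangle $\bigoplus_{\mathbb{N}} \mathbb{L}_{(A,F)} \otimes \mathbb{F}_p \to \mathbb{L}_{(A,F)}^{tw} \to \mathbb{L}_{(A,F)}$, the $\mathbb{F}_p$-contribution does not disturb the identification of the underlying graded Frobenius stack with $\mathrm{Spec}(Sym_A(\Omega_A[1]))$ endowed with $(F, \tfrac{dF}{p})$, exactly as is needed to apply Corollary \ref{CorClassDieudStr}; this is precisely where the splitting argument (the first map being null) in the proof of Corollary \ref{CorGrLoopSpaceUnderlying} is used, together with $p$-torsion-freeness of $A$ to ensure $\mathbb{L}_A \simeq \Omega_A$ sits as a direct summand controlling the whole Dieudonné structure.
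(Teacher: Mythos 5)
Your approach is the same as the paper's: the paper's proof reads, in its entirety, ``It is a simple application of Corollary~\ref{CorClassDieudStr},'' and your plan is precisely to combine the loopspace computation (Proposition~\ref{ComputeLoopSpace} / Corollary~\ref{CorGrLoopSpaceUnderlying}) with the uniqueness statement of Corollary~\ref{CorClassDieudStr} and then match with Proposition~\ref{FormDieud}. You also correctly identify the one place where this line of argument is not just formal bookkeeping, namely the contribution of the $\mathbb{F}_p$-summands in the twisted cotangent complex $\mathbb{L}_{(A,F)}^{tw}$.

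However, your proposed resolution of that point is too weak to actually close the argument. The splitting of the triangle $\bigoplus_{\mathbb{N}}\mathbb{L}_{(A,F)}\otimes\mathbb{F}_p\to\mathbb{L}^{tw}\to\mathbb{L}_{(A,F)}$ does not make the $\mathbb{F}_p$-pieces go away: it exhibits $\mathbb{L}^{tw}$ as $\mathbb{L}_A\oplus\bigl(\bigoplus_{\mathbb{N}}\mathbb{L}_A\otimes\mathbb{F}_p\bigr)[1]$, and since $A$ is $p$-torsion-free but generally nonzero modulo $p$, the summand $\Omega_A/p$ is nonzero. Consequently the underlying graded simplicial algebra of $DDR(A,F)$ is $Sym_A(\mathbb{L}^{tw}[1])$, which is strictly larger than $Sym_A(\Omega_A[1])$; Corollary~\ref{CorGrLoopSpaceUnderlying} itself says the identification with $\mathrm{Spec}(Sym_A(\Omega_A[1]))$ holds only ``after moding out by the $p$-torsion,'' and that quotient is not a no-op. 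This matters because Corollary~\ref{CorClassDieudStr} classifies Dieudonné mixed graded structures on $\mathrm{Spec}(Sym_A(\Omega_A[1]))$ and says nothing directly about $\mathrm{Spec}(Sym_A(\mathbb{L}^{tw}[1]))$; having $\Omega_A$ as a direct summand of $\mathbb{L}^{tw}$ does not entail that a Dieudonné structure on the big stack is ``controlled'' by, or descends uniquely to, the small one. So to make your step~(b) rigorous you would need either to upgrade Theorem~\ref{classiDieud} / Corollary~\ref{CorClassDieudStr} to cover the graded Frobenius stack $\mathrm{Spec}(Sym_{(A,F)}(\mathbb{L}^{tw}[1]))$, or to build the $p$-torsion quotient into the statement of the theorem (or into the very definition of $DDR$) and show that this quotient is compatible with the Dieudonné structure. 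Note that the paper's own one-line proof gives no indication of how it intends to bridge this same gap, so you are not doing worse than the source — you are just being more explicit about the place where something remains to be checked.
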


\begin{proof}
It is a simple application of Corollary \ref{CorClassDieudStr}.
\end{proof}

\begin{proof}
It is an easy corollary of Theorem \ref{classiDieud}.
\end{proof}

\begin{Thm} \label{AdjointDeRhamW} The Dieudonné de Rham functor
$$SCR^{Fr} \to \epsilon-D-SCR^{gr}$$
is left adjoint to the forgetful functor
$$(0) : \epsilon-D-SCR^{gr} \to SCR^{Fr}$$
\end{Thm}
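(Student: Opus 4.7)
The plan is to unfold both sides of the adjunction via the $Spec$-$\mathcal{O}$ adjunction and the universal property of the graded loopspace, then exploit the free action of $S^1_{gr}$ on itself to reduce to a $\mathbb{G}_m$-invariants computation. Fix $(A,F) \in SCR^{Fr}$ and $B \in \epsilon-D-SCR^{gr}$, and write $X = Spec(A,F) \in dSt^{Fr}$ and $Y = Spec(B) \in \epsilon-D-dSt^{gr}$. By the $Spec$-$\mathcal{O}$ adjunction on Dieudonné affine stacks, combined with the definition $DDR(A,F) = \mathcal{O}(\mathcal{L}^{gr,Fr}(X))$,
\[
Map_{\epsilon-D-SCR^{gr}}(DDR(A,F), B) \simeq Map_{\epsilon-D-dSt^{gr}}(Y, \mathcal{L}^{gr,Fr}(X)).
\]

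Next, identify $\epsilon-D-dSt^{gr} \simeq \mathcal{H}-dSt^{Fr}$ via Remark \ref{S1grH}, with $\mathcal{H} = \mathbb{G}_m \ltimes S^1_{gr}$. Applying the cotensor adjunction defining $\mathcal{L}^{gr,Fr}(X) = \textbf{Map}_{dSt^{Fr}}(S^1_{gr}, X)$, an $\mathcal{H}$-equivariant morphism $Y \to \textbf{Map}_{dSt^{Fr}}(S^1_{gr}, X)$ curries to an $\mathcal{H}$-equivariant morphism $Y \times S^1_{gr} \to X$ in $dSt^{Fr}$, with $\mathcal{H}$ acting diagonally on the source and trivially on $X$. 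The left translation action of $S^1_{gr}$ on itself is free, so the change of coordinates $(y,s) \mapsto (s^{-1}y, s)$ trivializes the diagonal $S^1_{gr}$-action on the second factor. This yields an equivalence between $(Y \times S^1_{gr})/S^1_{gr}$ and $Y$ as $\mathcal{H}/S^1_{gr} \simeq \mathbb{G}_m$-stacks with Frobenius, hence
\[
Map^{\mathcal{H}}_{dSt^{Fr}}(Y \times S^1_{gr}, X) \simeq Map^{\mathbb{G}_m}_{dSt^{Fr}}(Y, X).
\]
Finally, since $X$ carries trivial $\mathbb{G}_m$-action and $Y$ is affine, $\mathbb{G}_m$-equivariant maps factor through $Y^{\mathbb{G}_m} \simeq Spec(B(0))$ (Remark \ref{CompatWeight0}, Proposition \ref{DefWeight0SCR}), giving $Map_{dSt^{Fr}}(Spec(B(0)), X) \simeq Map_{SCR^{Fr}}((A,F), B(0))$. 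Concatenating the chain yields the desired adjunction, with naturality automatic.

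The main obstacle is the third step: justifying the $\infty$-categorical change of variables that trivializes the $S^1_{gr}$-action on the second factor of $Y \times S^1_{gr}$. One must verify that the diagonal action is free and that the quotient inherits the correct graded Frobenius structure from $\mathcal{H}/S^1_{gr}$, tracking the semi-direct product compatibilities between $\mathbb{G}_m$ and $S^1_{gr}$. This amounts to a projection-formula-style identity for principal $S^1_{gr}$-bundles in $dSt^{Fr}$. An alternative route would be to invoke the adjoint functor theorem by showing $(0)$ preserves small limits and filtered colimits, then identify the resulting left adjoint with $DDR$ by testing on free generators via Theorem \ref{classiDieud}.
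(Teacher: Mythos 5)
The paper's own proof of this theorem is a single sentence — ``an application of the definition'' — so your explicit unfolding is a legitimate and useful fill-in. Your overall strategy (unwind the $\mathcal{O}$--$Spec$ adjunction, curry through the cotensor defining $\mathcal{L}^{gr,Fr}$, strip off the $S^1_{gr}$-action via freeness of translation, then reduce to a $\mathbb{G}_m$-equivariance computation) is sound and is, in spirit, exactly the unwinding the paper is alluding to. Steps 1--4 are fine, and you are right to flag the trivialization $(y,s)\mapsto(s^{-1}y,s)$ of the diagonal $S^1_{gr}$-action as the place where the semi-direct product compatibilities must be tracked; since the $\mathbb{G}_m$-action on $S^1_{gr}$ is by group automorphisms and the $S^1_{gr}$-action on $Y$ is $\mathbb{G}_m$-equivariant, the residual $\mathbb{G}_m$-structure on $(Y\times S^1_{gr})/S^1_{gr}\simeq Y$ is indeed the original grading.

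The last step, however, is argued incorrectly even though the final formula comes out right. You claim that $\mathbb{G}_m$-equivariant maps out of $Y$ factor through $Y^{\mathbb{G}_m}$ and that $Y^{\mathbb{G}_m}\simeq Spec(B(0))$. Neither statement is right: by Remark \ref{CompatWeight0}, $Y^{\mathbb{G}_m}=(Spec(B))^{=0}\simeq Spec(B^{=0})$, and $B^{=0}$ differs from $B(0)$ in general (they coincide only for positively or negatively graded $B$; see Remark \ref{WeightNaiveVsNormal}). Moreover, the fixed-point stack $Y^{\mathbb{G}_m}$ is the \emph{right} adjoint of the trivial-action functor (Definition \ref{DefWeight0dSt}), so it governs equivariant maps \emph{into} $Y$ from trivially graded objects, not maps out of $Y$. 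The correct reduction here is simpler: since $X=Spec(A,F)$ is affine and trivially graded, a $\mathbb{G}_m$-equivariant map $Y\to X$ is a morphism of graded simplicial algebras $A^{\mathrm{triv}}\to B$, which necessarily factors through the naive weight-zero part $B(0)$; equivalently, such a map is a morphism $[Y/\mathbb{G}_m]\to X$, and $\mathcal{O}([Y/\mathbb{G}_m])=B(0)$. In other words you should invoke the \emph{quotient} $[Y/\mathbb{G}_m]$ (invariants $=B(0)$), not the \emph{fixed points} $Y^{\mathbb{G}_m}$ (coinvariants $=B^{=0}$). With that correction the chain of equivalences $Map_{\epsilon\text{-}D\text{-}SCR^{gr}}(DDR(A,F),B)\simeq Map_{SCR^{Fr}}((A,F),B(0))$ holds as you want, and the alternative route you sketch via the adjoint functor theorem plus Theorem \ref{classiDieud} is also viable.
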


\begin{proof}
This is an application of the definition of the various categories and the Dieudonné de Rham functor.
\end{proof}

\begin{Thm}
For $A$ a commutative algebra, non-necessarily smooth, the t-truncation of $DDR(A)$, with respect to the Beilinson t-structure, is equivalent to the classical Dieudonné complex. That is
$$t_{\ge 0}(DDR(A)) \simeq i(\Omega^\bullet_A)$$
where $\Omega^\bullet_A$ is endowed with its canonical classical Dieudonné structure, see Proposition \ref{FormDieud} and $i$ is the functor
$$ i : \textbf{DA} \to \epsilon-D-SCR^{gr}$$
identifying the category of classical Dieudonné algebra with the heart of $\epsilon-D-SCR^{gr}$.
\end{Thm}

\begin{proof}
As the t-truncation is compatible with the forgetful map
$$\epsilon-D-SCR^{gr} \to \epsilon-Mod^{gr}$$
and the forgetful map is conservative, the natural morphism
$$Sym_A(\Omega_A^\bullet[1]) \to t_{\ge 0}(Sym_A(\mathbb{L}_A[1]))$$
is an equivalence on underlying graded mixed Dieudonné complexes, therefore it is an equivalence.
\end{proof}

\subsection{Saturation}
In this section we study in more details the properties of saturation and comparing it to the classical saturation for Dieudonné algebras.

\subsubsection{The décalage functor}

\begin{Def}
We define an endofunctor of $\epsilon-Mod^{gr}_{\mathbb{Z}_{(p)}}$ denoted $[p]^*$ by forgetting along
$$\epsilon \in {\mathbb{Z}_{(p)}}[\epsilon] \mapsto p\epsilon \in {\mathbb{Z}_{(p)}}[\epsilon]$$
\end{Def}

\begin{Def}
We define the décalage $\eta_p$ functor as an endofunctor of $1$-categories $\bf{\epsilon-Mod^{gr}_{\mathbb{Z}_{(p)}}}$ sending $M$ to the graded mixed subcomplex of $M \times M$ on elements $(x,y)$ such that $\epsilon x = py$ and $\epsilon y=0$.
\end{Def}

\begin{Prop} \label{adjDec}
We have an adjunction of $1$-categories
$$ [p]^* : \bf{\epsilon-Mod^{gr}_{\mathbb{Z}_{(p)}}} \rightleftarrows \bf{\epsilon-Mod^{gr}_{\mathbb{Z}_{(p)}}} : \eta_p$$
\end{Prop}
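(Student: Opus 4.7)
The plan is to realise $[p]^*$ as restriction of scalars along the graded dg-algebra endomorphism $[p] : k[\epsilon] \to k[\epsilon]$, $\epsilon \mapsto p\epsilon$. This identification is essentially tautological: $\bf{\epsilon-Mod^{gr}}$ is by definition the $1$-category of dg-modules over the graded-commutative dg-algebra $k[\epsilon]$, and $[p]^*$ exactly rewrites a $k[\epsilon]$-action $\epsilon_M$ on $M$ as the new action $p \cdot \epsilon_M$. Once this is observed, the existence of a right adjoint becomes a classical fact: for any morphism $\phi : R \to R$ of graded-commutative dg-algebras, the coextension of scalars $\phi_* M = \underline{Hom}_R({}_\phi R, M)$ is right adjoint to $\phi^*$, where ${}_\phi R$ is $R$ equipped with a new left $R$-action twisted by $\phi$ (the right action remaining standard). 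This is simply the tensor-hom adjunction for the bimodule ${}_\phi R$, carried out at the level of strict dg-modules.

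It remains to match the coextension $[p]_*$ with the explicit description of $\eta_p$. A $k[\epsilon]$-linear map $f : {}_{[p]} k[\epsilon] \to M$ is uniquely determined by $x \coloneqq f(1)$ and $y \coloneqq f(\epsilon)$, and the $k[\epsilon]$-linearity conditions $f(\epsilon \cdot_{[p]} a) = \epsilon_M f(a)$ applied to $a=1$ and $a=\epsilon$ unfold to
$$p y \;=\; f(p \epsilon) \;=\; \epsilon_M x \quad \text{and} \quad \epsilon_M y \;=\; f(0) \;=\; 0,$$
which are precisely the equations cutting out $\eta_p M$ inside $M \times M$. The bidegrees line up: $\epsilon \in k[\epsilon]$ sits in cohomological degree $-1$ and weight $+1$, so $y = f(\epsilon)$ lives in degree $|x|-1$ and weight $|x|_w + 1$, which is the correct reading of the "$M \times M$" in the definition. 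The adjunction then reads
$$Hom_{k[\epsilon]}([p]^* N, M) \;\simeq\; Hom_{k[\epsilon]}(N, \underline{Hom}_{k[\epsilon]}({}_{[p]} k[\epsilon], M)) \;=\; Hom_{k[\epsilon]}(N, \eta_p M),$$
the middle equivalence being standard tensor-hom.

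I do not anticipate any substantive obstacle. Since the proposition asserts an adjunction of $1$-categories of strict dg-modules over $k[\epsilon]$, no cofibrant or fibrant replacement is required, and the restriction/coextension adjunction is entirely classical. The only mild bookkeeping is checking that the differential and the bigrading on $\eta_p M$ inherited from $M \times M$ coincide with the ones coming from the coextension construction; this is automatic because $k[\epsilon]$ carries the zero differential and so the $k[\epsilon]$-linearity of $f$ imposes no constraint on $f$ beyond the two linearity equations above.
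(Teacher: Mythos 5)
Your plan --- recognise $[p]^*$ as restriction of scalars along $[p]\colon k[\epsilon]\to k[\epsilon]$ and apply the tensor--hom adjunction to produce the right adjoint as coextension --- is the right argument, and the paper itself records this proposition without proof, so there is nothing to compare against. But the ``mild bookkeeping'' you set aside is exactly where the mathematical content of the identification lies, and it is \emph{not} automatic. You check that the underlying bigraded module of $\underline{Hom}_{k[\epsilon]}({}_{[p]}k[\epsilon],M)$ consists of pairs $(x,y)$ with $\epsilon x = py$ and $\epsilon y = 0$, and that the degrees and weights match, but you never compute the $\epsilon$-action on the coextension. That action is induced by right multiplication by $\epsilon$ on the bimodule, $(\epsilon\cdot f)(a)=\pm f(a\epsilon)$, which unpacks to $\epsilon\cdot(x,y)=(\pm y,0)$. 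By contrast, if $\eta_p M$ were literally a sub-object of $M\times M$ with the product mixed structure, the induced action would be $\epsilon\cdot(x,y)=(\epsilon x,\epsilon y)=(py,0)$ --- off by a factor of $p$ --- and with \emph{that} action the adjunction fails. One sees this concretely: a morphism $g\colon[p]^*N\to M$ corresponds under your dictionary to $h:=(g,\,g\circ\epsilon_N)\colon N\to M\times M$, and $\epsilon$-linearity of $h$ requires $h(\epsilon_N n)=(g(\epsilon_N n),0)=(h_2(n),0)$ to agree with $\epsilon_{\eta_p M}(h_1(n),h_2(n))$; this holds with the d\'ecalage action $\epsilon(x,y)=(y,0)$ and breaks with the product action $(py,0)$.

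So the coextension argument you invoke does produce the right adjoint, but you must compute its $\epsilon$-action explicitly and record that the phrase ``sub-graded mixed complex of $M\times M$'' in the definition is to be read as carrying this d\'ecalage action rather than the restricted product action. This is also the normalisation that recovers the Bhatt--Lurie--Mathew d\'ecalage functor under the heart identification, as the proposition immediately after this one requires; had you used the product action, that comparison would have accrued a spurious factor of $p$ in the differential as well.
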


\begin{Prop}
The adjunction is a Quillen adjunction
$$ [p]^* : \bf{(\epsilon-dg-mod^{gr}_{\mathbb{Z}_{(p)}})_{inj}} \rightleftarrows \bf{(\epsilon-dg-mod^{gr}_{\mathbb{Z}_{(p)}})_{inj}} : \eta_p$$
Therefore this adjunctions induces an adjunction between the corresponding $\infty$-categories.
\end{Prop}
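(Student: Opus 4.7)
The plan is very short because the left adjoint $[p]^*$ is essentially trivial at the level of underlying complexes. Concretely, $[p]^* M$ and $M$ have the same underlying graded differential module; only the $\epsilon$-action is modified (by a factor of $p$). I would begin the proof by unpacking this observation: the forgetful functor
\[
\bf{\epsilon-dg-mod^{gr}_{\mathbb{Z}_{(p)}}} \longrightarrow \bf{dg-mod^{gr}_{\mathbb{Z}_{(p)}}}
\]
intertwines $[p]^*$ with the identity functor.

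Next I would recall that in the injective model structure on $\bf{\epsilon-dg-mod^{gr}_{\mathbb{Z}_{(p)}}}$ the cofibrations are exactly the levelwise injections (of the underlying graded chain complex) and the weak equivalences are the quasi-isomorphisms; both notions are detected on the underlying graded complex. Since $[p]^*$ acts as the identity there, it sends cofibrations to cofibrations and trivial cofibrations to trivial cofibrations. By the standard characterization (see, e.g., the criterion used for Quillen adjunctions via cofibrations and trivial cofibrations), $[p]^* \dashv \eta_p$ is a Quillen adjunction. One could equivalently check the right adjoint side: $\eta_p$ sits inside a pullback of the underlying complexes, and since the injective fibrations are also defined on the underlying complex plus a lifting condition, $\eta_p$ preserves injective (trivial) fibrations. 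I would carry out only one of these checks; the left adjoint side is the cleanest.

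Finally, to pass from model categories to $\infty$-categories, I would invoke the standard result that a Quillen adjunction between model categories induces an adjunction between the underlying $\infty$-categories obtained by localizing at the weak equivalences (see \cite[Proposition 1.5.1]{Lur17} or the discussion around simplicial localizations). Since in our case the $\infty$-category $\epsilon-Mod^{gr}_{\mathbb{Z}_{(p)}}$ is precisely the localization of $\bf{\epsilon-dg-mod^{gr}_{\mathbb{Z}_{(p)}}}$ at quasi-isomorphisms, this produces the desired $\infty$-categorical adjunction.

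The only subtlety I anticipate is making sure the $1$-categorical adjunction of Proposition \ref{adjDec} is literally the one underlying the Quillen pair (as opposed to a merely derived version), but this is immediate from the explicit description of both $[p]^*$ and $\eta_p$ as strict endofunctors of the $1$-category $\bf{\epsilon-dg-mod^{gr}_{\mathbb{Z}_{(p)}}}$, so there is nothing further to check.
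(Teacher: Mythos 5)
Your proof is correct and follows the same approach as the paper: both observe that $[p]^*$ acts as the identity on the underlying graded complex, that cofibrations and weak equivalences in the injective model structure are created by the forgetful functor to $\bf{dg-mod_{\mathbb{Z}_{(p)}}}$, and therefore that $[p]^*$ preserves cofibrations and trivial cofibrations, giving a Quillen adjunction. The paper presents this more tersely (and does not spell out the passage to $\infty$-categories), but the argument is identical.
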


\begin{proof}
The injective model structure on $(\epsilon-dg-mod^{gr}_{\mathbb{Z}_{(p)}})_{inj}$ is defined by transfer along the forgetful functor $U : \bf{\epsilon-dg-mod^{gr}_{\mathbb{Z}_{(p)}}} \to \bf{dg-mod_{\mathbb{Z}_{(p)}}}$. We need to verify that $F^*_p$ sends cofibrations and trivial cofibrations to cofibrations and trivial cofibrations. Since for any graded mixed complex $M$, the underlying complex of $[p]^*M$ and $M$ are the same, the result is obvious.
\end{proof}

\begin{Prop}
Under the equivalence $(\epsilon-Mod^{gr}_{\mathbb{Z}_{(p)}})^{\heartsuit} \simeq Mod_{\mathbb{Z}_{(p)}}$ of \ref{HeartBeilClass}, the endofunctor $\eta_p$ induces an endofunctor on $Mod_{\mathbb{Z}_{(p)}}$, which, when restricted to $p$-torsion free complex, identifies with the décalage functor $\eta_p$ from \cite{BLM22}.
\end{Prop}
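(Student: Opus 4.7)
The plan is to transfer the adjunction $[p]^* \dashv \eta_p$ of Proposition \ref{adjDec} to the hearts of the Beilinson $t$-structure and match the resulting endofunctor on $Mod_{\mathbb{Z}_{(p)}}$ with BLM's classical décalage on $p$-torsion-free complexes. Recall from Proposition \ref{HeartBeilClass} that the heart of $\epsilon-Mod^{gr}_{\mathbb{Z}_{(p)}}$ identifies with $Mod_{\mathbb{Z}_{(p)}}$ via the functor $i$ which places $N^n$ in weight $-n$ with trivial internal differential, so that the original differential $d$ of $N$ is encoded by the mixed structure $\epsilon$.

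Steps 1 and 2. Observe that $[p]^*$ fixes both the underlying graded module and the internal differential of any graded mixed complex, rescaling only $\epsilon$ by a factor of $p$. In particular $[p]^*$ is $t$-exact for the Beilinson $t$-structure, and under $i$ it corresponds to the endofunctor $(N, d) \mapsto (N, p d)$. By $t$-exactness of $[p]^*$, its right adjoint $\eta_p$ preserves coconnectivity, so composing with the coconnective truncation yields an adjunction between the hearts. I am thereby reduced to identifying, at the level of complexes, the right adjoint of $(N, d) \mapsto (N, p d)$.

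Step 3. For a $p$-torsion-free target $(N', d')$, I will verify that the candidate $(\widetilde{\eta}_p N')^n := \{x \in p^n (N')^n : d' x \in p^{n+1} (N')^{n+1}\}$, with differential induced from $d'$, represents the desired right adjoint. Given a chain map $f : (N, p d) \to (N', d')$, set $g : N^n \to (\widetilde{\eta}_p N')^n$ by $g(x) := p^n f(x)$. The relation $p f(d x) = d' f(x)$ translates to $d' g(x) = p^{n+1} f(d x)$, which simultaneously witnesses the containment $d' g(x) \in p^{n+1} (N')^{n+1}$ and gives $g(d x) = d' g(x)$, so $g$ is a well-defined chain map. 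Conversely, $p$-torsion-freeness of $N'$ allows one to recover $f$ from $g$ by $f(x) := g(x)/p^n$, producing the inverse bijection naturally in $N$ and $N'$. Comparison with the definition in \cite{BLM22} then identifies $\widetilde{\eta}_p$ with the classical décalage, completing the proof.

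Main obstacle. The main subtlety is the bookkeeping in the heart equivalence of Proposition \ref{HeartBeilClass}---in particular confirming that the identification of $\epsilon$ with the classical differential is compatible with the sign and weight conventions, so that $[p]^*$ really corresponds to $(N, d) \mapsto (N, p d)$ rather than some variant---together with the justification that the heart restriction of $\eta_p$ is genuinely computed by the elementary adjunction calculation above. Both reduce to the observation that $[p]^*$ acts as the identity on underlying graded complexes, so that its effect and the effect of its adjoint on the heart are determined weight by weight; once this is established, the only non-formal input is the explicit subcomplex $(\widetilde{\eta}_p N')^\bullet$, and the $\textrm{Hom}$-calculation of Step 3 supplies it directly.
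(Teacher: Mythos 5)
Your proof is correct, but it takes a genuinely different route from the paper's. The paper's proof is a one-line direct identification: for $M$ $p$-torsion-free, a pair $(x,y) \in \eta_p M$ is determined by $x$ alone (with the divisibility constraint on $\epsilon x$), since $y$ is forced and $\epsilon y = 0$ comes for free from $p\epsilon y = \epsilon^2 x = 0$. You instead transfer the adjunction $[p]^* \dashv \eta_p$ to the hearts and then verify, by an explicit $\mathrm{Hom}$-computation, that BLM's subcomplex represents the right adjoint of $(N,d) \mapsto (N,pd)$ on $p$-torsion-free targets, concluding by uniqueness of adjoints. Both routes are valid. One thing your approach buys: the rescaling $g(x) := p^n f(x)$ in Step 3 cleanly accounts for the $p^n$ appearing in BLM's formula $(\eta_p M)^n = \{x \in p^n M^n : dx \in p^{n+1} M^{n+1}\}$. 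The paper's direct identification instead produces $\{x \in M^n : \epsilon x \in pM^{n+1}\}$ with differential $x \mapsto \epsilon x/p$ and leaves the (multiplication-by-$p^n$) isomorphism to BLM's model implicit, so your version is a bit more complete on this point. A minor terminological slip in Step 2: you write ``coconnective truncation,'' but since $\eta_p$ of a heart object is already $t$-coconnective, applying the coconnective truncation would be a no-op; what you actually need to compose with is the truncation to the heart (i.e., the relevant $H^0$). The intended argument --- $t$-exactness of $[p]^*$ passes the adjunction to the heart --- is sound, and indeed the paper implicitly uses the even simpler observation that $\eta_p$ preserves heart objects outright (the internal differentials of $M \times M$, hence of $\eta_p M$, vanish when those of $M$ do).
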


\begin{proof}
When $M$ is a $p$-torsion free graded mixed complex, an element $x \in M_n$ such that $\epsilon x$ is $p$-divisible defines a unique $y \in M_{n+1}$ such that $\epsilon x=py$, furthermore $\epsilon y=0$ is automatic since $p \epsilon y = \epsilon^2 x = 0$.
\end{proof}

\begin{Rq} \label{RqLFP}
From the definition of Dieudonné structures on complexes and the adjunction of Proposition \ref{adjDec}, the data of a Dieudonné structure on a graded mixed complex is equivalent to the data of a lax fixed structure with respect to $\eta_p$.

\end{Rq}

\begin{Prop} \label{decFilt}
The functor $\eta_p : \epsilon-Mod^{gr}_{\mathbb{Z}_{(p)}} \to \epsilon-Mod^{gr}_{\mathbb{Z}_{(p)}}$ commutes with filtered colimits.
\end{Prop}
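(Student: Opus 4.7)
The plan is to realize $\eta_p$ as a finite limit of functors that manifestly preserve filtered colimits, and then invoke the exactness of filtered colimits (axiom AB5) in the Grothendieck abelian category $\bf{\epsilon-Mod^{gr}_{\mathbb{Z}_{(p)}}}$.

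First I would observe that $\bf{\epsilon-Mod^{gr}_{\mathbb{Z}_{(p)}}} = \bf{\mathbb{Z}_{(p)}[\epsilon]-dg-mod^{gr}}$ is a Grothendieck abelian category, being the category of modules over the ring object $\mathbb{Z}_{(p)}[\epsilon]$ in the Grothendieck abelian category $\bf{dg-mod^{gr}_{\mathbb{Z}_{(p)}}}$. In any Grothendieck abelian category, filtered colimits are exact; in particular they commute with finite limits. Moreover, in this 1-category filtered colimits of graded mixed complexes are computed pointwise in bidegree, i.e.\ on the underlying graded $\mathbb{Z}_{(p)}$-modules.

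Next I would rewrite $\eta_p$ explicitly as a kernel. Unwinding the definition, a pair $(x,y) \in \eta_p(M)$ in bidegree $(n,w)$ consists of $x \in M^n(w)$ and $y \in M^{n-1}(w+1)$, since $\epsilon$ has cohomological degree $-1$ and weight $+1$, subject to the two equations $\epsilon x = py$ and $\epsilon y = 0$. Thus $\eta_p(M)$ is naturally identified with the kernel of the morphism of graded mixed complexes
$$\Phi_M : M \oplus M[-1]((1)) \longrightarrow M[-1]((1)) \oplus M[-2]((2)), \qquad (x,y) \longmapsto (\epsilon x - py,\; \epsilon y).$$
Both the source and the target are finite biproducts of shifts of $M$, and the shift functors $M \mapsto M[k]((\ell))$ are exact equivalences of $\bf{\epsilon-Mod^{gr}_{\mathbb{Z}_{(p)}}}$; in particular they preserve filtered colimits. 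The assignment $M \mapsto \Phi_M$ is a natural transformation between such colimit-preserving functors.

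Since $\eta_p(M) = \ker(\Phi_M)$ is a finite limit of functors that preserve filtered colimits, and finite limits commute with filtered colimits in the Grothendieck abelian category $\bf{\epsilon-Mod^{gr}_{\mathbb{Z}_{(p)}}}$, the functor $\eta_p$ preserves filtered colimits. The only care required is the bookkeeping of the degree and weight shifts induced by $\epsilon$, so no real obstacle arises; the statement is a formal consequence of the AB5 axiom once $\eta_p$ has been presented as a kernel of a natural transformation between biproducts of shifts.
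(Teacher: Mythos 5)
Your proof is correct and follows essentially the paper's approach: present $\eta_p$ as a finite limit of degree/weight-shift functors and invoke the commutation of filtered colimits with finite limits. The only real difference is where the limit is formed. You compute $\ker(\Phi_M)$ directly in $\bf{\epsilon-Mod^{gr}}$ and appeal to AB5, whereas the paper first applies the forgetful functor $U : \bf{\epsilon-Mod^{gr}} \to \bf{dg-mod^{gr}}$, exhibits $U \circ \eta_p$ as an iterated fibre product there, and then uses that $U$ is conservative and preserves filtered colimits. The paper's detour is not purely stylistic, and your identification $\eta_p(M) = \ker(\Phi_M)$ deserves a caveat: the kernel of $\Phi_M$, as a sub-graded-mixed-complex of $M \oplus M[-1]((1))$, carries the inherited action $\epsilon(x,y) = (\epsilon x, \epsilon y) = (py, 0)$, whereas the right adjoint of $[p]^*$ furnished by Proposition \ref{adjDec} (coinduction along $\epsilon \mapsto p\epsilon$ on $\mathbb{Z}_{(p)}[\epsilon]$) carries the rescaled action $\epsilon(x,y) = (y,0)$. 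These two structures coincide on underlying graded complexes but differ in $\bf{\epsilon-Mod^{gr}}$ once $M$ has $p$-torsion, so if $\eta_p$ is taken to be the right adjoint of the earlier proposition, it is $U \circ \eta_p$ rather than $\eta_p$ itself that is literally a finite limit of shifts. This does not affect your conclusion, since $U$ is a left and right adjoint, hence preserves and reflects filtered colimits, and the two candidate $\epsilon$-structures agree after applying $U$; but the identification should be stated at the level of underlying graded complexes, which is exactly the shape of the paper's argument.
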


\begin{proof}
The forgetful functor $\epsilon-Mod^{gr}_{\mathbb{Z}_{(p)}} \to Mod_{\mathbb{Z}_{(p)}}$ is conservative. Furthermore the composition $U \circ \eta_p$ is given by forming a finite limit
$$M \mapsto M \times_{M[1]} M[1] \times_{M[2]} 0$$ where the arrow are $\epsilon : M \to M[1]$, $- \times p : M[1] \to M[1]$, $\epsilon : M[1] \to M[2]$ and $0 : 0 \to M[2]$. Then $U \circ \eta_p$ commutes with filtered colimits, hence so does $U$.
\end{proof}

\begin{Prop}
The fully faithful inclusion of fixed points
$$i_{FP} : FP_{\eta_p}(\epsilon-Mod^{gr}_{\mathbb{Z}_{(p)}}) \subset LFP_{\eta_p}(\epsilon-Mod^{gr}_{\mathbb{Z}_{(p)}})$$
admits a left adjoint.
\end{Prop}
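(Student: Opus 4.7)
The plan is to construct the left adjoint explicitly as the filtered colimit of the iterated structure maps, exactly as in the classical construction of \cite{BLM22} but phrased at the level of lax fixed points. Given $(M,f) \in LFP_{\eta_p}(\epsilon\text{-}Mod^{gr}_{\mathbb{Z}_{(p)}})$, define
$$\mathrm{Sat}(M,f) \coloneqq \mathrm{colim}\bigl( M \xrightarrow{f} \eta_p M \xrightarrow{\eta_p f} \eta_p^2 M \xrightarrow{\eta_p^2 f} \eta_p^3 M \to \cdots \bigr),$$
where the colimit is computed in $\epsilon\text{-}Mod^{gr}_{\mathbb{Z}_{(p)}}$ along the $\mathbb{N}$-indexed diagram obtained by iteratively applying $\eta_p$ to $f$.

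Next I would equip $\mathrm{Sat}(M,f)$ with a fixed-point structure. By Proposition \ref{decFilt}, $\eta_p$ commutes with filtered colimits, so
$$\eta_p \mathrm{Sat}(M,f) \simeq \mathrm{colim}\bigl( \eta_p M \to \eta_p^2 M \to \eta_p^3 M \to \cdots \bigr),$$
and the evident shift of the diagram provides a canonical equivalence $\mathrm{Sat}(M,f) \xrightarrow{\sim} \eta_p \mathrm{Sat}(M,f)$, so the pair lies in $FP_{\eta_p}$. The natural map $M \to \mathrm{Sat}(M,f)$ is a morphism in $LFP_{\eta_p}$ by construction, giving the unit of the proposed adjunction.

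To verify the universal property, I would check that for any $(N,g) \in FP_{\eta_p}$, the restriction map
$$\mathrm{Map}_{FP_{\eta_p}}(\mathrm{Sat}(M,f),(N,g)) \longrightarrow \mathrm{Map}_{LFP_{\eta_p}}((M,f),(N,g))$$
is an equivalence. Unwinding Definition \ref{defLFP}, a morphism in $LFP_{\eta_p}$ from $(M,f)$ to $(N,g)$ is a map $\phi : M \to N$ together with a homotopy $g \circ \phi \simeq \eta_p(\phi) \circ f$; by iterating this homotopy one obtains a compatible family $\eta_p^n(\phi) : \eta_p^n M \to \eta_p^n N$, which by $g^{-1}$ assembles into a cone over the diagram defining $\mathrm{Sat}(M,f)$ with target $N$, hence into a morphism $\mathrm{Sat}(M,f) \to N$. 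Conversely any map out of $\mathrm{Sat}(M,f)$ restricts to a lax fixed point morphism, and the two constructions are mutually inverse because when $g$ is an equivalence the diagram $N \to \eta_p N \to \eta_p^2 N \to \cdots$ is essentially constant with colimit $N$.

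The step most likely to require care is making the inverse construction functorial at the $\infty$-categorical level, i.e.\ showing that the higher coherences of the iterated homotopies $g^n \circ \phi \simeq \eta_p^n(\phi) \circ f^n$ match up to give a genuine cocone rather than merely a compatible family of maps; this is handled cleanly by invoking Proposition \ref{LFPop} together with the universal property of the colimit, since one can describe $LFP_{\eta_p}$ and $FP_{\eta_p}$ as the appropriate (lax/strict) limits indexed by $\Delta^1$ resp.\ $\widetilde{\Delta^1}$ and argue via the explicit model on the projective model structure where everything is strict.
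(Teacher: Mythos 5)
Your proof is correct in substance, but it takes a genuinely different route from the paper. The paper's argument is entirely abstract: it observes that the inclusion $i_{FP} : FP_{\eta_p}(\epsilon\text{-}Mod^{gr}_{\mathbb{Z}_{(p)}}) \subset LFP_{\eta_p}(\epsilon\text{-}Mod^{gr}_{\mathbb{Z}_{(p)}})$ preserves filtered colimits and small limits (because $\eta_p$ does, by Propositions \ref{decFilt} and \ref{adjDec}), notes that both categories are presentable as limits of presentable categories over the cospan defining (lax) fixed points, and then invokes the adjoint functor theorem \ref{adjointFunct} via the general Proposition \ref{LFAdj}. It never names the left adjoint. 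Your argument, by contrast, constructs the left adjoint explicitly as $\mathrm{colim}(M \to \eta_p M \to \eta_p^2 M \to \cdots)$ and verifies the universal property directly. This is exactly the saturation formula that the paper records only as an informal remark in the review of \cite{BLM22}, and it has the advantage of being computable; the cost, which you correctly flag, is the $\infty$-categorical bookkeeping needed to assemble the iterated lax squares into a coherent cocone. That step is not trivial and in fact is precisely what the paper's abstract argument sidesteps.

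Two small points on your last paragraph. The invocation of Proposition \ref{LFPop} (which identifies $CFP_F(\mathcal{C})^{op}$ with $LFP_F(\mathcal{C}^{op})$) does not obviously help you produce the required cocone coherences; what you actually want is the presentation of $LFP$ and $FP$ as fiber products over $\Delta^1$ and $\widetilde{\Delta^1}$ from Definition \ref{defLFP}, together with a strict model. On that strict model: the paper establishes the Quillen adjunction $[p]^* \dashv \eta_p$ for the \emph{injective}, not the projective, model structure on $\bf{\epsilon\text{-}dg\text{-}mod^{gr}_{\mathbb{Z}_{(p)}}}$, so that is the one you should appeal to when arguing strictly. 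A cleaner way to finish than constructing cocones by hand would be to observe that, once you know $\eta_p$ preserves filtered colimits, your shift argument shows that the colimit of the tower is an $\eta_p$-fixed point, and then to appeal to the general Proposition \ref{LFAdj} to know a left adjoint exists; your colimit then coincides with it by uniqueness of adjoints, which transfers the coherence burden onto the adjoint functor theorem where it belongs.
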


\begin{proof}
The category $\epsilon-Mod^{gr}_{\mathbb{Z}_{(p)}}$ is presentable as a module category. The functor $\eta_p$ commutes with filtered colimits (Proposition \ref{decFilt}) and with small limits (Proposition \ref{adjDec}). We conclude using Proposition \ref{LFAdj}.
\end{proof}

\begin{Def}
Identifying the category of mixed Dieudonné complexes with the category $LFP_{\eta_p}(\epsilon-Mod^{gr}_{\mathbb{Z}_{(p)}})$, the subcategory of saturated mixed Dieudonné complexes is defined as $FP_{\eta_p}(\epsilon-Mod^{gr}_{\mathbb{Z}_{(p)}})$, we denote it $\epsilon-D-Mod^{gr,sat}$.
\end{Def}

\subsubsection{Saturated Dieudonné algebra}

\begin{Def}
We define the décalage functor for Dieudonné stacks. Let $ \pi : X \to BS^1_{gr}$ be a Dieudonné stack, the \textit{décalé} Dieudonné stack is simply $X$ endowed with the composed structure morphism : $X \xrightarrow{\pi} BS^1_{gr} \xrightarrow{[p]} BS^1_{gr}$. This \textit{décalage} construction defines an endofunctor of $D-dSt$.

\end{Def}

\begin{Rq}
A Dieudonné simplicial algebra, that is a Dieudonné derived stack $X \to BS^1_{gr}$ which is relatively affine, has an associated \textit{décalé} Dieudonné derived stack, which is not necessarily a Dieudonné simplicial algebra.
\end{Rq}

\begin{Prop}
The \textit{décalage} construction has a right adjoint given by taking the fiber product of the structure map along $[p] : BS^1_{gr} \to BS^1_{gr}$.
\end{Prop}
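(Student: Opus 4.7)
The plan is to recognize the claim as an instance of the standard postcomposition/pullback adjunction on slice $\infty$-categories, upgraded to the ambient topos $dSt^{gr,Frob}$.

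First I would unwind the definitions. By construction, $\epsilon\text{-}D\text{-}dSt^{gr} = S^1_{gr}\text{-}dSt^{gr,Frob}$, and via Proposition \ref{ActionBG} a Dieudonné derived stack is the same as a morphism $\pi : X \to BS^1_{gr}$ in $dSt^{gr,Frob}$. Thus the $\infty$-category of Dieudonné stacks is equivalent to the slice $\infty$-category $dSt^{gr,Frob}_{/BS^1_{gr}}$. Under this identification, the décalage functor $D$ is visibly postcomposition with $[p] : BS^1_{gr} \to BS^1_{gr}$, i.e.\ the functor usually written $[p]_!$ on slices.

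Next I would invoke the general fact that in any $\infty$-category $\mathcal{C}$ with pullbacks, and for any morphism $f : c \to c'$, the postcomposition functor $f_! : \mathcal{C}_{/c} \to \mathcal{C}_{/c'}$ has a right adjoint given by pullback $f^* : \mathcal{C}_{/c'} \to \mathcal{C}_{/c}$, $(Y \to c') \mapsto (Y \times_{c'} c \to c)$. Applying this to $\mathcal{C} = dSt^{gr,Frob}$, which is an $\infty$-topos by Proposition \ref{DieudTopos} and hence admits all small limits, and to $f = [p] : BS^1_{gr} \to BS^1_{gr}$, we get precisely the adjunction claimed: the right adjoint sends $(\pi : Y \to BS^1_{gr})$ to the fiber product $Y \times_{BS^1_{gr},[p]} BS^1_{gr}$ equipped with its projection to $BS^1_{gr}$.

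To finish, I would sketch the verification of the universal property at the level of mapping spaces. For $X \to BS^1_{gr}$ and $Y \to BS^1_{gr}$ in $dSt^{gr,Frob}_{/BS^1_{gr}}$, a morphism $D(X) \to Y$ over $BS^1_{gr}$ is a morphism $X \to Y$ in $dSt^{gr,Frob}$ making the square
$$\begin{tikzcd} X \arrow[r] \arrow[d,"\pi"'] & Y \arrow[d] \\ BS^1_{gr} \arrow[r,"{[p]}"'] & BS^1_{gr} \end{tikzcd}$$
commute, which by the universal property of the pullback is exactly the data of a morphism $X \to Y \times_{BS^1_{gr},[p]} BS^1_{gr}$ over $BS^1_{gr}$. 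Naturality in $X$ and $Y$ is immediate, giving the adjunction.

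The only genuine point to check, which I view as the main (small) obstacle, is that the pullback $Y \times_{BS^1_{gr},[p]} BS^1_{gr}$ is computed in the $\infty$-topos $dSt^{gr,Frob}$ and not naively in $dSt$; this is fine because Proposition \ref{DieudTopos} ensures the existence of finite limits, and the forgetful functor to $dSt^{gr}$ is shown there to preserve limits, so the underlying graded derived stack is the expected fiber product.
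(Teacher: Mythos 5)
Your proof is correct, and the paper in fact states this proposition without giving any proof at all, so you are supplying the argument the author omitted. Your approach is the natural one: identify $\epsilon\text{-}D\text{-}dSt^{gr}$ with the slice $dSt^{gr,Frob}_{/BS^1_{gr}}$ via Proposition \ref{ActionBG}, recognize the décalage as the postcomposition functor $[p]_!$ on that slice, and then invoke the standard $f_! \dashv f^*$ adjunction for slice $\infty$-categories, which applies because $dSt^{gr,Frob}$ has pullbacks (Proposition \ref{DieudTopos}). The final remark — that the fiber product is taken in $dSt^{gr,Frob}$ and that the forgetful functor preserves it — is a good point to make explicit, since it is what guarantees the underlying graded derived stack of the adjoint is the expected one.
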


\begin{Def}
We define $\epsilon-D-SCR^{gr,sat}$, the category of saturated mixed graded Dieudonné simplicial algebras, as the subcategory of mixed graded Dieudonné algebras having an underlying mixed graded Dieudonné complex which is saturated. Meaning :
$$\epsilon-D-SCR^{gr,sat} = \epsilon-D-SCR^{gr} \times_{\epsilon-D-Mod^{gr}} \epsilon-D-Mod^{gr,sat}$$
\end{Def}

\begin{Prop}
The inclusions
$$i : \epsilon-D-Mod^{gr,sat} \subset\epsilon-D-Mod^{gr}$$
and
$$j : \epsilon-D-SCR^{gr,sat} \subset \epsilon-D-SCR^{gr}$$
both admit a left adjoint.
\end{Prop}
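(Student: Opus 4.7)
The plan is to derive the first adjunction directly from the previous proposition, and to prove the second adjunction via the adjoint functor theorem. For the inclusion $i$, I would invoke Remark \ref{RqLFP} to identify $\epsilon-D-Mod^{gr}$ with $LFP_{\eta_p}(\epsilon-Mod^{gr})$; the subcategory $\epsilon-D-Mod^{gr,sat}$ is by definition $FP_{\eta_p}(\epsilon-Mod^{gr})$. Under these identifications, $i$ coincides with the inclusion $i_{FP}$ whose left adjoint was just constructed (using presentability of $\epsilon-Mod^{gr}$, that $\eta_p$ commutes with small limits by Proposition \ref{adjDec}, and that $\eta_p$ commutes with filtered colimits by Proposition \ref{decFilt}). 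This yields the saturation functor $Sat$ on derived Dieudonné complexes.

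For the inclusion $j$, the strategy is to apply the adjoint functor theorem \ref{adjointFunct}. First I would verify that both $\epsilon-D-SCR^{gr}$ and $\epsilon-D-SCR^{gr,sat}$ are presentable $\infty$-categories. For $\epsilon-D-SCR^{gr}$, presentability follows from its description as the opposite of the category of relative affine derived stacks over $B(S^1_{gr} \rtimes (\mathbb{N} \times \mathbb{G}_m))$ subject to a Frobenius lift condition, each ingredient being presentable and the Frobenius constraint being a pullback of presentable categories along accessible functors (as in Proposition \ref{DieudTopos}). For $\epsilon-D-SCR^{gr,sat}$, I would use the pullback presentation
$$\epsilon-D-SCR^{gr,sat} \simeq \epsilon-D-SCR^{gr} \times_{\epsilon-D-Mod^{gr}} \epsilon-D-Mod^{gr,sat}$$
combined with the fact that $\epsilon-D-Mod^{gr,sat}$ is presentable (being a reflective subcategory of the presentable $\epsilon-D-Mod^{gr}$ by the first part of the proposition).

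Next I would verify that $j$ preserves small limits. Since $j$ is the base change of $i$ along the forgetful functor $\epsilon-D-SCR^{gr} \to \epsilon-D-Mod^{gr}$, limits in the pullback are computed componentwise; the inclusion $i$ preserves limits as a right adjoint, and the forgetful functor $\epsilon-D-SCR^{gr} \to \epsilon-D-Mod^{gr}$ also preserves limits (it is the pushforward along the structure map of a derived Dieudonné stack, which is a right adjoint). Hence $j$ preserves small limits. Accessibility of $j$ then follows automatically from it being a limit-preserving functor between presentable $\infty$-categories, and the adjoint functor theorem delivers the desired left adjoint.

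The main obstacle I anticipate is the careful verification of presentability for $\epsilon-D-SCR^{gr}$, which requires unpacking the various fibered product definitions into a clean description as algebras over a monad in a presentable category (in the spirit of the program suggested in the remark that envisions a monad $LSym$ on $\epsilon-D-Mod^{gr}$, analogous to \cite{Rak20}). Once this is in place, the two adjunctions follow almost formally. An alternative, more explicit route would be to construct the left adjoint of $j$ by applying $Sat$ to the underlying derived Dieudonné complex and showing that the commutative algebra structure lifts canonically, but this would require promoting $Sat$ to a lax symmetric monoidal functor in the graded mixed setting, which is itself a nontrivial verification.
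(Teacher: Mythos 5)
Your proposal follows essentially the same structure as the paper's proof: part one is handled by the identification of Remark \ref{RqLFP} together with the saturation adjoint constructed just before (i.e.\ Proposition \ref{LFAdj}), and part two proceeds through the pullback square and the adjoint functor theorem \ref{adjointFunct}, using that $j$ is a projection of a pullback of presentable $\infty$-categories and hence preserves limits computed componentwise.

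The one place where your argument falls short of the paper's is the accessibility of $j$. You write that accessibility ``follows automatically from it being a limit-preserving functor between presentable $\infty$-categories,'' but the theorem you cite (Theorem \ref{adjointFunct}, i.e.\ Lurie's Corollary 5.5.2.9) lists accessibility as a \emph{separate} hypothesis in its criterion for a left adjoint — it is precisely \emph{not} a formal consequence of limit preservation in that formulation. Asserting automaticity here would require a stronger special adjoint functor theorem for presentable $\infty$-categories (in the spirit of Nguyen--Raptis--Schrade), which is beyond what the paper invokes. The paper closes this gap explicitly by verifying that $j$ preserves filtered colimits: since $U$ is conservative and preserves filtered colimits and small limits, one reduces to checking the composite $U \circ j \simeq i \circ U_0$, where $i$ preserves them by Proposition \ref{LFAdj} and $U_0$ does because it is a projection of a pullback of presentable categories along filtered-colimit-preserving functors. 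The same pullback-projection argument you already use for limits immediately furnishes the filtered colimit statement, so the fix is small — but as written the accessibility step is unjustified relative to the cited theorem.

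Your remark about an alternative route — applying $Sat$ to the underlying Dieudonné complex and lifting the multiplication, which would require promoting $\eta_p$-saturation to a lax symmetric monoidal functor — is reasonable to flag as nontrivial and is not the route the paper takes; the paper stays entirely within the adjoint-functor-theorem framework.
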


\begin{proof}
For the first inclusion, it follows from Remark \ref{RqLFP} and Proposition \ref{LFAdj}.

By definition of saturated Dieudonné algebras, we have a pullback diagram :

\begin{tikzcd}
\epsilon-D-SCR^{gr,sat} \arrow[d,"U_0"] \arrow[r,"j"] \arrow[dr, phantom, "\lrcorner", very near start] & \epsilon-D-SCR^{gr} \arrow[d,"U"] \\
\epsilon-D-Mod^{gr,sat} \arrow[r,"i"]& \epsilon-D-Mod^{gr}
\end{tikzcd}

We show $j$ commutes with filtered colimits and small limits. Since $U$ commutes with these colimits and limits and it is reflective, we show $U \circ j \simeq i \circ U_0$ preserves the required colimits and limits. The functor $j$ preserves these colimits and limits from the proof of Proposition \ref{LFAdj} and so does $U_0$ since it is the projection of a fiber product and fibered product commutes with filtered colimits and small limits. We conclude with the adjoint functor Theorem \ref{adjointFunct}.

\end{proof}

\begin{Rq}
We can notice the décalé Dieudonné derived stack of a Dieudonné stack has functions given by the saturation of functions of the Dieudonné stack, therefore this construction gives a geometrical interpretation of saturation.
\end{Rq}

\section{Future perspectives} \label{futuPers}

\subsection{Improvements}

We will expose many possible directions for improvements and generalizations of the results. In this thesis, we have defined the de Rham-Witt complex of a simplicial $\mathbb{Z}_{(p)}$-algebra with a Frobenius lift. In order to define de Rham-Witt of a simplicial $\mathbb{F}_p$-algebra R, we will need to construct the functor Verschiebung $V$. We will consider a completed version with respect to a $V$-filtration of the Dieudonné de Rham algebra of $(W(R),F)$.

\paragraph*{The $V$ functor and the $V$ filtration}
Following the construction of the de Rham-Witt complex in \cite{BLM22}, we can construct a Verschiebung map $V$. The fundamental compatibility
$$FV = p$$
is expected to hold. From which we deduce, assuming we work on $p$-torsion-free saturated Dieudonné complexes
$$p \epsilon V = V\epsilon$$

That is, we want to construct a decomposition of multiplication by $p$ :
$$M \xrightarrow{V} [p]^*M \xrightarrow{F} M$$

The construction of such a map is obvious for $M$ a $p$-torsion-free saturated derived Dieudonné complex.  We assume to have constructed such a map $V$.

\begin{Def}
For $M$ a saturated derived Dieudonné complex, we denote $\mathcal{W}_r(M)$ the graded mixed complexes representing the functor
$$N \in \epsilon-Mod^{gr} \mapsto Map^{V^r}(M,N)$$
where $Map^{V^r}(M,N)$ is the mapping space of graded mixed morphisms $M \to N$ such that the composition $M \xrightarrow{V^r} M \to N$ is homotopic to zero. Meaning $Map^{V^r}(M,N)$ is the fiber of
$$Map_{\epsilon-Mod^{gr}}(M,N) \xrightarrow{- \circ V^r} Map_{Mod}(M,N)$$

We have a canonical restriction map
$$res : M \to lim_r \mathcal{W}_r(M)$$

We denote $\mathcal{W}(M) \coloneqq lim_r \mathcal{W}_r(M)$.

The Dieudonné complex $M$ is said to be strict when $res$ is an equivalence.
\end{Def}

\begin{Rq}
The usual definition of $\mathcal{W}_r(M)$ is given by
$$\mathcal{W}_r(M) \coloneqq M/(Im(V^r) + Im(dV^r))$$
they coincide as taking the cofiber of $M \to M$ in the category of graded mixed complexes kills $Im(dV^r)$ automatically.
\end{Rq}

We recall an alternative description of the process of strictification.

\begin{Cor} (\cite[Corollary 2.8.2]{BLM22}) Let $M$ be a saturated Dieudonné complex. The restriction map
$$M \to \mathcal{W}(M)$$
exhibits $\mathcal{W}(M)$ as a $p$-completion of $M$ in the derived category of abelian groups.
\end{Cor}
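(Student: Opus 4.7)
The strategy is to verify the two standard criteria for a map $f : M \to N$ to be a derived $p$-completion in $\mathcal{D}(\mathbb{Z})$: namely, that $N$ is itself derived $p$-complete, and that $\mathrm{cofib}(f) \otimes^{\mathbb{L}} \mathbb{F}_p \simeq 0$ (equivalently, $f$ induces an equivalence after $- \otimes^{\mathbb{L}} \mathbb{F}_p$). I would verify these two conditions separately, obtaining the first almost formally and spending most of the work on the second.

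First I would show that each $\mathcal{W}_r(M)$ is annihilated by $p^r$. In a saturated Dieudonné complex, $F$ is bijective (it is the isomorphism $\alpha : M \to \eta_p M$ from the definition of saturation), so the Verschiebung $V$ with $FV = p$ is defined on all of $M$, and one obtains $p \cdot x = F(V(x)) = V(F(x)) \in VM$ (using that on saturated complexes $V$ also satisfies $VF = p$). Iterating, $p^r M \subseteq V^r M \subseteq V^r M + dV^r M$, which is the kernel of $M \to \mathcal{W}_r(M)$, so $p^r$ kills $\mathcal{W}_r(M)$. Since $\mathcal{W}(M) = \lim_r \mathcal{W}_r(M)$ is a limit of a tower of objects each annihilated by $p^r$, the limit is automatically derived $p$-complete (the derived $p$-completion of any object of $\mathcal{D}(\mathbb{Z})$ annihilated by $p^r$ is itself, and limits of derived $p$-complete objects are derived $p$-complete).

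Second I would compare the tower $\{M/p^r M\}$ with the tower $\{\mathcal{W}_r(M)\}$. The inclusion $p^r M \subseteq V^r M + dV^r M$ established above yields a natural surjection of towers $M/p^r M \twoheadrightarrow \mathcal{W}_r(M)$; I would then show that this map becomes an equivalence after derived $p$-completion, so that $R\lim_r (M/p^r M) \xrightarrow{\sim} R\lim_r \mathcal{W}_r(M) = \mathcal{W}(M)$, and the left-hand side is by definition the derived $p$-completion $M^{\wedge}_p$. Concretely, the kernel of $M/p^r M \to \mathcal{W}_r(M)$ is $(V^r M + dV^r M)/p^r M$; one analyzes the straight $V^r$-part using the identity $V^r(M) = F^{-r}(p^r M)$ (so that $V^r M / p^r M$ is identified with the obstruction to $F$ being the identity, which vanishes after reducing mod $p$ since $F \equiv \mathrm{Frobenius} \pmod{p}$ on the $p$-torsion-free quotient), and the crossed term $dV^r M$ using the Dieudonné relation $Vd = pdV$ (equivalent to $dF = pFd$), which pushes $d$-contributions into higher $V$-filtration at the cost of one power of $p$.

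The main obstacle will be controlling the mixed term $dV^r M$ modulo $p^r$: it does not sit inside $V^r M$ in general, and making precise that it is negligible modulo $p$ in the limit requires commuting $d$ past powers of $V$, where each commutation trades a factor of $V$ for a factor of $p$. Carrying this out cleanly probably requires stratifying the filtration $V^r M + dV^r M$ by bidegree in $(V\text{-order}, p\text{-order})$ and invoking the saturation hypothesis (injectivity of $\alpha$) to rule out spurious $p$-torsion. Once this bookkeeping is done, the tower map $M/p^r M \to \mathcal{W}_r(M)$ becomes an iso on $R\lim$, and combined with the first paragraph this identifies $\mathcal{W}(M)$ with $M^{\wedge}_p$.
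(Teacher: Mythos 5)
Note first that the paper does not prove this corollary; it simply records \cite[Corollary 2.8.2]{BLM22} as a citation, so the only argument to compare against is the one in that reference. Your two-step strategy — show that $\mathcal{W}(M)$ is derived $p$-complete, then identify it with the derived $p$-completion by comparing the tower $\{\mathcal{W}_r(M)\}_r$ to $\{M/p^r M\}_r$ — is exactly the route BLM22 take. The first step is essentially correct: on a saturated complex $V$ is everywhere defined, $px = VF(x)$ gives $pM \subset VM$, hence $p^r M \subset V^r M \subset \ker\bigl(M \to \mathcal{W}_r(M)\bigr)$, and a (derived) limit of $p^r$-torsion complexes along surjective transition maps is derived $p$-complete. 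One real slip, though: $F : M \to M$ is \emph{injective} but not bijective; saturation says $F$ maps $M^n$ isomorphically onto $\{x \in M^n : dx \in pM^{n+1}\}$, which is a proper subgroup in general. Only injectivity of $F$ (so that $V$ is well defined) is needed for your argument.

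The second step is where the gap lies. The statement you actually need is that the surjection $M/p^r M \twoheadrightarrow \mathcal{W}_r(M)$ is a \emph{quasi-isomorphism} for every $r$; this is the proposition in \cite{BLM22} immediately preceding Corollary 2.8.2, and it is the real content of the result. Phrasing this as ``becomes an equivalence after derived $p$-completion'' is circular: both $M/p^r M$ and $\mathcal{W}_r(M)$ are killed by $p^r$, hence already derived $p$-complete, so the condition adds nothing to being a quasi-isomorphism. More seriously, your heuristic for why the kernel $(V^r M + dV^r M)/p^r M$ is negligible invokes ``$F \equiv \mathrm{Frobenius} \pmod p$''. That is a condition on Dieudonné \emph{algebras} (item three of Definition \ref{DADef}), not on Dieudonné \emph{complexes}: a saturated Dieudonné complex carries no multiplication and $F$ satisfies no congruence beyond $dF = pFd$. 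So that part of the sketch does not apply here. The identity $Vd = pdV$ you invoke is correct and is used in BLM22, but the actual proof that $(V^r M + dV^r M)/p^r M$ is acyclic is a fairly delicate induction using the full strength of saturation, and the bookkeeping you gesture at (``stratifying by bidegree'') is not worked out and, as framed around a spurious Frobenius congruence, would not go through as stated.
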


This corollary motivates defining a derived Dieudonné complex to be strict when is is derived $p$-complete.

We can then extend these definitions to derived Dieudonné algebras : a derived Dieudonné algebra is said to be strict when it is as a derived Dieudonné complex. However the description of the strictification process seems more involved, in particular, defining the $A/VA$ as a simplicial algebra seems to be a difficult task in this homotopy context.

We define the category of strict derived Dieudonné complexes and algebras, respectively $\textbf{DC}_{str}$ and $\textbf{DA}_{str}$. We expect the inclusion
$$\textbf{DA}_{str} \subset \textbf{DA}$$
to admit a left adjoint denoted $\mathcal{W}Sat(-)$.

Defining $f : B \in \textbf{DA}_{str} \mapsto B^0/VB^0 \in SCR_{\mathbb{F}_p}$, we can ask ourselves

\begin{Ques}
Does the functor $$f : \textbf{DA}_{str} \to SCR_{\mathbb{F}_p}$$
admit a left adjoint given by
$$R \mapsto \mathcal{W}Sat(DDR(R^{red}))$$
where $R^{red}$ is the reduction of $R$ ?
\end{Ques}

\begin{Rq}
The construction of the left adjoint might not need to consider the reduction since our main results hold for simplicial algebras which are not $p$-torsion free.
\end{Rq}

\paragraph*{De Rham-Witt complex of a $\mathbb{F}_p$-algebra}

To complete the construction of the de Rham-Witt complex, we need a universal property of the Witt vectors. In \cite[Proposition 3.6.3]{BLM22}, the Witt vectors give the following identification
$$Hom_F(W(R),B) \cong Hom_{\textbf{DA}}(R,B^0/VB^0)$$
where $B$ is a strict Dieudonné algebra and $R$ is a $\mathbb{F}_p$-algebra. The set $Hom_F(W(R),B)$ consists of ring morphisms commuting with the Frobenius morphisms.

We hope to have a similar result in the context of simplicial algebras with Frobenius lifts.

\begin{Def}
We can define the Dieudonné algebra of a simplicial $\mathbb{F}_p$-algebra $R$ as $\mathcal{W}Sat(\mathbb{L}_{W(R)})$.
\end{Def}

\paragraph*{A more general base}
Our main results are given for schemes on $\mathbb{Z}_{(p)}$, we could develop a theory of graded loopspaces on a derived scheme endowed with a Frobenius over a general commutative ring $k$. However some precautions have to be taken, for example, for the crystalline circle
$$S^1_{gr} = Spec^\Delta(k \oplus k[1])$$
to have a derived Frobenius lift, we need $k$ to admit a derived Frobenius lift. When $k= \mathbb{Z}_{(p)}$, such a lift is essentially unique, for $k=\mathbb{F}_p$, there is not such lift since a commutative ring of finite $p$-torsion does not admit a $\delta$-ring structure. Therefore the base commutative ring $k$ must be endowed with a Frobenius lift structure.

Another generalization would be to replace the base $\mathbb{Z}_{(p)}$ by $\mathbb{Z}$. In this context, all prime numbers have to be considered for the Frobenius lifts, therefore, we have to use the more general notion of commutating Frobenius lifts. Furthermore big Witt vectors need to be used instead of $p$-typical Witt vectors.

\begin{Cons}
Let $\mathcal{C}$ be an $\infty$-category. We define the category of commutating endomorphisms on $\mathcal{C}$
$$\mathcal{C}^{\mathbb{N}^*endo} \coloneqq Fun(B\mathbb{N}^*,\mathcal{C})$$
where $(\mathbb{N}^*,\times) \simeq \mathbb{N}^{(\mathbb{N})}$ is the free abelian monoid on the set $\mathbb{N}$. Sending $1$ to the $i$-th prime number $p_i$ induces a morphism of monoids
$$\mathbb{N} \to \mathbb{N}^*$$
which gives by restriction
$$p_i : \mathcal{C}^{\mathbb{N}^*endo} \to  \mathcal{C}^{endo}$$

Taking $\mathcal{C}=SCR_{\mathbb{Z}_{(p)}}$, we define the category of simplicial algebras with commutating Frobenius lifts as
$$SCR^{\mathbb{N}^*Fr} \coloneqq \mathcal{C}^{\mathbb{N}^*endo} \times_{(SCR_p^{endo})^\mathbb{N}} (SCR_p)^{\mathbb{N}}$$

An element in this category is a simplicial algebra $A$ endowed with morphisms
$$\phi_p : A \to A$$
commutating up to coherent homotopy and the data of a homotopy between $\phi_p\vert_{\mathbb{F}_p}$ and the canonical Frobenius $Fr_p$.
\end{Cons}

This definition is close to the notion of cyclic spectra developed in \cite{NS18} and some connections with topological cyclic homology might be found.

Combining these notions could allow us to work over any general discrete commutative ring endowed with commutating Frobenius lifts.

\paragraph*{Study of derived stacks with Frobenius lift}

The theory of derived stacks with Frobenius lift has been quickly description in this thesis but a deeper examination could be fruitful. In particular, a theory of Postnikov towers could shorten many proofs.

We have proven that $dSt^{Fr}$ is an $\infty$-topos. Extending Proposition\ref{pointsEndodSt}, we can ask ourselves the following question.

\begin{Ques}
Let $\mathcal{C}_0$ be the full subcategory of $dSt^{Fr}$ on objets which are given by freely adjoining a Frobenius to a derived affine scheme, these objects are of the form $L(Spec(C))$, with $C$ a simplicial algebra. Does the category $\mathcal{C}_0$ admit a site structure such that the topos of sheaves on $\mathcal{C}_0$ identifies with $dSt^{Fr}$ ? In this case, we would have the identification :
$$Sh(\mathcal{C}_0) \simeq dSt^{Fr}$$
\end{Ques}
 
A precise description of the contructions and results of this thesis on derived stacks with endomorphisms instead of Frobenius lifts could be illuminating. We expect to be able to recover the construction of the de Rham-Witt complex when considering
$$\textbf{Map}_{dSt^{endo}}(S^1_{gr},X)$$
when $X$ admits a Frobenius lift, instead of $$\textbf{Map}_{dSt^{Fr}}(S^1_{gr},X)$$

\paragraph*{HKR-type filtration}
Inspired by \cite{MRT20}, we could construct a filtration analoguous to the one on hochschild cohomology. In the classical case, the affinization of the circle
$$Aff(S^1) \simeq BFix$$
admits a filtration which has $BKer$ as its associated graded stack.

The stack $BFix$ admits an endomorphism structure induced by the morphism
$$\mathbb{Z}\left[\binom{X}{n}\right] = \{ Q \in \mathbb{Q}[X] : Q(\mathbb{Z}) \subset \mathbb{Z} \} \to \{ Q \in \mathbb{Q}[X] : Q(\mathbb{Z}) \subset \mathbb{Z} \} = \mathbb{Z}\left[\binom{X}{n}\right]$$
sending $Q$ to $Q(pX)$.

This endomorphism is not a Frobenius lift. We denote it $[p]$.

\begin{Def}
Let $(X,F)$ be a derived scheme endowed with an endomorphism, we define the loopspace of $X$ as
$$\mathcal{L}^{endo}((X,F)) \coloneqq \textbf{Map}_{dSt^{endo}}((BFix,[p]),(X,F))$$

similarly, the graded loopspace is defined as
$$ \mathcal{L}^{endo}_{gr}((X,F)) \coloneqq \textbf{Map}_{dSt^{endo}}((BKer,[p]),(X,F))$$
\end{Def}

\begin{Rq}
The "multiplication by $p$" map of topological spaces
$$S^1 \to S^1$$
defines an endomorphism of the stack $S^1$. And we can expect an identification of graded affine stacks with endomorphisms
$$Aff((S^1,\times p)) \simeq (BFix,[p])$$
and also an equivalence
$$\textbf{Map}_{dSt^{endo}}((S^1,\times p),(X,F)) \simeq \textbf{Map}_{dSt^{endo}}((BFix,[p]),(X,F))$$
\end{Rq}

\begin{Def}
When $A$ is a simplicial algebra over $\mathbb{Z}_{(p)}$ and $F$ is en endomorphism of $A$, we define the Hochschild cohomology of $(A,F)$ as
$$HH((A,F)) \coloneqq \mathcal{O}_{SCR}(\mathcal{L}^{endo}((Spec(A),Spec(F))))$$
and the de Rham algebra of $(A,F)$ as 
$$DR((A,F)) \coloneqq \mathcal{O}_{SCR}(\mathcal{L}^{endo}_{gr}((Spec(A),Spec(F))))$$
\end{Def}

\begin{Rq}
We expect $DR((A,F))$ to be very close to
$$Sym_A((\mathbb{L}_A[1],\frac{dF}{p}))$$
at least when $A$ is $p$-torsion free and the endomorphism can be promoted to a Frobenius lift.
\end{Rq}

We can ask the following

\begin{Ques}
Does the Hochschild cohomology of a simplicial algebra with endomorphism $(A,F)$ admit a natural filtration which has $DR(A,F)$ as its associated graded simplicial algebra ?
\end{Ques}

Defining such a filtration can be achieved by constructing a filtered analogue of our graded circle. However some precautions have to be taken since the natural endomorphism on the affinization of the circle $Aff(S^1)$ is not a Frobenius lift. Therefore a careful comparision between Frobenius graded loopspace and endomorphism graded loopspace will probably be necessary.

\paragraph*{Prismatic circle} We expect our construction of graded circle to have an analogue over the prismatic site. We would call this object the prismatic circle. Precisely, for $(A,I)$ a prism, $A$ is a $\delta$-ring, therefore it has a canonical Frobenius lift. We can then consider
$$\textbf{Map}_{dSt^{Fr}}(S^1_{gr},Spec(A))$$

We expect taking levelwise mapping space with the crystalline circle to define a sheaf on the prismatic site. This sheaf could then recover prismatic cohomology from a mixed graded complex.

\begin{Rq}
The above definition is incomplete and needs to be modified to take into account the ideal $I$ when the prism is not a crystalline prism.
\end{Rq}

\subsection{Symplectic forms}
One of the main motivations behind our work on the graded Dieudonné loopspaces was to define a theory of shifted symplectic forms in mixed characteristic. The theory of shifted symplectic structures and shifted Poisson structures is carried out in \cite{PTVV} and \cite{CPTVV}. Furthermore they are helpful for the study of quantization deformations.

We recall some definitions from \cite{PTVV}.

\begin{Def}
Let $F$ be a derived Artin stack over a commutative ring of characteristic zero $k$. The space of $n$-shifted $p$-forms on $F$ is
$$\mathcal{A}^p(F,n) \coloneqq Map_{Mod}(k(p)[-p-n],DR(X))$$
and the space of closed $n$-shifted $p$-forms on $F$ is
$$\mathcal{A}^{p,cl}(F,n) \coloneqq Map_{\epsilon-Mod^{gr}}(k(p)[-p-n],DR(X))$$

We have a natural forgetful functor
$$\mathcal{A}^{p,cl}(F,n) \to \mathcal{A}^p(F,n)$$
and the differential induces
$$d_{DR} : \mathcal{A}^p(F,n) \to \mathcal{A}^{p+1,cl}(F,n)$$
\end{Def}

\begin{Def}
Let $\omega$ be a closed $2$-form of degree $n$ on $F$, by adjunction it induces a morphism of quasi-coherent sheaves on
$$\Theta_w : \mathbb{T}_F \to \mathbb{L}_F[n]$$
since $F$ is a derived Artin stack, therefore $\mathbb{L}_F$ is dualizable. The form $w$ is said to be an $n$-shifted symplectic form when $\Theta_w$ is an equivalence.
\end{Def}

We generalize these notions to the Dieudonné de Rham complex.

\begin{Def}
Let $X$ be a derived scheme over $\mathbb{Z}_{(p)}$, endowed with a Frobenius lift $F$. The space of $n$-shifted Dieudonné $p$-forms on $X$ as
$$\mathcal{A}^p(X,F,n) \coloneqq Map_{Mod}(k(p)[-p-n],DDR(X,F))$$
and the space of closed $n$-shifted Dieudonné $p$-forms on $X$ as
$$\mathcal{A}^{p,cl}(X,F,n) \coloneqq Map_{\epsilon-Mod^{gr}}(k(p)[-p-n],DDR(X,F))$$

We have a natural forgetful functor
$$\mathcal{A}^{p,cl}(X,F,n) \to \mathcal{A}^p(X,F,n)$$
and the differential induces
$$d_{DR} : \mathcal{A}^p(X,F,n) \to \mathcal{A}^{p+1,cl}(X,F,n)$$

Furthermore, the endomorphism on $DDR(X,F)$, which we can see as $\frac{dF}{p}$ on $\mathbb{L}_X$ induces natural transformations :
$$F_{form} : \mathcal{A}^p(X,F,n) \to \mathcal{A}^p(X,F,n)$$
and
$$F_{cl} : \mathcal{A}^{p,cl}(X,F,n) \to \mathcal{A}^{p,cl}(X,F,n)$$
\end{Def}

We define a variation on the definition which takes the Frobenius structure into account.

\begin{Def}
The space of $n$-shifted Dieudonné $p$-forms on $X$ as
$$\mathcal{A}^p_{Fr}(X,F,n) \coloneqq Map_{Mod^{endo}}(k(p)[-p-n],DDR(X,F))$$
where we endow $k(p)[-p-n]$ with the "multiplication by $p$" endomorphism and the space of closed $n$-shifted Dieudonné $p$-forms on $X$ as
$$\mathcal{A}^{p,cl}_{Fr}(X,F,n) \coloneqq Map_{\epsilon-Mod^{gr,endo}}(k(p)[-p-n],DDR(X,F))$$

We have a natural forgetful functor
$$\mathcal{A}^{p,cl}_{Fr}(X,F,n) \to \mathcal{A}^p_{Fr}(X,F,n)$$
and the differential induces
$$d_{DR} : \mathcal{A}^p_{Fr}(X,F,n) \to \mathcal{A}^{p+1,cl}_{Fr}(X,F,n)$$
\end{Def}

\begin{Rq}
We recall the definition of Frobenius-derivations used for Fedosov quantization in \cite{BK07}. Let $A$ be commutative ring and $M$ an $A$-module, a Frobenius-derivation $D : A \to M$ is a morphism of abelian groups such that $D(1)=0$ and
$$D(ab) = a^p D(b) + b^p D(a)$$

This definition suggest yet another notion of $n$-shifted $p$-forms, on a simplicial algebra with Frobenius lift $(A,F)$, based on a notion of Frobenius-twisted cotangent complex
$$\mathbb{L}^{tw}_{(A,F)} \coloneqq \mathbb{L}_{A} \otimes_A A$$
where the morphism $A \to A$ is $F$.
\end{Rq}

We expect to be able to define symplectic forms and use them to study deformation quantizations for derived schemes in mixed characteristic.

The difference between the Frobenius-twisted cotangent complex and the cotangent complex we study here $\mathbb{L}_{(A,F)}$ seems to be connected to the difference between $p$-restricted and partition Lie algebra, see \cite{BM19}.

\subsection{Foliations}

We recall from \cite{To20}, the definition of derived foliations.

\begin{Def}
Let $X$ be a derived scheme over a commutative ring $k$. A derived foliation on $X$ is the data of a graded mixed derived stack $\mathcal{F}$ and a morphism of graded mixed derived stack $\mathcal{F} \to \mathcal{L}^{gr}(X)$ such that
\begin{itemize}
\item The projection $\mathcal{F} \to X$ is relatively affine.
\item The quasi-coherent complex $\mathcal{O}_\mathcal{F}(1)$ is in Tor-amplitude $]-\infty,m]$, for an integer $m$, where $\mathcal{O}_\mathcal{F}(1)$ is the $1$-weighted part of functions of $\mathcal{F}$ relative to $X$.
\item The natural morphism of graded derived stack
$$\mathcal{F} \to \mathbb{V}(\mathcal{O}_\mathcal{F}(1))$$
is an equivalence.
\end{itemize}
\end{Def}

In this definition, we were able to define Dieudonné foliations for possibly non-connective cotangent complexes $\mathbb{L}_\mathcal{F}$ using Corollary \ref{CoroLinStackFunction}. Indeed, we keep the expected property that
$$\mathcal{F} \mapsto \mathbb{L}_\mathcal{F}$$ is conservative.

We extend the definition of foliations to our framework.

\begin{Def}
Let $X$ be a derived scheme over $\mathbb{Z}_{(p)}$, endowed with a Frobenius lift. A Dieudonné foliation on $(X,F)$ is the data of a derived Dieudonné stack $F$ and a morphism of derived Dieudonné stack $\mathcal{F} \to \mathcal{L}^{gr}(X,F)$ such that
\begin{itemize}
\item The projection $\mathcal{F} \to X$ is relatively affine.
\item The quasi-coherent complex $\mathcal{O}_\mathcal{F}(1)$ is in Tor-amplitude $]-\infty,-1]$ where $\mathcal{O}_\mathcal{F}(1)$ is the $1$-weighted part of functions of $\mathcal{F}$ relative to $X$.
\item The natural morphism of graded derived stack endowed with a Frobenius lift
$$\mathcal{F} \to \mathbb{V}(\mathcal{O}_\mathcal{F}(1))$$
is an equivalence.
\end{itemize}
\end{Def}

Our classification Theorem \ref{classiDieud} gives a more precise description of Dieudonné foliations.

\begin{Thm}
Let $A$ be a smooth commutative $k$-algebra, $M$ a projective $A$-module of finite type. We fix a derived Frobenius lift structure on the graded  simplicial algebra $Sym_A(M[1])$, with $M$ in weight $1$. From Proposition \ref{FLSymStr}, it is equivalent to a classical Frobenius lift $F$ on $A$ and a linear map of $A$-modules $\phi : M \to M$. The space of Dieudonné foliations $\mathcal{F}$ over $A$ having $M$ as a cotangent complex is discrete and in bijection with the set of Dieudonné algebra structures on the graded commutative $k$-algebra $\bigoplus_i \wedge^i_A M [-i]$ endowed with its natural canonical Frobenius lift structure.

\end{Thm}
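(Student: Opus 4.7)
The strategy is to reduce the classification of Dieudonné foliations with prescribed cotangent data to the classification of Dieudonné mixed graded structures on the derived affine scheme $X = \mathbb{V}(M[1]) = Spec(Sym_A(M[1]))$, and then to invoke Theorem \ref{classiDieud} as a black box.

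First I would unfold the definition of derived foliation. The three conditions on $\mathcal{F}$ force that $\mathcal{F} \to \mathbb{V}(\mathcal{O}_\mathcal{F}(1))$ be an equivalence of graded derived stacks, with $\mathcal{O}_\mathcal{F}(1)$ in Tor-amplitude $]-\infty,-1]$. Demanding that $M$ be the cotangent complex of the foliation, with $M$ projective of finite type placed in weight $1$, pins down $\mathcal{O}_\mathcal{F}(1) \simeq M[1]$, so the underlying graded derived stack of $\mathcal{F}$ is forced to be $\mathbb{V}(M[1]) \simeq Spec(Sym_A(M[1]))$. Here the fully faithfulness of $\mathbb{V}$ on bounded above complexes, Corollary \ref{CoroLinStackFunction}, is what guarantees that this identification is canonical up to contractible choice and that the passage between the defining module and the associated linear stack loses no information.

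Next I would incorporate the Frobenius lift. Proposition \ref{FLSymStr} asserts that a derived Frobenius lift on $Sym_A(M[1])$ compatible with its grading is exactly the pair $(F,\phi)$ given in the statement. So once $F$ and $\phi$ are fixed, the space of Dieudonné foliations $\mathcal{F}$ on $A$ with cotangent complex $M$ identifies with the space of compatible $S^1_{gr}$-equivariant enhancements of $X \in dSt^{gr,Frob}$, i.e.\ the classifying space of Dieudonné mixed graded structures on $X$ extending the chosen Frobenius lift.

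Finally, I would invoke Theorem \ref{classiDieud} directly: that classifying space is discrete and in natural bijection with the set of Dieudonné algebra structures on the graded commutative $k$-algebra $\bigoplus_i \wedge_A^i M[-i]$ endowed with its canonical Frobenius lift induced by $F$ and $\phi$, which is exactly the set appearing in the statement. The only non-formal step is the appeal to Theorem \ref{classiDieud}; the rest is an unpacking of definitions, with Corollary \ref{CoroLinStackFunction} doing the work of translating between foliation-theoretic and stacky descriptions. The potential obstacle is bookkeeping: one must verify that the identification $\mathcal{F} \simeq \mathbb{V}(M[1])$ supplied by the foliation axioms is compatible with both the grading and the Frobenius lift, so that the matching between the two classifying spaces is an equivalence of spaces and not merely a bijection of $\pi_0$.
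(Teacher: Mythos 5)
The paper gives no formal proof of this statement; it appears in the ``Future perspectives'' section preceded only by the remark that it is a consequence of Theorem~\ref{classiDieud}. Your plan is the natural argument and matches that intent: the foliation axioms force $\mathcal{F} \simeq \mathbb{V}(\mathcal{O}_\mathcal{F}(1))$, fixing the cotangent data pins down $\mathcal{O}_\mathcal{F}(1) \simeq M[1]$, Proposition~\ref{FLSymStr} identifies the Frobenius structure with the pair $(F,\phi)$, and Theorem~\ref{classiDieud} then classifies the Dieudonné mixed graded structures on $X = \mathbb{V}(M[1])$. Your use of Corollary~\ref{CoroLinStackFunction} to make the passage between $M$ and $\mathbb{V}(M[1])$ lossless is also the right instinct.

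One step you pass over silently and should spell out: a Dieudonné foliation on $(Spec(A),F)$ is by definition a \emph{pair} $(\mathcal{F}, f)$ where $f : \mathcal{F} \to \mathcal{L}^{gr,Fr}(Spec(A),Spec(F))$ is a morphism of derived Dieudonné stacks; you only classify the object $\mathcal{F}$, not the morphism $f$. You should argue that once the mixed graded Dieudonné structure on $\mathcal{F}$ is fixed, the space of maps $f$ compatible with the structure map $\mathcal{F} \to Spec(A)$ is contractible. This follows from Theorem~\ref{AdjointDeRhamW}: since $\mathcal{F}$ is relatively affine, $f$ corresponds to a morphism $DDR(A,F) \to \mathcal{O}(\mathcal{F})$ in $\epsilon\text{-}D\text{-}SCR^{gr}$, hence by adjunction to a morphism $(A,F) \to \mathcal{O}(\mathcal{F})^{(0)} \simeq (A,F)$ in $SCR^{Fr}$; requiring the composite projection $\mathcal{F} \to Spec(A)$ to be the canonical one forces this morphism to be the identity, up to contractible choice. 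Adding this paragraph would close the one gap between the data being classified and the data your reduction accounts for.
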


This theorem gives an easier description of Dieudonné foliations, which have a quite abstract definition.

\begin{Ex}
We outline the construction of a fundamental example of a Dieudonné foliation using the previous theorem. Let $(X,F)$ be a smooth derived scheme endowed with a Frobenius lift. We define the tangent complex $\mathbb{T}_{(X,F)}$ as the dual of the cotangent complex $\mathbb{L}_{(X,F)}$ in the category of $(X,F)$-modules, we note that $\mathbb{T}_{(X,F)}$ does not admit $\mathbb{T}_X$ as an underlying $\mathcal{O}_X$-module : an element of $\mathbb{T}_{(X,F)}$ can be thought of as a sequence of elements of $\mathbb{T}_X$ which satisfy compatibility conditions with the Frobenius morphism. We fix a sub-bundle $I$ of $\mathbb{T}_{(X,F)}$ which is stable by the Lie bracket. The derived stack
$$\mathcal{F} \coloneqq \mathbb{V}(I^\vee[1])$$
has a canonical mixed graded structure and a Frobenius structure. The canonical map
$$I \to \mathbb{T}_{(X,F)}$$
induces a morphism
$$\mathcal{F} \to \mathcal{L}^{gr}(X,F)$$
which makes $\mathcal{F}$ into a Dieudonné foliation.
\end{Ex}

\begin{Ex}
Let $f : X \to Y$ be a morphism of derived schemes with Frobenius lifts. Pulling back along $f$ should define a canonical foliation $f^* 0_Y \in Fol(X)$.
\end{Ex}

Extending on the theory of Dieudonné foliations, we hope to construct, for a fixed Dieudonné foliation, the de Rham-Witt complex along the leaves of the foliation, from which we will deduce crystalline cohomology along the foliation.

From these constructions, we can expect results on Dieudonné foliations regarding the vanishing of cohomology classes. In \cite{To20}, for a crystal $E$ on a foliation $\mathcal{F}$, cohomology classes $c_i(E(0))$ in $H^{2i}_{DR}(X/S)$ are shown to vanish in $H^{2i}_{DR}(X/\mathcal{F})$ where $\mathcal{F}$ is a foliation on $X$ relative to $S$. These classes are shown to come from classes in crystalline cohomology and a similar vanishing theorem is proven. These classes are, in fact, canonically lifted to rigid cohomology classes and the theory of Dieudonné foliations could help better understand the vanishing of the crystalline classes and give similar results for rigid cohomology classes.

\newpage

\begin{appendices}
\appendixheaderon

\numberwithin{Def}{section}

\section{Categorical results}

\begin{Prop} \label{LFPgroup}
Let $\mathcal{C}$ be an $\infty$-topos, $G$ a group in $\mathcal{C}^{endo}$. $G$ is naturally a group in $\mathcal{C}$ and it has a compatible endomorphism $\alpha : G \to G$ which induces a forgetful functor $\alpha^* : G-\mathcal{C} \to G-\mathcal{C}$. Then there is an equivalence :
$$G-\mathcal{C}^{endo} \simeq CFP_{\alpha^*}(G-\mathcal{C})$$

\end{Prop}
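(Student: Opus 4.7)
The plan is to interpret both sides via the topos-theoretic characterization of group actions. First I would note that $\mathcal{C}^{endo} = \mathrm{Fun}(B\mathbb{N}, \mathcal{C})$ is itself an $\infty$-topos (as a functor category into one), and the bar construction $B$ commutes with the forgetful functor $\mathcal{C}^{endo} \to \mathcal{C}$ since it is a colimit and evaluation in a functor category commutes with colimits. Consequently the delooping of the group $(G, \alpha)$ in $\mathcal{C}^{endo}$ is $(BG, B\alpha)$, and Proposition \ref{ActionBG} applied inside the $\infty$-topos $\mathcal{C}^{endo}$ yields
\[
G\text{-}\mathcal{C}^{endo} \;\simeq\; \mathcal{C}^{endo}_{/(BG, B\alpha)}.
\]

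Next I would unpack this slice pointwise in $\mathcal{C}^{endo} = \mathrm{Fun}(B\mathbb{N}, \mathcal{C})$. An object consists of a morphism $(X, \beta_X) \to (BG, B\alpha)$ in $\mathcal{C}^{endo}$, which unfolds as a pair $(f \colon X \to BG,\, \beta_X \colon X \to X)$ in $\mathcal{C}$ satisfying the commutation $f \circ \beta_X = B\alpha \circ f$. This equation says precisely that $\beta_X$ realizes a morphism in the slice $\mathcal{C}_{/BG}$ between the two objects $f$ and $B\alpha \circ f$ with common underlying object $X$, i.e., a colax fixed point datum for the endofunctor of $\mathcal{C}_{/BG}$ induced by composition with $B\alpha$.

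The last step is to transfer this description across the equivalence $\mathcal{C}_{/BG} \simeq G\text{-}\mathcal{C}$ of Proposition \ref{ActionBG}. Under this equivalence, which is natural in $BG$, the endofunctor of $\mathcal{C}_{/BG}$ induced by the self-map $B\alpha$ of $BG$ corresponds to the forgetful functor $\alpha^{*}$ on $G\text{-}\mathcal{C}$, and $\beta_X$ is identified with the structural map of a colax fixed point $\alpha^{*}(X,\mu) \to (X,\mu)$. Assembling these identifications yields the desired equivalence $G\text{-}\mathcal{C}^{endo} \simeq CFP_{\alpha^{*}}(G\text{-}\mathcal{C})$.

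The main obstacle is the final identification between the $B\alpha$-induced endofunctor on $\mathcal{C}_{/BG}$ and $\alpha^{*}$ on $G\text{-}\mathcal{C}$, together with the correct direction of the colax structure morphism. This is most transparently handled by factoring the equivalence of Proposition \ref{ActionBG} through the straightening equivalence $\mathcal{C}_{/BG} \simeq \mathrm{Fun}(BG, \mathcal{C})$ (valid because $BG$ is an $\infty$-groupoid): the slice-side operation induced by $B\alpha$ then intertwines with precomposition with $B\alpha$ on the functor category, and this precomposition unwinds to the restriction-along-$\alpha$ functor $\alpha^{*}$ under the identification $G\text{-}\mathcal{C} \simeq \mathrm{Fun}(BG, \mathcal{C})$.
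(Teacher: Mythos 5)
The paper states Proposition \ref{LFPgroup} without proof, so there is no official argument to compare against; I evaluate yours on its own terms.

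Your first two steps are sound: $\mathcal{C}^{endo} = \mathrm{Fun}(B\mathbb{N},\mathcal{C})$ is an $\infty$-topos, delooping is computed pointwise so $B(G,\alpha) = (BG,B\alpha)$, and Proposition~\ref{ActionBG} applied internally to $\mathcal{C}^{endo}$ gives $G\text{-}\mathcal{C}^{endo}\simeq \mathcal{C}^{endo}_{/(BG,B\alpha)}$. Unpacking the slice is also correct: the data is $(X,f:X\to BG,\beta_X:X\to X)$ with $f\circ\beta_X \simeq B\alpha\circ f$, and $\beta_X$ is indeed a morphism $(X, B\alpha\circ f)\to (X,f)$ in $\mathcal{C}_{/BG}$, i.e.\ a colax fixed point for the endofunctor \emph{post-composition} with $B\alpha$, call it $(B\alpha)_!$.

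The gap is in the last step. You assert that this post-composition endofunctor on $\mathcal{C}_{/BG}$ intertwines with \emph{pre}composition with $B\alpha$ on the functor-category side, hence with the restriction functor $\alpha^*$. This is false: under the equivalence $\mathcal{C}_{/BG}\simeq G\text{-}\mathcal{C}$, restriction $\alpha^*$ corresponds to \emph{pullback} $(B\alpha)^*$, while post-composition $(B\alpha)_!$ corresponds to \emph{left Kan extension} (induction) $\alpha_!$. These are adjoint, $(B\alpha)_!\dashv (B\alpha)^*$, so your computation correctly yields $CFP_{(B\alpha)_!}\simeq LFP_{(B\alpha)^*}\simeq LFP_{\alpha^*}(G\text{-}\mathcal{C})$, \emph{not} $CFP_{\alpha^*}(G\text{-}\mathcal{C})$. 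One can also see this directly without delooping: a $(G,\alpha)$-object $(X,\beta)$ in $\mathcal{C}^{endo}$ with action $\mu$ satisfies $\beta\circ\mu = \mu\circ(\alpha\times\beta)$, which says precisely that $\beta : (X,\mu)\to(X,\mu\circ(\alpha\times 1)) = \alpha^*(X,\mu)$ is $G$-equivariant, i.e.\ a \emph{lax} fixed point of $\alpha^*$. So either the proposition has a lax/colax typo (the two notions differ for non-invertible $\alpha$), or $\alpha^*$ in the statement is meant to denote induction $\alpha_!$ rather than restriction; in either reading, the intertwining claim in your final paragraph is incorrect as written and should be replaced by the adjunction between $(B\alpha)_!$ and $(B\alpha)^*$. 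Also note that the appeal to a straightening equivalence $\mathcal{C}_{/BG}\simeq \mathrm{Fun}(BG,\mathcal{C})$ is delicate here: $BG$ is an object of the topos $\mathcal{C}$, not an $\infty$-groupoid in the ambient sense, so the relevant statement is exactly Proposition~\ref{ActionBG} and one must track how $(B\alpha)_!$, $(B\alpha)^*$ transport across it rather than invoking straightening directly.
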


\begin{Prop} \label{actionProduct}
Let $\mathcal{C}_0 \to \mathcal{C}_{01} \leftarrow \mathcal{C}_1$ a pullback diagram of $\infty$-categories and $G$ be group in $\mathcal{C}_0 \times_{\mathcal{C}_{01}} \mathcal{C}_1$. $G$ induces a group $G_0$ in $\mathcal{C}_0$, $G_1$ in $\mathcal{C}_1$, $G_{01}$ in $\mathcal{C}_{01}$. The canonical morphism is an equivalence :
$$G-(\mathcal{C}_0 \times_{\mathcal{C}_{01}} \mathcal{C}_1) \xrightarrow{\sim} (G_0-\mathcal{C}_0) \times_{G_{01}-\mathcal{C}_{01}} (G_1-\mathcal{C}_1)$$
\end{Prop}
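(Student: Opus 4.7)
The plan is to describe $G$-equivariant objects via the simplicial bar construction and to invoke the fact that functor $\infty$-categories preserve limits in the target variable. First, for any $\infty$-category $\mathcal{D}$ and any group object $H$ in $\mathcal{D}$, recall that $H-\mathcal{D}$ embeds as a full subcategory of $Fun(\Delta^{op}, \mathcal{D})$ consisting of those simplicial objects $X_\bullet$ for which the natural Segal maps $X_n \to H^n \times X_0$ are equivalences; equivalently, $X_\bullet$ is a two-sided bar construction $B_\bullet(H,H,X_0)$ for an object $X_0$ equipped with an $H$-action. This presentation is intrinsic and does not require the geometric realization $BH$ to exist as an object of $\mathcal{D}$.

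Next, observe that the projection functors $\pi_0$, $\pi_1$, and $\pi_{01}$ out of $\mathcal{C} := \mathcal{C}_0 \times_{\mathcal{C}_{01}} \mathcal{C}_1$ preserve all limits, being the legs of a limit cone in $Cat_\infty$. In particular they preserve finite products, so they send the group object $G$ to the group objects $G_0$, $G_1$, $G_{01}$ with the compatible identifications $f_0^* G_0 \simeq G_{01} \simeq f_1^* G_1$ provided by the pullback datum, and the simplicial bar construction $B_\bullet G$ projects to $B_\bullet G_i$ in each component.

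The main computation now uses that $Fun(K,-)$, as an internal hom in $Cat_\infty$, preserves small limits in the second variable. Applied with $K = \Delta^{op}$ to the pullback defining $\mathcal{C}$, this yields
\[
Fun(\Delta^{op}, \mathcal{C}) \simeq Fun(\Delta^{op}, \mathcal{C}_0) \times_{Fun(\Delta^{op}, \mathcal{C}_{01})} Fun(\Delta^{op}, \mathcal{C}_1).
\]
It remains to verify that this equivalence restricts to the full subcategories cutting out the $G$-actions on each side. A simplicial object $X_\bullet \in \mathcal{C}$ satisfies the Segal condition for a $G$-action precisely when each map $X_n \to G^n \times X_0$ is an equivalence, and equivalences in the pullback category $\mathcal{C}$ are detected componentwise by $\pi_0$ and $\pi_1$. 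Hence this condition holds for $X_\bullet$ if and only if it holds for each of its projections to $\mathcal{C}_0$ and $\mathcal{C}_1$, giving the claimed restricted equivalence.

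I expect the main technical point will be the careful bookkeeping needed to check that the full subcategories of Segal-type objects in the three factors glue correctly along $G_{01}-\mathcal{C}_{01}$ to produce exactly $G-\mathcal{C}$ inside the pullback of functor categories, i.e.\ the compatibility of the action conditions under the comparison functors $f_0^*$ and $f_1^*$ induced by the pullback datum. Once this is granted, a direct inspection identifies the restriction of the above equivalence with the canonical comparison map of the statement, completing the proof.
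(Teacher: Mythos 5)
The paper states this proposition in the appendix without proof, so there is no argument to compare against directly. Your strategy — an intrinsic bar-construction description of $G$-equivariant objects, the fact that $Fun(\Delta^{op},-)$ preserves limits in $Cat_{\infty}$, and componentwise detection of equivalences in a pullback of $\infty$-categories — is sound, and it has the virtue of avoiding any $\infty$-topos hypothesis on the three categories (since $BG$ need not exist, unlike in the description $G-\mathcal{C} \simeq \mathcal{C}_{/BG}$ of Proposition \ref{ActionBG} that the paper uses elsewhere).

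Two points need tightening, beyond the bookkeeping you already flag. First, the full subcategory you describe cannot live in $Fun(\Delta^{op}, \mathcal{D})$ alone: the condition ``the natural Segal maps $X_n \to H^n \times X_0$ are equivalences'' presupposes a map $X_\bullet \to B_\bullet H$, where $B_\bullet H$ is the group object $H$ regarded as a simplicial object; without it there is no comparison map to $H^n$. The correct home for $H-\mathcal{D}$ is the full subcategory of $Fun(\Delta^{op}, \mathcal{D})_{/B_\bullet H}$ on Cartesian morphisms (equivalently, those whose shear maps $X_n \to H^n \times X_0$ are equivalences). This costs one additional lemma, namely that slices of a pullback of $\infty$-categories are the pullback of slices; this follows since $\mathcal{A}_{/a}$ can be modelled as $\mathcal{A}^{\Delta^1} \times_{\mathcal{A}} \{a\}$ and $Fun(\Delta^1,-)$ preserves limits, so the conclusion is a Fubini interchange of pullbacks. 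Second, the two-sided bar construction you invoke should be $B_\bullet(*, H, X_0)$ — the nerve of the action groupoid, with $n$-simplices $H^n \times X_0$ — rather than $B_\bullet(H, H, X_0)$, whose $n$-simplices are $H^{n+1} \times X_0$ and which is the free $H$-resolution of $X_0$, not the encoding of an $H$-action. With these corrections your argument goes through: the shear-map condition is preserved and reflected under the projections because equivalences in $\mathcal{C}_0 \times_{\mathcal{C}_{01}} \mathcal{C}_1$ are detected on the factors, and the finite products $H^n \times X_0$ entering the condition project to the analogous products in $\mathcal{C}_0$, $\mathcal{C}_1$, $\mathcal{C}_{01}$ under the implicit hypothesis (needed even to make sense of $G_0$, $G_1$, $G_{01}$ being groups) that the pullback legs preserve these products.
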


\begin{Prop} \label{LFAdj}
Let $\mathcal{C}$ a presentable category and $\eta$ an endofunctor of $\mathcal{C}$ which commutes with filtered colimits and small limits. Then the inclusion functor
$$U : FP(\mathcal{C}) \subset LFP(\mathcal{C})$$
admits a left adjoint.
\end{Prop}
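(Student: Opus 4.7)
The plan is to construct the left adjoint explicitly and then verify its universal property; this also illuminates why both hypotheses on $\eta$ are needed. Given $(x, f : x \to \eta x) \in LFP(\mathcal{C})$, set
$$L(x,f) \;:=\; \operatorname*{colim}_{n \in \mathbb{N}}\bigl(x \xrightarrow{f} \eta x \xrightarrow{\eta f} \eta^2 x \xrightarrow{\eta^2 f} \cdots\bigr).$$
Because $\eta$ preserves filtered colimits, applying $\eta$ to this colimit produces the same diagram shifted by one index, so the canonical comparison map $L(x,f) \to \eta L(x,f)$ is an equivalence. Thus $L(x,f)$ naturally lies in $FP(\mathcal{C})$.

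For the universal property: given $(y,g) \in FP(\mathcal{C})$, a morphism $L(x,f) \to y$ is the datum of a compatible family of maps $h_n : \eta^n x \to y$ satisfying $h_{n+1} \circ \eta^n f \simeq h_n$. Since $g : y \xrightarrow{\sim} \eta y$ is invertible, composing with $g^{-1}\circ \eta(-)$ lets one reconstruct $h_n$ from $h_0$ via $h_n \simeq g^{-n}\circ \eta^n h_0$. So the space of such families is equivalent to the space of maps $h_0 : x \to y$ compatible with $f$ and $g$ in the evident sense, which is precisely $\operatorname{Map}_{LFP(\mathcal{C})}((x,f),(y,g))$. This gives the adjunction $L \dashv U$.

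As a consistency check, one can also deduce the existence of $L$ from the adjoint functor theorem (Theorem \ref{adjointFunct}): the categories $LFP(\mathcal{C})$ and $FP(\mathcal{C})$ are presentable as pullbacks of presentable categories along accessible functors (using that $\eta$ is accessible), the inclusion $U$ preserves all small limits (limits in $LFP$ and $FP$ are computed in the underlying category and the hypothesis that $\eta$ preserves small limits ensures the pullback description is compatible), and $U$ is accessible because $\eta$ commutes with filtered colimits so invertibility of $f$ is detected on a set of generators.

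The main subtlety will be rigorously producing, from a single colax fixed point $(x,f)$, the $\mathbb{N}$-indexed diagram $n \mapsto \eta^n x$ with transition maps $\eta^n f$ as an actual functor $\mathbb{N} \to \mathcal{C}$, since in the $\infty$-categorical setting this requires assembling all the higher coherences at once. One clean way is to view $(x,f)$ as a module over the monoid $(B\mathbb{N}, \eta)$-action and take the colimit over the action, making the shift-equivalence $\eta L(x,f) \simeq L(x,f)$ automatic; alternatively, one constructs the diagram by iterated composition inside $\operatorname{Fun}(\Delta^1, \mathcal{C})$ and uses the fact that $\eta$ is a functor to get coherence for free.
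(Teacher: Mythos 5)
Your ``consistency check'' paragraph is the paper's own proof: it cites Proposition \ref{defLFP} to see that $FP(\mathcal{C})$ and $LFP(\mathcal{C})$ are presentable as limits of presentable categories, observes that $U$ preserves small limits and filtered colimits because $\eta$ does (so these are computed in $\mathcal{C}$, and such (co)limits of equivalences remain equivalences), and invokes the adjoint functor theorem \ref{adjointFunct}. That part is fine and recovers the paper's argument.

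Your explicit formula $L(x,f) = colim_n\, \eta^n x$ is the correct left adjoint and a worthwhile supplement, but the universal-property verification as written has a gap. You describe a morphism $L(x,f)\to y$ as a family $(h_n:\eta^n x\to y)$ with $h_{n+1}\circ\eta^n f\simeq h_n$ and then claim to reconstruct $h_n\simeq g^{-n}\circ\eta^n h_0$ from $h_0$. But the diagram compatibility determines $h_n$ from $h_{n+1}$, not conversely, so $h_0$ alone does not determine $h_1$, and the claimed equivalence with $Map_{LFP}((x,f),(y,g))$ fails. Concretely, take $\eta=\mathrm{id}$ and $f=\mathrm{id}_x$: then $L\simeq x$ and $Map_{\mathcal{C}}(L,y)=Map(x,y)$, whereas $Map_{LFP}((x,\mathrm{id}),(y,g))$ is the homotopy equalizer of $g_*$ and $\mathrm{id}$ acting on $Map(x,y)$, which is genuinely smaller for nontrivial $g$. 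The space you must compute is $Map_{FP}(L(x,f),(y,g))=Map_{LFP}((L,s),(y,g))$ where $s:L\to\eta L$ is the structure map; a point of it carries, beyond $(h_n)$ and the diagram compatibilities, paths $g\circ h_n\simeq\eta h_n\circ\eta^n f$ witnessing compatibility with $s$ and $g$, and only this extra data together with the invertibility of $g$ makes $h_n\simeq g^{-n}\circ\eta^n h_0$ provable.

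A clean way to repair the argument, which also discharges the coherence worry you flag at the end, is to note that $(\eta^n x,\eta^n f)$ with transition maps $\eta^n f$ is already a sequential diagram in $LFP(\mathcal{C})$ itself (the required square $\eta^{n+1}f\circ\eta^n f\simeq\eta(\eta^n f)\circ\eta^n f$ holds tautologically), that $(L,s)$ is its colimit in $LFP(\mathcal{C})$ because the projection $LFP(\mathcal{C})\to\mathcal{C}$ preserves filtered colimits and is conservative, and that for any $(y,g)\in FP(\mathcal{C})$ precomposition along $f:(x,f)\to(\eta x,\eta f)$ is an equivalence
$$Map_{LFP}((\eta x,\eta f),(y,g))\xrightarrow{\ \sim\ }Map_{LFP}((x,f),(y,g))$$
with inverse $k\mapsto g^{-1}\circ\eta k$. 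Passing to the limit over $n$ then yields the adjunction, with all higher coherences inherited from the diagram in $LFP(\mathcal{C})$.
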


\begin{proof}
The functor $U$ commutes with filtered colimits and with small limits as the functor $\eta$ does. The categories $FP_{\eta}(\mathcal{C})$ and $LFP(\mathcal{C})$ are presentable as limits of presentable categories, see Proposition \ref{defLFP}.
Using the adjoint functor theorem, Theorem \ref{adjointFunct}, concludes the proof.
\end{proof}

\section{Geometrical results}

\begin{Prop} \label{extAffine}
Let $F$ be an affine stack with $C(F)$ projective of finite type as a complex on $k$. We denote $p: F \to *$ the canonical structure morphism. Let $M$ and $N$ be $k$-complexes.

$$C(F) \otimes Map_{Mod}(M,N) \to Map_{QCoh(F)}(p^*M,p^*N)$$
is an equivalence, where $Map_{Mod}(M,N)$ is considered with its $k$-module structure.
\end{Prop}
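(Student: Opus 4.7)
The plan is to factor the statement into a composition of three canonical equivalences, via the $(p^*,p_*)$-adjunction, the projection formula, and the dualizability of $C(F)$. First, by the adjunction between $p^* : Mod_k \to QCoh(F)$ and $p_* : QCoh(F) \to Mod_k$, there is a natural equivalence
$$Map_{QCoh(F)}(p^*M, p^*N) \simeq Map_{Mod_k}(M, p_*p^*N).$$
This reduces the problem to computing $p_*p^*N$ and comparing it with $C(F) \otimes_k N$.

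Next, I would invoke the projection formula for $p : F \to \ast$: since $p^*N \simeq \mathcal{O}_F \otimes_k N$ and $p_*$ is $k$-linear and commutes with the relevant colimits, one obtains $p_*p^*N \simeq p_*\mathcal{O}_F \otimes_k N = C(F) \otimes_k N$. This is tautological when $N = k$ and extends to arbitrary $N$ by writing $N$ as a colimit of shifts of free $k$-modules, since both sides commute with such colimits. Finally, using that $C(F)$ is perfect by hypothesis, hence dualizable in $Mod_k$ with dual $C(F)^\vee$, one has
\begin{align*}
Map_{Mod_k}(M, C(F) \otimes_k N) &\simeq Map_{Mod_k}(M, Map_{Mod_k}(C(F)^\vee, N)) \\
&\simeq Map_{Mod_k}(M \otimes_k C(F)^\vee, N) \\
&\simeq Map_{Mod_k}(C(F)^\vee, Map_{Mod_k}(M,N)) \\
&\simeq C(F) \otimes_k Map_{Mod_k}(M,N).
\end{align*}
The composite of the three equivalences is an abstract equivalence of the desired form.

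The main obstacle is to check that this composite agrees, up to homotopy, with the natural map constructed in the statement, which arises from the functoriality of $p^*$ on mapping spaces together with the $C(F)$-action on $Map_{QCoh(F)}(p^*M, p^*N)$ coming from $p_*\mathcal{O}_F = C(F)$. I would verify this by tracking the image of the unit $k \to Map_{Mod_k}(k,k)$: on the one hand, under the composite, it is sent to the identity $p^*k \to p^*k$ viewed as a $C(F)$-equivariant endomorphism; on the other hand, the map of the statement sends it to the same element by construction. Naturality in $M$ and $N$ then upgrades this pointwise agreement to an equivalence of natural transformations, completing the proof.
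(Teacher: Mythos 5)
Your proof takes a genuinely different route from the paper's. The paper argues directly: since $C(F)$ is a retract of a bounded free complex, $C(F)\otimes -$ preserves all limits as well as colimits; the comparison map is then compatible with colimits in $M$ and limits in $N$, and the author reduces to $M=N=k$, where it is the identity. You instead factor the claim as a composite of three equivalences: the $(p^*,p_*)$-adjunction, the projection formula $p_*p^*N\simeq C(F)\otimes_k N$, and the dualizability identities for the perfect complex $C(F)$. The adjunction step and the chain of dualizability manipulations are both correct (interpreting $Map_{Mod}$ as the internal hom throughout, which is what the statement intends), and the coherence-checking strategy at the end is reasonable in outline.

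The one genuine gap is the projection formula step. You justify $p_*p^*N\simeq C(F)\otimes_k N$ by asserting that $p_*$ ``commutes with the relevant colimits,'' but for an affine stack $F=Spec^\Delta(A)$ the pushforward $p_*$ is computed by a cosimplicial totalization, i.e.\ a limit over $\Delta$, and this does \emph{not} automatically commute with the (possibly infinite, filtered) colimits used to build a general $N$ from shifts of $k$. This is precisely the point where the hypothesis that $C(F)$ is projective of finite type must do real work: it forces the totalization $Tot(A^\bullet\otimes N)$ to converge uniformly, so that it agrees with $Tot(A^\bullet)\otimes N=C(F)\otimes N$. You should make this explicit, either by a bounded-cohomology/spectral-sequence argument showing the totalization is finite, or by citing a projection-formula result for cohomologically finite morphisms. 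As it stands, the claim that ``both sides commute with such colimits'' is exactly as strong as the proposition you are trying to prove for $M=k$ and silently uses the perfectness hypothesis. The paper uses the same hypothesis but for a different consequence (limit preservation of $C(F)\otimes -$); your version requires colimit compatibility of $p_*$, and that still needs to be tied back to the perfectness of $C(F)$.
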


\begin{proof}
As a complex, $C(F)$ is a retract of a free complex, therefore tensoring with $C(F)$ preserves all limits. Now, the morphism
$$C(F) \otimes Map_{Mod}(M,N) \to Map_{QCoh(F)}(M,N)$$
is compatible with colimits in $M$ and limits in $N$. We are reduced to proving the result for $M=k$ and $N=k$, where the result is obvious.
\end{proof}

\begin{Prop}
Let $A$ be a simplicial algebra over $\mathbb{F}_p$. Endowing a simplicial algebra over $A$ with its canonical Frobenius endomorphism defines a functor
$$B \in SCR_A \mapsto (B,Fr_B) \in SCR_A^{endo}$$
which admits a right adjoint given by taking the homotopy equalizer
$$(B,F) \in SCR_A^{endo} \mapsto B^{F \simeq Fr_B} \coloneqq eq(B \xrightrightarrows[F]{Fr_B} B) \in SCR_A$$ 
and a left adjoint given by taking homotopy coinvariants
$$(B,F) \in SCR_A^{endo} \mapsto B_{F \simeq Fr_B} \coloneqq coeq(B \xrightrightarrows[F]{Fr_B} B) \in SCR_A$$

\end{Prop}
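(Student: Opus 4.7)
The plan is to establish both adjunctions by direct mapping-space computations, using as the key input the naturality of the Frobenius. Frobenius defines a natural transformation $Fr : \mathrm{id}_{SCR_A} \Rightarrow \mathrm{id}_{SCR_A}$ of endofunctors, constructed from the strictly natural levelwise $p$-th power map on simplicial commutative rings and descending to the $\infty$-localization because it preserves weak equivalences. Consequently, for any morphism $\psi : B \to C$ in $SCR_A$ one obtains a canonical homotopy $Fr_C \circ \psi \simeq \psi \circ Fr_B$.

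With this in hand, I would first tackle the right adjoint. Unwinding the definition $SCR_A^{endo} = Fun(B\mathbb{N}, SCR_A)$, a morphism $(C, Fr_C) \to (B, F)$ consists of a map $\phi : C \to B$ in $SCR_A$ together with a homotopy $F \circ \phi \simeq \phi \circ Fr_C$. Applying the naturality equivalence $\phi \circ Fr_C \simeq Fr_B \circ \phi$ rewrites this as a homotopy between $F \circ \phi$ and $Fr_B \circ \phi$, which is exactly the data of a lift of $\phi$ through the homotopy equalizer $B^{F \simeq Fr_B} \coloneqq eq(F, Fr_B)$ computed in $SCR_A$. This yields a natural equivalence
$$Map_{SCR_A^{endo}}((C, Fr_C), (B, F)) \simeq Map_{SCR_A}(C, B^{F \simeq Fr_B}),$$
establishing the right adjunction.

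For the left adjoint the argument is dual. A morphism $(B, F) \to (C, Fr_C)$ is a map $\psi : B \to C$ together with a homotopy $Fr_C \circ \psi \simeq \psi \circ F$; naturality rewrites the left-hand side as $\psi \circ Fr_B$, so the homotopy becomes one between $\psi \circ Fr_B$ and $\psi \circ F$. This is the data of a factorization of $\psi$ through the homotopy coequalizer $B_{F \simeq Fr_B} \coloneqq coeq(F, Fr_B)$ in $SCR_A$, giving the corresponding mapping-space identification.

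The main obstacle I anticipate is ensuring that the Frobenius transformation $Fr : \mathrm{id} \Rightarrow \mathrm{id}$ on the $\infty$-category $SCR_A$ genuinely exists as coherent $\infty$-categorical data, rather than merely as a pointwise-defined family of endomorphisms; once one is satisfied of this (for instance by lifting through the model-categorical presentation), the mapping-space computations above reduce to unravelling the definition of morphism spaces in $Fun(B\mathbb{N}, SCR_A)$.
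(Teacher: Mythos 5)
Your proof is correct and takes essentially the same approach as the paper's: both analyze a morphism in $SCR_A^{endo}$ as a map plus a commuting homotopy, apply the naturality of the Frobenius to rewrite that homotopy, and then recognize the result as the universal property of the homotopy (co)equalizer. The paper phrases this through a diagram of commuting squares while you phrase it through mapping-space identifications, and your explicit flag that $Fr\colon \mathrm{id}\Rightarrow \mathrm{id}$ must be verified to exist as coherent $\infty$-categorical data (e.g. by lifting from the model-categorical level) is a point the paper leaves implicit.
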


\begin{proof}
We sketch the proof. Let $B \in SCR_A$ and $C \in SCR_A^{endo}$.

In the following diagram :

\begin{center}
\begin{tikzcd}
B \arrow[d,"Fr_B"] \arrow[r,"f"]
& C \arrow[d,"Fr_C"] \arrow[r,"Id"]
& C \arrow[d,"F"]\\
B  \arrow[r,"f"]
& C \arrow[r,"Id"]
& C
\end{tikzcd}
\end{center}

the left diagram is a commutative diagram.

For a fixed morphism $f : B \to C$, a homotopy making the outer diagram commute is equivalent to a homotopy between $Fr_C \circ f$ and $F \circ f$ in the following diagram.

\begin{center}
\begin{tikzcd}
B \arrow[r,"f"]
& C \arrow[d,"Fr_C"] \arrow[r,"Id"]
& C \arrow[d,"F"]\\
& C \arrow[r,"Id"]
& C
\end{tikzcd}
\end{center}

The latter is then the data of $f$ factoring through
$$C^{F\simeq Fr_C} \coloneqq eq(C \xrightrightarrows[F]{Fr_C} C)$$

The dual proof for the left adjoint goes similarly.

\end{proof}

A similar proof yield the following proposition.

\begin{Prop} \label{AdjointAjoutFrobCano}
Let $A$ be a simplicial algebra over $\mathbb{F}_p$. Endowing a derived stack over $A$ with its canonical Frobenius endomorphism defines a functor
$$X \in dSt_A \mapsto (X,Fr_X) \in dSt_A^{endo}$$
which admits a right adjoint given by taking the homotopy equalizer
$$(X,F) \in dSt_A^{endo} \mapsto X^{F \simeq Fr_X} \coloneqq eq(X \xrightrightarrows[F]{Fr_X} X) \in dSt_A$$ 
and a left adjoint given by taking homotopy coinvariants
$$(X,F) \in dSt_A^{endo} \mapsto X_{F \simeq Fr_X} \coloneqq coeq(X \xrightrightarrows[F]{Fr_X} X) \in dSt_A$$

\end{Prop}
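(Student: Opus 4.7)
The plan is to mirror the proof of the preceding proposition for simplicial algebras, with the key additional ingredient being the naturality of the Frobenius endomorphism on $dSt_A = dSt_A/\mathbb{F}_p$. Recall that the Frobenius $Fr$ is a natural transformation from $\mathrm{Id}_{dSt_{\mathbb{F}_p}}$ to itself: for any morphism $g : Y \to Z$ of derived stacks over $\mathbb{F}_p$, one has a canonical identification $Fr_Z \circ g \simeq g \circ Fr_Y$, coming functorially from the Frobenius on the cosimplicial/simplicial algebras of points. This naturality will be what lets the two ``sides'' of the adjunction diagram close up.

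First I would unwind the mapping space in $dSt_A^{endo} = Fun(B\mathbb{N}, dSt_A)$. For $X \in dSt_A$ and $(Y, F) \in dSt_A^{endo}$, the space $Map_{dSt_A^{endo}}((X,Fr_X),(Y,F))$ is the homotopy equalizer
\[
  \mathrm{eq}\!\left(Map_{dSt_A}(X,Y) \xrightrightarrows[{F \circ -}]{{- \circ Fr_X}} Map_{dSt_A}(X,Y)\right).
\]
Naturality of Frobenius gives a canonical equivalence $- \circ Fr_X \simeq Fr_Y \circ -$ at the level of the mapping space, so the equalizer can be rewritten as
\[
  \mathrm{eq}\!\left(Map_{dSt_A}(X,Y) \xrightrightarrows[{F \circ -}]{{Fr_Y \circ -}} Map_{dSt_A}(X,Y)\right) \simeq Map_{dSt_A}(X, Y^{F \simeq Fr_Y}),
\]
the last equivalence using that $Map_{dSt_A}(X, -)$ preserves limits. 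This exhibits the functor $(Y,F) \mapsto Y^{F \simeq Fr_Y}$ as the right adjoint.

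For the left adjoint, the dual computation applies: $Map_{dSt_A^{endo}}((Y,F),(X,Fr_X))$ is the homotopy equalizer of the pair $(Fr_X \circ -, - \circ F)$ acting on $Map_{dSt_A}(Y,X)$, which by naturality becomes the equalizer of $(- \circ Fr_Y, - \circ F)$, that is $Map_{dSt_A}(Y_{F \simeq Fr_Y}, X)$ since $Map_{dSt_A}(-,X)$ takes colimits to limits. The hardest part of the argument is bookkeeping the $B\mathbb{N}$-equivariance coherently at the $\infty$-categorical level rather than just on $\pi_0$ of mapping spaces; concretely, this means carrying out the above equalizer manipulation not for the individual mapping spaces but for the cosimplicial object computing $Map_{Fun(B\mathbb{N}, dSt_A)}$, and verifying that naturality of $Fr$ produces a homotopy coherent identification of the two maps (not merely a $\pi_0$-level equality). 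Once that coherence is in place, the two adjunctions follow formally, and one can recognize the construction as a special case of Proposition \ref{LFAdj} with $\eta = Fr_{(-)}$ in the appropriate sense.
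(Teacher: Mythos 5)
Your argument is correct and is essentially the paper's proof in different clothing: the paper establishes the adjunction by observing that the left square of the two‑square diagram commutes by naturality of the Frobenius, so that the data of a morphism $(X,Fr_X)\to(Y,F)$ in $dSt_A^{endo}$ reduces to a map $X\to Y$ together with a homotopy $F\circ f\simeq Fr_Y\circ f$, i.e.\ a factorization through the equalizer; this is exactly your rewriting of $-\circ Fr_X$ as $Fr_Y\circ -$ inside the equalizer computing $Map_{Fun(B\mathbb{N},dSt_A)}$. The only imprecision is your closing remark: Proposition \ref{LFAdj} concerns lax versus strict fixed points of an endofunctor $\eta$ of a fixed category, whereas here the adjunction passes between $dSt_A$ and $dSt_A^{endo}$ and $Fr$ is a natural transformation of $\mathrm{Id}_{dSt_A}$, not an endofunctor, so it is not literally a special case of that proposition.
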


\begin{Prop}  \label{FrobeniusPointsdSt}
Let $\mathcal{C}_0$ be the full subcategory of $dSt^{Fr}$ on objects which are given by freely adjoining a Frobenius lift to a derived affine scheme, these objects are of the form $L(Spec(C))$, with $C$ a simplicial algebra. See Proposition \ref{FreeFrobStr} for the construction of $L$.

The subcategory $\mathcal{C}_0$ generates $dSt^{Fr}$ under colimits.
\end{Prop}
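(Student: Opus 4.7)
The plan is to use the adjunction $L \dashv U$ between $dSt$ and $dSt^{Fr}$ (Proposition \ref{FreeFrobStr}), together with the observation that any derived stack is a colimit of derived affine schemes, to write any object of $dSt^{Fr}$ as an iterated colimit of objects of the form $L(Spec(C))$.

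First, I would analyze the forgetful functor $U : dSt^{Fr} \to dSt$. By the defining pullback
$$dSt^{Fr} \simeq dSt^{endo} \times_{dSt_{\mathbb{F}_p}^{endo}} dSt_{\mathbb{F}_p},$$
$U$ factors as the composition $dSt^{Fr} \to dSt^{endo} \to dSt$ of a projection from a pullback square of presentable $\infty$-categories and the evaluation at the base-point $* \to B\mathbb{N}$ on $dSt^{endo} = Fun(B\mathbb{N}, dSt)$. Both factors preserve small colimits (projections from pullbacks in $\mathrm{Pr}^{L}$ preserve them, and evaluation functors preserve colimits since colimits in functor categories are computed pointwise). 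A direct inspection of the fiber-product description also shows that $U$ is conservative: a morphism in $dSt^{Fr}$ whose underlying morphism of derived stacks is an equivalence becomes an equivalence on all the data entering the pullback (endomorphism, mod-$p$ reduction, and the homotopy with the canonical Frobenius).

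Next, I would invoke the bar resolution for the comonad $T := L U$ on $dSt^{Fr}$. Because $U$ is conservative and preserves geometric realizations (in fact all colimits), the comonadic Barr--Beck theorem in the $\infty$-categorical form of \cite[Theorem 4.7.3.5]{Lur17} (applied on the opposite side) implies that for every $(Y,F,h) \in dSt^{Fr}$ the canonical augmented simplicial object
$$T^{\bullet+1}(Y,F,h) \longrightarrow (Y,F,h)$$
exhibits $(Y,F,h)$ as the geometric realization of $T^{\bullet+1}(Y,F,h)$. Each term $T^{n+1}(Y,F,h) = L\bigl(U (LU)^{n}(Y,F,h)\bigr)$ lies in the image of $L$ applied to a derived stack.

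Finally, every derived stack $X \in dSt$ is a small colimit of derived affine schemes $Spec(C_i)$ (by left Kan extension from $dAff \subset dSt$, which gives $dSt$ as the category of sheaves). Since $L$, being a left adjoint, preserves small colimits, each $L(X) \simeq \mathrm{colim}_i L(Spec(C_i))$ is a colimit of objects in $\mathcal{C}_0$. Combining this with the previous step expresses $(Y,F,h)$ as an iterated colimit of objects in $\mathcal{C}_0$, which concludes the proof.

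The main obstacle is the verification that the bar resolution converges, i.e. that $U$ is conservative and preserves geometric realizations so that the comonadic descent hypothesis is satisfied; this in turn rests on a careful unwinding of the pullback description of $dSt^{Fr}$ and of the fact that the homotopy datum $h$ is essentially unique once the underlying morphism of stacks is fixed.
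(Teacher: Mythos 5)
Your proof follows the same route as the paper: a bar resolution for the comonad $T = LU$ whose convergence is ensured by the Barr--Beck theorem, followed by writing any derived stack as a colimit of affines so that each level of the bar resolution lands in the colimit closure of $\mathcal{C}_0$. You spell out the Barr--Beck hypotheses (conservativity and colimit-preservation of $U$, via the pullback description of $dSt^{Fr}$) more carefully than the paper, which only remarks that $T$ commutes with small colimits and refers to the proof of Proposition \ref{DieudTopos}; the substance is the same.
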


\begin{proof}
The comonad $T$ induced from the adjunction
$$L : dSt \rightleftarrows dSt^{Fr} : U$$
induces a resolution of $X$ denoted $T^{\bullet+1}(X)$ using the bar construction, by combining the fact that $T$ commutes with small colimits with the Barr-Beck Theorem, see \cite[Theorem 4.7.3.5]{Lur17} and the proof of Proposition \ref{DieudTopos}. Therefore $X$ is given by the geometric realization of $T^{\bullet+1}(X)$.

Now we consider $Y = L(Z)$ obtained by freely adjoining a Frobenius lift to a derived stack. Writing $Z$ as a colimit of derived affine schemes, using the co-Yoneda lemma
$$Z \simeq colim_i Spec(A_i)$$
we deduce
$$Y = L(colim_i Spec(A_i)) \simeq colim_i L(Spec(A_i))$$
\end{proof}

\begin{Cor} \label{pointsFrobdSt}
The restriction of the Yoneda embedding gives a functor
$$dSt^{Fr} \to Fun(\mathcal{C}_0,\mathcal{S})$$
which is fully faithful.
\end{Cor}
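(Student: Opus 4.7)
The plan is to deduce the full faithfulness of the restricted Yoneda embedding from the generation property established in Proposition \ref{FrobeniusPointsdSt}, via the standard adjunction argument.

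Denote by $y : dSt^{Fr} \to Fun(\mathcal{C}_0^{op},\mathcal{S})$ the restricted Yoneda embedding (interpreted with the usual presheaf convention). By the universal property of the presheaf category as the free cocompletion of $\mathcal{C}_0$, the inclusion $\mathcal{C}_0 \hookrightarrow dSt^{Fr}$ extends uniquely to a colimit-preserving functor $L' : Fun(\mathcal{C}_0^{op},\mathcal{S}) \to dSt^{Fr}$, and $L'$ is left adjoint to $y$. Full faithfulness of $y$ is therefore equivalent to the counit $\epsilon_X : L'y(X) \to X$ being an equivalence for every $X \in dSt^{Fr}$.

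Using the co-Yoneda identity $y(X) \simeq \mathrm{colim}_{(Z \to X) \in (\mathcal{C}_0)_{/X}} y(Z)$ in the presheaf category, and the fact that $L'$ preserves colimits with $L' \circ y|_{\mathcal{C}_0} \simeq \mathrm{id}_{\mathcal{C}_0}$, the counit $\epsilon_X$ is identified with the canonical map $\mathrm{colim}_{(\mathcal{C}_0)_{/X}} Z \to X$ from the slice. To verify this is an equivalence, I would invoke the explicit bar resolution produced in the proof of Proposition \ref{FrobeniusPointsdSt}: namely $X \simeq |T^{\bullet+1}(X)|$ with $T = L \circ U$, where each term $T^{n+1}(X)$ further decomposes as a colimit of objects of $\mathcal{C}_0$ of the form $L(\mathrm{Spec}(A))$. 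This gives a specific diagram in $(\mathcal{C}_0)_{/X}$ whose colimit recovers $X$. To upgrade this to the canonical colimit over the entire slice, I would use the adjunction $L \dashv U$: any morphism $L(\mathrm{Spec}(C)) \to X$ corresponds under adjunction to a morphism $\mathrm{Spec}(C) \to U(X)$, hence factors canonically through the counit $LU(X) \to X = T(X) \to X$, and therefore through the bar-resolution diagram. This factorization provides the cofinality of the canonical bar diagram inside $(\mathcal{C}_0)_{/X}$.

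The main obstacle is precisely this cofinality step, which bridges the statement ``generation under colimits'' given in Proposition \ref{FrobeniusPointsdSt} with the stronger \emph{density} of $\mathcal{C}_0$ in $dSt^{Fr}$ that is needed to run the counit argument. Secured via the comonadic bar resolution and the universal factorization property of the free/forgetful adjunction $L \dashv U$ (together with the $\infty$-topos structure on $dSt^{Fr}$ established in Proposition \ref{DieudTopos}), density gives $\epsilon_X \simeq \mathrm{id}_X$ for all $X$, and the corollary follows.
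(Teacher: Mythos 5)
Your diagnosis of the obstacle is exactly right: Proposition \ref{FrobeniusPointsdSt} gives generation under colimits, which a priori only yields conservativity of the restricted Yoneda embedding, whereas full faithfulness is equivalent to \emph{density}, i.e.\ the canonical map $\mathrm{colim}_{(\mathcal{C}_0)_{/X}} Z \to X$ being an equivalence for every $X$. However, the step you offer to close this gap does not do so. Observing that every morphism $L(\mathrm{Spec}\,C) \to X$ factors through the counit $LU(X) \to X$ does not exhibit the bar-resolution diagram as cofinal in $(\mathcal{C}_0)_{/X}$: first, $LU(X)$ is generally not an object of $\mathcal{C}_0$, since $U(X)$ need not be affine, so the factorization leaves $\mathcal{C}_0$ and would have to be further refined through the co-Yoneda decomposition of $U(X)$, producing a much more intricate indexing category than the one you describe; second, cofinality is a weak-contractibility condition on the relevant comma categories, which is strictly stronger than the existence, for each object of $(\mathcal{C}_0)_{/X}$, of a single map into the candidate diagram. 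The sentence ``this factorization provides the cofinality'' is therefore a gap, and the sentence that follows, asserting density as the conclusion, is essentially restating what was to be shown.

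A route that does close the gap leans more directly on Barr--Beck, which the proof of Proposition \ref{FrobeniusPointsdSt} already invokes. The functor $U : dSt^{Fr} \to dSt$ is conservative and preserves all small colimits (it has a right adjoint, the Witt functor, and it is a projection from a pullback of topoi), hence is monadic by \cite[Theorem 4.7.3.5]{Lur17}: one has $dSt^{Fr} \simeq Alg_{UL}(dSt)$ for the colimit-preserving monad $UL$ on $dSt$. Under this identification $\mathcal{C}_0$ is the full subcategory spanned by free $UL$-algebras on derived affine schemes, and the restricted Yoneda sends $X$ to the presheaf $L(\mathrm{Spec}\,C) \mapsto Map_{dSt}(\mathrm{Spec}\,C, UX)$ on $\mathcal{C}_0$. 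Naturality against ordinary morphisms of affines recovers $UX \in dSt$ via the density of $dAff$ in $dSt$, while naturality against the remaining morphisms of $\mathcal{C}_0$, namely those corresponding to maps $\mathrm{Spec}\,C \to UL(\mathrm{Spec}\,D)$, recovers compatibility with the $UL$-action. Verifying that this naturality data characterizes $UL$-algebra morphisms, using that $UL$ preserves colimits and that $dAff$ is dense in $dSt$, is the genuine content that your proposal leaves out.
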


Similar arguments provide analoguous results for derived stacks endowed with endomorphisms.  

\begin{Prop} 
Let $\mathcal{C}_0$ be the full subcategory of $dSt^{endo}$ on objets which are given by freely adjoining an endomorphism to a derived affine scheme, these objects are of the form $L(Spec(C))$, with $C$ a simplicial algebra.

The subcategory $\mathcal{C}_0$ generates $dSt^{endo}$ under colimits.
\end{Prop}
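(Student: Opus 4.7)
The plan is to follow the proof of Proposition \ref{FrobeniusPointsdSt} essentially verbatim, with the Frobenius-lift adjunction replaced by the endomorphism-adjoining adjunction. First I would invoke Proposition \ref{PropAdjointEndo} (or its derived-stack analogue, obtained via the adjoint functor theorem: the forgetful functor $U : dSt^{endo} \to dSt$ preserves all small colimits since $dSt^{endo} = Fun(B\mathbb{N}, dSt)$ and colimits are computed pointwise in functor categories) to get a left adjoint $L : dSt \to dSt^{endo}$. Informally, $L(Z) = \bigsqcup_{\mathbb{N}} Z$ endowed with its shift endomorphism, so $L(Spec(C)) \in \mathcal{C}_0$.

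Next, consider the induced comonad $T = L \circ U$ on $dSt^{endo}$. Since $L$ is a left adjoint it commutes with small colimits, and $U$ is both a left and right adjoint (again by Proposition \ref{PropAdjointEndo}, using that $dSt$ has both $\mathbb{N}$-indexed products and coproducts, or by the same functor-category argument) so it is conservative and preserves the colimits required by the Barr--Beck--Lurie theorem \cite[Theorem 4.7.3.5]{Lur17}. Therefore the bar construction yields an augmented simplicial resolution $T^{\bullet+1}(X) \to X$, and $X$ is recovered as the geometric realization $|T^{\bullet+1}(X)|$ in $dSt^{endo}$.

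Each term of the resolution has the form $T^{n+1}(X) = L(U T^n(X))$, so it is of the form $L(Z)$ for some $Z \in dSt$. Writing $Z$ as a colimit of derived affine schemes by the co-Yoneda lemma,
$$Z \simeq \mathop{\mathrm{colim}}_i Spec(A_i),$$
and using that $L$ preserves colimits, we obtain
$$L(Z) \simeq \mathop{\mathrm{colim}}_i L(Spec(A_i)),$$
a colimit of objects in $\mathcal{C}_0$. Combining this with the realization $X \simeq |T^{\bullet+1}(X)|$ expresses $X$ as an iterated colimit of objects in $\mathcal{C}_0$, which concludes the proof.

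The only subtlety, and the step I would check most carefully, is verifying the hypotheses of Barr--Beck--Lurie for the adjunction $L \dashv U$: one needs $U$ to be conservative and to preserve $U$-split simplicial colimits. Both follow from the fact that $U$ is itself a left adjoint (hence preserves all small colimits) and that a morphism in $dSt^{endo}$ is an equivalence iff its underlying morphism of derived stacks is, since the $B\mathbb{N}$-diagram structure is extra data detected pointwise. Once these checks are in place, the rest of the argument is formal and parallels Proposition \ref{FrobeniusPointsdSt} exactly.
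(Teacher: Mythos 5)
Your proof is correct and follows essentially the same route as the paper, which itself just invokes "similar arguments" to Proposition \ref{FrobeniusPointsdSt}: construct $L$, resolve $X$ by the bar construction for the comonad $LU$, and pull $L$ across a colimit of affines via co-Yoneda. The verification of conservativity of $U$ via pointwise detection of equivalences in $Fun(B\mathbb{N}, dSt)$ and preservation of geometric realizations is exactly the right check, and is in fact cleaner than the paper's citation of the Barr--Beck theorem (one can argue directly that $UT^{\bullet+1}(X)\to UX$ is split by the unit, then apply conservativity of $U$).
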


\begin{Cor} \label{pointsEndodSt}
The restriction of the Yoneda embedding gives a functor
$$dSt^{endo} \to Fun(\mathcal{C}_0,\mathcal{S})$$
which is fully faithful.
\end{Cor}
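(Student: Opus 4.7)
The plan is to deduce this directly from the preceding Proposition, which asserts that $\mathcal{C}_0$ generates $dSt^{endo}$ under colimits. Write $j$ for the restricted Yoneda functor $X \mapsto \Map_{dSt^{endo}}(-, X)|_{\mathcal{C}_0}$, so $j$ lands in $\mathrm{Fun}(\mathcal{C}_0^{op}, \mathcal{S})$ (which is the content of the notation in the statement up to variance convention). Fix $X, Y \in dSt^{endo}$; the goal is to show that $j$ induces an equivalence on the mapping spaces $\Map_{dSt^{endo}}(X, Y) \to \Map(j(X), j(Y))$.

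First, I would compute the target of this map using the standard formula for mapping spaces of presheaves. Since any presheaf is a colimit of representables indexed by its category of elements, one has
$$j(X) \simeq \colim_{(Z \to X) \in (\mathcal{C}_0)_{/X}} h_Z$$
in $\mathrm{Fun}(\mathcal{C}_0^{op}, \mathcal{S})$, where $h_Z$ denotes the Yoneda presheaf of $Z$. Combining this with the Yoneda lemma yields
$$\Map_{\mathrm{Fun}(\mathcal{C}_0^{op}, \mathcal{S})}(j(X), j(Y)) \simeq \lim_{(Z \to X) \in (\mathcal{C}_0)_{/X}} \Map_{dSt^{endo}}(Z, Y).$$

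Next, I would compute the source side. By the previous Proposition, $X$ can be written as a small colimit $\colim_i p(i)$ of a diagram $p : I \to \mathcal{C}_0$. The canonical map $I \to (\mathcal{C}_0)_{/X}$, sending $i$ to the structure morphism $p(i) \to X$, is cofinal in the relevant sense: by a standard argument (HTT 5.1.5), this promotes the presentation of $X$ to the canonical colimit presentation
$$X \simeq \colim_{(Z \to X) \in (\mathcal{C}_0)_{/X}} Z$$
in $dSt^{endo}$. Applying $\Map_{dSt^{endo}}(-, Y)$ then converts this colimit into a limit, matching exactly the expression obtained above for $\Map(j(X), j(Y))$. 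Chasing the construction shows that the induced map is the one coming from $j$, so $j$ is fully faithful.

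The only nontrivial point is the cofinality argument promoting the existence of \emph{some} colimit presentation of $X$ by objects of $\mathcal{C}_0$ to the \emph{canonical} colimit presentation indexed by $(\mathcal{C}_0)_{/X}$; this is the standard mechanism by which a generating subcategory yields a fully faithful restricted Yoneda embedding, and here it applies verbatim since the preceding Proposition supplies exactly the required generation hypothesis. Everything else is a formal manipulation of (co)limits and the Yoneda lemma.
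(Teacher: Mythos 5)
Your reduction to the density statement is exactly right: the restricted Yoneda functor is fully faithful if and only if, for every $X$, the canonical colimit comparison map
$$\operatorname*{colim}_{(Z\to X)\in(\mathcal{C}_0)_{/X}} Z \longrightarrow X$$
is an equivalence, and you correctly unwind $\Map(j(X),j(Y))$ as a limit over $(\mathcal{C}_0)_{/X}$. The problem is the step you yourself flag as "the only nontrivial point": the claim that some colimit presentation $X \simeq \operatorname*{colim}_I p$ with $p$ valued in $\mathcal{C}_0$ forces the functor $I \to (\mathcal{C}_0)_{/X}$ to be cofinal, hence upgrades to the canonical presentation above. This is \emph{not} a standard consequence of HTT \S 5.1.5, and it is false in general. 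Density of $\mathcal{C}_0$ is a strictly stronger condition than "$\mathcal{C}_0$ generates under colimits," and the gap between the two is precisely what your cofinality appeal is sweeping under the rug.

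A concrete counterexample to the implication you invoke: take $\mathcal{D} = \mathbf{Ab}$ and $\mathcal{C}_0 = \{\mathbb{Z}\}$ the one-object full subcategory (so $\operatorname{End}(\mathbb{Z})=\mathbb{Z}$). Every abelian group is a coequalizer of coproducts of $\mathbb{Z}$, so $\mathcal{C}_0$ generates $\mathbf{Ab}$ under colimits. But the canonical comparison object $\operatorname*{colim}_{(\mathcal{C}_0)_{/A}}\mathbb{Z}$ fails to recover $A$: for $A=\mathbb{Z}/2$, the slice category $(\mathcal{C}_0)_{/A}$ has two objects indexed by the elements of $\mathbb{Z}/2$, the zero element admits the endomorphism $0$, and chasing the identifications collapses the colimit to $0 \neq \mathbb{Z}/2$. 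So the restricted Yoneda embedding here is not fully faithful, even though generation under colimits holds.

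The corollary is almost certainly true in the paper's situation, but establishing it requires using the specific shape of $\mathcal{C}_0$: it is the image under the colimit-preserving left adjoint $L$ of the dense subcategory $dAff \subset dSt$, and $U : dSt^{endo} \to dSt$ is conservative and colimit-preserving (hence comonadic by Barr--Beck--Lurie, as the paper notes). A correct proof should leverage this: for instance, show that the canonical comparison map becomes an equivalence after applying $U$ (using density of affines in $dSt$ together with the explicit description of $(\mathcal{C}_0)_{/X}$ via the adjunction $L \dashv U$) and then conclude by conservativity of $U$; or argue that the bar resolution $|T^{\bullet+1}(X)| \simeq X$ from the preceding Proposition interacts well with the canonical slice diagram because the bar resolution is $U$-split. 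Either way, one needs an argument that actually engages with the comonadic structure rather than a generic cofinality assertion, which is the part your proposal is missing.
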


\begin{Prop} \label{conneAffSt}
Let $A \to k$ be an augmented cosimplicial $k$-algebra such that $H^0(A) \to k$ is an isomorphism, where $k$ is a principal torsion-free commutative ring. Let $n \ge 0$ be such that
$$H^i(A)=0$$
for $1 \le i \le n$. Note that this condition is empty for $n=0$. Then there is a cofibrant replacement of $A$
$$QA \xrightarrow{\sim} A$$
such that
\begin{itemize}
\item All coface maps $QA_i \to QA_j$ are flat.
\item All coface maps $QA_i \to QA_0$ are isomorphisms for $i \le n-1$.
\end{itemize}
\end{Prop}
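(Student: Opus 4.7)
The plan is to build $QA$ by induction on the cosimplicial degree, working with the Reedy model structure on cosimplicial $k$-algebras and using the cohomological vanishing hypothesis to control the low-degree part. Recall that a Reedy cofibrant replacement $QA \to A$ is built inductively by factoring, at each level $i$, the map $L^i QA \to A^i$ (where $L^i QA$ is the $i$-th latching object, built from the $QA^j$ for $j<i$ via coface maps) as a cofibration of $k$-algebras followed by an acyclic fibration.

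First, set $QA^0$ to be a polynomial $k$-algebra equipped with a weak equivalence $QA^0 \xrightarrow{\sim} A^0$; since $k$ is principal and torsion-free, any cofibrant $k$-algebra is automatically flat, so the base case satisfies flatness vacuously. Then, for $1 \le i \le n-1$, use the hypothesis $H^i(A)=0$ to arrange inductively that the latching map $L^i QA \to QA^i$ can be taken to be the identity, realising $QA^i$ as a copy of $QA^0$ with the requisite coface/codegeneracy structure matching the cosimplicial identities. The key point is that the augmented cochain complex of $A$ is exact in the range $0,\ldots,n$, so the obstructions to lifting successive partial cosimplicial structures on $QA$ to $A$ live in the cohomology groups $H^i(A)$ and therefore vanish; this produces the desired compatible maps $QA^i = QA^0 \to A^i$.

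For $i \ge n$, proceed with the standard Reedy construction: factor the composite $L^i QA \to A^i$ as $L^i QA \hookrightarrow QA^i \twoheadrightarrow A^i$, where the first map is a cofibration and the second is an acyclic fibration. Choosing $QA^i$ to be obtained from $L^i QA$ by freely adjoining polynomial generators (which is always possible by the small object argument for the generating cofibrations in $k$-algebras) makes $L^i QA \to QA^i$ flat. Since each $QA^j \to QA^i$ for $j<i$ factors as $QA^j \to L^i QA \to QA^i$, and the first factor is itself flat by induction (being a coproduct of pieces glued along flat maps), flatness propagates to all coface maps.

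The main obstacle will be Step 2, the construction of $QA^i$ for $1 \le i \le n-1$ with the prescribed isomorphism property. This requires an obstruction-theoretic induction: at each stage $i$, one must show that the partial cosimplicial map constructed so far admits a lift which keeps $QA^i$ equal to $QA^0$, and that the resulting obstruction class (measuring incompatibility with the next coface) vanishes in a group of the form $H^i(A)$-with-coefficients. Equivalently, one can start from the constant cosimplicial object $c(QA^0)$, equipped with the map to $A$ coming from $QA^0 \to A^0$ composed with the augmentation structure, and use cohomological dévissage in the range $[1,n]$ to show that this extends, up to Reedy-cofibrant modification, to a genuine cofibrant replacement in cosimplicial $k$-algebras with the stated flatness and isomorphism properties.
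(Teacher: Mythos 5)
The central problem is your choice of model structure. You propose to work in the Reedy model structure on cosimplicial $k$-algebras, but the model structure actually in play is the cofibrantly generated one from \cite[Theorem 2.1.2]{To06}, in which weak equivalences are quasi-isomorphisms of the associated normalized cochain complexes and fibrations are levelwise surjections. Since the underlying category of discrete commutative $k$-algebras is a $1$-category, the Reedy structure on cosimplicial $k$-algebras would have levelwise isomorphisms as weak equivalences, which is far too rigid. Your base case already shows the symptom: you ask for a polynomial $k$-algebra $QA^0$ ``weakly equivalent'' to $A^0$, but in a discrete category this forces an isomorphism, and $A^0$ has no reason to be polynomial. A second structural obstruction is that if $QA^i \cong QA^0$ for $i \le n-1$, as the conclusion demands, then for $1 \le i \le n-1$ the latching map $L^i QA \to QA^i$ is the fold map $QA^0 \otimes_k \cdots \otimes_k QA^0 \to QA^0$, a surjection rather than a cofibration, so no such $QA$ can be Reedy cofibrant. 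In the model structure actually used, a constant cosimplicial algebra $c(P)$ \emph{is} cofibrant --- lifting against an acyclic fibration $B \to C$ amounts to lifting $P \to H^0(C)$ along the isomorphism $H^0(B) \to H^0(C)$ --- so the required shape is achievable there.

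Beyond the model structure, the paper's proof proceeds by a genuinely different induction. Rather than inducting on \emph{cosimplicial} degree through latching objects, it builds $Spec^\Delta(QA)$ as a tower in \emph{cohomological} degree, attaching Eilenberg--MacLane cells $K(\mathbb{G}_a,i)$ and $E(\mathbb{G}_a,i)$ (via products and fiber products) to present and kill cohomology at each cohomological stage. The hypothesis $H^i(A)=0$ for $1 \le i \le n$ then guarantees that no cells are attached through cohomological degree $n$; the specific models $Spec^\Delta(Free_{coSCR}(k[-i]))$ chosen for the Eilenberg--MacLane pieces are constant below cosimplicial degree $i-1$, which is what propagates the isomorphism claim to $QA$; and flatness is deduced from flatness of the cosimplicial algebra morphisms underlying $E(\mathbb{G}_a,i)\to K(\mathbb{G}_a,i)$, not from any latching-map argument. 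Your obstruction-theoretic Step 2 therefore needs to be replaced wholesale rather than repaired.
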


\begin{proof}
We construct a tower of cosimplicial algebras by adding successive cells to kill the various cohomology groups since the model structure on cosimplicial algebras is cofibrantly generated. Let us define $X \coloneqq Spec^\Delta(A)$. We assume that we have constructed a cofibrant model $X^m=Spec^\Delta(A_m)$ with a canonical morphism
$$p_m : X \to X^m$$
which is an isomorphism on all cohomology groups $H^i$ for $i \le n$. We have a natural identification
$$H^{m+1}(A) \cong [X,K(\mathbb{G}_a,m+1)]$$

We define $I \coloneqq [X,K(\mathbb{G}_a,m+1)]$ as a set, and the tautological morphism
$$X \to K(\mathbb{G}_a,m+1)^I$$
We define $X'^m(1) \coloneqq X^m \times K(\mathbb{G}_a,m+1)^I=Spec^\Delta(A'^m(1))$, where $A'^m(1)$ is the tensor product of the free cosimplicial algebra on a coproduct of $I$ copies of $k[-m-1]$ and $A^m$. The natural morphism
$$A'^m(1) \to A$$
is surjective on the $H^{m+1}$ group. From the isomorphisms
$$H^{m+1}(A'^m(1)) \cong H^{m+1}(K(\mathbb{G}_a,m+1)^I) \times H^{m+1}(A^m) \cong End_{Gp}(\mathbb{G}_a)^{(I)} \times H^{m+1}(A^m)$$
We deduce
$$H^{m+1}(A'^m(1)) \cong k^{(I)} \times H^{m+1}(A^m)$$
because $k$ is torsion-free. The kernel of the surjective morphism
$$k^{(I)} \times H^{m+1}(A^m) \to H^{m+1}(A)$$
is denoted $J$. We deduce a map
$$X^m \to X'^m(1) \to K(\mathbb{G}_a,m+1)^J$$
which factors through
$$X^m(1) \coloneqq X'^m(1) \times_{K(\mathbb{G}_a,m+1)^J} E(\mathbb{G}_a,m+1)^J$$

We iterate this construction to obtain a tower
$$X^m \to ... \to  X^m(k) \to X'^m(k) \to ... X^m(1) \to X'^m(1)$$
which gives
$$\alpha_m : X^m \to X^{m+1}$$
where $X^{m+1}$ is the limit of the tower
$$X^m(k) \to X'^m(k) \to ... X^m(1) \to X'^m(1)$$
We check that $\alpha_m$ is an isomorphism on the $H^{i}$ groups where $i \le m+1$ and we define $QA$ by requiring
$$Spec^\Delta(QA) \coloneqq colim_m X^{m}$$
where we take the colimit in the category of affine stacks.

The flatness of the transition morphisms come from the flatness of
$$E(\mathbb{G}_a,i) \to K(\mathbb{G}_a,i)$$
when $i>0$.

Now to get the last assertion, we choose a specific model for $K(\mathbb{G}_a,i)$ and $E(\mathbb{G}_a,i)$. We take the following :
$$K(\mathbb{G}_a,i) \coloneqq Spec^\Delta(Free_{coSCR}(k[-m]))$$
and
$$K(\mathbb{G}_a,i) \coloneqq Spec^\Delta(Free_{coSCR}(k_{m-1} \xrightarrow{Id} k_m))$$

As the associated cosimplicial algebras of these affine stacks satisfy
$$Y_i \xrightarrow{\sim} Y_0$$
where $i<m-1$, this concludes the proof.
\end{proof}

\begin{Rq}
We notice in the proof that if $H^{n+1}(A)$ is a free $k$-module, $A'^{n+1}(1) \to A$ is an isomorphism on $H^{m+1}$ groups, therefore we can take $X^{n+1}(1)$ to be $X'^{n+1}(1)$ and we obtain a cofibrant model such that $QA_n \to QA_0$ is also an isomorphism.
\end{Rq}

From the remark, we deduce :

\begin{Cor}
The formal $n$-sphere
$$S^n_f \coloneqq Spec^\Delta (D H^*(S^n,k))$$
is $(n-1)$-connective.
\end{Cor}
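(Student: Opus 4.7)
The plan is to apply Proposition \ref{conneAffSt} together with the Remark immediately preceding the Corollary, using that the cohomology of the topological $n$-sphere is particularly simple and free over $k$.

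First I would compute the relevant cohomology: $H^*(S^n, k) \cong k \oplus k[-n]$, so the cosimplicial algebra $A \coloneqq D H^*(S^n, k)$ satisfies $H^0(A) = k$, $H^i(A) = 0$ for $1 \le i \le n-1$, and $H^n(A) = k$. In particular $H^n(A)$ is a \emph{free} $k$-module of rank one, so the free-cohomology hypothesis of the Remark is satisfied.

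Next I would apply Proposition \ref{conneAffSt} (with the vanishing index equal to $n-1$) enhanced by the Remark: this yields a cofibrant replacement $QA \xrightarrow{\sim} A$ whose coface maps are flat and for which $QA_i \to QA_0$ is an isomorphism for $i \le n-1$. Unwinding the explicit construction in the proof of that proposition, one sees that the tower $X^m = Spec^\Delta(A_m)$ built to realize $QA$ has $X^m \simeq \ast$ for $m \le n-1$ (no cells are attached while $H^{i+1}(A) = 0$), and the first nontrivial stage is $X^n \simeq K(\mathbb{G}_a, n)$, obtained by attaching a single free cell coming from the rank-one generator of $H^n(A)$. All subsequent stages are obtained by pullbacks along $E(\mathbb{G}_a, i) \to K(\mathbb{G}_a, i)$ for $i \ge n+1$.

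Finally I would conclude by a connectivity estimate: $K(\mathbb{G}_a, n)$ is $(n-1)$-connective, and each subsequent pullback changes the stack only by a fibration whose fibers are $i$-fold loopings with $i \ge n$, hence are at least $n$-connective; such pullbacks preserve $(n-1)$-connectivity. The colimit of the tower (which on the cosimplicial side is the limit defining $Spec^\Delta(QA)$) retains this property, because connectivity of pointed affine stacks is preserved under such filtered colimits. Therefore $S^n_f \simeq Spec^\Delta(QA)$ is $(n-1)$-connective.

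The only delicate point I anticipate is verifying cleanly that $(n-1)$-connectivity persists through the colimit of the cell-attachment tower in the category of affine stacks; this should follow from a standard Postnikov-convergence argument combined with the fact that $Spec^\Delta$ sends the relevant cosimplicial limit to the desired colimit of affine stacks, but one has to check it is compatible with the truncation functors $t_{\le k}$ to guarantee the homotopy groups behave as expected.
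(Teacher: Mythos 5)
Your proposal is correct in substance and starts from the same input as the paper, but it then takes a genuinely different, more laborious route to the conclusion. The paper's proof of the corollary consists precisely of your first step: $H^*(S^n,k)\cong k\oplus k[-n]$, so Proposition \ref{conneAffSt} applies with vanishing range $1\le i\le n-1$, and since $H^n$ is a free $k$-module the preceding Remark upgrades the cofibrant model to one with $QA_i\xrightarrow{\sim}QA_0$ for all $i\le n-1$; the $(n-1)$-connectivity is then read off from this levelwise-constant model (all cells of $QA$ sit in cosimplicial degrees $\ge n$, so mapping spaces out of it have trivial homotopy in degrees $\le n-1$). You instead re-enter the proof of the proposition and propagate connectivity through the cell-attachment tower, and that is exactly what creates the delicate point you flag: each stage $X^{m+1}$ is an inverse limit of pullback stages (where a priori a $\varprojlim^1$ of $\pi_n$'s could contribute to $\pi_{n-1}$), and the final object is a colimit taken in the category of affine stacks, i.e. an affinization of the colimit of stacks, so you would also need to know that affinization preserves $(n-1)$-connectivity — none of which is required if you use the statement of the proposition and its Remark rather than their proof. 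Two minor inaccuracies in your unwinding: the construction does not attach a single cell at stage $n$, it uses the whole index set $I=[X,K(\mathbb{G}_a,n)]\cong H^n(A)$, the Remark's content being that the correction pullback along $K(\mathbb{G}_a,n)^J$ may be skipped when $H^n(A)$ is free (equivalently, one may index by a basis); and the relevant fibers in later stages are those of $E(\mathbb{G}_a,i)\to K(\mathbb{G}_a,i)$, namely $K(\mathbb{G}_a,i-1)$ with $i-1\ge n$, which is where the connectivity bound actually comes from. Neither affects your estimate, since all cells occur in degrees $\ge n$, but the cleaner (and the paper's) argument avoids the tower altogether.
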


\end{appendices}

\newpage
\setcounter{tocdepth}{2}

\bibliographystyle{amsalpha}

\end{document}